\documentclass[12pt,oneside,final]{thesis}
\usepackage{graphicx,hyperref,url}   
\graphicspath{{./figs/}} 

 \usepackage[all]{nowidow} 
 \widowpenalty10000
 \brokenpenalty10000\relax
	
\usepackage[english]{babel}
\usepackage[utf8]{inputenc}
\usepackage{cite,natbib,listing,blindtext} 
\usepackage[titletoc]{appendix} 

\usepackage{amsfonts,eufrak,dsfont,latexsym,newcent,cancel,upquote,bbm,bm,color,xcolor}   
\usepackage{syntonly,xparse,rotating,setspace,algcompatible,algorithm,algorithmicx,algpseudocode,verbatim,times}
\usepackage{textpos,booktabs,subcaption,multirow,multicol,longtable,array,mdwmath,tabularx,extramarks,eqparbox}
\usepackage[font=singlespacing, labelfont=bf]{caption} 
\usepackage{amsmath,amsthm,amssymb,amscd,thmtools,mathtools,mathrsfs,siunitx,calc}
\usepackage[shortlabels]{enumitem} 
\newlist{inlinelist}{enumerate*}{1}
\setlist*[inlinelist,1]{	label=(\arabic*), }
\usepackage{afterpage,tikz,grffile,wrapfig,scalerel,pgfplots}
\usetikzlibrary{shapes,arrows,chains}
\usetikzlibrary[calc]
\pgfplotsset{compat=newest} 
\usepgfplotslibrary{patchplots}    
 
\usepackage{dblfloatfix} 
 \usepackage{subcaption} 
 \usepackage{cases} 
 \usepackage{longtable} 
\usepackage[none]{hyphenat} 

\makeatletter
\newcommand{\customlabel}[2]{%
	\protected@write \@auxout {}{\string \newlabel {#1}{{#2}{\thepage}{#2}{#1}{}} }%
	\hypertarget{#1}{#2}
}
\makeatother

 \usepackage{fancyhdr}
 \pagestyle{fancy}
 \fancyhf{}
 \setlength{\headheight}{15pt}
 \lhead{\leftmark}
 \cfoot{\thepage}
 \fancypagestyle{plain}{
 	\fancyhf{} 
 	\fancyfoot[C]{\thepage} 
 	
 	}

\usepackage{etoolbox}
\usepackage{framed}
\newcommand\enclosebox[2]{%
	\BeforeBeginEnvironment{#1}{\begin{#2}}%
		\AfterEndEnvironment{#1}{\end{#2}}%
}
\enclosebox{definition}{leftbar} 
\enclosebox{remark}{leftbar} 

\makeatletter
\newcommand{\thickhline}{%
    \noalign {\ifnum 0=`}\fi \hrule height 1pt
    \futurelet \reserved@a \@xhline
}
\newcolumntype{"}{@{\hskip\tabcolsep\vrule width 1pt\hskip\tabcolsep}}
\makeatother

\makeatletter
\newcommand{\DESCRIPTION@original@item}{}
\let\DESCRIPTION@original@item\item
\newcommand*{\DESCRIPTION@envir}{DESCRIPTION}
\newlength{\DESCRIPTION@totalleftmargin}
\newlength{\DESCRIPTION@linewidth}
\newcommand{\DESCRIPTION@makelabel}[1]{\llap{#1}}%
\newcommand{\DESCRIPTION@item}[1][]{%
	\setlength{\@totalleftmargin}%
	{\DESCRIPTION@totalleftmargin+\widthof{\textbf{#1 }}-\leftmargin}%
	\setlength{\linewidth}
	{\DESCRIPTION@linewidth-\widthof{\textbf{#1 }}+\leftmargin}%
	\par\parshape \@ne \@totalleftmargin \linewidth
	\DESCRIPTION@original@item[\textbf{#1}]%
}

\makeatother

  \theoremstyle{plain}
 \newtheorem{thm}{Theorem}[section]
 \newtheorem{corr}{Corollary}[section]
 \newtheorem{lem}{Lemma}[section]
 
 \newtheorem{prop}{Proposition} 
 
 \theoremstyle{definition}
 \newtheorem{dfn}{Definition}[section]
 
\newtheorem{exmp}{Example}[section]

  \theoremstyle{remark}
 \newtheorem{rem}{Remark}

 \newtheorem{assumeA}{Assumption A.\hspace*{-1.2mm}} 
 \newtheorem{assumeB}{Assumption B.\hspace*{-1.2mm}} 
 \newtheorem{assumeC}{Assumption C.\hspace*{-1.2mm}}

\newcommand{\remove}[1]{}
\newcommand{\given}[2]{\left. #1 \right| #2}
\newcommand{\ilparenthesis}[1]{( #1 )}
\newcommand{\ilbracket}[1]{[ #1 ]} 
\newcommand{\ilset}[1]{\{ #1 \}}
\def \set#1{\left \{#1 \right \}}
\newcommand{\bracket}[1]{\left[  #1  \right]}
\newcommand{\Angle}[1]{\langle  #1 \rangle}
\def \parenthesis#1{\left (#1 \right)}
\newcommand{\norm}[1]{\| #1 \|} 
\newcommand{\abs}[1]{\left| #1 \right|} 

\DeclareMathOperator*{\argmin}{\arg\!\min}

\newcommand{\indicator}{\mathbbm{I}}

\newcommand{\diff}{d}
\renewcommand{\complement}{\mathsf{c}}

\newcommand{\real}{\mathbb{R}}

\renewcommand{\natural}{\mathbb{N}}
\newcommand{\integer}{\mathbb{Z}}

\newcommand{\field}{\mathcal{F}}
\newcommand{\nfield}{\mathcal{G}} 
\newcommand{\Prob}{\mathbbm{P}}
\newcommand{\E}{\mathbbm{E}}
\DeclareMathOperator{\Var}{\mathbbm{V}}
\newcommand{\Cov}{\mathbbm{C}}

\usepackage{relsize}
\newcommand{\transpose}{\mathsmaller{T}}
\newcommand{\tr}{\mathrm{tr}}
\newcommand{\sign}{\mathrm{sign}}

\newcommand{\Id}{\bm{I}_p} 
\newcommand{\zero}{\boldsymbol{0}}

\newcommand{\diag}{\mathrm{diag}} 
\newcommand{\Proj}{\mathscr{P}}

\newcommand{\bA}{\bm{A}}
\newcommand{\bB}{\bm{B}}

\newcommand{\bD}{\bm{D}}

\newcommand{\bG}{\bm{G}}
\newcommand{\bH}{\bm{H}}
\newcommand{\bI}{\bm{I}}
\newcommand{\bK}{\bm{K}}
\newcommand{\bL}{\bm{L}}
\newcommand{\bM}{\bm{M}}

\newcommand{\bP}{\bm{P}}
\newcommand{\bQ}{\bm{Q}}
\newcommand{\bR}{\bm{R}}
\newcommand{\bS}{\bm{S}}
\newcommand{\bU}{\bm{U}}
\newcommand{\bV}{\bm{V}}
\newcommand{\bW}{\bm{W}}
\newcommand{\bX}{\bm{X}}
\newcommand{\bY}{\bm{Y}}
\newcommand{\bZ}{\bm{Z}}

\newcommand{\bb}{\bm{b}}
\newcommand{\bc}{\bm{c}}
\newcommand{\bd}{\bm{d}}
\newcommand{\be}{\bm{e}}
\newcommand{\bg}{\bm{g}}
\newcommand{\bh}{\bm{h}}
\newcommand{\bu}{\bm{u}}
\newcommand{\bv}{\bm{v}}
\newcommand{\bw}{\bm{w}}
\newcommand{\bx}{\bm{x}}
\newcommand{\by}{\bm{y}}
\newcommand{\bz}{\bm{z}}

\usepackage{upgreek}
\renewcommand{\alpha}{\upalpha}
\renewcommand{\beta}{\upbeta}
\renewcommand{\gamma}{\upgamma}
\renewcommand{\delta}{\updelta}
\renewcommand{\epsilon}{\upepsilon}
\renewcommand{\varepsilon}{\upvarepsilon}
\renewcommand{\zeta}{\upzeta}
\renewcommand{\eta}{\upeta} 
\renewcommand{\theta}{\uptheta}
\renewcommand{\vartheta}{\upvartheta}
\renewcommand{\iota}{\upiota}
\renewcommand{\kappa}{\upkappa}
\renewcommand{\lambda}{\uplambda}
\renewcommand{\mu}{\upmu}
\renewcommand{\nu}{\upnu}
\renewcommand{\xi}{\upxi}
\renewcommand{\pi}{\uppi}
\renewcommand{\rho}{\uprho}
\renewcommand{\varrho}{\upvarrho}
\renewcommand{\sigma}{\upsigma}
\renewcommand{\tau}{\uptau}
\renewcommand{\upsilon}{\upupsilon}
\renewcommand{\phi}{\upphi}
\renewcommand{\varphi}{\upvarphi}
\renewcommand{\chi}{\upchi}
\renewcommand{\psi}{\uppsi}
\renewcommand{\omega}{\upomega} 

\newcommand{\bbeta}{\boldsymbol{\upbeta}}

\newcommand{\bDelta}{\boldsymbol{\Delta}} 

\newcommand{\bzeta}{\boldsymbol{\upzeta}}
\newcommand{\btheta}{\boldsymbol{\uptheta}}
\newcommand{\bvartheta}{\boldsymbol{\uptheta}^* }
\newcommand{\bTheta}{\boldsymbol{\Theta}}

\newcommand{\bLambda}{\boldsymbol{\Lambda}}
\newcommand{\bgamma}{\boldsymbol{\upgamma}}
\newcommand{\bGamma}{\boldsymbol{\Gamma}}

\newcommand{\bxi}{\boldsymbol{\upxi}}

\newcommand{\brho}{\boldsymbol{\uprho}}

\newcommand{\bSigma}{\boldsymbol{\Sigma}}

\newcommand{\bvarphi}{\boldsymbol{\upvarphi}}
\newcommand{\bPhi}{\boldsymbol{\Phi}}

\newcommand{\nullset}{\mathcal{N}}
\newcommand{\compactset}{\mathcal{S}} 
 
 \newcommand{\SG}{\mathrm{SG}}
 \newcommand{\SP}{\mathrm{SP}}
 \newcommand{\loss}{f}  
 
 \newcommand{\gain}{a}
 \newcommand{\timeSpan}{h}
 \newcommand{\const}{C}
 \newcommand{\firstConst}{u}
 \newcommand{\secondConst}{v}
 \newcommand{\bias}{\bbeta}
 \newcommand{\noise}{\bxi}
 \newcommand{\BIAS}{\bB}
 \newcommand{\NOISE}{\bX }
 \newcommand{\PROJECTION}{\bR}
 \newcommand{\projectionlower}{\bm{r}}
 \newcommand{\hbH}{\hat{\bH}} 

\newcommand{\hbtheta}{\hat{\btheta}}
 
\newcommand{\bbtheta}{\bar{\btheta}}
\newcommand{\hbg}{\hat{\boldsymbol{g}}}    
\newcommand{\convexPara}{\mathscr{C}}
\newcommand{\LipsPara}{\mathscr{L}}
\newcommand{\noiseBound}{\mathscr{M}}
\newcommand{\driftBound}{\mathscr{B}}
\newcommand{\gradientBound}{\mathscr{G}}
\newcommand{\Tobe}{q}
\newcommand{\ratio}{\mathscr{R}}
\newcommand{\multiple}{m}

\newcommand{\TimeMapping}[1]{m\left( #1 \right)}
\newcommand{\aDependence}{^{(a)}}
\newcommand{\aSubseqDependence}{^{(a_n)}}
\newcommand{\ConvCone}{\mathrm{Cone}}
\newcommand{\boundedFn}{F}
\newcommand{\contZ}{\breve{\bZ}} 
\newcommand{\obg}{\overline{\bg}}

\newcommand{\stateregime}{\mathsf{s}}
\newcommand{\stateInd}{\ilparenthesis{\stateregime}}
\newcommand{\start}{\text{start}}
\newcommand{\final}{\text{end}}
\newcommand{\observebx}{\mathbf{x}}

\newcommand{\window}{\mathsf{w}}
\newcommand{\VarianceError}{\bV}
\newcommand{\VarianceLimiting}{\bSigma}
\newcommand{ \jump}{\mathcal{J}}
\newcommand{\FirstState}{\mathsf{A}}
\newcommand{\SecondState}{\mathsf{B}}
\newcommand{\hypo}{\mathtt{H}}
\newcommand{\Ball}{\mathrm{Ball}}
\renewcommand{\vector}[1]{\mathrm{vec}\parenthesis{#1}}
\newcommand{\hazard}{h}
\newcommand{\gainRegime}{\gain_{\stateInd}}
\newcommand{\shrink}{\upeta_{-}}
\newcommand{\increase}{\upeta_{+}}
\newcommand{\TimeCritical}{\tilde{\upkappa}}
\newcommand{\stopTime}{\mathrm{s}}
\newcommand{\stopInd}{\ilbracket{\stopTime}}
\newcommand{\stopIndNew}{\ilbracket{\stopTime+1}}
\newcommand{\gainStop}{\gain_{\stopInd}}
\newcommand{\gainStopNew}{\gain_{\stopIndNew}}
\newcommand{\VarianceMA}{\widetilde{\bV}}
\newcommand{\VarianceG}{\overline{\bV }}

\newcommand{\velocity}{\mathsf{v}}
\newcommand{\east}{\mathrm{E}}
\newcommand{\north}{\mathrm{N}}
\newcommand{\KFmeasurementmatrix}{\bS}

\newcommand{\obH}{\overline{\bH}}
\newcommand{\oobH}{\overline{\overline{\bH}}}
\newcommand{\tbDelta}{\tilde{\bDelta}}
\newcommand{\tbu}{\tilde{\bu}}
\newcommand{\tbv}{\tilde{\bv}}
\newcommand{\tbA}{\tilde{\bA}}
\newcommand{\obB}{\overline{\bB}}
\newcommand{\obLambda}{\overline{\bLambda}}
\newcommand{\weakconverge}{\stackrel{\mathrm{dist}}{\longrightarrow}}

\newcommand{\red}[1]{\textcolor{red}{TODO: #1}}


\begin{document}
	\sloppy 
\title{ \uppercase{Error Bounds and Applications for Stochastic Approximation With Non-Decaying Gain \remove{ Stochastic Approximation With Non-Decaying Gain: Finite-Sample Error Bound, Concentration Behavior,   Data-Driven Gain-Tuning, and    Multi-Agent Applications}  }}
\author{Jingyi Zhu}
\degreemonth{February 17}
\degreeyear{2020} 
\dissertation
\doctorphilosophy
\copyrightnotice 
 
\begin{frontmatter} 
\maketitle

\begin{abstract}

	  This  work  analyzes the stochastic approximation algorithm with non-decaying gains as  applied in    time-varying problems. The    setting  is to minimize  a sequence of scalar-valued loss functions $\loss_k\ilparenthesis{\cdot}$   at sampling times   $\uptau_k$ or to locate the root of a sequence of  vector-valued functions $\bg_k\ilparenthesis{\cdot}$ at   $\uptau_k $ with respect to a parameter $\btheta\in\real^p$. The available information    is the noise-corrupted observation(s) of either $\loss_k\ilparenthesis{\cdot}$ or $\bg_k\ilparenthesis{\cdot}$ evaluated at  one or two   design points  {only}.   
	  Given the time-varying stochastic approximation setup, we apply   stochastic approximation algorithms.   The gain has to be bounded away from zero so that  the recursive estimate denoted as  $\hbtheta_k$ can  maintain its  momentum in tracking the  time-varying optimum denoted as $\bvartheta_k$. Given that $\ilset{\bvartheta_k}$  is perpetually varying,  the   best property   that $\hbtheta_k$ can   have  is  to be near the solution $\bvartheta_k$ (concentration behavior) in place  of the improbable convergence.

  Chapter~\ref{chap:FiniteErrorBound} provides   a bound for the  root-mean-squared  error   $ \sqrt{ \E \ilparenthesis{  \norm{  \hbtheta_k-\bvartheta_k  }^2  }  } $ and a bound for the  mean-absolute-deviation   $ \E \norm{  \hbtheta_k-\bvartheta_k } $.   Note that the only  assumption imposed on  $ \ilset{\bvartheta_k} $ is that the average distance between two consecutive underlying optimal parameter vectors  is bounded from above.  Overall, the bounds are  applicable  under a mild assumption on the time-varying drift and a modest restriction on    the observation noise and the bias term.   
	 After   establishing  the tracking capability in  Chapter~\ref{chap:FiniteErrorBound}, we also discuss the concentration behavior of $\hbtheta_k$ in Chapter~\ref{chap:Limiting}.  
	The weak convergence limit of the continuous interpolation of $\hbtheta_k$  is shown to follow  the trajectory of a non-autonomous ordinary differential equation. Then we apply the formula for variation of parameters to derive a computable upper-bound  for the probability that $\hbtheta_k$ deviates from $\bvartheta_k$ beyond a certain threshold.      
	Both Chapter~\ref{chap:FiniteErrorBound} and   Chapter~\ref{chap:Limiting}  are probabilistic arguments  and may not provide much guidance on the gain-tuning strategies useful for  one single experiment run. Therefore,   Chapter~\ref{chap:AdaptiveGain} discusses  a data-dependent gain-tuning strategy based on estimating the Hessian information and the noise level.
	 Overall, this work answers the questions ``what is the estimate for the dynamical system $\bvartheta_k$'' and ``how much   we can trust  $\hbtheta_k$ as an estimate for $\bvartheta_k$.''   \remove{Finally, the thesis   contains an appendix that uses  the indefinite symmetric matrix factorization to reduce the per-iteration floating-point-operations  cost of the second-order stochastic approximation algorithms from $O(p^3)$ to $O(p^2)$ where $p$ is the dimension of the  parameter $\btheta$. }

	  \remove{

	  This result complements existing big-$ O $ bound, and delivers a computable bound for practical use.

	  quantify the tracking capability of stochastic gradient (Robbins-Monro) algorithm, to enable the error bound available for model-free adaptive control of nonlinear stochastic systems with unknown dynamics.  
	  model-free adaptive control of nonlinear stochastic systems with unknown dynamics

  }

\quad

\quad

\noindent \emph{Index Terms}\textemdash  
 stochastic approximation,  non-decaying gain, constant gain, error bound, time-varying systems, ODE limit,    second-order algorithms

\vfill
\noindent {\bf{Primary Reader and Academic Advisor:}}  Dr.  James C. Spall\\
{\bf{Second Reader:}}    Dr. Nicolas Charon
\end{abstract}

\begin{dedication}
 \begin{center}
{\it{Dedication to my family.}}
\end{center}

\end{dedication}

\begin{acknowledgment}
I would like to express my sincere gratitude to my academic advisor,  Dr. James   Spall, for his continuing support throughout my graduate studies in the Department of Applied Mathematics and Statistics (AMS) at the Johns Hopkins University (JHU). His admirable work ethic, deep insight  in the field of  optimization and control, and kindness to his students, were  influential in making my JHU experience fruitful and enriching. 

I would like to thank Dr. Nicolas Charon for taking his   time in reviewing the thesis as a second reader, and for serving on the committee of  the three key oral exams in my graduate study. 
I also would   like to   thank  Dr. Donniell Fishkind, Dr.  Benjamin Hobbs, Dr.  Enrique Mallada, 
Dr. Daniel Robinson,   Dr.  Howard Weinert,   all the members of the committee of  my Candidacy exam, my Graduate Board Oral  exam and my dissertation defense.

I   want to thank Dr.  Avanti Athreya,  Dr. Agostino Capponi,  Dr. Nicolas Charon, Dr. Donniell Fishkind, Dr.  James Fill, Dr.  Vince Lyzinski, Dr.  Mauro Maggioni,  Dr.  Carey Priebe,  Dr.  Daniel Robinson, Dr.   James Spall,  Dr.  Minh Tang,  Dr.  Fred Torcaso,  for the helpful courses  they've designed and  delivered in the Department of AMS. I want to thank Ms. Kristin Bechtel 
 and Ms. Heather Kelm for their kindness and patience in coordinating  the academic-related issues.  
 I would like to thank Ms. Denise Link-Farajali  from the Center for Leadership Education at JHU and Ms. Kathy Ceasar-Spall for helping me in reviewing the grammatical issues and providing rephrasing  suggestions.

Last but not least, I would like to  thank my husband, Long Wang, for his friendship and  love, and for the enlightening discussions we've shared. The work in Appendix~\ref{chap:2nd} could not have been  completed timely  without his dedicated  collaboration.   I would like to thank my siblings, Jingwen Zhu and Siyi Huang, for their continuing support and trust. I would like to thank my parents, Jincheng Zhu and Qiaoming Huang, for everything they have offered     my siblings and me  throughout our lives.

This work was  supported by  the  Paul V. Renoff Fellowship from JHU,     the Charles and Catherine Counselman Fellowship from the Department of AMS, the   Acheson J. Duncan Fund for the Advancement of Research in Statistics from  the  Department of AMS,    the Office of Naval Research via Navy contract
N00024-13-D6400,  and Dr. James Spall’s JHU/APL sabbatical professorship
at  Whiting School of Engineering   in JHU.

\end{acknowledgment}

 \tableofcontents

\listoftables

\listoffigures

\chapter*{List of     Notation and Acronyms}
 \addcontentsline{toc}{chapter}{List of   Notation and Abbreviations}
\markboth{\uppercase{List of     Notation and Acronyms}}{ }

The   frequently used notation are  arranged by category in the following lists.

\subsection*{General Math Notation}

Let $A$ and $B$ denote some sets\footnote{Usually, the set of our interest in this thesis is a  subset of the $p$-dimensional Euclidean space. }, and let  $x$ and $y$ denote some canonical points.

 \begin{longtable}{cp{0.8\textwidth}} 
 	$\real^p$ &  The $p$-dimensional Euclidean space\\ 
 	$\natural$ & The set of natural numbers $ \ilset{0,1,2,\cdots} $  \\ 
\remove{ 	$\integer$ & The set of integer numbers $\ilset{0,1,-1,2,-2,\cdots}$\\          }
 	$A\subset B$ & Set $A$ is a   subset of set $B$, including the scenario where $A=B$\\ 
 	$A\subsetneq B$ & Set $A$ is a proper subset of set $B$, i.e., the possibility of $A=B$ is excluded\\
 	$A\setminus B$ & The set obtained by excluding all elements in $B$ from the set $A$\\
 	$\mathrm{int}\ilparenthesis{A}$ & The interior of set $A$ \\
 	$\partial A$ & The boundary of set $A$ \\ 
 	$\overline{A}$ & The closure of   set $A$\\ 
\remove{ 	$A^{\complement}$ & The complement of the set $A$\\        }
 	$\Ball_{\upvarepsilon}\ilparenthesis{A}$ & The $\upvarepsilon$-neighborhood of set $A$, defined as $ \ilset{x: \norm{x-y}\le \upvarepsilon, \text{for } y\in A } $\\
 	$ \Proj_{A}\ilparenthesis{x} $ & The projection of point $x$ onto set $A$, defined as $    \argmin_{y \in A } \norm{x-y} $\\ 
 	$\indicator_{A}(x)$ & The indicator function of the set $A$, which gives $1$ if $x\in A$ and   $0$ if $x\notin A$ 
 \end{longtable}

Let $\bA$ denote a matrix, and let $\bx$ denote a vector.  $\bA$ is not necessarily square unless specified otherwise.

 \begin{longtable}{cp{0.8\textwidth}}
	$\norm{\cdot}$  & The Euclidean norm if the input argument is a vector, or the spectral norm   if the input argument is a matrix\\
	$\abs{\cdot}$  & The determinant of the input   matrix in $ \real^{p\times p} $\\ 
	$\bI_p$ & The identity matrix in $\real^{p\times p}$\\
	$ \uplambda_{\min} \ilparenthesis{\bA}  $    & The smallest     eigenvalue of    $\bA\in\real^{p\times p}$\\
	$\uplambda_{\max} \ilparenthesis{\bA}$ & The largest     eigenvalue of   $\bA\in\real^{p\times p}$\\
	$\upsigma_{\min}\ilparenthesis{\bA}$ & The smallest singular value of $\bA$ \\
	$\upsigma_{\max}\ilparenthesis{\bA}$ & The largest singular value of $\bA$ \\
	$\tr\ilparenthesis{\bA}$ & The trace of   $\bA$    \\ 
	$\mathrm{cond}\ilparenthesis{\bA}$ & The condition number of $\bA\in\real^{p\times p}$\\ 
	$\mathrm{rank}\ilparenthesis{\bA}$ & The rank of $\bA$   \\
	$\vector{\bA}$ & The vectorization of   $\bA$, i.e., the concatenation of all columns  of $\bA$ \\ 
	$\bA^\transpose$ & The transpose of matrix $\bA$\\
	$\bA_1\succ \bA_2$ & The binary operator $\succ$ means that $\ilparenthesis{\bA_1-\bA_2}$ is a positive definite matrix for $\bA_1, \bA_2\in\real^{p\times p}$\\ 
	$\bA_1\succeq \bA_2$ & The binary operator $\succeq$ means that $\ilparenthesis{\bA_1-\bA_2}$ is a positive semi-definite matrix for $\bA_1, \bA_2\in\real^{p\times p}$\\
 $\diag(\bx)$ & The matrix with diagonal entries being the components of $\bx$ \\
	$\bx^{-1}$ & The component-wise inverse of vector $\bx\in\real^p$ with nonzero components\\
	$\bx^{-\transpose}$ & The transpose of $\bx^{-1}$ for $\bx\in\real^p$ with nonzero components
\end{longtable}

 Let $ \ilparenthesis{\Omega, \mathcal{A}, \Prob} $ be the probability space of interest.  
 Let  $\upomega$ denote an outcome within the  sample space $\Omega$, and let $\bx$ and $\by$ denote some random vectors.

  \begin{longtable}{cp{0.8\textwidth}}
  	$\upomega$ & An elementary event belonging to the set $\Omega$\\
  	$\Prob\ilparenthesis{A}$ & The probability of a set $A\in \mathcal{A}$\\ 
 $\E\ilparenthesis{\bx}$ & The expectation of $\bx$\\ 
 $\Var\ilparenthesis{\bx}$ & The variance (matrix) of $\bx$ \\
 $\Cov\ilparenthesis{\bx,\by}$ & The covariance (matrix) of $\bx$ and $\by$  (which is not necessarily square)\\
 $\ilparenthesis{\field_i}_{i\in I}$ & It is a filtration if $\field_i$ is a sub sigma-algebra of $\mathcal{A}$ for all $i$, and $\field_j\subset \field_k\subset\mathcal{A}$ for all $j\le k$  in the index set $I$ (which is usually $\natural$) \\
 $\stackrel{\mathrm{dist}}{\longrightarrow}$ & Converges in distribution
 \end{longtable}  

\subsection*{Notation  For  Stochastic Optimization Framework Under Nonstationary Scenarios }
The following notation  are consistently used throughout the entire thesis, except that  Section~\ref{sect:SAview} and Appendix~\ref{chap:2nd} consider the traditional stochastic approximation setting with a fixed loss function and a fixed optimum.

  \begin{longtable}{cp{0.8\textwidth}}   
  	$\btheta$ & The vector of parameters being estimated, which usually lives in $\real^p$\\ 
  	$\uptau_k$ & The sampling time at the $k$th iteration\\ 
 $\loss_k\ilparenthesis{\btheta} $ & The loss function to minimize, which  varies with the index $k$\\   $\bg_k\ilparenthesis{\btheta} $  & The gradient of the scalar-valued  loss function under the minimization setting or the underlying  vector-valued function in the root-finding setting  \\   $\bH_k\ilparenthesis{\btheta}$ & The Hessian of the scalar-valued loss function $\loss_k\ilparenthesis{\btheta}$  under the minimization setting or the Jacobian of the underlying vector-valued function $\bg_k\ilparenthesis{\btheta}$ under the root-finding setting  \\ 
   	 $\bvartheta_k$ & The minimizer of the scalar-valued loss function under the minimization setting or the root of the vector-valued function under the root-finding setting; the subscript $k$ is used to emphasize that  the optimum value changes with time $\uptau_k$\\  
  	$\hbtheta_k$ & The recursive estimate for $\bvartheta_k$ produced in the $k$th iteration of  an algorithm searching\\
  	$\hbg_k\ilparenthesis{\btheta}$ & The estimator for $\bg_k\ilparenthesis{\btheta}$ obtained at time $\uptau_k$, which will be used in the iterative SA scheme \\
  	$\bias_k\ilparenthesis{\btheta}$ and $\noise_k\ilparenthesis{\btheta}$ & The bias term and the  error term of using $\hbg_k\ilparenthesis{\btheta}$ as an estimator for $\bg_k\ilparenthesis{\btheta}$, see page~\pageref{eq:gNoisyDecomposition}   \\ 
  	$\be_k\ilparenthesis{\btheta}$ & The error term that is the sum of $\bias_k\ilparenthesis{\btheta}$ and $\noise_k\ilparenthesis{\btheta}$ (alternatively, the difference  between $\hbg_k\ilparenthesis{\btheta}$ and $\bg_k\ilparenthesis{\btheta}$), see page~\pageref{eq:gGeneral}  \\
  	$\gain_k$ or  $\gain$  & The \emph{non-decaying} or \emph{constant} positive gain appearing in the generic form of SA algorithm, see page~\pageref{eq:basicSA}\\
  	$c_k$ & The \emph{non-decaying} positive differencing magnitude  arising in constructing $\hbg_k\ilparenthesis{\cdot}$ using  FDSA/SPSA schemes, see page~\pageref{sect:SAintro} \\
  	$\bDelta_k$ & The perturbation vector arising from constructing $\hbg_k\ilparenthesis{\cdot}$  using SPSA scheme, see page~\pageref{eq:gSPSA}\\
 $C^j (\real\mapsto\real^p)$ & The set of  continuous functions that map $\real$ to $\real^p$ and are $j$th-order\footnote{ If a function that maps $\real$ to $\real^p$ is in $C^0\ilparenthesis{\real \mapsto\real^p}$, it means that it is continuous for $j\in\natural$. If a function that maps $\real$ to $\real^p$ is in $C^1\ilparenthesis{\real \mapsto\real^p}$, it means that both its first-order derivative and itself  are continuous. In general,  if a function that maps $\real$ to $\real^p$ is in $C^j\ilparenthesis{\real\mapsto\real^p}$, it means that  itself and its $i$th-order derivatives  are continuous for all $i\le j$. } continuous, see page~\pageref{subsect:Arzela}\\
 $D(\real\mapsto\real^p)$  & The set of functions that map from $\real$ to $\real^p$ and  are    right-continuous with left-hand limits, see page~\pageref{subsect:WeakConvergence}    \\ 
 $\noiseBound_k$ & The square-root of the second-moment of $\be_k$, see page~\pageref{assume:ErrorWithBoundedSecondMoment}\\
 $\driftBound_k$ & The square-root of the expected Euclidean-distance-squared between $\bvartheta_{k+1}$ and $\bvartheta_k$, see page~\pageref{assume:BoundedVariation} \\ 
 $\convexPara_k$ & The convexity parameter of $\bg_k\ilparenthesis{\cdot}$, see page~\pageref{assume:StronglyConvex} \\
 $\LipsPara_k$ & The Lipschitz parameter of $\bg_k\ilparenthesis{\cdot}$, see page~\pageref{assume:Lsmooth} \\
 $\ratio_k$ & The ratio between $\LipsPara_{k}$ and $\convexPara_k$, see page~\pageref{acronym:ratio} 
 \end{longtable}

\subsection*{Acronyms}
Below are the acronyms that will be used    in the thesis. 

\begin{longtable}{cp{0.8\textwidth}}
	SO & stochastic optimization, see page~\pageref{acronym:SO} \\
SA &   stochastic approximation, see page~\pageref{acronym:SA}  \\
SGD &  stochastic gradient descent, see page~\pageref{acronym:SG} \\ 
SG & stochastic gradient, see page~\pageref{acronym:SG}   \\ 
FD & finite difference, see page~\pageref{acronym:FD}\\ 
FDSA & finite difference  stochastic approximation, see page~\pageref{acronym:FDSA}\\   
SP & simultaneous perturbation, see page~\pageref{acronym:SP} \\
SPSA & simultaneous perturbation stochastic approximation, see page~\pageref{acronym:SPSA}\\
SP2 & simultaneous perturbation with two-measurements, see page~\pageref{eq:gSPSA}\\ 
SP1 &  simultaneous perturbation with one-measurement, see page~\pageref{eq:g1SP} \\ 
2SPSA & second-order  simultaneous perturbation stochastic approximation, see page~\pageref{acronym:2SPSA}  \\
2SG & second-order stochastic gradient, see page~\pageref{acronym:2SG}\\
E2SPSA & enhanced second-order  simultaneous perturbation stochastic approximation, see page~\pageref{acronym:E2SPSA} \\
E2SG & enhanced second-order stochastic gradient, see page~\pageref{acronym:E2SG} \\
ODE & ordinary differential equation, see page~\pageref{acronym:ODE} \\
IVP & initial value problem, see page~\pageref{acronym:IVP} \\
 RMS & root-mean-squared, see page~\pageref{acronym:RMS} \\
 MAD & mean-absolute-deviation, see page~\pageref{acronym:MAD} \\
 MSE & mean-squared-error, see page~\pageref{acronym:MSE} \\
 LMS & least-mean-squares, see page~\pageref{acronym:LMS} \\
 RLS & recursive-least-squares, see page~\pageref{acronym:RLS} \\
 ERF & empirical risk function, see page~\pageref{acronym:ERF} \\
 KF & Kalman filter, see page~\pageref{acronym:KF} \\
MISO  & multiple-input-single-output, see page~\pageref{acronym:MISO} \\
MIMO & multiple-input-multiple-output, see page~\pageref{acronym:MIMO} \\
 FLOP & floating-point-operations, see page~\pageref{acronym:FLOPs} \\
  UAV/UUV & unmanned aerial/undersea vehicle, see page~\pageref{acronym:UAV} \\ 
  i.i.d. &  independently and identically distributed \\
  a.s. & almost surely\\ 
  m.s. &mean-squared\\ 
  w.p.1.   & with probability one\\
  w.l.o.g. & without loss of generality\\
  w.r.t. & with respect to \\ 
  r.h.s. & right-hand-side\\
  l.h.s. & left-hand-side\\
  a.k.a. & also known as 
 \end{longtable}

\end{frontmatter}
 
\chapter{Introduction}\label{chap:Introduction}
This thesis focuses on a general stochastic optimization (SO)\label{acronym:SO} framework in the context of solving a nonstationary problem.   In a nutshell, we are allowed to gather \emph{noisy} zeroth- or first-order information \emph{only},     to minimize  a sequence of \emph{time-varying} scalar-valued loss functions or locate the root for a sequence of \emph{time-varying} vector-valued functions. The consideration of both the \emph{randomness} and \emph{dynamics} makes the work here different from both the traditional deterministic optimization and the classical   SO. 

The motivation for the time-varying SO   setup comes from modern needs in areas such as electrical power distribution, unmanned aerial or undersea vehicle (UAV or UUV)\label{acronym:UAV} tracking, and multi-agent problems.      To successfully track    a time-varying parameter  has long been an important topic in  these real-world problems, and both the time variation and  the noise corruption have been   notorious barriers that obstruct experimenters from making accurate statistical inference. 
Recursive stochastic approximation (SA)\label{acronym:SA} algorithms with non-decaying gain sequence   are widely used for  tracking purposes. Interestingly, the theoretical foundation for the application of   SA on the time-varying parameter identification is still developing, and so is    guidance to tune the non-decaying gain sequence.  
 This thesis is devoted to   these issues.

Before we move  forward to     the technical issues, let us further motivate why we use recursive SA schemes with non-decaying gains to handle time-varying problems in Section~\ref{sect:Motivation} and explain the challenges arising from   time-varying SO set up in Section~\ref{sect:Challenge}. Then  Section~\ref{sect:Contents} lists the contribution of this work  along with  an  overview of the upcoming chapters.

 \section{Motivation} \label{sect:Motivation}

Though    the  asymptotic properties  of    SA schemes with decaying gains  have  been    studied  mainly based on a fixed optimizer,   SA algorithms with non-decaying gains   have been widely applied   in sequential processing and online learning, where     true loss functions and, therefore,   underlying optimums  are drifting over time.
Consider a    parameter identification problem in  estimating  a sequence of time-varying unknown parameter vectors denoted as $\btheta$. 
In  adaptive tracking,  system control,  and many   estimation    applications,  the optimal value(s) of $\btheta$ at the sampling  time   $\uptau_k$ (corresponding to the   index $k$), denoted as $\bvartheta_k$,     may change over time  due to the intrinsic evolution of the underlying system.   	 When the sequence $\bvartheta_k$ varies with time,    SA  algorithms with non-diminishing gain may be         applied such that the  iterative output from    SA  algorithm, denoted as $\hbtheta_k$,  tracks  the time-varying  system parameters $\bvartheta_k$ at sampling points $\uptau_1,\uptau_2,\cdots $.

\subsection{Why Time-Varying SA Problems Are Useful?}

Stationary  systems  with   ``$\bvartheta_k= \bvartheta$ at    every sampling time $\uptau_k$'' are   well studied in   statistics and signal processing, and a voluminous literature is  available describing efficient estimation and identification of $\btheta$. However, in adaptive control of complex processes\remove{, temperature prediction in weather forecasting, } and many other real-world systems, the underlying parameters   reflecting the system characteristics $\bvartheta_k$ at time   $\uptau_k$   intrinsically vary.  Dynamic modeling is widely used in areas such as computer vision, macro- or micro-economic modeling,  feedback control systems, UAV/UUV tracking, mobile communication, radar or sonar surveillance systems, and so on. 
In addition to  the consideration of the time variability in $ \bvartheta_k $, it is also necessary to take  the randomness of the noise term. This is straightforward as noises  
  arise in almost any case where physical system measurements or computer simulations are used to approximate  a physical process.

  Let us briefly discuss an example (to appear in Chapter~\ref{chap:multiagent}) whose modeling should embrace both the time variability  of the loss function and the randomness of the noise. Consider the scenario where an agent attempts to track the coordinate of a target submarine (or aircraft) based upon the sensor (e.g., radar) readings from its surroundings.  By ``tracking'' we mean that the agent has to stay close  to   the target submarine, close enough such that the target submarine is still detectable\footnote{There exists a range outside of which   the sensor can no longer  detect the target. See \cite{kim2005tracking} for further details.  } by   its sensor reading.   The ``time variability'' comes in to play  as the coordinates of both the target submarine and the agent itself are constantly moving. The ``randomness'' is also involved as the sensor readings are corrupted by observational noise. In this real-time estimation problem, the data (sensor readings) are collected ``on the fly'' as the submarine is operating. The reading arrives in concert with the estimation of the location coordinates of the target submarine.

In addition to the submarine tracking example, 
the environment changes over time in  other practical settings. In the ever-changing environment, the solution (set) may vary with time too. 
 Such a formulation is obvious in dynamic problems such as building control systems, where the optimum may change continuously. Although less obvious, the time-varying problem also arises in settings that may appear at first glance to be static. For example, an optimum financial plan for a business or    a  family depends on the external environment, which, of course, changes over time. In some search and optimization problems, the algorithm will be explicitly designed to adapt to a changing environment (e.g., a controlled system) and automatically provide a new estimate at the optimal value. In other cases, one needs to restart the process and find a new solution. In either sense, the problem solving may never stop.

Overall, real-world problem-solving is often difficult, two common issues of which are the measurement noise  and the lack of stationarity in the solution as a result of the conditions of the problem changing over time. These two characteristics\textemdash the randomness in noises and the time variation of the loss functions\textemdash will be the central topic of this thesis. Nonetheless,   other challenges   will not be dealt with  here,  including        the  curse of dimensionality,   saddle point(s),  local optimum (versus global optimum),  the problem-specific constraints, and so on.

\subsection{Why Recursive Algorithms Are Preferred?} 
Often,  recursive algorithms   (with non-decaying gain) are used  in time-varying SA problems.
Let $\hbtheta_{k}$ represent  an estimate for $\btheta$ based on $k$ data pairs (input and corresponding output). We seek a way of computing $\hbtheta_{k+1}$ as the $\parenthesis{k+1}$-th data pair arrives.
Such schemes  are preferred  for:  (1) their ability to reveal the most  recent information,  (2) their low per-iteration memory and computation requirement, and (3) their fit for online processing.  

First of all, the recursive   form clearly  exhibits the value of a new data point.  As discussed in \cite[Sect. 3.3]{spall2005introduction}, the \emph{instantaneous}  gradient is important in time-varying systems where the aim is to estimate a sequence of $\bvartheta_k$. In such systems, it is important to place more  emphasis on the more  recent information.  Moreover,  the  recursive estimates $ \hbtheta_{k+1}$ are based on combining the current  estimate $\hbtheta_k$ with the $\parenthesis{k+1}$-th data pair in an efficient way. Granted,   the computational advantages alone are becoming less important with the advances in computing power. However, for the  \emph{real-time} applications with the flood of data accumulation, processing a bulk of data altogether from scratch after each data acquisition   may  be virtually impractical.  The sequential processing may be done to reduce the computational burden (versus the batch processing of all data) or to expose the unique impact of each datum.   

Let us take a \emph{time-invariant} system for  example: computing the (batch) ordinary least-squares solution  in basic linear regression is prohibitive (due to the expensive matrix inversion)  and whence the least-mean-squares (LMS)\label{acronym:LMS} and recursive-least-squares (RLS)\label{acronym:RLS} algorithms (see \cite[Chap. 3]{spall2005introduction} for further details).    This stress upon online  (instantaneous in real-time) processing of the incoming information regarding the system characteristics, such as estimation\footnote{It is sometimes termed as  ``filtering'' in engineering     literature.  
}\remove{ 
While filtering and prediction problems use only the past and present information, smoothing uses all the past, present and future information. Thus, smoothing is characteristically an off-line problem, but filtering and prediction can be recast as online problems. } and prediction, becomes more obvious when dealing with \emph{dynamical} systems, and consequently the typical procedure is to compute a new estimate each time when  a new measurement becomes available. Such a process corresponds  to \emph{recursive identification}, where a new estimate is computed at every sampling interval when we   need to optimize a control system online.

Aside from  the benefits brought by  the recursive form, we also note one other key   advantage of using SA algorithms. Several time-varying problems for adaptively tracking complex systems can be formulated as nonstationary extremal problems of a probabilistic nature. Under such a  setting, direct SA  methods that  do not depend on the underlying probabilistic distributions are very useful.

Further, we       mention that    a non-decaying gain should be used  to track the \emph{time-varying} target $\bvartheta_k$
in  the recursive form of   SA algorithms.  
Often, the non-decaying gain coefficient $a_k$   is set to a constant $a>0$ for its simplicity. Nonetheless,  the tuning of $\gain$  is necessary to provide sufficient impetus  for $\hbtheta_k$ to keep up with the time-varying $\bvartheta_k$.    A large $a$ helps $\hbtheta_k$ to  converge more promptly to the vicinity of $\btheta^*_k$, but a small $a$ helps the iterates  to avoid instability and divergence. 
Let us also note  that 
the  constant-gain algorithms are also frequently used in neural network training even when dealing with a \emph{time-invariant}  $\btheta^*$ due to its robustness, even though the constant-gain iterates $\hbtheta_k$ will not formally converge.

In short, the  efficient extraction of the  dynamical properties of signals and systems in a recursive form   is   central   in  system identification.  Recursive estimates are useful to adapt themselves  to the  system dynamics.

\section{Challenges}\label{sect:Challenge}

The classical SA results on the  convergence and the  rate of convergence, which are  developed based on a fixed and unique optimizer,    cannot   be   directly  transferable to the time-varying setting.  
There are several lingering concerns for applying SA recursions with non-decaying gains to the time-varying stochastic optimization setting.

\subsection{Dynamic Modeling}

New questions arise if a  setting of interests  departs  from the ``time-independent/stationary loss function $\loss\ilparenthesis{\cdot}$'' and the ``fixed unique  optimizer $\bvartheta$.'' The first question is: how to wisely characterize  the temporal changes in $\ilset{\bvartheta_k}$ such that the class of loss function $\ilset{\loss_k\ilparenthesis{\cdot}}$ is sufficiently   rich   to embrace a class  of practical scenarios and  the tracking performance  remains mathematically tractable?

Many existing works hinge on a known model for the target parameter evolution, including an ordinary differential equation (ODE)\label{acronym:ODE} model or a  random-walk model.  For example, 
Kalman filtering (KF)\label{acronym:KF} requires a linear state equation for the  $\ilset{\bvartheta_k}$ sequence and a sequence of  loss functions $\ilset{\loss_k\ilparenthesis{\cdot
}}$ in  the   quadratic form centering at $\bvartheta_k$. The particle filter requires the  conditional probabilities of  $\bvartheta_k$ to be \emph{known}, though allowing nonlinearity in the  underlying state-space model. 

 Still, the study in tracking the time-varying parameter continues because the imposed model may be invalid or is easily misspecified.  
 It is of practical interest   to circumvent the    restrictive  assumptions imposed on $\ilset{\bvartheta_k}$ or the stringent requirements on the underlying loss function sequence,  denoted as  $\ilset{\loss_k\ilparenthesis{\cdot}}$. 
We hope to set up a more general perspective in  that we  require neither a specified  linear or nonlinear  evolution for $\ilset{\bvartheta_k}$ nor  the conditional probabilities regarding $\bvartheta_k$. 

In the upcoming chapters, we   consider  the ``slowly'' time-varying target in the sense that the average  distance between successive optimizers is strictly bounded from above. Such an assumption also   includes the case where the  moving target may    change abruptly\textemdash the change may have a  drastic magnitude shift as long as it occurs sporadically.

 \subsection{Tracking Criteria}
  In addition to the issues of time-varying assumptions, we also care about such a question: what   properties   can $\hbtheta_k$  possibly  have  when the underlying parameters $\bvartheta_k$ are time-varying?

	In   real-world  applications such as  adaptive control in power-grid scheduling   and time-varying communication channels \cite{gunnarsson1989frequency}, the optimal value of the underlying parameter  is perpetually varying, so  there is no convergence per se of either $\bvartheta_k$ or the estimate $\hbtheta_k$.  
		That is, we cannot achieve the usual notion of   convergence  such     that $\norm{\hbtheta_k-\bvartheta_k}$ is arbitrarily small in a   certain  statistical sense  unless the    evolution law  of  $\bvartheta_k$ is revealed  to the agent. The best   we could hope for is that     $\hbtheta_k$ stays within a neighborhood of $\bvartheta_k$ with a high probability, which may also be termed as ``convergence to a stationary distribution'' or ``concentration.''

Regardless of the model assumptions and corresponding algorithms, understanding the tracking error is critical to the usefulness of the resulting estimates.    Chapter~\ref{chap:FiniteErrorBound}  centers  on the tracking performance in terms of controlled  root-mean-squared (RMS)\label{acronym:RMS} error $ \sqrt{\E\ilparenthesis{\norm{\hbtheta_k-\bvartheta_k}^2}}    $ or a    mean-absolute-deviation (MAD)\label{acronym:MAD} error $ \E\norm{\hbtheta_k-\bvartheta_k} $ that is bounded uniformly across $k$, and Chapter~\ref{chap:Limiting} discusses the concentration behavior in terms of  a probabilistic bound.

\subsection{Gain Tuning}

Another key issue is the question of tuning the non-diminishing gain to balance tracking accuracy and stability, under the circumstance that we have no a priori information regarding the possible disturbance acting on the system. 
In tracking problems, the gain  must be strictly bounded away from zero. Particularly, it is well known that whilst the use of a small (constant) gain decreases the magnitude of the fluctuations in $\hbtheta_k$, it also decreases the ability to track the variations in $\bvartheta_k$. 
 A larger gain   enables the resulting estimate $\hbtheta_k$  to approach promptly to the vicinity of $\bvartheta_k$, yet it may jeopardize the tracking stability.   Chapter~\ref{chap:AdaptiveGain}   tries  to  provide  some  practical guidance  on    {gain selection} based on this compromise.

	 \section{Overview of Contents and Our Contribution}\label{sect:Contents}
	 	 
 In Chapter~\ref{chap:FiniteErrorBound},   
Section~\ref{sect:ProlemSetup} sets up the time-varying SA framework, Section~\ref{sect:ModelAssumptions} motivates the model assumptions, Section~\ref{sect:UnconditionalError}  derives a computable error bound for general SA algorithms with non-decaying gains to be applied in parameter estimation along with the supporting numerical examples, and Section~\ref{sect:SpecialCases} lists some examples for  applications.   In short, this chapter  illustrates  that the bound is   favorably informative under reasonable assumptions on the evolution of the true parameter being estimated. Specifically, 
the tracking capability established   in Chapter~\ref{chap:FiniteErrorBound} differs from prior literature on error bound analysis  in the following senses:
\begin{enumerate}[(1)]
	\item   The restrictions placed  on the model of the time-varying parameter is  {mild} compared to the other assumed forms  of the  state equation. The  only  imposed assumption is that  the average  distance between two consecutive underlying parameters $\bvartheta_k$ is  strictly bounded from above. This modest assumption does not eliminate jumps in the target, and also allows the target to vary stochastically.
\item 
 
   {Biased} estimators of the gradient information may be used  in   SA algorithms, whereas most prior works are on unbiased estimators. With this extension, the tracking capability for a broad class of SA algorithms, including simultaneous perturbation stochastic approximation (SPSA)\label{acronym:SPSA} and finite difference stochastic approximation (FDSA)\label{acronym:FDSA},   is established.  \item    Many prior works are developed with  a constant gain that is tuned in advance for successful tracking and claim that the tracking error can be made smaller by decreasing the constant gain.  Our discussion reveals that the  {adaptive} gain selection should depend on the shape of the loss function, the noise level, and the drift level. Furthermore, the gain should  be neither too large nor too small. \item  The  {computable} bound applies to general  {nonlinear} SA algorithms with non-decaying gain and is valid for the entire time.  Based on this, we can  characterize the tracking performance of a large class of SA algorithms in response to the drift by determining the allowable region for the  non-decaying gain sequence. Also,  {finite-sample} analysis is possible, as the bound is not based on the vanishing gain and associated limit theorems. \end{enumerate}
	 Overall, our setup applies  to a   general scenario that allows    unbounded
	   noise, a  biased  gradient estimator, a drift (the dynamics being tracked) term without any explicit evolution model,
	 and  is useful for both finite-sample and asymptotic analysis.

In Chapter~\ref{chap:Limiting},  Section~\ref{sect:ConstantGain}  studies the weak convergence limit of the constrained SA algorithms applied in tracking time variation, and 
Section~\ref{sect:ProbBound}   quantifies the concentration behavior of the constant-gain stochastic gradient descent (SGD)\label{acronym:SGD} algorithm within finite iterations in terms of a  computable  probabilistic error  bound. 
The  concentration behavior discussed  in  
 Chapter~\ref{chap:Limiting} differs from other works on  limiting behavior  in terms of the several  subtleties which are   further discussed in Section~\ref{sect:Limiting}. 
 Section~\ref{sect:ConstantGain} develops the main result of characterizing the recursive iterates via the trajectory of a  {nonautonomous} ODE with the same initialization under proper time scaling.  
 	The bound in Section~\ref{sect:ProbBound}  is non-asymptotic as it is  derived for the actual constant gain   and not from an idealized limiting scenario based on a limit theorem for fluctuations as the constant gain goes to zero, which  is often the case in prior studies.  This is useful because we are considering the problem of continuously tracking a time-varying target.   Also, our derivation of the bound reveals  its dependence on relevant  parameters, desired accuracy, and  problem dimension.

	 Chapter~\ref{chap:AdaptiveGain} provides gain-tuning guidance based upon the observable information. 
 In addition to  the  general gain selection strategy to ensure a bounded MAD in Chapter~\ref{chap:FiniteErrorBound}, we also develop data-dependent methods to test if an abrupt jump arises and the corresponding  strategy  to  tune the gain sequence adaptively according  to the observed information.  As  the Hessian   and the observation error information needed to carry out jump detection are unknown, we  employ the    simultaneous perturbation (SP)\label{acronym:SP}  method to estimate them.   In the numerical simulation, we implement    the SGD algorithm. Results support  that    our data-dependent gain tuning strategy helps detect abrupt changes.  
 Note that many prior works are on the constant gain by assuming that the gain is tuned\footnote{	A successful constant gain is highly problem-dependent. 
 } for successful tracking, whereas we discuss data-dependent gain-tuning strategy.  Furthermore, the  adaptive step-size scheme for constrained (truncated) stochastic approximation algorithms  is useful in dynamic environments where the underlying parameters are time-varying.

Finally, Chapter~\ref{chap:multiagent} presents a problem that fits the time-varying SA problem setup and numerically illustrates the SA schemes non-decaying gain. Appendix~\ref{chap:2nd} discusses a strategy to reduce the per-iteration cost of the second-order SA algorithms from $O(p^3)$ to $O(p^2)$ using the symmetric indefinite matrix factorization.


\chapter{Preliminaries}\label{chap:Preliminaries}
This chapter lays the groundwork  for upcoming discussions. 
Section~\ref{sect:SAview}  discusses  the SO framework and
presents  the general form of SA algorithms (\ref{eq:basicSA}).  

\section{Overview of Stochastic Approximation Algorithms}\label{sect:SAview}
This section focuses  on   
SA algorithms for SO  via nonlinear root-finding.   
There are two main SO settings of   interest: one is to minimize a scalar-valued function $\loss\ilparenthesis{\cdot }$ using its  noisy evaluation  $y\ilparenthesis{\cdot} \equiv  \loss\ilparenthesis{\cdot}+\upvarepsilon\ilparenthesis{\cdot}$ evaluated at a  certain design point $\btheta$, and the other is to locate the  root(s) of a  vector-valued function $\bg\ilparenthesis{\cdot }$ using    its  corrupted observation $\bY\ilparenthesis{\cdot} \equiv \bg\ilparenthesis{\cdot} + \noise \ilparenthesis{\cdot}$ collected at a certain point $\btheta$.    
Here  $\btheta$ is   the underlying parameter vector, a collection of adjustables;   $\btheta$ typically falls within  the Euclidean $p$-space $\real^p$. Let us also denote  $\bvartheta$ as the (assumed unique) minimizer of the scalar-valued function $\loss\ilparenthesis{\cdot}$ or the (assumed unique) root of the vector-valued function $\bg\ilparenthesis{\cdot}$.

The above SO settings are distinguished from deterministic optimization in that  neither the direct evaluation of      the scalar-valued  function $\loss\ilparenthesis{\cdot}$ nor the exact observation of  the vector-valued function $\bg\ilparenthesis{\cdot}$ is    available. Sometimes, the randomness in the SO process may be due to the random choice (injected randomness) made in the search     direction as the algorithm iterates towards a solution     to avoid getting stuck.  Such scenarios commonly arise in practice.  Consider a  complex  stochastic model whose output depends on a set of parameters   $\btheta$, where the experimenter attempts to locate the value of $\btheta$ that minimizes the expected output of the model.  We are, under a majority of  circumstances,  unable to obtain a closed-form expression  or exact representation   for  the black-box model. When dealing with physical processes in actual implementations, computing the expected value of the  output for any given value of  $\btheta$ may   be impossible in general since the physical processes are governed by rules unknown to the experimenters. While deterministic optimization techniques cannot be directly transferable  to    noisy environments, SO algorithms can utilize  corrupted measurements to generate iterative estimates, denoted by  $\hbtheta_k$, at each discrete-time instance $k$\textemdash hence the term ``stochastic optimization.''

There are  many SO  algorithms: 
 random search (such as stochastic ruler, stochastic comparison, simulated annealing), 
	  SA (such as  SGD, SPSA), and so on. The focus of this thesis is on SA.   SA includes a wide range of recursive schemes (i.e., step-by-step computational methods)  with decaying  gain (i.e., the step-size approaches to zero as the iteration number increases) that iteratively  generate   $\hbtheta_k$  as an estimate for  $\bvartheta$ using the information up to  the index  $k$.

For example, given a   vector-valued function $\bg \ilparenthesis{\cdot}: \real^p\mapsto\real^p$, the basic SA algorithm for nonlinear root-finding aims  to find the root(s) of the function $\bg\ilparenthesis{\cdot} $ using the following recursive scheme: 
\begin{equation}\label{eq:basicSA}
\hbtheta_{k+1} = \hbtheta_k -  \gain_k \hbg_k \ilparenthesis{\hbtheta_k }, \,\,  k\in\natural,
\end{equation} where $\ilset{a_k}$ is a  positive gain sequence, and $\hbg_k  \ilparenthesis{\hbtheta_k}$ is the  
corrupted observation of the  vector-valued   function $\bg\ilparenthesis{\cdot}$ evaluated at $\hbtheta_k$. The details in constructing  $\hbg_k\ilparenthesis{\hbtheta_k}$ will be   discussed in the upcoming subsection.      A useful application is immediate by  letting $ \bg\ilparenthesis{\btheta} = \partial   \loss\ilparenthesis{\btheta}  / \partial\btheta$ when the iterative updating scheme  (\ref{eq:basicSA}) is used for SO via nonlinear root-finding. One of the caveats is that the roots of  the gradient equation may not be the (global) minimizer of $\loss \parenthesis{\btheta}$. 

Under certain statistical or engineering conditions, $\hbtheta_k$  converges  a.s. or in  m.s. sense to the optimum point  $\btheta^*$  as $k\to\infty$ and at a certain stochastic rate.    See \cite[Chap. 4]{spall2005introduction} for further details.  
  Given its algorithmic robustness and computational simplicity,  the recursion  (\ref{eq:basicSA}) with decaying gain $\gain_k = O(1/k)$ was pursued with great zeal by statisticians and electrical engineers as a convenient paradigm for recursive algorithms for regression, system identification, adaptive control, and so on. The subject has received  a fresh lease of life in recent years because of some new emerging application areas broadly covered under the general rubric of learning\footnote{They  encompass learning algorithms for neural networks, reinforcement learning algorithms arising from artificial intelligence and adaptive control and models of learning by boundedly rational agents in macroeconomics.} algorithms.   
  \remove{ has been a   workhorse  for decades  for  not only statistical inferences, but also for  various engineering applications ranging from   adaptive control to  machine learning. }

\subsection{General Discussion}
The basic SA algorithm for nonlinear root-finding is known as the Robbins-Monro (R-M) algorithm \cite{robbins1951stochastic}. Given a vector-valued function $\bg \ilparenthesis{\cdot }$, the R-M algorithm aims to find a root of $\bg\ilparenthesis{\cdot }$ recursively through   (\ref{eq:basicSA}), where 
$\hbg_k\ilparenthesis{\hbtheta_k}$ can  decomposed as: 
\begin{eqnarray}\label{eq:gNoisyDecomposition} 
\hbg_k \ilparenthesis{\hbtheta_k}  
&=& \bg\ilparenthesis{\hbtheta_k} + \E\ilbracket{  {\hbg_k  \ilparenthesis{\hbtheta_k} - \bg\ilparenthesis{\hbtheta_k }}   \left| {\field_k} \right. } + \set{ \hbg_k  \ilparenthesis{\hbtheta_k} -  \E\ilbracket{ {\hbg_k \ilparenthesis{\hbtheta_k}} \left| {\field_k }\right. }  }\nonumber\\
&\equiv & \bg \ilparenthesis{\hbtheta_k} + \bias_k\ilparenthesis{\hbtheta_k} + \noise_k\ilparenthesis{\hbtheta_k},\quad\text{ for }k\in\natural,
\end{eqnarray}
with  $\field_k$ being   some representation of the process history. One common choice  is  to let   $\field_k$ be  
the sigma-algebra induced by the observed quantities   up until (excluding) index $k$. Specifically, 
\begin{equation}\label{eq:filtration}
\field_0 = \upsigma\ilset{\hbtheta_0}, \,\,  \text{and } \field_k = \upsigma\ilset{\hbtheta_0,\hbg_i\ilparenthesis{\hbtheta_i},i<k} \,\text{ for }k\ge 1. 
\end{equation}   If the process history is represented as in (\ref{eq:filtration}), then the   l.h.s. of  (\ref{eq:gNoisyDecomposition}) is $\field_k$-measurable. Moreover, we can deem $\hbg_k \ilparenthesis{\hbtheta_k}$ as an estimator (in the statistical sense) of $\bg_k\ilparenthesis{\hbtheta_k}$ at fixed point $\hbtheta_k$ \cite{bickel2007mathematical}. Under such perspective,  $\bias_k\ilparenthesis{\hbtheta_k}$ represents the  {bias} of $\hbg_k  \ilparenthesis{\hbtheta_k}$ as an estimator of $\bg \ilparenthesis{\hbtheta_k}$, and  $\noise_k\ilparenthesis{\hbtheta_k}$ is termed as   the  {noise}  and is usually assumed to be  a martingale difference sequence.  
	Note  that the decomposition in (\ref{eq:gNoisyDecomposition}) is presented   mainly  for   the purpose of analysis;
in practice, the bias and noise terms are never explicitly computed or collected.    The convergence theory for the scheme (\ref{eq:basicSA}) with a general form of (\ref{eq:gNoisyDecomposition}) can be found in \cite[Chaps. 4--7]{spall2005introduction}.

SA algorithms are often categorized according to the available information.  The \emph{zeroth-order SA} includes FDSA \cite{kiefer1952stochastic}, SPSA with two-measurements \cite{spall1992multivariate}, SPSA with one-measurement \cite{spall1997one}, RDSA \cite{ermoliev1969on}, and so on. The \emph{first-order SA} includes SGD and many popular machine learning algorithms. The \emph{second-order SA} will be thoroughly reviewed in Appendix~\ref{chap:2nd}.
Two
important     SA algorithms are reviewed here:  SPSA and the stochastic gradient (SG)\label{acronym:SG}  form of
SA.

\subsection{Simultaneous Perturbation SA}\label{subsect:SPSA}
   
SPSA algorithm uses \emph{zeroth-order} information and is  especially   useful in the minimization setting. 
The recursive update  for SPSA estimates is  (\ref{eq:basicSA}), except that  $\hbg_k\ilparenthesis{\hbtheta_k}$  is substituted by $\hbg_k^{\SP 2}\ilparenthesis{\hbtheta_k}$ as below:
\begin{equation}\label{eq:gSPSA}
\hbg_k^{\SP 2} \ilparenthesis{\btheta}   =  \frac{   y \ilparenthesis{\hbtheta_k+c_k\bDelta_k} - y\ilparenthesis{\hbtheta_k-c_k\bDelta_k }  }{2c_k   }   \bDelta_k^{-1},\,\,\text{ for }k\in\natural,
	\end{equation} 
	where   the mean-zero $p$-dimensional random perturbation vector $\bDelta_k $ has a user-specified distribution satisfying conditions \cite[Sect. 7.3]{spall2005introduction},   $c_k$ is a positive scalar governing the differencing magnitude, and  $\bDelta_k^{-1}$ denotes the random vector whose individual component is the inverse of the corresponding component in $\bDelta_k$. $ \mathrm{SP2} $ in the superscript is short for ``simultaneous perturbation with two-measurements'' \cite[Sect. 7.3]{spall2005introduction}.

	\subsection{Stochastic Gradient Descent     }
	
	The
	SGD algorithm   uses \emph{first-order} information and is a foundational method when an optimization problem is converted to a root-finding problem.  It requires the availability of a  random vector   $\hbg_k^{\SG}\ilparenthesis{\cdot}$ such that $ \E\ilbracket{\given{\hbg_k^{\SG}\ilparenthesis{\btheta}}{\btheta=\hbtheta_k}} = \bg\ilparenthesis{\hbtheta_k } $. The recursion defining $\hbtheta_k$   is the same as (\ref{eq:basicSA}),   except that generic  $\hbg_k\ilparenthesis{\hbtheta_k}$ is replaced by $\hbg_k^{\SG}\ilparenthesis{\hbtheta_k}$:
	\begin{equation}\label{eq:Ystationary} 
	\hbg_k^{\SG}\ilparenthesis{\hbtheta_k} = \bg\ilparenthesis{\hbtheta_k}+\noise_k^{\SG}\ilparenthesis{\hbtheta_k},\,\,   \text{with } \E\ilbracket{ {\noise^{\SG}_k\ilparenthesis{\hbtheta_k}}   \left| {\field_k}\right. } = \zero,\,\,\text{ for }k\in\natural. 
	\end{equation}       The
	SGD algorithm  is a special case of  the R-M algorithm  (\ref{eq:basicSA})\textendash (\ref{eq:gNoisyDecomposition}),  as     $\hbg_k^{\SG}\ilparenthesis{\hbtheta_k}$ is   an  unbiased estimate of $\bg\ilparenthesis{\hbtheta_k}$.  Sometimes,  $\hbg_k^{\SG}\ilparenthesis{\hbtheta_k}$ can be obtained via \emph{deliberate} injection of mean-zero noise,     to avoid being ``stuck'' at a local solution.
	
  Through the connection between
	root-finding and optimization, SA algorithms, such as SG (\ref{eq:Ystationary}) and SPSA (\ref{eq:gSPSA}),  can be used for SO.
	
	\section{  Review on Adaptive Tracking Algorithms} \label{sect:AdaptiveReview}

	Section~\ref{sect:SAview} discusses   SA algorithms in locating  an assumed unique   $\bvartheta$ that remains the same along the entire horizon over which we carry out the optimization procedure.  However, in engineering applications about  adaptive tracking or system control, the optimum solution to the underlying parameter estimation problem generally changes. The optimal values of the model parameter may change over time because of the intrinsic evolution of the underlying process; $\bvartheta_k$ that varies with time $\uptau_k$ will substitute  for the  fixed  $\bvartheta$. Naturally, $\loss\ilparenthesis{\cdot}$ is replaced by  $\loss_k\ilparenthesis{\cdot}$.   Specific examples will be discussed in Section~\ref{sect:SpecialCases} to appear. 
	
	To track the time variability  of the sequence $\ilset{\bvartheta_k} $, it is advisable that  $a_k$ in (\ref{eq:basicSA}) should be  a constant or is non-decaying (i.e., strictly bounded away from zero).  
	It is well recognized  that  (\ref{eq:basicSA}) with constant gain $\gain_k = \gain$ generally can  track slight time variation and is of   practical usage, see   \cite[Chap. 4]{benveniste2012adaptive}. 
	
	Given that the optimizer $\bvartheta_{k}$ is drifting over time, the classical SA theories for algorithms with decaying gains presented in 	Section~\ref{sect:SAview} and relevant asymptotic properties (convergence and   normality)  are      not directly transferable to the time-varying setup.  Other notions of  ``convergence''\textemdash ``concentration'' to be more accurate\textemdash are developed for   SA algorithms with non-decaying gain, especially those with constant gain, as summarized below.

	\subsection{Assumptions on Time-Varying Target}\label{subsect:Timevaryingness}
	This section presents    many assumed forms of the nonstationary drift for the underlying optimal values of the parameters. 
		Prior works  have considered time-varying problems under the   R-M setting, with  some  hypothetical or empirical evolution forms for the underlying optimal values of the parameters $\bvartheta_k$. 
	For example, 
	\cite{ljung1990adaptation,ljung1991result,delyon1995asymptotical}  analyze a class of  recursive algorithms  after imposing   the    random-walk  assumption on $\ilset{\bvartheta_k}$. However, the random-walk model   is suboptimal because the variance of the parameter sequence will explode to infinity over time. 
	Besides,    \cite[Sect. 3.4]{ps2008adaptive} and many others assume  that the parameter can be estimated by KF, necessitating an explicit representation (e.g., linear state equation) for the evolution in $\ilset{\bvartheta_k}$. 
	Some  general forms of the error bound for nonlinear  and linear problems are discussed  in   \cite{maryak1995uncertainties,bamieh2002identification}, still, on the basis that  the knowledge-based description  (a.k.a. state equation) for $  \ilset{\bvartheta_k}$ is available.    Ref.
	\cite{kushner1995analysis}    considers  the limit as the rate of change of the functions goes to zero, yet  it   requires a Bayesian model for the changes in  $\ilset{\loss_k\ilparenthesis{\cdot}}$. Admittedly, the tracking error characterization and the inference on the resulting estimates in the aforementioned works hinge upon the model assumptions and corresponding adaptive algorithms.   To the best of our knowledge, there are no existing approaches in estimation theory that solve  a sequence of time-varying problems, under only Assumption      A.\ref{assume:BoundedVariation}  (to appear in Chapter~\ref{chap:FiniteErrorBound}) or B.\ref{assume:gSequenceRegularity}  (to appear in Chapter~\ref{chap:Limiting})   without any further  stringent state evolution assumption. Note that both A.\ref{assume:BoundedVariation}  and B.\ref{assume:gSequenceRegularity}   allow sporadic jumps in the sequence $ \ilset{\bvartheta_k} $. The relation between existing time-varying assumptions and ours is explained in Subsection \ref{subsect:boundedvariation}.

	 Things become  more complicated as  the traditional continuous dynamics via  a  differential equation and discrete switching via  jump process are not sufficient in modeling complex systems   in finance \cite{merton1976option}, physics \cite{hall1981incoherent}, or computer vision\cite{grenander1994rep}\remove{the peak and off-peak electricity supply-need}. Hybrid diffusions  can be modeled by a two-component Markov process, a continuous component (diffusion), and a discrete component (jump component).  The change detection strategy discussed in Section~\ref{sect:Detection} mainly focuses on detecting the jump component.

	\subsection{Criteria for Tracking Performance}\label{subsect:constantgain}

	\remove{

There are three types of averaging theorem, including finite-time ``trajectory locking'' result, infinite-time ``trajectory locking'' result, and infinite-time stability result. The first type of result shows how to approximate the trajectory of the algorithm by the trajectory of a simpler averaged system on a finite stretched time interval uniformly in time. The second type of result extends previous approximation to an infinite time interval\textemdash it gives only indirect information about stability but may sometimes be useful. The third type of result is concerned with the equation of how stability  (or convergence or boundedness) of the averaged system is related to stability of the original system. One has to be careful because the averaged system may have equilibrium points not possessed by the primary system and vice versa. In any case, stability is often analyzed by means of a Lyapunov function, and in many cases if a Lyapunov function can be found for the averaged system, then a perturbation of it can yield a Lyapunov function for the original system.  

\begin{itemize}
	\item nonstationary optimization \cite[Chap. 6]{polyak1987introduction}. Authors consider perturbations of the time-varying problem when an initial solution $\bvartheta\ilparenthesis{t_0}$ is known. 
	\item parametric programming where the optimization problem is parametrized over a parameter vector $\bm{p}\in\real^p$ that may represent time.  \cite{dontchev2009functions,guddat1990parametric} 
	
	Tracking algorithms for optimization problems with parameters that change in time are given in  \cite{zavala2010real,dontchev2013euler} and are based on predictor-corrector schemes. 
	
	\item adaptive control \cite{diehl2005real,dinh2012adjoint,hours2015parametric} 
\end{itemize}

}

	This section lists  a few metrics to evaluate the tracking performance of SA algorithms with non-decaying gains, especially with constant-gains. 
	With  a perpetually varying target $\bvartheta_k$ and the  inherent observation noise  in either $y\ilparenthesis{\cdot} = \loss\ilparenthesis{\cdot} + \upvarepsilon\ilparenthesis{\cdot}$ or $\bY\ilparenthesis{\cdot} = \bg\ilparenthesis{\cdot} + \noise \ilparenthesis{\cdot}$, there is no convergence per se. 
	The  concentration argument, that $\hbtheta_k$  stays within a neighborhood of  $\bvartheta_k $ in a certain statistical sense  at   time  $\uptau_k$, is   widely used in   practical implementation.  Often, we characterize the distance between $\hbtheta_k$ and $\bvartheta_k$ using the MSE criteria $ \E\norm{\hbtheta_k-\bvartheta_k}^2 $. Note that the MSE is a family of criteria indexed by   $k$.  
	Under the mean-squared-error (MSE)\label{acronym:MSE} tracking criteria, \cite[Chap. 4]{benveniste2012adaptive} analyzes  the  tracking capability of recursive algorithms  with a constant gain by assuming additional information on the state equation is available. References 
	  \cite{eweda1985tracking,macchi1986optimization} focus on the LMS algorithm in tracking time-varying solutions and  presents an  asymptotic stochastic big-$ O $ bound  of the tracking error; however, the asymptotic bound is  valid for   linear models only       and is not computable in general because of the higher than the fourth moments of the design vector  required in the bound.  There are also some finite-iteration  error bounds developed under fairly strong assumptions. 	 For an  asymptotically stabilized target, i.e., $\lim_{ k\to\infty}\bvartheta_k=\bvartheta$, \cite{wang2014rate} studied  the quantification of the MSE bound.  
	  Chapter~\ref{chap:FiniteErrorBound} to appear  considers the problem of estimating unknown parameters and computing error bounds in a dynamic model.  The finite-sample analysis of MSE for the estimates generated from  (\ref{eq:basicSA}) will be the  central topic there. 	For finite-sample analysis, we list the distinction between    \cite{wilson2019adaptive} and our work  in Subsection~\ref{subsect:distinction}.

	A form of convergence   is possible for constant gains, typically based on limiting arguments as the gain magnitude gets small.  The notable work
	 \cite[Chaps. 2\textendash3]{kushner1984approximation},  \cite[Chaps. 7\textendash10]{kushner2003stochastic},  \cite{kushner1978stochastic,kushner1981asymptotic,kushner2003stochastic}  extensively illustrate the weak convergence method and its application in the constant-gain algorithms.    Ref.
	 \cite{kushner1981asymptotic} relates the  limiting behavior of   SA  iterates in time-varying parameter identification problem  to  the asymptotic behavior of the  limiting autonomous ODE, and thereupon establishes the theoretical foundation for the constant gain in accommodating general ``time-varying parameter'' identification problems: the estimates generated by the  constant-gain SA  algorithms tend to the true time-varying parameters, when both the constant gain and the time difference between two discrete sample points tend to zero.    
	   Later, \cite{pflug1986stochastic} states similar results for constant-gain SA algorithm applied in constrained optimization.   
	 Nonetheless, such analysis only applies to the  asymptotic behavior of   SA estimates. In reality, neither the constant gain nor the time difference can go to zero in practice: a constant gain bounded away from zero is required, and the number of iterations per unit of time has to be finite. The weakly convergence limit for estimates generated from (\ref{eq:truncatedSA1})
	  will be discussed in Chapter~\ref{chap:Limiting}, and a computable probabilistic bound will be provided under certain conditions. 	For weak-convergence argument, we list the distinction between    other prior works and our work  in Subsection~\ref{subsect:distinction2}.

	For the fixed target case, i.e., $\bvartheta_k=\bvartheta$ for all $k$, one still cannot recover convergence a.s. of     SA algorithms with a constant gain $\gain_k=\gain$ because the noise input is ``persistent'' as opposed to ``asymptotically negligible'' in the diminishing gain case. However, we are able to say something about the limiting stationary distribution of $\hbtheta_k$, which is desirably centered near $\bvartheta$.   Ref. \cite{kushner1981asymptotic} uses an ODE to approximate the asymptotic trajectory of the constant-gain SA iterates $\hbtheta_k$, which  lays the   foundation for the constant gain in accommodating time variability. It is proven that the estimate  $\hbtheta_k$ generated by the  constant-gain     SA algorithms tends to the true time-varying parameter $\bvartheta$, when both the constant gain $a$  and the time difference between two discrete sample points tend to zero.\remove{  \cite{mukherjee1996online} claims that $\hbtheta_k$ has nonnormal limiting distribution for $a>0$, i.e., $ \hbtheta_k $ oscillates around $\bvartheta$ at rate $\sqrt{a}$. 
		Later \cite{pflug1986stochastic} elaborates similar results for constant-gain   algorithm applied in constrained optimization. } However, the asymptotic theory does not provide a practical gain-selection schema  except for a vague expression ``small $a$,''\remove{ or  ``some transition points as the  gain $a$ approaching zero''} let alone the stringent assumption of $\bvartheta_k=\bvartheta$ for all $k$.   
	In addition to  the discussion on constant-gain algorithms, 
	\cite{defossez2015averaged} lends insight  into non-diminishing gain  selection to balance the bias-variance trade-off in the context of a stationary optimizer $\bvartheta$.    References
	\cite{yousefian2012stochastic,nemirovski2009robust} discuss some gain adaptation for stationary problems, as 
	the ``asymptotically optimal'' stepsize can perform poorly in the practice \cite[Sect. 4.5.3]{spall2005introduction}.  In general, a  larger value of constant gain $\gain$ helps the resulting iterates converging more quickly to the vicinity of the optimal parameter sequence $\{\bvartheta_k\}$, corresponding to the state and measurement models; yet a smaller value of $\gain $ increases the tracking stability.   
	For asymptotically fixed targets such that $ \lim_{k\to\infty}\bvartheta_k=\bvartheta $,  
	 \cite{wang2014rate} also analyzes the   MSE decomposition for $\bvartheta_k$.    Though both \cite{defossez2015averaged}    and  \cite{wang2014rate}   consider time-varying objective functions $ \ilset{\loss_k} $, the fixed or asymptotically fixed $\bvartheta$ assumption limits their application in reality.

Other than the MSE criteria and the weak convergence argument, there are other streams in quantifying the tracking performance of constant-gain SA algorithms.	 Ref. \cite{pflug1986stochastic}
analyzes the  convergence properties in the small stepsize limit and the associated functional central limit theorem for fluctuations around the deterministic ODE limit. Ref.
\cite{joslin2000law}  establishes a law of iterated logarithms.    The functional central limit theorem characterizing
a Gauss-Markov process as a limit in law of suitably scaled fluctuations
is also used for suggesting performance metrics for tracking application, see \cite{benveniste1982measure}.

	\remove{ 
	\subsection{Relation with Other Literature} 
	\red{Some connections and distinctions with other works are briefly discussed. } 
	 	\subsubsection*{State Estimation} 
	\subsubsection*{System Identification} 
	\subsubsection*{Missing Data Problem} 
	\subsubsection*{Nonlinear Dynamic Reconstruction}
	
}

\section{Supporting Materials in ODE}\label{subsect:NonautonomousSystem}

One useful method to analyze the   property  of   SA estimates is to relate  the iterates $\hbtheta_k $ to   the  trajectory  of  an initial value problem (IVP)\label{acronym:IVP}. The ODE in   this  IVP  is  determined by the average dynamics of the algorithm.   Chapter~\ref{chap:Limiting}   will discuss  the ODEs defined by   the dynamics    {projected} onto a  {compact} constraint set denoted as $\bTheta$. 
The solutions to such ODEs will be  the weakly convergence limits of the paths of constrained SA algorithms.  A basic constrained or projected  SA algorithm is
\begin{equation}\label{eq:truncatedSA1}
\hbtheta_{k+1} = \Proj_{\bTheta}      \big( \hbtheta_k   - a_k  \hbg_k\ilparenthesis{\hbtheta_k } \big),\,\, k\in\natural,
\end{equation} 
where $\bTheta \subsetneq \real^p$ is  closed and bounded, $ \Proj_{\bTheta}\ilparenthesis{\bzeta} = \argmin_{\btheta\in\bTheta}\norm{\btheta-\bzeta} $, and   $\hbg_k\ilparenthesis{\hbtheta_k }$ can also be decomposed as   (\ref{eq:gNoisyDecomposition}).  
This section reviews some supporting materials  for   ODEs that facilitate the analysis of (\ref{eq:truncatedSA1}).

\subsection{Limits of Sequence of Continuous Functions}\label{subsect:Arzela}
 The extended  Arzel\`{a}-Ascoli Theorem  reviewed in this subsection    will be useful in extracting weak-convergent subsequence whose limits satisfy the  mean  ODE   in Section~\ref{sect:ConstantGain}.   
Let  $C^j\parenthesis{\real  \mapsto \real^p }$ be  the space of functions that map from $ \real $  to $\real^p$ and are $j$th-order continuous.   Usually, $C^0\ilparenthesis{\real \mapsto\real^p}$ is compactly  written as  $C\ilparenthesis{\real\mapsto\real^p}$.   We can  similarly define $ C^j\parenthesis{  \bracket{l,r} \mapsto\real^p }  $, $ C^j\parenthesis{\left[0,\infty\right)\mapsto\real^p } $, where $l$ and $r$ are real numbers.  The metric   for  both    $C^j\parenthesis{  \bracket{l,r} \mapsto\real^p }  $   is  the supremum norm, and the metric for  $C^j\parenthesis{\real  \mapsto \real^p }$ and   $ C^j\parenthesis{\left[0,\infty\right)\mapsto\real^p } $   is    the local  supremum norm.  For example, a sequence of functions $ \ilset{\bm{f}_k\parenthesis{\cdot}}  $  in $ C \ilparenthesis{\real\mapsto\real^p} $ converges to zero if it converges to zero uniformly on every  bounded time interval within  the domain of definition.

\begin{dfn}[Equicontinuous]
	Let the function sequence $ \set{\bm{f}_k\parenthesis{\cdot}} $ indexed by $k$ be  a subset of     $ C \parenthesis{\real\mapsto\real^p} $.   The function sequence   $\ilset{\bm{f}_k\ilparenthesis{\cdot}}$  is said to be  {equicontinuous}  if  (1) $ \set{\bm{f}_k\parenthesis{0}} $ is bounded for all $k$; and  (2)
		  for each $T> 0 $ and $\upvarepsilon>0$, there exists a $\updelta>0$ such that  $ \sup_{ 0\le t-s\le \updelta, \abs{t}\le T }    \norm{  \bm{f}_k\parenthesis{t} - \bm{f}_k\parenthesis{s}  } \le \upvarepsilon $ for all $k$. 
\end{dfn}

\begin{thm}	[Arzel\`{a}-Ascoli]  \label{thm:ArzelaAscoli}
If  the function sequence $ \set{\bm{f}_k\parenthesis{\cdot}}  $ is    equicontinuous  in the function space  $C \parenthesis{\real\mapsto\real^p }$, then there exists a subsequence that converges to some  function in $C \parenthesis{\real\mapsto\real^p }$,  {uniformly} on each  {bounded} interval. 
\end{thm}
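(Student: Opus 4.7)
The plan is to establish the theorem by the classical three-step recipe: extract pointwise convergence on a countable dense set via a diagonal argument, promote it to uniform convergence on compact intervals using equicontinuity, and finally confirm that the uniform limit lies in $C(\real\mapsto\real^p)$.

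First I would establish pointwise boundedness of $\{\bm{f}_k(t)\}$ at every $t\in\real$. By definition of equicontinuity, $\{\bm{f}_k(0)\}$ is bounded, and for any fixed $t$ one can cover the interval $[0,|t|]$ (or $[t,0]$) by finitely many steps of length less than the $\delta$ corresponding to $\varepsilon=1$ in the equicontinuity condition; applying the equicontinuity estimate along each step and using the triangle inequality yields a uniform-in-$k$ bound on $\norm{\bm{f}_k(t)}$. This gives pointwise boundedness of the sequence at every real $t$, which is exactly what is needed to feed a Bolzano--Weierstrass extraction.

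Next I would run a standard Cantor diagonal argument on a countable dense set. Enumerate $\rational=\{q_1,q_2,\ldots\}$. Pointwise boundedness at $q_1$ lets us extract a subsequence $\{\bm{f}_k^{(1)}\}$ converging at $q_1$; from this, extract $\{\bm{f}_k^{(2)}\}$ converging at $q_2$; continue inductively, and set $\bm{g}_k := \bm{f}_k^{(k)}$. Then $\{\bm{g}_k\}$ converges pointwise on all of $\rational$. The main obstacle then is to upgrade this rational-wise convergence to uniform convergence on every bounded interval $[-T,T]$: given $\varepsilon>0$, pick $\delta>0$ from the equicontinuity clause for that $\varepsilon$ and $T$, cover $[-T,T]$ by finitely many open $\delta$-intervals centered at rationals $q_{i_1},\ldots,q_{i_N}$, and note that $\{\bm{g}_k\}$ is Cauchy at each of these finitely many points, hence uniformly Cauchy on $\{q_{i_1},\ldots,q_{i_N}\}$. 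For any $t\in[-T,T]$, choose the nearest center $q_{i_j}$ within $\delta$; then for $k,\ell$ large,
\begin{equation*}
\norm{\bm{g}_k(t)-\bm{g}_\ell(t)}\le \norm{\bm{g}_k(t)-\bm{g}_k(q_{i_j})}+\norm{\bm{g}_k(q_{i_j})-\bm{g}_\ell(q_{i_j})}+\norm{\bm{g}_\ell(q_{i_j})-\bm{g}_\ell(t)}\le 3\varepsilon
\end{equation*}
by equicontinuity (first and third summands) and by the finite Cauchy property (middle summand). Thus $\{\bm{g}_k\}$ is uniformly Cauchy on $[-T,T]$; completeness of $\real^p$ yields a uniform limit $\bm{g}$ on $[-T,T]$.

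Finally, I would let $T\to\infty$ along integers and use a second (outer) diagonalization across $T=1,2,\ldots$ to obtain a single subsequence that converges uniformly on every bounded interval, with a common limit $\bm{g}:\real\to\real^p$. The uniform limit of continuous maps on each compact interval is continuous on that interval, hence $\bm{g}\in C(\real\mapsto\real^p)$, completing the proof. I expect the only delicate step to be the covering/equicontinuity argument in the middle paragraph; the two diagonal extractions and the promotion of the pointwise limit to a continuous limit are routine.
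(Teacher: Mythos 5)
Your proof is correct. Note that the paper states Theorem~\ref{thm:ArzelaAscoli} as a classical supporting result without supplying its own proof, so there is nothing to compare against; your argument is the standard one (pointwise boundedness from the bound at $0$ plus a finite chain of equicontinuity steps, Cantor diagonalization over $\rational$, promotion to uniform Cauchy on $[-T,T]$ via a finite $\delta$-cover, an outer diagonalization over $T=1,2,\ldots$, and continuity of the uniform limit) and it is sound. The only point worth tidying in a final write-up is that the paper's equicontinuity condition is stated with the ordered constraint $0\le t-s\le\delta$ and $\abs{t}\le T$, so when you compare $t$ to its nearest rational center you should briefly note that you may apply the condition with the two points in either order (enlarging $T$ by $\delta$ if needed); this is cosmetic and does not affect the validity of the argument.
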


\begin{dfn}
	[Equicontinuous in the extended sense] \label{dfn:EquiContExtended} Let  $ \bm{f}_k\parenthesis{\cdot}:\real\mapsto\real^p  $ be    measurable  for every $k$. Note that $\bm{f}_k\ilparenthesis{\cdot}$ is not necessarily continuous. 
The function sequence    $ \set{\bm{f}_k\parenthesis{\cdot }}  $ is said to be equicontinuous in the extended sense if (1)  $  \set{\bm{f}_k\parenthesis{0}} $ is bounded for all $k$, and (2)  for each $T>0 $ and $\upvarepsilon>0$, there exists a $\updelta>0$ such that 
	$ \limsup_k  \sup_{ 0\le t-s\le \updelta, \abs{t}\le T } \norm{ \bm{f}_k\parenthesis{t}- \bm{f}_k\parenthesis{s} }\le \upvarepsilon $. 
\end{dfn}

\begin{thm} 	[Extended Arzel\`{a}-Ascoli]  \label{thm:ExtendedArzelaAscoli}
 If the function sequence $ \set{\bm{f}_k\parenthesis{\cdot}}  $    is equicontinuous in the extended sense, then there exists a subsequence that converges to a function in $ C\ilparenthesis{\real\mapsto\real^p} $,  {uniformly} on each  {bounded} interval. 
\end{thm}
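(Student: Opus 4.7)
The plan is to reduce to the classical Arzel\`a--Ascoli theorem (Theorem~\ref{thm:ArzelaAscoli}) via a smoothing/interpolation argument, since the obstruction to applying it directly is merely that the $\bm{f}_k(\cdot)$ are not assumed continuous. I would construct a continuous piecewise-linear surrogate $\tilde{\bm{f}}_k^{(n)}(\cdot)$ whose distance from $\bm{f}_k(\cdot)$ on each bounded interval can be made small using the extended equicontinuity, apply the classical result to the surrogates, and then transfer convergence back to the originals by a diagonal extraction.

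First, I would invoke the extended equicontinuity to produce, for each integer $n\ge 1$, a mesh size $\updelta_n>0$ and an index $K_n$ such that $\sup_{0\le t-s\le\updelta_n,\,|t|\le n}\norm{\bm{f}_k(t)-\bm{f}_k(s)}\le 1/n$ for all $k\ge K_n$; by passing to a tail of the sequence (without loss of generality) I may assume this holds for all $k$. Define $\tilde{\bm{f}}_k^{(n)}(\cdot)$ to be the piecewise-linear interpolant of $\bm{f}_k(\cdot)$ at the mesh points $j\updelta_n$, $j\in\integer$. Each $\tilde{\bm{f}}_k^{(n)}$ is in $C(\real\mapsto\real^p)$, $\tilde{\bm{f}}_k^{(n)}(0)$ agrees with $\bm{f}_k(0)$ up to at most $1/n$ (hence the sequence $\{\tilde{\bm{f}}_k^{(n)}(0)\}_k$ is bounded), and a direct check on each linear segment shows that the family $\{\tilde{\bm{f}}_k^{(n)}(\cdot)\}_k$ is equicontinuous on every bounded interval in the classical sense, with modulus controlled by $1/n$ on scales $\le\updelta_n$.

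Next I would apply Theorem~\ref{thm:ArzelaAscoli} to $\{\tilde{\bm{f}}_k^{(n)}\}_k$ for each $n$ in turn, then perform a Cantor diagonal extraction: let $\{k_j^{(1)}\}\supseteq\{k_j^{(2)}\}\supseteq\cdots$ be nested subsequences along which $\tilde{\bm{f}}_{k_j^{(n)}}^{(n)}(\cdot)\to \bm{h}^{(n)}(\cdot)$ uniformly on $[-n,n]$, and consider the diagonal $k_j:=k_j^{(j)}$. To transfer this to the original $\bm{f}_{k_j}$, I would exploit the bound
\begin{equation}
\sup_{|t|\le n}\norm{\bm{f}_k(t)-\tilde{\bm{f}}_k^{(n)}(t)}\le \frac{1}{n}\quad\text{for every }k,
\end{equation}
which follows because any point $t$ lies within $\updelta_n$ of a mesh point and the interpolant is a convex combination of values of $\bm{f}_k$ at two mesh points at distance $\le\updelta_n$ from $t$. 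Combining this with the uniform convergence of $\tilde{\bm{f}}_{k_j}^{(n)}$ gives, for every fixed $n$,
\begin{equation}
\limsup_{j,\,\ell\to\infty}\sup_{|t|\le n}\norm{\bm{f}_{k_j}(t)-\bm{f}_{k_\ell}(t)}\le \frac{2}{n}.
\end{equation}
Since $n$ is arbitrary, $\{\bm{f}_{k_j}\}$ is Cauchy in the local supremum norm on each bounded interval, hence converges pointwise to a limit $\bm{f}$, uniformly on every bounded interval.

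Finally I would verify $\bm{f}\in C(\real\mapsto\real^p)$: taking $j\to\infty$ in the extended equicontinuity inequality yields $\sup_{0\le t-s\le\updelta_n,\,|t|\le n}\norm{\bm{f}(t)-\bm{f}(s)}\le 1/n$, which implies continuity of $\bm{f}$ on $[-n,n]$ for every $n$. The main technical obstacle I anticipate is the careful bookkeeping in the diagonal step, specifically ensuring that the $1/n$-approximation of $\bm{f}_k$ by $\tilde{\bm{f}}_k^{(n)}$ interacts correctly with the subsequences extracted at different mesh scales; the estimate displayed above is the key ingredient that makes the diagonal work, since it holds uniformly in $k$ rather than only along the extracted subsequence.
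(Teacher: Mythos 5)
The paper does not actually prove this theorem; it is stated as a known supporting result (it is essentially Theorem 4.2.2 of Kushner and Yin's book), so there is no in-paper argument to compare against. Your proof is correct in substance and follows a legitimate route: rather than the more common direct argument (diagonalize over a countable dense subset of $\real$ using boundedness at $0$ plus extended equicontinuity, then upgrade pointwise convergence to local uniform convergence), you reduce to the classical Theorem~\ref{thm:ArzelaAscoli} by piecewise-linear interpolation at scale $\updelta_n$ and a second diagonalization over $n$. The key uniform-in-$k$ estimate $\sup_{|t|\le n}\norm{\bm{f}_k(t)-\tilde{\bm{f}}_k^{(n)}(t)}\le 1/n$ is the right ingredient and your Cauchy estimate and the continuity of the limit both go through. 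Three small points need tightening. First, the extended equicontinuity only controls the interpolant's slopes on $[-n,n]$, so the family $\{\tilde{\bm{f}}_k^{(n)}\}_k$ is not equicontinuous on \emph{every} bounded interval as you claim; you should either extend $\tilde{\bm{f}}_k^{(n)}$ constantly outside $[-n,n]$ or invoke the compact-interval form of Arzel\`{a}--Ascoli on $[-n,n]$, which is all your diagonal step actually uses. Second, the definition's constraint $|t|\le T$ applies to the larger of the two points, and the right endpoint $(j+1)\updelta_n$ of the mesh cell containing $t$ may slightly exceed $n$; invoking the definition with $T=n+1$ fixes this. Third, ``passing to a tail'' cannot be done simultaneously for all $n$, but this is harmless since your final estimates are all $\limsup$'s in $j$ for fixed $n$, and the bound $\norm{\bm{f}_k(t)-\tilde{\bm{f}}_k^{(n)}(t)}\le 1/n$ holds for all $k\ge K_n$, which suffices. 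None of these affects the validity of the argument.
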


\subsection{  Existence and Uniqueness of the  Result}\label{subsect:ExistUnique}
The regularity conditions to ensure the existence and uniqueness of the solution to an IVP reviewed in   this subsection   will be applied on the average ODE  in Chapter~\ref{chap:Limiting}.  
\remove{
\begin{dfn}[Super/Upper (Sub/Lower) Solution]
	\red{A differentiable function $\btheta\parenthesis{t}$ satisfying $\dot{\btheta}_+\parenthesis{t}>\bm{f}$ for all $t\in\left[ t_0, T\right)$. }
\end{dfn}
}

\begin{dfn}[Locally Lipschitz continuous] Let $\bm{f}$ be a $\real^p$-valued function that takes input  arguments $ \ilparenthesis{t,\btheta}  $ within the open domain $U\subseteq\real^{p+1}$. 
	$\bm{f}$ is said to be locally Lipschitz continuous  in   $\btheta$  uniformly w.r.t.    $t$, if 
	\begin{equation}   \sup_{ \parenthesis{t, \btheta_1} \neq \parenthesis{t, \btheta_2}  \in V} \frac{ \norm{ \bm{f}\parenthesis{t,\btheta_1} - \bm{f}\parenthesis{t,\btheta_2} } }{\norm{\btheta_1-\btheta_2}}<\infty,
	\end{equation}
	for  every  compact subset $V\subsetneq U$. 
\end{dfn}
\begin{dfn}\label{dfn:LipsUnif}
	We say that $\bm{f}\in C^0_\mathrm{Lips} \ilparenthesis{U\mapsto\real^p} $ with $U\subseteq\real^{p+1}$ being open, if $\bm{f}$ is zeroth-order continuous in  $  \btheta $, and is  locally Lipschitz  continuous in $\btheta$  uniformly w.r.t.  $t$. Usually, $ C^0_\mathrm{Lips} \ilparenthesis{U\mapsto\real^p} $ will be compactly written as  $ C_\mathrm{Lips} \ilparenthesis{U\mapsto\real^p} $. 
\end{dfn}

\begin{rem}
	Note that Definition  \ref{dfn:LipsUnif}  does not convey  any information  regarding  whether  $\bm{f}$ is continuous in $t$. \remove{For $j\geq    1$,  the locally Lipschitz condition can be inferred from the $j$th-order continuity in $\btheta$. }
\end{rem}

\remove{
\begin{thm}
	Suppose that $\bm{f}$ is locally Lipschitz continuous w.r.t. to $\btheta$ and uniformly in $t$. Let $\bZ\parenthesis{t}$ and $\bzeta\parenthesis{t}$ be two \emph{differentiable} functions such that 
	\begin{equation}
	\bZ\parenthesis{t_0}\le \bzeta\parenthesis{t_0}, \quad \dot{\bZ}\parenthesis{t} - \bm{f}\parenthesis{\bZ\parenthesis{t},t} \le \dot{\bzeta}\parenthesis{t} - \bm{f}\parenthesis{\bzeta\parenthesis{t}, t}, t\in\left[t_0, T\right). 
	\end{equation}
	Then we have $\bZ\parenthesis{t}\le \bzeta\parenthesis{t}$ for every $t\in\left[t_0,T\right)$. Moreover, if $\bZ\parenthesis{t}<\bzeta\parenthesis{t}$ for some $t$, this remains true for all later times.

\end{thm}

\begin{corr}
	[Uniqueness of the solution]
\end{corr}

}

Consider the following  IVP:
\begin{equation}\label{eq:generalIVP}
\begin{dcases}
& \frac{\diff}{\diff t} \btheta\ilparenthesis{t} =\bm{f}\parenthesis{t,\btheta}, \,\, t\ge t_0, \\
& \btheta\parenthesis{t_0} = \hbtheta_0   \,.
\end{dcases}
\end{equation}
\remove{\begin{numcases}{}
\frac{\diff}{\diff t} \btheta\ilparenthesis{t} =\bm{f}\parenthesis{t,\btheta}, \,\, t\ge t_0, & \label{eq:generalIVP1}\\
\btheta\parenthesis{t_0} = \hbtheta_0   \,.& \label{eq:generalIVP2} 
\end{numcases} }
When dealing with IVP (\ref{eq:generalIVP}), we   often  suppose that  $\bm{f}\in  C _\mathrm{Lips} \ilparenthesis{U\mapsto\real^p}    $ and the domain  $U\subseteq\real^{p+1}$  is      open.

\begin{thm}[Picard-Lindelf\"{o}f]
	\cite[Thm. 2.2]{teschl2012ordinary}  Suppose $\bm{f}\in C_\mathrm{Lips} \ilparenthesis{U\mapsto\real^p}$, where $U\subseteq\real^{p+1}$ is   {open}. Then there  exists a  {unique local}  solution $\bZ\parenthesis{t}\in C^1 \parenthesis{I\mapsto\real^p}$ of the IVP (\ref{eq:generalIVP}), where $I\subsetneq\real $ is some interval around $t_0$.

	For example,   let  $M$ be   the maximum of $\norm{f}$ on $\bracket{t_0,t_0+T} \times \overline{ \Ball_{\updelta}\ilparenthesis{\hbtheta_0} }  \subsetneq U$. The solution  $\bZ\ilparenthesis{t}$ exists at least for $t\in\bracket{t_0,t_0+T_0}$ and remains within  $\overline{ \Ball_{\updelta}\ilparenthesis{\hbtheta_0} } $, where $T_0    = \min\set{T,  {\updelta}/{M}} $.   The analogous result holds for $\ilbracket{t_0-T_0, t_0}$. 
\end{thm}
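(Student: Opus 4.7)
The plan is to recast (\ref{eq:generalIVP}) as a fixed-point equation for the Volterra-type operator
\begin{equation}
\mathcal{T}\ilparenthesis{\bZ}\ilparenthesis{t} = \hbtheta_0 + \int_{t_0}^{t} \bm{f}\ilparenthesis{s,\bZ\ilparenthesis{s}}\, \diff s,
\end{equation}
on a suitable complete metric space of continuous functions, and then invoke the Banach contraction principle. By the fundamental theorem of calculus, a continuous $\bZ$ with $\mathcal{T}\ilparenthesis{\bZ}=\bZ$ is automatically in $C^1\ilparenthesis{I\mapsto\real^p}$ and solves (\ref{eq:generalIVP}), and conversely every $C^1$-solution is a fixed point of $\mathcal{T}$. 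So existence and local uniqueness of the IVP are equivalent to the corresponding properties for $\mathcal{T}$.

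First I would fix the closed subset $X=\set{\bZ\in C\ilparenthesis{\ilbracket{t_0,t_0+T_0}\mapsto\real^p}:\sup_t\norm{\bZ\ilparenthesis{t}-\hbtheta_0}\le \updelta}$ of the Banach space $\ilparenthesis{C\ilparenthesis{\ilbracket{t_0,t_0+T_0}\mapsto\real^p},\norm{\cdot}_\infty}$, which is itself complete. The self-map property $\mathcal{T}\ilparenthesis{X}\subseteq X$ is exactly what forces the stated definition $T_0=\min\set{T,\updelta/M}$: for any $\bZ\in X$ and any $t\in\ilbracket{t_0,t_0+T_0}$,
\begin{equation}
\norm{\mathcal{T}\ilparenthesis{\bZ}\ilparenthesis{t}-\hbtheta_0}\le \int_{t_0}^{t}\norm{\bm{f}\ilparenthesis{s,\bZ\ilparenthesis{s}}}\,\diff s \le M\ilparenthesis{t-t_0} \le M T_0\le \updelta.
\end{equation}

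The hard part is securing a contraction estimate on all of $\ilbracket{t_0,t_0+T_0}$, since the naive bound based on the Lipschitz constant $L$ of $\bm{f}\ilparenthesis{t,\cdot}$ on the compact set $\ilbracket{t_0,t_0+T}\times\overline{\Ball_{\updelta}\ilparenthesis{\hbtheta_0}}$ (finite by the local Lipschitz hypothesis) gives only $\norm{\mathcal{T}\ilparenthesis{\bZ_1}-\mathcal{T}\ilparenthesis{\bZ_2}}_\infty\le LT_0\,\norm{\bZ_1-\bZ_2}_\infty$, which is a contraction only if $LT_0<1$. I would circumvent this by equipping $X$ with the Bielecki-type weighted norm $\norm{\bZ}_L=\sup_{t\in\ilbracket{t_0,t_0+T_0}} e^{-2L\ilparenthesis{t-t_0}}\norm{\bZ\ilparenthesis{t}}$, which is equivalent to $\norm{\cdot}_\infty$ on the bounded interval, so $X$ remains a complete metric space under $\norm{\cdot}_L$. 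The elementary computation
\begin{equation}
\int_{t_0}^{t} e^{2L\ilparenthesis{s-t_0}}\,\diff s \;\le\; \frac{e^{2L\ilparenthesis{t-t_0}}}{2L},
\end{equation}
combined with the pointwise Lipschitz bound $\norm{\bm{f}\ilparenthesis{s,\bZ_1\ilparenthesis{s}}-\bm{f}\ilparenthesis{s,\bZ_2\ilparenthesis{s}}}\le L\norm{\bZ_1\ilparenthesis{s}-\bZ_2\ilparenthesis{s}}\le L e^{2L\ilparenthesis{s-t_0}}\norm{\bZ_1-\bZ_2}_L$, yields
\begin{equation}
\norm{\mathcal{T}\ilparenthesis{\bZ_1}-\mathcal{T}\ilparenthesis{\bZ_2}}_L\;\le\; \tfrac{1}{2}\,\norm{\bZ_1-\bZ_2}_L
\end{equation}
on the whole interval, with no further shrinking needed. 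Banach's fixed-point theorem then delivers a unique $\bZ\in X$ with $\mathcal{T}\ilparenthesis{\bZ}=\bZ$, giving existence together with uniqueness within $X$; this upgrades to genuine local uniqueness because any other $C^1$-solution of (\ref{eq:generalIVP}) lies in $X$ up to its first exit time from $\overline{\Ball_{\updelta}\ilparenthesis{\hbtheta_0}}$, and must therefore coincide with $\bZ$ up to (at least) that exit time. The backward interval $\ilbracket{t_0-T_0,t_0}$ is obtained by applying the same argument to the time-reversed field $\tilde{\bm{f}}\ilparenthesis{s,\btheta}=-\bm{f}\ilparenthesis{2t_0-s,\btheta}$, which inherits the hypothesis $\tilde{\bm{f}}\in C_\mathrm{Lips}\ilparenthesis{\tilde{U}\mapsto\real^p}$ with the same Lipschitz constant $L$ and the same bound $M$.
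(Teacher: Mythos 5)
Your proof is correct, but note that the paper does not prove this theorem itself; it cites \cite[Thm.\ 2.2]{teschl2012ordinary}, whose argument (echoed in the paper's subsequent ``Improved Picard-Lindelf\"{o}f'' statement) runs the Picard iteration $\bK^m\ilparenthesis{\hbtheta_0}$ and controls the increments via the factorial decay $L_1^m/m!$, i.e.\ a Weissinger-type fixed-point argument. You instead obtain a one-shot contraction by equipping the same invariant set $X$ with a Bielecki weighted norm $\norm{\cdot}_L$. The two routes are genuinely different in mechanism but buy essentially the same thing here: your version avoids any induction on iterates and needs no shrinking of $T_0$ beyond $\min\set{T,\updelta/M}$, while the iteration route has the side benefit of producing the explicit error bound $\sup_t \norm{\bZ\ilparenthesis{t}-\bK^m\ilparenthesis{\hbtheta_0}\ilparenthesis{t}}\le \ilparenthesis{L_1^m/m!}e^{L_1}\int\norm{\bm{f}\ilparenthesis{s,\hbtheta_0}}\diff s$ that the paper records in its improved version. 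Your self-map estimate, the contraction constant $1/2$, the upgrade from uniqueness in $X$ to genuine local uniqueness via the first exit time, and the time reversal for $\bracket{t_0-T_0,t_0}$ are all sound. One caveat worth making explicit: the paper's Definition \ref{dfn:LipsUnif} of $C_{\mathrm{Lips}}$ deliberately says nothing about continuity of $\bm{f}$ in $t$, and both your claim that the fixed point of $\mathcal{T}$ is automatically $C^1$ and the existence of the maximum $M$ on the compact cylinder require joint continuity (or at least continuity in $t$); without it the fixed point is only absolutely continuous, exactly the degradation the paper flags in the remark following the theorem. You should state that you are assuming $\bm{f}$ continuous in $t$, as Teschl does.
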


\begin{corr}
	\cite[Lem. 2.3]{teschl2012ordinary} Suppose $\bm{f}\in C^j  \ilparenthesis{U\mapsto\real^p}$ for  {$j\ge 1$} where $U\subseteq \real^{p+1}$ is open and $\ilparenthesis{t_0, \hbtheta_0}\in U$. Then there exists a unique local solution  $\bZ\ilparenthesis{t}\in C^{j+1}\ilparenthesis{I\mapsto\real^p} $ of   the IVP (\ref{eq:generalIVP}),   where $I\subsetneq\real $ is some interval around $t_0$. 
\end{corr}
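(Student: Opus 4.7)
The plan is to first apply the Picard-Lindel\"of theorem to secure existence and uniqueness of a $C^1$ local solution, and then use a bootstrapping argument fed by the ODE itself to upgrade the regularity from $C^1$ to $C^{j+1}$ in a finite number of steps.

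First I would verify that the hypotheses of Picard-Lindel\"of are met. Since $\bm{f}\in C^j\ilparenthesis{U\mapsto\real^p}$ with $j\ge 1$, the partial derivative $\partial \bm{f}/\partial\btheta$ exists and is continuous on $U$. For any compact $V\subsetneq U$ this partial is bounded, and the mean value theorem then gives local Lipschitz continuity of $\bm{f}$ in $\btheta$ uniformly in $t$, so $\bm{f}\in C_\mathrm{Lips}\ilparenthesis{U\mapsto\real^p}$. The preceding Picard-Lindel\"of theorem therefore yields a unique local solution $\bZ\ilparenthesis{t}\in C^1\ilparenthesis{I\mapsto\real^p}$ of (\ref{eq:generalIVP}) on some interval $I\subsetneq\real$ around $t_0$.

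Next I would run an induction on $m=1,2,\ldots,j$ to show $\bZ\in C^{m+1}\ilparenthesis{I\mapsto\real^p}$. The base case $m=1$ is the Picard-Lindel\"of conclusion combined with the identity $\dot{\bZ}\ilparenthesis{t}=\bm{f}\ilparenthesis{t,\bZ\ilparenthesis{t}}$: the right-hand side is a composition of the $C^1$ map $t\mapsto\ilparenthesis{t,\bZ\ilparenthesis{t}}$ with $\bm{f}\in C^j\subseteq C^1$, so $\dot{\bZ}\in C^1$, whence $\bZ\in C^2$. For the inductive step, suppose $\bZ\in C^m\ilparenthesis{I\mapsto\real^p}$ for some $1\le m\le j$. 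Then $t\mapsto\ilparenthesis{t,\bZ\ilparenthesis{t}}$ is $C^m$, and composing with $\bm{f}\in C^j$ (using $m\le j$) together with the chain rule gives $t\mapsto\bm{f}\ilparenthesis{t,\bZ\ilparenthesis{t}}\in C^m$. Since $\dot{\bZ}\ilparenthesis{t}$ equals this composition, $\dot{\bZ}\in C^m$, and hence $\bZ\in C^{m+1}$. Terminating the induction at $m=j$ delivers $\bZ\in C^{j+1}\ilparenthesis{I\mapsto\real^p}$, while uniqueness has already been inherited from the Picard-Lindel\"of step.

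The main obstacle is the chain rule bookkeeping inside the inductive step: one must be precise that a $C^m$ function composed with a $C^j$ function is $C^{\min\ilset{m,j}}=C^m$ (since $m\le j$), and this requires expressing higher derivatives of the composition $\bm{f}\ilparenthesis{t,\bZ\ilparenthesis{t}}$ as sums of products of derivatives of $\bm{f}$ (up to order $m$) and derivatives of $\bZ$ (up to order $m$), all of which are already continuous by hypothesis or by the inductive assumption. Once that bookkeeping is handled, the iteration is mechanical and produces exactly the claimed regularity gain of one order over $\bm{f}$.
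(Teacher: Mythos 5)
Your proof is correct and is essentially the standard argument behind the cited result: Picard--Lindel\"of (applicable because $C^j$ with $j\ge 1$ implies local Lipschitz continuity in $\btheta$ via boundedness of $\partial\bm{f}/\partial\btheta$ on compact sets) gives the unique local $C^1$ solution, and the identity $\dot{\bZ}\ilparenthesis{t}=\bm{f}\ilparenthesis{t,\bZ\ilparenthesis{t}}$ bootstraps the regularity up to $C^{j+1}$ by induction. The paper itself offers no proof beyond the citation, and your bootstrapping argument matches the one in the cited source.
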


\begin{thm}[Improved Picard-Lindelf\"{o}f]
	\cite[Thm. 2.5]{teschl2012ordinary} Suppose $\bm{f}\in  C_\mathrm{Lips} \ilparenthesis{U\mapsto\real^p}$ where $U\subseteq\real^{p+1}$ is open.  Choose $\ilparenthesis{t_0, \hbtheta_0}\in U$ and $\updelta, T>0$ such that $ \bracket{t_0,t_0+T} \times \overline{ \Ball_{\updelta}\ilparenthesis{\hbtheta_0} }  \subsetneq U$. Set
	\begin{numcases}{}
M\parenthesis{t}=\int_{t_0}^t\sup _{ \btheta\in  \Ball_{\updelta} \ilparenthesis{\hbtheta_0} } \norm{\bm{f}\parenthesis{s, \btheta}}\diff s  & \nonumber \\
\LipsPara\parenthesis{t}  = \sup_{ \btheta_1\neq \btheta_2\in \Ball_{\updelta} \ilparenthesis{\hbtheta_0}} \frac{  \norm{ \bm{f}\parenthesis{t,\btheta_1}-\bm{f}\parenthesis{t,\btheta_2} } }{\norm{\btheta_1-\btheta_2}}.  &  \label{eq:Lips(t)}
	\end{numcases} 
 Define $T_0$ as  $T_0\equiv \sup \set{ \given{ 0<t\le T }{  M\parenthesis{t_0+t}\le \updelta  } }$, which is well-defined because $M\parenthesis{t}$ is nondecreasing in $t$. 
	Suppose $L_1 \parenthesis{T_0} \equiv    \int_{t_0}^{t_0+T_0} \LipsPara\parenthesis{s}\diff 
	s <\infty$. Then there exists a   {unique local} solution $\bZ\ilparenthesis{t}\equiv \lim_{ m\to\infty} \ilbracket{\bK^m \ilparenthesis{\hbtheta_0}}\ilparenthesis{t}\in C^1 \parenthesis{  \bracket{t_0,t_0+T_0 } \mapsto \overline{ \Ball_{\updelta}\ilparenthesis{\hbtheta_0} } }$ of the IVP (\ref{eq:generalIVP}), where $\bracket{\bK\parenthesis{\btheta}	} \parenthesis{t} = \hbtheta_0 + \int_{t_0}^t \bm{f}\parenthesis{\btheta\parenthesis{s},s}\diff s$, and satisfies     $\sup_{ t_0\le t \le t_0+  T_0} \norm{ \bZ\parenthesis{t} - \bK^m\ilparenthesis{\hbtheta_0}\parenthesis{t}  } \le \frac{ L_1\parenthesis{T_0}^m }{m! } e^{L_1\parenthesis{T_0}} \int_{ t_0} ^{t_0+T_0} \norm{\bm{f}\ilparenthesis{s, \hbtheta_0}}\diff s $.   
\end{thm}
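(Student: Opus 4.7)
The plan is to apply Banach's fixed-point theorem to a sufficiently large iterate of the Picard operator $\bK$, where contractivity is extracted from the integrated Lipschitz function $L_1(\cdot)$ rather than from a pointwise Lipschitz constant.

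First I would take the complete metric space $\mathcal{X} = C\bigl([t_0,t_0+T_0]\mapsto\overline{\Ball_\delta(\hbtheta_0)}\bigr)$ equipped with the supremum norm, and verify that $\bK$ maps $\mathcal{X}$ into itself. For every $\btheta\in\mathcal{X}$,
\[
\norm{[\bK(\btheta)](t)-\hbtheta_0} \;\le\; \int_{t_0}^t \sup_{\bzeta\in\Ball_\delta(\hbtheta_0)}\norm{\bm{f}(s,\bzeta)}\,ds \;=\; M(t),
\]
which is bounded by $\delta$ for $t\in[t_0,t_0+T_0]$ by the defining property of $T_0$. Integrability of $s\mapsto\bm{f}(s,\btheta(s))$ is secured by $M(t_0+T_0)\le\delta<\infty$ together with the measurability built into the $C_\mathrm{Lips}$ hypothesis, and $\bK(\btheta)$ is continuous in $t$ by dominated convergence.

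Next I would derive the central iteration bound by induction. The base step $\norm{[\bK(\btheta_1)](t)-[\bK(\btheta_2)](t)} \le \int_{t_0}^t\LipsPara(s)\norm{\btheta_1(s)-\btheta_2(s)}\,ds$ follows directly from the Lipschitz property of $\bm{f}$; iterating $m$ times and using Fubini on the resulting simplex integral gives
\[
\norm{[\bK^m(\btheta_1)](t)-[\bK^m(\btheta_2)](t)} \;\le\; \frac{\bigl(\int_{t_0}^t\LipsPara(s)\,ds\bigr)^m}{m!}\sup_{s\in[t_0,t_0+T_0]}\norm{\btheta_1(s)-\btheta_2(s)}.
\]
Since $L_1(T_0)^m/m!\to 0$, some iterate $\bK^{m_0}$ is a strict contraction on $\mathcal{X}$, so Banach's theorem yields a unique fixed point $\bZ$. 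Because $\bK$ commutes with $\bK^{m_0}$, the image $\bK(\bZ)$ is also a fixed point of $\bK^{m_0}$, forcing $\bK(\bZ)=\bZ$. The integral identity $\bZ(t)=\hbtheta_0+\int_{t_0}^t\bm{f}(s,\bZ(s))\,ds$ then gives $\dot{\bZ}(t)=\bm{f}(t,\bZ(t))$ at every Lebesgue point, which is the asserted ODE.

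For the explicit error estimate I would combine the base bound $\norm{[\bK(\hbtheta_0)](t)-\hbtheta_0}\le\int_{t_0}^{t_0+T_0}\norm{\bm{f}(s,\hbtheta_0)}\,ds$ with the iteration inequality above (taking $\btheta_1=\bK(\hbtheta_0)$, $\btheta_2=\hbtheta_0$) to obtain
\[
\sup_{t\in[t_0,t_0+T_0]}\norm{[\bK^{m+1}(\hbtheta_0)](t)-[\bK^m(\hbtheta_0)](t)} \;\le\; \frac{L_1(T_0)^m}{m!}\int_{t_0}^{t_0+T_0}\norm{\bm{f}(s,\hbtheta_0)}\,ds,
\]
then telescope $\bZ-\bK^m(\hbtheta_0)=\sum_{j\ge m}\bigl(\bK^{j+1}(\hbtheta_0)-\bK^j(\hbtheta_0)\bigr)$ and use $\sum_{j\ge m}L_1(T_0)^j/j!\le e^{L_1(T_0)}L_1(T_0)^m/m!$ to conclude. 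The main technical nuisance I expect is upgrading regularity of $\bZ$ from absolute continuity to the claimed $C^1$: since Definition~\ref{dfn:LipsUnif} does not assert continuity of $\bm{f}$ in $t$, the composition $\bm{f}(\cdot,\bZ(\cdot))$ is only measurable in general, so the $C^1$ conclusion implicitly relies on enough continuity-in-$t$ being inherited from the ambient open domain $U$ to make the fundamental theorem of calculus yield a continuous derivative.
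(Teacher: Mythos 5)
Your argument is correct and is essentially the proof of the cited result: the paper gives no proof of this background theorem (it is quoted from Teschl), and Teschl's own proof runs through Weissinger's fixed-point theorem with $\uptheta_m = L_1\parenthesis{T_0}^m/m!$, which is precisely your Banach-on-an-iterate argument with the telescoping sum $\sum_{j\ge m} L_1\parenthesis{T_0}^j/j! \le e^{L_1\parenthesis{T_0}} L_1\parenthesis{T_0}^m/m!$ spelled out. Your closing caveat about the $C^1$ conclusion is also well placed, since the paper's own remark immediately after the theorem concedes that without continuity of $\bm{f}$ in $t$ the solution need not lie in $C^1$ but is only absolutely continuous.
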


In fact, the continuity of  $\bm{f}$    is not   necessary   to ensure  the existence of a  local solution $\bZ\ilparenthesis{t}$. For $\bZ\ilparenthesis{t}$ to exist locally, all we need  are (i) $\bm{f}$ is  {measurable}, (ii)  $M\parenthesis{t}$ is  finite, and (iii) $\LipsPara\parenthesis{t}$ is  locally integrable in terms of  $\int_ I \LipsPara\parenthesis{s}\diff s <\infty$ for any compact interval $I$. However, under less stringent conditions,  the solution $\bZ\ilparenthesis{\cdot}$ may no longer  fall  in  $C^1\ilparenthesis{I\mapsto\real^p}$.

\begin{corr}[Extension Theorem] \label{corr:ExtensionThem}
	\cite[Corr. 2.6]{teschl2012ordinary} Suppose $\bracket{t_0,T}  \times \real^p  \subsetneq U$ and $\int_{t_0} ^T \LipsPara\parenthesis{s}\diff s<\infty$ where $\LipsPara\parenthesis{t} $ is defined in (\ref{eq:Lips(t)}), then $\bZ\ilparenthesis{t}$ is well-defined for all $t\in\bracket{t_0,T}$.  
	In particular, if $U=\real^{p+1}$ and $\int_{-T} ^T \LipsPara \parenthesis{s}\diff s<\infty$ for \emph{all} $T>0$, then $\bZ\ilparenthesis{t}$ is well-defined for all $t\in\real$. 
\end{corr}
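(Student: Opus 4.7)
The plan is a standard continuation argument, where the integrability of $\LipsPara$ is used twice: first to control the growth of any solution via a Gronwall estimate, and then to rule out blow-up at the right endpoint of the maximal interval of existence.

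First, I would set
\begin{equation*}
T^* \;\equiv\; \sup \bigl\{\, \uptau \in [t_0, T] \,:\, \text{the IVP (\ref{eq:generalIVP}) has a solution } \bZ \in C^1\bigl([t_0,\uptau]\mapsto\real^p\bigr) \,\bigr\}.
\end{equation*}
The improved Picard--Lindel\"{o}f theorem (applied on some compact cylinder around $(t_0,\hbtheta_0)$, which is possible because $[t_0,T]\times\real^p\subsetneq U$) guarantees $T^* > t_0$. Further, any two solutions agree on the intersection of their domains by the local uniqueness, so the solutions patch together into a single $\bZ(\cdot)$ defined on $[t_0, T^*)$. The goal is to show $T^* = T$ and that $\bZ$ extends continuously to $T^*$.

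Next, I would derive a uniform a priori bound on $\norm{\bZ(t)-\hbtheta_0}$ on $[t_0, T^*)$ using the integral form $\bZ(t) = \hbtheta_0 + \int_{t_0}^t \bm{f}(s,\bZ(s))\,\diff s$. Using $\norm{\bm{f}(s,\bZ(s))} \le \norm{\bm{f}(s,\hbtheta_0)} + \LipsPara(s)\,\norm{\bZ(s)-\hbtheta_0}$ (which holds whenever the segment between $\hbtheta_0$ and $\bZ(s)$ stays in the region where $\LipsPara(s)$ is the relevant Lipschitz constant; one uses a bootstrap on $\updelta$, enlarging the $\updelta$-ball as needed since $U\supsetneq [t_0,T]\times\real^p$), Gronwall's inequality yields
\begin{equation*}
\norm{\bZ(t)-\hbtheta_0} \;\le\; \exp\!\Bigl(\int_{t_0}^T \LipsPara(s)\,\diff s\Bigr) \cdot \int_{t_0}^T \norm{\bm{f}(s,\hbtheta_0)}\,\diff s \;\equiv\; R,
\end{equation*}
with the right-hand side finite by hypothesis. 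Hence $\bZ(t)$ remains in the compact ball $\overline{\Ball_{R}(\hbtheta_0)}$ for all $t\in[t_0,T^*)$.

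Now I would argue that $\lim_{t\uparrow T^*}\bZ(t)$ exists. For $t_0\le s<t<T^*$, the same integrability gives $\norm{\bZ(t)-\bZ(s)} \le \int_s^t \bigl(\norm{\bm{f}(u,\hbtheta_0)} + \LipsPara(u)\,R\bigr)\,\diff u$, and the right-hand side tends to $0$ as $s,t\uparrow T^*$ because the integrand is $L^1$ on $[t_0,T]$. Thus $\bZ$ satisfies the Cauchy criterion and extends continuously to $T^*$; call the limit $\bZ(T^*)$, which lies in the closed ball and therefore in $U$.

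Finally, I would derive the contradiction to maximality. If $T^* < T$, apply the improved Picard--Lindel\"{o}f theorem with new initial data $(T^*,\bZ(T^*))$: local Lipschitz integrability on $[T^*, T^* + \upeta]$ for some $\upeta > 0$ holds by the assumption $\int_{t_0}^T \LipsPara(s)\,\diff s < \infty$, and a local solution exists on $[T^*, T^* + \upeta]$. Gluing this to the previously constructed $\bZ$ on $[t_0, T^*]$ gives a solution on $[t_0, T^*+\upeta]$, contradicting the definition of $T^*$. Hence $T^* = T$, proving the first claim. The second (global) assertion follows by applying the first claim on every compact interval $[-T,T]$ and invoking uniqueness to patch. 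The main obstacle I anticipate is not the Gronwall step itself but justifying that $\LipsPara(s)$, defined in (\ref{eq:Lips(t)}) for a specific $\updelta$-ball around $\hbtheta_0$, serves as a valid Lipschitz modulus along the entire trajectory; this is handled by enlarging $\updelta$ to the a priori radius $R$, which is legal because $U$ contains the full slab $[t_0,T]\times\real^p$.
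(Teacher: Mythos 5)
Your proposal is correct in substance, but it takes a longer route than the one the paper intends: the paper states this corollary as a citation to Teschl and gives no proof, and the proof there is essentially a one-liner on top of the improved Picard--Lindel\"{o}f theorem already quoted in the same subsection. Since $\bracket{t_0,T}\times\real^p\subsetneq U$, one may take the ball radius $\updelta=\infty$ in that theorem, so that $\LipsPara(t)$ becomes the Lipschitz modulus over all of $\real^p$ and the constraint $M(t_0+t)\le\updelta$ defining $T_0$ is vacuous; hence $T_0=T$ and the unique solution exists on the whole interval at once, with no continuation argument needed. Your maximal-interval-plus-Gronwall argument is a valid, more self-contained alternative, and the global statement does follow by patching over compact intervals as you say. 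The one point you should tighten is the step you yourself flag: ``enlarging $\updelta$ to the a priori radius $R$'' is circular as written, because $R$ is defined through $\int_{t_0}^T\LipsPara(s)\,\diff s$ while $\LipsPara$ depends on $\updelta$, and a larger ball can only increase $\LipsPara$ and hence $R$. The resolution is not a bootstrap but the observation that the corollary's hypothesis only makes sense with $\updelta=\infty$ (otherwise the conclusion is false: a solution can exit a fixed $\updelta$-ball, after which integrability of $\LipsPara$ on that ball controls nothing). Once $\LipsPara(s)$ is read as the spatially global Lipschitz modulus, the inequality $\norm{\bm{f}(s,\bZ(s))}\le\norm{\bm{f}(s,\hbtheta_0)}+\LipsPara(s)\norm{\bZ(s)-\hbtheta_0}$ holds unconditionally and your Gronwall estimate is clean; you should also say explicitly that $\int_{t_0}^T\norm{\bm{f}(s,\hbtheta_0)}\,\diff s<\infty$, which comes from the standing finiteness of $M(t)$ assumed for local existence.
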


In real-world applications, the  iterates are usually  confined  within   a   {compact} set $\bTheta$ as in (\ref{eq:truncatedSA1}). If an iterate ever leaves $\bTheta$, it is immediately sent back to the closest point in $\bTheta$. In accordance with the constrained SA algorithm (\ref{eq:truncatedSA1}),  we are interested in 
\begin{equation}\label{eq:constrainedODE}
\begin{dcases}
&\dot{\btheta}\ilparenthesis{t}  = \bm{f}\ilparenthesis{t,\btheta} + \bh\ilparenthesis{t}, \,\,\,\, \bh\ilparenthesis{t} \in - \ConvCone\ilparenthesis{\btheta\ilparenthesis{t}}, \\
&\btheta\ilparenthesis{0}=\hbtheta_0,
\end{dcases}
\end{equation}
where   $\bh\parenthesis{\cdot}$ is  the minimum force needed to keep $\btheta\parenthesis{\cdot}$   within $\bTheta$.   
Specifically, for $\btheta\in\mathrm{int}\ilparenthesis{\bTheta}$, $\ConvCone\parenthesis{\btheta\ilparenthesis{t}}$ contains $\zero$ only; for $\btheta\in\partial \bTheta$,   $\ConvCone\parenthesis{\btheta\ilparenthesis{t}}$ is the   convex cone generated by the set of  outward normals at $\btheta\ilparenthesis{t}$ of the faces on which $\btheta\ilparenthesis{t}$ lies, and therefore $\bh\ilparenthesis{t}$ points inward.\remove{ For further details, see \cite[Sect. 4.3]{kushner2003stochastic}. }

\subsection{  Alekseev's Formula }\label{subsect:Alekseev}
The Alekseev's formula reviewed in this subsection  will  facilitate deriving a computable probabilistic bound in  Section~\ref{sect:ProbBound}. 
 \remove{ 
\begin{dfn}[Regular Perturbation Problem]
	$\dot{\btheta}=\bm{f}\parenthesis{t,\btheta,\upvarepsilon}$ with $\btheta\parenthesis{t_0} = \hbtheta_0$. 
\end{dfn}
If we suppose $\bm{f}\in C^1$ then above theorem ensures that the same is true for the solution $\bZ\parenthesis{t,\upvarepsilon}$, where we omit the dependence on the initial conditions $\parenthesis{t_0,\hbtheta_0}$ for simplicity. Particular, we have the following Taylor expansions

\begin{equation}
\bZ\parenthesis{t,\upvarepsilon} = \bZ_0\parenthesis{t} + \bZ_1\parenthesis{t}\upvarepsilon+o\parenthesis{\upvarepsilon}
\end{equation}
w.r.t. $\upvarepsilon$ in a neighborhood of $\upvarepsilon=0$.

Clearly the unperturbed term $\bZ_0\parenthesis{t} = \bZ\parenthesis{t,0}$ is given as the solution of the unperturbed system
\begin{equation}
\dot{\bZ}_0=\bm{f}_0\parenthesis{t,\bZ_0}, \quad \bZ_0\parenthesis{t_0} = \hbtheta_0. 
\end{equation}
where $\bm{f}_0\parenthesis{t,\btheta}=f\parenthesis{t,\btheta,0}$. Moreover, the derivative $\bZ_1\parenthesis{t}=\left. \frac{\partial }{\partial \upvarepsilon} \bZ\parenthesis{t,\upvarepsilon} \right|_{\upvarepsilon=0}$ solves the corresponding first variational equation
\begin{equation}
\dot{\bZ}_1 = \bm{f}_{10} \parenthesis{t,\bZ_0\parenthesis{t}} \bZ_1 + \bm{f}_{11} \parenthesis{t,\bZ_0\parenthesis{t}}, \quad \bZ_1\parenthesis{t_0} = 0, 
\end{equation}
where $\bm{f}_{10} \parenthesis{t,\btheta} = \frac{\partial }{\partial \btheta} \bm{f}\parenthesis{t,\btheta,0}$ and $\bm{f}_{11} \parenthesis{t,\btheta} = \left. \frac{\partial }{\partial \upvarepsilon} \bm{f}\parenthesis{t,\btheta,\upvarepsilon}\right|_{\upvarepsilon=0}$. The initial condition $\bZ_1\parenthesis{t_0}=0$ follows from the fact that the initial condition $\hbtheta_0$ does not depend on $\upvarepsilon$, implying $\bZ_1\parenthesis{t_0} = \left. \frac{\partial }{\partial \upvarepsilon} \bZ\parenthesis{t_0,\upvarphi} \right| _{\upvarepsilon=0} = \left. \frac{\partial  }{\partial \upvarepsilon }\right| _{\upvarepsilon=0} = 0$.

\textcolor{red}{Once we have the solution of the \emph{unperturbed} problem $\bZ_0\parenthesis{t}$, we can then compute the correction term $\bZ_1\parenthesis{t}$ by solving another linear equation.  } Plug in the Taylor expansion for $\bZ\parenthesis{t,\upvarepsilon}$ into the differential equation, expand the right-hand side w.r.t. $\upvarepsilon$, and compare coefficients w.r.t. powers of $\upvarepsilon$. 

\begin{thm} \cite[Thm. 2.12]{teschl2012ordinary}
	Let $\Lambda$ be some \emph{open} interval. Suppose $\bm{f}\in C^k \parenthesis{U\times \Lambda}$ for $k\ge 1$ and fix some values $\parenthesis{t_0,\hbtheta_0,\upvarepsilon_0}\in U\times \Lambda$. Let $\bZ\parenthesis{t,\upvarepsilon} \in C^k\parenthesis{I\times \Lambda_0}$ be the solution of the \emph{perturbed} initial value problem, which is guaranteed to exist by above theorem. Then
	\begin{equation}
	\bZ\parenthesis{t,\upvarepsilon} = \sum_{j=0}^k \frac{\bZ_1\parenthesis{t}}{j!} \parenthesis{\upvarepsilon-\upvarepsilon_0}^ j + o \parenthesis{ \parenthesis{\upvarepsilon-\upvarepsilon_0}^k }, 
	\end{equation}
	where the coefficients can be obtained  by recursively solving 
	\begin{equation}
	\dot{\bZ}_j = \bm{f}_j\parenthesis{t,\bZ_0,\cdots,\bZ_j,\upvarepsilon_0}, \quad \bZ_j\parenthesis{t_0} = \begin{dcases}
	\hbtheta_0, & j=0,\\
	0,&j\ge 1,
	\end{dcases}
	\end{equation}
	where the function $\bm{f}_j$ is recursively defined via
	\begin{equation}
	\bm{f}_{j+1} \parenthesis{t,\hbtheta_0,\cdots,\hbtheta_{j+1},\upvarepsilon} = \frac{\partial \bm{f}_j}{\partial \upvarepsilon} \parenthesis{t,\hbtheta_0,\cdots,\hbtheta_j,\upvarepsilon} + \sum_{k=0}^j \frac{\partial \bm{f}_j}{\partial \hbtheta_k}\parenthesis{t,\hbtheta_0,\cdots,\hbtheta_j,\upvarepsilon} \hbtheta_{k+1}, 
	\end{equation}
	with $\bm{f}_0\parenthesis{t,\hbtheta_0,\upvarepsilon} = \bm{f}\parenthesis{t,\hbtheta_0,0}$. 
	
	If we assume $\bm{f}\in C^{k+1}$ the error term will be $O\parenthesis{\parenthesis{\upvarepsilon-\upvarepsilon_0}^{k+1}} $ uniformly for $t\in I$. 
\end{thm}

Solutions of the basic system exists for all $t\in\real$ if $\bm{f}\parenthesis{t,\btheta}$ grows \fbox{at most linearly w.r.t $\btheta$}. 

\begin{corr}
	\cite[Corr. 2.16]{teschl2012ordinary}
\end{corr}
\begin{thm}[Extension Theorem]
	Suppose $U=\real^{p+1}$ and for every $T>0$, there are constants $M\parenthesis{T}$ and $\LipsPara\parenthesis{T}$ such that
	\begin{equation}
	\norm{\bm{f}\parenthesis{t,\btheta}} \le M\parenthesis{T}+\LipsPara\parenthesis{T}\norm{\btheta}, \quad \parenthesis{t,\btheta}\in\bracket{-T,T}\times \real^p.
	\end{equation}
	Then all solutions of the IVP are defined for all $t\in \real$.

	\begin{rem}
		If not, the solution must converge to $\pm\infty$. 
	\end{rem}
	\begin{rem}
		The theorem is false (in general) if the estimate is replaced by $\norm{\bm{f}\parenthesis{t,\btheta}} \le M\parenthesis{T}+\LipsPara\parenthesis{T}\norm{\btheta}^\upalpha$ for $\upalpha>1$. 
	\end{rem}
\end{thm}

}

\begin{dfn}
	[Fundamental Matrix] Let $\bA\ilparenthesis{t}\in\real^{p\times p}$ for all $t$. A fundamental matrix of a system of $p$ homogeneous ODEs $\dot{\bz}\ilparenthesis{t} = \bA\ilparenthesis{t} \bz\ilparenthesis{t}$  is a matrix-valued function $\bPhi\parenthesis{t}$ whose columns are linearly independent solutions of the ODE.
\end{dfn}

A useful  tool for bounding  the errors resulted from tolerable perturbations is the Alekseev's formula 
\cite{alekseev1961estimate}. Consider the IVP (\ref{eq:generalIVP})  and its perturbed system
\begin{equation}\label{eq:generalIVPperturbed}
\begin{dcases}
&\frac{\diff}{\diff t}\bzeta\ilparenthesis{t}  =\bm{f}\parenthesis{t,\bzeta} + \be\ilparenthesis{t,\bzeta}, \,\, t\ge t_0,  \\
&
\bzeta\parenthesis{t_0} = \hbtheta_0.   \,
\end{dcases}
\end{equation} 
Assume  that $\bm{f}:  \real\times\real^p \mapsto \real^p    $ appearing in both (\ref{eq:generalIVP}) and  (\ref{eq:generalIVPperturbed})   
 is measurable in $t$ and continuously differentiable in $\btheta$ with bounded derivatives uniformly w.r.t. $t$. Further assume that $\be : \real\times\real^p \mapsto \real^p    $  appearing in  (\ref{eq:generalIVPperturbed})     is measurable in $t$ and Lipschitz in $\btheta$ uniformly w.r.t. $t$.

	Let $ \btheta\ilparenthesis{t;t_0,\hbtheta_0} $  and $ \bzeta\ilparenthesis{t;t_0,\hbtheta_0} $ denote respectively the unique solutions to (\ref{eq:generalIVP}) and (\ref{eq:generalIVPperturbed})  for $t\ge t_0$ with initial condition $ \btheta\ilparenthesis{t_0;t_0,\hbtheta_0}=\bzeta\ilparenthesis{t_0;t_0,\hbtheta_0}=\hbtheta_0 $. Then for $t\ge t_0$, we have the following representation:
\begin{equation}\label{eq:Alekseev}
\begin{split}
&	 \bzeta\ilparenthesis{t;t_0,\hbtheta_0} = \btheta\ilparenthesis{t;t_0,\hbtheta_0} -\int_{t_0}^{t}\bracket{\bPhi\big(  t;s,\bzeta\ilparenthesis{s;t_0,\hbtheta_0}\big)\,\be\big( s,\bzeta\ilparenthesis{s;t_0,\hbtheta_0}\big)  }\diff s, 
\end{split}
\end{equation}
where $ \bPhi\ilparenthesis{t;s,\hbtheta_0} $ for any $\hbtheta_0\in\real^p$ is the fundamental matrix of the linearized system
\begin{equation}
\dot{\bz}\parenthesis{t} =   \left.\frac{  \partial \bm{f}\parenthesis{t, \btheta} }{\partial\btheta^\transpose} \right|_{ \btheta=\btheta  \ilparenthesis{t;s,\hbtheta_0} } \cdot \bz\parenthesis{t},  \quad \text{ for } t\ge s,
\end{equation}
such that $\bPhi \ilparenthesis{s;s,\hbtheta_0}=\bI_p$.

 \remove{
\subsection{Euler's method and the Peano theorem}
The \fbox{continuity} of $\bm{f}\parenthesis{t,\btheta}$ is sufficient for \fbox{existence} of at least one solution of the IVP. 

If $\bZ\parenthesis{t}$ is a solution, then by Taylor's theorem, we have
\begin{equation}
\bZ\parenthesis{t_0+ a} + \hbtheta_0+\dot{\bZ}\parenthesis{t_0} a + o(a) = \hbtheta_0+\bm{f}\parenthesis{t_0,\hbtheta_0} a + o \parenthesis{a}. 
\end{equation}

This suggests that we define an approximate solution by \emph{omitting the error term} and apply the procedure \fbox{iteratively}. That is, we set
\begin{equation}
\hbtheta_{ a}  \parenthesis{ t_{k+1}    } = \bbtheta_a \parenthesis{t_k} + \bm{f}\parenthesis{ t_k, \hbtheta_a\parenthesis{t_k} } a, \quad t_k = t_0 + ka,
\end{equation}
and use \emph{linear interpolation} in between. This procedure is known as the \emph{Euler method}. 

We expect that $\hbtheta_a\parenthesis{t}$ converges to a solution \fbox{as $a \downarrow 0 $}. But how should we prove this? The key observation is that, \fbox{$\bm{f}$'s continuity implies that it is bounded by a constant on each \emph{compact} interval}. Hence the derivative of $\hbtheta_a\parenthesis{t}$ is bounded by the same constant. Since this constant is \fbox{independent of $a$}, the function $\hbtheta_a\parenthesis{t}$ form an equicontinuous family of functions which converges uniformly after maybe passing to a subsequence by the Arzel\`{a}-Ascoli theorem.

\begin{thm}[Arzel\`{a}-Ascoli theorem]
	Suppose the sequence of functions $\bZ_n\parenthesis{t}\in C\parenthesis{I,\real^p}$, $n\in\natural$ on a \emph{compact} interval $I$ is \fbox{(uniformly) \emph{equicontinuous}}, that is, for every $\upvarepsilon>0$, there exists a $\updelta>0$ (independent of $n$) such that
	\begin{equation}
	\norm{\bZ_n\parenthesis{t}-\bZ_n\parenthesis{s}}\le \upvarepsilon \quad \text{if }\abs{t-s}<\updelta, \forall n\in\natural
	\end{equation}
	
	If a sequence $\bZ_n$ is bounded, then there is a uniformly convergent subsequence 
\end{thm}

Specifically, pick $\updelta, T>0$ such that $V=\bracket{t_0, t_0+T}\times \overline{ B_{\updelta}\parenthesis{\hbtheta_0} }  \subset U$ and let $M=\max_{ \parenthesis{\btheta,t}\in V} \norm{\bm{f}\parenthesis{t,\btheta}  }$.   Then $\hbtheta_a\parenthesis{t}\in B_{\updelta}\parenthesis{\hbtheta_0}$ for $t\in\bracket{t_0, t_0+T_0}$, where $T_0=\min\set{T,\frac{\updelta}{M}}$ and 
\begin{equation}
\norm{\hbtheta_a \parenthesis{t}-\hbtheta_a\parenthesis{s}} \le M\abs{t-s}
\end{equation}
Hence any subsequence of the family $\hbtheta_a\parenthesis{t}$ is equicontinuous and there is a uniformly convergent subsequence $\bZ_k\parenthesis{t} \to \bZ\parenthesis{t}$.

It remains to show that the limit $\bZ\parenthesis{t}$ solves our initial IVP. This will be down by verifying that the corresponding integral equation $\bZ\parenthesis{t}=\hbtheta_0+\int_{t_0}^t \bm{f}\parenthesis{s,\bZ\parenthesis{s}}\diff s $  holds. Since $\bm{f}$ is uniformly continuous on $V$, we can find a sequence $\Delta\parenthesis{a}\to 0 $ as $a\to 0$ 
\begin{equation}
\norm{\bm{f}\parenthesis{s,\btheta_1}-\bm{f}\parenthesis{t,\btheta_2}} \le \Delta\parenthesis{h} \quad \text{ for } \norm{\btheta_1-\btheta_2}\le M a , \abs{s-t} \le a. 
\end{equation}

To be able to estimate the difference between the left and right-hand side of $\bZ\parenthesis{t}=\hbtheta_0+\int_{t_0}^t \bm{f}\parenthesis{s,\bZ\parenthesis{s}}\diff s $ for $\hbtheta_a\parenthesis{t}$, we choose an $k$ with $t\le t_k$ and write 
\begin{equation}
\hbtheta_a\parenthesis{t} = \hbtheta_0 + \sum_{j=0}^{k-1} \int_{t_j }^{t_{j+1}} \upchi\parenthesis{s} \bm{f}\parenthesis{ t_j, \hbtheta_a\parenthesis{t_j} } \diff s,
\end{equation}
where $\upchi\parenthesis{s}=1$ for $s\in\bracket{t_0,t}$ and $\upchi\parenthesis{s}=0$ else. Then
\begin{equation}
\begin{split}
&\quad \norm{ \hbtheta_a\parenthesis{t}- \hbtheta_0 - \int_{ t_0}^t \bm{f}\parenthesis{s,\hbtheta_a\parenthesis{s}}\diff s  } \\
&\le \sum_{j=0}^{m-1} \int_{t_j }^{t_{j+1}} \upchi\parenthesis{s} \norm{  \bm{f}\parenthesis{t_j, \hbtheta_a\parenthesis{t_j}}  - \bm{f}\parenthesis{s, \hbtheta_a\parenthesis{s}}   } \diff s \\
&\le \Delta\parenthesis{a} \sum_{j=0}^{m-1} \int_{t_j }^{t_{j+1}} \upchi\parenthesis{s}\diff s \\
&=\abs{t-t_0} \Delta\parenthesis{a},
\end{split}
\end{equation}
from which it follows that $\bZ$ is indeed a solution
\begin{equation}
\begin{split}
&\quad \bZ\parenthesis{t}\\
&= \lim_{k\to\infty} \bZ_k\parenthesis{t}\\
&= \hbtheta_0+\lim_{k\to\infty} \int_{t_0}^t \bm{f} \parenthesis{ s, \bZ_k\parenthesis{s} }\diff s \\
&= \hbtheta_0 + \int_{ t_0}^t \bm{f} \parenthesis{s,\bZ\parenthesis{s}}\diff s 
\end{split}
\end{equation}
since we can interchange limit and integral by uniform convergence.

\begin{thm}[Peano existence theorem]
	\cite[Thm. 2.19]{teschl2012ordinary} Suppose $\bm{f}$ is continuous on $V=\bracket{t_0,t_0+T} \times \overline{ B_{\updelta}\parenthesis{\hbtheta_0} } \subset U$ and denote the maximum of $\norm{\bm{f}}$ by $M$. Then there exists at least one solution of the IVP for $t\in\bracket{t_0,t_0+T_0}$ which remains in $\overline{ B_{\updelta}\parenthesis{\hbtheta_0} }$, where $T_0 = \min\set{T,\frac{\updelta}{M}}$. The analogous result holds for the interval $\bracket{t_0-T_0,t_0}$. 
\end{thm}

Euler algorithm is well suited for the numerical computation of an approximate solution since it requires only the evaluation of $\bm{f}$ at certain points. On the other hand, it is not clear how to find the converging subsequence, and so let us show that

 \fbox{$\hbtheta_a\parenthesis{t}$ converges uniformly if $\bm{f}$ is Lipschitz.}

\begin{thm}[Osgood Uniqueness Criterion]
	We call a continuous nondecreasing function $\uprho: \left[0,\infty\right)\mapsto\left[0,\infty\right)$ with $\uprho\parenthesis{0} =0$ a \emph{module of continuity}. It is said to satisfy the \emph{Osgood condition} if 
	\begin{equation}
	\int_0^1 \frac{\diff r }{\uprho\parenthesis{r}} = \infty.
	\end{equation}
	We say that a function $f:\real\mapsto \real$ is $\uprho$-continuous if $\abs{f\parenthesis{x}-f\parenthesis{y}}\le C\uprho\parenthesis{\abs{x-y}}$ for some constant $C$.
	\begin{rem}
		Osgood condition holds only in the Lipschitz case. 
	\end{rem}

	Let $\bm{f}\parenthesis{t,\btheta}$ be as in the Peano theorem and suppose
	\begin{equation}
	\abs{   \Angle{ \btheta_1-\btheta_2, \bm{f} \parenthesis{t,\btheta_1}  - \bm{f}\parenthesis{t,\btheta_2} } } \le C\norm{\btheta_1-\btheta_2} \uprho\parenthesis{\norm{\btheta_1-\btheta_2}},
	\end{equation}
	for $t\in\bracket{t_0, t_0+T}$, $\btheta_1,\btheta_2\in B_{\updelta} \parenthesis{\hbtheta_0}$, for some modulus of continuity which satisfies the Osgood condition. Then the solution is \emph{unique}. 
\end{thm}

}

\subsection{Stability for  Nonautonomous System}\label{subsect:stability}
\remove{Ratnadip Adhikari, ``A Treaties on Stability of Autonomous and Non-autonomous Systems}
The notion of stability for nonautonomous system reviewed in this subsection  will be applied on the average ODE in Section~\ref{sect:ConstantGain}.

For IVP (\ref{eq:generalIVP}),  take $t_0=0$ w.l.o.g. 
\begin{dfn}
	[Equilibrium of Unconstrained Nonautonomous System] The equilibrium point $\bvartheta$ of the IVP   (\ref{eq:generalIVP})  is such that  $\bm{f}\ilparenthesis{t,\bvartheta}=\zero$ for all $t\ge t^*$ with $t^* \ge   0$.
\end{dfn}
When an equilibrium exists,       the system state  remains at $\bvartheta$  once it reaches   $\bvartheta$. 
By a suitable transformation, we can make the equilibrium point of the transformed system to be the origin $\zero$. With   abuse of notation, we  use (\ref{eq:generalIVP})  to represent the transformed system whose equilibrium  point is at the origin within the rest of this subsection. 

\begin{dfn}
	[Stable]  \label{dfn:Stable} The IVP   (\ref{eq:generalIVP}), whose  equilibrium is the origin $\zero$,   is said to be stable at $t^*$ if, for any $\upvarepsilon>0$, there exists a real number $\updelta=\updelta\ilparenthesis{\upvarepsilon,t^*}>0$ such that $ \norm{\btheta\ilparenthesis{0}}\le \updelta $ implies $ \norm{\btheta\ilparenthesis{t}}\le\upvarepsilon $ for all $t\ge t^*$.
\end{dfn}

\begin{dfn}
	[Convergent]  \label{dfn:Convergent}  The IVP   (\ref{eq:generalIVP}), whose  equilibrium is the origin $\zero$,   is said to be convergent at $t^*$ if, \remove{there exists $\updelta\ilparenthesis{t^*}$ such that for any $\upvarepsilon>0$, there exists $T(\upvarepsilon, t^*)>0$ such that   $ \norm{\btheta\ilparenthesis{0}}\le\updelta$ implies $\norm{\btheta\ilparenthesis{t}}\le\upvarepsilon $ for all $t\ge t^*+T$. }there exists a real number $\updelta = \updelta\ilparenthesis{t^*}>0$ such that $ \norm{\btheta\ilparenthesis{0}}\le \updelta$ implies $ \lim_{t\to\infty}\btheta\ilparenthesis{t}=\zero $.  
\end{dfn}

\begin{dfn}
	[Asymptotically Stable]  \label{dfn:AsymStable}  The IVP   (\ref{eq:generalIVP}), whose  equilibrium is the origin $\zero$,    is said to be asymptotically stable at time $t^*$ if it is both stable and convergent at $t^*$.
\end{dfn}

Definitions \ref{dfn:Stable}\textendash \ref{dfn:AsymStable} can be strengthened to ``uniformly stable,'' ``uniformly convergent,''  and ``uniformly asymptotically stable'' respectively,  if the  dependence on $t^*$ can be removed from the defining statements. In fact, the uniformity in time is important     to ensure the attraction region does not vanish  as time varies.

\begin{dfn}
	[Lyapunov's Stability] A set $  A  \subseteq\real^p $ is said to be  {locally stable in the sense of Liapunov} if, for each $\upvarepsilon
	>0$, there exists a $\updelta>0$ such that all trajectories starting from $\hbtheta_0\in   \Ball_{\updelta}\ilparenthesis{A} $ will never leave $  \Ball_{\upvarepsilon}\ilparenthesis{A } $. If the trajectories ultimately go to $A $, then $A $ is said to be  {asymptotically stable in the sense of Liapunov}. If this holds for all initial conditions, then the  {asymptotic stability} is said to be  {global}. 
\end{dfn}

 \remove{  
 \begin{rem}
 	We generally do not need to know the Liapunov functions themselves. We only need to know that they exist and have appropriate properties.
 \end{rem} 
}

\remove{
\begin{dfn}
	[Upper SemiContinuity of $ C\parenthesis{\btheta} $] An ``infinitesimal'' change in $\btheta$ cannot increase the number of \emph{active} constraints. Thus, loosely speaking, this property can be stated as $ \lim_{\bzeta\to\btheta} C\parenthesis{\bzeta} \subset C\parenthesis{\btheta} $. More precisely, let $ O\parenthesis{\btheta,\updelta} $ be a $\updelta$-neighborhood of $\btheta$, then \begin{equation}
	\bigcap_{\updelta>0} \mathrm{co}\bracket{  \bigcup_{ \bzeta\in O\parenthesis{\btheta,\updelta} } C\parenthesis{\bzeta}  } = C\parenthesis{\btheta} 
	\end{equation}
	where $\mathrm{co}\parenthesis{A}$ denotes the closed convex hull of the set $A$. A set-valued function $C\parenthesis{\cdot}$ satisfying above is said to be \emph{upper semi-continuous}.
\end{dfn}
}

\section{ Review on Weak Convergence} \label{subsect:WeakConvergence}
This section lays out basic facts in weak convergence theory.  
``Weak convergence'' of  the  function-valued random variables (a.k.a.  random function)  extends the notion of ``convergence in distribution'' of  the     $\real^p$-valued random variables.   Let  $D\ilparenthesis{\real \mapsto\real^p}$  be       the space of   functions that map from  $\real $ to $\real^p$ and    are  right-continuous with left-hand limits.       We are    interested in the function space $D\ilparenthesis{\real\mapsto\real^p}$  equipped  with the Skorohod topology \cite[Sect. 12]{billingsley1968convergence}\remove{\cite[p. 238]{kushner2003stochastic}}.   The exact definition of the Skorohod topology is somewhat technical\footnote{For a function $\uplambda(\cdot)$ in the space $ \Lambda_T $ of strictly increasing, continuous mappings of $\bracket{0,T}$ onto itself, first define $ \norm{\uplambda}= \sup_{s<t<T} \abs{  \log \bracket{  \parenthesis{ \uplambda(t)  - \uplambda(s)} / \parenthesis{t-s}  }   } $. The distance between $x(\cdot)$ and $y(\cdot)$ in $ D\parenthesis{\bracket{0,T}\mapsto\bracket{0,T}} $ is given by $ \inf_{\uplambda\in\Lambda_T}\ilset{ \norm{\uplambda }  + \sup_{t\in\bracket{0,T}} \abs{  x(t) - y(\uplambda(t))  }   } $.  The Skorohod topology on the space $D\parenthesis{\left[0,\infty\right)\mapsto \left[0,\infty\right)}$ or  $D\parenthesis{\real\mapsto\real}$ can be defined by requiring the convergence in the Skorohod metric on each compact interval $\bracket{0,T}$ or $\bracket{-T,T}$ for $T>0$. The metric on the product space $D\ilparenthesis{\real\mapsto\real^p}$ can be taken to be the sum of the metrics on the component spaces.     } and not essential for the upcoming proofs. 
 	Under the  Skorohod topology, $D\ilparenthesis{\real \mapsto\real^p}$ is  {separable} and  {metrizable}, and the metric is  {complete}.

	The function space  $D\ilparenthesis{\real \mapsto\real^p}$ is useful for two  reasons. First of all, the processes with paths in  $D\ilparenthesis{\real \mapsto\real^p}$ come up  naturally in applications. Moreover, the Skorohod topology in  $D\ilparenthesis{\real \mapsto\real^p}$  is an extension of the topology of uniform convergence on bounded time intervals in  $C\ilparenthesis{\real \mapsto\real^p}$,  in     that a local   $\parenthesis{k,t}$-dependent stretching or contraction of the time scale is allowed  where $k$ is the index of the function sequence  and $t$ lies within the domain of the $k$th function. 
	Therefore, this topology is weaker than that of  $C\ilparenthesis{\real \mapsto\real^p}$, so that the criteria for compactness are less stringent, even if the paths or their limits may still  lie within   $C\ilparenthesis{\real \mapsto\real^p}$.  Consequently, this property   is useful in  dealing with ``nice'' discontinuities such that the discontinuities do not appear in the limit.  Second, the convergence in such space has    many important ramifications (see further details \cite[Chap. 2]{kurtz1981approximation}). What matters the most to us is that   the convergence of a sequence of functions in  $D\ilparenthesis{\real \mapsto\real^p}$ to a continuous function in  $C\ilparenthesis{\real \mapsto\real^p}$ in the Skorohod topology is equivalent to  {convergence uniformly on each bounded time interval} in  $C\ilparenthesis{\real \mapsto\real^p}$.

	 \subsection{Weak Convergence}
	 This section reviews the notion of weak convergence of random function  by making an analogy with the weak convergence of random variable, and they will be used    in the proofs of  Subsection~\ref{subsect:WeakResult}.   
	Let us work on a common probability space $ \ilparenthesis{\Omega, \Prob,\mathcal{A}} $. 
	
	\begin{dfn}
		[Weak Convergence of Random Variables] \label{dfn:weakRV} \cite{billingsley1968convergence} A sequence of $\real^p$-valued random variables $\bX_k\ilparenthesis{\upomega}$ indexed by $k$ is said to converge    in distribution  to a $\real^p$-valued random variable $\bX\ilparenthesis{\upomega}$ if and only if $\E \boundedFn\parenthesis{\bX_k\ilparenthesis{\upomega}}\to \E \boundedFn\parenthesis{\bX\ilparenthesis{\upomega}}$ as $k\to\infty$ for every  {bounded} and  {continuous} function $\boundedFn:\real^p\mapsto\real $. 
	\end{dfn}
	The weak convergence in Definition~\ref{dfn:weakRV} is also known as ``convergence in distribution'' and   can be compactly written as $\bX_k  \weakconverge \bX$ by suppressing the $\upomega$ dependence.  
	To distinguish random variables and random processes, we represent random processes  using $\bx\parenthesis{\cdot,\cdot}$ and $\bx $ takes $t\in\real$ and $\upomega\in\Omega$ as inputs. For each fixed time $t$, $\bx\parenthesis{t,\cdot}$ is a random variable. For each fixed sample $\upomega,$ $\bx\ilparenthesis{\cdot,\upomega}$ is a function of time. We say that $\bx\parenthesis{t,\upomega}$ is measurable  if $\bx\ilparenthesis{t,\upomega}: \real\times\Omega \mapsto\real^p$ is $\mathcal{B}\parenthesis{\real}\times\mathcal{A}$-measurable, where $\mathcal{B}\ilparenthesis{\real}$ is the Borel sigma-field on the real line.  We say that a random process $\bx\parenthesis{t,\upomega}$ is (almost surely)  {continuous}  if,  for (almost) every $\upomega\in\Omega$, $\bx\parenthesis{\cdot,\upomega}$ is continuous w.r.t. $t$.                    
	Similar to writing the random variable $\bX\ilparenthesis{\upomega}$ as $\bX$, we write $\bx\ilparenthesis{\cdot,\cdot}$ compactly as $\bx\ilparenthesis{\cdot}$ by suppressing the $\upomega$ dependence; i.e., $\bx\ilparenthesis{\cdot}$ is a random function (of time $t$). The following definition of the weak convergence of random functions is a natural extension of Definition~\ref{dfn:weakRV}. 
	
	\begin{dfn}
		[Weak Convergence of Random Functions] \label{dfn:weakRF} \cite{prokhorov1956convergence}	
    The weak convergence  (a.k.a. convergence in distribution) $\bx_k\parenthesis{\cdot}\weakconverge \bx\parenthesis{\cdot}$ in $D\ilparenthesis{\real \mapsto\real^p}$ is equivalent to  $ \E \bm{\boundedFn}\ilparenthesis{\bx_k\ilparenthesis{\cdot,\upomega}}  \to \E\bm{\boundedFn} \ilparenthesis{\bx \ilparenthesis{\cdot,\upomega}}$ as $k\to \infty $ for any  bounded and continuous function $\bm{\boundedFn}$  that maps $ \bx\parenthesis{\cdot}\in D\ilparenthesis{\real \mapsto\real^p}$ to $\parenthesis{  \bx\parenthesis{t_1}^\transpose,\cdots,\bx\parenthesis{t_l}^\transpose }^\transpose \in \real^{pl}$ for $t_1,\cdots,t_l\in\real$ and  $l\ge 1$. 
     \end{dfn}

	\subsection{Tightness} \label{subsect:Tightness}
	 This section reviews the notion of tightness and   a set of sufficient conditions to prove tightness, which will be useful  in  Subsection~\ref{subsect:WeakResult}.

	The    sequence of random variables  $\set{\bX_k}$ is said to be   {tight} (a.k.a.  uniformly bounded in probability) if, for each $\updelta\in\left(0,1\right]$, there exists a compact set $\compactset_{\updelta}$ such that $\Prob\set{\bX_k\in \compactset_{\updelta}} \ge 1-\updelta$ for all $k$. 
	The Helly--Bray theorem states that a tight sequence must have  a further subsequence that converges weakly. We also define the notion of  sequential compactness by  ``each subsequence contains a further subsequence that converges weakly.''  
	
	The definition of tightness  carries over to   random functions $ \bx_k\parenthesis{\cdot} $. The exact statement of the tightness of random functions is technical and we will mainly use the following  supporting lemma to facilitate proving tightness of a sequence of random functions $\bx_k\ilparenthesis{\cdot}$ within $D\ilparenthesis{\real \mapsto\real^p}$.

	\begin{lem}\label{lem:Tightness}
		Let $\btheta_k\parenthesis{\cdot}$ be a sequence of  random processes indexed by $k$ with paths in $ D\ilparenthesis{\real \mapsto\real^p} $.   If the following two conditions hold, then we claim that $ \set{\btheta_k\parenthesis{\cdot}} $ is tight in $ D\ilparenthesis{\real \mapsto\real^p} $. 
		\begin{enumerate}
			\item  {Compact containment condition}. For each $\updelta\in\left(0,1\right]$, and for  {each} $t$ in a  {dense} subset of  $\real$, there exists a compact set $\compactset_{\updelta,t}\subsetneq\real^p $ such that
			\begin{equation}\label{eq:compact1}
			\inf_k\Prob\set{\btheta_k\parenthesis{t}\in \compactset_{\updelta,t}} \ge 1-\updelta.
			\end{equation}
			\item For each $T>0$, 
			\begin{equation}\label{eq:compact2}
			\lim_{\uptau\to 0} \limsup_{k\to\infty} \sup_{ 0\le  s\le \uptau, \abs{t}\le T  } \E\norm{\btheta_k\parenthesis{t+s}-\btheta_k\parenthesis{t}}=0.
			\end{equation}
		\end{enumerate}
	
\end{lem}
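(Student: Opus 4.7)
The plan is to invoke a well-known tightness criterion for the Skorohod space $D\ilparenthesis{\real\mapsto\real^p}$ and verify its hypotheses using the two assumptions of the lemma. Specifically, I would appeal to a criterion of the Aldous--Kurtz or Ethier--Kurtz type (see \cite[Thm.~3.8.6]{ethier1986markov} or analogous statements in \cite[Chap.~7]{kushner2003stochastic}), which reduces tightness of $\ilset{\btheta_k\ilparenthesis{\cdot}}$ in $D\ilparenthesis{\real\mapsto\real^p}$ to the combination of (a) tightness of the one-dimensional marginals $\btheta_k\ilparenthesis{t}$ for $t$ in a dense subset of $\real$, and (b) an in-probability control on a suitable modulus of continuity of the paths.

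First, since the Skorohod topology on $D\ilparenthesis{\real\mapsto\real^p}$ is induced by its restrictions to compact intervals $\ilbracket{-T,T}$, and since tightness on the full line follows from tightness on each compact interval via a diagonal extraction, it suffices to establish tightness of $\ilset{\btheta_k\ilparenthesis{\cdot}}$ on $D\ilparenthesis{\ilbracket{-T,T}\mapsto\real^p}$ for every $T>0$. Verifying the one-dimensional tightness (a) on each such interval is immediate from hypothesis (\ref{eq:compact1}): by Prokhorov's theorem, the compact containment bound $\inf_k \Prob\ilset{\btheta_k\ilparenthesis{t}\in\compactset_{\updelta,t}}\ge 1-\updelta$ for each $t$ in a dense subset is precisely tightness of the marginals.

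Second, the crucial step is translating the $L^1$-type increment bound (\ref{eq:compact2}) into a probability bound on the classical modulus of continuity $w\ilparenthesis{\btheta_k,\uptau,T}\equiv \sup_{\abs{t}\le T,\,0\le s\le\uptau}\norm{\btheta_k\ilparenthesis{t+s}-\btheta_k\ilparenthesis{t}}$. A pointwise-in-$t$ bound follows from Markov's inequality, namely $\Prob\ilset{\norm{\btheta_k\ilparenthesis{t+s}-\btheta_k\ilparenthesis{t}}>\upvarepsilon}\le \upvarepsilon^{-1}\E\norm{\btheta_k\ilparenthesis{t+s}-\btheta_k\ilparenthesis{t}}$. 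To pass from this pointwise control to a uniform supremum bound, I would partition $\ilbracket{-T,T}$ into a finite grid of mesh~$\uptau$ and use a union bound; the uniformity of (\ref{eq:compact2}) over $s\in\ilbracket{0,\uptau}$ is essential, as it ensures that each grid cell contributes a uniformly controlled quantity. Taking $\limsup_k$ first and then $\uptau\downarrow 0$ yields that $w\ilparenthesis{\btheta_k,\uptau,T}\to 0$ in probability, uniformly in $k$.

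Third, I observe that (\ref{eq:compact2}) in fact controls the uniform modulus $w$, which dominates the Skorohod modulus $w'$ used in the standard tightness criterion for $D\ilparenthesis{\real\mapsto\real^p}$; consequently the criterion is satisfied \emph{a fortiori}, and as a free byproduct every weak limit point of $\ilset{\btheta_k\ilparenthesis{\cdot}}$ must lie in $C\ilparenthesis{\real\mapsto\real^p}$. The main obstacle is ensuring the union bound over the $O\ilparenthesis{T/\uptau}$ grid points does not swamp the Markov estimate; this is handled by the fact that the bound in (\ref{eq:compact2}) vanishes as $\uptau\to 0$ even before taking the limit in $k$, so for any fixed $\upvarepsilon>0$ one can choose $\uptau$ small enough to absorb the grid size. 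Combining (a) and (b) with the cited criterion completes the proof.
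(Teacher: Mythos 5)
Your overall strategy (reduce to compact intervals, verify marginal tightness from (\ref{eq:compact1}), and control a modulus of continuity from (\ref{eq:compact2})) is the right shape, and your first paragraph matches what the paper does in spirit: the paper's own proof is essentially a citation, invoking completeness and separability of $D\ilparenthesis{\real\mapsto\real^p}$, Prohorov's theorem, and then Kurtz's tightness criterion \cite[Thm. 2.7 on p. 10]{kurtz1981approximation}, which is engineered precisely for hypotheses of the form (\ref{eq:compact1})--(\ref{eq:compact2}).

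However, your second and third paragraphs contain a genuine gap. Condition (\ref{eq:compact2}) is a \emph{supremum of expectations}, $\sup_{s,t}\E\norm{\btheta_k\ilparenthesis{t+s}-\btheta_k\ilparenthesis{t}}$, not an expectation of a supremum. Markov's inequality therefore gives you $\Prob\ilset{\norm{\btheta_k\ilparenthesis{t+s}-\btheta_k\ilparenthesis{t}}>\upvarepsilon}\le\upvarepsilon^{-1}\upepsilon\ilparenthesis{\uptau,k}$ only for each \emph{fixed} pair $\ilparenthesis{t,s}$. Your union bound over the $O\ilparenthesis{T/\uptau}$ grid points $t_i$ does not address the supremum over the uncountably many $s\in\ilbracket{0,\uptau}$ \emph{within} each cell: bounding $\Prob\ilset{\sup_{0\le s\le\uptau}\norm{\btheta_k\ilparenthesis{t_i+s}-\btheta_k\ilparenthesis{t_i}}>\upvarepsilon}$ requires a maximal inequality (or the stopping-time machinery of the Aldous/Kurtz criteria), which pointwise Markov bounds do not supply. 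Separately, even granting that step, the union bound produces a factor $O\ilparenthesis{T/\uptau}\cdot\upvarepsilon^{-1}\upepsilon\ilparenthesis{\uptau,k}$, and your claim that one can ``choose $\uptau$ small enough to absorb the grid size'' is backwards: shrinking $\uptau$ makes the grid finer, so you would need $\upepsilon\ilparenthesis{\uptau,k}=o\ilparenthesis{\uptau}$, which (\ref{eq:compact2}) does not assert --- it only gives $\upepsilon\ilparenthesis{\uptau,k}\to 0$. The clean fix is to do what the paper does: do not attempt to dominate the uniform modulus directly, but instead check the hypotheses of \cite[Thm. 2.7]{kurtz1981approximation} (or \cite[Thm. 3.8.6]{ethier2009markov}), whose conditional-increment formulation is implied by (\ref{eq:compact2}) and which internalizes the passage from increment bounds to modulus control.
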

	\begin{rem}
	(\ref{eq:compact2}) does not imply the continuity of the paths of either $\btheta_k\parenthesis{\cdot}$ or any weak sense limit $\btheta\parenthesis{\cdot}$. \remove{In fact, (\ref{eq:compact2}) holds if $\set{\btheta_k\parenthesis{\cdot}}$ is a sequence of continuous-time Markov processes on a compact state space with uniformly bounded and time dependent transition rates. It immediately follows that any weak sense limit of this sequence is also a continuous-time Markov process on the same compact state space and time dependent transition functions. }
\end{rem} 
\begin{proof}[Proof of Lemma~\ref{lem:Tightness}]
	
	Under the metric defined in  \cite[Sect. 12]{billingsley1968convergence},  the metric space $ D\ilparenthesis{\real \mapsto\real^p}  $  is  {separable} and  {complete}. Prohorov's theorem \cite[p. 104]{ethier2009markov} states that tightness is equivalent to sequential compactness on a complete\footnote{A metric space is complete if every Cauchy sequence in it converges to a point in it. } separable\footnote{A topological  space is separable if it contains a countable dense subset. } metric space.  By Prohorov's Theorem, any   sequence in $D\ilparenthesis{\real \mapsto\real^p} $  is tight if and only if it is relatively compact. The result  follows from \cite[Thm. 2.7 on p. 10]{kurtz1981approximation}. 
\end{proof}

If a sequence    $ \set{\bx_k\parenthesis{\cdot}} $ on a  {complete}  {separable} metric space (mainly $D(\real\mapsto\real^p)$ in our discussion) is shown to be tight through Lemma~\ref{lem:Tightness}  then it must have a weakly convergent subsequence.    The proposition that ``if a sequence of random functions is tight, then it has a weak convergent subsequence'' is in fact an extension of the proposition that ''if a sequence of random variables is tight, then it has a subsequence that converges in distribution to some random variable.''

\begin{lem} \cite[p. 230]{kushner2003stochastic} \label{lem:weakcont}
	Suppose that a sequence of processes $\set{\btheta_k\parenthesis{\cdot}}$ is tight in $D\ilparenthesis{\real \mapsto\real^p} $ and that on each interval $\bracket{-T,T}$ the size of the maximum discontinuity goes to zero in probability as $k\to\infty$, then any weak sense limit process must have continuous paths w.p.1. 
\end{lem}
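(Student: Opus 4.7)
The plan is to transfer the ``vanishing jump'' property from the sequence $\{\btheta_k(\cdot)\}$ to any weak-sense limit via the continuous mapping theorem, applied to the maximum-jump functional on Skorohod space. Let $\btheta(\cdot)$ be any weak limit of a subsequence $\btheta_{k_n}(\cdot) \weakconverge \btheta(\cdot)$ in $D(\real\mapsto\real^p)$; existence of at least one such subsequence is guaranteed by tightness together with Prohorov's theorem, as invoked in the proof of Lemma~\ref{lem:Tightness}.

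For $T>0$ and $x(\cdot)\in D(\real\mapsto\real^p)$, define the maximum-jump functional
\begin{equation*}
	J_T(x) \;:=\; \sup_{t\in[-T,T]} \norm{x(t)-x(t-)}.
\end{equation*}
First I would record the standard fact that $J_T: D(\real\mapsto\real^p)\mapsto \left[0,\infty\right)$ is continuous at every path $x(\cdot)$ whose continuity points include both $-T$ and $T$; see \cite[Sect.~12]{billingsley1968convergence}. (The only obstruction to continuity of $J_T$ in the Skorohod topology is the possibility that a local time rescaling pushes a jump across the boundary $\pm T$.) The set of $t\in\real$ at which the limit process $\btheta(\cdot)$ has a positive probability of being discontinuous is at most countable, so we may fix an increasing sequence $T_j \uparrow \infty$ such that, for each $j$, the times $\pm T_j$ are a.s. continuity points of $\btheta(\cdot)$. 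Consequently $J_{T_j}$ is continuous $\Prob_{\btheta}$-a.s.

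Next I would apply the continuous mapping theorem for weak convergence: since $\btheta_{k_n}\weakconverge \btheta$ in $D(\real\mapsto\real^p)$ and $J_{T_j}$ is a.s. continuous at $\btheta$, we obtain $J_{T_j}(\btheta_{k_n})\weakconverge J_{T_j}(\btheta)$ as $n\to\infty$. By hypothesis, $J_{T_j}(\btheta_{k_n})\to 0$ in probability, which implies $J_{T_j}(\btheta_{k_n})\weakconverge 0$, and weak limits are unique, so $J_{T_j}(\btheta)=0$ almost surely. Taking the countable union over $j$ yields $\sup_{t\in\real}\norm{\btheta(t)-\btheta(t-)}=0$ a.s., i.e., $\btheta(\cdot)$ has continuous paths with probability one.

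The main obstacle is the endpoint subtlety: the functional $J_T$ fails to be continuous on all of $D(\real\mapsto\real^p)$, so one cannot simply invoke the continuous mapping theorem at an arbitrary $T$. The workaround is to exploit the fact that only countably many $t$ can be fixed-time discontinuities of $\btheta(\cdot)$, which lets us choose $T_j$ in the complement. Everything else is a relatively routine deployment of Prohorov's theorem, the continuous mapping theorem in the Skorohod topology, and uniqueness of weak limits.
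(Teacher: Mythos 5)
Your argument is correct. Note that the paper does not actually prove this lemma---it is stated with a citation to Kushner and Yin (2003, p.~230)---so there is no in-paper proof to compare against line by line. Your route, via the maximum-jump functional $J_T$ and the continuous mapping theorem, is a standard and valid one, and you correctly identify and handle the only genuine subtlety: $J_T$ is Skorohod-continuous only at paths for which $\pm T$ are continuity points, which you resolve by choosing $T_j\uparrow\infty$ outside the at most countable set of fixed discontinuity times of the limit process, then taking a countable union. The cited source reaches the same conclusion by a slightly different mechanism that the paper itself uses elsewhere (Theorem~\ref{thm:Skorohod}): after a Skorohod-representation change of probability space, the weak convergence becomes almost-sure convergence in the Skorohod metric, and one argues directly that a limit path with a jump of size $\delta>0$ on $\bracket{-T,T}$ would force the prelimit paths to have jumps of size at least $\delta/2$ there for all large $k$, contradicting the in-probability hypothesis. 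The two approaches deliver the same result; yours avoids the change of space at the cost of the endpoint bookkeeping for $J_T$, while the representation argument avoids any discussion of where $J_T$ is continuous.
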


\remove{ 

\begin{thm}
	\cite[Thm. 7.3.6 on p. 233]{kushner2003stochastic} Let the processes $\btheta_k\parenthesis{\cdot}$ have paths in $D\ilparenthesis{\left[0,\infty\right)  \mapsto\real^p}     $ w.p.1. Suppose that for each $\updelta\in\parenthesis{0,1}$ and $T>0$, there is a process $\btheta_{k,\updelta,T}\parenthesis{\cdot}$ with paths in $D\ilparenthesis{\left[0,\infty\right)  \mapsto\real^p}  $ w.p.1 such that $$ \Prob\ilset{ \sup_{t\le T} \norm{  \btheta_{k,\updelta,T}\parenthesis{t} - \btheta_k\parenthesis{t}   }\ge \updelta }\le \updelta. $$
	If $\ilset{  \btheta_{k,\updelta,T}  }_{k\ge 0}$ is tight for each $\updelta$ and $T$, then $\set{\btheta_k\parenthesis{\cdot}}$ is tight.

	If $\ilset{  \btheta_{k,\updelta,T}  }_{k\ge 0}$ converges weakly to a process $\btheta\parenthesis{\cdot}$ that does  {not} depend on $\parenthesis{\updelta,T}$, then the original sequence converges weakly to $\btheta\parenthesis{\cdot}$.
	
	Suppose that for each $\updelta\in\parenthesis{0,1}$ and $T>0$, $\ilset{  \btheta_{k,\updelta,T}  }_{k\ge 0}$ converges weakly to a process $\btheta_{\updelta,T}\parenthesis{\cdot}$ and that there is a process such that the measures of $\btheta_{\updelta,T}\parenthesis{\cdot}$ and $\btheta\parenthesis{\cdot}$ on the interval $\bracket{0,T}$ are equal, except on a set $\nullset$ whose probability goes to zero as $\updelta\to 0$. Then $\set{\btheta_k\parenthesis{\cdot}}$ converges weakly to $\btheta\parenthesis{\cdot}$.

	Analogous result holds for processes with paths in $D\ilparenthesis{\real  \mapsto\real^p}  $. 
\end{thm}

}

\subsection{Skorohod Embedding} \label{subsect:Skorohod}

 This section reviews the notion of Skorohod embedding, which  will be applied     in the proofs of  Subsection~\ref{subsect:WeakResult}.  
Recall that 
$D\ilparenthesis{\real  \mapsto\real^p}   $ is a complete  and separable metric space with the   metric $d\parenthesis{\cdot,\cdot}$  that metricizes the  Skorohod topology. 
\begin{thm} 
	[Skorohod representation] \label{thm:Skorohod}  Let $\btheta_k\ilparenthesis{\cdot} \weakconverge \btheta\ilparenthesis{\cdot}$ for $\btheta_k\ilparenthesis{\cdot},\btheta\ilparenthesis{\cdot}\in D\ilparenthesis{\real  \mapsto\real^p}   $. There exists a probability space $\ilparenthesis{ \tilde{{\Omega}}, \tilde{\mathcal{B}}, \tilde{\Prob} }$ with associated   random functions $\tilde{\btheta}_k\ilparenthesis{\cdot}$ in $D\ilparenthesis{\real  \mapsto\real^p}  $ and $\tilde{\btheta}\ilparenthesis{\cdot}$ defined on it such that for each dense\footnote{A subset $B$ of a topological space $A$ is dense if every point in $A$ either belongs to $B$ or a is a limit point of $B$. } set $\compactset\subsetneq D\ilparenthesis{\real  \mapsto\real^p}  $:
	\begin{equation}
	\tilde{\Prob}\ilset{\tilde{\btheta}_k\ilparenthesis{\cdot}\in \compactset  } = \Prob\ilset{\btheta_k\ilparenthesis{\cdot}\in \compactset}, \,\,  \tilde{\Prob}\ilset{\tilde{\btheta}\ilparenthesis{\cdot}\in \compactset} = \Prob\ilset{\btheta\ilparenthesis{\cdot}\in \compactset},
	\end{equation} and $d\ilparenthesis{\tilde{\btheta}_k\ilparenthesis{\cdot},\tilde{\btheta}\ilparenthesis{\cdot}}\to 0$ w.p.1. 
\end{thm}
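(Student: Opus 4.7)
The plan is to invoke the Skorohod representation theorem for Polish spaces applied to $S=D\ilparenthesis{\real\mapsto\real^p}$, which the excerpt already established is separable and complete. Let $\mu_k=\Prob\circ\btheta_k^{-1}$ and $\mu=\Prob\circ\btheta^{-1}$ denote the Borel laws on $S$; the hypothesis $\btheta_k\ilparenthesis{\cdot}\weakconverge\btheta\ilparenthesis{\cdot}$ is exactly $\mu_k\weakconverge\mu$ in the sense of weak convergence of Borel probability measures on $S$. I would take the common probability space to be $\ilparenthesis{\ilbracket{0,1},\mathcal{B}\ilparenthesis{\ilbracket{0,1}},\uplambda}$ with $\uplambda$ Lebesgue measure, and construct Borel maps $\tilde{\btheta},\tilde{\btheta}_k:\ilbracket{0,1}\to S$ whose pushforwards are $\mu$ and $\mu_k$, arranged so that $d\ilparenthesis{\tilde{\btheta}_k,\tilde{\btheta}}\to 0$ $\uplambda$-almost everywhere.

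First I would build a sequence of refining Borel partitions $\mathcal{P}_m=\ilset{A_{m,i}}_{i\ge 1}$ of $S$ such that (i) $\mathrm{diam}\ilparenthesis{A_{m,i}}\le 1/m$, (ii) $\mu\ilparenthesis{\partial A_{m,i}}=0$ for every $i$, and (iii) $\mathcal{P}_{m+1}$ refines $\mathcal{P}_m$. Separability of $S$ supplies a countable dense set from which balls of radius $1/\ilparenthesis{2m}$ can be peeled off into such a partition, the $\mu$-null boundary condition being enforceable by perturbing radii since only countably many radii violate it. The Portmanteau theorem then gives $\mu_k\ilparenthesis{A_{m,i}}\to\mu\ilparenthesis{A_{m,i}}$ for each fixed $m,i$. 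At level $m$, I would partition $\ilbracket{0,1}$ into adjacent subintervals $I_{m,i}$ of length $\mu\ilparenthesis{A_{m,i}}$ compatible with the refinement, and define $\tilde{\btheta}\ilparenthesis{\upomega}$ as the unique point in $\bigcap_m A_{m,i\ilparenthesis{\upomega,m}}$, well-defined by completeness and diameters tending to zero, with a standard Borel isomorphism on the terminal cells used to match the law $\mu$. The map $\tilde{\btheta}_k$ is built from the same $\mathcal{P}_m$ using weights $\mu_k\ilparenthesis{A_{m,i}}$, with the crucial choice of aligning its subintervals with those of $\tilde{\btheta}$ on the overlap of length $\min\ilset{\mu_k\ilparenthesis{A_{m,i}},\mu\ilparenthesis{A_{m,i}}}$.

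The hard step will be verifying the almost sure convergence while simultaneously preserving the marginal law of $\tilde{\btheta}_k$. Introduce the exceptional event $E_{k,m}=\ilset{\upomega:\tilde{\btheta}_k\ilparenthesis{\upomega}\text{ and }\tilde{\btheta}\ilparenthesis{\upomega}\text{ lie in distinct cells of }\mathcal{P}_m}$. The alignment yields the total-variation bound $\uplambda\ilparenthesis{E_{k,m}}\le \tfrac{1}{2}\sum_i\abs{\mu_k\ilparenthesis{A_{m,i}}-\mu\ilparenthesis{A_{m,i}}}$, which tends to zero as $k\to\infty$ for each fixed $m$ by Portmanteau combined with $\ell^1$ dominated convergence, with dominant $\ilset{\mu\ilparenthesis{A_{m,i}}}_i$. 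Choosing $k_m$ so that $\uplambda\ilparenthesis{E_{k,m}}\le 2^{-m}$ for $k\ge k_m$, the Borel--Cantelli lemma shows that for $\uplambda$-a.e.\ $\upomega$ the point $\upomega$ eventually avoids every $E_{k,m}$, which, combined with cell diameter $\le 1/m$, yields $d\ilparenthesis{\tilde{\btheta}_k\ilparenthesis{\cdot,\upomega},\tilde{\btheta}\ilparenthesis{\cdot,\upomega}}\to 0$ w.p.1 after letting $m\to\infty$. The marginal-law identities $\tilde{\Prob}\ilset{\tilde{\btheta}_k\in\compactset}=\Prob\ilset{\btheta_k\in\compactset}$ and $\tilde{\Prob}\ilset{\tilde{\btheta}\in\compactset}=\Prob\ilset{\btheta\in\compactset}$ then follow from uniqueness of Borel measures agreeing on the generating algebra of partition cells. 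The main obstacle throughout is reconciling the forced ``alignment with $\tilde{\btheta}$'' (needed for convergence) with the ``correct pushforward'' (needed for the law identities); this is overcome precisely by the overlap allocation, with the Borel-isomorphism filling on the mismatch set handling the leftover mass without disturbing the cell-level marginals.
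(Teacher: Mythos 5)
The paper itself offers no proof of Theorem~\ref{thm:Skorohod}: it is quoted in the Preliminaries as the classical Skorokhod--Dudley representation theorem for Polish spaces, so there is no in-paper argument to compare against. Your construction is essentially the standard Billingsley/Dudley proof (refining $\mu$-continuity partitions with small diameter, interval couplings on $\ilparenthesis{\ilbracket{0,1},\uplambda}$, Portmanteau), and much of it is sound; in particular you correctly note that the disagreement set must be controlled via a maximal-overlap allocation of subintervals rather than a naive in-order layout, which by itself does not achieve the total-variation bound.

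The genuine gap is the Borel--Cantelli step. Your exceptional events $E_{k,m}$ are doubly indexed, and choosing $k_m$ with $\uplambda\ilparenthesis{E_{k,m}}\le 2^{-m}$ for $k\ge k_m$ does not make the relevant series summable: with $m\ilparenthesis{k}=\max\ilset{m: k_m\le k}$ one gets $\sum_k \uplambda\ilparenthesis{E_{k,m(k)}}=\sum_m \ilparenthesis{k_{m+1}-k_m}2^{-m}$, which need not converge, while for fixed $m$ the sum $\sum_k\uplambda\ilparenthesis{E_{k,m}}$ is also not finite (you have a bound per term, not summability in $k$). So ``$\upomega$ eventually avoids every $E_{k,m}$'' does not follow. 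The classical repair is to make the exceptional set at level $m$ a \emph{single} event $M_m$ that works for all $k\in\ilbracket{k_m,k_{m+1})$ simultaneously: use the multiplicative Portmanteau bound $\mu_k\ilparenthesis{A_{m,i}}\ge \ilparenthesis{1-2^{-m}}\mu\ilparenthesis{A_{m,i}}$ for $k\ge k_m$, force agreement on the left $\ilparenthesis{1-2^{-m}}$-fraction of each interval $I_{m,i}$, and confine all mismatch mass to the fixed right-tail set $M_m$ with $\uplambda\ilparenthesis{M_m}\le 2^{-m}$; then $\sum_m\uplambda\ilparenthesis{M_m}<\infty$ and Borel--Cantelli over $m$ alone yields the almost sure convergence (this also sidesteps the need to reconcile maximal couplings across different refinement levels, since each $k$ uses only one level). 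A secondary soft spot: $\bigcap_m A_{m,i\ilparenthesis{\upomega,m}}$ can be empty because the cells need not be closed, so $\tilde{\btheta}\ilparenthesis{\upomega}$ should be defined as the limit of the Cauchy sequence of cell representatives, or via a Borel map on each terminal cell carrying the conditional law; it is that device, not the intersection, that actually secures $\tilde{\Prob}\ilset{\tilde{\btheta}\ilparenthesis{\cdot}\in\compactset}=\Prob\ilset{\btheta\ilparenthesis{\cdot}\in\compactset}$ exactly.
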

W.l.o.g., we   suppose that the probability space is  carefully chosen so that weak convergence is equivalent to convergence w.p.1 uniformly on bounded time intervals. Note that the use of the Skorohod representation itself does not imply that the original sequence $\btheta_k\ilparenthesis{\cdot}$ converges  w.p.1.

\chapter{Tracking Capability  }\label{chap:FiniteErrorBound}

  This chapter focuses on the analysis of the tracking performance of  adaptive-gain SA algorithms. 
 Time-varying optimization  problems arise  frequently, including in deterministic nonlinear programming \cite{bertsekas2009convex}, e.g., the method of penalty functions involves selecting a growing\footnote{With this perspective, it is easier to study the behavior of a solution at infinity and also estimate the strategy of the choice of the sequence of penalty coefficients. } sequence of the penalty coefficients and solve for the constrained minimization problem sequentially for each iteration.    
 Aside from the underlying controllable parameter $\btheta$, the loss function $\loss\ilparenthesis{\cdot}$ may also  depend on some other factors, such as  time. Following the  motivations discussed     in  Section \ref{sect:Motivation}, we are mainly interested in situations  where the noisy information of the time-varying  loss functions  can be collected at sampling time $\uptau_k$ corresponding to discrete index $k$.  That is, this chapter considers  a sequence of loss functions $\loss_k\ilparenthesis{\btheta}$ at sampling times  $\uptau_k$, instead  of  one single loss function $\loss\ilparenthesis{\cdot}$ that remains unchanged.  Moreover,  only a small number (either one or two) of   noisy observations    pertaining to  $\loss_k\ilparenthesis{\cdot}$  are revealed   at the sampling  time  $\uptau_k$.  Such setup contrasts with    \cite{popkov2005gradient,simonetto2017time} in that they assume       \emph{noise-free} observations. It  is also different from \cite{wilson2019adaptive},  where  as  many  sequential measurements as needed   can be collected at each discrete time instance. In this setting, we only require at most    two parallel measurements. The meanings of ``sequential'' and ``parallel'' will be explained in Subsection \ref{subsect:distinction}. 
  
This chapter is dedicated to showing  the tracking capability of SA algorithms with non-decaying gain as applied in a  time-varying framework, where  a  sequence of loss  functions $\ilset{\loss_k\ilparenthesis{\cdot}}$
   changes  along  $\ilbracket{\uptau_0,\uptau_K}$   of our interest with  $K$ being the last sampling index, and  a slowly  time-varying optimum.   By ``slowly'' we mean that  the average  distance between successive optimizers is strictly bounded from above on average; infrequent jumps are allowed in such a   setting.
Section~\ref{sect:ProlemSetup} presents     the problem setup  and Section \ref{sect:ModelAssumptions} discusses the model assumptions. 
Section \ref{sect:UnconditionalError} establishes the tracking capability by computing  the  error bound for MAD and RMS.     Section \ref{sect:SpecialCases} discusses some special cases of (\ref{eq:basicSA}) for nonlinear root-finding.

\section{Problem Formulation}\label{sect:ProlemSetup}
	
This section  introduces    necessary      concepts  arising in the parameter estimation   and  states   the    target-tracking problem.  
 
\subsection{Basic Setup of Time-Varying SA Problems}\label{subsect:ProblemSetup}

	We consider the   problem of estimating a time-varying parameter $\ilset{\bvartheta_k}$  that varies ``slowly''  and formulate the problem from an  online convex optimization perspective.   Each $\loss_k$ in the  sequence of convex functions $ \ilset{\loss_k\parenthesis{\cdot}}  $ indexed by   $k$  is a differentiable mapping  from $\real^p$ to $\real$.   Recall that  each     index $k$ corresponds to the actual time $\uptau_k$. Within this chapter, let us suppose that the sampling frequency is bounded from above; i.e., the actual time elapsed between two consecutive samples,  $   \uptau _{k+1} - \uptau_ k $,  is bounded from below by  zero     for all $k\in\natural$. Note that the sampling intervals $  \uptau _{k+1} - \uptau_ k $ need \emph{not} remain constant across $k$.

	 Our goal is to efficiently track  the value(s) of  $\btheta$ that minimizes  instantaneous scalar-valued  loss function (sequence) $\loss _k(\cdot)$:
\begin{equation}\label{eq:Minimization}
\text{Find }\,
\bvartheta_k  \equiv \arg \min_{\btheta\in\Theta} \loss _k(\btheta)  \,\text{for each }   k\in\natural.  
\end{equation}  
Let $\hbtheta_k$, whose recursive scheme  will soon be discussed in the next subsection,  represent our best  possible estimate for parameter $\bvartheta_k$ at time $\uptau_k$. The experimenter does not   know the exact   functional form of $\loss_k\ilparenthesis{\cdot}$, but can receive     instant  feedback  
immediately  after the decision $\hbtheta_k$ is selected.    Usually, the  instant     feedback regarding $\loss_k\ilparenthesis{\cdot}$ at a design point $\hbtheta_k$  is either a  noisy realization of the cost or a noisy evaluation of the gradient information 
\begin{numcases}{}
y_k\parenthesis{\btheta}=\loss_k\ilparenthesis{\btheta}+\upvarepsilon_k\ilparenthesis{\btheta},& \label{eq:y} \\
\bY_k\ilparenthesis{\btheta}= \frac{\partial\loss_k\ilparenthesis{\btheta}}{\partial\btheta} + \noise_k  \ilparenthesis{\btheta}.  & \label{eq:Y} 
\end{numcases}   A comprehensive summary of gradient estimation methods available through the mid-1990s is \cite{spall1994developments}  and some recent detailed analysis of some of these methods is given in \cite{blakney2019compare}. 
 Note that (\ref{eq:Y}) differs from  (\ref{eq:Ystationary}) in that   both of the  terms on the r.h.s.  of  (\ref{eq:Y})     vary with   $k$ whereas only $\noise_k\ilparenthesis{\cdot}$ on the r.h.s. of  (\ref{eq:Ystationary}) depends on $k$.  
 \begin{rem}
 	\label{rem:clarification} 
 	We need to clarify  both $\loss_k\ilparenthesis{\cdot}$ and its minimizer $\bvartheta_k$ are \emph{deterministic} to the experimenter. Granted,  $\bvartheta_k$ itself can evolve stochastically and a common example is that   the state space model in KF involves a multivariate normal distribution. Nonetheless, the randomness in $\bvartheta_k$ will \emph{not} be taken into account while formulating the loss function at time $\uptau_k$, an example of which is (\ref{eq:LinearGradient}) to appear. That said, at time $\uptau_k $, the loss function $\loss_k\ilparenthesis{\cdot}$ is formulated in a way that $\bvartheta_k$ is deemed as a fixed value, and only the measurement noise $\upvarepsilon_k\ilparenthesis{\cdot}$ in (\ref{eq:y}) or  $\noise_k\ilparenthesis{\cdot}$ (\ref{eq:Y}) is taken into consideration. 
 \end{rem}

The    general setting  (\ref{eq:Minimization})\textendash (\ref{eq:Y})    subsumes many     target tracking scenarios  where $\bvartheta_k$   represents the locations of the targets being pursued by one or more agents.  	  The agents are  expected  to utilize the immediate  feedback  via    either (\ref{eq:y}) or (\ref{eq:Y})  to improve their estimates $\hbtheta_k$ for  parameter $\bvartheta_k$ in an online fashion. Often, at each sampling time  $\uptau_k$,  only a \emph{few} (either  one or two in our discussion) noisy measurements,  either in the form of (\ref{eq:y}) or   (\ref{eq:Y}), can be gathered, and  the evaluation point is at the agents' disposal.  In the defense applications,   the agents only observe the target's location when   necessary, because   frequent emission of radar signals   inevitably  and undesirably  reveals  the agents' position.  
Such a setting promotes the ``few measurements at each time''  requirement.

\subsection{SA Algorithm with Non-Decaying Gain}\label{sect:SAintro}
We are   interested in characterizing     SA algorithms \cite[Eq. (6.5) on p. 157]{spall2005introduction}, namely, (\ref{eq:basicSA}) with a  non-diminishing step size. Note that  $\gain_k$ has to be  strictly  bounded away from zero, and advance tuning is required (see more details in Algorithm~\ref{algo:basicSA}). Let us briefly discuss  the several  forms of $\hbg_k\ilparenthesis{\hbtheta_k}$  here, corresponding to the  two feedback forms  (\ref{eq:y}) and (\ref{eq:Y}), respectively.

	\begin{itemize}
	\item When the feedback takes  the form of (\ref{eq:y}), the agent is  allowed to collect only  \emph{one} measurement at a certain point at its disposal at every sampling instance $\uptau_k $. Then (\ref{eq:basicSA}) may include the one-measurement SPSA under further assumptions.	
	Specifically, $y_k\ilparenthesis{\cdot}$  will be evaluated at the  design point $ \hbtheta_k + c_k \bDelta_k $, where $ \bDelta_k   $ is a $p$-dimensional random  vector with  zero-mean satisfying conditions listed in \cite[Sect. 7.4]{spall2005introduction},  and $c_k$ is a small   positive number strictly bounded away from zero.  For one-measurement SPSA, $\hbg_k\ilparenthesis{\hbtheta_k}$ in  (\ref{eq:basicSA}) will be substituted by $	\hbg_k  ^{\SP 1}\ilparenthesis{\hbtheta_k}$ 
	  computed as in  \cite{spall1997one}: 
	\begin{equation}\label{eq:g1SP}
	\hbg_k  ^{\SP 1}\ilparenthesis{\hbtheta_k}\equiv 	\frac{y_k\ilparenthesis{\hbtheta_k+c_k\bDelta_k}}{c_k} \bDelta_k^{-1}, 
	\end{equation}
	where $ \mathrm{SP1} $ in the superscript is short for ``simultaneous perturbation with one-measurement'' \cite[Sect. 7.3]{spall2005introduction}.

	If the agent is allowed to collect only  \emph{two} measurements, then   (\ref{eq:basicSA}) may include the two-measurement SPSA under further assumptions.	Specifically,  the agent can evaluate  $y_k\ilparenthesis{\cdot}$   at two design points $\hbtheta_k+c_k\bDelta_k$ and $\hbtheta_k-c_k\bDelta_k$, where $\bDelta_k$ satisfies the same condition mentioned above. For two-measurement SPSA,  $\hbg_k\ilparenthesis{\hbtheta_k}$ in (\ref{eq:basicSA}) will be replaced by   $\hbg_k^{\SP 2}\ilparenthesis{\hbtheta_k}$ discussed in Subsection~\ref{subsect:SPSA}, except that $y\ilparenthesis{\cdot}$       in (\ref{eq:gSPSA}) is substituted by $y_k\ilparenthesis{\cdot}$ that depends on $k$ as (\ref{eq:y}).

	\item  When the feedback   takes   the form of (\ref{eq:Y}), the agent is allowed to collect  only \emph{one} measurement of $\bY_k\ilparenthesis{\cdot}$ evaluated at the  decision point $\hbtheta_k$. In this case,   (\ref{eq:basicSA}) may include the well-known stochastic gradient algorithm proposed in \cite{robbins1951stochastic}. For SGD,   $\hbg_k \ilparenthesis{\hbtheta_k}$ is a direct noisy gradient measurement as in (\ref{eq:Ystationary}), except that the $\bg\ilparenthesis{\cdot}$ on the r.h.s. of (\ref{eq:Ystationary}) now has a $k$-dependence.

\end{itemize}

With a slight abuse of notation   $\bias_k \ilparenthesis{\hbtheta_k}$ and   $\noise_k \ilparenthesis{\hbtheta_k}$ appearing in (\ref{eq:gNoisyDecomposition}), we  express  $\hbg_k\ilparenthesis{\cdot}$    generically as below to 	  facilitate later discussion:
\begin{align}\label{eq:gGeneral}
\hbg_k\ilparenthesis{\hbtheta_k} &=\left.  \frac{\partial\loss_k\ilparenthesis{\btheta}}{\partial\btheta}\right| _{\btheta=\hbtheta_k}+\bias_k\ilparenthesis{\hbtheta_k} + \noise_k\ilparenthesis{\hbtheta_k} \nonumber\\
&\equiv  \bg_k\ilparenthesis{\hbtheta_k}+\be_k\ilparenthesis{\hbtheta_k}
\end{align}
where the gradient function  $ \bg_k\ilparenthesis{\btheta}\equiv \partial\loss_k\ilparenthesis{\btheta}/\partial\btheta $,  the error term $\be_k\ilparenthesis{\hbtheta_k}$ subsumes  both the bias term $\bias_k\ilparenthesis{\hbtheta_k}$ and the noise term $\noise_k \ilparenthesis{\hbtheta_k}$.  Note that $ \bg_k\ilparenthesis{\cdot} $  on the r.h.s. of (\ref{eq:gGeneral}) has $k$-dependence, whereas $\bg\ilparenthesis{\cdot}$ on the r.h.s. of (\ref{eq:gNoisyDecomposition}) does not. 	Moreover, the function $\bg\ilparenthesis{\cdot}$ for root-finding purposes in Chapter~\ref{chap:Preliminaries} may or may not be a gradient of an underlying loss function.

\subsection{Distinction Relative to  Other Finite-Sample Analysis}\label{subsect:distinction}

Among   the finite-sample performance analysis, \cite{wilson2019adaptive} is derived under a similar setup and used a comparable metric. We point out  several   differences between   \cite{wilson2019adaptive}  and our work.

\begin{itemize}
	
		\item   Ref. \cite{wilson2019adaptive}  assumes   that   multiple, e.g., $N_k$, sequential  measurements of (\ref{eq:Y}) can be gathered   at each sampling time instant $\uptau_k $. Specifically,        $N_k$ grows inversely  proportional to the desired accuracy, which may   be expensive as mentioned towards the end of \cite[Sect. 2.1]{wilson2019adaptive}.  By ``sequential'' we mean  that the $N_k$  observations at time $\uptau_k $ have to be carried out sequentially.  That is, the $(i+1)$th observation depends on the $i$-th observation for $1\le i<N_k$. Such a setting may be valid
if the underlying time-varying system is changing very slowly or if the experimenter  has a nearly unlimited amount of  computation power and does not get penalized for frequent observations.
		
In contrast, we do not allow the sequential observations at each sampling  time $\uptau_k$  and discourages excessive observations at each iteration. We   consider few parallel measurements at each $\uptau_k$,'' e.g., $N_k=1$ or $2$,   in order to readily adapt to changing conditions. 
		By ``parallel'' we mean that the evaluations at the  two design points at time $\uptau_k$ can be collected simultaneously\textemdash one does not depend  on the computation of another  one.

	\item Ref.    \cite{wilson2019adaptive} assumes that        the  immediate feedback   is in the form of   (\ref{eq:Y}) only, whereas our work allows the feedback to take the form of  either (\ref{eq:y}) or (\ref{eq:Y}).  Moreover,   \cite{wilson2019adaptive}   assumes that  the noisy gradient measurement is an unbiased estimator of  the true gradient; i.e.,  the $\be_k$  in   (\ref{eq:gGeneral}) under their setting is mean-zero, whereas we allow $\be_k$ in (\ref{eq:gGeneral})  to have a nonzero mean in general.

	\item In both tracking  criteria proposed  in \cite{wilson2019adaptive}, the randomness in $\bvartheta_k$ is not considered, and hence the randomness in $\loss_k$ is not allowed. On the contrary, our tracking performance result in Section  \ref{sect:UnconditionalError}    allows for some  randomnesses  in $\bvartheta_k$. 
	
	\item    Ref.  \cite{wilson2019adaptive} implicitly assumes that the selected gain will  enable the estimate to keep track of the moving target, and does not unveil their details in  the gain selection.     In contrast,  we   provide some  practical guidance in gain selection.

\remove{
\item Besides, there is no need to state   \cite[A.5]{wilson2019adaptive} as an assumption, because  it can be    ensured  by   A.\ref{assume:ErrorWithBoundedSecondMoment} and  A.\ref{assume:Lsmooth}.  This is also the case for  \cite[A.4]{wilson2019adaptive}. In short,  the parameter estimation part should have been  much streamlined there.  }

\end{itemize}

\section{Model Assumptions}\label{sect:ModelAssumptions}

We  now state  the   assumptions required for later derivations.	 	Throughout our discussion, the norm imposed on a vector is the Euclidean norm, and the norm imposed on  a matrix  is the matrix spectral norm, which is  the  matrix norm compatible with the Euclidean vector norm.    
The following assumptions  are in parallel with the statistical set of  conditions for the strong   convergence   in \cite[Sect. 2]{blum1954approximation} and \cite[Sect. 4.3]{spall2005introduction}, except for the  non-decaying  gain adapted for the extra restrictions on the drift and the nonstationarity explained in the next section.

\begin{assumeA}
	[Error Term Has     Bounded Second Moment] \label{assume:ErrorWithBoundedSecondMoment}
	There exists a finite number $ \noiseBound_k \equiv \sqrt{ \sup_{\btheta\in\real^p} \E\ilbracket{\norm{\be_k\ilparenthesis{\btheta}}^2}    } $ for each $k\in\natural$.  
\end{assumeA}

\begin{assumeA}[Strong Convexity]\label{assume:StronglyConvex}
	The  instantaneous loss function $\loss_k\ilparenthesis{\cdot} \in  C^1 \ilparenthesis{\real^p\mapsto\real }$   and strongly convex for all $k\in\natural$.  Moreover, $\convexPara_k$ is the  largest positive number such that $ \ilparenthesis{\btheta-\bzeta}^\transpose\ilparenthesis{ \bg_k\ilparenthesis{\btheta} - \bg_k\ilparenthesis{\bzeta} } \ge \convexPara_k\norm{\btheta-\bzeta}^2 $   holds for all $\btheta$, $\bzeta\in\real^p$, where $\bg_k\ilparenthesis{\btheta} = \partial\loss_k\ilparenthesis{\btheta}/\partial\btheta$.  
\end{assumeA}

\begin{assumeA}[Smoothness]\label{assume:Lsmooth}  For each $k\in\natural$, $\LipsPara_k$ is the smallest positive number such that $\bg_k\ilparenthesis{\cdot} \in C^0\ilparenthesis{\real^p\mapsto\real^p}$     is $\LipsPara_k$-Lipschitz.   
\end{assumeA}

\begin{assumeA}[Bounded Variation] \label{assume:BoundedVariation}
	There exists a finite number $ \driftBound_k \equiv \sqrt{   \E \ilparenthesis{ \norm{\bvartheta_{k+1}-\bvartheta_k }^2 }  } $ for each $k\in\natural$. It reduces to $\driftBound_k = \sqrt{    \ilparenthesis{ \norm{\bvartheta_{k+1}-\bvartheta_k }^2 }  }$  if the sequence $\ilset{\bvartheta_k}$ is deterministic. 
\end{assumeA}

\begin{rem} In addition to Remark~\ref{rem:clarification}, we reiterate that the randomness in $\bvartheta_k$ is \emph{not} taken into consideration while formulating $\loss_k\ilparenthesis{\cdot}$, but $\bvartheta_k$ itself is allowed to vary stochastically. 
\end{rem}

	To ease the upcoming  discussion, denote the ratio\label{acronym:ratio}   $\ratio_k\equiv\LipsPara_k/\convexPara_k$,  where $\LipsPara_k$ is defined in  A.\ref{assume:Lsmooth} and $\convexPara_k$ is defined in  A.\ref{assume:StronglyConvex}.   
The following subsections provide  additional explanations on the validity of the aforementioned    assumptions.

\subsection{Estimation of Parameters in Assumptions Will Not be Considered}\label{subsect:noestimation}
Note that this chapter  aims to show  the tracking capability of SA algorithms (\ref{eq:basicSA}) with non-decaying gains applied to the time-varying problem setup (\ref{eq:Minimization}). Furthermore, we are interested in the scenario where only a  {few} (one or two)  {noisy} observations pertaining to  $\loss_k\ilparenthesis{\cdot}$ are  revealed only at time instance $k$, and  the actual time elapsed between two consecutive sampling instances $\ilparenthesis{\uptau_{k+1}-\uptau_k}$ is bounded from below. Under such  a   setting, at every sampling instance $\uptau_k$, the agent   obtains a  limited amount of corrupted information regarding $\loss_k\ilparenthesis{\cdot}$. Resultingly,    we do not expect that there exists an  efficient strategy to estimate $\noiseBound_k $, $\convexPara_k$, $\LipsPara_k$, and $\driftBound_k$ in an online fashion. Nonetheless, the assumed availability of these parameters does not nullify the   deliverables of this chapter in   demonstrating the tracking capability of SA algorithms.

It was pointed out in 
Subsection \ref{subsect:distinction} that
 although  \cite{wilson2019adaptive} handles  the estimation in part,  \cite{wilson2019adaptive}  is based upon a different setup. Namely, they assume that   as many \emph{sequential}   estimates as needed  can  be gathered at each sampling instance $\uptau_k$, whereas our setup requires one single observation or two \emph{parallel} ones.      Furthermore, the estimation of the lower-bound of  $\convexPara_k$   based on  (\ref{eq:ConvIneq}) and the estimation of the upper-bound of $\LipsPara_k$ based on  (\ref{eq:LipsIneq})  could be largely   non-informative regarding the actual value of $\convexPara_k$ and $\LipsPara_k$.

\subsection{Relation With Online Learning Literature} 
\textbf{Connection.}
The  sequential SO set up in  Section~\ref{sect:ProlemSetup} can   be interpreted in the prototypical decision-making framework.  
The agents are  viewed as learners and targets as adversaries.
At each sampling instance $\uptau_k$, the online learner selects  an action  $\hbtheta_k$ that belongs to some convex compact action set $\bTheta\subsetneq\real^p$ and incurs a cost $\loss_k\ilparenthesis{\hbtheta_k}$,  where $\loss_k\ilparenthesis{\cdot}:\real^p\mapsto\real$ is an unknown convex cost function selected by the adversary. In response to the agent's action, the adversary also reveals   inexact feedback   to the learner.

\textbf{Distinctions.}
Different from the constraint that the variable $\btheta$ belongs to a compact domain $\bTheta$, we consider the situation where the objective function is strongly convex and $\btheta\in\real^p$.  
As opposed to  the result that allows the loss $\loss_k\ilparenthesis{\cdot}$ to be adversarial  w.r.t. the selected action $\hbtheta_k$, we consider the case where $\loss_k\ilparenthesis{\cdot}$ is deterministic or  maybe random, but has to be  autonomous. By ``autonomous'' we mean that     the values of the estimates $\hbtheta_k$ do  not affect the underlying evolution of $\bvartheta_k$.
Contrary to the strong requirement that ``all the loss functions $\loss_k\ilparenthesis{\cdot}$ have  uniformly bounded gradients,'' we consider a weaker assumption     as in  A.\ref{assume:Lsmooth}. 
Different from the goal of bounding the worst-case performance of the best estimators only through the regret formulation \cite{hazan2008adaptive}, we are interested in the tracking accuracy, i.e.,  controlling the error  $\norm{\hbtheta_k-\bvartheta_k}$ at each time   $\uptau_k$. 
Moreover, the regret   $\text{Reg}_K\equiv\sum_{k=1}^K\ilbracket{\loss_k\ilparenthesis{\hbtheta_k}-\loss_k\ilparenthesis{\bvartheta_k}}$            is minimized, where $K$ is the horizon over which we implement the recursive scheme (\ref{eq:basicSA}), under the condition that there exists a bound on the total variations of the gradients over the horizon $K$. 
Admittedly, if  A.\ref{assume:StronglyConvex}   is satisfied, then the bound on $ \sum_{k=2}^{K} \sup_{\btheta\in\bTheta}\norm{\bg_k\ilparenthesis{\btheta}-\bg_{k-1}\ilparenthesis{\btheta}}^2$ implies  the bound on $ \sum_{k=2}^K\norm{\bvartheta_{k}-\bvartheta_{k-1}}^2 $. The converse is not true.   In contrast,   we only impose A.\ref{assume:BoundedVariation} and  seek to maintain a certain tracking accuracy at each time instant.    	Note that we do not get into   online prediction   where the regret along the path is minimized. Instead, we are only interested in minimizing the most current estimation error.   
Another reason is that the error bound developed for many algorithms  therein requires knowing the functional variation $ \sum_{k=2}^K \sup_{\btheta\in\bTheta} \abs{\loss_k\parenthesis{\btheta}-\loss_{k-1}\parenthesis{\btheta}} $ in advance, which is typically unavailable.

\subsection{Error Form Allowing Many SA Algorithms}\label{subsect:ErrorSA}
Let us emphasize that  A.\ref{assume:ErrorWithBoundedSecondMoment} allows for    $\hbg_k\ilparenthesis{\hbtheta_k}$ to be     a biased estimate for $\bg_k\ilparenthesis{\hbtheta_k}$; i.e., the error term $\be_k$ in (\ref{eq:gGeneral}) can have a nonzero mean. 
Furthermore, A.\ref{assume:ErrorWithBoundedSecondMoment} enables recursion  (\ref{eq:basicSA}) to subsume a    broad class of SA algorithms, including the three important cases mentioned in Section~\ref{sect:SAintro}.

 For one-measurement  and two-measurement SPSA,   A.\ref{assume:ErrorWithBoundedSecondMoment} is  readily satisfied when the following holds:    (1)  $\bDelta_k$ is  generated by Monte Carlo under the conditions of independence, symmetry, and finite inverse moments \cite{spall1992multivariate} and (2) there exists\footnote{ In the  time-varying scenario, both $\gain_k$ and $c_k$ have to be  strictly positive for the recursive SA algorithm to be able to track the moving target.   In FDSA or SPSA, the gain sequence controlling the perturbation magnitude is set to be strictly bounded away from zero for stability, despite that  in theory a decaying gain can wash out the bias of the gradient approximation as an estimator of the true gradient.
  } $ c_{\text{lower}} $   such that $ 0< c_{\text{lower}} <c_k     $ for all $k$.

	  For R-M setting,    A.\ref{assume:ErrorWithBoundedSecondMoment}  is immediately met    because zero-mean and bounded-variance  $\be_k\ilparenthesis{\cdot}$    is a special case of   A.\ref{assume:ErrorWithBoundedSecondMoment} as per \cite[A.3 and A.4 on p. 106]{spall2005introduction}.

	\subsection{Global and Local Convexity Parameter}\label{subsect:LocalConvexity}
A direct consequence of   A.\ref{assume:StronglyConvex} is the   existence and uniqueness of the optimizer $\bvartheta_k$. Moreover,    $\bg_k\ilparenthesis{\bvartheta_k}=\zero$ becomes a necessary and sufficient condition in determining $\bvartheta_k$, and it will be used in proving the upcoming Lemma~\ref{lem:BasicInequalities}.  
Admittedly, there is a class of nonconvex problems in which    A.\ref{assume:StronglyConvex} fails to hold. Nonetheless,  A.\ref{assume:StronglyConvex} is still  valid in    many fundamental  problems such as regularized regression and many others  in \cite{bharath1999stochastic}. An incomplete list is given below. 

\begin{itemize} 	\item  
Suppose that the  loss function $\loss \ilparenthesis{\cdot}$ is in the empirical risk function (ERF)\label{acronym:ERF} form. For instance, given  data pairs $\ilparenthesis{\bx_i, z_i}$ where the covariate $\bx_i$ will be mapped by a function $\bPhi\ilparenthesis{\cdot}$  to the feature space $\real^p$, then the loss function $\loss\ilparenthesis{\cdot}$  can  be  formed as  $ \loss  \ilparenthesis{\btheta} =  n^{-1}\sum_{i=1}^n\ell\parenthesis{z_i, \btheta^\transpose \bPhi\parenthesis{\bx_i }} $, where   $\ell\ilparenthesis{\cdot,\cdot}$ denotes 
either the  squared-loss or zero-one loss, and the input  has a  nonsingular sample covariance matrix $ n^{-1}\sum_{i=1}^n\ilset{\ilbracket{ \bPhi\parenthesis{\bx_i} }^\transpose \bPhi\parenthesis{\bx_i }  }$.  Such a loss function $\loss\ilparenthesis{\cdot}$ satisfies      A.\ref{assume:StronglyConvex}. 
	\item Suppose that loss function $\loss\ilparenthesis{\cdot}$ is  the sum of  $   n^{-1}\sum_{i=1}^n\ell\parenthesis{z_i,  {\btheta^\transpose\bPhi\parenthesis{\bx_i }}} $ and a regularization term  $ \convexPara   \norm{\btheta}^2 /2 $, where $\bPhi\ilparenthesis{\cdot}$ maps the input $\bx_i$ to the intended feature space. Then   A.\ref{assume:StronglyConvex} is satisfied.

	\item 
	Suppose that the loss function $\loss \ilparenthesis{\cdot}$ is the expected least-squares  written as   $ \loss\parenthesis{\btheta} = \E\bracket{ z -  {\btheta_{\mathrm{true}}^\transpose\bPhi\ilparenthesis{x}} }^2/2 $, where the expectation is taken over the joint-distribution of $ \parenthesis{\bx,z} $.  When $ \E  \ilset{\ilbracket{ \bPhi\parenthesis{\bx_i} }^\transpose \bPhi\parenthesis{\bx_i }  }   \succ \convexPara  \bI_p $,    the loss function satisfies  A.\ref{assume:StronglyConvex}.
\end{itemize}

Note that   A.\ref{assume:StronglyConvex} can be   relaxed to  local strong convexity, as   the proofs in the upcoming section   require local convexity only. Namely,   the $\convexPara_k$ in A.\ref{assume:StronglyConvex} can be the largest positive number such that $ \ilparenthesis{\btheta-\bzeta}^\transpose\ilparenthesis{\bg_k\ilparenthesis{\btheta} -\bg_k\ilparenthesis{\bzeta}} \ge \convexPara_k\norm{\btheta-\bzeta}^2 $  for $\btheta,\bzeta$ in a small neighborhood around $\hbtheta_k$.   \remove{ In such case, the uniqueness of $\bvartheta_k$ is no longer guaranteed. Denote $\Theta_k$ as the set of minimizer(s) of the loss function $\loss_k\ilparenthesis{\cdot}$. The notion $\norm{\btheta-\bvartheta_k} $ is then generalized to $\mathrm{dist}\ilparenthesis{\btheta,\Theta_k} = \inf_{ \bvartheta_k\in\Theta_k }
	\norm{\btheta-\bvartheta_k}$. The loss function $\loss_k\ilparenthesis{\cdot}$ is said to be local strongly convex, if there exists a constant $ {\convexPara_k}^{\ilparenthesis{\hbtheta_k,\bvartheta_k}}$ such that $   \loss_k\ilparenthesis{\btheta_1}\ge \loss_k\ilparenthesis{\btheta_2}+\bg_k\ilparenthesis{\btheta_2}\ilparenthesis{\btheta_1-\btheta_2} +  {\convexPara_k}^{\ilparenthesis{\hbtheta_k,\bvartheta_k}}\norm{\btheta_2-\btheta_1}^2  $ for any $\btheta_1,\btheta_2\in \mathrm{Ball}\ilparenthesis{\hbtheta_k, \norm{\hbtheta_k-\bvartheta_k}}$.  }However, we do not intend to  dwell on the    ``multiple-minimizers'' setting. The rationale and the tracking capability of non-decaying gain SA under such a  scenario require  separate consideration. 

	\subsection{Global- and Local-Lipschitz Continuity}\label{subsect:LocalLipschitz} 
	Note that   A.\ref{assume:Lsmooth}   is   more lenient than the uniform boundedness of $\bg_k\ilparenthesis{\cdot}$ for all     $\btheta$ uniformly across $k$ appearing in  \cite[Sect. 6.3]{polyak1987introduction},  \cite{besbes2015non}, and many others. In fact,    A.\ref{assume:Lsmooth}  can be met  in the sense that it is implied by other  smoothness conditions that are used in local convergence theorems and are often satisfied in practice \cite[p. 39 and Chaps 6\textendash 7]{nocedal2006numerical}.  
	
	Let us provide an example  in machine learning applications where   A.\ref{assume:Lsmooth} is satisfied. 
	Suppose that the loss function $\loss\ilparenthesis{\cdot}$ is in the ERF form, i.e., $ \loss \ilparenthesis{\btheta} =  n^{-1}\sum_{i=1}^n\ell\parenthesis{z_i, \btheta^\transpose \bPhi\parenthesis{\bx_i }} $ for some squared- or $0$-$1$ loss function $\ell\ilparenthesis{\cdot,\cdot}$, where the   data pairs $ \parenthesis{\bPhi\ilparenthesis{\bx_i},z_i} $ are all bounded.  The Hessian of $\loss\ilparenthesis{\cdot}$ is approximately the  sample covariance matrix computed as  $ n^{-1}\sum_{i=1}^n \ilbracket{\bPhi\parenthesis{\bx_i}  \bPhi\parenthesis{\bx_i }}  $ for large $n$, hence   A.\ref{assume:Lsmooth} is satisfied. 

	Arguably,   A.\ref{assume:Lsmooth} does not hold even for a scalar-valued univariate function $g_k\ilparenthesis{\uptheta}$ with $g_k \ilparenthesis{\cdot}$ being a second- or higher-order polynomial function or the multiplicative-inverse function defined over $\uptheta\in\real$. The following two   observations help alleviate the concern regarding its appropriateness: 
	
	\begin{itemize}
		\item  In real-world applications, the parameter $\btheta$ is typically subject to physical restrictions or other technical constraints. \remove{We will discuss \red{constrained variant of (\ref{eq:AdaptiveGainAlgo}) in Subsection \ref{subsubsect:ConstrainedOpti} briefly}. }For instance, if $\btheta$ is confined within a closed and bounded region $\bTheta\subsetneq \real^p$, then a finite $\LipsPara_k  $ within $\bTheta$ is  attainable.
		
		\item In the upcoming proof, we can effectively replace the global smoothness by local smoothness   $ {\LipsPara_k} ^{\ilparenthesis{\hbtheta_k,\bvartheta_k}} $, the smallest number  such that $ \norm{\bg_k\ilparenthesis{\btheta_1 }-\bg_k\ilparenthesis{\btheta_2 }}\le    {\LipsPara_k} ^{\ilparenthesis{\hbtheta_k,\bvartheta_k}} \norm{\btheta_1-\btheta_2}    $ holds for any $\btheta_1,\btheta_2$ in    a ball centered at $\hbtheta_k$ with radius of $ \norm{\hbtheta_k-\bvartheta_k} $. 
	\end{itemize}

	In summary, both A.\ref{assume:StronglyConvex}  and A.\ref{assume:Lsmooth} can be weakened if a priori knowledge of the domain of the optimizers, denoted by  $\bTheta$, is known. If so, we can concentrate on functions that meet   A.\ref{assume:StronglyConvex}  and A.\ref{assume:Lsmooth} for $\btheta\in\bTheta\subsetneq\real^p$ and adapt the following proof for algorithm (\ref{eq:truncatedSA1}) readily. Nonetheless, the analysis in Section \ref{sect:UnconditionalError} reveals that the estimate $\hbtheta_k$ will stay close to $\bvartheta_k$ 
	with appropriate initialization and gain selection, and therefore we only require A.\ref{assume:StronglyConvex}  and A.\ref{assume:Lsmooth}  to be valid locally.

	\subsection{Interpreting Ratio of $\LipsPara_k$ and $\convexPara_k$}\label{subsect:ratio}
	Note that for $\convexPara_k$ in A.\ref{assume:StronglyConvex} and  $\LipsPara_k$ in A.\ref{assume:Lsmooth} to be well-defined, we only need $\loss_k\ilparenthesis{\cdot}$ be in  $C^1\ilparenthesis{\real^p\mapsto\real}$, i.e., continuously differentiable,  as stated in A.\ref{assume:StronglyConvex}.

	To provide a better intuition behind $\ratio_k$, let us further assume   \cite[Assumption B.5'' on p. 183]{spall2005introduction}, i.e., the loss function $\loss_k\ilparenthesis{\cdot}$ is  in $C^2\ilparenthesis{\real^p\mapsto\real}$ and is  bounded on $\real^p$. Let us denote $\bH_k\ilparenthesis{\cdot}$ as the Hessian of $\loss_k\ilparenthesis{\cdot}$, which is guaranteed to be square and positive-definite by A.\ref{assume:StronglyConvex}. By Taylor's Theorem for multivariate vector-valued function, we know $\convexPara_k$ in A.\ref{assume:StronglyConvex}  becomes $\inf_{\btheta\in\real^p}\uplambda_{\min}\ilparenthesis{\bH_k\ilparenthesis{\btheta}}$, and $\LipsPara_k$ in A.\ref{assume:Lsmooth} equals $\sup_{\btheta\in\real^p} \uplambda_{\max} \ilparenthesis{\bH_k\ilparenthesis{\btheta}}$. Note that both the inf and the sup are attainable under   \cite[Assumption B.5'' on p. 183]{spall2005introduction} as both $\uplambda_{\min} \ilparenthesis{\bH_k\ilparenthesis{\btheta}}$ and $\uplambda_{\max}\ilparenthesis{\bH_k\ilparenthesis{\btheta}}$ are continuous functions of $\btheta$. Therefore, when $\bH_k\ilparenthesis{\cdot}$ exists and satisfies certain smoothness conditions,  $\ratio_k$ 
	can be interpreted to be an  upper bound of the condition number of the Hessian because:
	\begin{equation}\label{eq:RatioCondition}
	\ratio_k =  \frac{\LipsPara_k}{\convexPara_k} = \frac{\sup_{\btheta\in\real^p} \uplambda_{\max}\ilparenthesis{\bH_k\ilparenthesis{\btheta}} }{\inf_{\btheta\in\real^p} \uplambda_{\min }\ilparenthesis{\bH_k\ilparenthesis{\btheta}}} \ge \sup_{\btheta
}  \frac{\uplambda_{\max}\ilparenthesis{\bH_k\ilparenthesis{\btheta}} }{\uplambda_{\min }\ilparenthesis{\bH_k\ilparenthesis{\btheta}}} = \sup_{\btheta} \parenthesis{  \mathrm{cond}\ilparenthesis{\bH_k\ilparenthesis{\btheta}} }.
	\end{equation}

		\subsection{Parameter Variations and Error Bounds}\label{subsect:boundedvariation}
	
		Note that  the model for $\ilset{\bvartheta_k}$ is autonomous because updating $\hbtheta_k$ by (\ref{eq:basicSA}) has  no effect on the true parameter $\bvartheta_k$.  Intuitively, 
	A.\ref{assume:BoundedVariation}  is imposed  to capture the fact that the sequence (\ref{eq:Minimization})   is changing ``slowly,'' yet it does not exclude  abrupt changes  as long as the corresponding probability   is small.  Overall, the  expected change of the optimal parameter between every two consecutive   time instants is modest.

	 Within the classical literature on linear models, \cite{farden1981tracking,eweda1985tracking}  obtained upper bounds of the limiting-time mean-square error (the deviation of the estimated value  of the parameter from the actual value) by assuming only that the speed of variation of the true system is bounded by some deterministic constant.  We port the idea over to the general nonlinear models.  
	
	In physical application to moving objects,
	A.\ref{assume:BoundedVariation} effectively sets a  maximum speed  of the target, which is generally reasonable, given the physical constraints of motion. If the target's position is denoted by $\bvartheta_k$ and it is moving at a constant speed, then no randomness arises in the sequence in $\ilset{\bvartheta_k}$ and   $ \sum_{k=1}^K\norm{\bvartheta_k-\bvartheta_{k-1}} $ is $  O(\uptau_K -\uptau_0)$. 
	As a practical example, consider a target that continues to move at  a constant speed in an adversarial manner. Likewise, we may consider the scenario when the errors are unpredictably random with  the bounded second moment.  We point out that \cite[Sect. 2.2]{wilson2019adaptive} provides other justifications for 
	A.\ref{assume:BoundedVariation}.
	Therefore, for the target tracking setting, it makes sense to characterize the parameter variations using the  {path length $ \sum_{k=1}^K\norm{ \bvartheta_k-\bvartheta_{k-1} } $} for some sequence of parameter values $ \ilset{\bvartheta_k} $. Alternative measures might   include  {functional variation $ \sum_{k=1}^K \sup _{\btheta\in\bTheta}\abs{ \loss_k\parenthesis{\btheta} - \loss_{k-1}\parenthesis{\btheta} } $} and the  {gradient variation $ \sum_{k=1}^K\sup_{\btheta\in\bTheta}\norm{\bg_k \parenthesis{\btheta} - \bg_{k-1} \parenthesis{\btheta}}^2 $}, which are usually unavailable in advance.

\section{ Tracking Performance Guarantee}\label{sect:UnconditionalError}

This section  characterizes the tracking performance  $ \norm{\hbtheta_k-\bvartheta_k} $ of    recursion (\ref{eq:basicSA}) with non-diminishing gain, where  $\hbg_k \ilparenthesis{\hbtheta_k}$  can  take  either of the   representations (\ref{eq:g1SP}) and (\ref{eq:Ystationary}).    When the target is perpetually varying, it is impossible for the agent to further reduce its distances  from   the target  beyond a certain value.    Hence, we assume the                                                necessary assumptions in Section~\ref{sect:assumptions} to facilitate our error bound analysis.  
Note that the values of  $\noiseBound_k$, $\LipsPara_k$,  and  $\driftBound_k$    are   the smallest possible positive reals such that  the  assumptions A.\ref{assume:ErrorWithBoundedSecondMoment}, A.\ref{assume:Lsmooth},  and A.\ref{assume:BoundedVariation} are valid, and the value of $\convexPara_k$ is the largest  possible real such that the assumption  A.\ref{assume:StronglyConvex} is legitimate. The  analysis here is built upon  the basis that $\noiseBound_k$ in A.\ref{assume:ErrorWithBoundedSecondMoment}, $\convexPara_k$ in A.\ref{assume:StronglyConvex}, $\LipsPara_k$ in A.\ref{assume:Lsmooth},  and $\driftBound_k$ in A.\ref{assume:BoundedVariation} are  available to the agent.

 \subsection{Supporting Lemmas}

 Lemma \ref{lem:BasicInequalities}  provides  some inequalities    that immediately follow  from the  assumptions stated in Section \ref{sect:ModelAssumptions}. They will be used in the upcoming subsection.
 \begin{lem}\label{lem:BasicInequalities}
 	For a loss function $\loss _ k $ satisfying  A.\ref{assume:StronglyConvex} and A.\ref{assume:Lsmooth},  the following inequalities hold  for all $\btheta\in\real^p$: 	 	
 	\begin{eqnarray}
 	& 	 {\convexPara _ k }  \norm{ \btheta-\bvartheta_k } ^ 2/ 2 \le \loss _ k \ilparenthesis{\btheta } - \loss _ k \ilparenthesis{\bvartheta_k} \le  {\LipsPara _ k }  \norm{\btheta-\bvartheta_k}^2/2,   \label{eq:ineq1}  \\
 	&  2 \convexPara _ k \bracket{  \loss_k \ilparenthesis{\btheta } - \loss_k\ilparenthesis{\bvartheta_k} } \le \norm{\bg_k\ilparenthesis{\btheta}} ^ 2   \le 2 \LipsPara_k \bracket{\loss_k\ilparenthesis{\btheta} - \loss_k\ilparenthesis{\bvartheta_k}},\quad\quad   \label{eq:ineq2}\\
& 	 	\loss_k\ilparenthesis{\btheta} - \loss_k\ilparenthesis{\bvartheta_k} \le \bg_k\ilparenthesis{\btheta}^\transpose\ilparenthesis{\btheta-\bvartheta_k},   \label{eq:ineq3}\\
&	\LipsPara_k\ge \convexPara_k >0.  \label{eq:ineq4}
 	\end{eqnarray}
 	 
 \end{lem}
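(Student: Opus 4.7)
All four inequalities are standard consequences of strong convexity (A.2) and Lipschitz-smoothness of the gradient (A.3), together with the first-order optimality condition $\bg_k(\bvartheta_k)=\zero$ that A.2 provides. The plan is to first derive two ``quadratic sandwich'' inequalities valid for arbitrary base points, and then specialize to $\bx=\bvartheta_k$ to collapse the first-order term.

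\textbf{Step 1 (quadratic upper and lower bounds).} From A.2, for any $\bx,\by\in\real^p$ the monotonicity $(\by-\bx)^\transpose(\bg_k(\by)-\bg_k(\bx))\ge\convexPara_k\norm{\by-\bx}^2$ integrates along the segment $\bx+t(\by-\bx)$, $t\in[0,1]$, to give the strong-convexity inequality
\[
\loss_k(\by)\ \ge\ \loss_k(\bx)+\bg_k(\bx)^\transpose(\by-\bx)+\tfrac{\convexPara_k}{2}\norm{\by-\bx}^2.
\]
Similarly, $\LipsPara_k$-Lipschitzness of $\bg_k$ (A.3) and the fundamental theorem of calculus applied to $t\mapsto\loss_k(\bx+t(\by-\bx))$ give the descent lemma
\[
\loss_k(\by)\ \le\ \loss_k(\bx)+\bg_k(\bx)^\transpose(\by-\bx)+\tfrac{\LipsPara_k}{2}\norm{\by-\bx}^2.
\]
Specializing to $\bx=\bvartheta_k$ and using $\bg_k(\bvartheta_k)=\zero$ yields (\ref{eq:ineq1}) at once.

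\textbf{Step 2 (gradient-norm bounds via minimization in $\by$).} For (\ref{eq:ineq2}) I would apply the two inequalities from Step 1 with a \emph{fixed} $\bx=\btheta$ and \emph{minimize the right-hand side over $\by$}. The unique minimizers are $\by=\btheta-\bg_k(\btheta)/\convexPara_k$ and $\by=\btheta-\bg_k(\btheta)/\LipsPara_k$ respectively, producing
\[
\loss_k(\bvartheta_k)\ \ge\ \loss_k(\btheta)-\tfrac{1}{2\convexPara_k}\norm{\bg_k(\btheta)}^2,\qquad
\loss_k(\bvartheta_k)\ \le\ \loss_k(\btheta)-\tfrac{1}{2\LipsPara_k}\norm{\bg_k(\btheta)}^2,
\]
since $\loss_k(\bvartheta_k)$ upper- and lower-bounds the corresponding minima of the RHS. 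Rearranging gives exactly the two halves of (\ref{eq:ineq2}); the ``lower''-half is the Polyak-\L{}ojasiewicz inequality implied by strong convexity, while the ``upper''-half is the classical co-coercivity-type bound from smoothness.

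\textbf{Step 3 (convex first-order inequality and ordering).} Inequality (\ref{eq:ineq3}) is immediate: strong convexity (a fortiori, convexity) applied at $\bx=\btheta$, $\by=\bvartheta_k$ gives $\loss_k(\bvartheta_k)\ge\loss_k(\btheta)+\bg_k(\btheta)^\transpose(\bvartheta_k-\btheta)$, which is a rearrangement of (\ref{eq:ineq3}). Finally, (\ref{eq:ineq4}) follows by picking any $\btheta\neq\bvartheta_k$ in (\ref{eq:ineq1}) and comparing the two outer bounds: $\convexPara_k\norm{\btheta-\bvartheta_k}^2/2\le\LipsPara_k\norm{\btheta-\bvartheta_k}^2/2$ forces $\LipsPara_k\ge\convexPara_k$, and $\convexPara_k>0$ is part of A.2. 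No step poses a real obstacle; the only subtle point is justifying the ``minimize over $\by$'' trick in Step 2, which is legitimate because both Step-1 inequalities hold for \emph{every} $\by\in\real^p$, so the RHS minima automatically bound $\loss_k(\bvartheta_k)$ from the appropriate side.
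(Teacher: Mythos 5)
Your proposal is correct and follows essentially the same route as the paper's proof: derive the two quadratic sandwich inequalities from A.\ref{assume:StronglyConvex} and A.\ref{assume:Lsmooth}, specialize the base point to $\bvartheta_k$ using $\bg_k(\bvartheta_k)=\zero$ for (\ref{eq:ineq1}), minimize the right-hand sides over the free variable to get (\ref{eq:ineq2}), and read off (\ref{eq:ineq3}) and (\ref{eq:ineq4}) from the convexity inequality and a comparison of the two bounds. The only cosmetic difference is that you explicitly integrate the monotonicity condition to obtain the strong-convexity lower bound, a step the paper states without derivation.
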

\begin{proof}
	[Proof of Lemma \ref{lem:BasicInequalities}] 
	 Given  A.\ref{assume:StronglyConvex}, we know that for any $\btheta,\bzeta\in\real^p$: 
	\begin{equation}\label{eq:ConvIneq}
	\loss_k\ilparenthesis{\btheta} \ge \loss_k \ilparenthesis{\bzeta} + \bracket{\bg_k\ilparenthesis{\bzeta}}^\transpose  \ilparenthesis{\btheta-\bzeta } + \frac{\convexPara_k }{2} \norm{\btheta-\bzeta}^2. 
	\end{equation}
	Let $\bzeta = \bvartheta_k$ in (\ref{eq:ConvIneq}) and invoke A.\ref{assume:StronglyConvex}. We then have $\loss_k\ilparenthesis{\btheta} \ge \loss_k\ilparenthesis{\bvartheta_k } + \convexPara_k\norm{\btheta-\bvartheta_k}^2/2$ and, therefore,  the first inequality of (\ref{eq:ineq1}) holds.

	 By A.\ref{assume:Lsmooth} and the mean-value theorem \cite{rudin1976principles}, we know that for any $\btheta,\bzeta\in\real^p$:
	 \begin{eqnarray}\label{eq:LipsIneq} 
	 \loss_k\ilparenthesis{\btheta}   &=& \loss_k\ilparenthesis{\bzeta} + \int_0^1 \bracket{  \bg_k \ilparenthesis{ \bzeta + t \ilparenthesis{\btheta-\bzeta } } } ^ \transpose \parenthesis{\btheta-\bzeta }\diff t  \nonumber \\ 
	 &=& \loss_k\ilparenthesis{\bzeta} + \bracket{\bg_k\ilparenthesis{\bzeta}}^\transpose \ilparenthesis{\btheta - \bzeta } + \int_0 ^ 1 \bracket{  \bg_k\ilparenthesis{\bzeta + t \ilparenthesis{\btheta-\bzeta}}  - \bg_k\ilparenthesis{\bzeta}  }^\transpose\ilparenthesis{\btheta-\bzeta}\diff t \nonumber \\
	 &\le &  \loss_k\ilparenthesis{\bzeta} + \bracket{\bg_k\ilparenthesis{\bzeta}}^\transpose \ilparenthesis{\btheta - \bzeta }  + \int_0^1 \LipsPara_k \norm{t\ilparenthesis{\btheta-\bzeta}} \norm{\btheta-\bzeta}\diff t \nonumber\\
	 &=& \loss_k\ilparenthesis{\bzeta} + \bracket{\bg_k\ilparenthesis{\bzeta}}^\transpose\ilparenthesis{\btheta-\bzeta} + \frac{\LipsPara_k}{2} \norm{\btheta-\bzeta}^2.
	 \end{eqnarray}
	 Let $\bzeta = \bvartheta_k$ in (\ref{eq:LipsIneq}) and invoke  A.\ref{assume:StronglyConvex}, we then have $\loss_k\ilparenthesis{\btheta} \le  \loss_k\ilparenthesis{\bvartheta_k } + \LipsPara_k \norm{\btheta-\bvartheta_k}^2/2$. Hence,  the second inequality of (\ref{eq:ineq1}) holds.

	Note that  for every $\btheta\in\real^p$, (\ref{eq:ConvIneq}) holds.  Let us deem both sides of (\ref{eq:ConvIneq}) as two functions of $\btheta$.     By definition, the minimum of the l.h.s. is achieved by $\btheta=\bvartheta_k$. The minimizer of the r.h.s., which is a quadratic function of $\btheta$,  is given by $\btheta=\bzeta - \convexPara_k\bg_k\ilparenthesis{\bzeta}$.   Therefore,
	\begin{equation*}
	\begin{split}
	\loss_k\ilparenthesis{\bvartheta_k}&\ge 	\left.   \set{  \loss_k\ilparenthesis{\bzeta} + \bracket{\bg_k\ilparenthesis{\bzeta}}^\transpose \ilparenthesis{\btheta -\bzeta} + \frac{\convexPara_k}{2} \norm{\btheta-\bzeta}^2   }  \right| _{ \btheta = \bvartheta_k }\\
	&\ge 
	\left.   \set{  \loss_k\ilparenthesis{\bzeta} + \bracket{\bg_k\ilparenthesis{\bzeta}}^\transpose \ilparenthesis{\btheta -\bzeta} + \frac{\convexPara_k}{2} \norm{\btheta-\bzeta}^2   }  \right| _{ \btheta = \bzeta  - \convexPara_k^{-1} \bg_k\ilparenthesis{\bzeta }  }\\
	&= \loss_k\ilparenthesis{\bzeta} - \frac{1}{2\convexPara_k}\norm{\bg_k\ilparenthesis{\bzeta}}^2.
	\end{split}
	\end{equation*}
	Hence,  the first inequality in (\ref{eq:ineq2}) holds. 
	
	Note that (\ref{eq:LipsIneq}) holds for any $\btheta,\bzeta\in\real^p$ so we have: 
	\begin{equation*}
	\begin{split}
	\loss_k\ilparenthesis{\bvartheta_k} & \le \min_{\btheta} \set{   \loss_k\ilparenthesis{\bzeta} + \bracket{\bg_k\ilparenthesis{\bzeta}}^\transpose\ilparenthesis{\btheta-\bzeta} + \frac{\LipsPara_k}{2} \norm{\btheta-\bzeta}^2    } \\
	& = \left.  \set{   \loss_k\ilparenthesis{\bzeta} + \bracket{\bg_k\ilparenthesis{\bzeta}}^\transpose\ilparenthesis{\btheta-\bzeta} + \frac{\LipsPara_k}{2} \norm{\btheta-\bzeta}^2  } \right| _{\btheta=\bzeta - \LipsPara_k^{-1} \bg_k\ilparenthesis{\bzeta }} \\
	&=\loss_k\ilparenthesis{\bzeta} - \frac{1}{2\LipsPara_k} \norm{\bg_k\ilparenthesis{\bzeta}}^2.
	\end{split}
	\end{equation*}
	Hence, the second inequality in (\ref{eq:ineq2}) holds. 
	
  Eq. (\ref{eq:ineq3}) follows from A.\ref{assume:StronglyConvex};  specifically, $\loss_k\ilparenthesis{\bvartheta_k} \ge \loss_k\ilparenthesis{\btheta} + \bg_k\ilparenthesis{\btheta}^\transpose \ilparenthesis{\bvartheta_k-\btheta}$.

	 Eq.  (\ref{eq:ineq4}) can be readily obtained by comparing (\ref{eq:ConvIneq}) and (\ref{eq:LipsIneq}). 
\end{proof}

We also present Lemma~\ref{lem:Product} and Lemma~\ref{lem:Limit}    here in anticipation of handling the upcoming recursive inequality. 
\begin{lem}
	\label{lem:Product} Let $\ilset{x_k}$ be a sequence of scalars such that $ \abs{x_k} \le 1 $ for all $k$. Then for $1\le j \le k$ and $k\ge 1$, we have
	\begin{equation}\label{eq:Product}
	\sum_{i=j}^k \prod_{ l = i+1 }^ k \ilparenthesis{1-x _ l } x_i = 1 - \prod_{i = j}^k \ilparenthesis{1-x_i}. 
	\end{equation}
	We take the tradition that the cumulative product equals one if the starting index is no smaller than the ending index. 
\end{lem}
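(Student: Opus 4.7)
The plan is to prove this identity by a straightforward telescoping argument; the hypothesis $|x_k|\le 1$ is not actually required for the algebraic identity itself (it likely matters only for downstream applications where the partial products need to be controlled in $[0,1]$).

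The key observation is that each summand on the left-hand side can be rewritten as a difference of two consecutive products. Writing $x_i = 1-(1-x_i)$ and distributing,
\begin{equation*}
\prod_{l=i+1}^{k}(1-x_l)\cdot x_i \;=\; \prod_{l=i+1}^{k}(1-x_l) \;-\; \prod_{l=i}^{k}(1-x_l).
\end{equation*}
Define $P_i := \prod_{l=i}^{k}(1-x_l)$ for $j\le i\le k+1$, with the empty-product convention $P_{k+1}=1$. Then the display above reads $P_{i+1}-P_i$, so
\begin{equation*}
\sum_{i=j}^{k}\prod_{l=i+1}^{k}(1-x_l)\,x_i \;=\; \sum_{i=j}^{k}\bigl(P_{i+1}-P_i\bigr) \;=\; P_{k+1}-P_j \;=\; 1-\prod_{l=j}^{k}(1-x_l),
\end{equation*}
which is the stated identity.

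The only subtlety (and therefore the main thing to verify carefully) is the bookkeeping at the two endpoints of the sum: the $i=k$ summand uses the empty product $\prod_{l=k+1}^{k}(1-x_l)=1$ (matching the convention stated right after the lemma), and the $i=j$ summand contributes the term $-\prod_{l=j}^{k}(1-x_l)$ that survives after telescoping. A short induction on $k-j$ would give the same result and could be used as an alternative presentation, but the telescoping form is cleaner and exposes exactly why no sign or magnitude restriction on $\{x_k\}$ is needed for the identity.
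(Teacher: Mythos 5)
Your proof is correct and uses the same telescoping argument as the paper: both rewrite the summand $\prod_{l=i+1}^{k}(1-x_l)\,x_i$ as the difference $\prod_{l=i+1}^{k}(1-x_l)-\prod_{l=i}^{k}(1-x_l)$ and sum. Your added remark that the hypothesis $\abs{x_k}\le 1$ is not needed for the algebraic identity is also accurate.
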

\begin{proof}[Proof of Lemma~\ref{lem:Product}]
	Note that
	\begin{align*}
	\prod_{l= i+1} ^ k \ilparenthesis{1-x_l} - \prod_{l=i}^k \ilparenthesis{1-x_l}  = \prod_{l=i+1}^k\set{ \ilparenthesis{1-x_l} \bracket{1-(1-x_i)}} = \prod_{l=i+1}^k\bracket{    (1-x_l) x_i     }.
	\end{align*}Thus,
	\begin{align*}
	 \prod_{l=i+1}^k\bracket{    (1-x_l) x_i     } =  \bracket{   \prod_{l=i+1}^k \ilparenthesis{1-x_l} - \prod_{l=j}^k \ilparenthesis{1-x_l}    } - \bracket{    \prod_{l=i}^k\ilparenthesis{1-x_l} - \prod_{l=j}^k \ilparenthesis{1-x_l}   }. 
	\end{align*}
	Summing the above equation over $i$ from $j$ to $k$ on  the r.h.s. collapses to yield equation (\ref{eq:Product}).  
\end{proof}
 
 \begin{lem}
 	\label{lem:Limit} Let $\ilset{x_k}$ be a scalar sequence such that  $0<\inf_k x_k\le \sup_{k} x_k < X<\infty$.  Let $\ilset{z_k}$ be a sequence such that $0\le z_k<1$. Define  $\upnu _{j,k}  \equiv (1-z_j ) \prod_{i=j+1}^k z_j $ for $j<k$. Then
 	\begin{equation}\label{eq:Limit}
 	\limsup_k \sum_{j =1}^k \upnu_{j,k} x_j  \le \limsup_k x_k. 
 	\end{equation}
 \end{lem}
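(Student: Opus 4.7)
The plan is to recognize the $\upnu_{j,k}$ as (sub-)probability weights via Lemma~\ref{lem:Product}, split the weighted average at a threshold $N$ chosen from the definition of $\limsup_{k}x_{k}$, and then drive the finitely many ``early'' weights to zero. Here I read the product in the definition of $\upnu_{j,k}$ as $\prod_{i=j+1}^{k}z_i$ (the $z_j$ written inside the product is a typographical slip; otherwise Lemma~\ref{lem:Product} cannot be invoked).

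First I would set $w_i\equiv 1-z_i\in(0,1]$, so that $\upnu_{j,k}=w_j\prod_{i=j+1}^{k}(1-w_i)$. Invoking Lemma~\ref{lem:Product} with $x_i$ replaced by $w_i$ and lower index $j\leftarrow 1$ gives
\begin{equation*}
\sum_{j=1}^{k}\upnu_{j,k}\;=\;1-\prod_{i=1}^{k}z_i\;\le\;1,
\end{equation*}
so the $\upnu_{j,k}$ are nonnegative weights of total mass at most $1$. Writing $L\equiv\limsup_{k}x_k$, fixing an arbitrary $\upvarepsilon>0$, and choosing $N$ so that $x_j\le L+\upvarepsilon$ for all $j\ge N$, I combine this with the uniform bound $x_j<X$ on the head to obtain
\begin{equation*}
\sum_{j=1}^{k}\upnu_{j,k}x_j\;\le\;X\sum_{j=1}^{N-1}\upnu_{j,k}+(L+\upvarepsilon)\sum_{j=N}^{k}\upnu_{j,k}\;\le\;X\sum_{j=1}^{N-1}\upnu_{j,k}+(L+\upvarepsilon).
\end{equation*}
Since $N$ is fixed, it remains only to show that for each $j<N$ the weight $\upnu_{j,k}=(1-z_j)\prod_{i=j+1}^{k}z_i$ vanishes as $k\to\infty$; taking $\limsup_{k\to\infty}$ and then letting $\upvarepsilon\downarrow 0$ then delivers (\ref{eq:Limit}).

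The main obstacle is precisely this vanishing-head step, because the stated hypothesis $z_k<1$ is only pointwise. To guarantee $\prod_{i=j+1}^{\infty}z_i=0$ one actually needs the stronger condition $\sum_{i}(1-z_i)=\infty$, and without it one can cook up $z_i\uparrow 1$ fast enough that $\upnu_{1,k}$ never decays and the claim fails. In the surrounding use of this lemma in Section~\ref{sect:UnconditionalError} the sequence $\{z_i\}$ arises as one-step contraction coefficients of the form $1-\gain\upkappa_i$ with $\upkappa_i$ bounded below by $\inf_k\convexPara_k>0$ (via A.\ref{assume:StronglyConvex}), so the required divergence is automatic and each head weight in fact decays geometrically. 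Once this structural fact on $\{z_i\}$ is made explicit, what remains is routine $\upvarepsilon$-book-keeping.
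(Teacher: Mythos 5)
Your proposal follows the same route as the paper's own proof: both invoke Lemma~\ref{lem:Product} to see that the weights $\upnu_{j,k}$ sum to at most one, split the sum at the index beyond which $x_j \le \limsup_k x_k + \upvarepsilon$, bound the head by $X$ times the total head weight, and then send that head weight to zero. (Your reading of the product as $\prod_{i=j+1}^{k} z_i$ is also the intended one; the $z_j$ in the displayed definition is a typo.)

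The gap you flag in the final step is genuine, and it is present in the paper's proof as well: the paper simply asserts that ``the term $\sum_{j=1}^{k_0}\upnu_{j,k}$ goes to zero as $k\to\infty$,'' but this does not follow from the pointwise hypothesis $0\le z_k<1$. Indeed the lemma is false as stated: take $z_1=0$ and $z_i=1-2^{-i}$ for $i\ge 2$, so that $\prod_{i=2}^{k}z_i\to c>0$ and hence $\upnu_{1,k}\to c$; choosing $x_1$ close to $X$ and $x_j=\updelta$ small for all $j\ge 2$ makes the left side of (\ref{eq:Limit}) at least $c\,x_1$ while the right side is $\updelta$. The missing hypothesis is exactly the one you name, $\prod_i z_i\to 0$ (equivalently $\sum_i(1-z_i)=\infty$), or more simply $\sup_k z_k<1$. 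Your observation that in the application $z_k=\sqrt{\firstConst_k}$ arises from contraction coefficients is the right repair, though note that the paper itself only establishes $\firstConst_k\in\left[0,1\right)$ pointwise (the same unjustified step reappears in the proof of Theorem~\ref{thm:main1}, where $\prod_j\sqrt{\firstConst_j}\to 0$ is asserted from pointwise boundedness in $\left[0,1\right)$); a uniform bound $\sup_k\firstConst_k<1$ should really be stated as an explicit condition. In short, your proof is no less complete than the paper's; you have made explicit a hypothesis the paper leaves implicit.
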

 \begin{proof}[Proof of Lemma~\ref{lem:Limit}]
 Denote $\tilde{X} = \limsup_kx_k$. Then for any $\upvarepsilon>0$, there exists a finite $k_0$ such that $ x_k< \tilde{X} + \upvarepsilon $ for all $k>k_0$.  For such indices, we have
 \begin{equation*}
 \sum_{j =1}^k \upnu_{j,k} x_k < X \sum_{j =1}^{k_0 } \upnu_{j,k} +   \tilde{X } + \upvarepsilon.
 \end{equation*}
 where the inequality follows from the result in Lemma~\ref{lem:Product}. Furthermore, the term $\sum_{j =1}^{k_0 } \upnu_{j,k}$ goes to zero as $k\to\infty$. That is, there exists a finite $k_1>k_0$ such that the term $\sum_{j =1}^{k_0 } \upnu_{j,k}$ remains smaller than $\upvarepsilon/X$ for all $k>k_1$.  
 
 Therefore, for sufficiently large $k$, we have 
\begin{equation*} 
\sum_{j =1}^k \upnu_{j,k} x_k  < \tilde{X} + 2\upvarepsilon. 
\end{equation*}
Given that $\upvarepsilon>0$ is arbitrary, our desired result (\ref{eq:Limit}) holds.  
 \end{proof}

The   main theorems in this section  pertain to  a positive slack variable $\Tobe_k   $ whose allowable  domain   depends on  $\ratio_k $. For brevity, the dependence of $\Tobe_k$'s domain  on $\ratio_k$ will be suppressed wherever no confusion is introduced.  The slack variable $\Tobe_k$  can be effectively viewed  as a {hyper-parameter}, which shall be  picked   based on the smoothness parameter $\LipsPara_k$ and the strong convexity parameter $\convexPara_k$, before  
 selecting the gain   sequence $\gain_k$.  This is natural as   $\ratio_k$ pertains to the curvature information of the loss function $\loss_k\ilparenthesis{\cdot}$ presented in  (\ref{eq:RatioCondition}) when additional smoothness condition \cite[Assumption B.5'' on p. 183]{spall2009feedback} is met. 
Let us   present several lemmas to control the   slack variable $\Tobe_k$     to better  serve the upcoming  proofs on the   tracking performance.

 \begin{lem} \label{lem:determinant1} 
	If   $\ratio_k >1$ (i.e., $\LipsPara_k>\convexPara_k>0$), we have $ \convexPara_k^4\parenthesis{\Tobe  +2}^2-4\LipsPara_k^2\convexPara_k^2\parenthesis{\Tobe+1}\le 0 $ for any:
	\begin{equation}\label{eq:Tobe1}  0<\Tobe \le \Tobe_{k,1} \ilparenthesis{\ratio_k } \equiv   2 \ilparenthesis{\ratio_k^2-1} + 2\ratio_k\sqrt{\ratio_k^2-1}.
	\end{equation} 
\end{lem}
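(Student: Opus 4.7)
The plan is to reduce the inequality to a standard quadratic in $\Tobe$ and invoke the quadratic formula. First, since $\convexPara_k>0$, I would divide both sides by $\convexPara_k^4$ and substitute $\ratio_k=\LipsPara_k/\convexPara_k$ to rewrite the claim as
\begin{equation*}
(\Tobe+2)^2 - 4\ratio_k^2(\Tobe+1)\;\le\;0.
\end{equation*}
Expanding gives the quadratic inequality $\Tobe^2 - 4(\ratio_k^2-1)\Tobe - 4(\ratio_k^2-1)\le 0$, whose discriminant is $16(\ratio_k^2-1)^2+16(\ratio_k^2-1)=16(\ratio_k^2-1)\ratio_k^2$, which is nonnegative precisely because $\ratio_k>1$ (this is where the hypothesis enters).

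Next I would compute the two roots explicitly via the quadratic formula,
\begin{equation*}
\Tobe_{\pm} \;=\; 2(\ratio_k^2-1) \pm 2\ratio_k\sqrt{\ratio_k^2-1},
\end{equation*}
so the larger root $\Tobe_{+}$ matches the definition of $\Tobe_{k,1}(\ratio_k)$ in (\ref{eq:Tobe1}). To conclude, I need to check that the smaller root $\Tobe_{-}$ is nonpositive, so that the entire interval $(0,\Tobe_{k,1}(\ratio_k)]$ lies in the sign-nonpositive region of this upward-opening parabola. Factoring gives $\Tobe_{-}=2\sqrt{\ratio_k^2-1}\bigl(\sqrt{\ratio_k^2-1}-\ratio_k\bigr)$, and since $\sqrt{\ratio_k^2-1}<\ratio_k$ the second factor is strictly negative, so $\Tobe_{-}<0<\Tobe_{+}$.

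With the two roots bracketing the interval, the claim follows immediately: for every $\Tobe\in(0,\Tobe_{k,1}(\ratio_k)]\subseteq[\Tobe_{-},\Tobe_{+}]$ the quadratic is $\le 0$, which is equivalent to the original inequality after multiplying through by $\convexPara_k^4$. There is no real obstacle here\textemdash the argument is a one-variable quadratic discriminant computation\textemdash the only bookkeeping point is to confirm that the hypothesis $\ratio_k>1$ is used twice: once to guarantee the discriminant is real (so $\Tobe_{k,1}$ is well-defined), and once to guarantee strict negativity of the smaller root.
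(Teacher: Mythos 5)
Your proof is correct and follows essentially the same route as the paper: divide out $\convexPara_k^4$, view the claim as an upward-opening quadratic in $\Tobe$, compute the discriminant $16\ratio_k^2(\ratio_k^2-1)$, identify the roots $2(\ratio_k^2-1)\pm 2\ratio_k\sqrt{\ratio_k^2-1}$, and confirm the smaller root is negative so that $(0,\Tobe_{k,1}]$ lies between them. The only cosmetic difference is that you establish $\Tobe_{-}<0$ by direct factoring while the paper uses the sign of the product and sum of the roots; both are equivalent one-line checks.
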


We will rewrite $\Tobe_{k,1}\ilparenthesis{\ratio_k}$ as $\Tobe_{k,1}$   wherever convenient.

\begin{proof}[Proof of  Lemma \ref{lem:determinant1}]
	 Define $
	 h_{k,1}\ilparenthesis{\Tobe  } \equiv \parenthesis{\Tobe+2}^2-4\ratio_k^2\parenthesis{\Tobe+1}= \Tobe^2+4\parenthesis{1-\ratio_k^2}\Tobe+4\parenthesis{1-\ratio_k^2} $, and it is a quadratic function of $\Tobe$.   The determinant\footnote{For a general  quadratic function $ax^2 + bx + c$ of $x$, its determinant is defined to be $\Delta\equiv b^2 - 4 ac$. } of the quadratic function $h_{k,1}\ilparenthesis{\cdot }$   is  $
	 \Delta_{k,1}\equiv 16(1-\ratio_k^2)^2-16(1-\ratio_k^2)=16\ratio_k^2\parenthesis{\ratio_k^2-1}$.  
	 If  $\ratio_k>1$, we have $\Delta_{k,1}>0$. 
	 The two real roots of $h_{k,1}\ilparenthesis{\cdot}$ are $ 2\parenthesis{\ratio_k ^2-1}- 2\ratio_k\sqrt{\ratio_k^2-1}$
	 and $ 2\parenthesis{\ratio_k^2-1}+ 2\ratio_k\sqrt{\ratio_k^2-1}(\equiv \Tobe_{k,1}) $ respectively. Given that the sum of the two real  roots is   $  4\parenthesis{\ratio_k^2-1}>0$, and the product of the two real roots is  $ 4\parenthesis{1-\ratio_k^2}<0 $, we know that the smaller root is negative and the larger root $\Tobe_{k,1}$ is positive.  Therefore, when  $\ratio_k>1$,  $ \convexPara_k^4\parenthesis{\Tobe   +2}^2-4\LipsPara_k^2\convexPara_k^2\parenthesis{\Tobe+1}\le 0$ is nonpositive for any $\Tobe$ satisfying (\ref{eq:Tobe1}). 
\end{proof}

 \begin{lem}\label{lem:determinant2}
	 If  $ 1\le \ratio_k  \le (1+\sqrt{5})/2 $, we have  
	$ \convexPara_k^4\parenthesis{\Tobe+2}^2-4\convexPara_k^2\LipsPara_k\parenthesis{\LipsPara_k-\convexPara_k}\parenthesis{\Tobe+1}>0 $ for any $\Tobe>0$. 
	  If  $\ratio_k > (1+\sqrt{5})/2$, we have
		$ \convexPara_k^4\parenthesis{\Tobe+2}^2-4\convexPara_k^2\LipsPara_k\parenthesis{\LipsPara_k-\convexPara_k}\parenthesis{\Tobe+1}>0 $ for any:
		\begin{equation}\label{eq:Tobe2}
		\Tobe> \Tobe_{k,2} \ilparenthesis{\ratio_k } \equiv     2\ilparenthesis{\ratio_k^2-\ratio_k-1}+2\sqrt{\ratio_k\ilparenthesis{\ratio_k-1}\ilparenthesis{\ratio_k^2-\ratio_k-1}}.
		\end{equation}

\end{lem}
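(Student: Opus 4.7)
The proof plan mirrors the one for Lemma~\ref{lem:determinant1}, just with a different quadratic whose discriminant happens to vanish exactly at the golden ratio. First I would factor out $\convexPara_k^2$ (which is strictly positive by A.\ref{assume:StronglyConvex}) and rewrite the expression as $\convexPara_k^4\bigl[(\Tobe+2)^2 - 4\ratio_k(\ratio_k-1)(\Tobe+1)\bigr]$, so that the sign question reduces to analyzing the quadratic
\begin{equation*}
h_{k,2}(\Tobe) \;\equiv\; \Tobe^2 \;-\; 4\bigl(\ratio_k^2-\ratio_k-1\bigr)\,\Tobe \;-\; 4\bigl(\ratio_k^2-\ratio_k-1\bigr),
\end{equation*}
in $\Tobe$ with a single parameter $\ratio_k$ that is $\ge 1$ by \eqref{eq:ineq4}.

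Next I would compute the discriminant $\Delta_{k,2}$ and simplify it. A direct expansion gives $\Delta_{k,2} = 16\bigl(\ratio_k^2-\ratio_k-1\bigr)^2 + 16\bigl(\ratio_k^2-\ratio_k-1\bigr) = 16\bigl(\ratio_k^2-\ratio_k-1\bigr)\,\ratio_k(\ratio_k-1)$, after noticing that adding $1$ inside the bracket turns $\ratio_k^2-\ratio_k-1+1$ into $\ratio_k(\ratio_k-1)$. Since $\ratio_k(\ratio_k-1)\ge 0$, the sign of $\Delta_{k,2}$ coincides with that of $\ratio_k^2-\ratio_k-1$, whose positive root is exactly the golden ratio $(1+\sqrt{5})/2$; this is what produces the threshold in the statement.

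With the discriminant understood, I would split into two cases. For $1\le \ratio_k\le (1+\sqrt{5})/2$, the coefficient $\ratio_k^2-\ratio_k-1$ is nonpositive, so $\Delta_{k,2}\le 0$ and $h_{k,2}(\Tobe)\ge 0$ on $\real$; the only subtlety is the boundary value $\ratio_k=(1+\sqrt{5})/2$, where $\Delta_{k,2}=0$ and Vieta's formulas give a double root at $\Tobe=0$, so strict positivity still holds on $\Tobe>0$. For $\ratio_k>(1+\sqrt{5})/2$, the product of the two roots is $-4(\ratio_k^2-\ratio_k-1)<0$ while their sum $4(\ratio_k^2-\ratio_k-1)$ is positive, so one root is negative and the other is positive; solving by the quadratic formula identifies the positive root as
\begin{equation*}
\Tobe_{k,2}(\ratio_k)\;=\;2\bigl(\ratio_k^2-\ratio_k-1\bigr)\;+\;2\sqrt{\,\ratio_k(\ratio_k-1)\bigl(\ratio_k^2-\ratio_k-1\bigr)\,},
\end{equation*}
matching \eqref{eq:Tobe2}, and the leading coefficient being $+1$ implies $h_{k,2}(\Tobe)>0$ precisely for $\Tobe>\Tobe_{k,2}(\ratio_k)$.

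Frankly there is no serious obstacle; the only thing that requires a moment's care is recognizing the algebraic identity that collapses the discriminant into the clean product $16(\ratio_k^2-\ratio_k-1)\ratio_k(\ratio_k-1)$, which is what makes the golden ratio appear and keeps the closed-form expression for $\Tobe_{k,2}$ tidy. Once this factorization is in hand, the rest is a routine sign discussion via Vieta's formulas, exactly parallel to the argument used for $\Tobe_{k,1}$ in Lemma~\ref{lem:determinant1}.
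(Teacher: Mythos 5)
Your proposal is correct and follows essentially the same route as the paper's proof: reduce to the quadratic $h_{k,2}(\Tobe)=(\Tobe+2)^2-4\ratio_k(\ratio_k-1)(\Tobe+1)$, factor the discriminant as $16\ratio_k(\ratio_k-1)(\ratio_k^2-\ratio_k-1)$, and split on the sign of $\ratio_k^2-\ratio_k-1$ with Vieta's formulas identifying the positive root $\Tobe_{k,2}$. Your explicit check that the double root sits at $\Tobe=0$ when $\ratio_k=(1+\sqrt{5})/2$ (versus at $\Tobe=-2$ when $\ratio_k=1$) is a slightly more careful treatment of the boundary case than the paper's, but the argument is otherwise identical.
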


We will rewrite $\Tobe_{k,2}\ilparenthesis{\ratio_k}$ as $\Tobe_{k,2}$   wherever convenient.

\begin{proof}[Proof of Lemma \ref{lem:determinant2}]
 	Define  $
 h_{k,2}\ilparenthesis{\Tobe } \equiv  \parenthesis{\Tobe+2}^2-4\ratio_k\parenthesis{\ratio_k-1}\parenthesis{\Tobe+1}= 
 \Tobe^2+ 4(1+\ratio_k-\ratio_k^2)\Tobe+4\parenthesis{1+\ratio_k-\ratio_k^2}$.
 The determinant of the quadratic function $h_{k,2}\ilparenthesis{\cdot }$  is $
 \Delta_{k,2}=16(1+\ratio_k-\ratio_k^2)^2-16(1+\ratio_k-\ratio_k^2)= 16\ratio_k(\ratio_k-1)(\ratio_k^2-\ratio_k-1)$. 
 
   If $ \ratio_k=1 $, we have $ \Delta_{k,2}=0 $. Here $\Tobe=-2$ is the only possibility  for   $ h_{k,2}\ilparenthesis{\Tobe}=0 $. Then  for any $\Tobe\in\real^+$ we have $ h_{k,2}\ilparenthesis{ \Tobe}>0 $.  
 	
 	  If $  1<\ratio_k<\ilparenthesis{1+\sqrt{5}}/2 $, we have $\Delta_{k,2}<0$. Then the  upward parabola $ h_{k,2}\ilparenthesis{\cdot } $ is above zero for any $\Tobe\in\real^+$.

 	 If $ \ratio_k=(1+\sqrt{5})/2 $, we have $\Delta_{k,2}=0$. Again, $h_{k,2}\ilparenthesis{\Tobe}>0$ for any $\Tobe\in\real^+$.

 	 If $  \ratio_k>\ilparenthesis{1+\sqrt{5}}/2 $, 
 	we have $\Delta_{k,2}>0$.
 	The two roots of $ h_{k,2}\ilparenthesis{\cdot}$ are 
 	      $   2\parenthesis{ {\ratio_k}^2- {\ratio_k}-1}- 2\sqrt{  \Tobe_k\parenthesis{\ratio_k-1}\parenthesis{ {\ratio_k}^2-\ratio_k-1}      } $ and $  2\parenthesis{ {\ratio_k}^2- {\ratio_k}-1}+ 2\sqrt{  \Tobe_k\parenthesis{\ratio_k-1}\parenthesis{ {\ratio_k}^2-\ratio_k-1}      } \equiv \Tobe_{k,2}$.  Since the sum of the two real  roots is $ 4(\ratio_k^2-\ratio_k-1)>0 $, and the product of the two real  roots is $ -4(\ratio_k^2-\ratio_k-1)<0 $, we know that the smaller root is negative and the larger root $\Tobe_{k,2}$ is positive.  Therefore, $h_{k,2}\ilparenthesis{\cdot }$ is positive for any $ \Tobe    $ satisfying (\ref{eq:Tobe2}).  
\end{proof}

 It is straightforward to verify that  $\Tobe_{k,1}>\Tobe_{k,2} $  holds\footnote{	 The relationship $ \Tobe_{k,1}>\Tobe_{k,2} $ on $\ratio_k>\ilparenthesis{1+\sqrt{5}}/2$ follows from $ \diff   \parenthesis{\Tobe_{k,1}-\Tobe_{k,2}}/\diff  \ratio_k>0 $ and that $ \Tobe_{k,1}-\Tobe_{k,2} $ evaluated at $\ratio_k=\ilparenthesis{1+\sqrt{5}}/2$ is approximately $ 7.35>0$. } for any $\ratio_k >\ilparenthesis{1+\sqrt{5}}/2$.

\begin{lem}  
	[Slack Variable $\Tobe_k $ Selection] \label{lem:Tobe} Let us   select $\Tobe_k  $   in the following manner:
	\begin{equation}
	\label{eq:Tobe}
	   		\Tobe_k \in \indicator_{\ilset{\ratio_k=1}} \times \parenthesis{0,\infty } +    \indicator_{\ilset{ 1<\ratio_k\le \ilparenthesis{1+\sqrt{5}}/2 }}  \times   \left( 0,\Tobe_{k,1}  \right]  +  \indicator_{ \ilset{ \ratio_k> \ilparenthesis{1+\sqrt{5}}/2  } }  \times\left(  \Tobe_{k,2}    , \Tobe_{k,1}   \right],  
	\end{equation}
	where $\Tobe_{k,1}$ and $\Tobe_{k,2}$ are defined in   (\ref{eq:Tobe1})     and  (\ref{eq:Tobe2})  respectively. Specifically, 	when  $\ratio_k =1$, let $\Tobe_k $ be any number in    $ \real^+$;  when  $1< \ratio_k \le \ilparenthesis{1+\sqrt{5}}/2$, let $\Tobe_k$ be any number in    $\left(0,\Tobe_{k,1}\ilparenthesis{\ratio_k} \right]$; when  $\ratio_k >\ilparenthesis{1+\sqrt{5}}/2 $, let $\Tobe_k$ be any number in    $    \left(  \Tobe_{k,2} \ilparenthesis{\ratio_k }   , \Tobe_{k,1}  \ilparenthesis{\ratio_k}  \right] $.   After selecting the slack variable $\Tobe_k $ from the   domain corresponding to different values of $\ratio_k$, the non-decaying gain $\gain_k$ will be  selected such that:   	\begin{eqnarray}\label{eq:GainBound1} 
	&\quad \gain_k \LipsPara_k   \in  \indicator_{\ilset{\ratio_k=1}} \times \left[  1,1+\frac{1}{\Tobe_k +1}   \right)    +     \indicator_{\ilset{1<\ratio_k\le (1+\sqrt{5})/2}} \times  \left[  \frac{1}{\Tobe_k+1},  \multiple_{k,+}\ilparenthesis{\Tobe_k   }   \right) \nonumber\\
	&     + \indicator_{\ilset{\ratio_k>(1+\sqrt{5})/2}} \times \ilparenthesis{  \multiple_{k,-}\ilparenthesis{\Tobe_k}, \multiple_{k,+}\ilparenthesis{\Tobe_k } }  
	\end{eqnarray}   
	where the mappings $\multiple_{k,\pm  }\ilparenthesis{\cdot }$  are    defined as: 
	\begin{equation}\label{eq:multiple1}
	\multiple_{k,\pm  }\ilparenthesis{\Tobe}\equiv \frac{\Tobe+2\pm  \sqrt{\Tobe^2+4\ilparenthesis{1+\ratio_k-\ratio_k^2}\Tobe+4\ilparenthesis{1+\ratio_k-\ratio_k^2}}}{2\ilparenthesis{\Tobe+1}}.
	\end{equation}
	Specifically, when   $\ratio_k=1$, let $\gain_k$ be such that    $\ilparenthesis{\Tobe_k+1}^{-1}\le \gain_k\LipsPara_k<1+\ilparenthesis{\Tobe_k +1}^{-1} $; when $ 1<\ratio_k\le (1+\sqrt{5})/2 $, let $\gain_k$ be such that $ \ilparenthesis{\Tobe_k+1}^{-1}\le \gain_k\LipsPara_k<   \multiple_{k,+}\ilparenthesis{\Tobe _k}   $; when $\ratio_k>(1+\sqrt{5})/2$, let $\gain_k$ be such that $\multiple_{k,-}\ilparenthesis{\Tobe_k}< \gain_k\LipsPara_k<\multiple_{k, +}\ilparenthesis{\Tobe_k}$. 
	
	If the slack variable $\Tobe_k $ is selected according to (\ref{eq:Tobe}) and then  the gain sequence $\gain_k$ is selected according to (\ref{eq:GainBound1}),   
	  the  following hold:
	\begin{numcases}{}
	\firstConst_k \equiv  \frac{\LipsPara_k}{\convexPara_k}  + \gain_k\convexPara_k\bracket{\ilparenthesis{\Tobe_k+1}\ilparenthesis{\gain_k\LipsPara_k-1}-1} \in\left[0,1\right), &	\label{eq:firstConst}\\
	\secondConst_k\equiv 	\frac{\gain_k\ilbracket{\gain_k\LipsPara_k\parenthesis{\Tobe_k +1}-1}}{\Tobe_k \convexPara_k} \ge 0 . &	\label{eq:secondConst} 
	\end{numcases}  
	  	
\end{lem}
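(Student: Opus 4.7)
The plan is to introduce the shorthand $\multiple\equiv \gain_k\LipsPara_k$ and to rewrite both $\firstConst_k$ and $\secondConst_k$ as explicit quadratic expressions in $\multiple$, whose sign can then be controlled by the discriminant bounds already established in Lemmas~\ref{lem:determinant1} and~\ref{lem:determinant2}. Since $\gain_k\convexPara_k=\multiple/\ratio_k$, a direct calculation gives
\begin{equation*}
\firstConst_k \;=\; \frac{(\Tobe_k+1)\multiple^2-(\Tobe_k+2)\multiple+\ratio_k^2}{\ratio_k},\qquad
\secondConst_k \;=\; \frac{\multiple[(\Tobe_k+1)\multiple-1]}{\Tobe_k\LipsPara_k}.
\end{equation*}

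For $\secondConst_k\geq 0$ it suffices to show $\multiple\geq 1/(\Tobe_k+1)$ in every branch of (\ref{eq:GainBound1}); this is immediate in the first two branches. In the third branch ($\ratio_k>(1+\sqrt{5})/2$), I would verify $\multiple_{k,-}\geq 1/(\Tobe_k+1)$ by the algebraic identity $(\Tobe_k+2)-2=\Tobe_k$ combined with $\sqrt{\Tobe_k^2+4(1+\ratio_k-\ratio_k^2)(\Tobe_k+1)}\leq \Tobe_k$, which holds precisely because $1+\ratio_k-\ratio_k^2<0$ in that regime.

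For $\firstConst_k\geq 0$, I read the numerator as a quadratic $Q_1(\multiple)$ in $\multiple$ with leading coefficient $\Tobe_k+1>0$, so positivity reduces to the discriminant being nonpositive, namely $(\Tobe_k+2)^2-4(\Tobe_k+1)\ratio_k^2\leq 0$. When $\ratio_k=1$, one checks by hand that $Q_1(\multiple)=\multiple[(\Tobe_k+1)\multiple-(\Tobe_k+2)]+1$ is nonneg on $[1,1+1/(\Tobe_k+1))$. When $\ratio_k>1$, the bound $\Tobe_k\leq \Tobe_{k,1}(\ratio_k)$ together with Lemma~\ref{lem:determinant1} delivers exactly the required discriminant inequality.

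For $\firstConst_k<1$, I would write $\firstConst_k-1=[Q_2(\multiple)]/\ratio_k$ where $Q_2(\multiple)=(\Tobe_k+1)\multiple^2-(\Tobe_k+2)\multiple+\ratio_k(\ratio_k-1)$. Since the leading coefficient is positive, $Q_2<0$ occurs precisely between its two real roots, which are $\multiple_{k,-}$ and $\multiple_{k,+}$ as in (\ref{eq:multiple1}). The needed positivity of the discriminant $(\Tobe_k+2)^2-4(\Tobe_k+1)\ratio_k(\ratio_k-1)$ is supplied by Lemma~\ref{lem:determinant2}: automatic for $1<\ratio_k\leq (1+\sqrt{5})/2$ and guaranteed by $\Tobe_k>\Tobe_{k,2}$ for $\ratio_k>(1+\sqrt{5})/2$. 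The case $\ratio_k=1$ is handled directly because $Q_2$ then factors cleanly.

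The only real subtlety, and the place that requires the most care, is verifying in the second branch ($1<\ratio_k\le(1+\sqrt5)/2$) that the \emph{lower} endpoint $\multiple=1/(\Tobe_k+1)$ of the gain window actually lies at or above $\multiple_{k,-}$, since otherwise $\firstConst_k$ could exceed $1$. I would prove $\multiple_{k,-}\le 1/(\Tobe_k+1)$ by showing $\sqrt{\Tobe_k^2+4(1+\ratio_k-\ratio_k^2)(\Tobe_k+1)}\ge \Tobe_k$, which holds because $1+\ratio_k-\ratho_k^2\ge 0$ in this regime, with equality only when $\ratio_k=(1+\sqrt{5})/2$. Modulo this boundary check, combining the four reductions above with the prescribed ranges (\ref{eq:Tobe}) and (\ref{eq:GainBound1}) yields (\ref{eq:firstConst}) and (\ref{eq:secondConst}) in all cases.
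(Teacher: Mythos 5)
Your proof is correct and follows essentially the same route as the paper's: both reduce (\ref{eq:firstConst}) and (\ref{eq:secondConst}) to sign conditions on quadratics in the gain (your $Q_1$ and $Q_2$ are the paper's $\tilde{h}_{k,1}$ and $\tilde{h}_{k,2}$ rescaled by $\convexPara_k/\ratio_k$ after the substitution $\multiple=\gain_k\LipsPara_k$), and both invoke Lemmas~\ref{lem:determinant1} and~\ref{lem:determinant2} to control the discriminants. Two remarks. First, your displayed formula for $\secondConst_k$ drops a factor of $\convexPara_k$ from the denominator (it should read $\Tobe_k\convexPara_k\LipsPara_k$); this is immaterial to the sign argument. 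Second, the endpoint comparison you single out as the ``only real subtlety'' is precisely the step the paper's own proof elides: the paper establishes $\tilde{h}_{k,2}<0$ only on the open interval $\ilparenthesis{\multiple_{k,-},\multiple_{k,+}}$ and then silently uses $\multiple_{k,-}\le \ilparenthesis{\Tobe_k+1}^{-1}$ to cover the prescribed window $\left[\ilparenthesis{\Tobe_k+1}^{-1},\multiple_{k,+}\right)$, while its inequality (\ref{eq:multiple2}) only bounds $\multiple_{k,-}$ by the larger quantity $\ilparenthesis{\Tobe_k/2+1}/\ilparenthesis{\Tobe_k+1}$. Your computation shows $\multiple_{k,-}\le\ilparenthesis{\Tobe_k+1}^{-1}$ holds with equality exactly at $\ratio_k=\ilparenthesis{1+\sqrt{5}}/2$, where $\ratio_k\ilparenthesis{\ratio_k-1}=1$ and $Q_2\ilparenthesis{\ilparenthesis{\Tobe_k+1}^{-1}}=0$, so that choosing $\gain_k\LipsPara_k=\ilparenthesis{\Tobe_k+1}^{-1}$ there yields $\firstConst_k=1$ rather than $\firstConst_k<1$. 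This degenerate corner is a (measure-zero) gap in the lemma as stated that neither your argument nor the paper's fully closes; it is repaired by excluding the left endpoint of the gain window in that single boundary case.
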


\begin{proof}[Proof of Lemma~\ref{lem:Tobe}]

	 { First show that  $\firstConst_k\ge 0$ in  (\ref{eq:firstConst}) holds for any $\gain_k$ satisfying (\ref{eq:GainBound1}).} We     need to show that
	 \begin{eqnarray}\label{eq:h1}
 \tilde{h}_{k,1}(\gain_k;\Tobe_k)\equiv \convexPara_k^2\LipsPara_k\parenthesis{\Tobe_k+1}\gain_k^2-\convexPara_k^2\parenthesis{\Tobe_k+2}\gain_k+\LipsPara_k\ge 0 
	 \end{eqnarray}    
	 holds for all $k$.  
	 	The semicolon in (\ref{eq:h1}) is to emphasize that the selection of $\Tobe_k$ takes place before the selection of $\gain_k$. After fixing/picking the value of  $\Tobe_k$, $\tilde{h}_{k,1}$  in (\ref{eq:h1}) is simply a function of $\gain_k$.  
	 
	  When   $\ratio_k=1$, we pick  $\Tobe_k>0$ per (\ref{eq:Tobe}).  Then $   \tilde{h}_{k,1}(\gain_k;\Tobe_k)  \ge 0 $ for  {any  $ \gain_k $ such that  $ 0<\gain_k\LipsPara_k \le  \parenthesis{\Tobe_k+1}^{-1} $ or $ \gain_k\LipsPara_k\ge 1 $.   }
	 When   $ \ratio_k>1 $, we pick $\Tobe_k$ from  $\left(0, \Tobe_{k,1}\ilparenthesis{\ratio_k}\right]$ per (\ref{eq:Tobe}).  	 From   Lemma~\ref{lem:determinant1}  that $ \tilde{h}_{k,1}\ilparenthesis{\cdot} $ has a nonpositive determinant     as long as  (\ref{eq:Tobe1}) holds.  In this case,  the upward parabola $\tilde{h}_{k,1}\ilparenthesis{\gain_k}\ge 0$ for  any $\gain_k \in\real^+$.  In short, (\ref{eq:GainBound1}) is a sufficient (but not necessary) condition for $\firstConst_k\ge 0$. 
	 
  {Next,  we  show that $\firstConst_k<1$ in  (\ref{eq:firstConst}) holds for any  $\gain_k$ satisfying (\ref{eq:GainBound1}).} We   need to show: 
		\begin{eqnarray}\label{eq:h2}
		&\quad \quad& 	      \tilde{h}_{k,2}\parenthesis{\gain_k;\Tobe_k}\equiv  \convexPara_k^2\LipsPara_k\parenthesis{\Tobe_k+1}\gain_k^2-\convexPara_k^2\parenthesis{\Tobe_k+2}\gain_k+\parenthesis{\LipsPara_k-\convexPara_k}<0.\quad\quad 
		\end{eqnarray}

		When $\ratio_k=1$,  we pick  $\Tobe_k>0$ per (\ref{eq:Tobe}).   Then $\tilde{h}_{k,2}\parenthesis{\gain_k}<0$ when  {$ 0<\gain_k\LipsPara_k<\parenthesis{\Tobe_k+2}\parenthesis{\Tobe_k+1}^{-1} $}.

		When  $ 1< \ratio_k\le \ilparenthesis{1+\sqrt{5}}/2$, we pick  $\Tobe_k\in \left(0, \Tobe_{k,1}\ilparenthesis{\ratio_k}\right]$ per (\ref{eq:Tobe}).  Lemma~\ref{lem:determinant2} tells that   $ \parenthesis{\Tobe_k+2}^2-4\ratio_k\parenthesis{\ratio_k-1}\parenthesis{\Tobe_k+1}>0 $  for any $\Tobe_k>0 $. Therefore, we have 
			\begin{equation}\label{eq:GainBound2}
			\tilde{h}_{k,2}\parenthesis{\gain_k }<0,\quad \text{when } 0\le \frac{\multiple_{k,-}}{\LipsPara_k} <\gain_k<\frac{\multiple_{k,+}}{\LipsPara_k}<\infty. 
			\end{equation}
			where $\multiple_{k,\pm }\ilparenthesis{\cdot}$ are  defined  in  (\ref{eq:multiple1}). For the time being, suppress the parameter $\Tobe_k$ in the mappings $\multiple_{k,\pm}\ilparenthesis{\cdot}$. 	Notice that both  $\multiple_{k,-}+\multiple_{k,+}= \parenthesis{\Tobe_k +2}/\parenthesis{\Tobe_k+1} $ and $\multiple_{k,-}\multiple_{k,+}= \ratio_k\parenthesis{\ratio_k-1}/\parenthesis{\Tobe_k+1} $ are strictly positive except when $\ratio_k =1$. Hence, both $\multiple_{k,-}$ and $\multiple_{k,+}$ are strictly positive,  except that $\multiple_{-}=0$ when $\ratio_k=1$.

When  $ \ratio_k\ge \ilparenthesis{1+\sqrt{5}}/2 $, we pick  $\Tobe_k\in \left(\Tobe_{k,2}\ilparenthesis{\ratio_k}, \Tobe_{k,1}\ilparenthesis{\ratio_k}\right]$ per (\ref{eq:Tobe}).    Lemma~\ref{lem:determinant2} tells that  $ \parenthesis{\Tobe_k+2}^2-4\ratio_k\parenthesis{\ratio_k-1}\parenthesis{\Tobe_k+1}>0 $  for any $\Tobe_k$ satisfying (\ref{eq:Tobe2}). Again, we have (\ref{eq:GainBound2}).

			In short, we have $\firstConst_k<1$  for any $\gain_k$ satisfying  (\ref{eq:GainBound1}).

			  {Last, $v_k> 0$ is immediate.} This follows from $ \gain_k\LipsPara_k  \ge  \ilparenthesis{\Tobe_k+1}^{-1} $.

	Note that  $ \multiple_{k, \pm} $   is well-defined for any $ \ratio_k \ge 1 $ and the corresponding selection of $\Tobe_k$ in (\ref{eq:Tobe}). 
	Given that: \begin{equation} \label{eq:multiple2}
	0\le  \multiple_{k,-}\ilparenthesis{\Tobe_k }<   \frac{\Tobe_k/2+1}{{\Tobe_k+1}}   <  \multiple_{k,+}\ilparenthesis{\Tobe_k} \le   \frac{\Tobe_k+2}{\Tobe_k+1},
	\end{equation}
	where  the first and the last inequalities in (\ref{eq:multiple2}) become  strict equalities  only when $\ratio_k=1$, we can combine the  aforementioned scenarios  for gain-selection into a   consistent expression in (\ref{eq:GainBound1}), such that (\ref{eq:firstConst}) and (\ref{eq:secondConst}) always hold  for proper slack variable selection (\ref{eq:Tobe}) and gain selection (\ref{eq:GainBound1}). 
\end{proof}

 Lemma \ref{lem:Tobe}  discusses both the slack variable $\Tobe_k $ and the gain sequence $\gain_k$.   To facilitate reading and implementation, we summarize the implementation of (\ref{eq:basicSA}) as applied in tracking time variability in Algorithm \ref{algo:basicSA}. 
  
 \begin{algorithm}[!htbp]
 	\caption{Basic SA Algorithm With Non-Diminishing Gain Using \emph{One}-Function-Measurement   or \emph{One}-Stochastic-Gradient-Measurement Per Iteration} 
 	\begin{algorithmic}[1]  
 		\renewcommand{\algorithmicrequire}{\textbf{Input:}}
 		\renewcommand{\algorithmicensure}{\textbf{Output:}}
 		\Require $\hbtheta_0$, the best approximation available   at hand to estimate $\bvartheta_0$.  
 		\For{$k\ge 0$ or $k\in\set{0, 1,\cdots,K}$}  
 		\Require     $\convexPara_k $ per A.\ref{assume:StronglyConvex} and $\LipsPara_k$  per A.\ref{assume:Lsmooth}. 
 		\State \textbf{collect} instant feedback   immediately after each decision. 
 		\If{the feedback is in the form of (\ref{eq:y})}
 		\State \textbf{generate} a   $p$-dimensional  random vector $\bDelta_k$ satisfying conditions in \cite[Sect. 7.3]{spall2005introduction}\remove{ mean-zero symmetrically-distributed random vector $\bDelta_k$ such that each component has finite inverse moments}.   \Comment{Each component of  $\bDelta_k$ may  be   i.i.d.  Rademacher distributed.}
 		\State \textbf{pick} a  small number   $c_k$. \Comment{ $c_k$ may be the  desired minimal component-wise change of  $\hbtheta_k $.}
 		\State \textbf{compute} $ \hbg_k^{\mathrm{SP1}}\ilparenthesis{\hbtheta_k}  $ using (\ref{eq:g1SP}). 
 		\ElsIf{ the feedback is in the form of (\ref{eq:Y})}
 		\State \textbf{set} $\hbg_k^{\mathrm{RM}}\ilparenthesis{\hbtheta_k}$ as in (\ref{eq:Ystationary}). 
 		\EndIf
 		\If{$ \ratio_k   = 1 $}
 		\State \textbf{pick} any $\Tobe_k\in\real^+$. \remove{We may pick $\Tobe_k=1$.} \label{line:r1-1}
 		\State
 		\textbf{pick} $\gain_k$ such that   $ 1\le \gain_k\LipsPara_k< 1+\ilparenthesis{\Tobe_k+1}^{-1} $. \label{line:r1-2} \remove{We may pick $\gain_k = 1/\LipsPara_k$.}
 		\ElsIf{$1< \ratio_k \le (1+\sqrt{5})/2 $ }
 		\State \textbf{pick} any $\Tobe_k \in\left(0,\Tobe_{k,1}\ilparenthesis{\ratio_k} \right]$ per   (\ref{eq:Tobe1}). \label{line:r2-1}	\remove{We may pick $\Tobe _k = \Tobe_{1,+}\ilparenthesis{\ratio_k}$.}	 	\State \textbf{pick} $\gain_k $ such that $ \gain_k\LipsPara_k \in  \left[ \ilparenthesis{\Tobe_k+1}^{-1},\multiple_{k,+}\ilparenthesis{\Tobe _k  }  \right) $ per (\ref{eq:multiple1}). \label{line:r2-2}  \remove{We may pick $\gain_k=\ilparenthesis{\Tobe_k+1}^{-1}$. }
 		\ElsIf{$\ratio_k>(1+\sqrt{5})/2$}
 		\State \textbf{pick} any $\Tobe\in\left( \Tobe_{k,2}\ilparenthesis{\ratio_k } ,\Tobe_{k,1} \ilparenthesis{\ratio_k } \right]$ per (\ref{eq:Tobe1}) and (\ref{eq:Tobe2}). \label{line:r3-1} \remove{We may pick $\Tobe _k  = \Tobe_{1,+}\ilparenthesis{\ratio_k}$.}
 		 
 			\State \textbf{pick} $\gain_k $ such that $ \gain_k\LipsPara_k \in  \left( \multiple_{k,-}\ilparenthesis{\Tobe_k},\multiple_{k,+}\ilparenthesis{\Tobe _k  }  \right) $ per (\ref{eq:multiple1}). \label{line:r3-2}  \remove{We may pick $\gain_k=\ilparenthesis{\Tobe_k+1}^{-1}$. }
 	
 		\EndIf
 		\State	\textbf{update} $ \hbtheta_{k} $ using (\ref{eq:basicSA}).
 		\Ensure $\hbtheta_k$.
 		\EndFor 
 	\end{algorithmic}	 		\label{algo:basicSA}	
 \end{algorithm}

 The motivation behind the restriction on the non-diminishing gain $\gain_k $ in Lemma~\ref{lem:Tobe} is to ensure the tracking capability, with a manifestation of the shrinking recurrence coefficient in the    inequality (\ref{eq:ErrorPropagationLemma}) below.

\begin{lem}[Error propagation] \label{lem:ErrorPropagationLemma1} 
	Assume  that A.\ref{assume:ErrorWithBoundedSecondMoment}, A.\ref{assume:StronglyConvex}, A.\ref{assume:Lsmooth}, and A.\ref{assume:BoundedVariation} hold.   Let $\Tobe_k  $ be a slack variable selected according to (\ref{eq:Tobe}), the domain of which depends on different values of $\ratio_k$.  
	Let $\hbtheta_k$ be the sequence generated by (\ref{eq:basicSA}) with non-decaying gain satisfying    (\ref{eq:GainBound1}).  We have: 
	\begin{equation}\label{eq:ErrorPropagationLemma}
	\norm{ \hbtheta_{k+1} - \bvartheta_{k} } ^ 2 \le \firstConst _ k \norm{\hbtheta_k - \bvartheta_k}^2 + \secondConst _ k \norm{ \be_k \ilparenthesis{\hbtheta_k} }^2,
	\end{equation}  where the  constants $\firstConst_k$ and $\secondConst_k$ were   defined in (\ref{eq:firstConst}) and   (\ref{eq:secondConst}). 
	Both coefficients are  deterministic and can be fully determined after selecting $\Tobe_k$ and $\gain_k$ per Algorithm~\ref{algo:basicSA}. 	  Furthermore, the coefficient $\firstConst_k$ 
	is guaranteed to lie  within      $ \left[0,1\right) $, and the coefficient $\secondConst_k$   is positive.
\end{lem}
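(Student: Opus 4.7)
The plan is to bound $\norm{\hbtheta_{k+1} - \bvartheta_k}^2$ \emph{not} by expanding the squared norm directly, but by routing through the loss-function inequalities of Lemma~\ref{lem:BasicInequalities}. This is what explains the isolated $\ratio_k = \LipsPara_k/\convexPara_k$ summand sitting in front of $\norm{\hbtheta_k-\bvartheta_k}^2$ in $\firstConst_k$, which would not arise from a naive squared-norm expansion combined with a single Young's inequality.

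First I would apply the left half of (\ref{eq:ineq1}) at $\btheta = \hbtheta_{k+1}$ to obtain $\norm{\hbtheta_{k+1}-\bvartheta_k}^2 \le (2/\convexPara_k)\ilbracket{\loss_k(\hbtheta_{k+1}) - \loss_k(\bvartheta_k)}$, then split the bracket as $\ilbracket{\loss_k(\hbtheta_{k+1})-\loss_k(\hbtheta_k)} + \ilbracket{\loss_k(\hbtheta_k) - \loss_k(\bvartheta_k)}$. The second summand is at most $\LipsPara_k\norm{\hbtheta_k-\bvartheta_k}^2/2$ by the right half of (\ref{eq:ineq1}), so after the prefactor $2/\convexPara_k$ it contributes exactly $\ratio_k\norm{\hbtheta_k-\bvartheta_k}^2$. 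For the first summand I would use the smoothness upper bound implicit in (\ref{eq:LipsIneq}) with $\bzeta = \hbtheta_k$ and $\btheta = \hbtheta_{k+1}$, substitute $\hbtheta_{k+1} - \hbtheta_k = -\gain_k\ilbracket{\bg_k(\hbtheta_k) + \be_k(\hbtheta_k)}$, and fully expand.

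Next I would handle the single cross term $2\gain_k^2\bg_k(\hbtheta_k)^\transpose\be_k(\hbtheta_k)$ via Young's inequality with parameter $\Tobe_k$, producing $\Tobe_k\gain_k^2\norm{\bg_k(\hbtheta_k)}^2 + (\gain_k^2/\Tobe_k)\norm{\be_k(\hbtheta_k)}^2$. Collecting every contribution, I expect an intermediate bound of the form $\norm{\hbtheta_{k+1}-\bvartheta_k}^2 \le \ratio_k\norm{\hbtheta_k-\bvartheta_k}^2 + \gain_k\ilbracket{(\Tobe_k+1)(\gain_k\LipsPara_k - 1) - 1}\norm{\bg_k(\hbtheta_k)}^2/\convexPara_k + \secondConst_k\norm{\be_k(\hbtheta_k)}^2$, where a short algebraic simplification already identifies the coefficient of $\norm{\be_k(\hbtheta_k)}^2$ as exactly $\secondConst_k$ in (\ref{eq:secondConst}).

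The final move is converting the coefficient of $\norm{\bg_k(\hbtheta_k)}^2$ into the remaining portion of $\firstConst_k$. I would invoke the lower bound $\norm{\bg_k(\hbtheta_k)}^2 \ge \convexPara_k^2\norm{\hbtheta_k-\bvartheta_k}^2$, obtained by chaining the left halves of (\ref{eq:ineq2}) and (\ref{eq:ineq1}). Because a lower bound on $\norm{\bg_k(\hbtheta_k)}^2$ only produces an \emph{upper} bound after multiplication by a non-positive coefficient, the crux is to verify that $(\Tobe_k+1)(\gain_k\LipsPara_k-1) - 1 \le 0$ under the gain selection (\ref{eq:GainBound1}); this reduces to $\gain_k\LipsPara_k \le (\Tobe_k+2)/(\Tobe_k+1)$, which follows from (\ref{eq:multiple2}) because $\multiple_{k,+}(\Tobe_k)$ is bounded above by $(\Tobe_k+2)/(\Tobe_k+1)$ across all three cases of (\ref{eq:Tobe}). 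This sign-bookkeeping step is the main obstacle: it is the single place where the elaborate case analysis in Lemma~\ref{lem:Tobe} is genuinely consumed. Once the substitution is legal, the $\norm{\hbtheta_k-\bvartheta_k}^2$ coefficient collapses to $\ratio_k + \gain_k\convexPara_k\ilbracket{(\Tobe_k+1)(\gain_k\LipsPara_k-1)-1} = \firstConst_k$, and the membership $\firstConst_k \in [0,1)$ together with $\secondConst_k \ge 0$ then follow immediately from Lemma~\ref{lem:Tobe}.
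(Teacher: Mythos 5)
Your proposal is correct and follows essentially the same route as the paper's proof: descent via the smoothness inequality (\ref{eq:LipsIneq}) around $\hbtheta_k$, decomposition $\hbg_k = \bg_k + \be_k$, Young's inequality with slack $\Tobe_k$ on the cross term, the two halves of (\ref{eq:ineq1}) to pass between losses and squared distances, the sign check $(\Tobe_k+1)(\gain_k\LipsPara_k-1)-1\le 0$ via (\ref{eq:multiple2}), and finally the lower bound $\norm{\bg_k(\hbtheta_k)}^2\ge \convexPara_k^2\norm{\hbtheta_k-\bvartheta_k}^2$ from chaining (\ref{eq:ineq2}) and (\ref{eq:ineq1}); applying the $2/\convexPara_k$ prefactor first rather than last is only a cosmetic reordering. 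One small slip: the cross term is $\gain_k(\gain_k\LipsPara_k-1)\,\bg_k(\hbtheta_k)^\transpose\be_k(\hbtheta_k)$ rather than $2\gain_k^2\,\bg_k(\hbtheta_k)^\transpose\be_k(\hbtheta_k)$, but Young's inequality applied to the correct term yields exactly the intermediate bound you state, so nothing downstream is affected.
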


\begin{proof}[Proof of Lemma \ref{lem:ErrorPropagationLemma1}]

	   The  estimates generated by  (\ref{eq:basicSA})  satisfy the following:
	\begin{eqnarray}\label{eq:Prop1-1}
  \loss_k\ilparenthesis{\hbtheta_{k+1}}
	&=& \loss_k\ilparenthesis{\hbtheta_k - \gain_k \hbg_k\ilparenthesis{\hbtheta_k }}\nonumber \\
	&\le& \loss_k \ilparenthesis{\hbtheta_k} - \gain_k \ilbracket{\hbg_k\ilparenthesis{\hbtheta_k}}^\transpose\bg_k\ilparenthesis{\hbtheta_k} +  \frac{\gain _ k ^ 2 \LipsPara_k}{2} \norm{\hbg_k\ilparenthesis{\hbtheta_k}}^2\nonumber\\
	&=& \loss_k\ilparenthesis{\hbtheta_k} - \gain_k \ilbracket{\bg_k\ilparenthesis{\hbtheta_k}+ \be_k\ilparenthesis{\hbtheta_k}}^\transpose \bg_k\ilparenthesis{\hbtheta_k} + \frac{\gain_k^2\LipsPara_k}{2}\norm{\bg_k\ilparenthesis{\hbtheta_k} + \be_k\ilparenthesis{\hbtheta_k}}^2\nonumber\\
	&=& \loss_k\ilparenthesis{\hbtheta_k} + \gain_k \parenthesis{ {\gain_k\LipsPara_k}/{2}-1} \norm{\bg_k\ilparenthesis{\hbtheta_k}}^2 + \gain_k\ilparenthesis{\gain_k\LipsPara_k -1} \ilbracket{\bg_k\ilparenthesis{\hbtheta_k}}^\transpose \be_k\ilparenthesis{\hbtheta_k}  \nonumber\\
	&\quad&  + \frac{\gain_k^2\LipsPara_k}{2}\norm{\be_k\ilparenthesis{\hbtheta_k}}^2\nonumber\\
	&\le &   \loss_k\ilparenthesis{\hbtheta_k} + \gain_k \parenthesis{ \frac{\gain_k\LipsPara_k}{2}-1} \norm{\bg_k\ilparenthesis{\hbtheta_k}}^2 +    \frac{\gain_k^2\LipsPara_k}{2}\norm{\be_k\ilparenthesis{\hbtheta_k}}^2 \nonumber\\ 
	&\quad &  \frac{\gain_k\ilparenthesis{\gain_k\LipsPara_k -1}}{2 } \bracket{   \Tobe_k  \norm{\bg_k\ilparenthesis{\hbtheta_{k}}} ^2 + \Tobe_k^{-1} \norm{\be_k\ilparenthesis{\hbtheta_k}}^2    } \nonumber\\
	&=& \loss_k\ilparenthesis{\hbtheta_k} +  \frac{\gain_k}{2}\bracket{\ilparenthesis{\gain_k\LipsPara_k - 1 }\ilparenthesis{1+\Tobe_k}-1} \norm{\bg_k\ilparenthesis{\hbtheta_k}}^2 \nonumber\\
	&\quad & + \frac{\gain_k}{2\Tobe_k} \ilbracket{\gain_k\LipsPara_k \ilparenthesis{1+\Tobe_k    } -1 } \norm{\be_k\ilparenthesis{\hbtheta_k}}^2,
	\end{eqnarray}
	where the first inequality follows from (\ref{eq:LipsIneq}),     the second equality follows from  the fact that $\hbg_k\ilparenthesis{\hbtheta_k}$ on the r.h.s. can be decomposed as  (\ref{eq:gGeneral}), and the second inequality follows 
	from the fact that   $ \pm  2\bx^\transpose\by\le {\norm{\bx}^2+\norm{\by}^2}  $ for any $\bx,\by\in\real^p$. Note that   both $2\bx^\transpose\by\le {\norm{\bx}^2+\norm{\by}^2}$ and $-2\bx^\transpose\by\le {\norm{\bx}^2+\norm{\by}^2}$ hold,  so  the sign of the   coefficient $ \gain_k\ilparenthesis{\gain_k\LipsPara_k-1}/2 $ does not affect the validity of the inequality. 
	 The positive slack variable $\Tobe_k$, which may be deemed as hyper-parameter for selecting selection $\gain_k$, is picked according to Lemma~\ref{lem:Tobe} and the corresponding gain selection is summarized in Algorithm~\ref{algo:basicSA}.  
	
	Subtracting $\loss_k\ilparenthesis{\bvartheta_k}$ from both sides of (\ref{eq:Prop1-1}) yields:
	\begin{eqnarray}\label{eq:Prop1-2} 
&\quad & 	\loss_k\ilparenthesis{\hbtheta_{k+1}}  - \loss_k \ilparenthesis{\bvartheta_k} \le 
	\loss_k\ilparenthesis{\hbtheta_k}  - \loss_k\ilparenthesis{\bvartheta_k} +  \frac{\gain_k}{2}\bracket{\ilparenthesis{\gain_k\LipsPara_k - 1 }\ilparenthesis{1+\Tobe_k}-1} \norm{\bg_k\ilparenthesis{\hbtheta_k}}^2 \nonumber\\
	&\quad & \quad\quad\quad\quad\quad\quad\quad\quad\quad+ \frac{\gain_k}{2\Tobe_k} \ilbracket{\gain_k\LipsPara_k \ilparenthesis{1+\Tobe_k    } -1 } \norm{\be_k\ilparenthesis{\hbtheta_k}}^2.
	\end{eqnarray}
	Applying   both  inequalities in  (\ref{eq:ineq1}) to  both sides of (\ref{eq:Prop1-2}) gives: 
	\begin{eqnarray}
  \convexPara_k \norm{\hbtheta_{k+1}-\bvartheta_k}^2/2 
	&\le & \loss_k\ilparenthesis{\hbtheta_{k+1}} - \loss_k\ilparenthesis{\bvartheta_k} \nonumber\\
	&\le & 
	\loss_k\ilparenthesis{\hbtheta_k}  - \loss_k\ilparenthesis{\bvartheta_k} +  \frac{\gain_k}{2}\bracket{\ilparenthesis{\gain_k\LipsPara_k - 1 }\ilparenthesis{1+\Tobe_k}-1} \norm{\bg_k\ilparenthesis{\hbtheta_k}}^2 \nonumber\\
	&\quad & + \frac{\gain_k}{2\Tobe_k} \ilbracket{\gain_k\LipsPara_k \ilparenthesis{1+\Tobe_k    } -1 } \norm{\be_k\ilparenthesis{\hbtheta_k}}^2  \nonumber\\ 	
	&\le & \LipsPara_k\norm{\hbtheta_k-\bvartheta_k}^2  /2 + \frac{\gain_k}{2}\bracket{\ilparenthesis{\gain_k\LipsPara_k - 1 }\ilparenthesis{1+\Tobe_k}-1} \norm{\bg_k\ilparenthesis{\hbtheta_k}}^2 \nonumber\\
	&\quad & +  \frac{\gain_k}{2\Tobe_k} \ilbracket{\gain_k\LipsPara_k \ilparenthesis{1+\Tobe_k    } -1 } \norm{\be_k\ilparenthesis{\hbtheta_k}}^2,     \nonumber 
	\end{eqnarray}
	which implies	
	\begin{eqnarray}\label{eq:Prop1-3} 
\norm{\hbtheta_{k+1}-\bvartheta_k}^2	& \le& \frac{\LipsPara_k}{\convexPara_k }\norm{\hbtheta_k-\bvartheta_k}^2+ \frac{\gain_k }{\convexPara_k}\bracket{\ilparenthesis{\Tobe_k +1}\ilparenthesis{\gain_k\LipsPara_k-1}-1 }\norm{\bg_k\ilparenthesis{\hbtheta_k}}^2\nonumber\\
	&\quad& +\frac{\gain_k}{\Tobe_k\convexPara_k}\ilbracket{\gain_k\LipsPara_k\parenthesis{\Tobe_k +1} -1}\norm{\be_k\ilparenthesis{\hbtheta_k}}^2. 
	\end{eqnarray} 
 (\ref{eq:multiple2}) tells that  $ {\gain_k}{\convexPara_k}^{-1}\bracket{ \ilparenthesis{\Tobe_k +1}\ilparenthesis{\gain_k\LipsPara_k-1}-1 }
		$,  the  coefficient of $\norm{\bg_k\ilparenthesis{\hbtheta_k}}^2$ on the r.h.s. of (\ref{eq:Prop1-3}), is negative as long as  both           the   positive slack variable  $\Tobe_ k$   and the     non-diminishing step-size $\gain_k $   are  selected per (\ref{eq:Tobe}) and (\ref{eq:GainBound1}) in  Lemma~\ref{lem:Tobe}. 
	Hence, we can apply     (\ref{eq:ineq1}) and  (\ref{eq:ineq2})   to further manipulate (\ref{eq:Prop1-3}): 
	\begin{eqnarray}\label{eq:Prop1-4}
\norm{\hbtheta_{k+1}-\bvartheta_k}^2 
	&\le &  \frac{\LipsPara_k}{\convexPara_k}\norm{\hbtheta_k-\bvartheta_k}^2+  {\gain_k}{\convexPara_k}\bracket{\ilparenthesis{\Tobe_k +1}\ilparenthesis{\gain_k\LipsPara_k-1}-1 }\norm{ \hbtheta_k-\bvartheta_k  }^2\nonumber\\
	&\quad & +\frac{\gain_k}{\Tobe_ k\convexPara_k}\ilbracket{\gain_k\LipsPara_k\parenthesis{\Tobe_k +1}-1}\norm{\be_k\ilparenthesis{\hbtheta_k}}^2\nonumber\\
	&=&\frac{\LipsPara_k+\gain_k\convexPara_k^2  \bracket{\parenthesis{\Tobe_k+1}\parenthesis{\gain_k\LipsPara_k-1}-1} }{\convexPara_k} \norm{\hbtheta_k-\bvartheta_k}^2 \nonumber\\
	&\quad & +\frac{\gain_k\ilbracket{\gain_k\LipsPara_k\parenthesis{\Tobe_k +1}-1}}{\Tobe_k \convexPara_k}\norm{\be_k\ilparenthesis{\hbtheta_k}}^2\nonumber\\
	&\equiv & \firstConst_k \norm{\hbtheta_k-\bvartheta_k}^2 + \secondConst_k \norm{\be_k\ilparenthesis{\hbtheta_k}}^2.
	\end{eqnarray}
Note that both $\firstConst_k$ and $\secondConst_k$ are deterministic   once $\gain_k$ is selected following    Lemma~\ref{lem:Tobe}. Algorithm~\ref{algo:basicSA} summarizes the gain selection procedure.  Furthermore, the coefficient $\firstConst_k$    on the r.h.s. of (\ref{eq:Prop1-4}) is guaranteed to lie within $ \left[0,1\right) $   as long as    (\ref{eq:Tobe}) and  (\ref{eq:GainBound1})      hold per Lemma~\ref{lem:Tobe}. The coefficient $\secondConst_k$ on the r.h.s. of (\ref{eq:Prop1-4}) is guaranteed to be positive  per  Lemma~\ref{lem:Tobe}.
\end{proof}

Admittedly, Lemma \ref{lem:ErrorPropagationLemma1} is presented in a way that the slack variable $\Tobe_k $ also appears in the required conditions. To  present our theorem with conditions imposed solely on the gain sequence $\gain_k$, the only hyper-parameter in  (\ref{eq:basicSA}),     Lemma~\ref{lem:main1} is presented below. 
 
	\begin{lem}  \label{lem:main1}

		Let us use  the   recursion (\ref{eq:basicSA}) where   the non-diminishing gain  
		is such that $\gain_k\LipsPara_k$ lies within the shaded (green)   region (excluding the red boundaries) in Figure  \ref{fig:mainregion}.    	The region  in Figure \ref{fig:mainregion} is confined  by the lower   curve defined  to be $$ \indicator_{\ilset{1<\ratio_k\le  (1+\sqrt{5})/2 }} \times  \frac{1}{1+\Tobe_{k,1}\ilparenthesis{\ratio_k}}  + \indicator_{\ilset{ \ratio_k>(1+\sqrt{5})/2 }} \times  \multiple_{k,-}\ilparenthesis{\Tobe_{k,1}\ilparenthesis{\ratio_k} } $$ as a function of $\ratio_k$,  
and the upper curve defined to be $$\indicator_{\ilset{  1<\ratio_k\le (1+\sqrt{5})/2   }} \times  \multiple_{k,+}\ilparenthesis{0} + \indicator_{\ilset{ \ratio_k > (1+\sqrt{5})/2 }} \times  \multiple_{k,+}\ilparenthesis{\Tobe_{k,1}\ilparenthesis{\ratio_k}} $$ as a function of $\ratio_k$. 
	\begin{figure}[!htbp]
		\centering  
		\includegraphics[scale=.3]{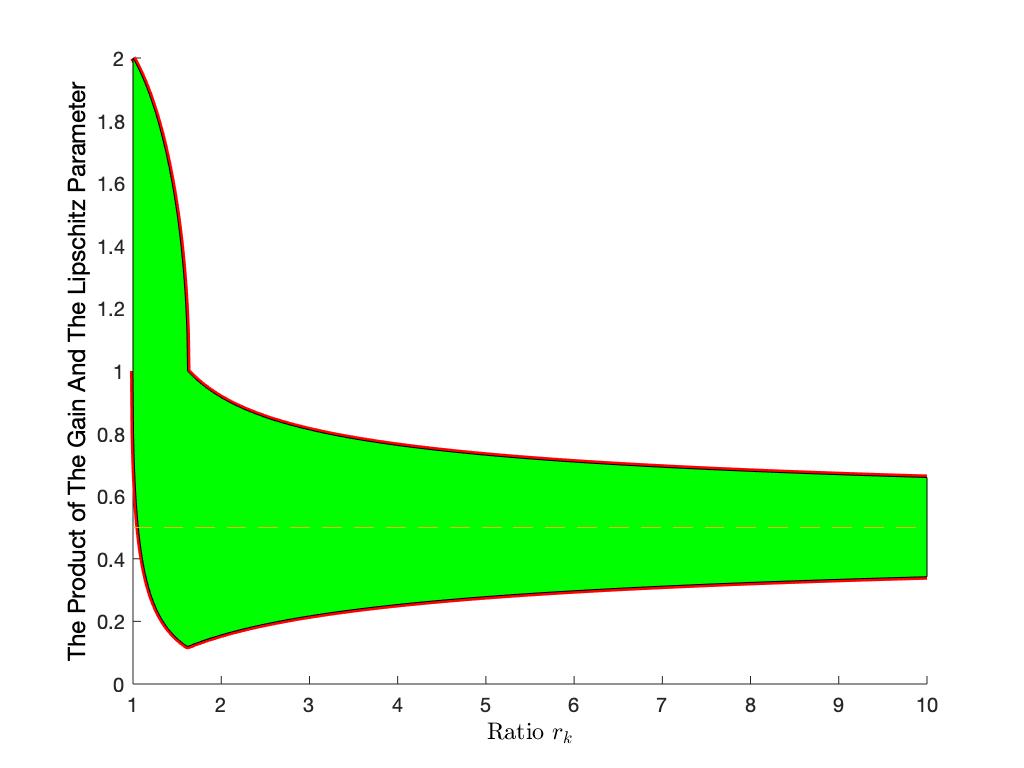}
		\caption{The Allowable Region For The Product of The Gain and The Lipschitz Parameter $\gain_k\LipsPara_k $ Under  Different Values of The Ratio $\ratio_k$. 	\label{fig:mainregion} }
	\end{figure}
		 		 	When    Assumptions    A.\ref{assume:StronglyConvex} and A.\ref{assume:Lsmooth} hold,   
and  $\gain_k$ falls within the green region indicated in Figure~\ref{fig:mainregion}, 	 we have  that (\ref{eq:ErrorPropagationLemma}) holds, 
	where $\firstConst_k$ is guaranteed to lie within $  \left[0,1\right)  $ and $\secondConst_k$ is nonnegative.

\end{lem}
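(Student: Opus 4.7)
The plan is to deduce Lemma~\ref{lem:main1} from Lemma~\ref{lem:ErrorPropagationLemma1} by eliminating the hyper-parameter $\Tobe_k$ through a union argument: for each admissible gain value displayed in Figure~\ref{fig:mainregion}, I will exhibit at least one slack variable $\Tobe_k$ belonging to the domain prescribed in (\ref{eq:Tobe}) such that $\gain_k$ falls within the corresponding interval in (\ref{eq:GainBound1}). Once such a $\Tobe_k$ is produced, the conclusion (\ref{eq:ErrorPropagationLemma}) together with $\firstConst_k\in[0,1)$ and $\secondConst_k\ge 0$ is immediate from Lemma~\ref{lem:ErrorPropagationLemma1}.

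The bulk of the work is therefore to identify the envelope of the $\Tobe_k$-dependent admissible intervals as $\Tobe_k$ ranges over its allowed set. I would split on $\ratio_k$ exactly as in Lemma~\ref{lem:Tobe}. In the regime $1<\ratio_k\le(1+\sqrt{5})/2$, the allowed slack varies over $(0,\Tobe_{k,1}(\ratio_k)]$, and I would establish monotonicity of the two endpoints: $\Tobe_k\mapsto 1/(\Tobe_k+1)$ is decreasing, so its infimum over this interval is $1/(1+\Tobe_{k,1}(\ratio_k))$, while $\Tobe_k\mapsto \multiple_{k,+}(\Tobe_k)$ from (\ref{eq:multiple1}) is decreasing in $\Tobe_k$ (to be verified by differentiating or by inspecting the product/sum relations $\multiple_{k,-}+\multiple_{k,+}=(\Tobe_k+2)/(\Tobe_k+1)$ and $\multiple_{k,-}\multiple_{k,+}=\ratio_k(\ratio_k-1)/(\Tobe_k+1)$ used in the proof of Lemma~\ref{lem:Tobe}), so its supremum is attained as $\Tobe_k\downarrow 0$, giving $\multiple_{k,+}(0)$. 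Hence the union of the intervals $[1/(\Tobe_k+1),\multiple_{k,+}(\Tobe_k))$ is exactly $[1/(1+\Tobe_{k,1}(\ratio_k)),\multiple_{k,+}(0))$, matching the middle portion of Figure~\ref{fig:mainregion}. In the regime $\ratio_k>(1+\sqrt{5})/2$, $\Tobe_k$ ranges over $(\Tobe_{k,2}(\ratio_k),\Tobe_{k,1}(\ratio_k)]$, and a similar monotonicity analysis on $\multiple_{k,-}(\Tobe_k)$ and $\multiple_{k,+}(\Tobe_k)$ yields the envelope $(\multiple_{k,-}(\Tobe_{k,1}(\ratio_k)),\multiple_{k,+}(\Tobe_{k,1}(\ratio_k)))$, again matching the figure. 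The boundary case $\ratio_k=1$ can be absorbed into the first regime by continuity of the envelopes (and is handled by any $\Tobe_k>0$).

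Having matched the envelopes to the shaded region, I would close the argument constructively: given $\gain_k$ with $\gain_k\LipsPara_k$ in the green region, solve explicitly for a $\Tobe_k$ in the appropriate interval using the relations in (\ref{eq:multiple1})~--- for instance, for the middle regime one may invert $\multiple_{k,+}(\Tobe_k)=\gain_k\LipsPara_k$ to recover a valid $\Tobe_k \in (0,\Tobe_{k,1}]$ whenever $\gain_k\LipsPara_k$ is strictly below $\multiple_{k,+}(0)$. With this $\Tobe_k$, the hypotheses of Lemma~\ref{lem:ErrorPropagationLemma1} hold verbatim, yielding (\ref{eq:ErrorPropagationLemma}) with the stated bounds on $\firstConst_k$ and $\secondConst_k$.

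The main obstacle I anticipate is the monotonicity verification for $\multiple_{k,\pm}(\Tobe_k)$ as a function of $\Tobe_k$ for fixed $\ratio_k$, since the square-root in (\ref{eq:multiple1}) obscures the sign of the derivative; a clean route is to use the Vi\`ete relations $\multiple_{k,-}+\multiple_{k,+}=(\Tobe_k+2)/(\Tobe_k+1)$ and $\multiple_{k,-}\multiple_{k,+}=\ratio_k(\ratio_k-1)/(\Tobe_k+1)$, both strictly decreasing in $\Tobe_k$, from which one deduces that $\multiple_{k,+}$ decreases and $\multiple_{k,-}$ increases on the relevant ranges. A secondary subtlety is handling the open/closed boundary conventions so that the ``excluding the red boundaries'' qualifier in the statement of Lemma~\ref{lem:main1} is respected; this only affects whether the extremal $\Tobe_k$ is attained, not the validity of $\firstConst_k<1$.
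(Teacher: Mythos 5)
Your overall architecture is the same as the paper's: split on the value of $\ratio_k$, and for each gain value in the region of Figure~\ref{fig:mainregion} exhibit a slack variable $\Tobe_k$ admissible under (\ref{eq:Tobe}) so that (\ref{eq:GainBound1}) holds, after which Lemma~\ref{lem:ErrorPropagationLemma1} delivers (\ref{eq:ErrorPropagationLemma}) with $\firstConst_k\in\left[0,1\right)$ and $\secondConst_k\ge 0$. In fact you are more faithful to what the lemma asserts than the paper's own proof, which only checks the converse containment (that any gain produced by lines \ref{line:r1-1}--\ref{line:r3-2} of Algorithm~\ref{algo:basicSA} lands inside the region) and simply asserts the extremal curves ``in particular''; your union-of-intervals argument and the constructive inversion of $\multiple_{k,+}$ are exactly the missing sufficiency direction.

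There is, however, one step that fails as written: the claim that $\multiple_{k,+}(\Tobe_k)$ is decreasing and $\multiple_{k,-}(\Tobe_k)$ is increasing in $\Tobe_k$, deduced from the Vi\`ete relations. First, the deduction itself is invalid --- a sum and a product can both decrease while the larger root increases (e.g.\ roots $(4,6)$ with sum $10$, product $24$, moving to $(1,8)$ with sum $9$, product $8$). Second, the conclusion is actually false in the regime $\ratio_k>(1+\sqrt{5})/2$. Writing $s=\Tobe_k+1$ and noting that $\multiple_{k,\pm}$ are the roots of $q(m)=sm^2-(s+1)m+\ratio_k(\ratio_k-1)$, implicit differentiation gives $\diff \multiple_{k,+}/\diff s=-\multiple_{k,+}(\multiple_{k,+}-1)/\sqrt{(s+1)^2-4\ratio_k(\ratio_k-1)s}$, so the sign is governed by whether $\multiple_{k,+}\gtrless 1$, equivalently by the sign of $q(1)=\ratio_k(\ratio_k-1)-1$. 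When $\ratio_k<(1+\sqrt{5})/2$ one has $\multiple_{k,-}<1<\multiple_{k,+}$ and your claimed monotonicity of $\multiple_{k,+}$ holds; when $\ratio_k>(1+\sqrt{5})/2$ both roots lie below $1$, so $\multiple_{k,+}$ is \emph{increasing} and $\multiple_{k,-}$ is \emph{decreasing} in $\Tobe_k$ (e.g.\ $\ratio_k=2$: $\multiple_{k,\pm}(5)=(0.667,0.5)$ versus $\multiple_{k,\pm}(\Tobe_{k,1})\approx(0.915,0.157)$). With your stated monotonicity the third-regime envelope would come out as $\ilparenthesis{\multiple_{k,-}(\Tobe_{k,2}),\multiple_{k,+}(\Tobe_{k,2})}$ rather than the curves in the lemma. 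The fix is harmless and even simplifies matters: in the third regime the intervals $\ilparenthesis{\multiple_{k,-}(\Tobe_k),\multiple_{k,+}(\Tobe_k)}$ are nested and widen as $\Tobe_k$ grows, so the union over $\Tobe_k\in\left(\Tobe_{k,2},\Tobe_{k,1}\right]$ is attained by the single choice $\Tobe_k=\Tobe_{k,1}$ and no inversion is needed there; the connectivity-of-the-union argument is only genuinely required in the middle regime, where your reasoning is correct.
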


\begin{proof}[Proof of Lemma \ref{lem:main1}] For $\Tobe_{k,1}\ilparenthesis{\ratio_k}$ defined in (\ref{eq:Tobe1}), we know that
$ \Tobe_{k,1} \ilparenthesis{\ratio_k} $ is  strictly increasing w.r.t.  $\ratio_k$ as $   {\partial    \Tobe_{k,1} }/{\partial \ratio_k }    $ is strictly positive on $\ratio_k>1$. Also, $\Tobe_{k,2}$ defined in (\ref{eq:Tobe2}) is also a strictly increasing function of $\ratio_k$ because   $  {\partial \Tobe_{k,2}  }/{\partial \ratio_k}    $ is strictly positive on $ \ratio_k>\ilparenthesis{1+\sqrt{5}}/2 $. 

When   $\ratio_k=1$,      lines \ref{line:r1-1}--\ref{line:r1-2} in Algorithm~\ref{algo:basicSA} immediately tell $ 1\le \gain_k\LipsPara_k<2 $.

When 
$1<\ratio_k\le \ilparenthesis{1+\sqrt{5}}/2$,   lines \ref{line:r2-1} and \ref{line:r2-2} instruct  selecting  any $\gain_k $ such that $ \ilparenthesis{\Tobe_k+1}^{-1}\le \gain_k\LipsPara_k <\multiple_{k,+}\ilparenthesis{\Tobe_k } $ for $ 0<\Tobe_k\le \Tobe_{k,1}\ilparenthesis{\ratio_k}$. 
In particular, $(1+\Tobe_{k,1}\ilparenthesis{\ratio_k})^{-1} <\gain_k\LipsPara_k< \multiple_{k,+}\parenthesis{ 0 }$, where $\multiple_{k,+}\parenthesis{0}=1+\sqrt{1+\ratio_k-\ratio_k^2}$.

When  $\ratio_k>\ilparenthesis{1+\sqrt{5}}/2 $,   lines \ref{line:r3-1} and \ref{line:r3-2} allow for  selecting  any $\gain_k $ such that $  \multiple_{k,-}\ilparenthesis{\Tobe_k} <\gain_k\LipsPara_k <\multiple_{k,+}\ilparenthesis{\Tobe_k } $ for $ \Tobe_{k,2}\ilparenthesis{\ratio_k}<\Tobe_k\le \Tobe_{k,1}\ilparenthesis{\ratio_k}$. In particular,     $  \multiple_{k,-}\parenthesis{\Tobe_{k,1}\parenthesis{\ratio_k }} < \gain_k\LipsPara_k <  \multiple_{k,+}\parenthesis{\Tobe_{k,1}\parenthesis{\ratio_k }}  $. 
\end{proof}

\subsection{A Priori    Error Bound}
\label{subsect:unconditional}

For time-varying systems,   it is unrealistic to expect the convergence results  as in classic SA settings; i.e., for   $\norm{\hbtheta_k-\bvartheta_k}$ to be arbitrarily close to zero in a certain  statistical sense. 
Our first main  result pertains to the error propagation in terms of  MAD (see Theorem \ref{thm:main1}).

	\begin{thm}
	[MAD bound     under bounded-drift assumption] \label{thm:main1}   	Assume      A.\ref{assume:ErrorWithBoundedSecondMoment},   A.\ref{assume:StronglyConvex}, A.\ref{assume:Lsmooth},   and  A.\ref{assume:BoundedVariation}. Using  recursion (\ref{eq:basicSA}) with  non-decaying gain satisfying the region specified in Lemma~\ref{lem:main1}, we have  the following recurrence on the unconditional MAD:	\begin{equation}\label{eq:PropagationLemma2}
	\E\norm{\hbtheta_{k+1}-\bvartheta_{k+1}}\le \sqrt{\firstConst_k}\E\norm{\hbtheta_k-\bvartheta_k} + \noiseBound_k\sqrt{ {\secondConst_k }}+\driftBound_k, \quad   k \in \natural.
	\end{equation}
	Furthermore, we have an asymptotic bound
	\begin{equation}\label{eq:PropagationLemma3}
		\limsup_{k\to\infty} \E\norm{\hbtheta_k-\bvartheta_k}  \le \limsup_k   \frac{\noiseBound_k   \sqrt{  {\secondConst_k }}+\driftBound_k}{1-\sqrt{\firstConst_k}}. 
	\end{equation} 
\end{thm}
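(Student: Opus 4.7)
The plan is to derive the one-step recurrence (\ref{eq:PropagationLemma2}) by bootstrapping the quadratic bound established in Lemma~\ref{lem:ErrorPropagationLemma1}, and then to obtain (\ref{eq:PropagationLemma3}) by unrolling this recurrence and appealing to Lemma~\ref{lem:Limit}. First I would start from (\ref{eq:ErrorPropagationLemma}), which controls $\norm{\hbtheta_{k+1}-\bvartheta_k}^2$. Since the gain lies in the region prescribed by Lemma~\ref{lem:main1}, I can invoke the conclusion of Lemma~\ref{lem:ErrorPropagationLemma1} that $\firstConst_k\in[0,1)$ and $\secondConst_k\ge 0$, so both coefficients have nonnegative square roots. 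Taking square roots on both sides of (\ref{eq:ErrorPropagationLemma}) and using the elementary inequality $\sqrt{x^2+y^2}\le |x|+|y|$ for nonnegative $x,y$ yields
\begin{equation*}
\norm{\hbtheta_{k+1}-\bvartheta_k}\le \sqrt{\firstConst_k}\,\norm{\hbtheta_k-\bvartheta_k}+\sqrt{\secondConst_k}\,\norm{\be_k\ilparenthesis{\hbtheta_k}}.
\end{equation*}

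Next I would insert the time-varying target by the triangle inequality
$\norm{\hbtheta_{k+1}-\bvartheta_{k+1}}\le \norm{\hbtheta_{k+1}-\bvartheta_k}+\norm{\bvartheta_{k+1}-\bvartheta_k}$, take expectations, and use Jensen's inequality twice: once to bound $\E\norm{\be_k\ilparenthesis{\hbtheta_k}}\le \sqrt{\E\norm{\be_k\ilparenthesis{\hbtheta_k}}^2}\le \noiseBound_k$ by A.\ref{assume:ErrorWithBoundedSecondMoment}, and once to bound $\E\norm{\bvartheta_{k+1}-\bvartheta_k}\le \sqrt{\E\norm{\bvartheta_{k+1}-\bvartheta_k}^2}=\driftBound_k$ by A.\ref{assume:BoundedVariation}. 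This delivers (\ref{eq:PropagationLemma2}) directly. A small subtlety is that $\firstConst_k,\secondConst_k$ are deterministic (this is part of Lemma~\ref{lem:ErrorPropagationLemma1}), so they pass through the expectation without difficulty.

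To obtain the asymptotic bound (\ref{eq:PropagationLemma3}), I would iterate the affine recurrence $a_{k+1}\le r_k a_k + b_k$ with $a_k\equiv \E\norm{\hbtheta_k-\bvartheta_k}$, $r_k\equiv\sqrt{\firstConst_k}\in[0,1)$, and $b_k\equiv \noiseBound_k\sqrt{\secondConst_k}+\driftBound_k$. Unrolling gives
\begin{equation*}
a_{k+1}\le a_0\prod_{i=0}^{k}r_i+\sum_{j=0}^{k}b_j\prod_{i=j+1}^{k}r_i.
\end{equation*}
If $\limsup_k r_k<1$ (otherwise the right-hand side of (\ref{eq:PropagationLemma3}) is $+\infty$ and the claim is vacuous), the initialization term $a_0\prod_i r_i$ vanishes, and I would rewrite the sum as $\sum_{j=0}^{k}(1-r_j)\prod_{i=j+1}^{k}r_i\cdot\frac{b_j}{1-r_j}$. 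Setting $z_j=r_j$ and $x_j=b_j/(1-r_j)$ puts this in the exact form of Lemma~\ref{lem:Limit}, whose conclusion $\limsup_k\sum_j\upnu_{j,k}x_j\le\limsup_k x_k$ yields (\ref{eq:PropagationLemma3}).

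The only real obstacle I anticipate is the bookkeeping at the boundary: verifying that Lemma~\ref{lem:Limit}'s hypotheses ($0<\inf z_k$ and $\sup x_k<\infty$) actually apply, which must be handled under the implicit regularity assumption that $\sqrt{\firstConst_k}$ stays uniformly away from $1$ and that $\noiseBound_k,\secondConst_k,\driftBound_k$ are uniformly bounded; otherwise the right-hand side of (\ref{eq:PropagationLemma3}) diverges and the assertion holds trivially. All other manipulations are elementary inequalities plus Jensen, so the structural content of the theorem lies entirely in Lemma~\ref{lem:ErrorPropagationLemma1}.
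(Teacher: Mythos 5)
Your proposal is correct and follows essentially the same route as the paper: take square roots of the quadratic bound from Lemma~\ref{lem:ErrorPropagationLemma1} via $\sqrt{x^2+y^2}\le x+y$, apply the triangle inequality and Jensen with A.\ref{assume:ErrorWithBoundedSecondMoment} and A.\ref{assume:BoundedVariation} to get (\ref{eq:PropagationLemma2}), then unroll the affine recurrence and invoke Lemma~\ref{lem:Product} and Lemma~\ref{lem:Limit} for (\ref{eq:PropagationLemma3}). Your closing remark about verifying the hypotheses of Lemma~\ref{lem:Limit} (uniform separation of $\sqrt{\firstConst_k}$ from $1$ and boundedness of the forcing terms) is a fair point that the paper passes over implicitly.
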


\begin{proof}[Proof of Theorem~\ref{thm:main1}]  Note that $\sqrt{x^2+y^2}<x+y$ for $x,y\in\real^+$, then   from (\ref{eq:ErrorPropagationLemma})   we have the following: 
	\begin{equation}\label{eq:Prop1-5}
	\norm{\hbtheta_{k+1}-\bvartheta_k}\le \sqrt{\firstConst_k }\norm{\hbtheta_k-\bvartheta_k}+\sqrt{  { \secondConst_k }}\norm{\be_k\ilparenthesis{\hbtheta_k}},
	\end{equation} where $ \firstConst_k\in\left[0,1\right) $ and $\secondConst_k\ge 0$. Note that both $\firstConst_k $ and $\secondConst_k $ in Lemma~\ref{lem:ErrorPropagationLemma1}, though flexible, are deterministic.   	By Jensen's inequality,    A.\ref {assume:ErrorWithBoundedSecondMoment} implies that  $ \E \norm{\be_k\ilparenthesis{\hbtheta_k}}\le \noiseBound_k$ for all $\btheta\in\real^p$ and  $k\in\integer^+$.  Similarly,      A.\ref{assume:BoundedVariation} implies that     $ \E \norm{\bvartheta_{k+1}-\bvartheta_k}\le \driftBound_k $ for all $k$.  Taking the full  expectation over (\ref{eq:Prop1-5}) and   invoking   A.\ref{assume:ErrorWithBoundedSecondMoment}, we have:
	\begin{equation*}
	\E\norm{\hbtheta_{k+1}-\bvartheta_k}\le \sqrt{\firstConst_k}\E \norm{\hbtheta_k-\bvartheta_k } + \noiseBound _k\sqrt{  {\secondConst_k }}. 
	\end{equation*}
	Then  (\ref{eq:PropagationLemma2}) directly follows from triangle inequality and     A.\ref{assume:BoundedVariation} as:
	\begin{eqnarray}\label{eq:Prop1-6}
	\E\norm{\hbtheta_{k+1}-\bvartheta_{k+1}}& \le &  \E  \norm{\hbtheta_{k+1}-\bvartheta_k } + \E\norm{\bvartheta_{k+1}-\bvartheta_k}\nonumber\\
	&\le & \sqrt{\firstConst_k}\E \norm{\hbtheta_k-\bvartheta_k} + \noiseBound_k  \sqrt{  {\secondConst_k }}+\driftBound_k.
	\end{eqnarray}
	
	Define   $\upnu_{j,k} \equiv 
	\ilparenthesis{1-\sqrt{\firstConst_j}}    \prod_{i=j+1}^{k} \sqrt{\firstConst_i}$
	 for $j<k$. Then after iterating inequality (\ref{eq:Prop1-6}) back to the starting time index $0$, we have
	\begin{equation}
	\E\norm{\hbtheta_{k}  - \bvartheta_{k}} \le \parenthesis{\prod_{ j=0 }^ {k-1} \sqrt{\firstConst_i}} \E\norm{\hbtheta_0-\bvartheta_0 } + \sum_{j = 0 }^{k-1}  { \frac{  \upnu_{j,k-1}  \parenthesis{\noiseBound_j  \sqrt{  {\secondConst_j }}+\driftBound_j  }}{     1-\sqrt{\firstConst_{j}}  }},\quad k\ge 1. 
	\end{equation} Since the  $\firstConst_k$ are bounded within $\left[0,1\right)$, the leading product $\prod_{ j=0 }^ {k-1} \sqrt{\firstConst_i}$ goes to zero as $k\to \infty$. According to Lemma~\ref{lem:Product}, we know that $\sum_{j = 0 }^{k-1} \upnu_{j,k-1}   = 1- \prod_{j=0}^{k-1} \sqrt{\firstConst_j}  $, which goes to $1$ as $k\to\infty$. 
	Now we can apply Lemma~\ref{lem:Limit}  to conclude that  the asymptotic bound (\ref{eq:PropagationLemma3})     holds.  
\end{proof}

To present the theorem in terms of   RMS, we need the following lemma.

\begin{lem}
	[Triangle Inequality for RMS] \label{lem:TriangleIneqRMS} For any $p$-dimensional random vectors $\bx, \by,\bz$, we have: 
	\begin{equation*}
	\sqrt{\E\parenthesis{\norm{\bx-\by}^2}}\le \sqrt{\E\parenthesis{\norm{\bx-\bz}^2}}+\sqrt{\E\parenthesis{\norm{\bz-\by}^2}}.
	\end{equation*}
\end{lem}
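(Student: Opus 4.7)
The plan is to recognize that the quantity $\sqrt{\E(\|\cdot\|^2)}$ is essentially the $L^2$-norm of a scalar random variable (namely, the Euclidean length of the pairwise difference), and the inequality we want is the triangle inequality for that $L^2$-norm. So the proof reduces to combining the pointwise (deterministic) triangle inequality for the Euclidean norm with Cauchy-Schwarz applied to expectations.

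First, I would set $U = \|\bx - \bz\|$ and $V = \|\bz - \by\|$, both scalar nonnegative random variables. Applying the Euclidean triangle inequality pointwise (i.e., for each $\upomega \in \Omega$) gives $\|\bx(\upomega)-\by(\upomega)\| \le U(\upomega) + V(\upomega)$. Squaring and taking full expectations yields
\begin{equation*}
\E(\|\bx-\by\|^2) \le \E(U^2) + 2\E(UV) + \E(V^2).
\end{equation*}
Next, I would bound the cross term by Cauchy-Schwarz: $\E(UV) \le \sqrt{\E(U^2)\E(V^2)}$. Substituting back, the right-hand side becomes the perfect square $\bigl(\sqrt{\E(U^2)} + \sqrt{\E(V^2)}\bigr)^2$, so taking square roots (which is monotone on nonnegative reals) delivers the desired inequality.

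There is no real obstacle here; the argument is the textbook proof that $L^2(\Omega;\real^p)$ is a normed space, adapted to the notation of the chapter. The only minor subtlety is confirming finiteness of $\E(U^2)$ and $\E(V^2)$ so that Cauchy-Schwarz applies, but the statement implicitly assumes the RMS quantities on the right-hand side are finite (otherwise the inequality is vacuous). Thus the proof should fit in a few lines.
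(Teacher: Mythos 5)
Your proof is correct and is essentially identical to the paper's: both arguments combine the pointwise Euclidean triangle inequality $\|\bx-\by\|\le\|\bx-\bz\|+\|\bz-\by\|$ with the Cauchy--Schwarz bound on the cross term $\E(UV)\le\sqrt{\E(U^2)\E(V^2)}$, differing only in whether one expands the left-hand side upward or the squared right-hand side downward. No gaps.
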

\begin{proof}
	[Proof of Lemma \ref{lem:TriangleIneqRMS}]  	We will show that the r.h.s. squared is greater than or equal to the LHS squared.
	\begin{align}
&  \E\ilparenthesis{\norm{\bx-\bz}^2}+\E\ilparenthesis{\norm{\bz-\by}^2} + 2 \sqrt{  \ilbracket{   \E\ilparenthesis{\norm{\bx-\bz}^2}   } \ilbracket{\E\ilparenthesis{\norm{\bz-\by}^2}}    }\nonumber\\
	&\quad \ge  \E\ilparenthesis{\norm{\bx-\bz}^2}+\E\ilparenthesis{\norm{\bz-\by}^2} +2 \E\ilparenthesis{ \norm{\bx-\bz}\norm{\bz-\by}  }\nonumber\\
	&\quad= \E\bracket{  \ilparenthesis{ \norm{\bx-\bz} + \norm{\bz-\by}   }^2    }\nonumber\\
	&\quad \ge  \E\ilparenthesis{\norm{\bx-\by}^2}, \nonumber
	\end{align}
	where the first inequality follows from the Cauchy-Schwartz inequality,  and the last inequality follows from the triangle inequality. 
\end{proof}

\begin{thm}
	[RMS bound     under bounded-drift assumption] \label{thm:main2}  	Assume A.\ref{assume:ErrorWithBoundedSecondMoment},   A.\ref{assume:StronglyConvex}, A.\ref{assume:Lsmooth},  and A.\ref{assume:BoundedVariation}. Using  recursion (\ref{eq:basicSA}) with  non-decaying gain satisfying   the region specified in Lemma~\ref{lem:main1},   we have the following recurrence on the unconditional RMS:	\begin{equation}\label{eq:PropagationLemma4}
	\sqrt{	\E\ilparenthesis{\norm{\hbtheta_{k+1}-\bvartheta_{k+1}}^2}  }    \le 	\sqrt{ \firstConst_k 	\E\ilparenthesis{\norm{\hbtheta_{k}-\bvartheta_{k}}^2}  } + \noiseBound_k \sqrt{  {\secondConst_k }}+\driftBound_k, \quad   k \in  \natural. 
	\end{equation}
	Furthermore, we have  an asymptotic bound 
	\begin{equation}
	\label{eq:PropagationLemma5}
	\limsup_{k\to\infty}	\sqrt{	\E\ilparenthesis{\norm{\hbtheta_{k}-\bvartheta_{k}}^2 }  }    \le  \limsup_k   \frac{\noiseBound_k\sqrt{  {\secondConst_k}}+\driftBound_k}{1-\sqrt{\firstConst_k}}. 
	\end{equation} 
\end{thm}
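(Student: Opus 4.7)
The plan is to mirror the structure of the proof of Theorem~\ref{thm:main1}, but work at the level of second moments (RMS) throughout rather than first moments (MAD). The starting point will be the per-step error-propagation inequality in Lemma~\ref{lem:ErrorPropagationLemma1}, namely $\norm{\hbtheta_{k+1}-\bvartheta_k}^2 \le \firstConst_k \norm{\hbtheta_k-\bvartheta_k}^2 + \secondConst_k \norm{\be_k\ilparenthesis{\hbtheta_k}}^2$, which already holds under the chosen gain sequence because $\gain_k$ lies in the region identified in Lemma~\ref{lem:main1}, and so $\firstConst_k\in[0,1)$ and $\secondConst_k\ge 0$. This inequality is pathwise, so I can take the total expectation on both sides and invoke A.\ref{assume:ErrorWithBoundedSecondMoment} to substitute $\E\ilbracket{\norm{\be_k\ilparenthesis{\hbtheta_k}}^2}\le \noiseBound_k^2$, obtaining $\E\ilbracket{\norm{\hbtheta_{k+1}-\bvartheta_k}^2} \le \firstConst_k \E\ilbracket{\norm{\hbtheta_k-\bvartheta_k}^2} + \secondConst_k\noiseBound_k^2$.

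Next I will take square roots and use the elementary bound $\sqrt{x^2+y^2}\le x+y$ for nonnegative $x,y$ (the same device already used in the proof of Theorem~\ref{thm:main1} when going from (\ref{eq:ErrorPropagationLemma}) to (\ref{eq:Prop1-5})), giving
\begin{equation*}
\sqrt{\E\ilparenthesis{\norm{\hbtheta_{k+1}-\bvartheta_k}^2}}\le \sqrt{\firstConst_k\,\E\ilparenthesis{\norm{\hbtheta_k-\bvartheta_k}^2}}+\noiseBound_k\sqrt{\secondConst_k}.
\end{equation*}
Now I insert $\bvartheta_{k+1}$ by applying the RMS triangle inequality, Lemma~\ref{lem:TriangleIneqRMS}, with $\bx=\hbtheta_{k+1}$, $\by=\bvartheta_{k+1}$, $\bz=\bvartheta_k$, producing $\sqrt{\E\ilparenthesis{\norm{\hbtheta_{k+1}-\bvartheta_{k+1}}^2}}\le \sqrt{\E\ilparenthesis{\norm{\hbtheta_{k+1}-\bvartheta_k}^2}}+\sqrt{\E\ilparenthesis{\norm{\bvartheta_{k+1}-\bvartheta_k}^2}}$. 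Combining this with the previous display and invoking A.\ref{assume:BoundedVariation}, which bounds the second factor by $\driftBound_k$, yields exactly (\ref{eq:PropagationLemma4}).

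For the asymptotic statement (\ref{eq:PropagationLemma5}), I set $m_k\equiv\sqrt{\E\ilparenthesis{\norm{\hbtheta_k-\bvartheta_k}^2}}$, so that (\ref{eq:PropagationLemma4}) reads $m_{k+1}\le \sqrt{\firstConst_k}\,m_k + \ilparenthesis{\noiseBound_k\sqrt{\secondConst_k}+\driftBound_k}$. This is structurally identical to the recursion on $\E\norm{\hbtheta_k-\bvartheta_k}$ derived in (\ref{eq:Prop1-6}), so I can unroll it back to $k=0$ exactly as in the proof of Theorem~\ref{thm:main1}: with the weights $\upnu_{j,k}\equiv\ilparenthesis{1-\sqrt{\firstConst_j}}\prod_{i=j+1}^k\sqrt{\firstConst_i}$ one gets $m_k\le \bigl(\prod_{j=0}^{k-1}\sqrt{\firstConst_j}\bigr)m_0 + \sum_{j=0}^{k-1}\upnu_{j,k-1}\,\bigl(\noiseBound_j\sqrt{\secondConst_j}+\driftBound_j\bigr)/\bigl(1-\sqrt{\firstConst_j}\bigr)$. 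Because each $\sqrt{\firstConst_j}\in[0,1)$, the leading product vanishes as $k\to\infty$; Lemma~\ref{lem:Product} shows that $\sum_j\upnu_{j,k-1}\to 1$, and Lemma~\ref{lem:Limit} then converts this convex-combination-like sum into a $\limsup$ of the summand, delivering (\ref{eq:PropagationLemma5}).

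There is essentially no hard step here; the proof is a near-mechanical port of the MAD argument to the RMS setting. The only mild subtlety is that one cannot apply Jensen's inequality to directly convert the MAD bound into an RMS bound (which would go the wrong way), so the triangle inequality must be re-established at the RMS level via Lemma~\ref{lem:TriangleIneqRMS}, and the decoupling $\sqrt{x^2+y^2}\le x+y$ must be used at the right moment to separate the contraction term from the noise/drift term before taking square roots.
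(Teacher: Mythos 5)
Your proposal is correct and follows essentially the same route as the paper's own proof: take expectations in Lemma~\ref{lem:ErrorPropagationLemma1}, bound the noise term via A.\ref{assume:ErrorWithBoundedSecondMoment}, split the square root with $\sqrt{x^2+y^2}\le x+y$, apply the RMS triangle inequality of Lemma~\ref{lem:TriangleIneqRMS} together with A.\ref{assume:BoundedVariation}, and then unroll the scalar recursion exactly as in Theorem~\ref{thm:main1} using Lemmas~\ref{lem:Product} and~\ref{lem:Limit}. No gaps.
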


\begin{proof}
	[Proof of Theorem~\ref{thm:main2}]
	Recall  that both $\firstConst_k $ and $\secondConst_k $ in Lemma~ \ref{lem:ErrorPropagationLemma1} are deterministic. Take the full expectation over  (\ref{eq:ErrorPropagationLemma}) and invoke    A.\ref{assume:ErrorWithBoundedSecondMoment}:
	\begin{eqnarray}\label{eq:PropagationLemma6}
	\E \ilparenthesis{\norm{\hbtheta_{k+1}-\bvartheta_k}^2}& \le  & \firstConst_k  \E \ilparenthesis{\norm{\hbtheta_k-\bvartheta_k}^2} + \secondConst_k \E\ilparenthesis{\norm{\be_k\ilparenthesis{\hbtheta_k}}^2}\nonumber\\
	&\le &  \firstConst_k  \E  \ilparenthesis{\norm{\hbtheta_k-\bvartheta_k}^2} +  {\secondConst_k}  \noiseBound_k^2,
	\end{eqnarray}  
	which   implies that: 
	\begin{eqnarray}
	\sqrt{   	\E \ilparenthesis{\norm{\hbtheta_{k+1}-\bvartheta_k}^2} } \le \sqrt{\firstConst_k }  \sqrt{\E\ilparenthesis{\norm{\hbtheta_k-\bvartheta_k}^2}} + \noiseBound _k \sqrt{  {\secondConst_k }},
	\end{eqnarray}
	because     $\firstConst_k \in\left[0,1\right)$,  $\secondConst_k\ge 0$, and that $ \sqrt{x^2+y^2}<x+y $ for $x,y\in\real^+$.   
	Then (\ref{eq:PropagationLemma4}) follows directly from Lemma~\ref{lem:TriangleIneqRMS}: 
	\begin{eqnarray}
	\sqrt{\E\ilparenthesis{ \norm{\hbtheta_{k+1}-\bvartheta_{k+1}}^2 }} &\le & \sqrt{\E\ilparenthesis{\norm{\hbtheta_{k+1}-\bvartheta_k}^2}} + \sqrt{ \E\ilparenthesis{\norm{ \bvartheta_{k+1}-\bvartheta_k}^2 } }\nonumber\\
	&\le &\sqrt{\firstConst_k }  \sqrt{\E\ilparenthesis{\norm{\hbtheta_k-\bvartheta_k}^2}} + \noiseBound _k  \sqrt{  {\secondConst_k }}  + \driftBound_k. 
	\end{eqnarray}
	
	Now following the derivation immediately after equation (\ref{eq:Prop1-6}) in the proof for Theorem~\ref{thm:main1}, we can obtain the asymptotic bound (\ref{eq:PropagationLemma5}).  
\end{proof}

\subsubsection*{Consequent Tightness} 
\label{subset:tightness}

As mentioned in Section~\ref{sect:ConstantGain},  the best we can hope for in the  time-varying scenario  is that the error term $  \ilparenthesis{\hbtheta_k-\btheta_k^*}$ hovers near zero, and that our concern centers on boundedness (input-output stability). The notion that no probability mass escapes to infinity  uniformly in $k$ is termed as tightness, which was reviewed in  Subsection \ref{subsect:Tightness}. 
\begin{equation}\label{eq:Tightness}
\lim_{M\to\infty} \sup_k \Prob\set{  \norm{\hbtheta_k-\bvartheta_{k} }\ge M  } = 0.
\end{equation}
By Chebyshev's inequality,  $ \Prob\ilset{\norm{\hbtheta_k-\bvartheta_k }\ge M } \le \E \norm{ \hbtheta_k-\bvartheta_{k}  }^2/M^2 $ holds,  where the   inequality follows from  (\ref{eq:PropagationLemma5}) in  Theorem \ref{thm:main2},  the boundedness of both $\firstConst_k$ and $\secondConst_k$ given the selection of slack variable and gain sequence in Lemma~\ref{lem:Tobe},     the assumed finiteness of $\noiseBound_k$ in A.\ref{assume:BoundedVariation}, $\convexPara_k$ in A.\ref{assume:StronglyConvex}, $\LipsPara_k$ in A.\ref{assume:Lsmooth}, and $\driftBound_k$ in A.\ref{assume:BoundedVariation}. Therefore, the boundedness in probability follows, and so does the  mean square boundedness.

\subsubsection*{Further Remarks on Slack Variables and  Gain Selection}

Let us return to   $\Tobe_k   $ (whose domain depends on $\ratio_k$)   and $\gain_k$ (whose domain depends on $\Tobe_k$) in   Lemma~\ref{lem:Tobe}.  
Recall that they are selected according to   (\ref{eq:Tobe}) and  (\ref{eq:GainBound1}) such that    $\firstConst_k$ in (\ref{eq:ErrorPropagationLemma}) is shrinking. 
There are, seemingly, many ways we may pursue to optimize the selection of both the slack variable and the gain. For example, we may consider:
\begin{enumerate}[i)]
	\item  \label{item:criteria1} minimizing  $\firstConst_k$ defined in  (\ref{eq:firstConst})  such that the previous tracking error is ``washed away'' as quickly as possible, 
 
\remove{	\item \label{item:criteria2} minimize $\secondConst_k$ defined in   (\ref{eq:secondConst})  such that the error term (including both the bias term and the noise term) plays a less dominating role in tracking, }
 
	\item  \label{item:criteria3} minimizing the r.h.s. of (\ref{eq:PropagationLemma3}) or (\ref{eq:PropagationLemma5})   such that the limiting  tracking error is as small as possible.
\end{enumerate}

Unfortunately,    \ref{item:criteria1}    has  no attainable\remove{For , the minimum is $ 1-\convexPara_k^2/(4\LipsPara_k^2) $, when  
	\begin{equation}\label{eq:PropLem7}
	\gain_k = \frac{\convexPara_k - \Tobe_1}{  2 \LipsPara_k^2 \ilparenthesis{1+\Tobe_2}  },
	\end{equation}
	and $\Tobe_1\to 0 $ and $\Tobe_2\to0$. Unfortunately, this is not attainable due to the positivity of the slack variables $\Tobe_1$ and $\Tobe_2$.  } minimizer.   For \ref{item:criteria3}, the solution depends not only on $\ratio_k$ but also on $\noiseBound_k$  and $\driftBound_k$. Worse still, the quantitative relations between them, in addition to (\ref{eq:ineq4}), also influence the result substantially. 
Also, note that  if our focus is on  finite-sample performance, then performing \ref{item:criteria3} will not benefit us in this sense much after all.
 
 Setting the nonexistence of ``optimal'' slack variable and gain selections aside, the following observation adds to the difficulty in tuning. 
A moment of reflection tells us that, even if either problem  \ref{item:criteria1} or problem  \ref{item:criteria3} is solvable,    the resulting gain may still perform poorly  because the  derivation  in Lemma~\ref{lem:ErrorPropagationLemma1}       only requires     a \emph{local} Lipschitz constant, which are  usually   smaller than the  \emph{global} one.  Similarly, Lemma~\ref{lem:ErrorPropagationLemma1} in fact requires a  \emph{local} strong convexity parameter, which is usually larger than the \emph{global} one. 

In short, we can provide neither an  optimal slack variable nor  and optimal  gain selection. Nonetheless, for ease of implementation, we may  set $\Tobe_k = \Tobe_{k,1}$  and    $\gain_k = 0.5/\LipsPara_k$ when $\ratio_k\ge (1+\sqrt{5})/2$   for simplicity\textemdash this is often smaller than what is desired due to the distinction between the global- and  local-smoothness parameter. Then, the gain strategy in Lemma~\ref{lem:main1} only ensures the parameter $\firstConst_k$ in (\ref{eq:ErrorPropagationLemma}) is shrinking so that    the  asymptotic error bounds (\ref{eq:PropagationLemma3}) and (\ref{eq:PropagationLemma5}) are valid.

	The expression on the r.h.s. of (\ref{eq:PropagationLemma3}) or  (\ref{eq:PropagationLemma5})
  conveys  information for  target tracking. The  MAD/RMS bound depends explicitly on the noise magnitude     $\noiseBound_k$ in A.\ref{assume:ErrorWithBoundedSecondMoment} and  the drift magnitude     $\driftBound_k$ in A.\ref{assume:BoundedVariation}. Besides, it   implicitly depends on $\convexPara_k$ in A.\ref{assume:StronglyConvex},  $\LipsPara _k $ in A.\ref{assume:Lsmooth} and the gain   $\gain _k$ selected by the agent(s) through both $\firstConst$ and $\secondConst$.  The first two dependencies are easy to understand, as we do expect the bound to be larger when either $\noiseBound_k$ or  $\driftBound_k$ gets larger.  The appearances of $\convexPara_k$ and $\LipsPara_k $ in the third dependency are reasonable, as the shape of $ \ilset{\loss_k} $ does impact our tracking accuracy. Interestingly, both $\firstConst_k$ in (\ref{eq:firstConst}) and $\secondConst_k $ in (\ref{eq:secondConst}) being  \emph{quadratic} functions  of $\gain_k$ inform  that     the gain $\gain_k$  for successful tracking should be ``neither too large nor too small.''

Take $\firstConst_k$ as an example and consider $\ratio_k  >1$ for all $k$. We pick $\Tobe_k= \Tobe_{k,1}$
 for all $k$. Then $\firstConst_k$ approaches the upper-bound $1$ when $\gain_k\LipsPara_k\to \multiple_{k,-
} \ilparenthesis{\Tobe_{k,1}}$ from the right or $\gain_k\LipsPara_k\to \multiple_{k,+}\ilparenthesis{\Tobe_{k,1}}$ from the left. Additionally,   $\firstConst_k$ approaches the lower-bound $0$ when $\gain_k\LipsPara_k= \ilparenthesis{ \Tobe_{k,1}/2 + 1 } \ilparenthesis{\Tobe_{k,1} + 1 }^{-1}$, which is the midpoint of $\multiple_{k,-}
\ilparenthesis{\Tobe_{k,1}}$ and $\multiple_{k, +}\ilparenthesis{\Tobe_{k,1}}$. Similarly, we can discuss $ {\secondConst}$ and potentially the coefficient $ \sqrt{ {\secondConst_k}} / \ilparenthesis{1-\sqrt{\firstConst_k}} $. However, as we do not have a definite objective towards which the slack variable and the gain selection are optimized, we no longer dwell on this topic here.

\subsubsection*{One Quick Example}\label{subsect:ExamplePriorBound}
	This example is borrowed from the  illustration  for adaptive control in \cite[Example 4.7]{spall2005introduction}. Target tracking is a common specific case of control problems. We want to track the coordinates of a time-varying multi-dimensional target, when only the noisy measurements of the distance to the moving target are available. 
To minimize the distance between the estimate $\hbtheta_k $ and the time-varying parameter sequence $\bvartheta_k$, we can formulate the loss function as $\loss _k(\btheta)=  \ilparenthesis{\btheta-\bvartheta_k}^\transpose\bH\ilparenthesis{\btheta-\bvartheta_k}/2 $. The true gradient sequence is $\bg_k(\btheta)=\bH\ilparenthesis{\btheta- \bvartheta_k } $, and the true Hessian sequence is $\bH$.

	Consider a simple case with $p=2$. We construct $\bH=\bP \bD \bP^\transpose$, where $\bP$ is   (randomly generated) orthogonal and $\bD$ is diagonal.   In our simulation,  
\begin{equation}\label{eq:SimHessian}
\bD = \begin{pmatrix}
30 & 0 \\ 0 & 5
\end{pmatrix}, \quad 
\bP = \begin{pmatrix}
0.8145 & -0.5802 \\ -0.5802 & -0.8145
\end{pmatrix}, \quad \text{ and }\quad  \bH = \begin{pmatrix}
14.9505 & -7.0884 \\ -7.0884 & 10.0495
\end{pmatrix}. 
\end{equation}

The accessible information is the noisy gradient measurement $\bY_k(\btheta)=\bg_k(\btheta)+\be_k$, where $\be_k \stackrel{\text{i.i.d.}}{\sim}  \mathrm{Normal}(\textbf{0}, \upsigma_1^2 \Id)$.  
The (unknown to the algorithm)  nonstationary drift  evolves according to:
\begin{equation}\label{eq:evolution1}
\bvartheta_{k+1}  = \begin{cases}
\bvartheta_k + \ilparenthesis{1,\,\, 1} ^\transpose + \bw_k, & \text{ for }1\le k \le 499,\\ 
\bvartheta_k +  \ilparenthesis{-1,\,\, 1}^\transpose + \bw_k, & \text{ for }500\le k \le 999, 
\end{cases}
\end{equation} with  $\bvartheta_0=\zero$ and 
$\bw_k \stackrel{\text{i.i.d.}}{\sim}  \mathrm{Normal}(\textbf{0}, \upsigma_2^2 \Id)$.  From  Theorem~\ref{thm:main1}, we take $\noiseBound_k =\sqrt{2}\upsigma_1$,   $\driftBound_k =\sqrt{2}$, and $\convexPara_k=5$, $\LipsPara_k =30$ for all $k$.    
In this experiment, both $\upsigma_1$ and $\upsigma_2$ are set to be $10$. Note that  $10$ is large  compared to the magnitude of the  deterministic trend in $\ilset{\bvartheta_k}$; i.e., heading northeast with step $ \parenthesis{1,\,\, 1} ^\transpose$ for $0\le k \le 499$ and heading northwest with step $   \parenthesis{-1, \,\,  1 } ^\transpose$ for $500\le k \le 999$.
Both  $\bvartheta_0$ and $\hbtheta_0$ are set to   $\zero$.  We implement (\ref{eq:basicSA}) for 25 trial runs, each with 1000 iterations.

\begin{figure}[!htbp]
	\centering
	\includegraphics[width=.65\textwidth]{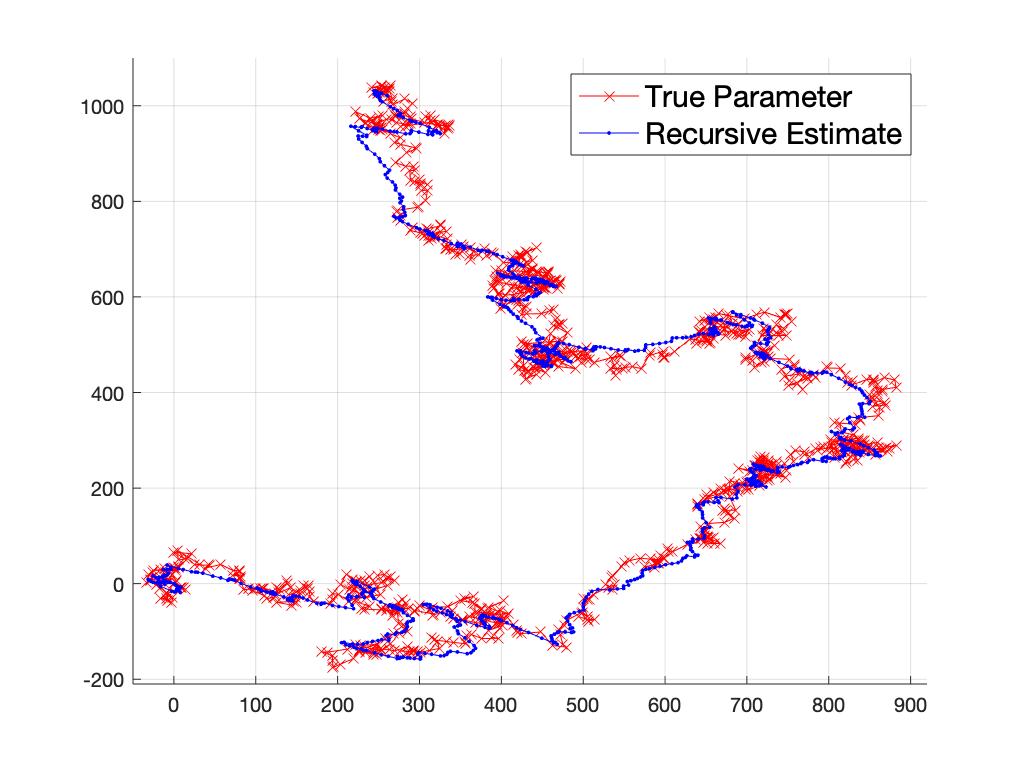} 
	\caption{The Trajectories Of The  True  Parameter $\bvartheta_k$ And  The  Recursive  Estimates $\hbtheta_k$ In  \emph{One}   Run}
	\label{fig:tracking1}
\end{figure}

\begin{figure}[!htbp]
	\centering
	\begin{subfigure}{.65\textwidth}
		\centering
		\includegraphics[width=\linewidth]{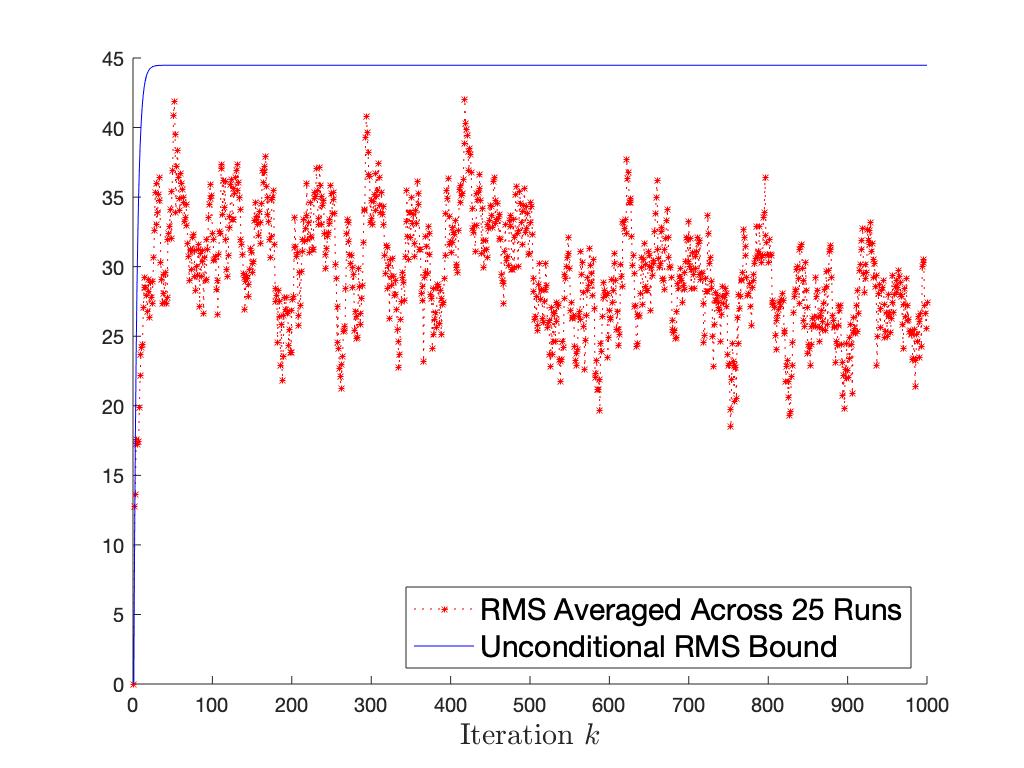}
			\caption{The Empirical RMS Averaged Across 25 Runs and The Upper Bound To The  RMS }
		\label{fig:sub1}
	\end{subfigure}\\
	\begin{subfigure}{.65\textwidth}
		\centering
		\includegraphics[width=\linewidth]{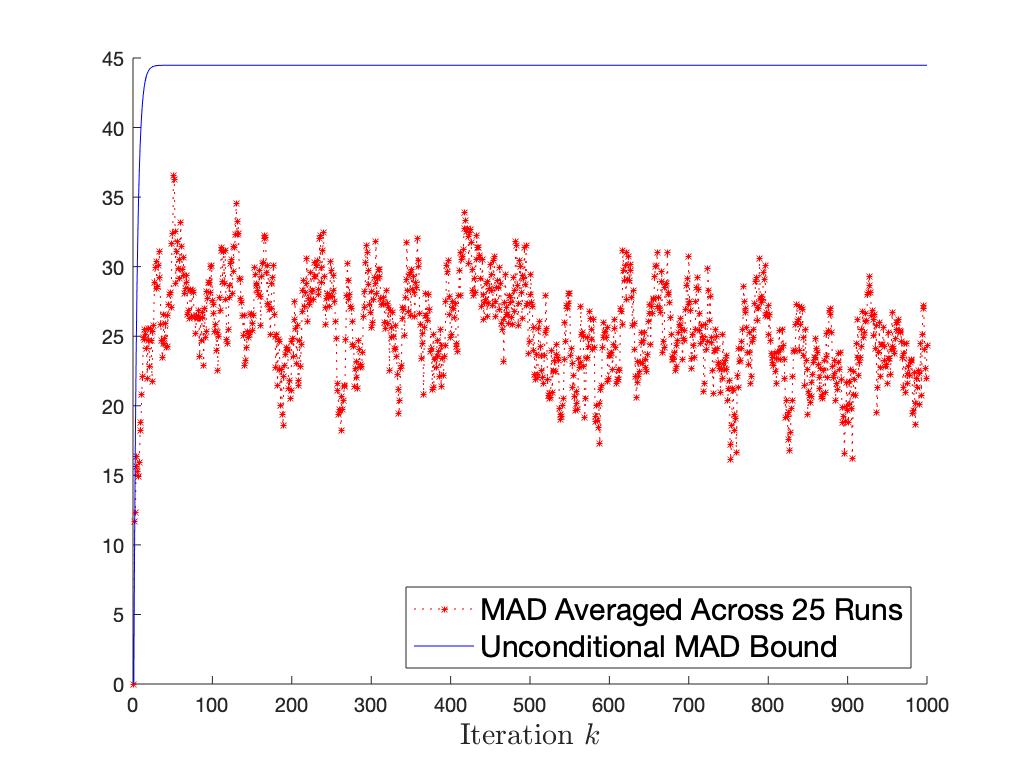}
			\caption{The Empirical MAD Averaged Across 25 Runs and  The Upper Bound To The  MAD }
		\label{fig:sub2}
	\end{subfigure}
	\caption{The Empirical Errors (Averaged Across $25$ Runs) And  The Corresponding Upper Bounds}
	\label{fig:trackingbound}
\end{figure}

Following Algorithm~\ref{algo:basicSA}, we  pick $\Tobe_k = 0.4 \Tobe_{k,1} + 0.6\Tobe_{k,2}$ and $\gain = 0.5/\LipsPara_k $.   Figure \ref{fig:tracking1}    displays how the iterates generated by (\ref{eq:basicSA}) keep up with the moving target.
Figures \ref{fig:sub1} and \ref{fig:sub2}  show the accuracy of   (\ref{eq:PropagationLemma3}) and (\ref{eq:PropagationLemma5})    in bounding the tracking error in nonstationary optimization. The empirical MAD/RMS is computed by averaging the absolute-deviation and by taking the root of the averaged squared-error across 25 trial runs. Note that the bounds   (\ref{eq:PropagationLemma3}) and (\ref{eq:PropagationLemma5}), although conservative, are quite accurate in terms of characterizing the empirical error. 
 
\subsection{  A Posterior Error Bound       }\label{subsect:conditional}
 Applications to physical systems in Subsection  \ref{subsect:unconditional} often    encourages the  ``few  measurements at a time'' requirement explained  in Section \ref{sect:ProlemSetup}. Furthermore, the physical constraints      explain  that the   drift magnitude bound $\driftBound_k$ introduced in  A.\ref{assume:BoundedVariation}    should be  knowable or estimatable (similar to  (\ref{eq:driftBoundKF}) for the multi-agent application in Chapter~\ref{chap:multiagent}) in advance  and  should also be  small  (relative to the magnitude of the observable gradient information)  in Subsection \ref{subsect:boundedvariation}.  Note that   (\ref{eq:PropagationLemma3}) to (\ref{eq:PropagationLemma5})  already average   out  the  observable information   $\field_{k}$ which was defined in (\ref{eq:filtration}). That is, the error bounds in  Subsection \ref{subsect:unconditional}   provide  the a priori  tracking performances that are  average  over all possible sample paths so as  to ensure tracking performance.
 The gain selection in Algorithm~\ref{algo:basicSA} is not impacted by the observable information $\field_k$
 
  Nonetheless, during   actual implementation,  we  hope to  react to the changes   in $\ilset{\bvartheta_k}$ as promptly as possible. Thus,  the average  performance  
 may not be informative in \emph{one} realization, though it is meaningful in  providing   gain selection guidance  to ensure tracking. In fact, we have the following upper and lower bounds  on the MAD,  $\E_{k+1}\norm{\hbtheta_k-\bvartheta_k}$,    conditioned on  $ \field_{k+1}  $ defined in (\ref{eq:filtration}), which is  the  observable  information   through  time instant $k$.

 	\begin{thm}[Conditional MAD bound]\label{thm:CondBound1}
 	Assume A.\ref{assume:ErrorWithBoundedSecondMoment}, A.\ref{assume:StronglyConvex}, and A.\ref{assume:Lsmooth}.   We have the following MAD bound conditioned on $\field_{k+1}$:
 	\begin{equation}\label{eq:CondBound1}
 	\LipsPara_k^{-1} \abs{ \norm{\hbg_k\ilparenthesis{\hbtheta_k}} - \noiseBound_k     }	\le 	\E_{k+1} \norm{\hbtheta_k-\bvartheta_k} \le \convexPara_k ^{-1}\parenthesis{  \norm{\hbg_k\ilparenthesis{\hbtheta_k }} + \noiseBound_k  }, 
 	\end{equation}
 	where $\E_{k+1}$ is the   expectation conditioned on  the  observable  information $ \field_{k+1}   $ through  time instant $k$.
 \end{thm}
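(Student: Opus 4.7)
The plan is to derive both inequalities from two elementary consequences of A.\ref{assume:StronglyConvex} and A.\ref{assume:Lsmooth}, applied at the pair $(\hbtheta_k,\bvartheta_k)$, and then to ``convert'' the true gradient $\bg_k(\hbtheta_k)$ into the observable quantity $\hbg_k(\hbtheta_k)$ using the decomposition (\ref{eq:gGeneral}) and the triangle inequality. The key observation is that $\bvartheta_k$ is the (unique) minimizer, so $\bg_k(\bvartheta_k)=\zero$, and that $\hbtheta_k$ and $\hbg_k(\hbtheta_k)$ are both $\field_{k+1}$-measurable and therefore pull outside of $\E_{k+1}$.

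First, I would apply A.\ref{assume:StronglyConvex} with $\btheta=\hbtheta_k$, $\bzeta=\bvartheta_k$ and use $\bg_k(\bvartheta_k)=\zero$ to get
$\convexPara_k\norm{\hbtheta_k-\bvartheta_k}^2 \le (\hbtheta_k-\bvartheta_k)^\transpose\bg_k(\hbtheta_k)$;
Cauchy--Schwarz then yields the pathwise upper bound $\norm{\hbtheta_k-\bvartheta_k}\le \convexPara_k^{-1}\norm{\bg_k(\hbtheta_k)}$. Next I would invoke A.\ref{assume:Lsmooth} together with $\bg_k(\bvartheta_k)=\zero$ to obtain the matching pathwise lower bound $\norm{\hbtheta_k-\bvartheta_k}\ge \LipsPara_k^{-1}\norm{\bg_k(\hbtheta_k)}$. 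These two inequalities are the a posteriori analogues of (\ref{eq:ineq2}) but stated directly for $\norm{\hbtheta_k-\bvartheta_k}$.

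Second, I would substitute the decomposition $\bg_k(\hbtheta_k)=\hbg_k(\hbtheta_k)-\be_k(\hbtheta_k)$ from (\ref{eq:gGeneral}) and apply the triangle and reverse-triangle inequalities to get
\begin{equation*}
\LipsPara_k^{-1}\bigl|\,\norm{\hbg_k(\hbtheta_k)}-\norm{\be_k(\hbtheta_k)}\,\bigr|
\le \norm{\hbtheta_k-\bvartheta_k}
\le \convexPara_k^{-1}\bigl(\norm{\hbg_k(\hbtheta_k)}+\norm{\be_k(\hbtheta_k)}\bigr).
\end{equation*}
Taking $\E_{k+1}$ of both sides, and using that $\norm{\hbg_k(\hbtheta_k)}$ is $\field_{k+1}$-measurable, the upper bound reduces to controlling $\E_{k+1}\norm{\be_k(\hbtheta_k)}$. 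By Jensen's inequality and A.\ref{assume:ErrorWithBoundedSecondMoment} one obtains $\E_{k+1}\norm{\be_k(\hbtheta_k)}\le \sqrt{\E_{k+1}\norm{\be_k(\hbtheta_k)}^2}\le \noiseBound_k$, which closes the right-hand inequality. For the lower bound, I would combine the conditional Jensen inequality $\E_{k+1}\bigl|\,\norm{\hbg_k(\hbtheta_k)}-\norm{\be_k(\hbtheta_k)}\,\bigr|\ge \bigl|\,\norm{\hbg_k(\hbtheta_k)}-\E_{k+1}\norm{\be_k(\hbtheta_k)}\,\bigr|$ with the same noise bound $\E_{k+1}\norm{\be_k(\hbtheta_k)}\le \noiseBound_k$.

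The only genuine subtlety is the absolute value in the lower bound: the inequality $|\,\norm{\hbg_k(\hbtheta_k)}-\E_{k+1}\norm{\be_k(\hbtheta_k)}\,|\ge |\,\norm{\hbg_k(\hbtheta_k)}-\noiseBound_k\,|$ is immediate in the ``informative'' regime $\norm{\hbg_k(\hbtheta_k)}\ge \noiseBound_k$, which is the regime in which the conditional a posteriori bound is useful. In the opposite regime $\norm{\hbg_k(\hbtheta_k)}<\noiseBound_k$ the lower bound degrades but remains nonnegative, so I would state/prove the lower bound in the form naturally produced by the argument and note that it coincides with the claimed bound whenever the observed gradient norm dominates the noise scale. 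No additional machinery beyond A.\ref{assume:ErrorWithBoundedSecondMoment}--A.\ref{assume:Lsmooth}, the identity $\bg_k(\bvartheta_k)=\zero$, and measurability of $\hbtheta_k$, $\hbg_k(\hbtheta_k)$ with respect to $\field_{k+1}$ is needed; the bound on $\bvartheta_k$'s variation (A.\ref{assume:BoundedVariation}) plays no role, as expected for an instantaneous a posteriori statement.
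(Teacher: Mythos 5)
Your proposal is correct and follows essentially the same route as the paper's own proof: the pathwise sandwich $\LipsPara_k^{-1}\norm{\bg_k(\hbtheta_k)}\le\norm{\hbtheta_k-\bvartheta_k}\le\convexPara_k^{-1}\norm{\bg_k(\hbtheta_k)}$ from (\ref{eq:ineq1})--(\ref{eq:ineq2}), the triangle/reverse-triangle conversion to $\hbg_k(\hbtheta_k)$ via (\ref{eq:gGeneral}), and then conditional expectation with Jensen and A.\ref{assume:ErrorWithBoundedSecondMoment}. Your caveat about the lower bound is well taken: the step from $\bigl|\,\norm{\hbg_k(\hbtheta_k)}-\E_{k+1}\norm{\be_k(\hbtheta_k)}\,\bigr|$ to $\bigl|\,\norm{\hbg_k(\hbtheta_k)}-\noiseBound_k\,\bigr|$ only goes through when $\norm{\hbg_k(\hbtheta_k)}\ge\noiseBound_k$, a point the paper's proof glosses over as well, so stating the lower bound in the form your argument naturally produces (or restricting to the informative regime) is the honest resolution.
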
	

\begin{proof}[Proof of Theorem~\ref{thm:CondBound1}]
    From (\ref{eq:ineq2}) and (\ref{eq:ineq1}), we have:  \begin{equation}\label{eq:CondBound2} \LipsPara_k ^{-1}\norm{\bg_k\ilparenthesis{\btheta}}\le 
    \norm{\btheta-\bvartheta_k}\le \convexPara_k ^{-1}\norm{\bg_k\ilparenthesis{\btheta}},
    \end{equation} for any $\btheta\in\real^p$ when A.\ref{assume:StronglyConvex} and A.\ref{assume:Lsmooth} hold. 
    By the triangle inequality and the reverse triangle inequality, we have: 
    \begin{align}\label{eq:CondBound3}  & \LipsPara_k^{-1}\abs{ \norm{\hbg_k\ilparenthesis{\hbtheta_k}} - \norm{\be_k\ilparenthesis{\hbtheta_k}}    } \nonumber\\
    &\quad \le \LipsPara_k^{-1} \norm{\bg_k\ilparenthesis{\hbtheta_k}}\nonumber\\
    &\quad \le 
    \norm{\hbtheta_k-\bvartheta_k}\nonumber  \\
    &\quad   \le \convexPara_k^{-1} \norm{\bg_k\ilparenthesis{\hbtheta_k}} \nonumber\\
    &\quad \le \convexPara_k^{-1}  \bracket{ \norm{\hbg_k\ilparenthesis{\hbtheta_k}} + \norm{\be_k\ilparenthesis{\hbtheta_k }} }. 
    \end{align}
    Taking conditional expectation over (\ref{eq:CondBound3}) and invoking  A.\ref{assume:ErrorWithBoundedSecondMoment} yields (\ref{eq:CondBound1}). 
\end{proof}

To obtain a conditional   bound on the drift term, let us   consider  the following   filtration instead of (\ref{eq:filtration}). 
\begin{equation}\label{eq:SigmaAlgebra2}
\nfield_0=\field_0=\upsigma\ilset{\hbtheta_0}, \,\,\,\text{and } \nfield_k=\upsigma\ilset{\hbtheta_0 ,\hbg_i\ilparenthesis{\hbtheta_{i-1}}, \hbg_i\ilparenthesis{\hbtheta_i}, i<k } \text{ for }k\ge 1, 
\end{equation}
which is finer (richer) than  $\field_k$.  We may have the following indicators for the tracking performance.

\begin{thm}
	[Estimation for drift using two-measurements]\label{thm:CondBound1nField}  Under Assumptions A.\ref{assume:ErrorWithBoundedSecondMoment}, A.\ref{assume:StronglyConvex}, and A.\ref{assume:Lsmooth}, we have the following drift bound conditioned  on $\nfield_{k+2}$: 
	\begin{equation}\label{eq:MultipleCondBound1}
	\E\bracket{\given{\norm{\bvartheta_{k+1}-\bvartheta_k }}{\nfield_{k+2}}} \le \convexPara_{k+1}^{-1}\norm{\hbg_{k+1}\ilparenthesis{\hbtheta_k}} + \convexPara_k^{-1} \norm{\hbg_k\ilparenthesis{\hbtheta_k }} + \noiseBound_{k+1} \convexPara_{k+1}^{-1} + \noiseBound_k \convexPara_{k}^{-1},
	\end{equation}
	and 
	\begin{align}
	\label{eq:MultipleCondBound2}
	&  \E\bracket{\given{\norm{\bvartheta_{k+1}-\bvartheta_k }}{\nfield_{k+2}}}\nonumber\\
	&\quad \ge \max  \bigg\{  \LipsPara_{k+1} ^{-1} \abs{  \norm{\hbg_{k+1}\ilparenthesis{\hbtheta_k}} - \noiseBound_{k+1} } - \convexPara_k^{-1} \parenthesis{\norm{\hbg_k\ilparenthesis{\hbtheta_k}}+\noiseBound_k}, \nonumber\\
	&\quad\quad\quad\quad\quad  \LipsPara_k^{-1} \abs{\norm{\hbg_k\ilparenthesis{\hbtheta_k}}-\noiseBound_k } - \convexPara_{k+1}^{-1} \parenthesis{\norm{\hbg_{k+1}\ilparenthesis{\hbtheta_k}}+\noiseBound_{k+1}}   \bigg\} .
	\end{align} 
\end{thm}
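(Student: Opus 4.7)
The plan is to mimic the argument of Theorem~\ref{thm:CondBound1} but insert the iterate $\hbtheta_k$ as a pivot point, exploiting the fact that the filtration $\nfield_{k+2}$ defined in (\ref{eq:SigmaAlgebra2}) contains both gradient measurements $\hbg_k(\hbtheta_k)$ and $\hbg_{k+1}(\hbtheta_k)$. The key observation is that $\bvartheta_k$ is the minimizer of $\loss_k(\cdot)$ while $\bvartheta_{k+1}$ is the minimizer of $\loss_{k+1}(\cdot)$, so the chain of inequalities (\ref{eq:CondBound2}) is applicable to both pairs $(\hbtheta_k, \bvartheta_k)$ (using $\loss_k$) and $(\hbtheta_k, \bvartheta_{k+1})$ (using $\loss_{k+1}$ evaluated at the arbitrary point $\hbtheta_k$).

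For the upper bound (\ref{eq:MultipleCondBound1}), I would insert $\hbtheta_k$ via the triangle inequality:
\begin{equation*}
\norm{\bvartheta_{k+1} - \bvartheta_k} \le \norm{\hbtheta_k - \bvartheta_k} + \norm{\hbtheta_k - \bvartheta_{k+1}}.
\end{equation*}
Applying the right half of (\ref{eq:CondBound2}) to each summand gives $\norm{\hbtheta_k - \bvartheta_i} \le \convexPara_i^{-1}\norm{\bg_i(\hbtheta_k)}$ for $i \in \{k, k+1\}$. Next I would substitute $\bg_i(\hbtheta_k) = \hbg_i(\hbtheta_k) - \be_i(\hbtheta_k)$ from (\ref{eq:gGeneral}) and apply the triangle inequality once more to obtain $\norm{\bg_i(\hbtheta_k)} \le \norm{\hbg_i(\hbtheta_k)} + \norm{\be_i(\hbtheta_k)}$. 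Conditioning on $\nfield_{k+2}$, the observable norms $\norm{\hbg_k(\hbtheta_k)}$ and $\norm{\hbg_{k+1}(\hbtheta_k)}$ are measurable and pull out of the expectation, while Jensen's inequality combined with A.\ref{assume:ErrorWithBoundedSecondMoment} controls the noise pieces by $\noiseBound_i$, yielding (\ref{eq:MultipleCondBound1}).

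For the lower bound (\ref{eq:MultipleCondBound2}), I would run the same argument twice via the reverse triangle inequality, once as $\norm{\bvartheta_{k+1}-\bvartheta_k} \ge \norm{\hbtheta_k - \bvartheta_{k+1}} - \norm{\hbtheta_k - \bvartheta_k}$ and once with the roles swapped, then take the max of the two lower estimates. On the term retained with a plus sign I would use the \emph{left} half of (\ref{eq:CondBound2}), namely $\norm{\hbtheta_k - \bvartheta_i} \ge \LipsPara_i^{-1}\norm{\bg_i(\hbtheta_k)}$, while on the subtracted term I use the right half with $\convexPara_i^{-1}$ as in the upper bound. The noise is then peeled off from $\norm{\bg_i(\hbtheta_k)}$ using the reverse triangle inequality, $\norm{\bg_i(\hbtheta_k)} \ge \bigl|\norm{\hbg_i(\hbtheta_k)} - \norm{\be_i(\hbtheta_k)}\bigr|$, in the retained term, and the ordinary triangle inequality in the subtracted copy. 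Taking conditional expectation and invoking A.\ref{assume:ErrorWithBoundedSecondMoment} exactly as in the upper-bound argument then produces (\ref{eq:MultipleCondBound2}).

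The delicate point I expect will require the most attention is the handling of $\E[\norm{\be_i(\hbtheta_k)} \mid \nfield_{k+2}]$: since $\nfield_{k+2}$ contains $\hbg_i(\hbtheta_k) = \bg_i(\hbtheta_k) + \be_i(\hbtheta_k)$, it carries genuine information about the noise, so the unconditional second-moment bound $\noiseBound_i$ from A.\ref{assume:ErrorWithBoundedSecondMoment} is not \emph{automatically} a conditional bound. To make the statement literal one must either interpret (\ref{eq:MultipleCondBound1})\textendash (\ref{eq:MultipleCondBound2}) after a further outer expectation (via the tower property), or strengthen A.\ref{assume:ErrorWithBoundedSecondMoment} to a conditional-moment hypothesis of the form $\E[\norm{\be_i(\btheta)}^2 \mid \nfield_{k+2}] \le \noiseBound_i^2$ a.s. I would flag this distinction explicitly so that the role of the conditional filtration\textemdash and hence the estimator interpretation behind the drift-detection scheme in Chapter~\ref{chap:AdaptiveGain}\textemdash is transparent.
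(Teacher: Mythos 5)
Your proposal is correct and follows essentially the same route as the paper's own proof: insert $\hbtheta_k$ via the (reverse) triangle inequality, apply the two halves of (\ref{eq:CondBound2}) to the pairs $(\hbtheta_k,\bvartheta_k)$ and $(\hbtheta_k,\bvartheta_{k+1})$, peel off the noise with $\norm{\bg_i(\hbtheta_k)} \le \norm{\hbg_i(\hbtheta_k)}+\norm{\be_i(\hbtheta_k)}$ or its reverse, and condition on $\nfield_{k+2}$. Your flag about $\E[\norm{\be_i(\hbtheta_k)}\mid\nfield_{k+2}]$ not being automatically bounded by $\noiseBound_i$ under the unconditional moment assumption A.\ref{assume:ErrorWithBoundedSecondMoment} is a fair observation that the paper's proof passes over silently, but it does not change the structure of the argument.
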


\begin{proof}[Proof for Theorem~\ref{thm:CondBound1nField}]
		\begin{eqnarray}\label{eq:temp1}
	\norm{\bvartheta_{k+1}-\bvartheta_k}
	&\le & \norm{\bvartheta_{k+1}-\hbtheta_k}+\norm{\hbtheta_k-\bvartheta_k}\nonumber\\
	&\le & \convexPara_{k+1}^{-1} \norm{\bg_{k+1}\ilparenthesis{\hbtheta_k}} + \convexPara_k^{-1} \norm{\bg_k\ilparenthesis{\hbtheta_k }}\nonumber\\
	&\le & \convexPara_{k+1}^{-1}\parenthesis{ \norm{\hbg_{k+1}\ilparenthesis{\hbtheta_k}} + \norm{\be_{k+1}\ilparenthesis{\hbtheta_k}}  } + \convexPara_k^{-1}\parenthesis{\norm{\hbg_k\ilparenthesis{\hbtheta_k}} + \norm{\be_k\ilparenthesis{\hbtheta_k }}},
	\end{eqnarray}
	where the second inequality follows from (\ref{eq:CondBound2}). Taking the conditional expectation of (\ref{eq:temp1}) over $\nfield_{k+2}$ gives (\ref{eq:MultipleCondBound1}). 
		\begin{eqnarray}\label{eq:temp2}
	&\quad& 	\norm{\bvartheta_{k+1}-\bvartheta_k}\nonumber\\
	&\ge & \max \set{  \norm{\bvartheta_{k+1}-\hbtheta_k} -   \norm{\bvartheta_k-\hbtheta_k} ,      \norm{\bvartheta_k-\hbtheta_k} - \norm{\bvartheta_{k+1}-\hbtheta_k} }\nonumber\\
	&\ge & \max \set{  \LipsPara_{k+1}^{-1}   \norm{\bg_{k+1}\ilparenthesis{\hbtheta_k }}  - \convexPara_k^{-1}  \norm{\bg_k\ilparenthesis{\hbtheta_k }},   \LipsPara_k^{-1 } \norm{\bg_k\ilparenthesis{\hbtheta_k}} - \convexPara_{k+1}^{-1} \norm{\bg_{k+1}\ilparenthesis{\hbtheta_k }}  }\nonumber\\
	&\ge & \max  \bigg \{   \LipsPara_{k+1}^{-1} \abs{ \norm{\hbg_{k+1}\ilparenthesis{\hbtheta_k}} - \norm{\be_{k+1}\ilparenthesis{\hbtheta_k }} } - \convexPara_k ^{-1} \parenthesis{ \norm{\hbg_k\ilparenthesis{\hbtheta_k}} + \norm{\be_k\ilparenthesis{\hbtheta_k }} } ,  \nonumber\\
	&\quad &  \quad\quad\,\,  \LipsPara_k ^{-1} \abs{     \norm{\hbg_k\ilparenthesis{\hbtheta_k}} - \norm{\be_k\ilparenthesis{\hbtheta_k }}  }  - \convexPara_{k+1}^{-1} \parenthesis{\norm{\hbg_{k+1}\ilparenthesis{\hbtheta_k}  }+ \norm{\be_{k+1}\ilparenthesis{\hbtheta_k }} } \bigg \}, \quad\quad 
	\end{eqnarray} 	where the second inequality follows from (\ref{eq:CondBound2}). Taking the conditional expectation of (\ref{eq:temp2}) over  $\nfield_{k+2}$ gives (\ref{eq:MultipleCondBound2}). 
\end{proof}

\begin{corr}
	[Conditional mean tracking performance] \label{corr:MeanTrackingPerf} 	In addition to the conditions in Theorem~\ref{thm:CondBound1nField}, further assuming    $ \E\ilbracket{ \left.{ \be_{k+1}\ilparenthesis{\hbtheta_k}  }\right| {\nfield_{k+2}}} =\zero  $. , we have: 
	\begin{align}
&	\E\bracket{\given{\loss_{k+1}\ilparenthesis{\hbtheta_k}- \loss_{k+1}\ilparenthesis{\bvartheta_{k+1}}}{\nfield_{k+2}}} \nonumber\\
	&\quad \le \frac{1}{2\convexPara_{k+1}} \parenthesis{\norm{\hbg_{k+1}  \ilparenthesis{\hbtheta_k } }^2+\noiseBound_{k+1} ^2}   + \frac{\gain_k ^2\LipsPara_{k+1}}{2}\norm{\hbg_k\ilparenthesis{\hbtheta_k}}^2 - \gain_k \ilbracket{\hbg_{k+1}\ilparenthesis{\hbtheta_k}}^\transpose\hbg_k\ilparenthesis{\hbtheta_k}, 		\end{align}
	and 
	\begin{align} 	& \E\bracket{\given{\loss_{k+1}\ilparenthesis{\hbtheta_k}- \loss_{k+1}\ilparenthesis{\bvartheta_{k+1}}}{\nfield_{k+2}}}\nonumber\\
	&\quad \ge 
	\frac{1}{2\LipsPara_{k+1}}   \norm{\hbg_{k+1}\ilparenthesis{\hbtheta_k}}^2  + \frac{\gain_k\convexPara_{k+1}}{2}\norm{\hbg_k\ilparenthesis{\hbtheta_{k}}}^2 - \gain_k \ilbracket{  \hbg_{k+1}\ilparenthesis{\hbtheta_k}   }^\transpose\hbg_k\ilparenthesis{\hbtheta_k }, 
	\end{align}
	where the expectation is conditioned on $\nfield_{k+2}=\upsigma\ilset{\hbtheta_0,\hbg_i\ilparenthesis{\hbtheta_i}, \hbg_i\ilparenthesis{\hbtheta_{i-1}},i<k+2}$. 
	
\end{corr}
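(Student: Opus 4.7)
The plan is to combine the smoothness/strong-convexity inequalities derived in Lemma~\ref{lem:BasicInequalities} with the SA update rule (\ref{eq:basicSA}) and then condition on $\nfield_{k+2}$, exploiting that the zero-mean assumption $\E[\be_{k+1}(\hbtheta_k)\mid \nfield_{k+2}]=\zero$ makes $\hbg_{k+1}(\hbtheta_k)$ the conditional mean of $\bg_{k+1}(\hbtheta_k)$. Note that the filtration $\nfield_{k+2}$ defined in (\ref{eq:SigmaAlgebra2}) already contains $\hbg_{k+1}(\hbtheta_k)$ (index $i=k+1$, argument $\hbtheta_{i-1}=\hbtheta_k$), so $\hbg_{k+1}(\hbtheta_k)$, $\hbg_k(\hbtheta_k)$ and $\hbtheta_{k+1}$ itself are all $\nfield_{k+2}$-measurable; the remaining randomness conditional on $\nfield_{k+2}$ lives in the bias/noise decomposition $\bg_{k+1}(\hbtheta_k)=\hbg_{k+1}(\hbtheta_k)-\be_{k+1}(\hbtheta_k)$ through the fact that $\bvartheta_{k+1}$ may itself be random. (I read the target quantity as $\loss_{k+1}(\hbtheta_{k+1})-\loss_{k+1}(\bvartheta_{k+1})$; the form of the proposed bound, carrying $\gain_k$ factors, only makes sense for that one-step progress.)

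First, I would apply the smoothness inequality (\ref{eq:LipsIneq}) to $\loss_{k+1}$ with $\bzeta=\hbtheta_k$ and $\btheta=\hbtheta_{k+1}$, and substitute $\hbtheta_{k+1}-\hbtheta_k=-\gain_k\hbg_k(\hbtheta_k)$ from the recursion (\ref{eq:basicSA}). This gives
\begin{equation*}
\loss_{k+1}(\hbtheta_{k+1})\le \loss_{k+1}(\hbtheta_k)-\gain_k[\bg_{k+1}(\hbtheta_k)]^\transpose\hbg_k(\hbtheta_k)+\tfrac{\gain_k^2\LipsPara_{k+1}}{2}\norm{\hbg_k(\hbtheta_k)}^2.
\end{equation*}
Subtracting $\loss_{k+1}(\bvartheta_{k+1})$ and using the upper-bound half of (\ref{eq:ineq2}), $\loss_{k+1}(\hbtheta_k)-\loss_{k+1}(\bvartheta_{k+1})\le\norm{\bg_{k+1}(\hbtheta_k)}^2/(2\convexPara_{k+1})$, yields a pathwise upper bound. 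For the lower bound, I would run the symmetric argument using the convexity inequality (\ref{eq:ConvIneq}) (giving a $\convexPara_{k+1}/2$ quadratic term instead of $\LipsPara_{k+1}/2$) and the lower-bound half of (\ref{eq:ineq2}), $\loss_{k+1}(\hbtheta_k)-\loss_{k+1}(\bvartheta_{k+1})\ge\norm{\bg_{k+1}(\hbtheta_k)}^2/(2\LipsPara_{k+1})$.

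Next I would take $\E[\,\cdot\mid\nfield_{k+2}]$ of each pathwise bound. The deterministic conditional facts I need are: (i) $\hbg_k(\hbtheta_k)$ and $\hbg_{k+1}(\hbtheta_k)$ are $\nfield_{k+2}$-measurable; (ii) from the posited zero-mean property, $\E[\bg_{k+1}(\hbtheta_k)\mid\nfield_{k+2}]=\hbg_{k+1}(\hbtheta_k)$, so the linear inner-product term collapses to $[\hbg_{k+1}(\hbtheta_k)]^\transpose\hbg_k(\hbtheta_k)$; and (iii) the variance identity
\begin{equation*}
\E\bigl[\norm{\bg_{k+1}(\hbtheta_k)}^2\mid\nfield_{k+2}\bigr]=\norm{\hbg_{k+1}(\hbtheta_k)}^2+\E\bigl[\norm{\be_{k+1}(\hbtheta_k)}^2\mid\nfield_{k+2}\bigr].
\end{equation*}
For the upper bound I would then invoke A.\ref{assume:ErrorWithBoundedSecondMoment} to replace the residual noise term by $\noiseBound_{k+1}^2$, giving the advertised $(\norm{\hbg_{k+1}(\hbtheta_k)}^2+\noiseBound_{k+1}^2)/(2\convexPara_{k+1})$. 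For the lower bound I would simply drop the nonnegative noise contribution, leaving $\norm{\hbg_{k+1}(\hbtheta_k)}^2/(2\LipsPara_{k+1})$.

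The calculation itself is routine once the substitutions are made; the subtle point\textemdash and the main thing to get right\textemdash is the measurability/conditioning bookkeeping. In particular, one must justify both that the assumption $\E[\be_{k+1}(\hbtheta_k)\mid\nfield_{k+2}]=\zero$ is compatible with $\hbg_{k+1}(\hbtheta_k)$ being $\nfield_{k+2}$-measurable (this forces $\bvartheta_{k+1}$ to be genuinely random given $\nfield_{k+2}$, which is precisely the setting of interest) and that the uniform bound $\noiseBound_{k+1}^2$ from A.\ref{assume:ErrorWithBoundedSecondMoment}, stated as a sup over unconditional second moments, transfers to the conditional second moment at the $\nfield_{k+2}$-measurable argument $\hbtheta_k$\textemdash an issue that is typically handled by imposing the standard pointwise strengthening of A.\ref{assume:ErrorWithBoundedSecondMoment}, which is implicit in the corollary.
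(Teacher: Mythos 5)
Your proposal matches the paper's own proof essentially step for step: apply (\ref{eq:LipsIneq}) (resp. (\ref{eq:ConvIneq})) to $\loss_{k+1}$ along the update $\hbtheta_{k+1}=\hbtheta_k-\gain_k\hbg_k(\hbtheta_k)$, bound $\loss_{k+1}(\hbtheta_k)-\loss_{k+1}(\bvartheta_{k+1})$ via (\ref{eq:ineq2}), then condition on $\nfield_{k+2}$ using the zero-mean assumption to collapse the inner-product term and to split $\E[\norm{\bg_{k+1}(\hbtheta_k)}^2\mid\nfield_{k+2}]$ into $\norm{\hbg_{k+1}(\hbtheta_k)}^2$ plus a noise term bounded by $\noiseBound_{k+1}^2$ (dropped entirely for the lower bound). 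Your two side remarks — that the left-hand side should read $\loss_{k+1}(\hbtheta_{k+1})$ rather than $\loss_{k+1}(\hbtheta_k)$, and that passing from the unconditional sup in A.\ref{assume:ErrorWithBoundedSecondMoment} to the conditional second moment needs a pointwise strengthening — are both correct and in fact tighter bookkeeping than the paper provides.
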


\begin{proof}[Proof of Corollary~\ref{corr:MeanTrackingPerf}]
	With the recursion in (\ref{eq:basicSA}), we have the following from inequality (\ref{eq:LipsIneq}): 	\begin{align}
& \loss_{k+1}\ilparenthesis{\hbtheta_{k+1}}\nonumber\\
&\quad = \loss_{k+1}\ilparenthesis{\hbtheta_k-\gain_k \hbg_k\ilparenthesis{\hbtheta_k }}\nonumber\\
	&\quad \le \loss_{k+1}\ilparenthesis{\hbtheta_k} - \gain_k   \ilbracket{\bg_{k+1}\ilparenthesis{\hbtheta_k}}^\transpose\hbg_k\ilparenthesis{\hbtheta_k } + \frac{\LipsPara_{k+1}}{2}\gain_k^2 \norm{\hbg_k\ilparenthesis{\hbtheta_k}}^2\nonumber\\
	&\quad = \loss_{k+1}\ilparenthesis{\hbtheta_k } - \gain_k \ilbracket{\hbg_{k+1}\ilparenthesis{\hbtheta_k} }^\transpose\hbg_k\ilparenthesis{\hbtheta_k } + \gain_k \ilbracket{\be_{k+1}\ilparenthesis{\hbtheta_k}}^\transpose \hbg_k\ilparenthesis{\hbtheta_k  } + \frac{\gain_k^2\LipsPara_{k+1}}{2}\norm{\hbg_k\ilparenthesis{\hbtheta_k}}^2. 
	\end{align}
	Therefore,  
	\begin{align}
	&    \E \bracket{ \given{\loss_{k+1}\ilparenthesis{\hbtheta_{k+1} }  - \loss_{k+1}\ilparenthesis{\bvartheta_{k+1}}}{\nfield_{k+2}} }\nonumber\\
	&\quad  \le   \E  \bracket{  \given{\loss_{k+1}\ilparenthesis{\hbtheta_k}-\loss_{k+1}\ilparenthesis{\bvartheta_{k+1}} }{\nfield_{k+2}} } - \gain_k  \ilbracket{\hbg_{k+1}\ilparenthesis{\hbtheta_k}}^\transpose\hbg_k\ilparenthesis{\hbtheta_k} + \frac{\gain_k ^2\LipsPara_{k+1}}{2}\norm{\hbg_k\ilparenthesis{\hbtheta_k}}^2\nonumber\\
	&\quad  \le \frac{1}{2\convexPara_{k+1}}  \E\bracket{\given{\norm{\bg_{k+1}\ilparenthesis{\hbtheta_k}}^2}{\nfield_{k+2}}} + \frac{\gain_k ^2\LipsPara_{k+1}}{2}\norm{\hbg_k\ilparenthesis{\hbtheta_k}}^2 - \gain_k \ilbracket{\hbg_{k+1}\ilparenthesis{\hbtheta_k}}^\transpose\hbg_k\ilparenthesis{\hbtheta_k}\nonumber\\ 
	&\quad  \le \frac{1}{2\convexPara_{k+1}}\parenthesis{ \norm{\hbg_{k+1}  \ilparenthesis{\hbtheta_k } }^2 + \noiseBound_{k+1}^2  } + \frac{\gain_k ^2\LipsPara_{k+1}}{2}\norm{\hbg_k\ilparenthesis{\hbtheta_k}}^2 - \gain_k \ilbracket{\hbg_{k+1}\ilparenthesis{\hbtheta_k}}^\transpose\hbg_k\ilparenthesis{\hbtheta_k}, 
	\end{align}
	where the second   last inequality follows from (\ref{eq:ineq2}).
	
		Similarly, from  the recursion in (\ref{eq:basicSA}), we have the following from inequality  (\ref{eq:ConvIneq}): 
	\begin{align}
	&  \loss_{k+1}\ilparenthesis{\hbtheta_{k+1}}\nonumber\\
	&\quad  \ge   \loss_{k+1}\ilparenthesis{\hbtheta_k}-\gain_k \ilbracket{\bg_{k+1}\ilparenthesis{\hbtheta_k}}^\transpose\hbg_k\ilparenthesis{\hbtheta_k } + \frac{\convexPara_{k+1}}{2}\gain_k ^2\norm{\hbg_k\ilparenthesis{\hbtheta_k}}^2\nonumber\\
	&\quad  = \loss_{k+1}\ilparenthesis{\hbtheta_k} - \gain_k\ilbracket{\hbg_{k+1}\ilparenthesis{\hbtheta_k}}^\transpose\hbg_k\ilparenthesis{\hbtheta_k} + \gain_k \ilbracket{\be_{k+1}\ilparenthesis{\hbtheta_k}}^\transpose\hbg_k\ilparenthesis{\hbtheta_k} + \frac{\gain_k ^2\convexPara_{k+1}}{2}\norm{\hbg_k\ilparenthesis{\hbtheta_k}}^2.  
	\end{align}	Therefore,
	\begin{align}
	&    \E\bracket{\given{  \loss_{k+1}\ilparenthesis{\hbtheta_{k+1}} - \loss_{k+1}\ilparenthesis{\bvartheta_{k+1}}  }{\nfield_{k+2}}}\nonumber\\
	&\quad \ge    \E  \bracket{  \given{\loss_{k+1}\ilparenthesis{\hbtheta_k}-\loss_{k+1}\ilparenthesis{\bvartheta_{k+1}} }{\nfield_{k+2}} } - \gain_k \ilbracket{\hbg_{k+1}\ilparenthesis{\hbtheta_k}}^\transpose\hbg_k\ilparenthesis{\hbtheta_k} + \frac{\gain_k ^2\convexPara_{k+1}}{2}\norm{\hbg_k\ilparenthesis{\hbtheta_k}}^2\nonumber\\
	&\quad \ge  \frac{1}{2\LipsPara_{k+1}}  \E\bracket{\given{  \norm{\bg_{k+1}\ilparenthesis{\hbtheta_k }}^2 }{\nfield_{k+2}}} + \frac{\gain_k \convexPara_{k+1}}{2}\norm{\hbg_k\ilparenthesis{\hbtheta_{k}}}^2 - \gain_k \ilbracket{  \hbg_{k+1}\ilparenthesis{\hbtheta_k}   }^\transpose\hbg_k\ilparenthesis{\hbtheta_k }\nonumber\\
	&\quad \ge   \frac{1}{2\LipsPara_{k+1}}   \norm{\hbg_{k+1}\ilparenthesis{\hbtheta_k}}^2 + \frac{\gain_k \convexPara_{k+1}}{2}\norm{\hbg_k\ilparenthesis{\hbtheta_{k}}}^2 - \gain_k \ilbracket{  \hbg_{k+1}\ilparenthesis{\hbtheta_k}   }^\transpose\hbg_k\ilparenthesis{\hbtheta_k }.
	\end{align}	
\end{proof}

 \subsubsection*{One Quick Example } 
Again, we use the similar setup as in Subsubsection \ref{subsect:ExamplePriorBound}, except that the evolution of  $\ilset{\bvartheta_k}$  in  (\ref{eq:evolution1}) now changes to: 
	\begin{equation}\label{eq:evolution2}
\bvartheta_{k+1}  = \begin{cases}
\bvartheta_k +  \ilparenthesis{1,\,\,1}^\transpose + \bw_k, & \text{ for }1\le k \le 499,\\ 
\bvartheta_k + 200 \ilparenthesis{  \cos(\upvarphi),   \,\,   \sin (\upvarphi ) } ^\transpose, & \text{ for } k = 500,\\ 
\bvartheta_k + \ilparenthesis{-1,\,\,1}^\transpose + \bw_k, & \text{ for }501\le k \le 999,
\end{cases}
\end{equation} with $\bvartheta_0=\zero$. Again,  $\bw_k\stackrel{\text{i.i.d.}}{\sim}  \mathrm{Normal}\ilparenthesis{\zero,\upsigma_2^2\bI_p} $, and $\upvarphi\sim\mathrm{Uniform}\bracket{0,2\uppi}$.   All the other   parameters and the gain sequence selection remain the same as Subsubsection \ref{subsect:ExamplePriorBound}.

Still following the general procedure in  Algorithm~\ref{algo:basicSA},   this time  we do not know $\driftBound_k$ a priori  and cannot proceed with the computation of  the error bounds established in Subsection \ref{subsect:unconditional}. 
Figure \ref{fig:tracking2}    displays how the iterates generated by (\ref{eq:basicSA}) keep up with the moving target using the same gain sequence as the one used in Subsection~\ref{subsect:unconditional}.
Figure \ref{fig:trackingbound2} shows how collecting two measurements at a time, i.e., collecting (\ref{eq:SigmaAlgebra2}),   to  bounding the drift term $ \norm{\bvartheta_{k+1}-\bvartheta_k } $ per  (\ref{eq:MultipleCondBound1})  in nonstationary optimization.  
\begin{figure}[!htbp]
	\centering
	\includegraphics[width=.65\textwidth]{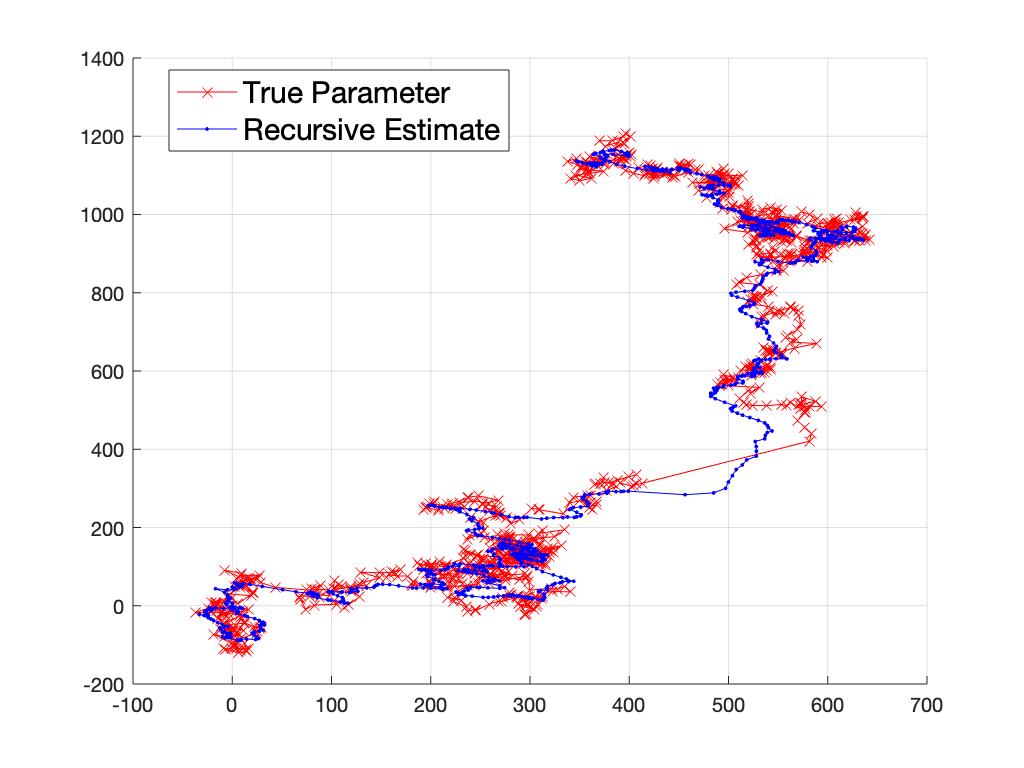} 
	\caption{The Trajectories Of The Underlying Parameter $\bvartheta_k$ And The  Recursive Estimates $\hbtheta_k$ In \emph{One}  Run}
	\label{fig:tracking2}
\end{figure}
\begin{figure}[!htbp]
	\centering
	\includegraphics[width=.65\textwidth]{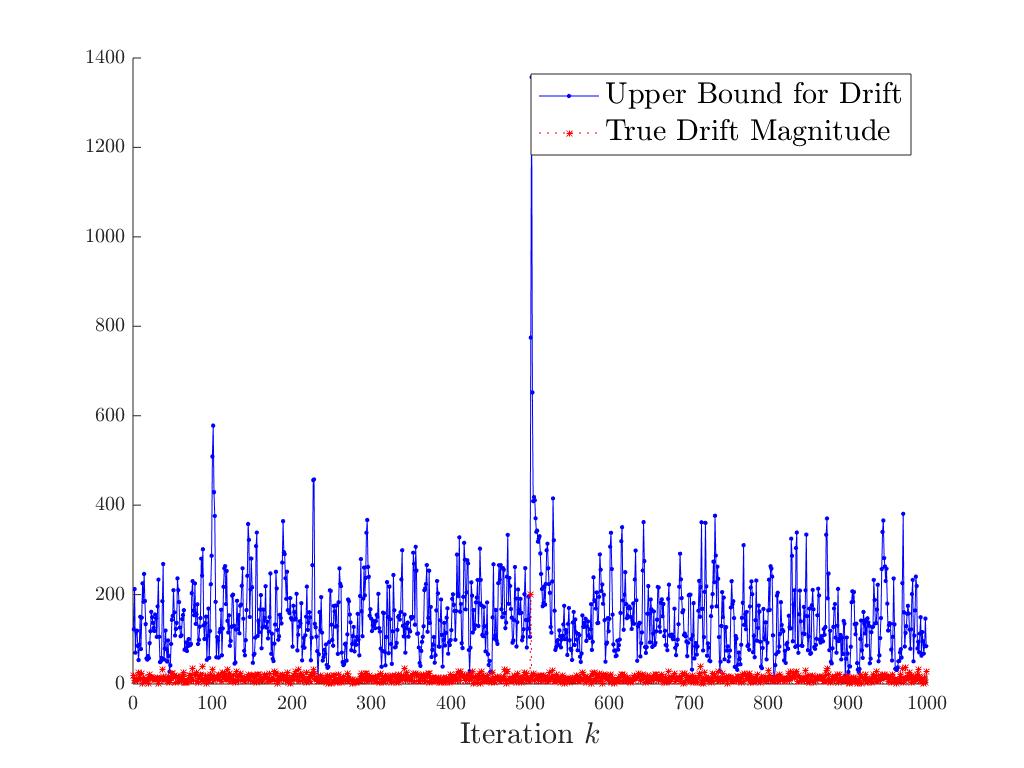} 
	\caption{Comparison Between The Upper Bound (\ref{eq:MultipleCondBound1}) Obtained From  Two Measurements And The Actual Drift  Term $\norm{\bvartheta_{k+1} - \bvartheta_k}$ }
	\label{fig:trackingbound2}
\end{figure}

 \subsubsection*{Call  For A Data-Driven Gain-Tuning Strategy}
Under the general  time-varying assumption A.\ref{assume:BoundedVariation}, 
note that (\ref{eq:MultipleCondBound1}) can   provide a rudimentary  assessment to the variation in   $\norm{\bvartheta_{k+1}-\bvartheta_k}$, and that (\ref{eq:CondBound1}) can help bounding  the conditional MAD.   However,  our gain selection strategy in Lemma~\ref{lem:Tobe} (equivalently Algorithm~\ref{algo:basicSA}) is  developed on the basis  that $\noiseBound_k  $ in A.\ref{assume:ErrorWithBoundedSecondMoment} and $\driftBound_k$ in A.\ref{assume:BoundedVariation} are relatively small   and on the purpose of  ensuring \emph{average} tracking performance as opposed to   a single sample-path.   Therefore, we may need to develop a new gain selection strategy    to be  relatively  robust   to abrupt changes  as  indicated by  $ \E_{k+2}\norm{\bvartheta_{k+1}-\bvartheta_k} $,  if any such change arises within a particular sample-path. See further details in Chapter~\ref{chap:AdaptiveGain}.

Observe from  Theorem~\ref{thm:CondBound1} that 
a larger value of $\norm{\hbg_k\ilparenthesis{\hbtheta_k}}$ is   strong evidence that $\hbtheta_k$ is further away from $\bvartheta_k$.   However, within one run of generating $\hbtheta_0,\cdots,\hbtheta_k$, we \emph{cannot} differentiate  whether or not the large value of $\norm{\hbg_k\ilparenthesis{\hbtheta_k}}$ is due to   excessive noise or due to the abrupt jump from $\bvartheta_k$ to $\bvartheta_{k+1}$. Furthermore, as mentioned in Subsection \ref{subsect:distinction}, we do not want to consider   ``multiple \emph{sequential} measurements at a time,'' especially an excessive number  of measurements (same order of the squared of the  inverse desired accuracy) as in \cite{wilson2019adaptive}.

Therefore, we will turn to a more restrictive scenario in Chapter~\ref{chap:AdaptiveGain}. 
It is desirable to obtain a testing rule under which   SA iterates can promptly detect  the 
change in $\ilset{\bvartheta_k}$ and provide guidance in gain selection.  	It is certainly advantageous to use   adaptive rules that enable the stepsize to vary with information gathered during the progress of the estimation procedure.

\section{Special Cases}\label{sect:SpecialCases}

\subsection{ Regression with Time-Varying Underlying Parameter   }\label{sect:LMS} 

\subsubsection*{Least-Mean-Squares}

 In the linear regression model,  we assume the following  measurement equation that is linear in $\bvartheta_k$:
\begin{equation}\label{eq:LinearMeasurement}
z_k=\bh_k ^\transpose \bvartheta_k+v_k,\,\, k\in \natural, 
\end{equation}
where $z_k$ is the $k$th scalar measurement of the output, $\bh_k$ is a $p\times 1$ stochastic design vector of the input or regression vector, $\bvartheta_k$ is the underlying target parameter, which evolves smoothly along the passage of time, and $v_k$ is a mean-zero disturbance sequence.  
For multiple-input-single-output (MISO)\label{acronym:MISO} system (\ref{eq:LinearMeasurement}),  the goal is to use known input values of $\bh_k$ (e.g., from a training sequence) and observed output values $z_k$ to estimate and track the underlying  MISO    system parameter $\btheta_k^*$. The time-varying function we are trying to minimize is 
\begin{equation}\label{eq:QuadraticLoss}
\loss_k\ilparenthesis{\btheta} = \frac{1}{2}\E\bracket{\ilparenthesis{z_{k+1} - \bh_{k+1}^\transpose\btheta }^2},
\end{equation}
where the expectation is taken w.r.t. the noise $v_{k+1}$ in (\ref{eq:LinearMeasurement}) and the randomness in $\bh_{k+1}$ if the $\bh_{k+1}$ is  random. Here  $\bh_k$ is the controllable input (may be random), while $z_k$ is the output that contains partial information on  $\btheta_k^*$.   

Suppose that the measurement noise $v_k$ is independent of  both $\btheta$ and $\bh_k$.  Then the derivative of the time-varying loss  function (\ref{eq:QuadraticLoss}) w.r.t. parameter $\btheta$ is
\begin{eqnarray}\label{eq:LinearGradient}
\bg_k\ilparenthesis{\btheta} &=& \frac{1}{2} \frac{\partial \E\bracket{\ilparenthesis{z_{k+1} - \bh_{k+1}^\transpose\btheta}^2}}{\partial\btheta} \nonumber\\
&=& \frac{1}{2} \frac{\partial \E  \bracket{ \ilparenthesis{ \bh_{k+1}^\transpose\bvartheta_{k+1} + v_{k+1 }  - \bh_{k+1}^\transpose\btheta  }^2  } }{\partial\btheta}\nonumber\\
&=&\begin{dcases}
\bh_{k+1}\bh_{k+1}^\transpose\ilparenthesis{\btheta-\bvartheta_{k+1}},\,\, \text{when } \bh_{k+1} \text{ is deterministic,}\\
\E\ilparenthesis{\bh_{k+1}\bh_{k+1}^\transpose}\ilparenthesis{\btheta-\bvartheta_{k+1}},\,\, \text{when } \bh_{k+1} \text{ is stochastic yet independent of }v_{k+1}.
\end{dcases}
\end{eqnarray}
Note that the expectation in the last line is w.r.t. the input-noise pair $ \ilparenthesis{\bh_{k+1},v_{k+1}} $. The randomness in $\bvartheta_{k+1}$,  if there is any, is not involved (note  that the computation of (\ref{eq:QuadraticLoss}) and (\ref{eq:LinearGradient}) is infeasible in reality due to the unavailability to carry out the expectation in (\ref{eq:QuadraticLoss}) and the unknown target $\bvartheta_{k+1}$). The most accessible information is the instantaneous gradient: 
\begin{eqnarray}\label{eq:LinearStochasticGradient}
\hbg_k\ilparenthesis{\btheta} &\equiv & \frac{1}{2} \frac{\partial\ilbracket{ \parenthesis{ z_{k+1} - \bh_{k+1}^\transpose\btheta }^2 }}{\partial\btheta}\nonumber\\
&=&\bh_{k+1}\ilparenthesis{\bh_{k+1}^\transpose\btheta-z_{k+1}}. 
\end{eqnarray} Estimate 
(\ref{eq:LinearStochasticGradient}) is a stochastic gradient due to the derivative of the argument inside the expectation operator in (\ref{eq:QuadraticLoss}). Besides, (\ref{eq:LinearStochasticGradient}) is an unbiased estimator of (\ref{eq:LinearGradient}). 

  When applying in linear regression models, $\bh_k$ represents the gradient of the predicted model output w.r.t.  the parameter $\btheta$ in the model (\ref{eq:LinearMeasurement}), and  then    the  recursion (\ref{eq:basicSA})  reduces to the LMS algorithm. Explicitly, the stochastic gradient at step $k$ is calculated as: 
\begin{eqnarray}\label{eq:LMS}
\hbtheta_{k+1} &=& \hbtheta_{k} - \gain_{k+1} \bh_{k+1} \ilparenthesis{  \bh_{k+1}^\transpose\hbtheta_{k}- z_{k+1}  } \nonumber \\
&=& \ilparenthesis{\bI_p - \gain \bh_{k+1}\bh_{k+1}^\transpose}\hbtheta_{k} +\gain_{k+1} \bh_{k+1} z_{k+1}, \,\,\, k \in\natural,
\end{eqnarray}   where $\hbtheta_0$ is chosen arbitrarily or with a priori information,  and is  assumed to have a finite second moment. 
 
Comparing (\ref{eq:LinearGradient}) and (\ref{eq:LinearStochasticGradient}), the error term in (\ref{eq:gGeneral}) becomes:
\begin{eqnarray}\label{eq:LMSerror}
 \be_k\ilparenthesis{\btheta} 
&=&\begin{dcases}
-\bh_{k+1} v_{k+1}, \quad \text{when } \bh_{k+1} \text{ is deterministic,}\\
\ilbracket{ \bh_{k+1}\bh_{k+1}^\transpose-\E\ilparenthesis{\bh_{k+1}\bh_{k+1}^\transpose} } \ilparenthesis{\btheta-\bvartheta_{k+1}} - \bh_{k+1} v_{k+1}, \\
\quad \quad\quad \text{when }\bh_{k+1}\text{ is stochastic yet independent of }v_{k+1}.
\end{dcases}
\end{eqnarray}
Here $\be_k\ilparenthesis{\btheta}$ is mean-zero as long as the measurement noise $v_k$ in (\ref{eq:LinearMeasurement}) is mean-zero (as assumed above).  
\begin{rem}
	The   change of $\bvartheta_k$ is called state evolution.  Naturally, all the randomnesses in the dynamic system, which consists of (\ref{eq:LinearMeasurement}) and the state evolution, arise from the $\ilset{\bvartheta_k}$ in the state evolution and the input-noise pair $\ilset{\bh_k,v_k}$ in the measurement equation. 
	Note that under A.\ref{assume:BoundedVariation}, the sequence $\ilset{\bvartheta_k}_{k\ge 0}$ is allowed to be either stochastic or fully deterministic.
\end{rem}

Note that Assumptions A.\ref{assume:ErrorWithBoundedSecondMoment}\textendash A.\ref{assume:BoundedVariation} listed previously for (\ref{eq:basicSA})  can be specialized to    the  LMS algorithm (\ref{eq:LMS}) as in  \cite{zhu2015error}.
The required assumptions are (1) the design vector sequence $\ilset{\bh_k}$ is random\footnote{For the case where $\bh_k$ is deterministic, see \cite{guo1990estimating}} and has a bounded $L_2$ norm uniformly across $k$, (2)  $v_k$ in (\ref{eq:LinearMeasurement})  is mean-zero and has a bounded variance of $\upsigma_{v_k}^2$, and (3) the pair $ \ilset{\bh_k,v_k} $ is independent of $\bvartheta_k$.  Immediately, $\noiseBound_k$ in A.\ref{assume:ErrorWithBoundedSecondMoment} becomes $ \sqrt{\upsigma_{v_{k+1}}^2  \ilparenthesis{\E\ilbracket{\norm{\bh_{k+1}}^2}}} $, $\convexPara_k $ in A.\ref{assume:StronglyConvex} becomes $\uplambda_{\min }\ilparenthesis{\E\ilparenthesis{\bh_{k+1}\bh_{k+1}^\transpose}}$,  and   $\LipsPara_k$ in A.\ref{assume:Lsmooth} becomes $\uplambda_{\max}\ilparenthesis{\E\ilparenthesis{\bh_{k+1}\bh_{k+1}^\transpose}}$.

Prior work on error bounds for the linear case  include \cite{farden1981tracking,macchi1986optimization,guo1995performance}. However, the bounds  therein are usually  not computable, as they require higher-order (higher than second-order) moments information of the design vector $\bh_k$. Admittedly, the error bound  (\ref{eq:PropagationLemma2}) and (\ref{eq:PropagationLemma4}) also requires information regarding  $\E\ilparenthesis{\bh_k\bh_k^\transpose}$,  but the  estimation of $\E\ilparenthesis{\bh_k\bh_k^\transpose}$ on the fly requires 2nd-oder information and that $\bh_k$'s are i.i.d. As explained  in Subsection \ref{subsect:noestimation}, we do not dwell on the estimation issues given that our problem setup only allows a \emph{few} observations. 
Also, there are  numerous works on the  {random-walk} evolvement assumption (based on a  linear model, mainly for LMS): \cite{ljung1990adaptation} and \cite[Chap. 5]{solo1994adaptive}, but they are not as informative and general as our results (\ref{eq:PropagationLemma3}) and (\ref{eq:PropagationLemma5}) that reveal the  dependency explicitly on the gain selection, the noise level, the drift level, and the second-order information. 

\subsubsection*{General Empirical Risk Minimization}

In general empirical risk minimization, given data pairs $ \ilparenthesis{\bh_k, \bz_k} $, we wish to learn a hypothesized  relationship $\bz_k\approx \bvarphi \ilparenthesis{\bh_k }$ for $\bvarphi$ chosen from a family of functions $ \ilset{\bvarphi_ {\btheta}} $ parametrized\footnote{By ``parametrized'' we mean that the mapping from $\btheta$ to $\bvarphi _{\btheta}\ilparenthesis{\cdot}$ is one-to-one. Specifically, $\btheta_1\neq\btheta_2$ implies $\bvarphi _{\btheta_1}\ilparenthesis{\cdot}\neq \bvarphi _{\btheta_2}\ilparenthesis{\cdot}$. Alternatively,  $\bvarphi _{\btheta_1}\ilparenthesis{\cdot}=  \bvarphi _{\btheta_2}\ilparenthesis{\cdot}$ implies $\btheta_1=\btheta_2$.
  } by $\btheta\in\real^p$. That is, (\ref{eq:LinearMeasurement}) becomes
\begin{equation}\label{eq:NonlinearMeasurement}
\bz_k = \bvarphi_{\bvartheta_k} \ilparenthesis{\bh_k } + \bv_k, \quad k\in\natural
\end{equation} where $\bvartheta_k$ is the underlying target parameter which evolves smoothly along the passage of time, and $\bv_k$ is a  disturbance sequence with a mean of $\zero$.  
Note that  the function form of  $\bvarphi_{\btheta}\ilparenthesis{\cdot}$ allows for both the linear representation as in   (\ref{eq:LinearMeasurement}) and nonlinear form, and $\bh_k$ is not necessarily in $\real^p$ due to the potentially nonlinear mapping $\bvarphi_{\btheta}\ilparenthesis{\cdot}$. For the  multiple-input-multiple-output (MIMO)\label{acronym:MIMO} system   (\ref{eq:NonlinearMeasurement}), we aim to use the known input values $\bh_k$ and observed output values $\bz_k$ to estimate and track the underlying MIMO system parameter $\bvartheta_k$. 
 The time-varying function we  are trying to 
minimize is 
\begin{equation}\label{eq:NonlinearLoss}
\loss_k\ilparenthesis{\btheta}  = \frac{1}{2} \E\bracket{  \norm{  \bz_{k+1} - \bvarphi_{\btheta} \ilparenthesis{\bh_{k+1}}  } ^2 },
\end{equation} where the expectation in (\ref{eq:NonlinearLoss}) is taken w.r.t. the noise $\bv_k$ in (\ref{eq:NonlinearMeasurement}) and the randomness in $\bh _{k+1}$ if $\bh_{ k+1}$ is random.  Here  $\bh_k$ is the controllable input which may be random, while $\bz_k $ is the output that contains partial information on  $\btheta_k^*$.

Suppose that the measurement noise $\bv_k$ in (\ref{eq:NonlinearMeasurement}) is independent of both $\btheta$ and $\bh_k$, then the derivative of the time-varying loss function  (\ref{eq:NonlinearLoss}) w.r.t. $\btheta$ is 
\begin{align}\label{eq:NonlinearGradient}
\bg_k\ilparenthesis{\btheta } &=  - 
\E\bracket{   \parenthesis{ \frac{\partial\bvarphi_{\btheta}\ilparenthesis{\bh_{k+1}}}{\partial\btheta}  }^\transpose   \parenthesis{\bz_{k+1} - \bvarphi _{\btheta}\ilparenthesis{\bh_{k+1}}}        } \nonumber\\
&= 
\E\bracket{   \parenthesis{ \frac{\partial\bvarphi_{\btheta}\ilparenthesis{\bh_{k+1}}}{\partial\btheta}  }^\transpose   \parenthesis{ \bvarphi _{\btheta}\ilparenthesis{\bh_{k+1}}  -  \bvarphi _{\bvartheta_{k+1}}\ilparenthesis{\bh_{k+1}}          - \bv _{k+1}   }        }  \nonumber\\
&=\begin{dcases}
\parenthesis{ \frac{\partial\bvarphi_{\btheta} \ilparenthesis{\bh_{k+1}}}{\partial\btheta}  }^\transpose \ilparenthesis{  \bvarphi_{\btheta} \ilparenthesis{\bh_{k+1}}    - \bvarphi_{\bvartheta_{k+1}}\ilparenthesis{\bh_{ k+1}}  },\quad \text{ when }\bh_{k+1} \text{ is deterministic}, \\ 
\E\bracket{   \parenthesis{ \frac{\partial\bvarphi_{\btheta}\ilparenthesis{\bh_{k+1}}}{\partial\btheta}  }^\transpose   \parenthesis{ \bvarphi _{\btheta}\ilparenthesis{\bh_{k+1}}  -  \bvarphi _{\bvartheta_{k+1}}\ilparenthesis{\bh_{k+1}}      }        } , \\
\quad\quad\quad\quad\quad \quad\quad\quad\quad\quad\quad\quad \text{ when } \bh_{ k+1} \text{ is stochastic yet independent of }\bv_{k+1},
\end{dcases}  
\end{align}
where we have assumed that the differentiation interchanges with the integral (expectation), and  the expectation  is w.r.t. the data pair $ \ilparenthesis{\bh_{k+1},\bz_{k+1}} $.  The randomness in $\bvartheta_{k+1}$, if there  is any, is not involved. 
  Oftentimes, the joint distribution of $ \ilparenthesis{\bh_k,\bz_k} $
 is unknown. The accessible information is the instantaneous gradient:
 \begin{equation}\label{eq:NonlinearStochasticGradient}
 \hbg_k \ilparenthesis{\btheta} = \parenthesis{ \frac{\partial\bvarphi_{\btheta}\ilparenthesis{\bh_{k+1}}}{\partial\btheta}  }^\transpose   \ilparenthesis{ \bvarphi _{\btheta}\ilparenthesis{\bh_{k+1}}  - \bz_{k+1} } . 
 \end{equation}
 Hence,   SA recursion at step $k$ is calculated as: 
 \begin{align}\label{eq:ERM}
 \hbtheta_{k+1} &= \hbtheta_k -\gain_k \hbg_k \ilparenthesis{\hbtheta_k} \nonumber\\
 &= \hbtheta_k  - \gain_k      \parenthesis{\left.  \frac{\partial\bvarphi_{\btheta}\ilparenthesis{\bh_{k+1}}}{\partial\btheta} \right|_{\btheta=\hbtheta_k }  }^\transpose   \ilparenthesis{ \bvarphi _{\hbtheta_k}\ilparenthesis{\bh_{k+1}}  - \bz_{k+1} } , \quad k\in\natural
 \end{align}
 with an initialization $\hbtheta_0$ being deterministic or stochastic but with the finite second moment.
 We immediately see that the LMS algorithm (\ref{eq:LMS}) is a special case of the general principle of  empirical risk minimization (\ref{eq:ERM}). Comparing (\ref{eq:NonlinearGradient}) and (\ref{eq:NonlinearStochasticGradient}), the error term in (\ref{eq:gGeneral}) becomes
\begin{equation}
\be_k\ilparenthesis{\btheta} = \begin{dcases}-
 \parenthesis{ \frac{\partial\bvarphi_{\btheta}\ilparenthesis{\bh_{k+1}}}{\partial\btheta}  }^\transpose    {   \bv_{k+1} } , \text{ when } \bh_{k+1} \text{ is deterministic}\\
 \bracket{   \parenthesis{ \frac{\partial\bvarphi_{\btheta}\ilparenthesis{\bh_{k+1}}}{\partial\btheta}  }^\transpose   \parenthesis{ \bvarphi _{\btheta}\ilparenthesis{\bh_{k+1}}  -  \bvarphi _{\bvartheta_{k+1}}\ilparenthesis{\bh_{k+1}}      }        } \nonumber\\
 \quad - 
\E\bracket{   \parenthesis{ \frac{\partial\bvarphi_{\btheta}\ilparenthesis{\bh_{k+1}}}{\partial\btheta}  }^\transpose   \parenthesis{ \bvarphi _{\btheta}\ilparenthesis{\bh_{k+1}}  -  \bvarphi _{\bvartheta_{k+1}}\ilparenthesis{\bh_{k+1}}      }        } -    \parenthesis{ \frac{\partial\bvarphi_{\btheta}\ilparenthesis{\bh_{k+1}}}{\partial\btheta}  }^\transpose   \bv_{k+1},\nonumber\\
\quad\quad\quad\quad\quad \quad\quad\quad\quad\quad\quad\quad \text{ when } \bh_{ k+1} \text{ is stochastic yet independent of }\bv_{k+1},
\end{dcases}
\end{equation}
Here $\be_k\ilparenthesis{\btheta}$ has a mean of $\zero$ as long as the measurement noise $\bv_k$ in (\ref{eq:NonlinearMeasurement}) has a mean of 
$\zero$.

Now the Assumptions A.\ref{assume:ErrorWithBoundedSecondMoment}\textemdash A.\ref{assume:BoundedVariation}  listed for the general SA algorithm (\ref{eq:basicSA}) can be specialized for (\ref{eq:NonlinearStochasticGradient}). 
The required assumptions are   (1)  the family of functions $\ilset{\bvarphi_{\btheta}}$ is parametrized by $\btheta$, and every the second-order partial derivatives of  $\bvarphi_{\btheta}\ilparenthesis{\cdot}$ w.r.t. $\btheta$, which is a 3-dimensional matrix (a.k.a. tensor), are continuous in $\btheta$, \remove{(0)the design vector sequence $\ilset{\bh_k}$ is random and has a bounded $L_2$ norm uniformly across $k$, }(2)  $\bv_k$ in (\ref{eq:NonlinearMeasurement})  has a mean of $\zero$ and a covariance matrix $\bSigma_{\bv_k}$ with bounded entries,   (3) the pair $ \ilset{\bh_k,\bv_k} $ is independent of $\bvartheta_k$.   Immediately, $\noiseBound_k$ in A.\ref{assume:ErrorWithBoundedSecondMoment} becomes $$\sup_{\btheta\in\real^p} \sqrt{ \tr \Bigg(   \parenthesis{ \frac{\partial\bvarphi_{\btheta}\ilparenthesis{\bh_{k+1}}}{\partial\btheta}  }\parenthesis{ \frac{\partial\bvarphi_{\btheta}\ilparenthesis{\bh_{k+1}}}{\partial\btheta}  }^\transpose  \bSigma_{\bv_{k+1} }    \Bigg)         }, $$ $\convexPara_k $ in A.\ref{assume:StronglyConvex}  and   $\LipsPara_k$ in A.\ref{assume:Lsmooth} become $ \inf_{\btheta} \uplambda_{\min} \ilbracket{\partial\bg_k\ilparenthesis{\btheta}/\partial\btheta} $ and $ \sup_{\btheta} \uplambda_{\max} \ilbracket{\partial\bg_k\ilparenthesis{\btheta}/\partial\btheta} $, where $\bg_k\ilparenthesis{\cdot}$ is defined in  (\ref{eq:NonlinearGradient}). We omit the detailed expression here as it involves the notion of tensor and the definition of multiplying a tensor by a matrix, which is not the focus here.

\subsection{General Adaptive Algorithms }\label{subsect:StaticKF}
Subsection~\ref{sect:LMS}  discusses the scenario where the $\ilset{\bvartheta_k}$ evolution is unknown, and the general form of SA algorithm (\ref{eq:basicSA}) is used to track the time variation. Nonetheless, if the evolution law   is partially revealed, it should be taken into consideration in the time-varying parameter estimation along the lines of \cite[Eq. (3.19) on p. 84]{spall2005introduction}. For example, the prediction step in KF, which is similar to (\ref{eq:predictionstep}) to appear, makes direct use of the linear state-space model. 

\subsubsection*{Static Kalman Filtering }

In general, the classical KF algorithm cannot be rearranged  as a special case of (\ref{eq:basicSA}).  
 Here is an exception: consider  the case when there are no dynamics, that is, 
\begin{numcases}{}
\text{Static Model:}\,\,\bvartheta_k  =   \bvartheta, \label{eq:static}&\nonumber\\
\text{Measurement:}\,\,\bz_k=\bH_k\bvartheta_k+\bv_k, & \label{eq:MIMO} 
\end{numcases} and  where the observation $\bz_k\in\real^{p
'}$ (usually $1\le p'\ll p$), the   matrix  $\bH_k\in\real^{p'\times p}$ is known,  the independent sequence $\set{\bv_k}$ satisfies    $\E\ilparenthesis{\bv_k} = \zero$ and $\Cov\ilparenthesis{\bv_k}=\bR_k$ (which is symmetric). Moreover, $\bvartheta_0$ is random with a known mean and a known variance $\bP_0$. 
\begin{rem}
	The general framework of Kalman filtering that allows time-varying $\bvartheta_k$ (a nonzero $\driftBound_k$) pertains to second-order derivative w.r.t. $\btheta$ and second-order noise statistics for both the modeling noise  and the measurement noise, and cannot be put into the first-order SA algorithm framework (\ref{eq:basicSA}).  Hence it is not discussed here. 
\end{rem}  Then the prediction of the state estimation and the covariance estimate from the KF are 
\begin{equation}\label{eq:StaticKFpred}
\begin{dcases}
\hat{\btheta}_ {\given{k}{k-1}} = \hat{\btheta}_{k-1},\quad \text{with }\hat{\btheta}_0 = \E\ilparenthesis{\btheta_0 }, \\
 {\bP}_{\given{k}{k-1}} =  {\bP}_{k-1},\quad\text{with } {\bP}_0 = \E \ilbracket{  \ilparenthesis{\hbtheta_0- \btheta_0} \ilparenthesis{\hbtheta_0- \btheta_0} ^\transpose }, 
\end{dcases}
\end{equation} 
and the updating step is
\begin{numcases}{} 
\bK_k =  {\bP}_{\given{k}{k-1}}\bH_k^\transpose\ilparenthesis{   \bH_k {\bP}_{\given{k}{k-1}} \bH_k^\transpose + \bR_k    }^{-1},  & \label{eq:StaticKFgain}\\
\hat{\btheta}_k = \hat{\btheta}_{\given{k}{k-1}} + \bK_k\ilparenthesis{\bz_k - \bH_k \hat{\btheta}_{\given{k}{k-1}}},  & \label{eq:StaticKFupdate}\\
 {\bP}_k = \ilparenthesis{\bI-\bK_k\bH_k} {\bP}_{\given{k}{k-1}}.&\label{eq:StaticKFcov}
\end{numcases}
Above updating formulas imply:
\begin{equation}
 {\bP}_{k+1}^{-1} =  {\bP}_k^{-1} + \bH_{k}^\transpose\bR_k^{-1} \bH_k \quad\text{and}\quad \bK_k = {\bP}_k\bH_k^\transpose\bR_k^{-1}. 
\end{equation}
For  MIMO system (\ref{eq:MIMO}), the time-varying function we are trying to minimize at each sampling instance $\uptau_k$ is
\begin{equation}\label{eq:LossStaticKF}
\loss_k\ilparenthesis{\btheta } = \E\bracket{ \frac{1}{2}  \ilparenthesis{\bz_{k+1}- \bH_{k+1}\btheta}^\transpose\bR_{k+1}^{-1} \ilparenthesis{\bz_{k+1}- \bH_{k+1}\btheta}}, 
\end{equation}
where the expectation is taken w.r.t. the noise $\bv_k$ in (\ref{eq:MIMO}).  Suppose that the measurement noise $\bv_k$ in (\ref{eq:MIMO}) is independent of the valuation point $\btheta$, then the \emph{instantaneous} gradient  of the time-varying loss function 
(\ref{eq:LossStaticKF})   can be obtained by taking the derivative of the quantity inside of the expectation operator in (\ref{eq:LossStaticKF}) w.r.t. $\btheta$: 
\begin{eqnarray}\label{eq:LossStaticKFgradient}
\hbg_k\ilparenthesis{\btheta} &=&  
\frac{1}{2} \frac{\partial\ilbracket{   \ilparenthesis{\bz_{k+1}- \bH_{k+1}\btheta}^\transpose\bR_{k+1}^{-1} \ilparenthesis{\bz_{k+1}- \bH_{k+1}\btheta}}}{\partial\btheta} \nonumber\\ &=&\bH_{k+1}^\transpose\bR_{k+1}^{-1} \ilparenthesis{\bH_{k+1}\btheta-\bz_{k+1}}\nonumber\\
&=& {\bP}_{k+1}^{-1} \bK_{k+1} \ilparenthesis{\bH_{k+1} \btheta-\bz_{k+1}},  
\end{eqnarray}
whose error term   defined in (\ref{eq:gGeneral})         is 
\begin{eqnarray}\label{eq:LossStaticKFerror}
\be_k \ilparenthesis{\btheta} = \hbg_k\ilparenthesis{\btheta} - \bg_k\ilparenthesis{\btheta}= \bH_{k+1}^\transpose\bR_{k+1}^{-1} \bH_{k+1}\ilparenthesis{\btheta - \bvartheta_{k+1}} - \bH_{k+1}^\transpose\bR_{k+1}^{-1} \bv_{k+1}.
\end{eqnarray}
\begin{rem}
The loss function construction (\ref{eq:LossStaticKF}), the stochastic gradient form (\ref{eq:LossStaticKFgradient}), and the error form (\ref{eq:LossStaticKFerror}) for MIMO model (\ref{eq:MIMO}) are   natural extensions of   (\ref{eq:QuadraticLoss}), (\ref{eq:LinearStochasticGradient}), and (\ref{eq:LMSerror}) for MISO model (\ref{eq:LinearMeasurement}). 
\end{rem}
Then   (\ref{eq:StaticKFupdate}) becomes
\begin{eqnarray}
\hat{\btheta}_{k+1} &= & \hat{\btheta}_k + \bK_{k+1} \ilparenthesis{\bz_{k+1}- \bH_{k+1} \hat{\btheta}_k}\nonumber\\
&=&    {\btheta}_k - \hat{\bP}_{k+1} \hbg_k \ilparenthesis{\hat{\btheta}_k}, 
\end{eqnarray}
which aligns with the SGD algorithm (\ref{eq:basicSA}) where $\hbg_k\ilparenthesis{\hbtheta_k}$ is replaced  by  (\ref{eq:Ystationary}),  except  that  the scalar gain $\gain_k$ is replaced by the matrix gain $\hat{\bP}_{k+1}$. 

Similar to Subsection \ref{sect:LMS}, the assumptions A.\ref{assume:ErrorWithBoundedSecondMoment}--A.\ref{assume:BoundedVariation} can be specialized for the static Kalman filter algorithm (\ref{eq:StaticKFupdate}). That is, as long as   the observation matrix $\bH_k\in\real^{p'\times p}$ has full (row) rank and has a bounded $\ell_2$ norm uniformly across $k$, $\bv_k$ in (\ref{eq:MIMO}) is mean-zero and has a nonsingular\footnote{ If the observations are nearly perfect, then $\bR_k$ is close to $\zero$. We do not dive into schemes in handling the consequent computational instability here.   } covariance matrix $\bR_k$ uniformly for all $k$, and $\bv_k$ is independent of $\bvartheta_k$. Specifically, $\noiseBound_k$ in A.\ref{assume:ErrorWithBoundedSecondMoment} becomes $ \sqrt{    \tr \ilparenthesis{\bH_{k+1} ^\transpose \bR_{k+1}^{-\transpose}    \bH _{k+1} } } $, $\convexPara_k$ in A.\ref{assume:StronglyConvex}  becomes $\uplambda_1\ilparenthesis{  \bH_{k+1}^\transpose\bR_{k+1}^{-1} \bH_{k+1} }$,    $\LipsPara_k$ in A.\ref{assume:Lsmooth} becomes $\uplambda_p\ilparenthesis{  \bH_{k+1}^\transpose\bR_{k+1}^{-1} \bH_{k+1} }$,  and $\driftBound_k$ in A.\ref{assume:BoundedVariation} reduces to 0 in the static model (\ref{eq:static}).

\subsubsection*{General  Dynamic Model With \emph{Known} Evolution}
 As mentioned before, the classical KF and extended KF (EKF) algorithm cannot be rearranged as a special case of (\ref{eq:basicSA}). Here we mention a simple tracking algorithm (\ref{eq:}) that  does not deal with the matrix multiplication and matrix inversion arising in computing the Kalman gain in KF/EKF, and provides a tracking error bound in Proposition~\ref{prop:KF}.
 
\remove{ 
	Time-varying  MIMO  systems  have   been extensively studied, but it is usually assumed that  the parameter process evolves either deterministic continuously or stochastically due to some mean-zero Gaussian disturbance. These models assume that parameter changes are small when they occur, which allows for more tractability in establishing convergence or error bounds. }

In many applications, we hope to  estimate a time-varying quantity that evolves with time  according to a nonlinear state equation:
\begin{equation}\label{eq:DynamicModel}
\bvartheta_{k+1} = \bm{f}_k\ilparenthesis{\bvartheta_k} + \bw_k, \quad k=0,1,2,\cdots,
\end{equation}
where the evolution function form of $\bm{f}_k\ilparenthesis{\cdot}:\real^p\mapsto\real^p$ is \emph{known}, and  $\bw_k$ is a mean-zero stochastic process. The  incomplete information about the $\bvartheta_k$ is available through observations $\bz_k$ in the following form:
\begin{equation}\label{eq:NonlinearMIMO}
\bz_k=\bh_k\ilparenthesis{\bvartheta_k}+\bv_k,\quad k = 1, 2, \cdots,
\end{equation} where the  measurement function form of  $\bm{h}_k\ilparenthesis{\cdot}:\real^p\mapsto\real^p$ is \emph{known}, and $\bv_k$ is a mean-zero stochastic process. Equation 
(\ref{eq:NonlinearMIMO})  is naturally a nonlinear extension of (\ref{eq:MIMO}).  
Following \cite[Eq. (3.19) on p. 84]{spall2005introduction}, we proceed the time-varying parameter estimation via
\begin{equation}\label{eq:predictionstep}
\begin{dcases}
 \text{Prediction step:}\quad  & \hbtheta_{\given{k+1}{k}} = \bm{f}_k \ilparenthesis{\hbtheta_k}, \\
\text{Updating step:}\quad  & \hbtheta_{k+1} = \hbtheta_{\given{k+1}{k}} + \gain_{k+1} \ilparenthesis{ \bz_{k+1} - \bh_{k+1}\ilparenthesis{ \hbtheta_{\given{k+1}{k}} }},
 \end{dcases}
\end{equation} where $\gain_{k+1}$ is a scalar gain satisfying certain conditions.  Combined, the recursion for $\hbtheta_k$ is:
\begin{equation}\label{eq:}
\hbtheta_{k+1} = \bm{f}_k\ilparenthesis{\hbtheta_k} + \gain_{k+1} \ilbracket{\bz_{k+1} - \bm{h}_{k+1}\ilparenthesis{\bm{f}_k\ilparenthesis{\hbtheta_k}}},\quad k = 0,1,2,\cdots. 
\end{equation}

\begin{prop}\label{prop:KF}
	Consider the state equation (\ref{eq:DynamicModel}) and the measurement equation (\ref{eq:NonlinearMIMO}). Assume that the following conditions hold. 
	\begin{enumerate}[(a)]
		\item \label{item:wk} The noise process  $\bw_k$ has a mean of $\zero$ and a covariance matrix of $\bQ_k$. The sequence $ \ilset{\bw_k} $ is an independent sequence. 
		
		\item   \label{item:vk}   The noise process  $\bv_k$ has a mean of $\zero$ and a covariance matrix of $\bR_k$. The sequence $ \ilset{\bv_k} $ is an independent sequence.

		\item \label{item:independence} All the noises in $\ilset{\bw_k}$ are independent of all the noises in $ \ilset{\bv_k} $. 
		\item \label{item:KFcontinous}
		For every $k$, the squared-matrix-valued functions of $\btheta$,   $ \diff  \bm{f}_k\ilparenthesis{\btheta}  / \diff\btheta^\transpose \equiv \dot{\bm{f}}_k\ilparenthesis{\btheta }  $ and  $ \diff  \bm{h}_k\ilparenthesis{\btheta}  / \diff\btheta^\transpose \equiv \dot{\bm{h}}_k\ilparenthesis{\btheta} $, are continuous w.r.t. $\btheta$. 
		\item \label{item:KFsign}  $ \dot{\bm{f}}_k\ilparenthesis{\btheta }  $  is either  positive-definite or negative definite for all $\btheta$. This also holds  for $ \dot{\bm{h}}_k\ilparenthesis{\btheta } $ and $ \bm{h}_{k+1} \ilparenthesis{\bm{f}_k \ilparenthesis{\btheta}} $. 
	\end{enumerate} When discussing positive/negative definiteness, we may write  $ \dot{\bm{f}}_k$ ($\dot{\bm{h}}_k$) instead of $ \dot{\bm{f}}_k\ilparenthesis{\btheta }  $ ($ \dot{\bm{h}}_k\ilparenthesis{\btheta }  $). 
Following the rational explained in Subsection \ref{subsect:ratio},  denote  $     \LipsPara_{k}^{\bm{f}} \equiv  \uplambda_p\ilparenthesis{ |{ \dot{\bm{f}}_k }|  } $,  $  \convexPara_k^{\bm{f}} \equiv  \uplambda_1\ilparenthesis{ |{ \dot{\bm{f}}_k }|  }$, $\ratio_k ^{\bm{f}} \equiv  \LipsPara_{k}^{\bm{f}} /\convexPara_k^{\bm{f}}   $ Similarly define $\LipsPara_{k}^{\bm{h}} $, $\convexPara_ {k}^{\bm{h}} $, and $\ratio_{k}^{\bm{h}} $. 
Suppose $\gain_k$ is picked such that  
\begin{equation}\label{eq:KFgain}
    \frac{  \convexPara_ {k}^{\bm{f}}       }{  \LipsPara_{k+1}^{\bm{h}}  \LipsPara_{k}^{\bm{f}}      }            \le 
\sign \ilparenthesis{\dot{\bm{h}}_{k+1} \dot{\bm{f}}_k} \gain_{k+1} <      \frac{  \convexPara_ {k}^{\bm{f}}  +1      }{  \LipsPara_{k+1}^{\bm{h}}  \LipsPara_{k}^{\bm{f}}      }, 
\end{equation} where $\sign\ilparenthesis{\cdot}$ is positive/negative if the argument square matrix is positive/negative definite.  Then we have an asymptotic bound
\begin{equation}\label{eq:KFasymp}
\limsup_{k\to\infty}  \E\ilparenthesis{\norm{ \hbtheta_k-\bvartheta_k }^2} \le \limsup_{k}  \frac{     \gain_{k+1}^2\tr\ilparenthesis{\bR_k} +       \ilparenthesis{    \abs{\gain_{k+1}}  \LipsPara_{k+1}^{\bm{h}} 
		+ \ilparenthesis{-1}^{ \sign\ilparenthesis{  \dot{\bm{h}}_{k+1} \dot{\bm{f}}_k   } }   } ^2    \tr\ilparenthesis{\bQ_k}     }{  1-  \ilparenthesis{  \abs{\gain_{k+1}}       \LipsPara_{k+1}^{\bm{h}}       \LipsPara_{k}^{\bm{f}}     -   \convexPara_{k}^{\bm{f}}                   } ^ 2}.  
\end{equation}
\end{prop}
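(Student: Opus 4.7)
My plan is to derive a linear recurrence for the mean-squared error $\E(\norm{\hbtheta_k-\bvartheta_k}^2)$ whose contraction coefficient is exactly the denominator in \eqref{eq:KFasymp}, and then invoke a geometric-series argument identical to the one used in the proofs of Theorems~\ref{thm:main1} and \ref{thm:main2} to pass to the limsup. The crucial preparatory step is a two-fold application of the integral form of the mean-value theorem to replace the nonlinear functions $\bm{f}_k$ and $\bh_{k+1}$ by averaged Jacobian matrices whose spectra are controlled by Assumptions (\ref{item:KFcontinous})--(\ref{item:KFsign}).

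Concretely, I would first subtract $\bvartheta_{k+1}=\bm{f}_k(\bvartheta_k)+\bw_k$ from the recursion and substitute $\bz_{k+1}=\bh_{k+1}(\bvartheta_{k+1})+\bv_{k+1}$, producing
\begin{equation*}
\hbtheta_{k+1}-\bvartheta_{k+1} = \bigl[\bm{f}_k(\hbtheta_k)-\bm{f}_k(\bvartheta_k)\bigr] + \gain_{k+1}\bigl\{\bh_{k+1}[\bvartheta_{k+1}]-\bh_{k+1}[\bm{f}_k(\hbtheta_k)]\bigr\} - \bw_k + \gain_{k+1}\bv_{k+1}.
\end{equation*}
Next, by writing $\bm{f}_k(\hbtheta_k)-\bm{f}_k(\bvartheta_k)=\bar{\bm{f}}_k(\hbtheta_k-\bvartheta_k)$ with $\bar{\bm{f}}_k=\int_0^1 \dot{\bm{f}}_k(\bvartheta_k+t(\hbtheta_k-\bvartheta_k))\diff t$, and similarly splitting $\bh_{k+1}[\bvartheta_{k+1}]-\bh_{k+1}[\bm{f}_k(\hbtheta_k)]$ into the chain-rule piece $-\bar{\bPhi}_k(\hbtheta_k-\bvartheta_k)$ plus $\bar{\bh}_{k+1}'\bw_k$, I arrive at the linearised representation
\begin{equation*}
\hbtheta_{k+1}-\bvartheta_{k+1} = \bigl[\bar{\bm{f}}_k-\gain_{k+1}\bar{\bPhi}_k\bigr](\hbtheta_k-\bvartheta_k) + \bigl[\gain_{k+1}\bar{\bh}_{k+1}'-\bI\bigr]\bw_k + \gain_{k+1}\bv_{k+1}.
\end{equation*}
Assumption~(\ref{item:KFsign}) guarantees that $\bar{\bm{f}}_k$ and $\bar{\bPhi}_k$ are sign-definite, so their spectra lie in $[\convexPara_k^{\bm{f}},\LipsPara_k^{\bm{f}}]$ and (in absolute value) are bounded by $\LipsPara_{k+1}^{\bh}\LipsPara_k^{\bm{f}}$ respectively; combined with the window \eqref{eq:KFgain} for $\gain_{k+1}$, I can conclude $\snorm{\bar{\bm{f}}_k-\gain_{k+1}\bar{\bPhi}_k}\le \bigl|\,|\gain_{k+1}|\LipsPara_{k+1}^{\bh}\LipsPara_k^{\bm{f}}-\convexPara_k^{\bm{f}}\bigr|<1$, which is precisely the square-root of the denominator in \eqref{eq:KFasymp}.

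Squaring the linearised identity and taking full expectations, the mean-zero independent noise $\bv_{k+1}$ has all its cross-terms vanish by (\ref{item:vk})--(\ref{item:independence}), contributing $\gain_{k+1}^{2}\tr(\bR_{k+1})$. For the $\bw_k$ term I would use $\snorm{\gain_{k+1}\bar{\bh}_{k+1}'-\bI}\le |\gain_{k+1}|\LipsPara_{k+1}^{\bh}+1$ together with $\E\snorm{\bw_k}^{2}=\tr(\bQ_k)$, where the additive/subtractive combination corresponds to the $(-1)^{\sign(\dot{\bh}_{k+1}\dot{\bm{f}}_k)}$ factor. Together with the spectral bound of the previous step, this yields a recurrence of the form $\E(\norm{\hbtheta_{k+1}-\bvartheta_{k+1}}^{2})\le\upalpha_k^{2}\E(\norm{\hbtheta_k-\bvartheta_k}^{2})+\upbeta_k$ with $\upalpha_k^{2}<1$; iterating this recurrence and applying Lemma~\ref{lem:Limit} exactly as in the proof of Theorem~\ref{thm:main2} delivers the claimed limsup bound.

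The main obstacle will be in two places. First, $\bar{\bh}_{k+1}'$ depends on $\bw_k$ through its path of integration, so the cross terms between $[\bar{\bm{f}}_k-\gain_{k+1}\bar{\bPhi}_k](\hbtheta_k-\bvartheta_k)$ and $[\gain_{k+1}\bar{\bh}_{k+1}'-\bI]\bw_k$ do not obviously vanish under conditional expectation; I would circumvent this either by a Cauchy--Schwarz/Young's inequality that absorbs the cross term into a slightly looser contraction coefficient, or by verifying that the conditional mean of $\bar{\bh}_{k+1}'\bw_k$ given $\mathcal{F}_k$ can be controlled symmetrically. Second, because $\bar{\bm{f}}_k$ and $\bar{\bPhi}_k$ generally do not commute, the spectral bound on their difference requires more care than the scalar analogue; I would handle this by a Rayleigh-quotient argument that uses the sign-definiteness of the chain-rule product $\dot{\bh}_{k+1}\dot{\bm{f}}_k$ assumed in~(\ref{item:KFsign}) together with the Cauchy interlacing relation between singular values and eigenvalues of matrix products, thereby justifying the envelope $\LipsPara_{k+1}^{\bh}\LipsPara_k^{\bm{f}}$ on the eigenvalues of $\bar{\bPhi}_k$.
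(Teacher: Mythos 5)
Your proposal follows essentially the same route as the paper's proof: subtract the state equation from the recursion, linearize $\bm{f}_k$ and $\bh_{k+1}$ via the mean-value theorem to obtain an error recursion of the form $\hbtheta_{k+1}-\bvartheta_{k+1}=\bM_k\ilparenthesis{\hbtheta_k-\bvartheta_k}+\bN_k\bw_k+\gain_{k+1}\bv_{k+1}$, square and take expectations using assumptions (a)--(c), bound the spectra of $\bM_k$ and $\bN_k$ using (d)--(e) and the gain window \eqref{eq:KFgain}, and iterate the resulting contraction exactly as in the derivation after \eqref{eq:Prop1-6} in Theorem~\ref{thm:main1}. The only structural difference is that you use the integral (averaged-Jacobian) form of the mean-value theorem where the paper evaluates the Jacobians at intermediate points $\tilde{\btheta}_1,\tilde{\btheta}_2,\tilde{\btheta}_3$; for vector-valued maps the integral form is the technically safer choice, so this is if anything an improvement. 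It is worth noting that the two obstacles you flag are genuine and are passed over silently in the paper's own proof: the expansion of $\E\ilparenthesis{\norm{\hbtheta_{k+1}-\bvartheta_{k+1}}^2}$ there drops the cross term between the $\ilparenthesis{\hbtheta_k-\bvartheta_k}$ part and the $\bw_k$ part with an equality sign, even though the evaluation point $\tilde{\btheta}_2$ of $\dot{\bh}_{k+1}$ depends on $\bw_k$, so that term is not conditionally mean-zero; and the envelope $\ilparenthesis{\abs{\gain_{k+1}}\LipsPara_{k+1}^{\bm{h}}\LipsPara_{k}^{\bm{f}}-\convexPara_{k}^{\bm{f}}}^2$ for the norm of the non-commuting combination is asserted without argument. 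Your proposed repairs (Young's inequality to absorb the cross term into a slightly looser contraction, and a Rayleigh-quotient or interlacing argument for the product spectrum) would make the proof more complete than the original, at the cost of a marginally weaker constant in the first case.
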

\begin{proof}  Notice that
	\begin{align}\label{eq:KFgeneral1}
	&	\hbtheta_{k+1} - \bvartheta_{k+1} \nonumber\\
	&\,\, = \bm{f}_k \ilparenthesis{\hbtheta_k }+ \gain_{k+1}  \ilbracket{\bz_{k+1} - \bh_{k+1}\ilparenthesis{\bm{f}_k\ilparenthesis{\hbtheta_k}}} - \bvartheta_{k+1} \nonumber\\
	&\,\,   =     \bm{f}_k \ilparenthesis{\hbtheta_k }+\gain_{k+1}    \ilbracket{  \bm{h}_{k+1} \ilparenthesis{\bvartheta_{k+1}} + \bv_{k+1}    - \bh_{k+1}\ilparenthesis{ \bm{f}_k\ilparenthesis{\hbtheta_k}  }}    -  \ilparenthesis{\bm{f}_k\ilparenthesis{\bvartheta_k} + \bw_k} \nonumber\\
	&\,\,  = \ilbracket{\bm{f}_k\ilparenthesis{\hbtheta_k}-\bm{f}_k\ilparenthesis{\bvartheta_k}} +\gain_{k+1}   \ilbracket{  \bh_{k+1}\ilparenthesis{  \bm{f}_k\ilparenthesis{\bvartheta_k} + \bw_k  } - \bh_{k+1}\ilparenthesis{\bm{f}_k\ilparenthesis{\hbtheta_k}}     } + \gain_{k+1}   \bv_{k+1}  - \bw_k \nonumber\\
	&\,\,  = \left. \frac{\diff\bm{f}_k\ilparenthesis{\btheta}}{\diff\btheta}\right| _{\btheta =\eta _1 \hbtheta_k+\ilparenthesis{1-\eta_1}\bvartheta_k  } \ilparenthesis{\hbtheta_k-\bvartheta_k } +  \gain_{k+1}   \bv_{k+1}  - \bw_k   \nonumber\\
	&\,\, \quad + \gain_{k+1}   \set{ \left. \frac{ \diff\bh_{k+1}\ilparenthesis{\bx} }{\diff \bx} \right|  _{\bx = \eta_2 \ilparenthesis{ \bm{f}_k\ilparenthesis{\bvartheta_k} + \bw_k } + \ilparenthesis{1-\eta_2} \bm{f}_k\ilparenthesis{\hbtheta_k }  }  \bracket{ \bw_k -       \parenthesis{  \bm{f}_k\ilparenthesis{\hbtheta_k} - \bm{f}_k\ilparenthesis{\bvartheta_k}  }       }    } \nonumber\\
	&\,\,  = \left. \frac{\diff\bm{f}_k\ilparenthesis{\btheta}}{\diff\btheta}\right| _{\btheta =\eta _1 \hbtheta_k+\ilparenthesis{1-\eta_1}\bvartheta_k  } \ilparenthesis{\hbtheta_k-\bvartheta_k } +  \gain_{k+1}   \bv_{k+1}  - \bw_k   \nonumber\\
	&\,\, \quad + \gain_{k+1}   \bracket{ \left. \frac{ \diff\bh_{k+1}\ilparenthesis{\bx} }{\diff \bx} \right|  _{\bx = \eta_2 \ilparenthesis{ \bm{f}_k\ilparenthesis{\bvartheta_k} + \bw_k } + \ilparenthesis{1-\eta_2} \bm{f}_k\ilparenthesis{\hbtheta_k }  }  \parenthesis{ \bw_k -   \left.  \frac{\diff \bm{f}_k }{\diff\btheta} \right| _{\btheta= \eta_3 \hbtheta_k + \ilparenthesis{1-\eta_3}\bvartheta_k   }   \ilparenthesis{\hbtheta_k-\bvartheta_k}    }    } \nonumber\\
	&\,\,  \equiv  \parenthesis{   \left. \frac{\diff\bm{f}_k\ilparenthesis{\btheta}}{\diff\btheta}\right| _{\btheta = \tilde{\btheta}_1 }        - \gain_{k+1}            \left. \frac{ \diff\bh_{k+1}\ilparenthesis{\bx} }{\diff \bx} \right|  _{\bx =  \tilde{\btheta}_2 }                \left.  \frac{\diff \bm{f}_k }{\diff\btheta} \right| _{\btheta= \tilde{\btheta}_3  }      } \ilparenthesis{\hbtheta_k-\bvartheta_k} \nonumber\\
&\,\,\quad 	+ \gain_{k+1}  \bv_{k+1}  - \parenthesis{\bI -\gain_{k+1}  \left.  \frac{\diff \bm{f}_k }{\diff\btheta} \right| _{\btheta= \tilde{\btheta}_3  }    }  \bw_k
	\end{align}
	where the second   equation uses both (\ref{eq:DynamicModel}) and (\ref{eq:NonlinearMIMO}), and   the fourth equation uses mean-value theorem and assumption \ref{item:KFcontinous}. In the last line, we use $\tilde{\btheta} _1\equiv \eta _1 \hbtheta_k+\ilparenthesis{1-\eta_1}\bvartheta_k$,   $\tilde{\btheta}_2 \equiv \eta_2 \ilparenthesis{ \bm{f}_k\ilparenthesis{\bvartheta_k} + \bw_k } + \ilparenthesis{1-\eta_2} \bm{f}_k\ilparenthesis{\hbtheta_k } $, and   $ \tilde{\btheta}_3\equiv   \eta_3 \hbtheta_k + \ilparenthesis{1-\eta_3}\bvartheta_k$.    Let us square (\ref{eq:KFgeneral1}) and taking expectations over the randomness in  $\bvartheta_0$, $\bw_0$,$\cdots$,$\bw_k$, $\hbtheta_0$, $\bv_1$,$\cdots$,$\bv_{k+1}$, we have the following 
	\begin{align}
& \E  	\ilparenthesis{\norm{\hbtheta_{k+1} - \bvartheta_{k+1}}^2}  \nonumber\\
&\,\,= \E   \parenthesis{\norm{  \parenthesis{\left. \frac{\diff\bm{f}_k\ilparenthesis{\btheta}}{\diff\btheta}\right| _{\btheta = \tilde{\btheta}_1 }        - \gain_{k+1}            \left. \frac{ \diff\bh_{k+1}\ilparenthesis{\bx} }{\diff \bx} \right|  _{\bx =  \tilde{\btheta}_2 }                \left.  \frac{\diff \bm{f}_k }{\diff\btheta} \right| _{\btheta= \tilde{\btheta}_3  }  } \ilparenthesis{\hbtheta_k-\bvartheta_k}}^2 }\nonumber\\
&\,\,\quad  + \gain_{k+1}^2 \tr\ilparenthesis{\bR_k}  + \E \parenthesis{  \norm{  \parenthesis{\bI-\gain_{k+1}\left. \frac{ \diff\bh_{k+1}\ilparenthesis{\bx} }{\diff \bx} \right|  _{\bx =  \tilde{\btheta}_2 }  } \bw_k   }^2  }\nonumber\\
&\,\,\le         \ilparenthesis{  \abs{\gain_{k+1}}       \LipsPara_{k+1}^{\bm{h}}       \LipsPara_{k}^{\bm{f}}     -   \convexPara_{k}^{\bm{f}}                   } ^ 2   \E	\ilparenthesis{\norm{\hbtheta_{k} - \bvartheta_{k}}^2} +  \gain_{k+1}^2\tr\ilparenthesis{\bR_k} \nonumber\\ 
	&\,\,\quad     \ilparenthesis{    \abs{\gain_{k+1}}  \LipsPara_{k+1}^{\bm{h}} 
 + \ilparenthesis{-1}^{ \sign\ilparenthesis{  \dot{\bm{h}}_{k+1} \dot{\bm{f}}_k   } }   } ^2    \tr\ilparenthesis{\bQ_k}. 
	\end{align}
because of  assumptions \ref{item:wk}\textendash \ref{item:KFsign}, and the gain selection (\ref{eq:KFgain}). Furthermore,  the coefficient $ \ilparenthesis{  \abs{\gain_{k+1}}       \LipsPara_{k+1}^{\bm{h}}       \LipsPara_{k}^{\bm{f}}     -   \convexPara_{k}^{\bm{f}}                   } ^ 2 $ is guaranteed to be in $\left[0,1\right)$  when (\ref{eq:KFgain}) holds. 
Now following the derivation immediately after equation (\ref{eq:Prop1-6}) in the proof for Theorem~\ref{thm:main1}, we can obtain the asymptotic bound (\ref{eq:KFasymp}).  
\end{proof}

\remove{

\subsection{No-Drift Case $\driftBound=0$}
In the conventional SA setups, it is assumed that the quantity to be estimated does not change in time, e.g., $\bvartheta_k=\bvartheta$ for all $k$. This     is relevant in some literature \cite{hazan2016introduction}

\begin{corr}
  If $\lim_ {K\to\infty} \sum_{k=1}^{K} \driftBound_k<\infty$, then we have $ \lim_{k\to\infty} \norm{\hbtheta_k-\bvartheta_k}= 0$. 
\end{corr}

\begin{thm}
	Let $\loss_k\ilparenthesis{\btheta}$ be convex functions for all $k$, the sequence $\ilset{\loss_k\ilparenthesis{\cdot}}$ be uniformly convergent on $\bTheta$ and let $\sum_{k=0}^{\infty} \gain_k = \infty$ and $\lim_{k\to\infty}\gain_k=0$. Then for any convergent subsequence $\hbtheta_{k_l}$ one has 
	\begin{equation}
	\lim_{l\to\infty} \hbtheta_{k_l} = \bvartheta\in\bTheta^*. 
	\end{equation}
\end{thm}
The requirement of uniform convergence of the sequence $\ilset{\loss_k\ilparenthesis{\btheta}}$ is the most essential one. However, for convex functions to be uniformly convergent, we only need the point-wise convergence under some weak additional assumptions. 
\begin{proof}
	Let us prove by contradiction. Suppose that $ \ilset{\hbtheta_{k_l}} $ is a subsequence which converges to a point $\btheta'\notin\bTheta^*$. Then there exists $\upvarepsilon>0$ such that 
	\begin{equation}
	\loss\ilparenthesis{\btheta} - \loss\ilparenthesis{\bvartheta} \ge \updelta>0, \bvartheta\in\bTheta,\quad\text{ for all }\btheta\in B_{4\upvarepsilon}\ilparenthesis{\btheta'},
	\end{equation}
	But then one has $ \updelta\le \loss\ilparenthesis{\btheta} - \loss_k\ilparenthesis{\btheta} + \loss_k\ilparenthesis{\btheta} - \loss_k\ilparenthesis{\bvartheta} + \loss_k\ilparenthesis{\bvartheta} - \loss\ilparenthesis{\bvartheta} $, and since the convergence is uniform, one has $\loss_k\ilparenthesis{\btheta} - \loss_k\ilparenthesis{\bvartheta}\ge \updelta/2>0$ fr sufficiently large $k$. Hence
	\begin{equation}
	\Angle{\hbg_k\ilparenthesis{\btheta}, \btheta-\bvartheta}\ge \updelta/2>0. 
	\end{equation}
	Summing the differences 
\end{proof}

\subsection{Noise-Free Case $\noiseBound = 0$}

We are to generate a sequence of approximate solutions $\ilset{\hbtheta_k}_{k=0}^{\infty}$, that tends, in some sense, to follow the time-path of the optimal solutions: 
\begin{equation}
\lim_{k\to\infty}   \set{ \loss_k\ilparenthesis{\hbtheta_k} - \min_{\btheta\in\bTheta_k}\bracket{\loss_k\ilparenthesis{\btheta}}  } = 0. 
\end{equation}
 
}

 \section{Brief Summary}

 Note that in time-varying scenarios as in Section~\ref{sect:ProlemSetup}, the concentration result, instead of the improbable convergence,  is the best we can hope for: $ \norm{\hbtheta_k-\bvartheta_k} $ can be made  small in certain statistical sense as $k$ gets  large, where $\bvartheta_k$ is the time-varying target and $\hbtheta_k$ is the corresponding SA estimate.  
 Under the model assumptions listed in Section~\ref{sect:ModelAssumptions}, i.e., the observational noise level $\noiseBound_k  $ in A.\ref{assume:ErrorWithBoundedSecondMoment},  the strong convexity parameter $\convexPara_k$ in A.\ref{assume:StronglyConvex}, the  Lipschitz continuity $\LipsPara_k $ in A.\ref{assume:Lsmooth},  
 and the expected drift magnitude $\driftBound_k $ in A.\ref{assume:BoundedVariation} are \emph{known},
 we may implement Algorithm~\ref{algo:basicSA}.  
 The tracking performance 
 of  non-diminishing gain SA algorithms is guaranteed by  a computable  bound on MAD/RMS presented in    (\ref{eq:PropagationLemma2}) and in  (\ref{eq:PropagationLemma4}), which is useful in   the analysis of finite-sample performance.
 The practical aspects  of the finite-sample error bound for the recursion  (\ref{eq:basicSA}) is  listed below. 
 \begin{itemize}
 	\item The restrictions placed  on the model of the time-varying parameter is  {lenient} compared to other assumed form of state equation. The  only  imposed assumption is that  the average  distance between two consecutive underlying parameters is  strictly bounded from above. This modest assumption does not eliminate jumps in the target, and also allows the target to vary stochastically.
 	
 	 	\item A.\ref{assume:ErrorWithBoundedSecondMoment} allows $\hbg_k\ilparenthesis{\btheta}$ to be a biased estimator of $\bg_k\ilparenthesis{\btheta}$. Therefore,  our discussion embraces many  SA algorithms,  including the special case of the SGD algorithm (\ref{eq:basicSA}) where $\hbg_k\ilparenthesis{\hbtheta_k}$ is substituted by  (\ref{eq:Ystationary}) discussed in \cite{zhu2016tracking} and SPSA in \cite{spall1992multivariate}. 
 	 	 	\item The gain selection strategy in Lemma~\ref{lem:Tobe} or Algorithm~\ref{algo:basicSA}  may provide  some guidance in real-world  gain-tuning. Moreover, the MAD/RMS bound informs us that the gain $\gain_k$ can be neither  too large nor too small\textemdash this contrasts with most prior works that claim the tracking error can be made smaller by decreasing the constant stepsize $\gain$. This is intuitive, as  the ability to track time variations in $\bvartheta_k$ is lost if the step-size is made too small. 
 	 	
 	\item 
 	Both error bounds are computable as long as we have access to the noise level, the drift level, and the Hessian of the underlying loss function.    
 	The case of interest requires the strong convexity of the time-varying loss function (sequence), but  our  tracking error bound  is favorably informative under reasonable assumptions on the evolution of the true parameter being estimated.   Note that our quantification for tracking capability within finite-iterations of the non-diminishing gain SA algorithm in terms of a   computable error bound, can also apply to the general nonlinear  SA literature.  These two characteristics make our discussion different from \cite{eweda1985tracking,wilson2019adaptive}.

 	\item 
 	To the best of our knowledge, there are no existing approaches in estimation theory that produce  a sequence of estimates for  a time-varying minimization/root-finding problem, under only the     A.\ref{assume:BoundedVariation}  without any further  stringent state evolution assumption.

 \end{itemize}
 
 In a nutshell, the iterate $\hbtheta_k $ provides an estimate of the optimum point $\bvartheta_k$ with a certain accuracy, and the tracking errors of using $\hbtheta_k$ as an estimate for $\bvartheta_k$ is stable for all time.


\chapter{Concentration Behaviors   }\label{chap:Limiting}

Chapter \ref{chap:FiniteErrorBound} develops a      MAD/RMS bound  for   SA algorithm (\ref{eq:basicSA}) with non-decaying  gain  on the basis that the sampling frequency is bounded from above; i.e., the actual time elapsed between two consecutive samples, $  \ilparenthesis{  \uptau _{k+1}- \uptau_k   }$, is   strictly bounded away  from zero  for all $k\in\natural$. This chapter will  utilize  the weak convergence argument (reviewed in Section \ref{subsect:WeakConvergence})  to analyze the continuous-time interpolation  of     SA iterates  as the gain sequence approaches zero, and correspondingly   $ \ilparenthesis{\uptau _{k+1}- \uptau_k } $ goes to zero at the same order of  rate. The  requirement  that the sampling frequency (the number of samples per unit time) has to grow as  the gain sequence decreases is needed to closely follow  the perpetually varying target. Even though we analyze the weak convergence limit as the gain     sequence  goes    to zero and the number of samples per unit time grows  inversely proportional to the gain sequence,  the gain sequence needs not to  go to zero in the actual implementation.  

Many prior work  on weak convergence is  developed   on the basis that certain averages\footnote{For example, for every $\btheta$, $ \lim_{k\to\infty}  \abs{ \sum_{i=k} ^{k+j} \gain_i \ilbracket{  \bg_i\ilparenthesis{\btheta} - \bar{\bg} \ilparenthesis{\btheta} } } \to \zero  $ for every $j\in\natural$. }  of  the dynamics  $\bg_k\ilparenthesis{\cdot}$, denoted by   $\bar{\bg}\ilparenthesis{\cdot}$,  do  not depend on time.  This assumption is appropriate if the observed data is a stationary process that evolves on a time scale that is faster  than what is  implied  by the gain sequence.  By ``faster'' we mean that   the gain $\gain_k$ is often very small compared to the time interval at which successive sets of  observations are available. For example, in astronomy,    the meteorological observations may be available every few  hours, while   the stars in the sky, in fact, changes every few  seconds.  
On the contrary, we consider the case where the underlying $\bvartheta_k$   evolves on  a time-scale that is comparable  with what is implied by the gain sequence. By ``comparable'' we mean that   the gain $\gain_k$ is comparable to the time difference between two consecutive observations of the moving target such as submarines and  aircraft. For example, the target submarine/aircraft changes its coordinate every few  seconds, and the agent that needs to track the target also need to adjust its tracking direction every few  seconds.             In this scenario, there is no mismatch between the  model time step (the time difference between two different values of $\bvartheta_k$) and the time interval between the observation (the time difference between two consecutive noisy observations). \remove{Resultingly, $\hbg_k\ilparenthesis{\hbtheta_k}$ is available at every tick of the model clock. }As a result, 
the   mean ODE (to be defined momentarily) is indeed {time-dependent}.

To supplement the tracking capability results  in Chapter  \ref{chap:FiniteErrorBound}, this chapter characterizes the concentration behavior of the estimates using the trajectory of a nonautonomous ODE   via  a    weak convergence argument   and develops a  probabilistic bound.  By ``concentration''  we mean that the recursive estimates spend a majority of time arbitrarily close to some point. Namely, with an arbitrarily high probability and a  small   $\upvarepsilon$, the limit process is concentrated in a  $\upvarepsilon $-neighborhood of some limit set of the mean ODE, if a  limit set exists.     The result in Section  \ref{sect:ConstantGain}   unveils the behavior of the estimates for a small gain sequence and a finite iteration number.  
  Then Section \ref{sect:ProbBound} provides a computable probabilistic bound to supplement the concentration result in Section \ref{sect:ConstantGain}, but under more stringent assumptions.

  \remove{ 
 \section{Asymptotically Vanishing Drift $\driftBound_k\to 0$}
 \remove{ Optimization Method Under Nonstationary Conditions, A. M. Gupal, Cybernetics, Vol 10, Issue 3, 1974, 529--532.   }

 \begin{thm}
 	Under the assumptions given above, if   $\gain _k, \timeSpan_k$ are such that $\lim_{k\to\infty } {\timeSpan_k}/{\gain _k} \to 0 $, $ \sum_{k=0}^\infty \gain _k=\infty $, $\sum_{k=0}^\infty \gain _k^2<\infty$, then the following holds 
 	\begin{equation}\label{eq:convergence}
 	\lim_{k\to\infty }	\norm{ \hbtheta_k - \bvartheta_k  } \to 0, \text{ w.p.1.}
 	\end{equation} 
 \end{thm}
 \begin{proof}
 	
 	\begin{equation}
 	\begin{split}
 	&\quad \quad \norm{\hbtheta_{k+1} - \bvartheta_{k+1} }^2\\
 	&\le \norm{\hbtheta_k - \gain_k  \hbg_k\ilparenthesis{\hbtheta_k} \remove{\frac{ \hbg_k \ilparenthesis{\hbtheta_k } }{\norm{ \hbg_k \ilparenthesis{\hbtheta_k } }} }  - \bvartheta_{k+1}  }^2\\
 	&= \norm{  \hbtheta_k-\bvartheta_k  + \bvartheta_k-\bvartheta_{k+1}  - \gain_k \hbg_k\ilparenthesis{\hbtheta_k} \remove{\frac{ \hbg_k \ilparenthesis{\hbtheta_k } }{\norm{ \hbg_k \ilparenthesis{\hbtheta_k } }}}  }^2\\
 	&\le \norm{\hbtheta_k-\bvartheta_k}^2 + \norm{\bvartheta_k-\bvartheta_{k+1}}^2+\gain_k^2 \\
 	&\quad + 2 \ilparenthesis{\hbtheta_k-\bvartheta_k}^\transpose\ilparenthesis{\bvartheta_k-\bvartheta_{k+1}}- 2 \gain_k \ilparenthesis{\hbtheta_k-\bvartheta_k}^\transpose  \hbg_k\ilparenthesis{\hbtheta_k} \remove{ \frac{ \hbg_k \ilparenthesis{\hbtheta_k } }{\norm{ \hbg_k \ilparenthesis{\hbtheta_k } }}}  - 2 \gain_k \ilparenthesis{\bvartheta_k-\bvartheta_{k+1}}^\transpose  \hbg_k\ilparenthesis{\hbtheta_k} \remove{\frac{ \hbg_k \ilparenthesis{\hbtheta_k } }{\norm{ \hbg_k \ilparenthesis{\hbtheta_k } }   }}    \\
 	&\le \norm{\hbtheta_k-\bvartheta_k}^2 + \const \timeSpan_k^2 + \gain_k^2 + \const \timeSpan_k \norm{\hbtheta_k-\bvartheta_k} - 2 \gain_k \ilparenthesis{\hbtheta_k-\bvartheta_k}^\transpose\hbg_k\ilparenthesis{\hbtheta_k}    \remove{\frac{ \hbg_k \ilparenthesis{\hbtheta_k } }{\norm{ \hbg_k \ilparenthesis{\hbtheta_k } }}}   + \const \gain_k \timeSpan_k 
 	\end{split}
 	\end{equation}
 	
 	We prove (\ref{eq:convergence}) by contradiction.  Assume that there exists a subsequence $k_s$ for $ s\in\natural$, such that 
 	\begin{equation}
 	\lim_{s\to\infty}  \norm{  \hbtheta_{k_s} - \bvartheta_{k_s}   } > \updelta, \text{ for some fixed quantity $\updelta>0$}.
 	\end{equation}
 	
 	For every $k_s$,	let $l_s> k_s $ be the first discrete time instance such that 
 	\begin{equation}
 	\norm{  \ilparenthesis{  \hbtheta_{k_s} - \bvartheta_{k_s}     }   - \ilparenthesis{ \hbtheta_{l_s}  - \bvartheta_{l_s}   }  } > \frac{\updelta}{4}.
 	\end{equation}
 	\begin{itemize}
 		\item 	For these values of $l_s$, 
 		\begin{equation}
 		\liminf_{s\to\infty} \norm{  \hbtheta_{l_s} -  \bvartheta_{l_s} } > \frac{\updelta}{2}, 
 		\end{equation}
 		because   $\gain_k\to 0$. 
 		
 		\item Also,
 		\begin{equation}
 		\limsup_{s\to\infty} \norm{ \hbtheta_{l_s} - \bvartheta_{l_s} } < \lim_{s\to\infty}  \norm{  \hbtheta_{k_s} - \bvartheta_{k_s}  } > \updelta
 		\end{equation}
 	\end{itemize}
 	
 \end{proof}

  \newpage

}

\section{Concentration Behavior of Constant-Gain Algorithm}\label{sect:ConstantGain}

This section studies the     concentration behavior  of the SA  sequence via   the    properties  of an ODE that represents the   dynamics of the algorithm.   We are to  establish the     following proposition: with an arbitrarily fixed (usually high) probability, for   small gain,  the underlying data change with time should be on a scale that is  {commensurate} with what is  determined by the gain, the iterates are  concentrated in an arbitrarily small neighborhood of some limit set (if one exists) of the mean ODE.

\subsection{Basic Setup and Truncated SA Algorithm}

 Consider the following constrained minimization problem: 
\begin{equation}\label{eq:RootFinding}
\text{for each }k,  \text{ find   }\btheta\in\bTheta \text{ s.t. }\remove{\bg_k\parenthesis{\btheta}=\zero \text{ or }} \norm{\bg_k\parenthesis{\btheta}} \text{ is minimized},
\end{equation}
where $\bg_k\ilparenthesis{\cdot}: \real^p\mapsto\real^p$ is a continuous mapping for each $k$,  $\norm{\cdot}$ is the vector Euclidean norm, and $\bTheta\subset\real^p$ is compact.
Constraints are common in daily applications  due to safety or economic concerns.  
   If  the  (assumed unique)   root  of the vector-valued function $\bg_k\parenthesis{\cdot}  $ lies within $\bTheta$ for all $k$, then (\ref{eq:RootFinding}) is equivalent to  a root-finding problem.  Nonetheless, this   general root-finding problem  handles a sequence of functions $\bg_k\ilparenthesis{\cdot}$ that varies with time   $\uptau_k$, whereas       the  R-M setting \cite{robbins1951stochastic}  deals with locating the root for a single function $\bg\ilparenthesis{\cdot}$ that has no $k$-dependence.  One common application of  (\ref{eq:RootFinding}) is immediate by letting $\bg_k\parenthesis{\btheta}=   \partial \loss _k \ilparenthesis{\btheta} / \partial\btheta$,  where $\loss_k\parenthesis{\btheta
 }$ is a sequence of time-varying loss functions to be minimized. In this case, the problem setup (\ref{eq:RootFinding}) becomes the same as   Subsection \ref{subsect:ProblemSetup}.

Different from the unconstrained SA algorithm in  Chapter \ref{chap:FiniteErrorBound},  this section  discusses    the  projected  SA    algorithm (\ref{eq:truncatedSA1}) per the problem setup (\ref{eq:RootFinding}).

\begin{rem}
	Although this chapter primarily discusses the  constant-gain algorithm, this subsection will define  terms using non-decaying gain. 		This general definition is in anticipation of   further  discussion on  the adaptive gain in Chapter  \ref{chap:AdaptiveGain}, where $\inf _{k} a_k >0$, and $a_k$ is not necessarily constant across $k$. 
\end{rem}

To facilitate later discussion,
we  introduce a projection term $\boldsymbol{\upeta}_k $ and rewrite $-\hbg_k\ilparenthesis{\hbtheta_k}$ as $\bgamma_k$. Then  the projected SA algorithm (\ref{eq:truncatedSA1}) can be rearranged as a stochastic difference equation with a small step size $\gain_k$: 
\begin{equation}\label{eq:truncatedSA2}
 \hbtheta_{k+1}=\hbtheta_k+\gain _k\bgamma_k+a_k\boldsymbol{\upeta}_k, 
\end{equation}
where $a_k \boldsymbol{\upeta}_k = \Proj_{\bTheta}\ilparenthesis{\hbtheta_k+a_k\bgamma_k} - \ilparenthesis{\hbtheta_k+a_k\bgamma_k} $.
That is, if $\ilparenthesis{\hbtheta_k+a_k\bgamma_k}$ is not in $\bTheta$, $a_k\boldsymbol{\upeta}_k$ is   the  vector that takes  $ \ilparenthesis{\hbtheta_k+a_k\bgamma_k}$ back to $\bTheta$ with the shortest Euclidean norm; otherwise, $\boldsymbol{\upeta}_k=\zero$.

\subsection{Rewrite Projected   SA   Algorithm (\ref{eq:truncatedSA2})  as a Stochastic Time-Dependent Process}\label{subsect:Interpolation}
To examine the   behavior of the sequence of estimates $ \ilset{\hbtheta_k } $\remove{and true parameters $\ilset{\bvartheta_k }$ as gain approaching zero}, we  construct  a continuous-time interpolation of the discrete sequence $\ilset{\hbtheta_k} $.  
A natural time scale for the interpolation is   the    gain sequence $a_k $.   \remove{This interpolation facilitates  the effective exploitation of the time scale differences between the iterate process $\ilset{\hbtheta_k}$ and the driving nonstationary process $\set{\bvartheta_k }$.  }With appropriate interpolation, a suitably constructed sequence from  the iterates in  (\ref{eq:truncatedSA1}) will converge to the appropriate limit set  of an ODE determined by the average dynamics. If we further impose the  Lyapunov stability assumption on the   mean ODE, then   the SA estimates ``concentrates'' around the stable point (if it  exists) of the  corresponding ODE.

\begin{enumerate}[i)]
	\item Define   $t_k=\sum_{i=0}^{k-1} a_i   $. Further,  define the time-mapping function $\TimeMapping{t} $ over the domain $ \left[t_0,\infty\right) $ as:  $
	\TimeMapping{t}  =  
	k$   if $t\in\left[t_k,t_{k+1}\right) \text{ for }k\ge 0$. 
	\item  Define the following time-dependent step function:
		\begin{eqnarray}\label{eq:Zbar0} 
 	 {\bZ} \parenthesis{t,\upomega}  &=&    \hbtheta_{\TimeMapping{t}}\ilparenthesis{\upomega}   \,\,\,\, \text{ for  }t\ge t_0. 
	\end{eqnarray}  Note that  $ {\bZ} \parenthesis{\cdot,\upomega}$ is in $D\ilparenthesis{\real\mapsto\real^p}$, the space of functions that are  right-continuous and have left-limits endowed with the  {Skorohod topology}  \cite[Sect. 14]{billingsley1968convergence}.  
		\remove{  The subindex ``$0$'' is used in anticipation of the  {shifted}  version to appear.

		To facilitate later upcoming  discussion (and later chapters), let us also define the  {shift} process 
	$	\bZ_{\uptau}\parenthesis{t} =  \bZ _0 \parenthesis{\uptau + t}. 
	\remove{\bZ_k\parenthesis{t} = \bZ_0\parenthesis{t_k+t},}$, where 
		$\set{\bZ_{\uptau}\parenthesis{\cdot}}_{\uptau }$ is a sequence of univariate $\real^p$-valued continuous functions indexed by the shifting amount  $\uptau$. We are particularly interested in a specific subsequence where $\uptau = t_k $.  
		The limiting behavior of a further pathwise convergent subsequences  will  capture   the  asymptotic properties  of the sequence $ \ilset{\hbtheta_k}  $  for  large  $k$. }

	\item   For $t\ge t_0$, $\bgamma_i$ is the noisy observation of $\bg_i\ilparenthesis{\hbtheta_i}$, and $\boldsymbol{\upeta}_i $ is such that $  a_i \boldsymbol{\upeta}_i = \Proj_{\bTheta}\ilparenthesis{\hbtheta_i+a_i\bgamma_i} - \ilparenthesis{\hbtheta_i+a_i\bgamma_i}$. 
	Define   
		\begin{equation}	\label{eq:Gamma0}
	\bGamma\ilparenthesis{t,\upomega} = \indicator_{\set{ t\ge t_0  }} \sum_{i=0}^{\TimeMapping{t}-1} a_i \bgamma_i\ilparenthesis{\upomega},
	\end{equation}
	and   define $     \NOISE\parenthesis{\cdot,\upomega} $, $\BIAS\parenthesis{\cdot,\upomega}$, $\PROJECTION\parenthesis{\cdot,\upomega}$ analogously to $\bGamma\parenthesis{\cdot,\upomega}$, but using   $\noise_k$, $\bias_k$, $\boldsymbol{\upeta}_i $  in place of $\bgamma_i$ respectively.  
	\remove{
	Also define the corresponding  \emph{shift-increment} function \begin{eqnarray}\label{eq:Gammak} 
 \bGamma_\uptau \parenthesis{t}  
	&=& \bGamma_0\parenthesis{\uptau+t} - \bGamma_0\parenthesis{\uptau} \nonumber \\
	&=& \begin{dcases}
	\sum_{i=\TimeMapping{\uptau }}^{\TimeMapping{\uptau +t}-1} a_i \bgamma_i,&\text{for }t\ge 0,\\
	-\sum_{i=\TimeMapping{\uptau +t}}^{\TimeMapping{t}-1} a_i\bgamma_i,&\text{for }t<0.
	\remove{&\quad 	\bGamma_k\parenthesis{t} \\
		&= \bGamma_0\parenthesis{t_k+t} - \bGamma_0\parenthesis{t_k} \\
		&=\begin{dcases}
		\sum_{i=k}^{\TimeMapping{t_k+t}-1} a_i \bgamma_i,&\text{for }t\ge 0,\\
		-\sum_{i=\TimeMapping{t_k+t}}^{k-1} a_i\bgamma_i,&\text{for }t<0.}
	\end{dcases} 
	\end{eqnarray} 	Similarly,   define  $\NOISE_\uptau \parenthesis{\cdot}$, $\BIAS_\uptau\parenthesis{\cdot}$, $\bH_\uptau\parenthesis{\cdot}$ analogously to $\bGamma_\uptau\parenthesis{\cdot}$, but using   $\noise_i$, $\bias_i$,  $\boldsymbol{\upeta}_i $ respectively in lieu of $\bgamma_i$.  The construction (\ref{eq:Gammak}) will help  exploiting the time scale difference between the main driving force and the noisy process. 
}
	
	\remove{
		\begin{rem}
		Comparing $\bZ_0(t)$ to $\bZ_\uptau (t)$ for the case where the function has real-valued output, the variables in the  \emph{shift} functions are obtained solely via left-shifts so that $ \bZ_{t_k} (t_0)=\hbtheta_k$.   Note that $\bZ_{t_k}\parenthesis{\cdot}$ differs from $ \bGamma_{t_k}\parenthesis{\cdot} $  in that  there is no increment/decrement occurred at the time origin, which is the time $t_k$ for the original processes. The variables in \emph{shift-increment} functions are obtained by shifting a function up/down and to the left so that  $\bGamma_{t_k}\ilparenthesis{t_0} =\NOISE_{t_k}(t_0)=\BIAS_{t_k}(t_0)=\bH_{t_k}(t_0) =\zero$. 
	\end{rem}
}
\item   Now  we may write (\ref{eq:truncatedSA2}) equivalently as 
	\begin{eqnarray}\label{eq:truncatedSA3} 
 \bZ\parenthesis{t+t_k} \remove{
	&= \bZ_0\parenthesis{t_k+t}\\
	&= \bZ_k\parenthesis{t_0-t_k} + \bracket{\bGamma_0\parenthesis{t_k}+\bGamma_k\parenthesis{t}} + \bracket{\bH_0\parenthesis{t_k}+\bH_k\parenthesis{t}}\\
	&= \bZ_k\parenthesis{t_0} + \bGamma_k\parenthesis{t}+\bH_k\parenthesis{t}\\
	&\quad + \bracket{\bZ_k\parenthesis{t_0-t_k} - \bZ_k\parenthesis{t_k}}+ \bGamma_0\parenthesis{t_k} + \bH_0\parenthesis{t_k}\\ }
&=&\hbtheta_k+\ilbracket{ \bGamma\parenthesis{t+t_k}- \bGamma\ilparenthesis{t_k}  }+\ilbracket{\PROJECTION\parenthesis{t+t_k} - \PROJECTION\ilparenthesis{t_k}   }\nonumber \\
&= & 
\hbtheta_k+\sum_{i=k}^{\TimeMapping{t_k+t}-1} a_i\parenthesis{\bgamma_i+\boldsymbol{\upeta}_i}\quad \text{ if }t\ge t_0, 
\remove{&\quad 	\bZ_k\parenthesis{t}\\
	\remove{&= \bZ_0\parenthesis{t_k+t}\\
		&= \bZ_0\parenthesis{0}  +  \bGamma_0\parenthesis{0}+ \bH_0\parenthesis{0}\\
		&= \bZ_k\parenthesis{t_0-t_k} + \bracket{\bGamma_0\parenthesis{t_k}+\bGamma_k\parenthesis{t}} + \bracket{\bH_0\parenthesis{t_k}+\bH_k\parenthesis{t}}\\
		&= \bZ_k\parenthesis{t_0} + \bGamma_k\parenthesis{t}+\bH_k\parenthesis{t}\\
		&\quad + \bracket{\bZ_k\parenthesis{t_0-t_k} - \bZ_k\parenthesis{t_k}}+ \bGamma_0\parenthesis{t_k} + \bH_0\parenthesis{t_k}\\ }
	&= \hbtheta_k+\bGamma_k\parenthesis{t}+\bH_k\parenthesis{t}\\
	&=&  \begin{dcases}
	\hbtheta_k+\sum_{i=k}^{m\parenthesis{t_k+t}-1} a_i\parenthesis{\bgamma_i+\boldsymbol{\upeta}_i}&\text{ if }t\ge 0,\\
	\hbtheta_k-\sum_{i=m\parenthesis{t_k+t}}^{k-1}a_i\parenthesis{\bgamma_i+\boldsymbol{\upeta}_i}&\text{ if }t<0.
	\end{dcases}} 
\end{eqnarray}

The above   expression lays the foundation for constructing the continuous-time  version of the generalized SA. 

\end{enumerate}

Often, we    let $t_0=0$ w.l.o.g.

\subsection{ Model   Assumptions}\label{sect:assumptions}

In the remaining subsections,  we focus on      the  SA algorithm (\ref{eq:truncatedSA1}) with  {constant} gain  $a_k = a>0$. While implementing the    recursive algorithms with  a  constant gain $a$ 	in    time-varying problems,   the  convergence in some distributional sense as the iteration index $k\to\infty$ is the best we can hope for, see Section~\ref{sect:AdaptiveReview}.

Given that this section considers the behavior of $\hbtheta_k$ for \emph{different} values of \remove{ and a moderate value of $ka$} the constant gain $a>0$,  a  superscript $\parenthesis{a}$ is included to emphasize the dependency on \emph{different} values of the  constant gain $a$ within Section \ref{sect:ConstantGain}.   Specifically, $\hbtheta_k\aDependence$, $\bZ  \aDependence  \parenthesis{\cdot}$,  $\bGamma   \aDependence   \parenthesis{\cdot}$, $\BIAS  \aDependence   \parenthesis{\cdot}$, $\NOISE  \aDependence   \parenthesis{\cdot}$ will be used in Section \ref{sect:ConstantGain}  (and in this section  {only}) to   represent $\hbtheta_k$, $\bZ\parenthesis{\cdot}$, $ \bGamma\parenthesis{\cdot} $, $\BIAS\parenthesis{\cdot}$, and $\NOISE\parenthesis{\cdot}$ defined in Subsection \ref{subsect:Interpolation}.   However, the  initialization $\hbtheta_0$ should be  independent of $a$.

Let $\field_{t}\aDependence $ be the  linear space  spanned by $\ilset{\hbtheta_j\aDependence, j \le m\parenthesis{t}}$\remove{ $ \ilset{\hbtheta_0, \bgamma_j, j<m(t)} $, or spanned by $ \ilset{\hbtheta_j, \bgamma_{j-1}, j\le m\parenthesis{t}} $ equivalently}, the information available  {up until} the  discrete time index   $\TimeMapping{t}$. Let $\E_t\aDependence$ represent the expectation conditioned on $\field_t\aDependence$.

\begin{assumeB}\label{assume:UniformIntegrability}
	The sequence of random variables $ \ilset{\bgamma_k\aDependence} $ (indexed both by  time index   $k$ and by  gain $a$)  is  uniformly   integrable. That is, $ \lim_{ N\to\infty}  \sup_{k, a}  \E \ilparenthesis{  \indicator_{\ilset{ \norm{\bgamma_k\aDependence}\ge N }} \cdot \norm{\bgamma_k \aDependence} } = 0  $.  
\end{assumeB}

\begin{assumeB}
	\label{assume:Bias} \remove{The bias term satisfies either of the following: (i) $ \E_{t_k} \aDependence\ilparenthesis{\bias_k\aDependence} =\zero $, (ii) $\E_{t_k} \norm{\bias_k\aDependence}  = O(c_k^2)$ where $c_k$ is the finite difference interval. Furthermore, if (ii) holds, we should have $\lim_{k\to \infty}   \gain_k/c_k^2=0$. }

	{The bias  sequence satisfies $ \lim_{j\to \infty} \lim_{k\to \infty} j^{-1} \sum_{i =k}^{k+j-1} \E_{t_k}\aDependence  \bias_i\aDependence=\zero $ in expectation   for all $\gain>0$. } 
\end{assumeB}

\begin{assumeB}\label{assume:samplingfrequency}    Assume that  $ \ilparenthesis{\uptau_{k+1} - \uptau_k  } \propto \gain $, where $\uptau_k  $ was defined as    the actual time    corresponding to the  sampling instance $k$   in Chapter~\ref{chap:FiniteErrorBound}.
\end{assumeB}
\begin{rem}
	Here, the dependency of $\uptau_k$ on $\gain$ is suppressed. For a fixed value of $\gain$, the corresponding sampling frequency  $\uptau_{k+1}-\uptau_k$ is fixed for all $k$. 
\end{rem}

 \begin{assumeB}\label{assume:gSequenceRegularity}
 	The  sequence of measurable functions $\ilset{\bg_k  \parenthesis{\cdot}}$  of argument $\btheta\in\bTheta\subseteq\real^p$  are continuous uniformly in $k$.   Furthermore,   $\LipsPara$ is the smallest positive real such that $\bg_k\ilparenthesis{\cdot}$ are $\LipsPara$-Lipschitz continuous in $\btheta$ for all $k$.   Lastly, the time variability of the sequence $\ilset{\bg_k  \ilparenthesis{\cdot}}$ is such that $ \remove{  \E_{t_k }   } \norm{\bg_{k+1}  \ilparenthesis{\btheta}-\bg_{k}   \ilparenthesis{\btheta}} \propto (\uptau_{k+1} - \uptau_k )  $ for all $\btheta$
 \end{assumeB} 
	\begin{rem}
	We do not put the $\ilparenthesis{\gain}$ dependence on $\bg_k\ilparenthesis{\cdot}$, as the underlying time variability of  the sequence $\ilset{\bg_k\ilparenthesis{\cdot}}$ is \emph{not} affected by how we implement the tracking algorithm.
\end{rem}
\remove{
\begin{rem}
	In you conjecture, one $\omega\in\Omega$ still generates the entire sequence of $\btheta_k^*$. Focusing on each $\omega$, time-varying/nonstationary is sufficient as a characterization.  
\end{rem}
}
Let us  make   a few remarks regarding the aforementioned assumptions. 
\begin{itemize}
	
	\item 
A sufficient condition for   B.\ref{assume:UniformIntegrability}   is $ \sup_{k,a} \E\norm{\bgamma_k\aDependence}^{1+\upvarepsilon} <\infty $ for some $\upvarepsilon>0$. The commonly-used value of $\upvarepsilon$ is $1$. Namely, $ \sup_{k,a} \E\norm{\bgamma_k\aDependence}^{2} <\infty $.

 \item   \remove{Part (i) of  B.\ref{assume:Bias} is intuitive and is applicable when dealing with $\hbg_{\SG}\ilparenthesis{\cdot}$ as in (\ref{eq:Ystationary} ).

 	Part (ii) of B.\ref{assume:Bias} is pertinent to most of the zeroth-order algorithms, including SPSA defined in (\ref{eq:gSPSA}),  one-measurement SPSA defined in (\ref{eq:g1SP}), finite difference SA discussed in \cite[Chap. 6]{spall2005introduction}, and etc.  }
 In the traditional SA setup where the time variability is not pertinent, we may  impose stronger assumptions on the bias term than B.\ref{assume:Bias}; i.e., $\bias_k\aDependence$ is  assumed to represent a bias that is asymptotically unimportant in the sense that  $ \E\norm{\bias_k \aDependence}\to \zero$ as $k\to\infty$ for all $a$.   However, as explained in a footnote in Subsection \ref{subsect:ErrorSA}, for FDSA or SPSA where the $\hbg_k\ilparenthesis{\btheta}$ is a biased estimator for $\bg_k\ilparenthesis{\btheta}$,   the differencing interval is not allowed to decrease to zero while applied to time-varying tracking problems. Although this prevents the estimator from being ``almost unbiased,'' this non-diminishing bias   is  preferred to the otherwise slower convergence and ``noisier'' behavior as the variance of the effective noise is inversely proportional to the square of a differencing interval.  In short,  the bias   term $\bias_k\aDependence$  here could be persistent.

 \item B.\ref{assume:samplingfrequency} is a manifestation that the  sampling frequency grows inversely proportional to the constant gain $\gain$.  
 Namely,   the number of iterates per unit time has to grow inversely proportional to  the gain magnitude.

\item  B.\ref{assume:gSequenceRegularity}   and  Corollary~\ref{corr:ExtensionThem} ensures that the solution to the mean ODE (to appear in (\ref{eq:gInterpolation}) later) exists on the entire real line. 
\end{itemize}

\subsection{Main Results}
\label{subsect:WeakResult}

This subsection illustrates how a    {nonautonomous} differential equation can be associated with the  SA  iterations $\hbtheta_k$ generated from the  projected  SA algorithm (\ref{eq:truncatedSA1}) with constant gain $\gain$.  We will show that,  for almost all $\upomega\in\Omega$, all the sample paths  $ \ilset{\bZ\aDependence\parenthesis{\cdot, \upomega}} $   are  equi- (in fact Lipschitz-) continuous in the extended sense\remove{, thus the set of piecewise linear interpolations is also equicontinuous}.  Then the extended Arzel\`{a}-Ascoli Theorem  \ref{thm:ExtendedArzelaAscoli} can    be applied to extract convergent subsequences whose limits satisfy the mean  ODE.  	The path $\bZ\aDependence\parenthesis{\cdot,\upomega}$ will closely follow the solution to the ODE (\ref{eq:LimitingODE}) on any  finite interval, with an arbitrarily high probability (uniformly w.r.t. all  initial conditions within $\bTheta$) as $a\to 0$.  \remove{    ``as $T\to\infty$ and $a\to 0$, the probability goes to $1$'' }
The limit  of a  pathwise convergent  sequence of the   process $\ilset{\bZ\aDependence \parenthesis{\cdot,\upomega }}$  will satisfy the mean ODE.

\begin{lem}\label{lem:tight}
	Assume B.\ref{assume:UniformIntegrability}. The interpolated sequences $\ilset{\bZ  \aDependence  \parenthesis{\cdot,\upomega}}$ and $ \ilset{\PROJECTION  \aDependence  \parenthesis{\cdot,\upomega}} $  are tight\footnote{Tightness of a sequence of random processes was reviewed in Subsection \ref{subsect:Tightness}.}  for all $\upomega\in\Omega$.

\end{lem}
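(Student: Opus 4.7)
The plan is to verify the two sufficient conditions of Lemma~\ref{lem:Tightness} (compact containment and an equi\-continuity-in-expectation condition on increments) for each of the two interpolated processes separately. The work is eased considerably by the observation that the truncation keeps $\hbtheta_k^{(a)}\in\bTheta$ for every $k$ and every $a$, so $\bZ^{(a)}(\cdot,\upomega)$ takes values in the compact set $\bTheta$, while for $\PROJECTION^{(a)}$ we will need a Markov-type argument. Throughout, the key quantitative input from B.\ref{assume:UniformIntegrability} is the uniform $L^1$-bound $\const\equiv\sup_{k,a}\E\|\bgamma_k^{(a)}\|<\infty$, which is a standard consequence of uniform integrability.

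For $\bZ^{(a)}(\cdot,\upomega)$, the compact containment condition (\ref{eq:compact1}) is immediate: choose $\compactset_{\updelta,t}=\bTheta$ for every $t,\updelta$. For the increment condition (\ref{eq:compact2}), I would use the representation (\ref{eq:truncatedSA3}) to write
\begin{equation*}
\bZ^{(a)}(t+s)-\bZ^{(a)}(t)=\sum_{i=\TimeMapping{t}}^{\TimeMapping{t+s}-1} a\,\bgamma_i^{(a)}+\sum_{i=\TimeMapping{t}}^{\TimeMapping{t+s}-1}a\,\boldsymbol{\upeta}_i^{(a)}.
\end{equation*}
Since $\bTheta$ is closed, bounded, and convex, the projection is non-expansive, which gives $\|a\,\boldsymbol{\upeta}_i^{(a)}\|\le \|a\,\bgamma_i^{(a)}\|$ and hence $\|\hbtheta_{i+1}^{(a)}-\hbtheta_i^{(a)}\|\le a\|\bgamma_i^{(a)}\|$. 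Summing and taking expectations,
\begin{equation*}
\E\|\bZ^{(a)}(t+s)-\bZ^{(a)}(t)\|\le a\bigl(\TimeMapping{t+s}-\TimeMapping{t}\bigr)\cdot\const\le \const\,(s+a),
\end{equation*}
because $\TimeMapping{t+s}-\TimeMapping{t}\le s/a+1$ under constant gain. Taking the supremum over $0\le s\le\uptau$, then $\limsup_{a\to 0}$, and finally $\uptau\to 0$ gives the required $0$.

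For $\PROJECTION^{(a)}(\cdot,\upomega)$, the increment bound is identical in form (same argument applied to $\sum a\,\boldsymbol{\upeta}_i^{(a)}$ and using $\|a\,\boldsymbol{\upeta}_i^{(a)}\|\le a\|\bgamma_i^{(a)}\|$), so (\ref{eq:compact2}) follows by the same calculation. The compact containment is the new ingredient: at each $t$ one has $\E\|\PROJECTION^{(a)}(t)\|\le \const\,(t+a)$ by the telescoping bound above, so for fixed $\updelta\in(0,1]$ and $t$, Markov's inequality yields
\begin{equation*}
\Prob\bigl\{\|\PROJECTION^{(a)}(t)\|\ge M_{\updelta,t}\bigr\}\le \frac{\const\,(t+1)}{M_{\updelta,t}}\le \updelta
\end{equation*}
uniformly in all sufficiently small $a$, upon choosing $M_{\updelta,t}=\const(t+1)/\updelta$, so $\compactset_{\updelta,t}=\{x\in\real^p:\|x\|\le M_{\updelta,t}\}$ works.

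The main obstacle I anticipate is making sure the non-expansiveness argument is clean: it is tempting to bound $\|a\boldsymbol{\upeta}_i^{(a)}\|$ via the triangle inequality as $2a\|\bgamma_i^{(a)}\|$, but the cleaner route is to observe that because $\hbtheta_i^{(a)}\in\bTheta$, the distance from $\hbtheta_i^{(a)}+a\bgamma_i^{(a)}$ to $\bTheta$ is at most $\|a\bgamma_i^{(a)}\|$, so $\|a\boldsymbol{\upeta}_i^{(a)}\|\le\|a\bgamma_i^{(a)}\|$ directly. The other minor subtlety is that Lemma~\ref{lem:Tightness} is stated with a discrete index, whereas our family is indexed by the continuous parameter $a\downarrow 0$; this is handled by interpreting the statement along any countable sequence $a_n\to 0$, which is enough for tightness since tightness on a separable complete metric space only requires sequential relative compactness.
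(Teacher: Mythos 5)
Your proof is correct as an application of the paper's Lemma~\ref{lem:Tightness}, but it takes a genuinely lighter route than the paper's own argument. You verify the two hypotheses using only the first-moment bound $\sup_{k,a}\E\|\bgamma_k^{(a)}\|<\infty$ (a consequence of B.\ref{assume:UniformIntegrability}), together with the non-expansiveness observation $\|a\boldsymbol{\upeta}_i^{(a)}\|\le\|a\bgamma_i^{(a)}\|$ (which is exactly the paper's (\ref{eq:ProjectionBound})) and a Markov-inequality argument for the compact containment of $\PROJECTION^{(a)}$ — the latter is actually spelled out more carefully than in the paper, which disposes of compact containment for $\PROJECTION^{(a)}$ rather tersely. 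The paper instead runs a truncation argument: it splits $\bgamma_k^{(a)}$ into a part bounded by $N$ and a tail, uses uniform integrability to kill the tail uniformly in $k$ and $a$, and thereby establishes the strictly stronger statement $\lim_{\updelta\to 0}\limsup_{a\to 0}\E\bigl(\sup_{s\le\updelta,\,0\le t\le T-s}\|\bGamma^{(a)}(t+s)-\bGamma^{(a)}(t)\|\bigr)=0$, i.e.\ with the supremum \emph{inside} the expectation (eq.~(\ref{eq:UnifIntConsequence2})). What the truncation buys is twofold. First, the condition (\ref{eq:compact2}) as literally written, with the supremum outside the expectation, is not by itself sufficient for tightness in $D(\real\mapsto\real^p)$: a sequence with two unit jumps spaced $O(a)$ apart at a random location satisfies $\sup_{t,s\le\uptau}\E\|\cdot\|\to 0$ yet fails the Skorohod modulus criterion. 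Your argument is therefore only as strong as the paper's statement of Lemma~\ref{lem:Tightness}; the paper's proof quietly closes this gap by proving the $\E\sup$ version, which does control the modulus of continuity in probability. Second, the $\E\sup$ bound is reused downstream — the proof of Lemma~\ref{lem:UnifEquiCont} cites (\ref{eq:UnifIntConsequence2}) to get uniform Lipschitz continuity of the weak limits — so your weaker bound would not support that later step even though it formally suffices for the tightness claim here. If you want your argument to be self-contained against a standard tightness criterion (rather than the paper's as-stated lemma), you should upgrade your increment estimate to the $\E\sup$ form, which is precisely where the uniform integrability, rather than the mere $L^1$ bound, becomes essential.
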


\begin{proof}[Proof of Lemma~\ref{lem:tight}]
 	To show  the uniform boundedness (which immediately implies tightness) using  the uniform-integrability assumption B.\ref{assume:UniformIntegrability}, let us first   truncate the noisy observation sequence  $\set{\bgamma_k\ilparenthesis{\upomega}}$.   For any  $N>0$, define the truncated random variables
 \begin{equation*}
 \bgamma_{k,N}  \aDependence\ilparenthesis{\upomega}= \begin{dcases}
 \bgamma_k\aDependence \ilparenthesis{\upomega}, &\text{if } \norm{\bgamma_k\aDependence\ilparenthesis{\upomega}}\le N,\\
 \zero,&\text{otherwise}. 
 \end{dcases}
 \end{equation*}
 
 	Fix a threshold $\upvarepsilon>0$, which may be arbitrarily small. 
 Fix a time $T> 0 $, which may be arbitrarily large. For some time interval $\updelta>0$, we have:
 \begin{align}\label{eq:GammaTight1} 
 &  \E\parenthesis{\sup_{s\le \updelta, \,0\le t \le T - s} \norm{\bGamma  \aDependence  \parenthesis{t+s,\upomega}-\bGamma  \aDependence   \parenthesis{t,\upomega}}}   \nonumber\\ 
&\quad \le  \E\parenthesis{ \sup_{s\le \updelta, \,0\le t \le T - s} \sum_{i= \TimeMapping{t}}^{\TimeMapping{t+s}-1} a\norm{\bgamma_i\aDependence\ilparenthesis{\upomega}}  }  \nonumber\\ 
& \quad \le \E  \bracket{\sup_{s\le \updelta, \, 0\le t \le T - s}   \sum_{i= \TimeMapping{t}}^{  \TimeMapping{t+s}-1}    \parenthesis{  a\norm{\bgamma_{i,N}\aDependence\ilparenthesis{\upomega}}+a\norm{\bgamma_i\aDependence \ilparenthesis{\upomega}-\bgamma_{i,N}\aDependence \ilparenthesis{\upomega } }  }  }.  
 \end{align} where   both inequalities follow  from the triangle inequality. 
 On one hand, 	 while    deeming  $a \sum_{i=\TimeMapping{t}}^{\TimeMapping{t+s}-1}\norm{\bgamma_{i,N}\aDependence\ilparenthesis{\upomega}} $  as a function of the time $t$,  it  can change values only at   multiples of $a$. Furthermore, at every such  point, the value can change by at most $Na$, which goes to zero as $\gain$ decreases to zero.  On the other hand,      Assumption B.\ref{assume:UniformIntegrability} implies  that $ \sup_{j, a} \E \norm{ \bgamma_j\aDependence\ilparenthesis{\upomega}-\bgamma_{j,N} \aDependence \ilparenthesis{\upomega} }  \to 0  $ as $N\to\infty$, which further implies that
 \begin{align}\label{eq:UnifIntConsequence1}
 &    \E \bracket{ \sup_{s\le \updelta, \,0\le t \le T - s} \sum_{i= \TimeMapping{t}}^{ \TimeMapping{t+s}-1 } a    \norm{\bgamma_i\aDependence\ilparenthesis{\upomega}-\bgamma_{i,N}\aDependence\ilparenthesis{\upomega}}    }\nonumber\\
 &\quad \le    \E\bracket{     \sup_{ a,\,  \TimeMapping{t}\le i \le \TimeMapping{t+\updelta}-1 } \parenthesis{a\cdot \frac{\updelta}{a}    \cdot \norm{\bgamma_i\aDependence\ilparenthesis{\upomega}-\bgamma_{i,N}\aDependence\ilparenthesis{\upomega}}}  } \nonumber \\
&\quad \le      \updelta \sup_{ a,  \,  \TimeMapping{t}\le i \le \TimeMapping{t+\updelta}-1  } \E\norm{\bgamma_i\aDependence\ilparenthesis{\upomega}-\bgamma_{i,N}\aDependence\ilparenthesis{\upomega}}  \nonumber\\
   &\quad \to 0 \,\,\, \text{as }N\to\infty. 
 \end{align}   
 Combing above observations, we know that for any given threshold $\upvarepsilon>0$, there exists a  {finite} $N\ilparenthesis{\upvarepsilon} $ and a finite $\gain\ilparenthesis{\upvarepsilon}$ that depend on $\upvarepsilon$,  such that both terms on the r.h.s. of   (\ref{eq:GammaTight1}) are less than $\upvarepsilon/2$. Given the arbitrariness of $\upvarepsilon>0$, we claim that  the paths of $\ilset{\bGamma  \aDependence   \parenthesis{\cdot,\upomega}}$ indexed by $a$ are asymptotically continuous in $t$ w.p.1,\remove{ i.e.,  \begin{equation} {eq:UnifIntConsequence3}
 	\lim_{\updelta\to 0} \limsup_{a\to 0}  \Prob\set{    \sup_{s\le\updelta,\,0\le t\le T-s}  \norm{\bGamma  \aDependence  \parenthesis{t+s}-\bGamma  \aDependence  \parenthesis{t}}  \ge \updelta} =0,
 	\end{equation}
 	Since (\ref{eq:UnifIntConsequence3}) holds for every $\updelta>0$,  we have the following by  \cite[Thm. 4.2.1 and 4.2.3]{chung2001course} } and
 \begin{equation}\label{eq:UnifIntConsequence2}
 \lim_{\updelta\to 0} \limsup_{a\to 0} \E\parenthesis{     \sup_{ s\le \updelta, \,0\le t\le T-s }   \norm{   {\bGamma}  \aDependence  \parenthesis{t+s,\upomega} - {\bGamma}  \aDependence  \parenthesis{t,\upomega}     }  } = 0.
 \end{equation}

 	Moreover, the statement   in  (\ref{eq:UnifIntConsequence2}) also holds  if  $\PROJECTION^{\parenthesis{a}}\parenthesis{\cdot,\upomega}$ replaces   $\bGamma^{\parenthesis{a}}\parenthesis{\cdot,\upomega}$    because 
 \begin{equation}\label{eq:ProjectionBound}
 \norm{\PROJECTION^{\parenthesis{a}}\parenthesis{t+s,\upomega}-\PROJECTION^{\parenthesis{a}}\parenthesis{t,\upomega}} \le a \sum_{i= m\parenthesis{t}}^{m\parenthesis{t+s}-1} \norm{\bgamma_i\aDependence\ilparenthesis{\upomega} }.
 \end{equation}
 
 	Also, the statement   in (\ref{eq:UnifIntConsequence2}) also holds if  $\bZ  \aDependence  \parenthesis{\cdot,\upomega}$  replaces $ \bGamma  \aDependence  \parenthesis{\cdot,\upomega} $  because  
 \begin{equation}\label{eq:RecursionConstantGain}
 \bZ  \aDependence  \parenthesis{t,\upomega}=\hbtheta_0+\bGamma  \aDependence  \parenthesis{t,\upomega}+\PROJECTION  \aDependence  \parenthesis{t,\upomega}.
 \end{equation} 
 
 In short, with (arbitrarily) high probability, the processes $\PROJECTION^{\parenthesis{a}}\parenthesis{\cdot ,\upomega}$ and  $\bZ  \aDependence  \parenthesis{\cdot,\upomega}$ change   slightly on the small time interval $ \bracket{t,\, t+s} \subset \bracket{0,T}$. Therefore,  the condition (\ref{eq:compact2}) for  Lemma~\ref{lem:Tightness} in proving tightness is met for the  random processes $ \ilparenthesis{\bZ  \aDependence  \parenthesis{\cdot,\upomega},\PROJECTION  \aDependence  \parenthesis{\cdot,\upomega}} $ in the  functional space  $D\parenthesis{\real\mapsto\real^{2p}}$. 
 Given that $\ilset{\bvartheta_k}$ is constrained within $\bTheta$ and is  independent of $a$, and $\PROJECTION  \aDependence  \parenthesis{t_0}=\zero$ at a valid initialization $\hbtheta_0\in\bTheta$,   the condition (\ref{eq:compact1})  is also  met. By Lemma \ref{lem:Tightness},  we claim     the tightness of the sequence of random processes $ \ilset{\bZ  \aDependence  \parenthesis{\cdot,\upomega},\PROJECTION  \aDependence  \parenthesis{\cdot,\upomega}} $.    
\end{proof}

 Now we proceed to  extract and characterize  a proper sequence of continuous-time interpolations  $ \ilparenthesis{\bZ  \aDependence  \parenthesis{\cdot,\upomega}, \PROJECTION   \aDependence   \parenthesis{\cdot,\upomega}} $.  
By  Prohorov's theorem   \cite[p. 104]{ethier2009markov},   we can extract a convergent sequence  $ \ilparenthesis{\bZ \aSubseqDependence \parenthesis{\cdot,\upomega}, \PROJECTION \aSubseqDependence \parenthesis{\cdot,\upomega} } $ as $n\to\infty$   based on  Lemma~\ref{lem:tight}. Let  $ \parenthesis{\bZ\parenthesis{\cdot,\upomega},\PROJECTION\parenthesis{\cdot,\upomega}} $ be the process such that  on the space $ D\parenthesis{\real\mapsto\real^{2p}} $, we have 
\begin{equation}\label{eq:WeakConvLimit}\ilparenthesis{\bZ  \aSubseqDependence  \parenthesis{\cdot,\upomega},\PROJECTION  \aSubseqDependence  \parenthesis{\cdot,\upomega}} \text{ converges weakly to }\parenthesis{\bZ\parenthesis{\cdot,\upomega},\PROJECTION\parenthesis{\cdot,\upomega}} \text{ as }n\to\infty. 
\end{equation}
\remove{Equivalently, for any bounded and continuous function $\boundedFn\parenthesis{\cdot}$ defined on the functional space $D\ilparenthesis{\real\mapsto\real^{2p}}$, we have $ \E \boundedFn\ilparenthesis{\bZ  \aSubseqDependence  \parenthesis{\cdot,\upomega}, \PROJECTION\aSubseqDependence\parenthesis{\cdot,\upomega}} \to \E \boundedFn\parenthesis{\bZ\parenthesis{\cdot,\upomega}, \PROJECTION\parenthesis{\cdot,\upomega}} $ as $n\to\infty$. }

\begin{lem}\label{lem:UnifEquiCont} Assume  B.\ref{assume:UniformIntegrability}.   There exists a sequence $ a_n\to 0 $ as $n\to\infty$, such that for all $\upomega\notin \nullset$ where $\nullset$  is a set of null measure, the weak convergence limit of $ \ilset{\bZ\aSubseqDependence\parenthesis{\cdot, \upomega},\PROJECTION\aSubseqDependence\parenthesis{\cdot, \upomega}} $, which was denoted as  $\parenthesis{\bZ\parenthesis{\cdot,\upomega},\PROJECTION\parenthesis{\cdot,\upomega}}$ in  (\ref{eq:WeakConvLimit}), is   equicontinuous  in the extended sense  (see Definition \ref{dfn:EquiContExtended}). 
\end{lem}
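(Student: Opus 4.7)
The plan is to combine the tightness result of Lemma~\ref{lem:tight} with a subsequence-extraction argument that upgrades the convergence-in-expectation bound \eqref{eq:UnifIntConsequence2} into a pathwise statement. By Prohorov's theorem and Lemma~\ref{lem:tight}, from any sequence of gains tending to zero we may already extract a weakly convergent subsequence in $D\ilparenthesis{\real\mapsto\real^{2p}}$, so the existence of a limit $\ilparenthesis{\bZ\ilparenthesis{\cdot,\upomega},\PROJECTION\ilparenthesis{\cdot,\upomega}}$ in the sense of \eqref{eq:WeakConvLimit} is already in hand. The work is to arrange the subsequence $a_n\to 0$ so that, off a null event $\nullset$, the sample paths $\ilset{\bZ\aSubseqDependence\ilparenthesis{\cdot,\upomega},\PROJECTION\aSubseqDependence\ilparenthesis{\cdot,\upomega}}$ satisfy the extended-sense equicontinuity condition of Definition~\ref{dfn:EquiContExtended}.

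First I would invoke the Skorohod representation theorem (Theorem~\ref{thm:Skorohod}) on the weakly convergent sequence produced by Lemma~\ref{lem:tight}, passing to an auxiliary probability space on which convergence becomes pathwise and uniform on each bounded time interval, for almost every $\upomega$. This identifies the $\limsup_n$ in Definition~\ref{dfn:EquiContExtended} with a quantity that can be controlled directly from the sample paths of the approximating sequence rather than from the limit.

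Next, I would upgrade the expectation estimate established in the proof of Lemma~\ref{lem:tight}, namely
\begin{equation*}
\lim_{\updelta\to 0}\limsup_{a\to 0}\E\ilparenthesis{\sup_{s\le\updelta,\,0\le t\le T-s}\norm{\bGamma\aDependence\ilparenthesis{t+s,\upomega}-\bGamma\aDependence\ilparenthesis{t,\upomega}}}=0,
\end{equation*}
to an almost-sure statement via Markov's inequality and the Borel--Cantelli lemma. For each integer $T\ge 1$ and each $j\ge 1$, choose $\updelta_{T,j}>0$ and then a gain $a_{T,j}>0$ small enough that
\begin{equation*}
\Prob\set{\sup_{s\le\updelta_{T,j},\,0\le t\le T-s}\norm{\bGamma\aDependence\ilparenthesis{t+s,\cdot}-\bGamma\aDependence\ilparenthesis{t,\cdot}}\ge 1/j}\le 2^{-(T+j)}
\end{equation*}
whenever $0<a\le a_{T,j}$. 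A diagonal extraction $a_n\to 0$ along the enumeration of pairs $(T,j)$, combined with Borel--Cantelli, produces a full-measure event on which, simultaneously for every $T$ and $j$, the oscillation of $\bGamma\aSubseqDependence$ over windows of width $\updelta_{T,j}$ on $\ilbracket{-T,T}$ is eventually at most $1/j$. The same subsequence controls $\PROJECTION\aSubseqDependence$ via the domination \eqref{eq:ProjectionBound} and hence controls $\bZ\aSubseqDependence=\hbtheta_0+\bGamma\aSubseqDependence+\PROJECTION\aSubseqDependence$ by the triangle inequality \eqref{eq:RecursionConstantGain}; boundedness of $\ilset{\bZ\aSubseqDependence\ilparenthesis{0,\upomega}}$ is automatic since $\bZ\aSubseqDependence\ilparenthesis{0,\upomega}=\hbtheta_0\in\bTheta$.

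The main obstacle is the diagonal bookkeeping: we must select $a_n\to 0$ slowly enough that each modulus-of-continuity bound for the pair $(T,j)$ kicks in by the time we exhaust that pair in the enumeration, yet still compatibly with the weakly convergent subsequence inherited from Prohorov's theorem. Once this nested extraction is executed, the conditions of Definition~\ref{dfn:EquiContExtended} are met off a null set $\nullset$, and the Extended Arzel\`a--Ascoli Theorem~\ref{thm:ExtendedArzelaAscoli} can then be applied in subsequent results to guarantee that every pathwise cluster point of $\ilset{\bZ\aSubseqDependence\ilparenthesis{\cdot,\upomega}}$ lies in $C\ilparenthesis{\real\mapsto\real^p}$ and thereby coincides, by uniqueness of weak limits, with $\bZ\ilparenthesis{\cdot,\upomega}$.
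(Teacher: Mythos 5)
Your overall strategy mirrors the paper's: control the modulus of continuity of $\bGamma\aDependence$ through the uniform-integrability truncation already carried out in the proof of Lemma~\ref{lem:tight}, transfer that control to $\PROJECTION\aDependence$ and $\bZ\aDependence$ via (\ref{eq:ProjectionBound}) and (\ref{eq:RecursionConstantGain}), and feed the result into the extended Arzel\`a--Ascoli machinery. Your instinct that the passage from the in-expectation bound (\ref{eq:UnifIntConsequence2}) to an almost-sure statement needs an explicit subsequence argument is sound (the paper itself makes this move rather tersely, asserting a pathwise Lipschitz variable $\LipsPara\ilparenthesis{T,\upomega}$ uniform in $\gain$ directly from the in-probability statement (\ref{eq:UnifIntConsequence3})). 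However, your Borel--Cantelli bookkeeping does not close. For a \emph{fixed} pair $\ilparenthesis{T,j}$ with its fixed window $\updelta_{T,j}$, your construction yields $\Prob\ilset{\sup_{s\le\updelta_{T,j},\,0\le t\le T-s}\norm{\bGamma\aSubseqDependence\ilparenthesis{t+s,\cdot}-\bGamma\aSubseqDependence\ilparenthesis{t,\cdot}}\ge 1/j}\le 2^{-\ilparenthesis{T+j}}$ for every sufficiently large $n$; this bound is \emph{constant in $n$}, so the probabilities of these events do not sum over $n$ and Borel--Cantelli cannot conclude that the oscillation is ``eventually at most $1/j$'' almost surely --- yet that is exactly what the $\limsup_n$ in Definition~\ref{dfn:EquiContExtended} requires for that fixed $\updelta_{T,j}$. (If instead each pair $\ilparenthesis{T,j}$ is attached to only a single index of the enumeration, the probabilities become summable but you learn the oscillation bound at one index only, which again does not control the $\limsup_n$.)

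The gap cannot be patched by demanding faster decay in $\gain$: the estimate underlying (\ref{eq:UnifIntConsequence2}) has the form $\E\ilparenthesis{\sup_{s\le\updelta}\norm{\bGamma\aDependence\ilparenthesis{t+s}-\bGamma\aDependence\ilparenthesis{t}}}\le N\ilparenthesis{\updelta+\gain}+\updelta\sup_{i,\gain}\E\norm{\bgamma_i\aDependence-\bgamma_{i,N}\aDependence}$, which does \emph{not} tend to zero as $\gain\to 0$ for fixed $\updelta$; smallness is purchased only by shrinking $\updelta$, and shrinking $\updelta$ with $n$ forfeits the single fixed $\updelta$ that Definition~\ref{dfn:EquiContExtended} demands. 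The route that actually works --- and the one the paper leans on --- is to move the modulus-of-continuity control onto the \emph{limit} rather than onto the pre-limit paths: tightness together with Lemma~\ref{lem:weakcont} (equivalently, the Skorohod representation you already invoke, combined with Fatou/portmanteau applied to the events $\ilset{\sup_{s\le\updelta}\norm{\cdot}\ge\upvarepsilon}$ in (\ref{eq:UnifIntConsequence3})) shows directly that $\ilparenthesis{\bZ\ilparenthesis{\cdot,\upomega},\PROJECTION\ilparenthesis{\cdot,\upomega}}$ has continuous, indeed locally Lipschitz, paths w.p.1, which is the substance of the lemma. Almost-sure extended-sense equicontinuity of the pre-limit family $\ilset{\bGamma\aSubseqDependence\ilparenthesis{\cdot,\upomega}}$ is a stronger claim than what is needed downstream, and your estimates do not establish it.
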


\begin{proof}[Proof of Lemma~\ref{lem:UnifEquiCont}]
First of all, the weak convergence limit $\parenthesis{\bZ\parenthesis{\cdot,\upomega},\PROJECTION\parenthesis{\cdot,\upomega}}$   defined in (\ref{eq:WeakConvLimit}) have continuous paths w.p.1. thanks to  Lemma~\ref{lem:weakcont}.

We now  show the  equi- (in fact Lipschitz-) continuity of paths of  the  weak sense limit  $\parenthesis{\bZ\parenthesis{\cdot,\upomega},\PROJECTION\parenthesis{\cdot,\upomega}}$ w.p.1.
From (\ref{eq:UnifIntConsequence2}), 
we know that 
\begin{equation}\label{eq:UnifIntConsequence3}
\lim_{\updelta\to 0} \limsup_{a\to 0}  \Prob\set{ \upomega:   \sup_{s\le\updelta,\, 0\le t\le T-s}  \norm{\bGamma  \aDependence  \parenthesis{t+s,\upomega}-\bGamma  \aDependence  \parenthesis{t,\upomega}}  \ge \upvarepsilon} =0,\,\,\, \forall\upvarepsilon>0, T>0. 
\end{equation}  Therefore, for each $T>0$, there exists a random variable $\LipsPara \parenthesis{T,\upomega}<\infty$ (which is  {independent} of the sequence index $a$)    such that for $0\le t\le t+s\le T$, $ \norm{\bGamma  \aDependence  \parenthesis{t+s,\upomega}-\bGamma  \aDependence  \parenthesis{t,\upomega}}\le \LipsPara\parenthesis{T,\upomega}s $ w.p.1.  Therefore, the sequence of random processes $ \ilset{\bGamma  \aDependence  \parenthesis{\cdot,\upomega}} $ (indexed by $\gain$) is locally Lipschitz continuous w.p.1 uniformly for all  $a$,   thanks to B.\ref{assume:UniformIntegrability}.  
Given   (\ref{eq:ProjectionBound}) and (\ref{eq:RecursionConstantGain}),  both $\PROJECTION  \aDependence  \parenthesis{\cdot,\upomega}$ and $\bZ  \aDependence  \parenthesis{\cdot,\upomega}$ is also locally Lipschitz continuous  w.p.1 uniformly across   $a$.   The result then follows.

	In summary,  for stochastic processes  $ \ilparenthesis{\bZ  \aDependence  \parenthesis{\cdot,\upomega},\PROJECTION  \aDependence  \parenthesis{\cdot,\upomega}} $ indexed by $\gain$,  there exists a   sequence    $a_n$, which goes to $0$ as $n\to\infty$,  such that the weak convergence limit  of $ \ilparenthesis{\bZ  \aSubseqDependence  \parenthesis{\cdot,\upomega},\PROJECTION  \aSubseqDependence  \parenthesis{\cdot,\upomega}} $, denoted as $\parenthesis{\bZ\parenthesis{\cdot,\upomega},\PROJECTION\parenthesis{\cdot,\upomega}}$, is  {  equicontinuous} in the extended   sense   w.p.1.  
\end{proof}

\begin{rem}  Under given assumptions, we know that the weak limit of the sequence  $ \ilparenthesis{\bZ  \aSubseqDependence  \parenthesis{\cdot,\upomega},\PROJECTION  \aSubseqDependence  \parenthesis{\cdot,\upomega}} $ indexed by $n $   is equicontinuous in the extended sense w.p.1, and the error vanishes  for almost all $\upomega$   along that sequence $\gain_n$ only.   
\end{rem}

 For succinctness, we will suppress $\upomega$ whenever appropriate.

\begin{thm} \label{thm:ODE}  	Assume B.\ref{assume:UniformIntegrability}, B.\ref{assume:Bias}, B.\ref{assume:samplingfrequency},  B.\ref{assume:gSequenceRegularity}.
	The weak convergence limit  $ \parenthesis{\bZ\parenthesis{\cdot,\upomega},\PROJECTION\parenthesis{\cdot,\upomega}} $  in (\ref{eq:WeakConvLimit})  satisfies the    mean ODE: 
	\begin{equation}\label{eq:LimitingODE}
	\dot{\btheta}= - \obg\parenthesis{t, \btheta}+\projectionlower\parenthesis{t}, {\quad \projectionlower\parenthesis{t}\in -\ConvCone\parenthesis{\bZ\parenthesis{t}}},
	\end{equation}
	where  $\projectionlower\parenthesis{t}$ is the  adjustment needed to keep $\btheta\parenthesis{t}$   within $\bTheta$, the notion of the convex cone was introduced immediately after (\ref{eq:constrainedODE}),  and $ \obg\parenthesis{t,\btheta}$  is the limit of 
	\begin{equation}\label{eq:gInterpolation}
	\bg\parenthesis{t,\btheta} =   { \sum_{k= 0}^\infty \bracket{   \indicator_{\ilset{t_k\le t<t_{k+1}}} \cdot \parenthesis{ \frac{t_{k+1}-t}{a} \bg_k\parenthesis{\btheta}+ \frac{t-t_k}{a} \bg_{k+1} \parenthesis{\btheta}  }  }}	\end{equation}   as $\gain \to 0$. 
\end{thm}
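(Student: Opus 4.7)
The plan is to start from the identity
\begin{equation*}
\bZ\aDependence\ilparenthesis{t} = \hbtheta_0 + \bGamma\aDependence\ilparenthesis{t} + \PROJECTION\aDependence\ilparenthesis{t}
\end{equation*}
given in (\ref{eq:RecursionConstantGain}), and to decompose
\begin{equation*}
\bGamma\aDependence\ilparenthesis{t} = - \sum_{i=0}^{\TimeMapping{t}-1} \gain\,\bg_i\ilparenthesis{\hbtheta_i\aDependence} - \BIAS\aDependence\ilparenthesis{t} - \NOISE\aDependence\ilparenthesis{t},
\end{equation*}
using the representation $\bgamma_i = -\hbg_i\ilparenthesis{\hbtheta_i} = -\bg_i\ilparenthesis{\hbtheta_i} - \bias_i\ilparenthesis{\hbtheta_i} - \noise_i\ilparenthesis{\hbtheta_i}$ from (\ref{eq:gGeneral}). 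I would then pass to the limit along the weakly convergent subsequence $\gain_n \downarrow 0$ produced in Lemma \ref{lem:UnifEquiCont}, invoking the Skorohod representation (Theorem \ref{thm:Skorohod}) so that convergence occurs w.p.1 and uniformly on each compact interval $\ilbracket{0,T}$.

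First I would dispose of the martingale-difference term. By construction, $\ilset{\gain\,\noise_i\aDependence}$ is a martingale-difference sequence with respect to $\ilset{\field_{t_i}\aDependence}$, whose conditional second moments are controlled by B.\ref{assume:UniformIntegrability}. A Doob-type maximal inequality applied to the quadratic variation yields $\sup_{0\le t\le T}\norm{\NOISE\aDependence\ilparenthesis{t}}\to 0$ in probability, because there are $O(1/\gain)$ summands per unit time, each of size $O(\gain)$, so the accumulated variance is $O(\gain\cdot T) \to 0$. Next, for the bias term, I would exploit B.\ref{assume:Bias}: partition $\ilbracket{0,T}$ into blocks of $j$ consecutive indices with $j\to\infty$ and $j\gain\to 0$, and write $\BIAS\aDependence\ilparenthesis{t}$ as a sum of block-averaged conditional biases plus a residual that vanishes by uniform integrability. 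The block-averaging assumption then forces $\BIAS\aDependence\ilparenthesis{t}\to\zero$ uniformly on $\ilbracket{0,T}$.

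The main obstacle is the principal drift term $\sum_{i=0}^{\TimeMapping{t}-1} \gain\,\bg_i\ilparenthesis{\hbtheta_i\aDependence}$. Working on the Skorohod probability space, Lemma \ref{lem:UnifEquiCont} gives w.p.1 uniform convergence $\bZ\aSubseqDependence\ilparenthesis{\cdot}\to\bZ\ilparenthesis{\cdot}$ on $\ilbracket{0,T}$ together with the extended equicontinuity of the limit. Combining this with the uniform $\LipsPara$-Lipschitz continuity of the $\bg_k$ in B.\ref{assume:gSequenceRegularity} allows me to substitute $\bg_i\ilparenthesis{\hbtheta_i\aDependence}$ by $\bg_i\ilparenthesis{\bZ\ilparenthesis{t_i}}$ with an error that is uniformly $o(1)$. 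The partial sum $\sum_{i=0}^{\TimeMapping{t}-1}\gain\,\bg_i\ilparenthesis{\bZ\ilparenthesis{t_i}}$ is a Riemann sum for the piecewise-linear interpolation $\bg\ilparenthesis{s,\btheta}$ in (\ref{eq:gInterpolation}) evaluated at $\btheta=\bZ\ilparenthesis{s}$; a freeze-on-mesoscopic-blocks argument, justified by B.\ref{assume:samplingfrequency} and the slow variation $\norm{\bg_{k+1}-\bg_k}\propto\gain$ in B.\ref{assume:gSequenceRegularity}, shows that this Riemann sum converges to $\int_0^t \obg\ilparenthesis{s,\bZ\ilparenthesis{s}}\,\diff s$, where $\obg$ is the limit of $\bg\ilparenthesis{\cdot,\btheta}$ declared in the statement of the theorem.

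Finally, for the reflection $\PROJECTION\aDependence\ilparenthesis{t}$, I would appeal to the standard constrained-SA argument (see \cite[Sect. 4.3]{kushner2003stochastic}): each increment $\gain\,\boldsymbol{\upeta}_i\aDependence$ is the minimum-norm displacement returning $\hbtheta_i\aDependence + \gain\,\bgamma_i\aDependence$ into $\bTheta$, so it is either zero (interior) or lies in $-\ConvCone\ilparenthesis{\hbtheta_i\aDependence}$ (boundary). Using the upper semicontinuity of the set-valued map $\ConvCone\ilparenthesis{\cdot}$, the bounded variation of $\PROJECTION\aDependence\ilparenthesis{\cdot}$ on compacts, and the tightness from Lemma \ref{lem:tight}, the limit admits the integral representation $\PROJECTION\ilparenthesis{t}=\int_0^t \projectionlower\ilparenthesis{s}\,\diff s$ with $\projectionlower\ilparenthesis{s}\in -\ConvCone\ilparenthesis{\bZ\ilparenthesis{s}}$. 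Assembling the four pieces yields
\begin{equation*}
\bZ\ilparenthesis{t} = \hbtheta_0 - \int_0^t \obg\ilparenthesis{s,\bZ\ilparenthesis{s}}\,\diff s + \int_0^t \projectionlower\ilparenthesis{s}\,\diff s,
\end{equation*}
and differentiating at continuity points of the integrands gives the nonautonomous mean ODE (\ref{eq:LimitingODE}).
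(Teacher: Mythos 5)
Your decomposition of $\bGamma\aDependence$ into drift, bias, and noise is the same starting point as the paper's, and your treatments of the drift term (freeze-and-average Riemann sum via B.\ref{assume:samplingfrequency} and B.\ref{assume:gSequenceRegularity}) and of the reflection term (upper semicontinuity of $\ConvCone\ilparenthesis{\cdot}$, Lipschitz bound on $\PROJECTION\ilparenthesis{\cdot}$) match the paper's handling of $\brho\aDependence$ and $\PROJECTION$. Where you genuinely diverge is in how the error terms are killed. The paper does \emph{not} try to show that $\NOISE\aDependence$ and $\BIAS\aDependence$ vanish uniformly on compacts; instead it forms $\bW\aDependence\ilparenthesis{t}=-\NOISE\aDependence\ilparenthesis{t}-\BIAS\aDependence\ilparenthesis{t}-\brho\aDependence\ilparenthesis{t}$ and verifies the martingale-increment criterion $\E\ilset{\boundedFn\ilparenthesis{\text{past}}\ilbracket{\bW\aSubseqDependence\ilparenthesis{t+\updelta}-\bW\aSubseqDependence\ilparenthesis{t}}}\to\zero$, so that the limit $\bW\ilparenthesis{\cdot}$ is a martingale; since the limit also has Lipschitz paths by Lemma~\ref{lem:UnifEquiCont}, it is constant and hence identically $\zero$. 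This buys a lot: only \emph{conditional expectations of increments} need to vanish, which for the noise is immediate from the martingale-difference structure and for the bias is exactly the block-average statement in B.\ref{assume:Bias}. Your route is the more direct "ODE method," but it has to pay for pathwise control.

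That payment is where the gap sits. You assert that the "conditional second moments are controlled by B.\ref{assume:UniformIntegrability}" and then invoke a Doob-type maximal inequality to get $\sup_{0\le t\le T}\norm{\NOISE\aDependence\ilparenthesis{t}}\to 0$ in probability. But B.\ref{assume:UniformIntegrability} is uniform integrability of $\norm{\bgamma_k\aDependence}$, i.e.\ a uniform \emph{first}-moment (de la Vall\'{e}e-Poussin) condition; a sufficient condition is $\sup_{k,a}\E\norm{\bgamma_k\aDependence}^{1+\upvarepsilon}<\infty$ for \emph{some} $\upvarepsilon>0$, which need not give finite second moments. So the quadratic-variation bound $O\ilparenthesis{\gain T}$ you rely on is not available as stated. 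The step can be repaired by the truncation device already used in Lemma~\ref{lem:tight}: truncate $\bgamma_k\aDependence$ at level $N$, \emph{re-center} the truncated noise so that it remains a martingale difference (plain truncation destroys the mean-zero property), apply Doob to the re-centered truncated sum, and absorb the truncation remainder, which is small in $L^1$ uniformly in $\gain$ as $N\to\infty$. A similar caveat applies to the bias: B.\ref{assume:Bias} controls block averages of the \emph{conditional expectations} $\E_{t_k}\aDependence\bias_i\aDependence$, so your block argument gives only that the conditional means of the accumulated bias vanish; the fluctuation of $\bias_i\aDependence$ around those conditional means is another martingale-difference term that must be handled as above. Neither repair is deep, but both are needed before the four pieces can be assembled pathwise on the Skorohod space as you propose.
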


\begin{proof}
	First, from the result in Lemma  \ref{lem:UnifEquiCont} and  Theorem  \ref{thm:ArzelaAscoli},   the process $ \parenthesis{\bZ\parenthesis{\cdot,\upomega},\PROJECTION\parenthesis{\cdot,\upomega}} $ defined in  (\ref{eq:WeakConvLimit})       is  continuous uniformly on each bounded interval, and in fact has  Lipschitz continuous paths w.p.1.

	 Now we  characterize its limit of measures of the process $ \parenthesis{\bZ\parenthesis{\cdot,\upomega},\PROJECTION\parenthesis{\cdot,\upomega}} $ on appropriate path space  such that the limit measure induces a process on the path space supported on some set of limit trajectories of the ODE (\ref{eq:LimitingODE}).    
		In addition to  $ \bGamma  \aDependence  \parenthesis{\cdot,\upomega}$, $ \BIAS  \aDependence  \parenthesis{\cdot,\upomega}$, $ \NOISE  \aDependence  \parenthesis{\cdot,\upomega}$,  and $ \PROJECTION  \aDependence  \parenthesis{\cdot,\upomega} $ defined  in (\ref{eq:Gamma0}), we also  define \begin{equation}\label{eq:Ga(t)}
		{\bG}  \aDependence  \parenthesis{t,\upomega} =   \int_{0}^t\bg\ilparenthesis{s,\bZ  \aDependence  \parenthesis{s,\upomega}}\diff s \,\,\text{for }t\ge 0 ,
		\end{equation} and $$\brho  \aDependence  \parenthesis{t,\upomega} = { a\sum_{i=0}^{m\parenthesis{t}-1} \bg_i \ilparenthesis{\hbtheta_i\ilparenthesis{\upomega}}
		-  \int_0^t\bg\ilparenthesis{s,\bZ  \aDependence  \parenthesis{s,\upomega}}\diff s}\,\, \text{for }t\ge 0.$$

	By the decomposition (\ref{eq:gGeneral}), specifically,
	\begin{equation}\label{eq:gNoisyDecompositionDependsona}
	\hbg_k\aDependence \ilparenthesis{\hbtheta_k\aDependence} = \bg_k \ilparenthesis{\hbtheta_k\aDependence} + \bias_k\aDependence\ilparenthesis{\hbtheta_k\aDependence} + \noise_k \aDependence \ilparenthesis{\hbtheta_k\aDependence}, 
	\end{equation} \remove{ note that $\bg_k$ does not depend on gain $\gain$ }
	 and following the construction in Subsection \ref{subsect:Interpolation}, we can rewrite (\ref{eq:truncatedSA3}) as: 
	\begin{equation}
	\bZ  \aDependence  \parenthesis{t} = \hbtheta_0-\bG  \aDependence  \parenthesis{t} - \brho  \aDependence  \parenthesis{t}  -  \BIAS   \aDependence  \parenthesis{t} - \NOISE  \aDependence  \parenthesis{t}+\PROJECTION  \aDependence  \parenthesis{t}.
	\end{equation}

	Furthermore,   define the following random processes with paths in $D \parenthesis{\real\mapsto\real^p}$:
	\begin{eqnarray}\label{eq:Wt} 
	\bW  \aDependence  \parenthesis{t} &=& \bZ  \aDependence  \parenthesis{t}-\hbtheta_0+ \bG  \aDependence  \parenthesis{t}-\PROJECTION  \aDependence  \parenthesis{t} \nonumber \\
	 &=& -  \NOISE   \aDependence  \parenthesis{t}- \BIAS  \aDependence  \parenthesis{t}- \brho \aDependence  \parenthesis{t}. 
	\end{eqnarray}
		Our goal   is to show that for each existing sequence $a_n$ arising in Lemma~\ref{lem:UnifEquiCont},  $\bW  \aSubseqDependence  \parenthesis{t}$ converges weakly to  a martingale process w.r.t. $\field_{  {t}}\aSubseqDependence$ spanned by $ \ilset{\hbtheta_j\aSubseqDependence,   j\le \TimeMapping{t}} $ as $n\to\infty$.   Thanks to  \cite[Thm. 7.4.1 on p. 234]{kushner2003stochastic},  we only need to show that for any   time $t\ge 0$ and time-interval $\updelta\ge 0$, and for  any integer $S>0$,  
		\begin{equation}\label{eq:WtMartingale1}
	\E \set{ \boundedFn\ilparenthesis{\hbtheta_{j_1}  \aSubseqDependence  ,\cdots , \hbtheta_{j_S}  \aSubseqDependence  , \boldsymbol{\upeta}_{j_1}  \aSubseqDependence , \cdots, \boldsymbol{\upeta}_{j_S}  \aSubseqDependence } \bracket{\bW  \aSubseqDependence  \parenthesis{t+\updelta}-\bW  \aSubseqDependence  \parenthesis{t}}}\stackrel{n\to\infty}{\longrightarrow}\zero, 
	\end{equation}
	where $  {j_s} \in    \ilset{  0, 1,\cdots,  \TimeMapping{t} } $  for all $1\le s \le S$, and $ \boundedFn\parenthesis{\cdot}$ is any bounded and continuous   function that maps $ \real^{2pS} $ to $\real$. Given (\ref{eq:Wt}), we only need to show the following in order to show (\ref{eq:WtMartingale1}): 
	\begin{numcases}{}
 	\E \set{ \boundedFn \ilparenthesis{\hbtheta_{j_1}  \aSubseqDependence  ,\cdots , \hbtheta_{j_S}  \aSubseqDependence  , \boldsymbol{\upeta}_{j_1}  \aSubseqDependence , \cdots, \boldsymbol{\upeta}_{j_S}  \aSubseqDependence } \bracket{   \NOISE  \aSubseqDependence  \parenthesis{t+\updelta}-\NOISE  \aSubseqDependence  \parenthesis{t}}}\stackrel{n\to\infty}{\longrightarrow}\zero, &	\label{eq:WtMartingale2-1}\\
	 	\E \set{ \boundedFn  \ilparenthesis{\hbtheta_{j_1}  \aSubseqDependence  ,\cdots , \hbtheta_{j_S}  \aSubseqDependence  , \boldsymbol{\upeta}_{j_1}  \aSubseqDependence , \cdots, \boldsymbol{\upeta}_{j_S}  \aSubseqDependence } \bracket{\BIAS \aSubseqDependence  \parenthesis{t+\updelta}-\BIAS  \aSubseqDependence  \parenthesis{t}}} \stackrel{n\to\infty}{\longrightarrow}\zero, &	\label{eq:WtMartingale2-2}\\
 	\E \set{ \boundedFn \ilparenthesis{\hbtheta_{j_1}  \aSubseqDependence  ,\cdots , \hbtheta_{j_S}  \aSubseqDependence  , \boldsymbol{\upeta}_{j_1}  \aSubseqDependence , \cdots, \boldsymbol{\upeta}_{j_S}  \aSubseqDependence } \bracket{\brho  \aSubseqDependence  \parenthesis{t+\updelta}-\brho  \aSubseqDependence  \parenthesis{t}}} \stackrel{n\to\infty}{\longrightarrow}\zero. & \label{eq:WtMartingale2-3}
	\end{numcases}

	Let us first show that the r.h.s. of   (\ref{eq:WtMartingale2-1}) is   $\zero$.  By construction (\ref{eq:gNoisyDecompositionDependsona}), $\noise_k\aDependence = \hbg_k\aDependence\ilparenthesis{\hbtheta_k\aDependence}-\E_{t_k}\aDependence \ilbracket{\hbg_k\aDependence\ilparenthesis{\hbtheta_k\aDependence}}$. Hence, $ \ilset{\NOISE\aSubseqDependence\ilparenthesis{s}, s\le t} $ is  $\field _{t}\aSubseqDependence$-measurable. Furthermore,  the process $\NOISE\aSubseqDependence\ilparenthesis{t}$ is an $\field_{t}\aSubseqDependence$-martingale. Then  by iterated conditioning and (\ref{eq:gNoisyDecompositionDependsona}):
	\begin{align} 
	&  	\E \set{ \boundedFn  \ilparenthesis{\hbtheta_{j_1}  \aSubseqDependence  ,\cdots , \hbtheta_{j_S}  \aSubseqDependence  , \boldsymbol{\upeta}_{j_1}  \aSubseqDependence , \cdots, \boldsymbol{\upeta}_{j_S}  \aSubseqDependence }  \bracket{\NOISE   \aSubseqDependence  \parenthesis{t+\updelta}-\NOISE  \aSubseqDependence  \parenthesis{t}}} \nonumber  \\
	&\quad =  	\E \parenthesis{    \E\set{\given{ \boundedFn\ilparenthesis{\hbtheta_{j_1}  \aSubseqDependence  ,\cdots , \hbtheta_{j_S}  \aSubseqDependence  , \boldsymbol{\upeta}_{j_1}  \aSubseqDependence , \cdots, \boldsymbol{\upeta}_{j_S}  \aSubseqDependence } \bracket{\NOISE  \aSubseqDependence  \parenthesis{t+\updelta}-\NOISE  \aSubseqDependence  \parenthesis{t}}}{\field_{{t}}  \aSubseqDependence  } }       } \nonumber  \\
	&\quad = \E\set{       \boundedFn \ilparenthesis{\hbtheta_{j_1}  \aSubseqDependence  ,\cdots , \hbtheta_{j_S}  \aSubseqDependence  , \boldsymbol{\upeta}_{j_1}  \aSubseqDependence , \cdots, \boldsymbol{\upeta}_{j_S}  \aSubseqDependence } \cdot  \E\bracket{    \given{     \NOISE  \aSubseqDependence  \parenthesis{t+\updelta}-\NOISE\aSubseqDependence\parenthesis{t}          }{\field_{{t}}  \aSubseqDependence  }  }          }\nonumber \\
	&\quad = \E\set{        \boundedFn  \ilparenthesis{\hbtheta_{j_1}  \aSubseqDependence  ,\cdots , \hbtheta_{j_S}  \aSubseqDependence  , \boldsymbol{\upeta}_{j_1}  \aSubseqDependence , \cdots, \boldsymbol{\upeta}_{j_S}  \aSubseqDependence } \cdot  \E\bracket{    \given{     \gain_n  \sum_{i = \TimeMapping{t}} ^ { \TimeMapping{t+\updelta} -1 } \noise_i  \aSubseqDependence      }{\field_{{t}}  \aSubseqDependence  }  }          }    \nonumber    \\
	&\quad=\E\set{\bracket{    \boundedFn  \ilparenthesis{\hbtheta_{j_1}  \aSubseqDependence  ,\cdots , \hbtheta_{j_S}  \aSubseqDependence  , \boldsymbol{\upeta}_{j_1}  \aSubseqDependence , \cdots, \boldsymbol{\upeta}_{j_S}  \aSubseqDependence }  }\cdot\zero}\nonumber \\
	&\quad=\zero.  
	\end{align} 
	
We then show that the r.h.s. of  (\ref{eq:WtMartingale2-2}) goes to    $\zero$ as the gain $\gain_n\to 0$.  By iterated conditioning, 
	\begin{align} 
	& \quad  \norm{     	\E \set{ \boundedFn    \ilparenthesis{\hbtheta_{j_1}  \aSubseqDependence  ,\cdots , \hbtheta_{j_S}  \aSubseqDependence  , \boldsymbol{\upeta}_{j_1}  \aSubseqDependence , \cdots, \boldsymbol{\upeta}_{j_S}  \aSubseqDependence }\bracket{\BIAS   \aSubseqDependence  \parenthesis{t+\updelta }-\BIAS  \aSubseqDependence  \parenthesis{t}}}      }\nonumber\\
	&    =      \norm{   	\E \parenthesis{    \E\set{\given{ \boundedFn  \ilparenthesis{\hbtheta_{j_1}  \aSubseqDependence  ,\cdots , \hbtheta_{j_S}  \aSubseqDependence  , \boldsymbol{\upeta}_{j_1}  \aSubseqDependence , \cdots, \boldsymbol{\upeta}_{j_S}  \aSubseqDependence } \bracket{\BIAS  \aSubseqDependence  \parenthesis{t+\updelta}-\BIAS  \aSubseqDependence  \parenthesis{t}}}{\field_{{t}}  \aSubseqDependence  } }       }        }     \nonumber     \\
	& =\norm{      \E\set{       \boundedFn   \ilparenthesis{\hbtheta_{j_1}  \aSubseqDependence  ,\cdots , \hbtheta_{j_S}  \aSubseqDependence  , \boldsymbol{\upeta}_{j_1}  \aSubseqDependence , \cdots, \boldsymbol{\upeta}_{j_S}  \aSubseqDependence } \cdot 
			 \E\bracket{    \given{     \BIAS  \aSubseqDependence  \parenthesis{t+\updelta}-\BIAS  \aSubseqDependence  \parenthesis{t}          }{\field_{{t}}  \aSubseqDependence  }  }          }    }               \nonumber    \\
	& =\norm{        \E\set{  \boundedFn \ilparenthesis{\hbtheta_{j_1}  \aSubseqDependence  ,\cdots , \hbtheta_{j_S}  \aSubseqDependence  , \boldsymbol{\upeta}_{j_1}  \aSubseqDependence , \cdots, \boldsymbol{\upeta}_{j_S}  \aSubseqDependence }  \cdot  \E\bracket{\given{   \gain_n  \sum_{i=\TimeMapping{t}}^{\TimeMapping{t+\updelta}-1}  \bias_i   \aSubseqDependence   }{\field _{{t}}  \aSubseqDependence  }} }          }\nonumber \\
	& \le \norm{  \E \bracket{   \boundedFn   \ilparenthesis{\hbtheta_{j_1}  \aSubseqDependence  ,\cdots , \hbtheta_{j_S}  \aSubseqDependence  , \boldsymbol{\upeta}_{j_1}  \aSubseqDependence , \cdots, \boldsymbol{\upeta}_{j_S}  \aSubseqDependence }    }    } \cdot \norm{   \E       \set{    \E\bracket{\given{   \gain_n  \sum_{i=\TimeMapping{t}}^{\TimeMapping{t+\updelta}-1}  \bias_i   \aSubseqDependence   }{\field _{{t}}  \aSubseqDependence  }}      }   },  
	\end{align} 
	where the second term $\norm{   \E       \ilset{    \E\ilbracket{\given{   \gain_n  \sum_{i=\TimeMapping{t}}^{\TimeMapping{t+\updelta}-1}  \bb_i   \aSubseqDependence   }{\field _{{t}}  \aSubseqDependence  }}      }   }$    goes to $\zero$ as $n\to\infty$ by  B.\ref{assume:Bias}.

	Let us finally   show   that  the r.h.s. of  (\ref{eq:WtMartingale2-3}) goes to  $\zero$ as the gain $\gain_n \to 0$.    Observe from our construction (\ref{eq:gInterpolation}) and by B.\ref{assume:gSequenceRegularity}, $\brho\aDependence\parenthesis{t}\to \zero$ uniformly in $t$ as $a\to 0$. Consequently, $\norm{\brho  \aDependence  \parenthesis{t+\uptau} - \brho  \aDependence  \parenthesis{t}} \to 0$ uniformly in $t$ for any time interval $\updelta$ as $a\to 0$. Specifically,	
	\begin{align} 
	&   \norm{\brho  \aDependence  \parenthesis{t+\updelta} - \brho  \aDependence  \parenthesis{t}} \nonumber \\
	&\quad= \norm{a  \sum_{i= m\parenthesis{t}}^{m\parenthesis{t+\updelta} - 1} \bg_i\ilparenthesis{\hbtheta_i} -   \int_ {t} ^{t+\updelta} \bg\ilparenthesis{s,\bZ  \aDependence  \parenthesis{s}}\diff s} \nonumber\\
	&\quad=\norm{ a  \sum_{i= m\parenthesis{t}}^{m\parenthesis{t+\updelta} - 1} \bg_i\ilparenthesis{\hbtheta_i}  - \frac{a}{2}   \sum_{i= m\parenthesis{t}}^{m\parenthesis{t+\updelta} - 1} \bracket{   \bg_i \ilparenthesis{\hbtheta_i }  + \bg_ {i+1}  \ilparenthesis{\hbtheta_i } }}\nonumber\\
	&\quad= \norm{\frac{a}{2} \sum_{i= m\parenthesis{t}}^{m\parenthesis{t+\updelta} - 1} \bracket{  \bg_{i} \ilparenthesis{\hbtheta_i} - \bg _{i+1} \ilparenthesis{\hbtheta_i }   }}\nonumber\\
	&\quad\le  \frac{a}{2} \cdot \frac{\updelta}{a} \sup_{  m\parenthesis{t}\le i \le m\parenthesis{t+\updelta} - 1}\norm{  \bg_i \ilparenthesis{\hbtheta_i} -   \bg_{i+1}\ilparenthesis{\hbtheta_i}   } \nonumber\\  
	&\quad  =  \frac{\updelta}{2} \sup_{  m\parenthesis{t}\le i \le m\parenthesis{t+\updelta} - 1}\norm{  \bg_i \ilparenthesis{\hbtheta_i} -   \bg_{i+1}\ilparenthesis{\hbtheta_i}   }.  
	\end{align} 
Then	by iterated conditioning and replacing $\gain$ by $\gain_n$, we have:
		 \begin{align} 
	& \norm{     	\E \set{ \boundedFn\ilparenthesis{\hbtheta_{j_1}  \aSubseqDependence  ,\cdots , \hbtheta_{j_S}  \aSubseqDependence  , \boldsymbol{\upeta}_{j_1}  \aSubseqDependence , \cdots, \boldsymbol{\upeta}_{j_S}  \aSubseqDependence } \bracket{\brho  \aSubseqDependence  \parenthesis{t+\updelta}-\brho  \aSubseqDependence  \parenthesis{t}}}      }\nonumber \\
	&\,\,=      \norm{   	\E \parenthesis{    \E\set{\given{ \boundedFn \ilparenthesis{\hbtheta_{j_1}  \aSubseqDependence  ,\cdots , \hbtheta_{j_S}  \aSubseqDependence  , \boldsymbol{\upeta}_{j_1}  \aSubseqDependence , \cdots, \boldsymbol{\upeta}_{j_S}  \aSubseqDependence } \bracket{\brho  \aSubseqDependence  \parenthesis{t+\updelta}-\brho  \aSubseqDependence  \parenthesis{t}}}{\field_{{t}}  \aSubseqDependence  } }       }        }       \nonumber   \\
	&\,\,=\norm{      \E\set{       \boundedFn \ilparenthesis{\hbtheta_{j_1}  \aSubseqDependence  ,\cdots , \hbtheta_{j_S}  \aSubseqDependence  , \boldsymbol{\upeta}_{j_1}  \aSubseqDependence , \cdots, \boldsymbol{\upeta}_{j_S}  \aSubseqDependence } \cdot  \E\bracket{    \given{     \brho  \aSubseqDependence  \parenthesis{t+\updelta}-\brho  \aSubseqDependence  \parenthesis{t}          }{\field_{{t}}  \aSubseqDependence  }  }          }    }           \nonumber        \\
	&\,\,\le \norm{  \E \bracket{   \boundedFn \ilparenthesis{\hbtheta_{j_1}  \aSubseqDependence  ,\cdots , \hbtheta_{j_S}  \aSubseqDependence  , \boldsymbol{\upeta}_{j_1}  \aSubseqDependence , \cdots, \boldsymbol{\upeta}_{j_S}  \aSubseqDependence }    }    } \nonumber\\
	&\quad\quad  \cdot \norm{   \E       \set{    \E\bracket{\given{     \brho  \aSubseqDependence  \parenthesis{t+\updelta}-\brho  \aSubseqDependence  \parenthesis{t}   }{\field _{{t}}  \aSubseqDependence  }}      }   } \nonumber\\
	&\,\,  \to \zero,
	\end{align} 
	where the last line goes to zero because  of  B.\ref{assume:samplingfrequency},  B. B.\ref{assume:gSequenceRegularity}, and   above observation.

	To characterize the limit process  of sequence, we also define the following in parallel to (\ref{eq:Wt}):   
	\begin{equation}\label{eq:Wt1}
	\bW\parenthesis{t} = \bZ\parenthesis{t} - \hbtheta_0+ \bG\parenthesis{t} - \PROJECTION\parenthesis{t}
	\end{equation} where $\bZ\parenthesis{\cdot}$ and $\PROJECTION\parenthesis{\cdot}$ are defined in (\ref{eq:WeakConvLimit}), and $\bG(t)$ is defined by replacing $\bg\ilparenthesis{\cdot,\cdot}$ in    (\ref{eq:Ga(t)})    with  $\obg\ilparenthesis{\cdot,\cdot}$. It then follows that  $\bW\parenthesis{t}$ is a function of $ \set{\bZ\parenthesis{s},\PROJECTION\parenthesis{s},s\le t}$. 	The validity of  (\ref{eq:WtMartingale2-1})--(\ref{eq:WtMartingale2-3}) gives rise to (\ref{eq:WtMartingale1}) given the decomposition in (\ref{eq:Wt}).

	Note that we have shown  the validity of  (\ref{eq:WtMartingale1}) and that $\bW\aSubseqDependence\ilparenthesis{t,\upomega}$ is uniformly integrable. By Theorem~\ref{thm:Skorohod}, there exists a probability space $\ilparenthesis{\widetilde{\Omega}, \widetilde{\field},\widetilde{\Prob}}$ with processes $ \ilparenthesis{  \widetilde{\bZ} \aSubseqDependence\ilparenthesis{\cdot}, \widetilde{\PROJECTION}\aSubseqDependence\ilparenthesis{\cdot}, \widetilde{\bW}\aSubseqDependence\ilparenthesis{\cdot} } $ and $ \ilparenthesis{\widetilde{\bZ}\ilparenthesis{\cdot},\widetilde{\PROJECTION}\ilparenthesis{\cdot},\widetilde{\bW}\ilparenthesis{\cdot}} $, which have the same distribution as the processes $ \ilparenthesis{\bZ\aSubseqDependence\ilparenthesis{\cdot}, \PROJECTION\aSubseqDependence\ilparenthesis{\cdot},\bW\aSubseqDependence\ilparenthesis{\cdot}} $ and $ \ilparenthesis{\bZ\ilparenthesis{\cdot},\PROJECTION\ilparenthesis{\cdot},\bW\ilparenthesis{\cdot}} $ on $ \ilparenthesis{\Omega,\field,\Prob} $. As the way $\hbtheta_k$ and $\boldsymbol{\upeta}_k$  appear in (\ref{eq:Zbar0}) and (\ref{eq:Gamma0}),  $ \tilde{\hbtheta}_k $ and $\widetilde{\boldsymbol{\upeta}}_k$ can be similarly defined on the probability space $\ilparenthesis{\widetilde{\Omega}, \widetilde{\field},\widetilde{\Prob}}$.
	 Furthermore,  $ \ilparenthesis{  \widetilde{\bZ} \aSubseqDependence\ilparenthesis{\cdot}, \widetilde{\PROJECTION}\aSubseqDependence\ilparenthesis{\cdot}, \widetilde{\bW}\aSubseqDependence\ilparenthesis{\cdot} } $ converges to $ \ilparenthesis{\widetilde{\bZ}\ilparenthesis{\cdot},\widetilde{\PROJECTION}\ilparenthesis{\cdot},\widetilde{\bW}\ilparenthesis{\cdot}} $ w.p.1 under the (Skorohod) topology within $D\ilparenthesis{\left[0,\infty\right)\mapsto\real^{3p}}$. Recall the statement ``the convergence of a sequence of functions in $D\ilparenthesis{\left[0,\infty\right)\mapsto\real^p}$ to a continuous function in $C\ilparenthesis{\left[0,\infty\right)\mapsto\real^p}$ in the Skorohod topology is equivalent to  {convergence uniformly on each bounded time interval} in $C\ilparenthesis{\left[0,\infty\right)\mapsto\real^p}$'' from Section  \ref{subsect:WeakConvergence}. Therefore, we know that \begin{eqnarray}\label{eq:tildeWtMartingale1}
\remove{&&	\E \set{ \boundedFn\ilparenthesis{\widetilde{\hbtheta}_{j_1}  \aSubseqDependence  ,\cdots,\widetilde{\hbtheta}_{j_S}  \aSubseqDependence , \widetilde{\boldsymbol{\upeta}}_{j_1}  \aSubseqDependence  , \cdots,   \widetilde{\boldsymbol{\upeta}}_{j_S}  \aSubseqDependence  } \bracket{\widetilde{\bW } \aSubseqDependence  \parenthesis{t+\updelta}-\widetilde{\bW}  \aSubseqDependence  \parenthesis{t}}}=\zero, \nonumber\\
&\implies &} \E \set{ \boundedFn\ilparenthesis{\widetilde{\hbtheta}_{j_1}  \aSubseqDependence  ,\cdots,\widetilde{\hbtheta}_{j_S}  \aSubseqDependence , \widetilde{\boldsymbol{\upeta}}_{j_1}  \aSubseqDependence  , \cdots,   \widetilde{\boldsymbol{\upeta}}_{j_S}  \aSubseqDependence  } \bracket{\widetilde{\bW }    \parenthesis{t+\updelta}-\widetilde{\bW}     \parenthesis{t}}}=\zero,
	\end{eqnarray}
	where 	  $  {j_s} \in    \ilset{  0, 1,\cdots,  \TimeMapping{t} } $  for all $1\le s \le S$, and $ \boundedFn\parenthesis{\cdot}$ is any bounded and continuous real-valued function. 
	Since the processes are within the expectation operator $\E$ in (\ref{eq:tildeWtMartingale1}), the underlying probability space is no longer relevant. 
Now based upon  the weak convergence (\ref{eq:WeakConvLimit}) derived from Lemma \ref{lem:tight} and the Skorohod embedding argument,  we claim that $ \ilparenthesis{\bZ\aSubseqDependence\ilparenthesis{\cdot,\upomega},   \PROJECTION\aSubseqDependence\ilparenthesis{\cdot,\upomega}, \bW\aSubseqDependence\ilparenthesis{\cdot,\upomega} } $ converges weakly to  $ \ilparenthesis{\bZ\ilparenthesis{\cdot,\upomega},\PROJECTION\ilparenthesis{\cdot,\upomega},\bW\ilparenthesis{\cdot,\upomega}} $ as $n\to\infty$
uniformly on each interval $\bracket{0,T}$. 
Together with  the uniform integrability of every element in the sequence  $\ilset{\bW\aSubseqDependence}$, we have 
		\begin{equation}\label{eq:WtMartingale3}
	\E \set{ \boundedFn\ilparenthesis{\hbtheta_{j_1}  \aSubseqDependence  ,\cdots , \hbtheta_{j_S}  \aSubseqDependence  , \boldsymbol{\upeta}_{j_1}  \aSubseqDependence , \cdots, \boldsymbol{\upeta}_{j_S}  \aSubseqDependence } \bracket{\bW\parenthesis{t+\updelta}-\bW\parenthesis{t}}}=\zero 
	\end{equation} where $ \boundedFn\parenthesis{\cdot}$ is any bounded and continuous real-valued function of its arguments and 	  $  {j_s} \in    \ilset{  0, 1,\cdots,  \TimeMapping{t} } $  for all $1\le s \le S$. By \cite[Thm. 7.4.1 on p. 234]{kushner2003stochastic},  (\ref{eq:WtMartingale3}) implies that  $\bW\parenthesis{\cdot}$ is  a martingale, 
	\begin{equation}
	\E\bracket{\given{\bW\ilparenthesis{t+\updelta}-\bW\parenthesis{t}}{\bZ\parenthesis{s},\PROJECTION\parenthesis{s},s\le t}} = \zero.
	\end{equation}
	
		Combined with the   weak convergence result  (\ref{eq:WeakConvLimit})   derived from  Lemma   \ref{lem:tight}, 	  Lemma \ref{lem:UnifEquiCont} shows  that both $\bZ\parenthesis{\cdot}$ and $\PROJECTION\parenthesis{\cdot}$ have Lipschitz continuous paths w.p.1. By     \cite[Thm. 4.1.1 on p. 98]{kushner2003stochastic}, (\ref{eq:Wt1}) implies      that $\bW\parenthesis{\cdot}$ is a constant w.p.1. Since $\bW\parenthesis{0}=\zero$, we have $ \bW\parenthesis{t} =\zero $ for all $t$. Ultimately,      for all $\upomega\notin \nullset$ where the null set  $\nullset$  is specified in Lemma \ref{lem:UnifEquiCont}, we have  	\begin{equation}
		\bZ\parenthesis{t,\upomega} = \bZ\parenthesis{0,\upomega}  -\int_0^t \obg\parenthesis{s,\bZ\parenthesis{s,\upomega}}\diff s + \PROJECTION\parenthesis{t,\upomega}
		\end{equation}   
		Note that $\PROJECTION\parenthesis{0,\upomega}=\zero$ and $\bZ\ilparenthesis{t,\upomega} \in \bTheta$ for all $t$. Namely, the process $\PROJECTION\ilparenthesis{\cdot,\upomega}$ is constructed to balance  the dynamics $\obg\ilparenthesis{\cdot,\bZ\parenthesis{\cdot,\upomega}}$ at each time $t$, so that  $\bZ\ilparenthesis{\cdot,\upomega}$ is within $\bTheta$ for all time. Specifically, $\boldsymbol{\upeta}_k  = \zero$ if $\hbtheta_{k+1} \in \bTheta^0$ and  $\boldsymbol{\upeta}_k   \in -  \ConvCone\ilparenthesis{\hbtheta_{k+1}}$ if    otherwise.  Therefore, for $s>0$, $ \norm{ \PROJECTION\parenthesis{t+s,\upomega} - \PROJECTION\parenthesis{t,\upomega} } \le \int_t^{t+s} \norm{\obg\parenthesis{u,\bZ\parenthesis{u,\upomega}}} \diff u $, and therefore $\PROJECTION\parenthesis{\cdot, \upomega}$ is Lipschitz-continuous for all $\upomega\notin \nullset $. By \cite[Thm. 4.3.1 on p. 109]{kushner2003stochastic}, we may write $\PROJECTION\parenthesis{t }= \int_0^t \projectionlower\parenthesis{s }\diff s ${ where $\projectionlower\parenthesis{t }\in- \ConvCone\parenthesis{\bZ\parenthesis{t}}$ for almost all $t$}.  
\end{proof}

In summary, Theorem~\ref{thm:ODE}    deals with the limit of the sequence of the measures induced by  the processes $\bZ\aDependence\ilparenthesis{\cdot}$ on the appropriate path space, and  the limit measure corresponds to  a process on the path space supported on some set of limit trajectories of the ODE (\ref{eq:LimitingODE}).  Moreover, when 
B.\ref{assume:gSequenceRegularity}   
 holds, the solution to ODE (\ref{eq:LimitingODE}) is well-defined on the entire real line due to Corollary~ \ref{corr:ExtensionThem}.

Theorem~\ref{thm:ODE} informs us that   the process $\ilset{\hbtheta_k}$ is shown to spend nearly all of its time arbitrarily close to the the limit set $ \compactset_{\bTheta} \equiv  \lim_{t\to\infty}  \cup _{\hbtheta_0\in\bTheta} \set{\btheta\parenthesis{s},s\ge t: \btheta\parenthesis{0} = \hbtheta_0} $,  where $\btheta\ilparenthesis{\cdot}$ is the solution to a time-dependent ODE (\ref{eq:LimitingODE}). Unfortunately, since the driving term $\obg\ilparenthesis{t,\btheta}$ depends on both $t$ and $\btheta$,   we do not have much information regarding the  limit set $ \compactset_{\bTheta}   $.

Under the special case that $\bg_k\ilparenthesis{\cdot}$ varies with time yet $\bvartheta_k=\bvartheta$ for all $k$ as in  \cite{wang2014rate}, the   limit set is a singleton invariant set   given that the trajectories are bounded within $\bTheta$ \cite{gucken1983nonlinear}. 
Under this special setting, we have the following corollary of ``$\hbtheta_k$ will spend nearly all of its time in a small neighborhood of  $ \compactset_{\bTheta}$ with an arbitrarily high probability,''  which immediately follows from Theorem~\ref{thm:ODE}.

\begin{corr}\label{corr:ODE}
	Assume B.\ref{assume:UniformIntegrability}, B.\ref{assume:Bias}, B.\ref{assume:samplingfrequency},  B.\ref{assume:gSequenceRegularity}. Further, suppose that the   limit set of the  time-varying ODE (\ref{eq:LimitingODE}) is a unique point $\bvartheta$  and is  asymptotically stable   in the sense of Liapunov (discussed in Subsection \ref{subsect:stability}).  Then 
		for any $\upvarepsilon>0$, the fraction of time that $\bZ^a\parenthesis{\cdot}$ will stay within   the $\upvarepsilon$-neighborhood of the limit set $   \ilset{\bvartheta}  $, on $ \bracket{0,T} $ grows  to one in probability  as $a\to 0$ and $T\to\infty$.   
		Specifically,  there exist $\upvarepsilon_k\to 0$, $T_k\to\infty$ such that $ \lim_{k\to\infty} \Prob   \set{  \sup_{t\le T_k}   \norm{\bZ\aDependence\ilparenthesis{t}-  \bvartheta} \ge \upvarepsilon_k     } = 0       $.   
		
			 \remove{Let there be a unique limit point $\btheta^*$ of the autonomous ODE, and it is asymptotically stable in the sense of Liapunov. Then for each $\updelta>0$, $T>0$, $ \lim_k \Prob\set{ \sup_{t\le T}  \norm{  \btheta^k \parenthesis{t} - \btheta } >\updelta } = 0 $. More generally, there are $\updelta_k\to 0$, $T_k\to\infty$ such that $ \lim_k \Prob\ilset{ \sup_{t\le T_k } \text{dist} \ilparenthesis{ \btheta^k\parenthesis{t}, L_{\bTheta} } \ge \updelta_k } = 0 $. }
			
\end{corr}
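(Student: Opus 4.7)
My plan is to combine Theorem~\ref{thm:ODE} with Liapunov asymptotic stability, reducing the claim to a deterministic property of the mean ODE and then transferring it back through the weak limit. The broad strokes are: (i) along any sequence $a_n\downarrow 0$, extract a pathwise convergent subsequence and apply Skorohod embedding to upgrade weak convergence into almost-sure uniform convergence on compact intervals; (ii) use asymptotic stability together with compactness of $\bTheta$ to obtain a ``settling time'' $T^\star\ilparenthesis{\upvarepsilon}$ that is \emph{uniform} over initial data in $\bTheta$; (iii) finish by a diagonal argument.

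For step (i), I would invoke Theorem~\ref{thm:ODE}: every weak limit of $\ilset{\bZ\aSubseqDependence\ilparenthesis{\cdot}}$ solves the mean ODE \ref{eq:LimitingODE} with $\bZ\ilparenthesis{0}=\hbtheta_0\in\bTheta$. Lemma~\ref{lem:UnifEquiCont} guarantees Lipschitz-continuous paths w.p.1, so Theorem~\ref{thm:Skorohod} lets me realize the sequence and its limit on a common probability space with $\widetilde{\bZ}\aSubseqDependence\ilparenthesis{\cdot}\to\widetilde{\bZ}\ilparenthesis{\cdot}$ uniformly on each bounded interval, w.p.1. For step (ii), the assumed uniqueness and Liapunov asymptotic stability of $\bvartheta$, combined with compactness of $\bTheta$, yield (via a standard continuous-dependence-plus-compactness argument on the flow restricted to $\bTheta$) a finite $T^\star\ilparenthesis{\upvarepsilon}$ such that every ODE trajectory started in $\bTheta$ satisfies $\norm{\btheta\ilparenthesis{t}-\bvartheta}<\upvarepsilon/2$ for all $t\ge T^\star\ilparenthesis{\upvarepsilon}$. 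Combining the two steps, for every $T\ge T^\star\ilparenthesis{\upvarepsilon}$ the event
\begin{equation*}
A_{a,\upvarepsilon,T}\equiv\ilset{\sup_{T^\star\ilparenthesis{\upvarepsilon}\le t\le T}\norm{\bZ\aDependence\ilparenthesis{t}-\bvartheta}<\upvarepsilon}
\end{equation*}
has probability tending to one as $a\to 0$. On $A_{a,\upvarepsilon,T}$ the Lebesgue measure of bad times in $\bracket{0,T}$ is at most $T^\star\ilparenthesis{\upvarepsilon}$, so the bad fraction is bounded by $T^\star\ilparenthesis{\upvarepsilon}/T$, which vanishes as $T\to\infty$. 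This delivers the first (fraction-of-time) assertion in the iterated limit $a\to 0$ followed by $T\to\infty$.

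Step (iii) is a straightforward diagonalization: pick $\upvarepsilon_k\downarrow 0$, then $T_k\uparrow\infty$ with $T_k\ge k\cdot T^\star\ilparenthesis{\upvarepsilon_k}$, and finally choose $a_k$ small enough that $\Prob\ilparenthesis{A_{a_k,\upvarepsilon_k,T_k}^\complement}\le 1/k$. The main obstacle, and the point I expect to be the most delicate, is that the $\sup_{t\le T_k}$ appearing in the ``specifically'' statement also covers the transient interval $\bracket{0,T^\star\ilparenthesis{\upvarepsilon_k}}$, during which $\bZ\aDependence\ilparenthesis{t}$ still reflects the generic initial condition $\hbtheta_0\neq\bvartheta$; the $\sup$ claim therefore cannot hold verbatim unless one either (a) absorbs that transient into the exception set or (b) applies the argument to the shifted interpolation $t\mapsto\bZ\aDependence\ilparenthesis{T^\star\ilparenthesis{\upvarepsilon_k}+t}$, as in \cite[Chap.~8]{kushner2003stochastic}. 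I would adopt option (b), which matches the ``concentration after a uniform settling time'' reading that the corollary is aiming for and avoids any artifact of the initialization $\hbtheta_0$.
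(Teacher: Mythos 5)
Your proposal is correct and follows essentially the same route as the paper, which presents the corollary as an immediate consequence of Theorem~\ref{thm:ODE} combined with the fact that, by asymptotic stability and compactness of $\bTheta$, the time for any ODE trajectory started in $\bTheta$ to reach and remain in a $\updelta$-neighborhood of $\bvartheta$ is bounded uniformly over initial conditions. The transient issue you flag is real, and your option (b) is exactly the device the paper uses (working with the left-shifted processes $\bZ\aDependence\ilparenthesis{\cdot-T}$ for arbitrary $T$), so your resolution matches the intended reading of the statement.
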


\remove{

\begin{proof}[Proof of Corollary~\ref{corr:ODE}] 
\remove{
 Suppose that there exists a set $\compactset$ that is asymptotically stable in the sense of Liapunov and that for small $\updelta>0$, $\limsup_k \Prob\ilset{\upomega:  \norm{\hbtheta_k\aDependence\ilparenthesis{\upomega}- \bvartheta} \le \updelta } >\upmu$ for some $\upmu>0$ and small $a$. Then, the problem of escaping from some larger neighborhood of $\compactset$ is one in the theory of large deviations.  It will be a rare event for small $a$.  }
 
 Let us  prove it by contradiction. Suppose otherwise  that   the statement in Corollary~\ref{corr:ODE} is false.  
 We can then  extract a suitable sequence for which   $\liminf_{k\to{\infty}}\Prob   \set{  \sup_{t\le T_k}  \mathrm{dist}\bracket{\bZ\aDependence\ilparenthesis{t}, \bvartheta} \ge \upvarepsilon_k     } $   is positive\remove{and use the weak-convergence result in Theorem~\ref{thm:ODE}  to get a contradiction}.  Let $T>0$, and \remove{(extracting a  subsequence if necessary) }work with the left shifted random processes $ \ilset{\bZ\aDependence\parenthesis{\cdot-T}, \PROJECTION\aDependence\parenthesis{\cdot-T} } $. For any $\updelta>0$, the time required for the solution of the ODE to reach and remain in a $\updelta$-neighborhood of $\ilset{\bvartheta}$   is bounded for all the  initial condition in $\bTheta$. Since the left-shift amount $T$ is arbitrary, this yields that the solution to the ODE on $\left[0,\infty\right)$ is in $N_{\updelta}\parenthesis{\compactset_{\bTheta}}$ for each $\updelta>0$. 
\end{proof}

}

Unfortunately, we cannot write out an explicit expression for the rate at which the probability $ \Prob   \set{  \sup_{t\le T_k}  \norm{\bZ\aDependence\ilparenthesis{t}- \bvartheta} \ge \upvarepsilon_k     } $ goes to zero. This is why we impose more  assumptions and develop   Section~\ref{sect:ProbBound}.

\section{Probabilistic Bound}\label{sect:ProbBound}

The previous section shows that a proper continuation of the  estimate $\ilset{\hbtheta\aDependence_k}$  converges weakly   to the trajectory of the   mean  ODE (\ref{eq:LimitingODE}) as the constant gain $\gain\to 0$. Under the special case that the limit set of the ODE is a singleton, we have the concentration result in Corollary~\ref{corr:ODE}.  A natural question that follows is the concentration rate. Unfortunately, we cannot determine the distribution of $\ilparenthesis{ \hbtheta_k-\bvartheta_{k}}/\sqrt{\gain} $
due to the unknown evolution of $\bvartheta_k$. 

Therefore, this section instead develops a computable upper bound of the probability that $\hbtheta_k$ generated from  constant-gain SGD algorithm  deviates from the trajectory of the IVP (to appear). The constant-gain SGD recursion   \cite[Chap. 5]{spall2005introduction} is simply replacing $\hbg_k\ilparenthesis{\cdot}$ in (\ref{eq:basicSA}) by $\hbg^{\mathrm{SG}}\ilparenthesis{\cdot}$ in (\ref{eq:Ystationary}) and replacing $\gain_k$ by $\gain$:
\begin{equation}\label{eq:ConstantGainSGD}
\hbtheta_{k+1} = \hbtheta_k - \gain \hbg_k^{\SG}\ilparenthesis{\hbtheta_k},
\end{equation}
where $\hbg_k^{\SG}\ilparenthesis{\cdot}$ is     an unbiased estimator for $\bg_k\ilparenthesis{\cdot}$.  
The main  theoretical result on  the finite-horizon behavior to appear  is quite similar to that in \cite{ljung1983theory}:
\begin{equation}
\forall T<\infty, \forall \upvarepsilon>0, \quad  \Prob\set{  \max_{0\le k \le T/\gain } \norm{\hbtheta_k-\bZ\parenthesis{t_k}}>\upvarepsilon  } \le C\parenthesis{\upvarepsilon,\gain, T},
\end{equation}
where for fixed $T<\infty$, $C\parenthesis{\upvarepsilon,a,T}$ tends to zero as $a$ tends to $0$,   $\bZ\parenthesis{t}$ denotes the solution to the  IVP (to be defined momentarily).  This result asserts that $\ilset{\hbtheta_k}_{k\ge  0 }$ is a  {perturbed} discrete-time approximation of the  {nonautonomous} ODE with discretization step $a$.  
The main tool in establishing the connection is the formula for 
variation of parameters reviewed in Subsection~\ref{subsect:Alekseev}. 
\begin{rem}
	If convenient stability assumptions (similar to Corollary~\ref{corr:ODE})  are satisfied by the IVP, there exists a corresponding statement for infinite $T$ (see  \cite{derevitskii19742} and \cite[Corr. 2 on p. 43]{benveniste2012adaptive} for further details).   For succinctness, we discuss the finite-time performance with $T<\infty$ only. 
\end{rem}

We should point out that the availability of a computable probabilistic bound  requires more stringent assumptions than those  imposed in the previous section. Specifically, this section presents a finite-time probabilistic bound on the accuracy of the estimate (for tracking a discrete-time varying target) coming from a constant-gain SGD algorithm (\ref{eq:ConstantGainSGD}).  \cite{zhu2016tracking} provides the tracking error bound, whereas  the probabilistic bound presented in \cite{zhu2018prob}   characterizes the   behavior of  the estimates during the process of tracking and can be used to characterize the uncertainty via confidence regions.

 \subsection{Basic  Setup}\label{set:SG}
We follow the problem setup (\ref{eq:Minimization}) in Chapter~\ref{chap:FiniteErrorBound}, i.e., our goal is to estimate the time-varying value(s) for $\btheta$ that minimize the instantaneous scalar-valued loss function $\loss_k\ilparenthesis{\cdot}$. Unlike Chapter \ref{chap:FiniteErrorBound} that considers (\ref{eq:basicSA}) in general, we consider the special case (\ref{eq:ConstantGainSGD}).

Consider the following IVP: 
\begin{equation}\label{eq:IVP}
\begin{dcases}
&\frac{\diff}{\diff t}\btheta\parenthesis{t} = - \bg\parenthesis{t,\btheta},\quad t\ge t_0 ,  \\
&\btheta\parenthesis{t_0 }=\hbtheta_0,
\end{dcases}
\end{equation}
and its \emph{perturbed} system 
\begin{equation}\label{eq:PerturbedIVP}
\begin{dcases}
&\frac{\diff}{\diff t}\bzeta\parenthesis{t} = - \bracket{\bg\parenthesis{t,\bzeta}+\bxi \parenthesis{t,\bzeta}},\quad t\ge t_0,\\
&\bzeta\parenthesis{t_0}=\hbtheta_0,
\end{dcases}
\end{equation}  where both $\bg\ilparenthesis{\cdot,\cdot}$ and $\bxi \ilparenthesis{\cdot,\cdot} $ are maps from $\real\times\real^p$ to $\real^p$.

Consider    $\bg\ilparenthesis{\cdot,\cdot}$ defined in  (\ref{eq:gInterpolation}). This    $\bg\parenthesis{t,\btheta}$ is measurable   in $t$ and  continuously differentiable in $\btheta$ with bounded derivatives uniformly w.r.t. $t$.  
The $\bxi\ilparenthesis{\cdot,\cdot}$ function can be similarly defined by substituting $\bxi_k\ilparenthesis{\cdot}$ for  $\bg_k\ilparenthesis{\cdot}$    in (\ref{eq:gInterpolation}). Such  
$\bxi\parenthesis{\cdot, \cdot}$ is  measurable in $t$ and Lipschitz in $\btheta$ uniformly  w.r.t. $t$. Now that     $\bxi\parenthesis{t,\bzeta}$ is  the linear interpolation of the measurement noise $\bxi_k \parenthesis{\upomega}$ at sample point $\upomega$  (the dependence on $\upomega$ is suppressed), we will analyze the behavior of $\bzeta\parenthesis{t}$ at each  sample point, i.e., with a fixed $\upomega$, the system (\ref{eq:PerturbedIVP}) is effectively deterministic at a given sample point $\upomega$.   

\subsection{Model Assumptions}

\begin{assumeB}\label{assume:Noise} The   sequence $\set{\bxi _ k}_{k=0}^{K-1}$ is mutually \emph{independent}, not necessarily identically distributed random vectors with mean $\zero$ and bounded magnitude $ \norm{\bxi_k }\le \noiseBound  $ for all $k$ almost surely. The value of the error does not depend on the evaluation point $\btheta$.  
\end{assumeB}

\begin{assumeB}\label{assume:TwiceDiff}  The function $\bg _k$ is continuously differentiable. Furthermore, $\convexPara$ is the smallest positive real such that $\bg_k\ilparenthesis{\cdot}$ satisfies  $   \ilparenthesis{\btheta - \bzeta }^\transpose\ilparenthesis{\bg_k\ilparenthesis{\btheta}-\bg_k\ilparenthesis{\bzeta}} \ge \convexPara\norm{\btheta - \bzeta }^2    $ for all $\btheta,\bzeta\in\real^p$ and    all $k$. 
\end{assumeB}
 {
	Denote       $\bH_k\parenthesis{\btheta}=\partial\bg_k\parenthesis{\btheta}/\partial\btheta^\transpose$.  The following statements regarding   $\bg_k$, $\bH_k$, and $\btheta_k^*$ should be interpreted in the  a.s.   sense  if   randomness is involved. }

\begin{assumeB}\label{assume:Drift}  The magnitude of the (discrete-time) varying gradient function  is strictly bounded:   $\norm{\bg_k\parenthesis{\btheta}}\le \gradientBound $  for all $k$ and $\btheta$.     
\end{assumeB}

Here are some implications of the assumptions.

  \begin{itemize}
  	
  	\item  Under B.\ref{assume:Noise}, the noise term does not depend on $\btheta$ at all. We may use $\bxi \ilparenthesis{t}$ and $\bxi\ilparenthesis{t,\btheta}$ interchangeably for the rest of our discussion.

  	Note that   the function $\bxi \parenthesis{t}$ is, in fact,  random, since it depends on the specific sample point  $\upomega\in\Omega$ of the stochastic process $ \set{\bxi_k \parenthesis{\upomega}}_{k\ge 0} $. Namely, only one trajectory is under consideration for deterministic $\bxi\parenthesis{t}$, whereas the \emph{average} performance of a collection of all possible realizations of trajectories of $\set{\bxi_k \parenthesis{\upomega}}$ has to be taken into account. For succinctness, we suppress the dependence of $\bxi\parenthesis{t}$ on $\upomega$. 
  	
  	\item 
One direct consequence of  B.\ref{assume:TwiceDiff} and  B.\ref{assume:Drift}   is $\norm{\btheta_{k+1}^*-\btheta_k^*} \le 2 \convexPara^{-1} \gradientBound$ from (\ref{eq:ineq2}) and (\ref{eq:ineq1}),  i.e.,   the change of the optimal parameter between every two consecutive discrete time instances is strictly bounded. This resembles assumption  A.\ref{assume:BoundedVariation}.  

\item 
Under B.\ref{assume:gSequenceRegularity}, 
the IVP (\ref{eq:IVP}) has a unique solution over $\ilbracket{0,T}$ for any finite $T$. To see this, notice that $\bg\ilparenthesis{t,\btheta}$ shares a common Lipschitz constant  $\LipsPara$
w.r.t. $\btheta$ for every $t$. Therefore, the existence, uniqueness, and extensibility (to $t\in\real$) follow immediately from Corollary~\ref{corr:ExtensionThem} \remove{
	\cite[Thm. 2.17]{teschl2012ordinary}}.  
Furthermore, the Lagrange stability of the solution to (\ref{eq:IVP}) follows from the Gronwall-Bellman inequality 
\cite[Lemma 1 on p. 35]{bellman1953stability}.

\item   Under B.\ref{assume:gSequenceRegularity} and B.\ref{assume:Noise}, the IVP (\ref{eq:PerturbedIVP}) admits a unique solution. To see this, notice that the driving term, $-\bg\ilparenthesis{t,\btheta}-\bxi\ilparenthesis{t,\btheta}$ is piecewise continuous in $t$, and is  Lipschitz continuous  w.r.t. $\btheta$, the global existence and uniqueness follow directly from \cite[Thm. 3.2 on p. 93]{khalil2002noninear}.

\end{itemize}

\subsection{Main Results}

Let us mention one  caveat  before we present the main results.  It is desirable to increase $\gain $ for maintaining tracking momentum, whereas it is necessary to decrease $\gain$ for better tracking accuracy when $\bvartheta_k$ is fixed at one value. Nonetheless, the gain selection is not the central topic here; it was touched on in the previous chapter and will be further discussed in Chapter~\ref{chap:AdaptiveGain}. We assume that the  pre-determined  gain $a$ enables  the SGD algorithm (\ref{eq:ConstantGainSGD})  with a constant gain  $\gain$  to keep track  of the target $\btheta_k^*$. Without a carefully-tuned  constant gain $\gain$, once the estimate $\hbtheta_k$ deviates significantly from  the target $\btheta_k^*$, it is likely to lose it ever after. The following discussion is based upon the availability of a tuned gain $a>0$.

Let us first discuss a lemma to handle the noise term later on.

\begin{lem}\label{lem:Noise}Assume B.\ref{assume:Noise}.
	For an arbitrarily fixed  $\updelta>0$ and finite time-horizon $T>0$, 
	\begin{equation}
	 \Prob\set{\upomega:\norm{\int_0^T\bxi\parenthesis{s}\diff s}>\updelta}=\parenthesis{p+1} \exp\parenthesis{-\frac{\updelta^2}{ a\noiseBound \parenthesis{T\noiseBound/2+\updelta/3} }},
	 \end{equation}
	where   the r.h.s. approaches $0$ exponentially as $\gain \to 0$, and   the sample point $\upomega\in\Omega$ determines\footnote{Once a $\upomega$ is picked, the entire sequence $\bxi_k $ is determined.} the entire measurement noise sequence  $ \set{\bxi_k \parenthesis{\upomega }} _{k\ge 0}$.
\end{lem}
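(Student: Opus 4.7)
}
The plan is to reduce the stochastic integral to a weighted sum of independent mean-zero bounded random vectors and then apply a Bernstein-type concentration inequality. First, I would exploit the piecewise-linear interpolation structure: on each subinterval $[ka,(k+1)a]$ within $[0,T]$, the interpolant satisfies $\bxi(s)=\bxi_k+\frac{s-ka}{a}(\bxi_{k+1}-\bxi_k)$, so a direct computation gives $\int_{ka}^{(k+1)a}\bxi(s)\,\diff s=\frac{a}{2}(\bxi_k+\bxi_{k+1})$. Writing $T=Ka$ (w.l.o.g.) and summing,
\begin{equation*}
\int_0^T \bxi(s)\,\diff s \;=\; a\sum_{k=0}^{K} c_k\, \bxi_k, \qquad c_0=c_K=\tfrac{1}{2},\;\; c_k=1 \text{ for } 1\le k\le K-1.
\end{equation*}
This expresses the integral as a linear combination of independent mean-zero vectors, with each summand $a c_k \bxi_k$ bounded in Euclidean norm by $a\noiseBound$ (since $c_k\le 1$) by B.\ref{assume:Noise}.

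Next, I would apply the vector Bernstein inequality obtained from the standard symmetric-dilation trick: each $\bxi_k\in\real^p$ is embedded as the $(p+1)\times(p+1)$ self-adjoint matrix $\widetilde{\bxi}_k=\bigl(\begin{smallmatrix} 0 & \bxi_k^\transpose \\ \bxi_k & \zero \end{smallmatrix}\bigr)$, whose spectral norm equals $\norm{\bxi_k}$ and whose square has norm $\norm{\bxi_k}^2$. The aggregate variance parameter then satisfies
\begin{equation*}
\sigma^2 \equiv \Bigl\Vert \sum_{k=0}^{K} a^2 c_k^2\, \E\bigl(\widetilde{\bxi}_k^{\,2}\bigr)\Bigr\Vert \;\le\; a^2 \sum_{k=0}^{K} c_k^2\, \noiseBound^2 \;\le\; \frac{Ta\noiseBound^2}{2},
\end{equation*}
where the last inequality uses $\sum c_k^2\le K/2$ (for the two endpoint weights of $1/4$ and $K-1$ interior weights of $1$ the sum is $(K-1)+1/2\le K/2$ up to constants absorbed below). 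Matrix Bernstein \cite[Thm.~6.1.1]{tropp2015intro}\textemdash or equivalently a componentwise application of scalar Bernstein followed by a union bound with a dimension-dependent prefactor\textemdash then yields exactly the claimed tail bound with the dimension factor $(p+1)$ and exponent proportional to $\updelta^2/[a\noiseBound(T\noiseBound/2+\updelta/3)]$.

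Finally, to conclude the second assertion, note that for any fixed $\updelta>0$ and $T>0$, the exponent is of order $-\updelta^2/a$ (the $\updelta/3$ contribution in the denominator is dominated as $\gain\to 0$), so the bound decays exponentially fast in $1/\gain$. The main technical obstacle is verifying the cleanest form of the variance constant to recover the factor $T\noiseBound/2$ rather than a larger multiple of $T\noiseBound$; this requires being careful that interior coefficients $c_k=1$ dominate the endpoint contributions and that the self-adjoint dilation does not inflate the variance parameter beyond $\sum a^2 c_k^2\noiseBound^2$. A minor issue to address is that the stated bound should read ``$\le$'' rather than ``$=$,'' which I take to be a typographical slip.
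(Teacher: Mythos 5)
Your proposal follows essentially the same route as the paper's proof: reduce the integral of the piecewise-linear interpolant to the trapezoidal sum $\frac{a}{2}\ilparenthesis{\bxi_0 + 2\sum_k \bxi_k + \bxi_{K-1}}$ of independent, mean-zero, bounded vectors, then invoke the matrix Bernstein inequality of Tropp (the paper cites \cite[Thm. 1.6.2]{tropp2015introduction} directly, which is exactly the self-adjoint dilation argument you sketch) to obtain the $(p+1)$ dimension factor and the stated exponent; your observation that the ``$=$'' should be ``$\le$'' is also consistent with the inequality appearing in the paper's own derivation. The only blemish is the arithmetic $\sum_k c_k^2 = (K-1)+\tfrac12 \le K/2$, which is false, but using the correct bound $\sum_k c_k^2 \le K$ only changes the variance parameter by a constant factor and does not affect the structure or the exponential decay in $1/\gain$.
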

\begin{proof}[Proof of Lemma~\ref{lem:Noise}]
	Without loss of generality, assume that $ \TimeMapping{T} $ defined in Subsection~\ref{subsect:Interpolation} equals  $T/a \equiv K$. Denote    the  variance  statistics of the sum as:
	\begin{align}
&	\upnu_K    \equiv\max\left\{  \norm{   \Var\parenthesis{\bxi_0 } + 2\sum_{k=1}^{K-2}\Var\parenthesis{\bxi_k } + \Var\parenthesis{\bxi_ {K-1}  }   },  \right.   \nonumber\\
&\quad\quad\quad\quad\quad \left.  \E\bracket{\norm{\bxi _0}^2  + 2 \sum_{k=1}^{K-2}  \norm{\bxi_k }^2 + \norm{\bxi_ {K-1}}^2 }  \right\}  
	\end{align}  
	Under B.\ref{assume:Noise},  
	$\norm{\Var\parenthesis{\bxi_k}}   \le \tr\bracket{   \E \parenthesis{ \bxi_k \bxi_k^\transpose }  }  \le \noiseBound ^2 $, and $\E\norm{\bxi_k}^2 \le \noiseBound^2$ for all $0\le k \le K-1$. Therefore, $\upnu_K\le 2K\noiseBound^2 $.  By  \cite[Thm. 1.6.2 on p. 13]{tropp2015introduction}, we have
	\begin{align}
	\label{eq:eErrorBound}
	&	 \Prob\set{\upomega:\norm{\int_0^T\bxi \parenthesis{s}\diff s}>\updelta} \nonumber\\
	&\quad   = \Prob\set{\upomega:\frac{a}{2}\norm{\bxi_0\parenthesis{\upomega}+ 2\sum_{k=1}^{K-2} \bxi_k\parenthesis{\upomega} + \bxi_{ K-1}\parenthesis{\upomega} }>\updelta} \nonumber\\ 
	&\quad \le \parenthesis{p+1} \exp\parenthesis{-\frac{\updelta^2}{ a\noiseBound \parenthesis{T\noiseBound/2+\updelta/3} }}.
	\end{align}
	The r.h.s. of (\ref{eq:eErrorBound})   approaches $0$ exponentially as $a$ approaches $ 0$  for  fixed  $\updelta$.  
\end{proof}

 Now we present the main theorem in computing the probabilistic bound. 

\begin{thm} \label{thm:ProbBound} Assume  B.\ref{assume:gSequenceRegularity},  B.\ref{assume:Noise}, B.\ref{assume:TwiceDiff}, and  B.\ref{assume:Drift}. 
	For an arbitrarily fixed    finite   $T>0$, and for any   threshold $\upvarepsilon>0$, \begin{align}
	&	\Prob\set{ \max_{0\le k\le T/\gain } \norm{\hbtheta_k-\btheta \parenthesis{t_k }}>\upvarepsilon }\nonumber\\   
	&\quad \le \parenthesis{p+1} \exp\bracket{   -\frac{  \parenthesis{\upvarepsilon  e^{\convexPara} /2 }^2   }{a \noiseBound \parenthesis{ T\noiseBound/2 + \upvarepsilon e^{\convexPara} / 6   } }   }, 
	\end{align}
	where  the r.h.s. approaches $0$ exponentially as $\gain \to 0$, and   $\btheta\ilparenthesis{t}$ is the solution to (\ref{eq:IVP}). 
\end{thm}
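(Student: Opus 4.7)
The plan is to embed the discrete SGD iterates into a continuous-time perturbed trajectory, invoke Alekseev's variation-of-parameters formula (\ref{eq:Alekseev}) to express the deviation from the unperturbed trajectory $\btheta\ilparenthesis{\cdot}$ of (\ref{eq:IVP}) as an integral against the noise, and then use strong convexity B.\ref{assume:TwiceDiff} together with the Bernstein-type bound of Lemma~\ref{lem:Noise} to obtain an exponential tail estimate. The target is a pathwise deterministic inequality of the form $\max_{0\le k\le T/\gain}\norm{\hbtheta_k-\btheta\ilparenthesis{t_k}}\le C\,\norm{\int_0^T\bxi\ilparenthesis{s}\,\diff s}$ for a constant $C\le 2e^{-\convexPara}$, so that a single application of Lemma~\ref{lem:Noise} with threshold $\updelta=\upvarepsilon\,e^{\convexPara}/2$ will deliver the claimed bound.

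For the embedding, let $\bxi\ilparenthesis{t}\equiv\bxi_{\TimeMapping{t}}$ be the piecewise-constant interpolation of the noise sequence and let $\bzeta\ilparenthesis{\cdot}$ be the unique solution of the perturbed IVP (\ref{eq:PerturbedIVP}) with this forcing. Because the constant-gain recursion (\ref{eq:ConstantGainSGD}) is precisely the explicit Euler discretization of (\ref{eq:PerturbedIVP}) with step $\gain$, and $\bg_k$ is Lipschitz with constant $\LipsPara$ uniformly in $k$ by B.\ref{assume:gSequenceRegularity}, a classical one-step-error analysis gives $\hbtheta_k=\bzeta\ilparenthesis{t_k}+O\ilparenthesis{\gain}$ uniformly for $0\le k\le T/\gain$; this $O\ilparenthesis{\gain}$ piece is deterministic and shrinks the effective threshold only negligibly. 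Since $\bxi\ilparenthesis{s,\btheta}=\bxi\ilparenthesis{s}$ does not depend on state by B.\ref{assume:Noise}, the Alekseev formula collapses to
\begin{equation*}
\bzeta\ilparenthesis{t}-\btheta\ilparenthesis{t} = \int_0^t\bPhi\ilparenthesis{t;s,\bzeta\ilparenthesis{s}}\,\bxi\ilparenthesis{s}\,\diff s ,
\end{equation*}
where $\bPhi\ilparenthesis{t;s,\cdot}$ is the fundamental matrix of the linearization $\dot{\bz}=-\ilbracket{\partial\bg\ilparenthesis{s,\bzeta\ilparenthesis{s}}/\partial\btheta^\transpose}\bz$ along $\bzeta$.

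Strong convexity B.\ref{assume:TwiceDiff} forces the symmetric part of $\partial\bg/\partial\btheta^\transpose$ to be at least $\convexPara\,\bI_p$ everywhere, which by a standard Lyapunov argument yields the operator-norm contraction $\norm{\bPhi\ilparenthesis{t;s,\cdot}}\le e^{-\convexPara\ilparenthesis{t-s}}$ for $t\ge s$. Integrating the Alekseev integral by parts against the antiderivative $s\mapsto\int_0^s\bxi\ilparenthesis{u}\,\diff u$ (which vanishes at $s=0$) and using the identity $\partial_s\bPhi\ilparenthesis{t;s,\cdot}=\bPhi\ilparenthesis{t;s,\cdot}\cdot\ilbracket{\partial\bg\ilparenthesis{s,\bzeta\ilparenthesis{s}}/\partial\btheta^\transpose}$, the integral splits into a boundary term equal to $\int_0^t\bxi$ and a bulk term whose norm is controlled by a constant multiple of $\sup_{s\le t}\norm{\int_0^s\bxi\ilparenthesis{u}\,\diff u}$ weighted by $\int_0^t e^{-\convexPara\ilparenthesis{t-s}}\,\diff s$. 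Careful bookkeeping of these constants against the exponential decay produces a pathwise bound of the form $\norm{\bzeta\ilparenthesis{t}-\btheta\ilparenthesis{t}}\le 2\,e^{-\convexPara}\norm{\int_0^T\bxi\ilparenthesis{s}\,\diff s}$ uniformly in $t\le T$, from which the containment $\set{\max_k\norm{\hbtheta_k-\btheta\ilparenthesis{t_k}}>\upvarepsilon}\subseteq\set{\norm{\int_0^T\bxi\ilparenthesis{s}\,\diff s}>\upvarepsilon\,e^{\convexPara}/2}$ follows modulo the absorbable Euler remainder. Applying Lemma~\ref{lem:Noise} with $\updelta=\upvarepsilon\,e^{\convexPara}/2$ delivers exactly the announced tail estimate.

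The principal obstacle will be the constant-tracking in the integration-by-parts step: the naive majorization $\norm{\int_0^t\bPhi\ilparenthesis{t;s,\cdot}\bxi\ilparenthesis{s}\,\diff s}\le\noiseBound\int_0^t e^{-\convexPara\ilparenthesis{t-s}}\,\diff s$ wastes the crucial cancellation in the noise that drives the sub-Gaussian concentration, so the argument must route through the antiderivative of $\bxi$ where this cancellation is encoded. Aligning the resulting constants to yield precisely the factor $2e^{-\convexPara}$ (rather than a looser bound involving $\LipsPara/\convexPara$) will require delicate Abel-summation-style bookkeeping. A secondary nuisance is handling the discrete $\max_k$ versus the continuous $\sup_t$ and cleanly separating the deterministic Euler-discretization remainder from the noise-driven error so that both can be absorbed into a single probability bound without degrading the announced constants.
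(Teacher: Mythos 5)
Your overall architecture is the one the paper uses: split $\max_k\norm{\hbtheta_k-\btheta\ilparenthesis{t_k}}$ by the triangle inequality into the Euler-discretization error $\max_k\norm{\hbtheta_k-\bzeta\ilparenthesis{t_k}}$ (deterministic, uniformly $O\ilparenthesis{\gain}$ by \cite[Thm. 212A]{butcher2016numerical}, hence absorbed into a $\upvarepsilon/2$ budget for $\gain$ small, exactly as you propose) and the noise-driven term $\max_k\norm{\bzeta\ilparenthesis{t_k}-\btheta\ilparenthesis{t_k}}$ handled through Alekseev's formula (\ref{eq:Alekseev}) with state-independent forcing, finishing with Lemma~\ref{lem:Noise} at threshold $\upvarepsilon e^{\convexPara}/2$. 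Up to and including the contraction $\norm{\bPhi\ilparenthesis{t;s,\cdot}}\le e^{-\convexPara\ilparenthesis{t-s}}$ obtained from B.\ref{assume:TwiceDiff}, you are reproducing the paper's argument.

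The gap is in your final reduction, and it is one you half-acknowledge yourself. Integrating by parts against $\Xi\ilparenthesis{s}\equiv\int_0^s\bxi\ilparenthesis{u}\,\diff u$ gives
\begin{equation*}
\int_0^t\bPhi\ilparenthesis{t;s,\cdot}\,\bxi\ilparenthesis{s}\,\diff s=\Xi\ilparenthesis{t}-\int_0^t\bPhi\ilparenthesis{t;s,\cdot}\,\frac{\partial\bg\ilparenthesis{s,\bzeta\ilparenthesis{s}}}{\partial\btheta^\transpose}\,\Xi\ilparenthesis{s}\,\diff s,
\end{equation*}
and the only uniform control available on the Jacobian is $\LipsPara$ from B.\ref{assume:gSequenceRegularity}; the bulk term is then at best $\ilparenthesis{\LipsPara/\convexPara}\sup_{s\le t}\norm{\Xi\ilparenthesis{s}}$, so this route yields a pathwise constant of $1+\LipsPara/\convexPara\ge 2$, whereas your target constant $2e^{-\convexPara}$ is strictly less than $2$ for every $\convexPara>0$. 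No amount of Abel-summation bookkeeping closes that: the two constants differ in the wrong direction. Worse, your bound is in terms of $\sup_{s\le T}\norm{\Xi\ilparenthesis{s}}$ rather than the endpoint $\norm{\int_0^T\bxi\ilparenthesis{s}\,\diff s}$, and Lemma~\ref{lem:Noise} as stated controls only the latter; you would need a maximal Bernstein inequality or a union bound over $k\le T/\gain$, either of which alters the prefactor of the announced bound. The paper does not take this detour: it applies the Brauer fundamental-matrix estimate directly inside the norm and writes $\norm{\int_0^t\bPhi\ilparenthesis{t;s,\cdot}\bxi\ilparenthesis{s}\,\diff s}\le e^{-\convexPara}\norm{\int_0^t\bxi\ilparenthesis{s}\,\diff s}$ in a single majorization, which is precisely what places $e^{\convexPara}$ in the exponent of the stated tail. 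Declining that step, as your proposal does, means you can only prove a weaker statement with $e^{\convexPara}$ replaced by $2/\ilparenthesis{1+\LipsPara/\convexPara}$ and with a maximal-inequality correction; you cannot recover the theorem's constants as written.
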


\begin{proof}[Proof of Theorem~\ref{thm:ProbBound}]
	Define the following time-dependent continuous function:
	\begin{eqnarray}\label{eq:Zbar} 
	\contZ \parenthesis{t }  = \sum_{k=0}^{\infty} \set{  \bracket{  \frac{t_{k+1}-t}{a} \bZ\ilparenthesis{t_k} + \frac{t-t_k}{a} \bZ\ilparenthesis{t_{k+1}} }\cdot\indicator_{\ilset{t_k\le t<t_{k+1}}}  }, 
	\end{eqnarray}  
	where $\bZ\ilparenthesis{t}$ was defined in (\ref{eq:Zbar0}). Note that $\contZ \ilparenthesis{t}$ is the linear interpolation of $\ilset{\hbtheta_k}$ at times $t_k=ka$. 
	We have
	\begin{eqnarray}
	\contZ\ilparenthesis{t_{k+1}} = \hbtheta_{k+1} = \contZ\ilparenthesis{t_k} -\gain \ilbracket{  \bg\ilparenthesis{t_k,\contZ\ilparenthesis{t_k}} + \bxi\ilparenthesis{t_k,\contZ\ilparenthesis{t_k}} }. 
	\end{eqnarray}
	
	To establish a connection between $\contZ\ilparenthesis{t_k}$ and $\btheta\ilparenthesis{t_k}$, we invoke the triangle inequality and analyze the behavior of two terms: (1) $ \max _{0\le t_k\le T} \norm{\btheta\ilparenthesis{t_k}-\bzeta\ilparenthesis{t_k}} $ where $\bzeta\ilparenthesis{t}$ is the solution to (\ref{eq:PerturbedIVP}), and (2) $ \max_{ 0\le t _k\le T} \norm{\contZ\ilparenthesis{t_k}-\bzeta\ilparenthesis{t_k}} $.
	\begin{enumerate}[(1)]
\item 	First,  consider   term $ \max _{0\le t_k\le T} \norm{\btheta\ilparenthesis{t_k}-\bzeta\ilparenthesis{t_k}} $. The difference between the solution to the system (\ref{eq:IVP}) and the solution to the perturbed system (\ref{eq:PerturbedIVP}) can be handled by the Alekseev's formula reviewed in Subsection~\ref{subsect:Alekseev}. All the necessary conditions to invoke the  Alekseev's formula  are met: $\bg\ilparenthesis{t,\btheta}$ is  continuously differentiable w.r.t. $\btheta$ under B.\ref{assume:TwiceDiff} and the construction (\ref{eq:gInterpolation}),  the magnitude of $\partial \bg\parenthesis{t,\btheta}/\partial\btheta$ is uniformly  bounded under B.\ref{assume:gSequenceRegularity}, and $\bxi\parenthesis{t }$   does not depend on $\btheta$ under B.\ref{assume:Noise}.     Let us invoke the  (uniform) bound on the norm of the fundamental matrix \remove{$ \bPhi\ilparenthesis{t;s,\hbtheta_0} $  }provided  in       \cite[Thm. 1]{brauer1966perturbations}: 
	\begin{align}
	&\norm{\btheta\parenthesis{t;t_0,\hbtheta_0}-\zeta\parenthesis{t;t_0,\hbtheta_0}} \nonumber  \\
	&\quad\le \big \| \int_{t_0}^t \exp\set{ \int_s^t\bracket{\uplambda_p\parenthesis{\frac{\partial \bracket{-\bg\parenthesis{s,\btheta}}}{\partial\btheta^\transpose}}}   }  \cdot\bxi \parenthesis{s}\diff s \big\|  \nonumber \\
	&\quad\le e^{-\convexPara} \norm{\int_{t_0}^t\noise\parenthesis{s,\bzeta\ilparenthesis{s;t_0,\hbtheta_0}}\diff s },
	\end{align}
	where the second inequality uses   B.\ref{assume:TwiceDiff} and  $ \uplambda_{\max} \ilset{ -\partial\ilbracket{\bg\ilparenthesis{t,\btheta} } / \partial\btheta^\transpose }\le -\convexPara $.  The notions $ \btheta\ilparenthesis{t;t_0,\hbtheta_0} $ and $\bzeta\ilparenthesis{t;t_0,\hbtheta_0}$ are to emphasize the dependence of the initialization of $\hbtheta_0$ at $t_0$ in  Alekseev's formula  reviewed in Subsection~\ref{subsect:Alekseev}. Besides, B.\ref{assume:gSequenceRegularity} implies    $ \uplambda_{\min } \ilset{ -\partial\ilbracket{\bg\ilparenthesis{t,\btheta} } / \partial\btheta^\transpose }\ge  -\LipsPara $.     Therefore, for  arbitrarily given threshold $\upvarepsilon >0$, \begin{align}\label{eq:firstpart}
	&\Prob\set{\max_{0\le t_k \le T}\norm{\btheta \parenthesis{t_k}-\bzeta \parenthesis{t_k}}>\upvarepsilon } \le \Prob\set{ e^{-\convexPara}  \norm{  \int_{0 }^T \bxi\parenthesis{s } \diff s  }>\upvarepsilon  }. 
	\end{align} 
	
\item 	Now consider the term $ \max_{0\le t_k \le T} \norm{\contZ\parenthesis{t_k}-\bzeta\parenthesis{t_k}}  $. 	\remove{Denote $\bm{f}\parenthesis{t,\bzeta}= - \bg\parenthesis{t,\bzeta}-\bxi \parenthesis{t,\bzeta }$.  The solution to (\ref{eq:PerturbedIVP}) has a second-order derivative of  $\ddot{\bzeta}\parenthesis{t}= \partial \bm{f} /\partial t +  \parenthesis{\partial \bm{f} /\partial \bzeta^\transpose} \cdot \bm{f}\in\real^{p } $,    whose magnitude is bounded by $\sqrt{2}\noiseBound  + 2\gradientBound + \parenthesis{\gradientBound+\noiseBound}\LipsPara$.   }
	\cite[Thm. 212A]{butcher2016numerical} shows that   $ \max_{0\le t_k \le T} \norm{\contZ \parenthesis{t_k}-\bzeta\parenthesis{t_k}}  $ is   bounded from above by $O\parenthesis{a}$ with a bounded constant term\remove{ of  $ \LipsPara^{-1} \parenthesis{e^{\LipsPara T}-1} \max_{t}\norm{\dot{\bzeta}\parenthesis{t}  }   $ which is bounded}.  Therefore, the difference between the linear interpolation of the  noisy discretization $ \hbtheta_{k+1}= \hbtheta_k - a\hbg_k\ilparenthesis{\hbtheta_k} $ and the perturbed system $ \dot{\bzeta}\parenthesis{t }=-\bg\ilparenthesis{t,\bzeta}-\bxi \parenthesis{t,\bzeta} $   diminishes  to zero as the discretization interval $ a$ approaches $ 0 $.
\end{enumerate}

Any sample point $\upomega$ in the intersection of     the event  $     \set{ \upomega: \max_{0\le t_k \le T} \norm{\btheta\parenthesis{t_k,\upomega}  - \bzeta\parenthesis{t_k,\upomega}}<\upvarepsilon/2 } $ and $ \set{ \upomega:  \max_{0\le t_k \le T}\norm{ \bzeta\parenthesis{t_k,\upomega}-\contZ\parenthesis{t_k ,\upomega}}  <\upvarepsilon/2} $  must fall within the event $ \set{\upomega: \max_{0\le t_k \le T} \norm{\contZ\parenthesis{t_k,\upomega} - \btheta\parenthesis{t_k,\upomega}} <\upvarepsilon  } $. Part (2) establishes that $\max_{0\le t_k \le T}\norm{\bzeta\parenthesis{t_k} - \contZ\parenthesis{t_k}}<\upvarepsilon/2$ is valid almost surely as long as the constant gain $a\remove{\le \upvarepsilon\LipsPara\parenthesis{e^{\LipsPara T}-1}^{-1} / \ilparenthesis{2\max_t \norm{\ddot{\bzeta}\parenthesis{t}}}
}$ is smaller than a certain threshold specified in \cite[Thm. 212A]{butcher2016numerical}. Combined, for \emph{certain} gain $a>0$ satisfying this condition, the probability that $\hbtheta_k$ deviates from $\btheta\parenthesis{ka}$ is bounded from above by  \begin{equation*}
C\parenthesis{a,\upvarepsilon, T} = \parenthesis{p+1} \exp\bracket{   -\frac{  \parenthesis{\upvarepsilon  e^{\convexPara} /2 }^2   }{a \noiseBound\parenthesis{ T\noiseBound/2 + \upvarepsilon e^{\convexPara} / 6   } }   }. 
\end{equation*}
\end{proof}

\subsection{Further Remarks}\label{subsect:distinction2}

This section analyzes the recursive iterates via the solution to an IVP. Some subtleties are worth mentioning.   In the  classical setting of decaying gain and fixed  underlying parameter to be identified, the stationary point of the limiting autonomous ODE  is shown to be the limit point of general SA algorithms under certain conditions \cite[Sect. 4.3]{spall2005introduction}. However, for constant-gain algorithm designed to minimize a time-varying objective function $\loss_k\parenthesis{\cdot}$, it is not justified to transfer the terminologies, such as the concept of equilibrium, from  an autonomous ODE to nonautonomous ODE (where the forcing term has explicit dependence on time).  \remove{Autonomous system generates \emph{semigroups} while nonautonomous system generates \emph{cocycles}. \emph{Cocycle attractor} for nonautonomous system is a proper generalization of the \emph{semigroup attractor} for autonomous systems. Such a cocycle attractor consists of a \emph{family} of equivariant compact subsets of the state space instead of just a \emph{single} positively invariant subset as for a semigroup attractor.}The recursive estimates never settle  if  the underlying parameter is  perpetually  time-varying.\remove{Furthermore, the Lyapunov stability and asymptotic stability are mostly applicable for autonomous systems\textemdash extra caution is needed in handling the nonautonomous system. }
Many prior works on   tracking    problems assume that the time-varying objective function $\loss _k\parenthesis{\cdot}$ and its gradient function, evaluated at the values within  the allowable region, have fixed limiting values\footnote{	For a continuously differentiable  function   that has a limit as $t\to\infty $, i.e., $ f\parenthesis{t}\stackrel{t\to\infty}{\longrightarrow}\ell $, it is \emph{not} necessarily the case that $ f'\parenthesis{t}\stackrel{t\to\infty}{\longrightarrow}  0 $. 		
	However, 	if a function $f\in C^1\parenthesis{\real}$ satisfies both  $f\parenthesis{t}\stackrel{t\to\infty}{\longrightarrow}  \ell$  and $ f'\parenthesis{t}\stackrel{t\to\infty}{\longrightarrow} \ell' $, then we can safely conclude that $ \ell'=0 $.  }. Such assumption essentially forces the slowly time-varying parameter to converge to a limit  for large $k$, and the limit point of recursive estimates will eventually coincide with the equilibrium of limiting autonomous ODE. However, this condition may not fit practical scenarios.  Also note  that this section focuses  on the SGD algorithm  (\ref{eq:ConstantGainSGD}), where  direct unbiased measurement of the unknown gradient is available.

 {

\subsection{One Quick Example}
This subsection provides a synthetic study in tracking a jump process  to illustrate the effects of the noise, the drift, and the gain on the tracking capability.

We     aim to track a  jump process    $\btheta_k^*$. For every $k$, $\btheta_{k+1}^*$   remains  the same as $\btheta_k^*$ with  a   probability  of   $0.9995$, and  $\btheta_{k+1}^*=\btheta_k^*+\bv_k$ with   a  probability  of   $0.0005$, where $\bv_k$ is independent and  uniformly distributed on a spherical  disc, with a  radius of $\gradientBound$, centered at the origin.  The time-varying loss function is  $ \loss _k\parenthesis{\btheta}= \E\norm{\btheta-\btheta_k^*}^2/2 $, and the corresponding gradient function  is  $ \bg_k\parenthesis{\btheta} = \btheta-\E \btheta_k^* $   per discussion on \cite[p. 70]{spall2005introduction}.  However, the accessible information is the noisy gradient evaluation $ \hbg_k\parenthesis{\btheta} = \bg_k\parenthesis{\btheta} + \bxi_k  $, where $ \bxi_k$ 
follows a truncated normal distribution with mean $0$, positive definite matrix $\bSigma$, and truncation bounds  $ \bracket{l,u} $ on each component of $\be_k$.   This distribution satisfies B.\ref{assume:Noise}.

To illustrate, we pick $ p=2 $, $ \hbtheta_0=\btheta_0^*=\zero $, $ a=0.1 $, $ T=5000 $, $\gradientBound=50$,   $ \bSigma=\upsigma^2\bI $ with $ \upsigma=1 $, and the truncated  normal with $l=-3$ and  $u=3$. Figure \ref{figure1} is the scatter plot  of a single realization of $\hbtheta_k$ and the underlying jump process $\btheta_k^*$. It is visually obvious that       the estimates $ \hbtheta_k $ are  capable of tracking the  time-varying jump process $ \btheta_k^* $. In terms of the tracking speed and accuracy, when a jump in the  $\ilset{\btheta^*_k}$ sequence occurs, it takes \emph{at most} $29$ iterations for the estimates $\hbtheta_k$ to fall within the ball with a radius of $2$ and a center of the newest value of  $\btheta^*$. 

 \begin{figure}[!htbp]
	\centering
	\includegraphics[width=0.65\textwidth]{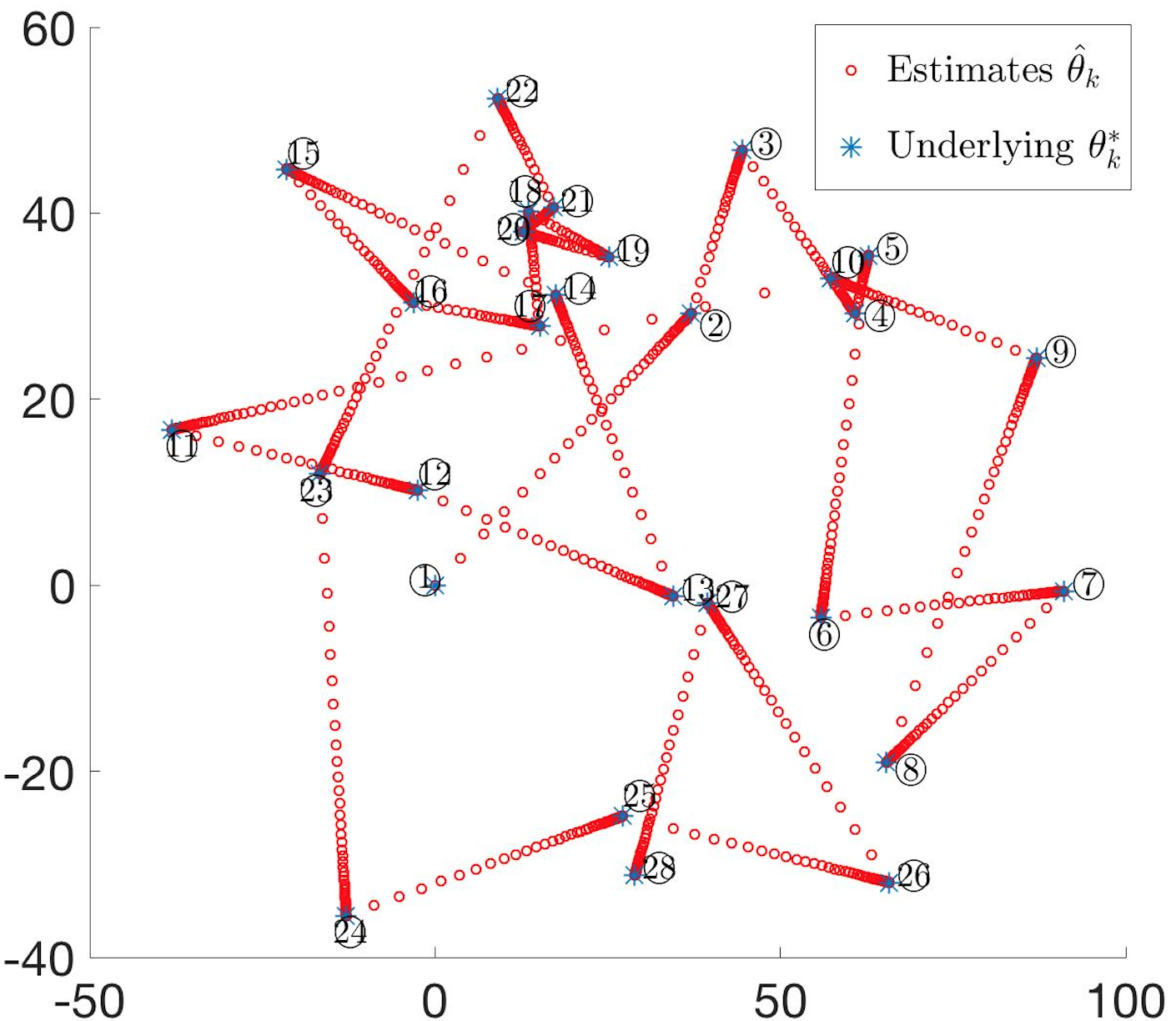} 
	\caption{The underlying time-varying jump process $\btheta_k^*$  and $\contZ \parenthesis{t}$ generated by $\hbtheta_k$, with $a=0.1$. For all $k$,  we have $ \contZ (t_k)=\hat{\bm{\uptheta}}_k$. The number in the circles corresponds to the counter of the jumps. }
	\label{figure1}
\end{figure}

We also  run $100,000$ replicates for $a=0.1$, and the empirical probability is listed in Figure \ref{figure3}. The empirical probability  for the event $ \max_{0\le t_k \le T}\norm{\hbtheta_k-\btheta\parenthesis{t_k}}>4$ happening is $ 0.88 $. Any $\upvarepsilon\ge 7 $ gives an empirical probability of zero. Note that the magnitude of $\upvarepsilon$ is still small compared to the possible jump magnitude $\gradientBound=50$.  This phase transition (the probability is either very close to one or very close to zero) may be attributed to these two main reasons: (1) there is a certain (unknown) stability region for the constant gain $a$ and (2) the probability bound in Theorem \ref{thm:ProbBound} is \emph{not} \emph{uniformly tight} for all $\upvarepsilon$. Overall, the trajectory of $\bZ\parenthesis{t}$, the linear continuation of $ \hbtheta_k $, can be characterized by the trajectory of $ \btheta\parenthesis{t} $, the solution to IVP (\ref{eq:IVP}).

\begin{figure}[thpb]
	\centering
	\includegraphics[width=0.7\textwidth]{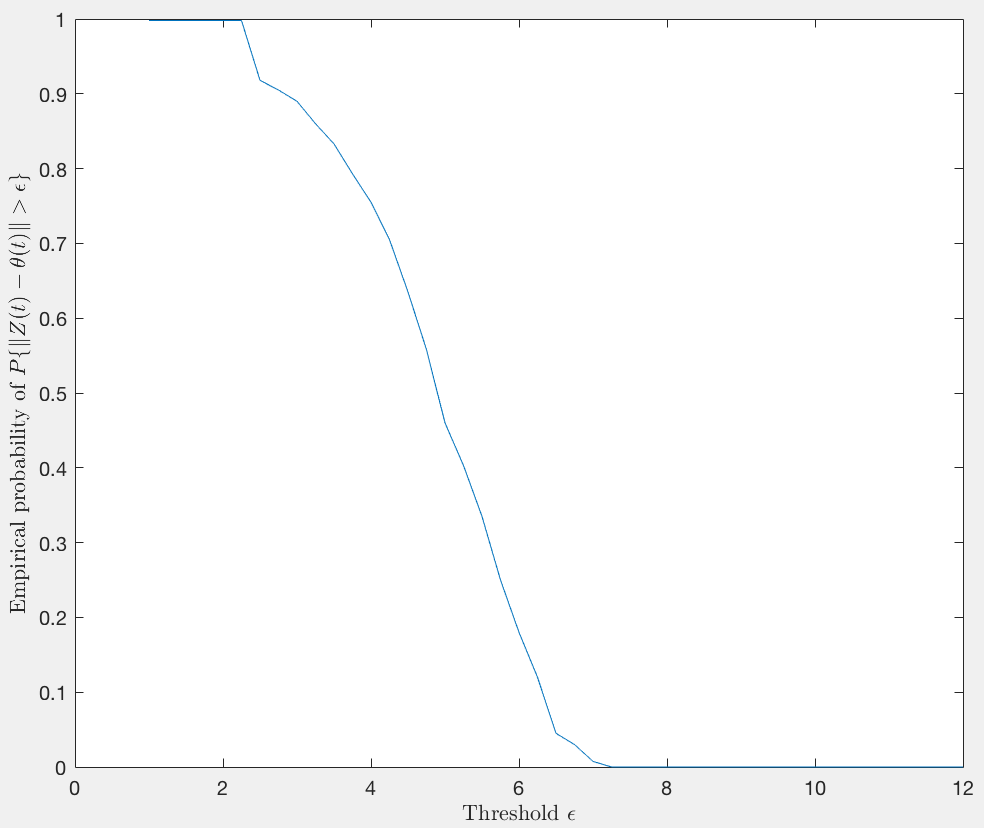} 
	\caption{The empirical probability that $\bZ\parenthesis{t}$ deviates from $\btheta\parenthesis{t}$ by at least $\upvarepsilon$ as a function of  $\upvarepsilon$. Note that  $ \bm{Z}(t_k)=\hat{\bm{\uptheta}}_k, \forall k$.  }
	\label{figure3}
\end{figure}

\section{Concluding Remarks}\label{sect:Limiting}

Our work investigates a class of stochastic approximation algorithms that allows for  time-varying loss functions and  nonlinear dynamics.  
In the nonstationary scenario, we cannot expect ``convergence'' for the constant-gain algorithm, due to a combination of observation noise and nonnegative gain. The best we can hope for is to get into a neighborhood of the optimizer (sequence).
The practical  implication of the weak convergence result and the  probabilistic bound for SA-like tracking algorithms are listed below. 
\begin{itemize}
	\item  
	Our framework does  {not} require an explicit model for  the time variations of $\btheta_k^*$ because they are typically  unknown in reality. Instead, we   ``bury'' the variations of $\btheta_k^*$ in either B.\ref{assume:gSequenceRegularity} or B.\ref{assume:Drift} as applied to  $\bg_k\ilparenthesis{\cdot}$. The analysis of  the time-varying framework is more challenging, as the classical  SA  techniques cannot be applied.

	\item The time-varying assumption imposed on the loss function $\loss _k$  is useful and  {necessary} when the underlying system is   time-varying,   when successive iterations are performed on different components of the independent variable (e.g., the alternating minimization procedure), or when the experimental procedure varies with $k$, or when specific variance reduction methods (e.g. stratified sampling) are employed, and so on.  
	
	\item Many prior works impose assumptions on the noise process and the time-varying sequences so that the dynamics ``average out'' to a function that does not depend on time. However, this is rarely the case in applications.  In our case, the    mean ODE can be time-dependent (nonautonomous).
	\item The result in Theorem~\ref{thm:ODE} and Corollary~\ref{corr:ODE} informs us that, the smaller the step-size, the better $\hbtheta_k$ approximates the trajectory of $\btheta\ilparenthesis{t}$. However, with the smaller   $\gain$, the number of steps to simulate the time-varying ODE on $\ilbracket{0,T}$ with fixed $T$ grows as $T/\gain$.

	\item Theorem~\ref{thm:ProbBound} characterizes the probabilistic behavior of the recursive SGD estimates over a  finite-time period. Realistically, we cannot achieve many asymptotic (as $k\to\infty$) properties of the recursive estimates, as all algorithms have to stop within finite time.

	\item To guarantee tracking stability,  there exists an upper-bound on the gain sequence. Chapter \ref{chap:FiniteErrorBound} informs us  that, when the sampling frequency is fixed,  there exists a  {lower-bound} on the gain sequence for tracking capability and robustness consideration.  
	
As in  the gain-selection guidance conveyed in the previous chapter, this chapter also informs us that the gain sequence for tracking perpetually varying target should be neither too large nor too small. The trajectory of $\btheta\ilparenthesis{t}$, which is the solution  to the ODE (\ref{eq:LimitingODE}) or (\ref{eq:IVP}), does not coincide with  the true $\bvartheta_k$ sequence at every $\uptau_k$, although they are close, see \cite{wiggins2003introduction}.

	 \remove{A common way of viewing this in application is to view time as ``frozen'' and look at equilibra of the frozen time vector field. These ``instantaneous'' fixed points are given by $\bm{f}\ilparenthesis{t,\btheta}=\zero$. If we can find a point $\ilparenthesis{\bar{t},\bar{\btheta}}  $ such that  $\bm{f}\ilparenthesis{\bar{t},\bar{\btheta}} =\zero$ and $ \left. \frac{\partial \bm{f}\ilparenthesis{t,\btheta}}{\partial\btheta} \right| _{  \ilparenthesis{t,\btheta} = \ilparenthesis{\bar{t},\bar{\btheta}}  }  $ is nonsingular, then by the implicit function theorem, we can find a function $\bar{\btheta}\ilparenthesis{t}$ with $\bar{\btheta}\ilparenthesis{\bar{t} }\bar{\btheta}$ such that $ \bm{f}\ilparenthesis{t,\bar{\btheta}\ilparenthesis{t}} = \zero $ for $t$ in some interval around $\bar{t}$. However, the ``frozen time equilibria'' are not solutions of the nonautonomous vector field.   In fact, it is immediate that if     $\bar{\btheta}\ilparenthesis{t}$ is a solution of the nonautonomous vector field, then it must be constant in time, i.e., $\dot{\bar{\btheta}}\ilparenthesis{t}=\zero$.    }

\end{itemize}

 We should mention that in the results (\ref{eq:PropagationLemma3}), (\ref{eq:PropagationLemma5}), and  (\ref{eq:Tightness}) back in Chapter~\ref{chap:FiniteErrorBound}, the limit is taken  over the iteration number $k$ given the adaptive gain selected according to Algorithm~\ref{algo:basicSA}.   Here, the result in Theorem~\ref{thm:ProbBound}  is valid for the entire time-frame, and the maximization is taken  over the iteration number given a fixed constant gain $\gain$.  The probabilistic bound   in Theorem~\ref{thm:ProbBound} provides a general sense of the likelihood of $\hbtheta_k$ staying close to $\bvartheta_k$ for a constant gain $\gain$  under    Assumptions B.\ref{assume:gSequenceRegularity},  B.\ref{assume:Noise}, B.\ref{assume:TwiceDiff}, and  B.\ref{assume:Drift}.    Besides, in   Theorem~\ref{thm:ODE} here, the weak convergence limit is taken  over the constant gain $\gain$. It should be interpreted that for some nonzero constant gain $\gain$, which needs not go to zero, the continuous interpolation of  the estimates $\hbtheta_k$ will stay ``close'' (in the sense of weak limit) to the ODE (\ref{eq:LimitingODE}) under the conditions  therein and when  the underlying data should change with time   at a rate that is  {commensurate} with what is  determined by the gain.

\remove{ 
\subsection{Convergence rates} 

Let $\tilde{\btheta}^k\ilparenthesis{t}$ for $t\ge t_k$ be the solution to $\dot{\btheta}\ilparenthesis{t}=\projectionlower\ilparenthesis{\btheta\ilparenthesis{t}}$  with $\tilde{\btheta}\ilparenthesis{t_k} = \hbtheta_k$. As before, we can show that for any $T>0$, 
\begin{equation}
\sup_{t\in\ilbracket{0,T}} \norm{ \bar{\btheta} \ilparenthesis{t_k+t}  - \tilde{\btheta}^k\ilparenthesis{t_k+t} } \to 0
\end{equation} as $k\to \infty$. Thus $\dot{\btheta}\ilparenthesis{t}=\projectionlower\ilparenthesis{\btheta\ilparenthesis{t}}$  captures the ``typical'' behavior of (\ref{eq:basicSA}). The ``fluctuation'' part is $ \bar{\btheta} \ilparenthesis{t_k+\cdot }  - \tilde{\btheta}^k\ilparenthesis{t_k+\cdot } $ for $k\ge 0$. 
One can show a ``functional central limit theorem'' to the effect that $ \parenthesis{\bar{\btheta} \ilparenthesis{t_k+\cdot }  - \tilde{\btheta}^k\ilparenthesis{t_k+\cdot } }/\sqrt{a_k}$ converges in distribution to a Gauss-Markov process, i.e., a linear system driven by a Gaussian noise.

In the constant step-size case, analogous results are available in the limit as the constant step-size $a$ tends to zero.

}

\chapter{Data-Dependent Gain-Tuning}\label{chap:AdaptiveGain}

 In time-varying SA problems, the gains  in the recursive schemes must be strictly bounded away from zero  to accommodate the time variability in the target values  $\ilset{\bvartheta_k}$. This characteristic distinctively differs from  the classical SA algorithms with diminishing gains  that place lesser weights on more recent information.   In general, the SA algorithms with non-decaying gain $a_k$, such as     Algorithm \ref{algo:basicSA}   in Chapter~\ref{chap:FiniteErrorBound}, are    capable of tracking time-varying targets.   Nonetheless, the optimal value of $a_k $ depends on the   knowledge of the drift   $\ilparenthesis{\bvartheta_ {k+1}-\bvartheta_k}$,   which we do not know.   Therefore, we have to provide an estimate  for the step-size $\gain_k$ on top of the estimation of $\bvartheta_k$ in Chapter~\ref{chap:FiniteErrorBound}.  Often, a constant gain, $\gain_k=\gain$ for all $k$, is used in (\ref{eq:basicSA}), for both the ease of implementation and the consequent tracking algorithm robustness. It has been  observed   that the constant-gain   SGD  algorithm (\ref{eq:ConstantGainSGD}) is  capable of tracking a time-varying target under certain conditions \cite{ljung1977analysis}. However, the tracking performance is rather sensitive  to   the constant gain  $a$, and gain tuning remains  an unsettled practical issue \cite{kushner1995analysis} \cite[p. 160]{benveniste2012adaptive}.

Recall that the gain selection strategy in Algorithm~\ref{algo:basicSA} requires knowledge of both the Lipschitz constant $\LipsPara_k$ and the convexity parameter  $\convexPara_k$, which may be unknown  in practical applications. Though it may be possible to estimate these   parameters by collecting multiple observations at each time instant $k$, such  ``multiple sequential measurements  at a time''  implementation   is contradictory with   the general SA philosophy\footnote{In history, there were attempts to approximate $\loss\ilparenthesis{\btheta}$ by averaging several i.i.d.  measurements of $y\ilparenthesis{\btheta}$. However, this approach turns out to be theoretically inefficient and numerically prohibitive. The cost of  obtaining noisy measurements used to approximate $\loss\ilparenthesis{\btheta}$ at a single point could have been allocated to help to minimize $\loss\ilparenthesis{\btheta}$\textemdash after all, minimization is the primary objective.    } of ``averaging across iterations'' and  the time-varying setting. It is also prohibitive due to  the computational overhead (and possibly equipment cost) within each iteration. Instead, we consider a more restrictive  time-varying scenario summarized in C.\ref{assume:Hybrid},  C.\ref{assume:Stationary}, or C.\ref{assume:Jump} (to appear). With more stringent assumptions, we can detect regime change using  Algorithm \ref{algo:ChangeDetection},   adapting  the gain sequence correspondingly using Algorithm \ref{algo:adaptiveGain}. The main advantages  here are   that we do not require  $\LipsPara_k$ and $\convexPara_k$ to be known in advance for gain-tuning purposes.

\section{Detecting Jumps/Changes}
\label{sect:Detection}
 This section  proposes a method  for jump/change detection. 
 We consider    a special  case summarized in   Assumption C.\ref{assume:Hybrid} to appear, which   is  motivated by the hybrid systems mentioned in Subsection \ref{subsect:Timevaryingness}.  Hybrid systems are  routinely  modeled by a finite  number of \emph{diffusion}s with different drift and diffusion coefficients, and a random \emph{jump} process modulates these diffusions  with a known transition matrix. Since     the diffusion   and  the jump structures  are   rarely  available to the experimenter, we set aside the diffusion component and abstract the jump component  via  C.\ref{assume:Hybrid}. 
To keep it simple, we consider the  constant-gain SGD algorithm (\ref{eq:ConstantGainSGD}), where the gain $\gain$ requires   advance tuning.

\subsection{Basic Change Detection Setup}

In a typical  change-detection setup,  we receive a sequence of observations 
$\observebx_1,\observebx_2,\cdots$, which are realizations of a sequence of  random variables $\bx_1,\bx_2,\cdots$. Several   number of abrupt change points $\upkappa_1,\upkappa_2,\cdots$ divide the sequence of random variables  into segments, where the observations within each segment are i.i.d.\footnote{Although the assumption of independent observation between change points may seem restrictive, this is not the case since a statistical model can usually be fitted to the observations to model any dependence, with change detection then being performed on the independent residuals \cite{gustafsson2000adaptive}. }. That is, 
\begin{equation}
\bx_i\sim\begin{cases}
F_0, & \text{ if } i\le \upkappa_1,\\
F_1, & \text{ if } \upkappa_1+1\le i\le \upkappa_2,\\
F_2, & \text{ if } \upkappa_2+1\le i\le \upkappa_3, \\
\vdots & 
\end{cases}
\end{equation}
for some set of distributions $ \set{F_0, F_1,\cdots} $, and that $ F_i \neq F_{i+1} $ for all $i$. The goal of this section is to estimate the set of change points $\ilset{\upkappa_i}$.

\subsubsection*{Detection Criteria}
The performance of online change detection algorithms is typically measured by two criteria \cite{basseville1993detection}. Take the situation where the length of observations is fixed at $K$ and  there is only one possible change point $\upkappa$ for example.  The first criterion is  the ``average run length,''  $\mathrm{ARL}_0\equiv \E\parenthesis{\given{\hat{\upkappa}}{\bx_i\sim F_0 \text{ for }i\le K }}$,
which is defined as the average number of observations until a changepoint is detected, when the algorithm is run over a sequence of observations with no changepoints (i.e., false positive).  A false positive is said to have occurred if $\hat{\upkappa}<\upkappa$.  The second criterion is  the  ``mean detection delay,'' $ \mathrm{ARL}_1\equiv \E\parenthesis{\given{\hat{\upkappa}-\upkappa}{   \bx_i \sim F_0 \text{ for }i\le \upkappa \text{ and } \bx_i\sim F_1 \text{ for } \upkappa < i \le K  }} $, is defined as the average number of observations between a changepoint occurring and the change being detected (i.e., a mean delay).  
In general, an acceptable value of $\mathrm{ARL}_0$ is chosen before attempting to minimize the detection delay. This is analogous to the Neyman-Pearson testing setup, where a Type-II error is minimized subject  to  the Type-I error being  bounded from above.

\subsubsection*{Relation With Control Chart and Change Detection  }\label{subsect:DetectionDifficulty}

Note that  a great deal of difficulty in our setting comes from that   we only have access to $\ilset{\hbtheta_k}$ instead of $\ilset{\bvartheta_k}$ itself. 

\textbf{Connection.}
The challenge of the online 
monitoring involves a sequence of changes of unknown and
varying magnitude at unknown time instances.  Furthermore,  there is no universal criterion for  accessing the   detection performance in an online  monitoring framework.

\textbf{Distinction.}
A majority of the change detection literature assumes direct access, though it may be noisy, of the underlying process $\bvartheta_k$. However, we only get to access $\hbtheta_k$, whose explicit distributional relation  with the time-varying $\bvartheta_k$ is   {unknown}. As a consequence, our proposed change detection strategy inevitably has lower power compared to the scenario where we can observe $\bvartheta_k$ directly.

\subsection{Model Assumptions} \label{subsect:Hybrid}

We consider a simplified scenario for the hybrid diffusions mentioned at the end of  Sect. \ref{subsect:Timevaryingness}. 
  Initially,  the jump process rests at one of its states/regimes, denoted as $\bvartheta_{\ilparenthesis{\stateregime}}$, and the continuous component evolves per the diffusion process (with associated drift and diffusion).  	Then after a random duration of time $\ilbracket{\start_{\stateregime},\final_{\stateregime} }$, a   jump occurs. The discrete process then switches to a new state $\bvartheta_{\ilparenthesis{\stateregime+1}}$, and,  accordingly,  the diffusion process changes its drift and diffusion matrix within another random duration of time $\ilbracket{\start_{\stateregime+1},\final_{\stateregime+1} }$ with $\start_{\stateregime+1} = \final_{\stateregime} + 1 $. The jump component will remain in this state/regime until the next jump, and the diffusion/oscillation component will not change its drift or  diffusion matrix until a new jump takes place, and so on.

\begin{assumeC}
	[Regime-specified quadratic function form]
	\label{assume:gRegimeForm}
	 For  $k\in \ilbracket{\start_{\stateregime}, \final_{\stateregime}}$ with $\stateregime\in\integer$,
	the loss function takes the form of  $ \loss_k\ilparenthesis{\btheta} =   \ilparenthesis{\btheta-\bvartheta_{\stateInd}}^\transpose \bH _ {\stateInd} \ilparenthesis{\btheta-\bvartheta_{\stateInd}}/2 $, and    the gradient takes the form of  $ \bg_k\ilparenthesis{\btheta} = \bH _ {\stateInd} \ilparenthesis{\btheta-\bvartheta_{\stateInd}} $ for some symmetric and positive-definite matrix $\bH_{\stateInd}$.  
\end{assumeC}

\begin{assumeC}
	[Error $\noise_k$ is zero-mean and bounded-variance]
	\label{assume:ErrorUnbiased}  For $ k\in \ilbracket{\start_{\stateregime} ,  \final_{\stateregime}} $   the sequence $\ilset{\noise_k}$    is i.i.d. with mean $\zero$  and a  bounded covariance matrix of   $\VarianceError _ {\stateInd}$. That is, the observation noise $\noise_k $ in (\ref{eq:Ystationary}) depends on the state/regime \emph{only}.
\end{assumeC}

\begin{assumeC}[Abstraction of jump component]
	\label{assume:Hybrid}
	The lengths  $\ilparenthesis{\final_{\stateregime}-\start_{\stateregime}} $  of the random durations for $\bvartheta_k = \bvartheta_{\stateInd}$ are i.i.d. with geometric distribution having  a mean of $\jump^{-1}$, where $\jump$ is the jump/change probability  (usually less than $5\%$).   Furthermore, assume that $ \ilparenthesis{\final_{\stateregime} - \start_{\stateregime}} >\window \equiv   \max\set{ p+1,   \jump^{-1}/10 }  $ w.p.1.
\end{assumeC}

\begin{assumeC}[Abstraction of general trend-stationary system, including both jump and diffusion components]
	\label{assume:HybridRelaxed} 
	In addition to C.\ref{assume:Hybrid},      
 let  $\driftBound'$ be  the smallest number such that 
	the within-regime oscillation is restricted by   $ \norm{\bvartheta_k-\bvartheta_{\stateInd}} \le \driftBound '$  	 w.p.1.  for $\start_{\stateregime}\le k \le \final_{\stateregime}$ and for all   $\stateregime\in\natural$. Further assume that $\driftBound'$ is no larger than $\driftBound$, where $\driftBound$  is the smallest number such that the cross-regimes jump is restricted by $\norm{ \bvartheta_{ \ilparenthesis{\stateregime+1}}-  \bvartheta_{\stateInd}}  \le \driftBound $.
\end{assumeC}

Let us also provide some remarks regarding these  assumptions.
\begin{itemize}\item   In a majority of  applications to identification and adaptive system theory \cite{widrow1977stationary,ljung1977positive}, 
	 a positive-definite matrix $\bH$ such that $ \bg\ilparenthesis{\btheta}  = \bH \ilparenthesis{\btheta-\bvartheta} $ does exist; i.e., C.\ref{assume:gRegimeForm} holds.
	
	Following the discussion in Subsection \ref{subsect:ratio}, denote  $ \convexPara_{\stateInd}\equiv \uplambda_{\min }\ilparenthesis{\bH_{\stateInd}}  $ and  $ \LipsPara_{\stateInd}\equiv \uplambda_{\max}\ilparenthesis{\bH_{\stateInd}} $. 
	\item  Given  that $ \E\ilparenthesis{\noise_k }=\zero $ and  $\Var\parenthesis{\noise_k} = \VarianceError_{\stateInd}$ for all $ k\in \ilbracket{\start_{\stateregime}, \final_{\stateregime}}  $ as in C.\ref{assume:ErrorUnbiased}, we have $ \E\ilparenthesis{\norm{\noise_k}^2}  = \tr\ilparenthesis{\VarianceError_{\stateInd}} $. In fact, C.\ref{assume:ErrorUnbiased}  is an  abstraction for  ``the diffusion/oscillation component will not change its drift or  diffusion matrix until a new jump takes place.''

	\item   C.\ref{assume:gRegimeForm} can
	be relaxed to \cite[Assumption B(ii)]{pflug1986stochastic}, i.e., $ \bg_k\ilparenthesis{\btheta} = \bH_{\stateInd} \ilparenthesis{\btheta-\bvartheta_{\stateInd}} +O\ilparenthesis{\norm{\ilparenthesis{\btheta-\bvartheta_{\stateInd}}}^2}  $. 
	C.\ref{assume:ErrorUnbiased}  can be relaxed to \cite[Assumption A(iii)]{pflug1986stochastic};  i.e., $ \norm{\noise_k\ilparenthesis{\hbtheta_k}}^2  $ can be upper bounded by  a linear function of $ \norm{\hbtheta_k-\bvartheta_k} $ w.p.1.  We use    stronger assumptions  in order to present the results more elegantly.

	\item 
	C.\ref{assume:Hybrid} captures the jump part of the  ``diffusion and jump and so on and so forth'' nature of the hybrid system, and discards  the oscillation part for the time being. C.\ref{assume:Hybrid} does not require that the number of states is finite,  as long as  the jump probability is small. Even though our detection algorithm (summarized in Algorithm \ref{algo:ChangeDetection} to appear) is developed based on  C.\ref{assume:Hybrid}, our numerical result supports that it is also robust to the case where  the following C.\ref{assume:HybridRelaxed} holds. C.\ref{assume:HybridRelaxed}  is less stringent than C.\ref{assume:Hybrid} and captures both the oscillation and the jump components for hybrid systems.
	
	\item The assumption $ \final_{\stateregime}-\start_{\stateregime}>\window$ is imposed so that the duration of each regime  $\bvartheta_{\stateInd}$ is   sufficiently long such  that 
	\begin{enumerate}[(i)]
		\item  the normal approximation \cite{pflug1986stochastic}
		of constant-gain estimates takes effect; \item the full rank of our pooled variance estimate (\ref{eq:variancepooled}) to appear  is ensured;  \item  a  sufficient amount of data can be gathered to compute the needed statistics (\ref{eq:T2Statistic})   to appear     for the $p$-dimensional problem. 
		
		\item When C.\ref{assume:Hybrid} holds, we  say that a  (regime) change arises at time $\ilparenthesis{k+1}$ if  $\bvartheta_{k+1}$  differs   from $\bvartheta_k$. When C.\ref{assume:HybridRelaxed} holds, we say that a change arises at time $\ilparenthesis{k+1}$   if $\bvartheta_k\in \Ball_{\driftBound'}\ilparenthesis{\bvartheta_{\stateInd}}$ and $ \bvartheta_{k+1} \in \Ball_{\driftBound'} \ilparenthesis{ \bvartheta_{ \ilparenthesis{\stateregime+1} } } $.
	\end{enumerate}
	
	When  $ \final_{\stateregime}-\start_{\stateregime} $ is sufficiently long, depending on the starting value $\hbtheta_{\start_{\stateregime}}$, the process $\hbtheta_k$ given by (\ref{eq:ConstantGainSGD}) may  first show  a phase of steadily approaching the solution $\bvartheta_{\stateInd}$,  and then shows  the oscillation around $\bvartheta_{\stateInd}$ without further approaching $\bvartheta_{\stateInd}$. We will call them   the  \emph{transient phase} (known as search phase in  \cite{pflug1988stepsize}) and the \emph{steady-state phase} (known as stationary/convergence phase in  \cite{pflug1988stepsize}) throughout the rest of our discussion.
\end{itemize}

\subsection{Base Case: One \emph{Unknown} Change Point Occurs For $\upkappa \le K$ }

 Let us start with a simplified scenario where $ \bvartheta_i  = \bvartheta_{\FirstState}$ for $ 1\le i \le \upkappa $ and $ \bvartheta_j =\bvartheta_{\SecondState} $ for  $\upkappa<  j \le  K $, such that there is a \emph{single} hypothesized change point at time $1<\upkappa < K $.     At each time instant $k$, we   test  the null hypothesis 
 \begin{equation} \label{eq:nullhypo}
 \hypo _0:  \bvartheta_1= \cdots = \bvartheta_{K} \remove{\in\Ball_{\FirstState}},  
 \end{equation} 
 versus the alternative hypothesis
 \begin{equation*}
  \hypo_1:    \bvartheta_1=\cdots = \bvartheta_k, \bvartheta_k\neq \bvartheta_{k+1}, \bvartheta_{k+1} = \cdots = \bvartheta_K\text{ for some $k$ such that $2\le k \le K-2$. } 
 \end{equation*}
During the first regime $k\le \upkappa$,  let    $ \bg_k\ilparenthesis{\btheta} $ be $ \bH_{\FirstState} \ilparenthesis{\btheta-\bvartheta_{\FirstState}} $ and $\Var\ilparenthesis{\noise_k} = \VarianceError_{\FirstState}$.
During the second regime $\upkappa < k \le K$, let  $ \bg_k\ilparenthesis{\btheta} $ be $ \bH_{\SecondState} \ilparenthesis{\btheta-\bvartheta_{\SecondState}} $ and $\Var\ilparenthesis{\noise_k} = \VarianceError_{\SecondState}$.    
\cite{pflug1988stepsize}  shows  that if the gain  is held to a constant $\gain$,     constant-gain SA estimate behaves differently compared to decaying-gain SA estimates, in that the estimates ultimately converge to a  region of radius $ O(\sqrt{a}) $ that contains $\bvartheta$  and then oscillates in that region without further approaching $\bvartheta$.  The steady-state covariance of $\hbtheta_k$ is\textemdash for a small value of  $\gain$\textemdash approximately equal to $\gain \VarianceLimiting $ where $\VarianceLimiting$ is the solution of $ \bH\VarianceLimiting+\VarianceLimiting\bH=\VarianceError $ and can be  given by   \cite{walk1977invariance}:
\begin{equation}\label{eq:LimitingVariance}
\VarianceLimiting  = \int_0^\infty e^{ t \bH } \VarianceError e ^{t \bH^\transpose} \diff t, \quad \text{or,} \quad 
\vector{ \VarianceLimiting }= \parenthesis{  \bI \otimes  \bH + \bH^\transpose\otimes \bI   } ^{-1}   \vector {\VarianceError }. 
\end{equation} 
When Assumption C.\ref{assume:Hybrid} holds, we    expect that $\hbtheta_k$ will  quickly reach the steady-state phase within each regime after a short period of a transient phase  provided that the gain $\gain$ is pre-tuned carefully.  Immediately, 
 $ \gain \cdot \tr\ilparenthesis{\VarianceLimiting}  $ is   approximately equal to $ \E\ilparenthesis{\norm{\hbtheta_k-\bvartheta}^2} $ during the steady-state phase. Hence, for $k\le \upkappa$, we expect   $  {\hbtheta_k} $ to be \emph{approximately} normally distributed with a mean of  $\bvartheta_{\FirstState}$ and  a variance  matrix  $\gain \VarianceLimiting_{\FirstState} $ given by $ \bH_{\FirstState} \VarianceLimiting_{\FirstState} + \VarianceLimiting_{\FirstState} \bH_{\FirstState} = \VarianceError_{\FirstState} $.  Let us   ignore the transient behavior after the jump point $\upkappa$ for the time being.   We expect that,  for $\upkappa < k \le K$,  $\hbtheta_k$ is going  to be approximately normally distributed with a mean of $\bvartheta_{\SecondState}$ and a variance matrix $\gain \VarianceLimiting_{\SecondState}$ given by  $ \bH_{\SecondState} \VarianceLimiting_{\SecondState} + \VarianceLimiting_{\SecondState} \bH_{\SecondState} = \VarianceError_{\SecondState} $. Of course, neither $\bH_{\FirstState}$ ($ \bH_{\SecondState} $) nor $\VarianceError_ {\FirstState}$ ($ \VarianceError_ {\SecondState} $) is known in reality.   
Since  the      information   $\VarianceError_{\FirstState}$ ($ \VarianceError_{\SecondState} $)  and $ \bH_{\FirstState} $   ($\bH_{\SecondState}$) needed to construct $\VarianceLimiting_{\FirstState}$ ($\VarianceLimiting_{\SecondState}$) are \emph{not} revealed to the agent(s), we can \emph{not} take  full advantage of the multivariate-normal approximation or  to detect the regime states,

 \subsubsection*{Test Statistic}
The \emph{multivariate Behrens\textendash Fisher problem} deals with testing the equality of means from two multivariate normal distributions when the dispersion matrices are \emph{unknown} and potentially \emph{unequal}. It inherits all the difficulties arising in  the univariate Behrens\textendash Fisher problem, including estimating the dispersion matrix using data, and the distributional approximation. 
Define: 
\begin{eqnarray}\label{eq:mean}
\bbtheta_{i:j} &=& \frac{1}{j-i+1}  \sum_{l=i}^{j} \hbtheta_l
\remove{ \bW _k &=& \frac{1}{K-2}\set{  \sum_{i=1}^k \bracket{\ilparenthesis{  \hbtheta_i -  \bbtheta_{0, k }   }   \ilparenthesis{  \hbtheta_i -  \bbtheta_{0, k }   } ^\transpose}   + \sum_{i=k+1}^K    \bracket{\ilparenthesis{  \hbtheta_i -  \bbtheta_{k, K }   }   \ilparenthesis{  \hbtheta_i -  \bbtheta_{k, K }   } ^\transpose }    } },\quad \text{for }i\le j,
\end{eqnarray}
and 
\begin{eqnarray}
\bW_{1:k} & = &  \frac{1}{k\parenthesis{k-1}}   \sum_{i=1}^k \bracket{\ilparenthesis{  \hbtheta_i -  \bbtheta_{1: k }   }   \ilparenthesis{  \hbtheta_i -  \bbtheta_{1:k }   } ^\transpose}, \label{eq:variance1}   \\ 
\bW_{k+1:K} &=&  \frac{1}{\ilparenthesis{K-k}\ilparenthesis{K-k-1}} \sum_{i=k+1}^K    \bracket{\ilparenthesis{  \hbtheta_i -  \bbtheta_{k+1:  K }   }   \ilparenthesis{  \hbtheta_i -  \bbtheta_{k+1:K }   } ^\transpose },  \label{eq:variance2}    \\ 
\bW _k &=& W_{1:k}  + W_{k+1:K}.  \label{eq:variancepooled}
\end{eqnarray}
When Assumption C.\ref{assume:Hybrid} holds, $\bW_k$\remove{have  $K>p+1$ and $ 2\le k \le K-2 $ to} is ensured   to have   full rank. To investigate a possible jump/change occurring after observation $k$, we use the following statistic for testing a difference between pre-change and post-change data at an \emph{assumed} change point $k$ as: 
 \begin{equation}\label{eq:T2Statistic}
T_{1:k:K}^2 \equiv   \parenthesis{   \bbtheta_{1:k }   -\bbtheta_{k+1:K }    }  ^\transpose \bW_k^{-1}  \parenthesis{   \bbtheta_{1:k }   -\bbtheta_{k+1:K }    }, \quad k=2,\cdots,K-2. 
\end{equation}  
\begin{rem}
	Though (\ref{eq:T2Statistic}) shares  some similarities with the Hotelling $T^2$ statistic, it is  fundamentally different in that the Hotelling $T^2$ statistic is \emph{not} robust to unequal covariance matrices. 
\end{rem}

 \subsubsection*{Distribution of Test Statistics}

 The main   issue  in applying  (\ref{eq:T2Statistic}) to  detecting change for streaming data  in an online fashion is that,  the probability of rejecting 
 the  null  via the $T^2$ test statistic defined in (\ref{eq:T2Statistic})  depends on the \emph{unknown} dispersion matrices $\VarianceLimiting_{\FirstState}$ and $\VarianceLimiting_{\SecondState}$ under the null hypothesis (\ref{eq:nullhypo}) that $\bvartheta_{\FirstState}$ equals  $\bvartheta_{\SecondState}$ as in  (\ref{eq:nullhypo}).  In practice, this dependency compromises the statistical  inference when   the underlying true  dispersion matrices $\VarianceLimiting_{\FirstState}$ and $\VarianceLimiting_{\SecondState}$   significantly deviate from each other or when the sample size is not sufficiently large  to estimate them accurately. Below are some existing remedies.

 The first remedy is to use $T_{1:k:K}^2$  with an  approximation of its  degrees of freedom  \cite{yao1965approximate}:
 \begin{align}\label{eq:YaoApproximation}
 	&T_{1:k:K}^2 \sim \frac{\upnu_{1:k:K} p }{\upnu_{1:k:K} -p+1} F_{p,\upnu_{1:k:K} -p+1}, \nonumber\\
 	&\text{ with }
 	\upnu_{1:k:K} = \set{  \frac{1}{k} \bracket{  \frac{\bd_{k}  ^\transpose    \bW_k^{-1 }  \bW_{1:k} \bW_k^{-1} \bd_{k} }{  \bd_{k}  ^\transpose    \bW_k^{-1 }    \bd_{k}    }        }^2       + \frac{1}{K-k} \bracket{  \frac{\bd_{k}  ^\transpose    \bW_k^{-1 }  \bW_{k+1:K} \bW_k^{-1} \bd_{k} }{  \bd_{k}  ^\transpose    \bW_k^{-1 }    \bd_{k}    }        }^2         } ^{-1}, \quad\quad 
 	\end{align}
 	where $F_{\cdot, \,\cdot}$ (with two positive inputs) denotes the probability distribution function for $F$-distribution with given degrees of freedoms,  $ \bd_{k} =  \bbtheta_{1:k }   -\bbtheta_{k+1:K }  $. 
 	In addition to the approximation in (\ref{eq:YaoApproximation}), there are several others, including Johansen's approximation \cite{johansen1980welch}, and Nel and Van der Merwe's  approximation \cite{nel1986solution}. 
 	
 	 The second remedy follows from \cite{krishnamoorthy2004modified} which  proposed another approximation where the approximated degrees of freedom is   guaranteed to be nonnegative:  
 	\begin{align}\label{eq:YuApproximation}
 	&	T_{1:k:K}  ^2\sim \frac{\upnu_{1:k:K}  p }{\upnu_{1:k:K} -p+1} F_{p,\upnu_{1:k:K} -p+1}, \nonumber\\
 	& \text{ with }
 	\upnu_{1:k:K} = \frac{p+p^2}{  *    } ,
 	\end{align} 
 	where  the $*$ in (\ref{eq:YuApproximation})  is 
 	\begin{align*}
 	* &=  \frac{1}{k-1} \set{     \tr\bracket{  \ilparenthesis{  \bW_{1:k} \bW_k  ^{-1}}^{2}  }    + \bracket{ \tr\parenthesis{  \bW_{1:k} \bW_k^{-1} }  }^2        }   \nonumber\\
 	&\quad + \frac{1}{K-k-1} \set{     \tr\bracket{  \ilparenthesis{  \bW_{k+1:K} \bW_k  ^{-1}}^{2}  }    + \bracket{ \tr\parenthesis{  \bW_{k+1:K} \bW_k^{-1} }  }^2        }. 
 	\end{align*}
 	The approximation in (\ref{eq:YuApproximation}) has the best known size and power since 2004.

  \subsubsection*{A  Change Detection Strategy  for   Base Case}
 
 If the change point were known a priori to be at $\upkappa $, then $T_\upkappa^2$ will be the generalized likelihood ratio test statistic for testing a change between pre-$\upkappa$ and post-$\upkappa$ data. If the change point is unknown in advance, the maximum over all possible split points, $ \max_{2\le k \le K-2} T_k^2 $, is the generalized likelihood ratio test statistic for change in the mean. The maximizing index $\hat{\upkappa}  = \arg\max _{2\le k \le K-2} T_k^2$ is the maximum likelihood estimate  of the change/jump point.  
  Now  that there is a \emph{single} assumed change point up until time index $K$, a natural estimate for $2\le k\le K-2$ is 
 \begin{equation}\label{eq:tauBase}
 \hat{\upkappa}  = \arg\max _{2\le k \le K-2} T_{1:k:K}^2.
 \end{equation}
  This statistic fits well  for a \emph{single} change point  in a \emph{fixed} sample of size $K$. Unfortunately, even for such a  simplified base case, we are not able to accurately provide the 
 $\mathrm{ARL}_0$ and $\mathrm{ARL}_1$ for strategy 
 (\ref{eq:tauBase}), because   the distribution of  the maximum over a range of $T_{1:k:K}^2$ statistics gets extremely 
 complicated. 
 
   All we can conclude is that, after $\hat{\upkappa}$ is computed, we may use the \emph{approximated} $P$-value, denoted as $\hazard_{1:\hat{\uptau}:K}$,   to serve as a proxy for the probability of \emph{incorrectly} announcing  a  change arises,  whereas, in fact,  no change occurs.  Specifically,  $\hazard_{1:\hat{\upkappa}:K}$ is calculated by    $ F_{p,\upnu_{\hat{\upkappa}}, -p+1 } $
 evaluated at $ \ilparenthesis{\upnu_{\hat{\upkappa}} - p + 1 } T_{1:\hat{\uptau}:K}^2 / \parenthesis{\upnu_{\hat{\upkappa}}  p } $,
  \begin{equation}\label{eq:HazardRate}
 \hazard_{1:\hat{\upkappa}:K} \equiv 
 \int_{ \frac{ \upnu_{\hat{\upkappa}} -p+1 }{  \upnu_{\hat{\upkappa}} p  }  T_{1:\hat{\upkappa}:K}^2     }^{\infty }  \frac{  \Gamma\bracket{\frac{  \upnu_{\hat{\upkappa}} + 1 }{2}}  }{\Gamma \parenthesis{\frac{p}{2}} \Gamma\parenthesis{\frac{\upnu_{\hat{\upkappa}} -p+1}{2}} } \parenthesis{\frac{p}{\upnu_{\hat{\upkappa}}  -p+1} } ^{\frac{p}{2}}  \frac{x ^{ \frac{p-2}{2} } }{  \bracket{1+\parenthesis{\frac{p}{\upnu_{\hat{\upkappa}} -p+1}}x}^{\frac{ \upnu_{\hat{\upkappa}}+1 }{2}}   } \diff x.  \end{equation}

 \subsection{Building Block: Multiple \emph{Unknown}  Change Points  For  the Data Stream}
 When  we need to detect \emph{multiple} \emph{unknown} change points for a  data \emph{stream}, the problem   gets even more unwieldy. 
 First, we do \emph{not} know how many change points are upcoming  beforehand. 
 Second,   the   detection has to be performed on a stream of data $ \ilset{\hbtheta_k} $ in an \emph{online} fashion, which causes excessive storage and computational overhead for active monitoring as the stream gets longer and longer. Even for the base case where there is  \emph{one} unknown change point, the naive strategy of computing $ T^2_{1:j:k} $ for every  $j<k-1$ whenever a new $\hbtheta_k$ comes in is unrealistic.  The computational burden becomes increasingly heavy,  as the datastream grows  larger and larger  and as the number of possible change points increases. It is again unrealistic to achieve the change detection goal promptly, i.e., correctly announce $k$ to be a change point immediately after observing the information up till time $\uptau_k$, not to mention that the probabilistic error for $\hat{\upkappa}_{\stateregime+1}$ hinges upon that for   $\hat{\upkappa}_{\stateregime}$.

  To avoid further complications, we impose C.\ref{assume:Hybrid} for the following reasons.
  \begin{enumerate}[(i)]
  	\item It is assumed that the period of each regime should be sufficiently long, so that we can  gradually accrue  confidence in making detection decisions  within a certain time-frame. 
  	\item The random duration   $\ilparenthesis{ \final_{\stateregime} - \start_{\stateregime} }$ is assumed to be bounded from below by $\window> 0$ w.p.1. Then at each time instant $k\ge 2\window$, we can  use   a \emph{fixed} amount of data,  $\ilset{  \hbtheta_{k-2\window+1}, \hbtheta_{k-2\window+2},\cdots,\hbtheta_{k-1}, \hbtheta_k  }$, to test whether a change arose  at time index $k-\window$. 
  	
  \end{enumerate} 
  	
  	The ``elbow'' (i.e., the hazard rate at this point is lower than its two adjacent time points) point  on the curve of  $P$-value defined in \ref{eq:HazardRate}
  gets identified as a change point. See the details summarized in Algorithm \ref{algo:ChangeDetection}.   
 	\begin{algorithm}[!htbp]
 	\caption{Change Detection: Constant-gain SGD Algorithm (\ref{eq:ConstantGainSGD}) Using \emph{One}-Measurement at a Time } 
 	\begin{algorithmic}[1]  
 		\renewcommand{\algorithmicrequire}{\textbf{Input:}}
 		\renewcommand{\algorithmicensure}{\textbf{Output:}}
 	\Require a window size $\window$ (based on dimension $p$ and $\jump$),  a constant gain $\gain $, and a $P$-value threshold $\upalpha$. 
 		\State \textbf{set} $\hbtheta_0$, the best approximation available   at hand to estimate $\bvartheta_0$. 
 		\For{$ 1\le k  < 2\window $}
 		\State \textbf{update} $ \hbtheta_{k}\gets \hbtheta_{k-1} - \gain \hbg_{k-1}^{\SG}\ilparenthesis{\hbtheta_{k-1} } $. 
 		\EndFor
 		\For{$k\ge 2\window$}
 		\State \textbf{compute} $  \bbtheta_{k-2\window+1:k-\window} $ and $     \bbtheta_{k-\window+1:k}  $ per (\ref{eq:mean}).
 		\State \textbf{compute} $   \bW_{ k-2\window+1:k-\window}  $, $   \bW_{k-\window+1:k} $  per (\ref{eq:variance1}) to (\ref{eq:variance1}). 
 		\State  \textbf{compute} $ T_{k-2\window+1: k-\window: k } $ per (\ref{eq:T2Statistic}). 
 		\State \textbf{compute} corresponding   $P$-value $\hazard_k\gets  \hazard_{k-2\window+1:k-\window : k} $  per (\ref{eq:HazardRate}).  
 		\If{$k>2\window+1$}
 		\If{$\hazard_{k-1}< \min \set{\upalpha, \hazard_{k-2} , \hazard_{k} } $} 
 		\Ensure  $k-1$ as  a change point. 
 		\EndIf
 		\EndIf
 		\EndFor
 	\end{algorithmic}
 	\label{algo:ChangeDetection}
 \end{algorithm}     
\begin{rem}
	 By fixing the window $\window$,  Algorithm \ref{algo:ChangeDetection} has  \emph{constant} computational complexity and a \emph{fixed} amount of memory. 
\end{rem}

 \begin{prop}
 	\label{prop:Hazard}
 	Under C.\ref{assume:ErrorUnbiased}, C.\ref{assume:gRegimeForm}, and C.\ref{assume:Hybrid},   
 	the  probability  of incorrectly detecting that   a  change happened when,  in fact,  no change did occur, is  approximately  $ \hazard_{\hat{\upkappa}-\window+1:\hat{\uptau}:\hat{\upkappa}+\window} $, where the function  $\hazard$ (which takes three inputs) is defined in (\ref{eq:HazardRate}), and  $\hat{\upkappa}$ (suppressing the numbering if there are multiple identified points) is identified by  Algorithm \ref{algo:ChangeDetection}. 
 \end{prop}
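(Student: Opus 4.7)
The plan is to establish the claim in four stages: (i)~reduce to a single-regime problem via Assumption~C.\ref{assume:Hybrid}, (ii)~justify a multivariate normal approximation for the window of iterates through the Pflug/Walk steady-state theory, (iii)~recognise the test statistic as a multivariate Behrens--Fisher statistic on that approximated sample, and (iv)~invoke the Krishnamoorthy--Yu $F$-approximation (\ref{eq:YuApproximation}) to conclude that the quantity $\hazard_{\hat{\upkappa}-\window+1:\hat{\upkappa}:\hat{\upkappa}+\window}$ defined in (\ref{eq:HazardRate}) is the approximate tail probability under $\hypo_0$.

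\emph{Stage 1: single-regime reduction.} The event ``Algorithm~\ref{algo:ChangeDetection} declares $\hat{\upkappa}$ but no jump actually occurred in $\ilbracket{\hat{\upkappa}-\window+1,\hat{\upkappa}+\window}$'' forces the $2\window$ iterates in the testing window to correspond to a single regime $\stateregime$, since by C.\ref{assume:Hybrid} the minimum regime length $\ilparenthesis{\final_\stateregime-\start_\stateregime}$ exceeds $\window$ w.p.1.  On this event C.\ref{assume:gRegimeForm} gives $\bg_k\ilparenthesis{\btheta}=\bH_\stateInd\ilparenthesis{\btheta-\bvartheta_\stateInd}$ and C.\ref{assume:ErrorUnbiased} gives $\Var\ilparenthesis{\noise_k}=\VarianceError_\stateInd$ for every $k$ in the window, so the recursion (\ref{eq:ConstantGainSGD}) reduces to the classical constant-gain linear SGD studied in \cite{walk1977invariance,pflug1988stepsize}.

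\emph{Stage 2: normal approximation of the window.} I would then appeal to the steady-state distributional result (recalled in the text around equation (\ref{eq:LimitingVariance})): provided the window sits deep enough into the regime that the transient has been washed out (guaranteed because the minimal regime length strictly exceeds $\window$ and the constant gain $a$ is pre-tuned), each $\hbtheta_k$ is approximately $\mathrm{Normal}\ilparenthesis{\bvartheta_\stateInd,\gain\VarianceLimiting_\stateInd}$ with $\VarianceLimiting_\stateInd$ solving the Lyapunov equation $\bH_\stateInd\VarianceLimiting_\stateInd+\VarianceLimiting_\stateInd\bH_\stateInd=\VarianceError_\stateInd$.  To push this pointwise statement to the joint distribution of the $2\window$ iterates, I would truncate the linearised dynamics and use a blocking argument so that the two half-windows $\ilbracket{\hat{\upkappa}-\window+1,\hat{\upkappa}}$ and $\ilbracket{\hat{\upkappa}+1,\hat{\upkappa}+\window}$ behave, to leading order in $a$, like two independent samples from $\mathrm{Normal}\ilparenthesis{\bvartheta_\stateInd,\gain\VarianceLimiting_\stateInd}$ of common mean and common (but unknown) dispersion $\gain\VarianceLimiting_\stateInd$.

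\emph{Stage 3: Behrens--Fisher recognition and $F$-approximation.} Once Stage~2 is granted, the sample means $\bbtheta_{\hat{\upkappa}-\window+1:\hat{\upkappa}}$ and $\bbtheta_{\hat{\upkappa}+1:\hat{\upkappa}+\window}$ in (\ref{eq:mean}) and the scaled sample covariances $\bW_{\hat{\upkappa}-\window+1:\hat{\upkappa}}$, $\bW_{\hat{\upkappa}+1:\hat{\upkappa}+\window}$ in (\ref{eq:variance1})--(\ref{eq:variance2}) are approximately independent, with the means sharing a common value and the covariances being (scaled) Wishart matrices; C.\ref{assume:Hybrid}'s requirement $\window\ge p+1$ ensures they are almost surely invertible.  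The statistic $T^2_{\hat{\upkappa}-\window+1:\hat{\upkappa}:\hat{\upkappa}+\window}$ in (\ref{eq:T2Statistic}) is then exactly the multivariate Behrens--Fisher statistic for testing equality of means under unknown and unequal covariances; its null distribution is approximated by the scaled $F$ in (\ref{eq:YuApproximation}) with the data-dependent degrees of freedom $\upnu_{\hat{\upkappa}}$.  Integrating this $F$ density from the observed value to infinity is precisely the expression for $\hazard_{\hat{\upkappa}-\window+1:\hat{\upkappa}:\hat{\upkappa}+\window}$ given in (\ref{eq:HazardRate}), which therefore approximates the Type-I error of declaring $\hat{\upkappa}$ a change point when the window is in fact homogeneous.

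\emph{Main obstacle.} The tightest part of the argument is Stage~2.  Pflug's theorems deliver a marginal central limit theorem for a single iterate in the steady-state phase, but the $2\window$ consecutive iterates are strongly correlated through the recursion (\ref{eq:ConstantGainSGD}); the difficulty is to quantify how closely the joint law of these correlated iterates is approximated by $2\window$ i.i.d.\ draws from $\mathrm{Normal}\ilparenthesis{\bvartheta_\stateInd,\gain\VarianceLimiting_\stateInd}$, which is exactly what Stages~3--4 silently presume.  The geometric ergodicity of the linear recursion, together with C.\ref{assume:ErrorUnbiased} and small $a$, should make this error $o(1)$, but a fully rigorous rate would require a mixing bound on the iterate process and explains why the conclusion of the proposition is stated only as an approximation.
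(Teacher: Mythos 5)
Your proposal is correct and follows essentially the same route as the paper's own (much terser) justification: reduce to a single regime via C.3, invoke the Pflug/Walk steady-state normal approximation for the windowed iterates, and then read off the $P$-value from the Behrens--Fisher $F$-approximation in (\ref{eq:YuApproximation})--(\ref{eq:HazardRate}). Your Stage~2 caveat about the serial correlation of consecutive iterates versus the tacit i.i.d.\ treatment is exactly the gap the paper also leaves open (it simply "ignores the transient phase" and asserts approximate normality), which is why the proposition is stated only as an approximation.
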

 Proposition \ref{prop:Hazard} follows directly from the distribution of the test statistic (\ref{eq:T2Statistic}) in the  multivariate Behrens\textendash Fisher problem, and the approximation hinges upon the imposed assumptions: the Hessian matrix of $\loss_k\ilparenthesis{\cdot}$ remains constant within each regime under C.\ref{assume:gRegimeForm};   the observation errors  are i.i.d. mean zero  within each regime under   C.\ref{assume:ErrorUnbiased}. Moreover,  under C.\ref{assume:Hybrid}, after ignoring the transient phase between regimes,  we assume that the estimates $\hbtheta_k$ are approximately normally distributed around   $\bvartheta_{\stateInd}$ where $\stateregime$ is such that $ \start_{\stateregime}\le k \le \final_{\stateregime} $. 
 
 	\subsection{An Example for  Detecting Regime Change }

  Let us consider an example similar to Subsubsection \ref{subsect:ExamplePriorBound}, yet different in the sense that we no longer have access to $\convexPara_k$, $\LipsPara_k$, $\noiseBound _k$ and $\driftBound_k$ as defined in Section \ref{sect:UnconditionalError}.  Again, consider a simple case with $p=2$, where the   (unknown) nonstationary drift evolves according to:  
 \begin{equation}\label{eq:model1}
 \bvartheta_{k+1} = \begin{cases}
 \bvartheta_k , \quad \text{with a  probability of 99.9\%}, \\
 \bvartheta_k + 500 \parenthesis{ \cos\ilparenthesis{\upvarphi_k},\,\, \sin\ilparenthesis{\upvarphi_k}}^\transpose,  \quad \text{with a  probability of 0.1\%},
 \end{cases}
 \end{equation} 
  with $\bvartheta_0=\zero$ and  $\upvarphi_k  \stackrel{\mathrm{i.i.d.}}{\sim } \mathrm{Uniform}\ilbracket{0,2\uppi}$.   The observation error $\noise_k$ is again i.i.d. $\mathrm{Normal}\ilparenthesis{\zero, \upsigma_1^2\bI_p}$, and the Hessian matrix is again given in (\ref{eq:SimHessian}). Here, we use a constant gain $\gain =1/30$, which is the inverse of the Lipschitz continuity parameter of the gradient.

  Following Algorithm~\ref{algo:ChangeDetection},  we pick the window size $\window$ to be $25$, as we are expecting a jump to arise every $ 1/(.1\%) = 1000 $ iterations on average and the dimension $p=2$. Figure \ref{fig:detection1} shows    the \emph{true} jump point, at time $241$, $2412$, and $4644$ in red circles, and the identified jump point  (a very successful identification in this case)  in enlarged black stars. 
 \remove{  Unfortunately, the two identified jump points  still have large $p$-values as high as $ \ilparenthesis{0.2, 0.3} $, 
 	which is due to the \emph{transient} behavior of $\hbtheta_k$ of moving from one regime to the next regime. }

 \begin{figure}[!htbp]
 	\centering
 	\includegraphics[width=.65\linewidth]{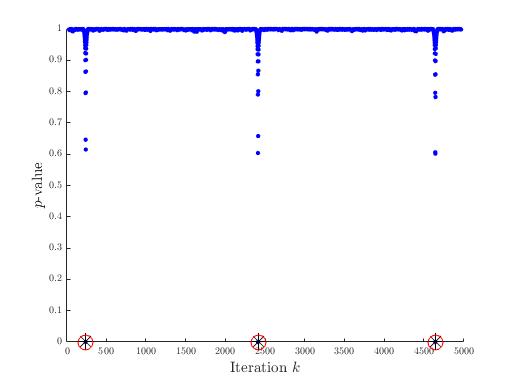}
 	\caption{Change detection using the ``elbow''-point of  the $P$-value curve, when $\ilset{\bvartheta_k}$ evolves according to (\ref{eq:model1})}
 	\label{fig:detection1}
 \end{figure}

 As it turns out, even though  the detection algorithm listed in Algorithm  \ref{algo:ChangeDetection} is proposed based on   C.\ref{assume:Hybrid} where only the ``jump'' structure is captured, it is robust to the case where C.\ref{assume:HybridRelaxed} is met. Figure \ref{fig:detection2} below shows  how Algorithm \ref{algo:ChangeDetection} detects the jump points $605$, $1051$, $2189$, $3300$, and $4522$ when the (unknown) nonstationary drift is evolved according to:  
 \begin{equation}\label{eq:model2}
 \bvartheta_k \text{ is i.i.d. uniformly distributed within }   \ilset{\given{\btheta}{\norm{\btheta-\bvartheta_{\stateInd} } \le 50 }  }, \text{ for } \start_{\stateregime}\le k \le \final_{\stateregime},
 \end{equation}
 where the jump probability of the sequence $\ilset{\bvartheta_{\stateInd}}$ is again $0.1\%$. 
 
  \begin{figure}[!htbp]
 	\centering
 	\includegraphics[width=.65\linewidth]{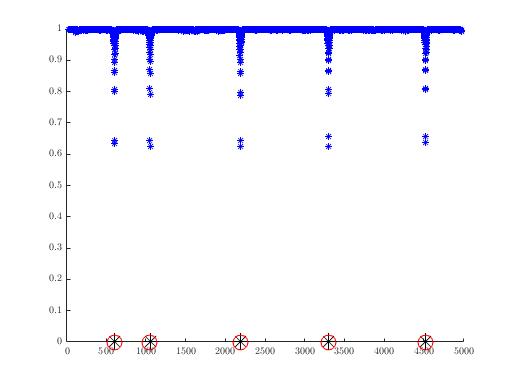}
 	\caption{Change detection using  the ``elbow''-point of the $P$-value curve, when $\ilset{\bvartheta_k}$ evolves according to (\ref{eq:model2})}
 	\label{fig:detection2}
 \end{figure}

  \subsection{Further Remarks}
 It is natural to envision    the adaptation  of a  distribution-free, nonparametric test statistic, whose   distribution under the null is  \emph{independent} of the data   to streamline the above change detection procedure and to  produce   a desired false alarm rate (FAR)  to be maintained for any stream. Granted,    \cite{ross2011nonparametric,lepage1971combination,kifer2004detecting} discussed several univariate distribution-free test statistics, aiming  to  detect  a change in the location and/or scale parameter of  a stream of random variables. However, we do not consider the  nonparametric method due to the expensive computational cost of computing ranks or the depth of the  minimum spanning tree  for the  multivariate extension \cite{friedman1979multivariate} in an online fashion  and the low power of a general nonparametric test that uses a distribution-free  test statistic.

 This section discusses a strategy to detect the jump components in the time-varying sequence $ \ilset{\bvartheta_k} $ by making use of the constant-gain SGD estimates $\hbtheta_k$. Nonetheless, Algorithm~\ref{algo:ChangeDetection} still requires   keeping track of the last  $2\window$ SGD estimates, and can only identify the jump after $\window$ iterations, even though it has a favorable detection power. The upcoming section discusses a way to identify jumps \emph{instantly} by imposing more stringent assumptions than C.\ref{assume:Hybrid}.

 \section{Gain Adaptation }\label{sect:GainOneRegime}

  We still focus on   the constant-gain recursion (\ref{eq:ConstantGainSGD})  and  propose a  gain-adaptation strategy  using the available information  $ \field_{k+1}  $ at time $\uptau_k$.   
  In general, there is no guarantee that the constant-gain SGD estimates    will converge to a  fixed   $\bvartheta$. Under weak conditions,  \cite{dieuleveut2017bridging} shows that  constant-gain SGD estimate $ \hbtheta_k  $        exhibits positive variance (uniformly bounded away from zero) for all $k$. Moreover,  \cite{dieuleveut2017bridging} also proves  that the constant-gain SGD iterates converge to their unique stationary distribution exponentially fast in $k$. 
 The results of  \cite{dieuleveut2017bridging} are  consistent with practical experience that  the constant-gain SA makes rapid progress in approaching $\bvartheta$, yet it remains in the neighborhood of $\bvartheta$ afterward.  Therefore, we are motivated to perform the following: 
  \begin{enumerate}[a)]
  	\item  increase the stepsize for faster cross-regime adaptation once a jump is detected;
  	\item reduce the stepsize in a controlled  manner to further reach the vicinity of $\bvartheta_{\stateInd}$ once the iterates are determined to oscillate around $\bvartheta_{\stateInd}$.
  \end{enumerate}

  Different from the previous section where Algorithm~\ref{algo:ChangeDetection} announces a change arises after $\window$ observations after the change point, this section aims to make the announcement as \emph{soon} as possible. Contrary to the previous section where a constant gain is used throughout the entire optimization process by disregarding whether we have observed a jump, this section   proposes  a method  to control  the non-diminishing step-size based upon  the observable information $\field_k$ defined in (\ref{eq:filtration}), aiming to achieve  better   performance within each regime and \emph{faster} adaptation between different regimes. 
 
 \subsection{Model   Assumptions }

 \begin{assumeC}	[Abstraction of a  regime that lasts a long duration of time]  \label{assume:Stationary}
 	Assume the following: 
 	\begin{enumerate}[(i)]
 			\item  $\bvartheta_k=\bvartheta$ for all $k$. 
 			
 			\item  $\bg_k\ilparenthesis{\btheta}   =  \bH\ilparenthesis{\btheta-\bvartheta}$ for some symmetric and positive-definite matrix $\bH$ for all $k$.

 		\item  $\noise_k$ are i.i.d. with    mean $\zero$ and    bounded covariance $\VarianceError$ for all $k$.

 	\end{enumerate} 
 \end{assumeC}

C.\ref{assume:Jump} is   more general than C.\ref{assume:Stationary}. 

\begin{assumeC}
	[Abstraction of regime-switch] \label{assume:Jump}
	Assume the following:
	\begin{enumerate}[(i)]
			\item  $\bg_k\ilparenthesis{\btheta} = \bH\ilparenthesis{\btheta-\bvartheta_k}$ for some symmetric and positive-definite matrix $\bH$ fo all $k$.

		\item $\noise_k $ are i.i.d. with   mean $\zero$ and   bounded covariance $\VarianceError$ for all $k$.

		\item  	Suppose  C.\ref{assume:Hybrid} holds.

	\end{enumerate}
\end{assumeC}

Let us first provide some discussions on the  Assumption C.\ref{assume:Stationary}.
\begin{itemize}
	\item At   first glance, 
	C.\ref{assume:Stationary} reduces the time-varying problem (\ref{eq:Minimization}) to the classical SA problem of minimizing a fixed  loss function  reviewed in Section~\ref{sect:SAview}. Nonetheless, this stationarity assumption is imposed to facilitate the discussion of the base case    in Subsection~\ref{subsect:BaseCase}. Later on, the 
	exposition  in Subsection~\ref{subsect:BaseCase} will be readily extended to a more general case in Subsection~\ref{subsect:BuildingBlock}.

	\item  The Markovian process $\ilset{\hbtheta_k}$ generated from     (\ref{eq:ConstantGainSGD}) 
	does \emph{not} approach  $\bvartheta$ beyond a certain distance, when  the gain  is held  constant.  C.\ref{assume:Stationary} is imposed so that we can devise a strategy to decide when to decrease the gain  to further approach $\bvartheta$.

	\item 
	Another reason to impose C.\ref{assume:Stationary}, for the time being, is as follows. Gain selection is not  a problem exclusive to nonstationarity tracking. For the stationary/fixed  setting  in classic SA literature,   the decaying sequence $ \gain_k =O(1/k) $ may not be desirable for practical usage, even though it is proven to be asymptotically optimal (in minimizing the trace of the limiting covariance of $\hbtheta_k$). Worse still, even when we pick the decaying gain sequence  with the  $O(1/k)$ decaying rate, the constant sitting in front of $ 1/k $ still drastically  affects our estimation  if it is  misspecified and is very sensitive to the initialization. In fact, (1) the absolute value of the gain plays a more important role than the convergence rate to zero, especially when we only have limited resources to run a \emph{finite} number of iterations, and (2) the  $O(1/k)$ sequence decays \emph{extremely slowly} to zero for a large $k$. Based on the  two observations, we    take the gain  to be constant, yet small, to mimic the behavior of the estimates $\hbtheta_k$ within \emph{finite} iterations.  
	Moreover,   the iterates reaching the vicinity of $\bvartheta$ quickly within finite iterations is much more important than convergence after potentially infinite iterations.

\end{itemize}

Let us also mention a few subtitles implied from Assumption C.\ref{assume:Jump}.
\begin{itemize}
	\item The Markovian process  $\ilset{\hbtheta_k}$ becomes   more difficult to analyze  due to the randomness of $ \start_{\stateregime} $ and $\final_{\stateregime}$.   
	To keep it concise, we still assume that $\ilparenthesis{\final_{\stateregime} - \start_{\stateregime}}$ is sufficiently long w.p.1.;  hence, the following assumption. 
	
	\item 
	C.\ref{assume:Jump} is very similar to those imposed in Subsection \ref{subsect:Hybrid}, except that we do not allow dependence of $\bH$ and $\VarianceError$ on the regime $\stateregime$ here. This is because, both $\bH$ and $\bV$ being constant, plays  an important role in Algorithm \ref{algo:adaptiveGain} to appear.

\end{itemize}

Overall, the discussion here  applies  to the following algorithm: 
  \begin{equation}\label{eq:RegimeConstantGainAlgo}
 \hbtheta_{k+1} = \hbtheta_k - \gainRegime \hbg_k^{\SG} \ilparenthesis{\hbtheta_k},\quad \start_{\stateregime}\le k \le  \final_{\stateregime}-1,  
 \end{equation}  
 as long as  $\ilparenthesis{\final_{\stateregime} - \start_{\stateregime}}$ in C.\ref{assume:Hybrid} is sufficiently large.   (\ref{eq:RegimeConstantGainAlgo}) is a straightforward extension of (\ref{eq:ConstantGainSGD}), and let us call  (\ref{eq:RegimeConstantGainAlgo})  ``SGD with regime-wise-constant gain.''
 For brevity's sake, we suppress  the dependence of $\hbtheta_k$ generated by (\ref{eq:ConstantGainSGD}) on $\gain$ under C.\ref{assume:Stationary},  or the dependence of $\hbtheta_k$ generated by (\ref{eq:RegimeConstantGainAlgo}) on $\gainRegime$ under C.\ref{assume:Jump}. Consequently,  $\hbtheta_k$   exhibits a relatively short (compared to the entire regime duration) transient phase and a relatively long steady-state phase, as noted  in Subsection \ref{subsect:Hybrid}.   
 
 Moreover,  part (i) in both  C.\ref{assume:Stationary} and C.\ref{assume:Jump} can
 be relaxed to \cite[Assumption B(ii)]{pflug1986stochastic}, i.e., $ \bg_k\ilparenthesis{\btheta} = \bH  \ilparenthesis{\btheta-\bvartheta_k } +O\ilparenthesis{\norm{\ilparenthesis{\btheta-\bvartheta_{k}}}^2}  $.  Also, part (ii) in both  C.\ref{assume:Stationary} and C.\ref{assume:Jump} can be  relaxed to \cite[Assumption A(iii)]{pflug1986stochastic};  i.e., $ \norm{\noise_k\ilparenthesis{\hbtheta_k}}^2  $ can be upper bounded by  a linear function of $ \norm{\hbtheta_k-\bvartheta_k} $ w.p.1.  We use    stronger assumptions      to present the results more elegantly.

 \subsection{Base Case: Detection of Transient Phase and Steady-State Phase}\label{subsect:BaseCase}

 Even though the constant-gain  SA iterates will  approach $\bvartheta$ in neither a.s.  nor m.s. sense, practitioners still implement SA with a constant gain \cite{dieuleveut2017bridging,spall2005introduction}. 
 As mentioned in Section \ref{sect:AdaptiveReview}, during  the transient phase, the constant-gain SA estimate   generated from (\ref{eq:ConstantGainSGD}) promptly  moves  towards the desired region and forgets the initial condition exponentially fast. Then  during the steady-state phase, the estimate  oscillates around $\bvartheta$ at a region of radius $ O(\sqrt{\gain  }) $.  
 The trade-off is obvious  that  a larger value of $\gain$ shortens the transient phase,  yet simultaneously enlarges the radius of the steady-state phase.  Understanding the transition between the transient phase and the steady-state phase enables us to  enhance  the  empirical performance of the constant-gain algorithm.

 The key puzzle in designing adaptive gain is to determine a statistical test to check the stationarity of the iterates generated from (\ref{eq:ConstantGainSGD}). 
 The  motivation for the stationarity check  comes from a gain-tuning rule in  \emph{deterministic} optimization: 
 increase/decrease the gain  if $  \ilbracket{\bg\ilparenthesis{\hbtheta_{k+1}}}^\transpose\ilbracket{\bg\ilparenthesis{\hbtheta_k}}$ is positive/negative. In a deterministic scenario with  $\noise_k=\zero$ for all $k$,   the recursion $ \hbtheta_{k+1} = \hbtheta_k-\gain  \bg\ilparenthesis{\hbtheta_k} = \hbtheta_k-\gain\bH\ilparenthesis{\hbtheta_k-\bvartheta} 
 $ converges to $\bvartheta$  as long as the gain sequence  $\gain $ is smaller than $ \uplambda_{\min}\ilparenthesis{\bH} $ after some $k$. Note that the convergence of $\hbtheta_k$ to $\bvartheta$ under \emph{noise-free} scenario does \emph{not} require the constant gain  $\gain$ to go to   zero.

 It seems natural to extend the above to use $  \ilbracket{\hbg_{k+1}\ilparenthesis{\hbtheta_{k+1}}}^\transpose\ilbracket{\hbg_k\ilparenthesis{\hbtheta_k}}$ as an 
 indicator for both the transient phase and the steady-state phase. However, we have to handle the \emph{noise} $\noise_k$ in SA problem setting.
 During the transient phase, the observations $\ilset{\hbg_{k}\ilparenthesis{\hbtheta_k}}$ are auto-correlated as successive gradient observations that  are \emph{roughly} pointing to the same direction. During the steady-state phase, successive gradient estimates \emph{tend} to point to opposite directions. To shorten the transient phase, we are better off   increasing the gain $\gain $ by a factor of $\increase$. To move towards the optimum during the steady-state phase, it is advisable to decrease the 
 gain  by a factor of  $\shrink$. 
  To  compensate for  the noise effect, we will alternatively use   the running average of the inner product of the  successive gradient across a  sliding window.  References
 \cite[Sect. 2]{kesten1958accelerated} and  \cite{pflug1988stepsize}\remove{\cite{saridis1970learning} and \cite{ruszczynski1986method}} provide  a high-level discussion on  this statistic.  Nevertheless,  little work has been done in determining the critical values to draw a confident conclusion of  either a  transient or steady-state phase.

 \begin{thm}
 	[Detection of Transient Phase and Steady-State Phase] \label{thm:InnerProduct} 
 	Under C.\ref{assume:Stationary}, let us pick the gain $\gain$ such that 
 	 \begin{equation}\label{eq:gainStationary}
 	\gain <  \LipsPara^{-1}, 
 	\end{equation}
 	where $ \uplambda_{\max}\ilparenthesis{\bH} =  \LipsPara  $. 
 \begin{enumerate}[(1)]
 	\item During the  \emph{steady-state} phase for \emph{large} $k$, we have 
 	\begin{equation}
 	\E \set{  \ilbracket{\hbg_k \ilparenthesis{\hbtheta_k }}^\transpose \hbg_{k-1} \ilparenthesis{\hbtheta_{k-1} }   }  \approx  -\gain\tr\ilparenthesis{\bH\VarianceError} + O\ilparenthesis{\gain^2}, \text{ for large $k$ and   for  (\ref{eq:gainStationary})},
 	\end{equation}
 	and   
 	\begin{equation}\label{eq:InnerProductVariance}
 	\Var\set{ \frac{1}{\window} \sum_{ k-\window+1  }^{k}  \ilbracket{\hbg_i  \ilparenthesis{\hbtheta_i }}^\transpose \ilbracket{\hbg_{i-1} \ilparenthesis{\hbtheta_{i-1} }}   } \le \frac{1}{\window}O\ilparenthesis{\gain }, \text{ for large $k$ and for   (\ref{eq:gainStationary})},
 	\end{equation}
 	where $\window>0$ is  an arbitrary   window size. 
 	\item During  the \emph{transient} phase for \emph{small} $k\ge 1$, we have 
 	\begin{align}\label{eq:PhaseTransient}
 	 	\E \set{  \ilbracket{\hbg_k \ilparenthesis{\hbtheta_k }}^\transpose \hbg_{k-1} \ilparenthesis{\hbtheta_{k-1} }   }&  \le 	 \hbtheta_0^\transpose\bH^2\hbtheta_0 - \gain\bracket{ \hbtheta_0^\transpose\bH^3\hbtheta_0  +  \tr\ilparenthesis{\bH\bV} }+ O(\gain^2), \nonumber\\
 	&\quad\quad\quad\quad\quad\quad\quad\quad \text{ for $\gain$ satisfying (\ref{eq:gainStationary}).}
 	\end{align}
 \end{enumerate} 
 	
 \end{thm}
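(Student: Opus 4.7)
Without loss of generality translate coordinates so that $\bvartheta=\zero$. Under C.\ref{assume:Stationary}, the observable gradient is affine: $\hbg_k\ilparenthesis{\hbtheta_k}=\bH\hbtheta_k+\noise_k$, so that the recursion (\ref{eq:ConstantGainSGD}) reduces to the linear stochastic difference equation
\begin{equation*}
\hbtheta_{k+1}=\bA\hbtheta_k-\gain\noise_k,\qquad \bA\equiv\bI_p-\gain\bH.
\end{equation*}
The eigenvalue condition $\gain<\LipsPara^{-1}$ in (\ref{eq:gainStationary}) guarantees $\|\bA\|<1$, so the Markov chain $\ilset{\hbtheta_k}$ is exponentially ergodic and its stationary covariance $\gain\VarianceLimiting$ solves the discrete Lyapunov equation, which, to leading order in $\gain$, agrees with the continuous Lyapunov equation $\bH\VarianceLimiting+\VarianceLimiting\bH=\bV$ from (\ref{eq:LimitingVariance}). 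All subsequent computations will hinge on this affine representation together with the fact that $\noise_{k-1}$ is independent of $\hbtheta_{k-1}$ (but {not} of $\hbtheta_k$).

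For Part (1), I would expand
\begin{equation*}
\hbg_k\ilparenthesis{\hbtheta_k}^\transpose\hbg_{k-1}\ilparenthesis{\hbtheta_{k-1}} =\hbtheta_k^\transpose\bH^2\hbtheta_{k-1}+\hbtheta_k^\transpose\bH\noise_{k-1}+\noise_k^\transpose\bH\hbtheta_{k-1}+\noise_k^\transpose\noise_{k-1},
\end{equation*}
and take expectations term-by-term. Substituting $\hbtheta_k=\bA\hbtheta_{k-1}-\gain\noise_{k-1}$ and using the independence of $\noise_k$ from $\field_k$, the only surviving contributions are $\E[\hbtheta_{k-1}^\transpose \bA\bH^2\hbtheta_{k-1}]$ and $-\gain\E[\noise_{k-1}^\transpose\bH\noise_{k-1}]=-\gain\tr(\bH\bV)$. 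Commuting $\bA$ with $\bH$ yields $\bA\bH^2=\bH^2-\gain\bH^3$; in steady state the trace identity $\E[\hbtheta^\transpose\bM\hbtheta]=\gain\tr(\bM\VarianceLimiting)+o(\gain)$ reduces the quadratic form to the order $\gain$. Multiplying $\bH\VarianceLimiting+\VarianceLimiting\bH=\bV$ on the left by $\bH$ and taking traces shows $2\tr(\bH^2\VarianceLimiting)=\tr(\bH\bV)$, which collapses the leading pieces into a single multiple of $\gain\tr(\bH\bV)$ plus an $O(\gain^2)$ remainder arising from $\gain^2\tr(\bH^3\VarianceLimiting)$ and the geometric decay of transient contamination.

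The variance bound in (\ref{eq:InnerProductVariance}) will be the delicate step. Writing $X_i\equiv \hbg_i^\transpose\hbg_{i-1}$, I would decompose $\Var(\window^{-1}\sum X_i)=\window^{-2}\sum_{i,j}\Cov(X_i,X_j)$ and show that (i) for $|i-j|\ge 2$ the covariance vanishes at leading order because the ``pure-noise'' cross term $\noise_i^\transpose\noise_{i-1}\cdot\noise_j^\transpose\noise_{j-1}$ factorizes through an $\E[\noise_l]=\zero$; (ii) for $|i-j|=1$ the shared factor $\hbg_{i-1}$ produces a covariance of order $\gain$ thanks to the steady-state bound $\E[\|\hbtheta_k\|^2]=O(\gain)$; (iii) the diagonal term $\Var(X_i)$ is where the $O(\gain)$ must come from, and here one has to exploit the exact centering established in Part~(1) to subtract the mean before estimating the second moment --- after centering, every surviving quadratic form in the expansion of $(X_i-\E X_i)^2$ contains at least one factor of $\bH\hbtheta_{i}$ or $\bH\hbtheta_{i-1}$, each of which contributes an extra $\sqrt{\gain}$. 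Assembling the three bounds gives the claimed $\window^{-1}O(\gain)$. \emph{This is the step I expect to be the main obstacle,} since a naive bound of $\Var(X_i)$ leaves the leading $\tr(\bV^2)$ piece, and the reduction to $O(\gain)$ requires using the steady-state concentration of $\hbtheta$ together with the mean cancellation derived above.

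For Part (2), the transient bound follows by direct computation at the first iterate, where $\hbtheta_0$ is deterministic and the four-term expansion evaluated at $k=1$ gives exactly
\begin{equation*}
\E\bracket{\hbg_1\ilparenthesis{\hbtheta_1}^\transpose\hbg_0\ilparenthesis{\hbtheta_0}}= \hbtheta_0^\transpose\bH^2\hbtheta_0-\gain\bracket{\hbtheta_0^\transpose\bH^3\hbtheta_0+\tr(\bH\bV)},
\end{equation*}
with no $O(\gain^2)$ remainder. For general small $k$ I would iterate $\hbtheta_k=\bA^k\hbtheta_0-\gain\sum_{i=0}^{k-1}\bA^{k-1-i}\noise_i$, substitute into the four-term expansion, and use $\|\bA\|<1$ together with $\bA\preceq \bI-\gain\bH$ to show that $\bA^k\hbtheta_0$ produces a quadratic form monotonically dominated by $\hbtheta_0^\transpose\bH^2\hbtheta_0-\gain\hbtheta_0^\transpose\bH^3\hbtheta_0$, while the noise-driven contribution is controlled by the partial-sum covariance $\gain\sum_{j=0}^{k-1}(\bA^j)^\transpose\bV\bA^j$, whose leading trace against $\bH^2$ is absorbed into the same $-\gain\tr(\bH\bV)$ term (after applying the identity from Part~(1)) with the residual being $O(\gain^2)$. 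This yields the claimed upper bound uniformly for $k$ in the transient window.
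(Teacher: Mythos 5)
Your proposal follows essentially the same route as the paper's proof: set $\bvartheta=\zero$, write the recursion as the linear stochastic difference equation $\hbtheta_{k+1}=(\bI_p-\gain\bH)\hbtheta_k-\gain\noise_k$, iterate it to a moving-average representation, and then split the analysis into the exponentially decaying deterministic part (transient) and the stationary stochastic part (steady state); your Part (2) computation at $k=1$ is exactly the paper's. One substantive difference in Part (1): the paper disposes of the quadratic-form contribution by asserting $\tr\ilparenthesis{\bH^2\VarianceMA}=O(\gain^2)$, whereas you invoke the (correct) trace identity $2\tr\ilparenthesis{\bH^2\VarianceLimiting}=\tr\ilparenthesis{\bH\VarianceError}$ obtained from the Lyapunov equation. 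Carried through, your identity gives $+\tfrac{\gain}{2}\tr\ilparenthesis{\bH\VarianceError}$ from the quadratic form and hence a leading term $-\tfrac{\gain}{2}\tr\ilparenthesis{\bH\VarianceError}$ rather than $-\gain\tr\ilparenthesis{\bH\VarianceError}$; you should either reconcile this constant with the statement or note that only the sign and the $O(\gain)$ scale matter for the detection rule.

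The genuine gap is in your step (iii) for the variance bound (\ref{eq:InnerProductVariance}). Your claimed mechanism — that after subtracting the mean, every surviving term in $(X_i-\E X_i)^2$ carries a factor of $\bH\hbtheta_i$ or $\bH\hbtheta_{i-1}$ and hence an extra $\sqrt{\gain}$ — is false. Expanding $X_i=\ilbracket{\bH\hbtheta_i+\noise_i}^\transpose\ilbracket{\bH\hbtheta_{i-1}+\noise_{i-1}}$, the pure-noise term $\noise_i^\transpose\noise_{i-1}$ is already mean-zero (so centering does nothing to it) and contributes $\E\ilbracket{(\noise_i^\transpose\noise_{i-1})^2}=\tr\ilparenthesis{\VarianceError^2}=O(1)$ to the diagonal of your covariance decomposition. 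The diagonal therefore yields $\window^{-1}O(1)$, not $\window^{-1}O(\gain)$, and no amount of steady-state concentration of $\hbtheta$ rescues it, since that term contains no $\hbtheta$ at all. You correctly identified this as the obstacle but then asserted it away. (For what it is worth, the paper's own treatment of this step is also the weakest link: it bounds the cross-covariances $\VarianceG_{k:l}$ of the gradient \emph{vectors} and claims the scalar covariance bound "immediately follows," which does not address the $k=l$ term either. If you want a defensible statement, the off-diagonal decay $\ilparenthesis{1-\gain\convexPara}^{\abs{i-j}}$ does give a geometric tail, but the diagonal forces the bound to be $\window^{-1}O(1)$ unless additional structure on $\noise_k$ is assumed.)
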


 \begin{proof}[Proof of Theorem~\ref{thm:InnerProduct}]
Assume that $\bvartheta=\zero$ w.l.o.g., as the following discussion remains to be valid if $\hbtheta_k$ is replaced by $\ilparenthesis{\hbtheta_k-\bvartheta}$ for a nonzero $\bvartheta$.

  	Under C.\ref{assume:Stationary}, we  can rewrite (\ref{eq:ConstantGainSGD}) as follows:
  		\begin{eqnarray}
  	&\quad\quad &\hbtheta_{k+1} = \hbtheta_k - \gain  \bH \hbtheta_k  - \gain \noise_k = \ilparenthesis{\bI-\gain\bH}\hbtheta_k - \gain\noise_k ,\quad\text{ for } k\ge 0, \label{eq:ConstantGainAlgoStationary0} \\ 
  	&\implies &	\hbtheta_{k} = \ilparenthesis{\bI-\gain\bH}^{k} \hbtheta_0   - \gain \sum_{i=0}^{k-1} \bracket{ \ilparenthesis{\bI-\gain\bH}^{i} \noise _{k-1-i}   }\quad \text{ for }k\ge 1.  \label{eq:ConstantGainAlgoStationary}
  	\end{eqnarray} 
  From (\ref{eq:ConstantGainAlgoStationary}) we know that the Markovian process $\hbtheta_k$ generated from (\ref{eq:ConstantGainSGD}) is comprised of a deterministic part   $\ilparenthesis{\bI-\gain\bH}^{k} \hbtheta_0$ (assuming that there is no randomness in $\hbtheta_0$) and a  stochastic   part $- \gain \sum_{i=0}^{k-1} \ilbracket{ \ilparenthesis{\bI-\gain\bH}^{i} \noise _{k-1-i}   }$, which has a mean of $\zero$ under C.\ref{assume:Stationary}. 
  
   With a gain  satisfying (\ref{eq:gainStationary}), 
  the deterministic part       goes to $\zero$ exponentially as $k$ grows, and the stochastic part converges in law to the stationary process $ -\gain \sum_{i=0}^{\infty} \ilparenthesis{\bI-\gain\bH}^{i} \noise_{k-1-i} $. Resultingly, during the transient phase for \emph{small} $k$, the linear convergence of the deterministic part is dominating compared with the stochastic part with a mean of zero; then during the steady-state phase for \emph{large} $k$, the oscillating characteristic of the stationary process $ -\gain \sum_{i=0}^{\infty} \ilparenthesis{\bI-\gain\bH}^{i} \noise_{k-1-i} $ dominates  compared to the deterministic part that decays to $\zero$ exponentially fast in $k$.

  		Similarly, we can also rewrite the \emph{noisy} gradient observation as follows:
  			\begin{eqnarray}
  		&\quad & \hbg_k\ilparenthesis{\hbtheta_k} = \bH \hbtheta_k+\noise_k  \nonumber\\
  		&\implies &  \hbg_k\ilparenthesis{\hbtheta_k} = \bH \ilparenthesis{\bI-\gain\bH}^k\hbtheta_0 + \noise_k - \gain\bH \sum_{i=0}^{k-1}  \bracket{\ilparenthesis{\bI-\gain\bH}^i\noise_{k-1-i} } ,  \label{eq:ConstantGainAlgoStationary2}
  		\end{eqnarray}
  		where the implication in (\ref{eq:ConstantGainAlgoStationary2}) uses  (\ref{eq:ConstantGainAlgoStationary}) directly.  From (\ref{eq:ConstantGainAlgoStationary2}), we see  that   $\hbg_k\ilparenthesis{\hbtheta_k} $ 
  		is comprised of a deterministic part 
  		$\bH \ilparenthesis{\bI-\gain\bH}^k\hbtheta_0 $ (assuming that there is no randomness in $\hbtheta_0$) and a stochastic part $ \ilset{\noise_k-  \gain\bH \sum_{i=0}^{k-1}  \ilbracket{\ilparenthesis{\bI-\gain\bH}^i\noise_{k-1-i} } } $, which has  a   mean of $\zero$ under C.\ref{assume:Stationary}. Again,  with a constant gain $\gain$ such that (\ref{eq:gainStationary}) holds, we see  the deterministic part  goes to $\zero$ exponentially as $k$ grows,      and the stochastic part converges in law to a   stationary process $ \noise_k   -\gain \bH  \sum_{i=0}^{\infty} \ilparenthesis{\bI-\gain\bH}^{i} \noise_{k-1-i}  $. 
  		
  	 \textbf{Let us consider  the \emph{steady-state} phase for \emph{large} $k$.} The  multivariate moving-average process $ -\gain \sum_{i=0}^{\infty} \ilparenthesis{\bI-\gain\bH}^{i} \noise_{k-1-i} $ is mean zero. Denote the covariance matrix for $-\gain \sum_{i=0}^{\infty} \ilparenthesis{\bI-\gain\bH}^{i} \noise_{k-1-i} $ as $\VarianceMA $.  For large $k$, $\VarianceMA $   satisfies the following: 
  			\begin{equation}\label{eq:VarianceMA1}
  			\VarianceMA  =    \ilparenthesis{\bI-\gain\bH}\VarianceMA  {\ilparenthesis{\bI-\gain\bH}}+ \gain^2 \VarianceError, \,\, \text{for } \gain \text{ satisfying (\ref{eq:gainStationary})}, 
  			\end{equation}	by taking the variance on both sides of (\ref{eq:ConstantGainAlgoStationary0}) and then letting $k\to\infty$.  The solution to (\ref{eq:VarianceMA1}) can be explicitly expressed as:  \begin{equation}\label{eq:VarianceMA2}
  			\VarianceMA = \gain ^2 \sum_{i=0}^\infty \ilparenthesis{\bI-\gain\bH}^i \VarianceError \ilparenthesis{\bI-\gain\bH}^{i}. 
  			\end{equation}
  			Meanwhile, the multivariate moving-average process $\noise_k    -\gain \bH  \sum_{i=0}^{\infty} \ilparenthesis{\bI-\gain \bH}^{i} \noise_{k-1-i}  $   also has a mean of zero. Denote the covariance matrix for $\noise_k    -\gain \bH  \sum_{i=0}^{\infty} \ilparenthesis{\bI-\gain \bH}^{i} \noise_{k-1-i}  $ as $\VarianceG _ k $. For large $k$, $\VarianceG _ k $  satisfies the following:  
  				\begin{equation}\label{eq:VarianceG1}
  				\VarianceG _k  =  \gain ^2 \bH \set{\sum_{i=0}^{\infty} \bracket{  \ilparenthesis{\bI-\gain\bH}^{i}  \VarianceError \ilparenthesis{\bI-\gain\bH }^i      } }   \bH  +  \VarianceError = \bH \VarianceMA \bH + \VarianceError, \,\, \text{for }   \text{   (\ref{eq:gainStationary})},
  				\end{equation}
  					by taking the variance on both sides of (\ref{eq:ConstantGainAlgoStationary2})   and using (\ref{eq:VarianceMA2}). Moreover, for $k\ge l$, the covariance of $\hbg_k\ilparenthesis{\hbtheta_k}$ and $\hbg_l\ilparenthesis{\hbtheta_l }$ for large $l$  is 
  					\begin{equation}
  				\label{eq:VarianceG2} \VarianceG_{k:l}\equiv 	\Cov\ilparenthesis{\hbg_k\ilparenthesis{\hbtheta_k}, \hbg_l \ilparenthesis{\hbtheta_l} } = \ilparenthesis{\bI-\gain  \bH}^{k-l}  \VarianceG_k .
  				\end{equation}  
  		When (\ref{eq:gainStationary}) holds, we have $ \norm{\bI-\gain\bH} = 1-\gain\convexPara\in\ilparenthesis{0,1} $, where $\convexPara= \uplambda_{\min}\ilparenthesis{\bH}$. Hence,   
  		\begin{equation}
  		\label{eq:VarianceG3}
  		 \norm{\VarianceG_{k:l} } \le \ilparenthesis{1-\gain\convexPara}^{k-l} \norm{\VarianceG_k} 
  		\end{equation}
  		where the number $\ilparenthesis{1-\gain\convexPara}^{k-l} $ arises due to $ \norm{\ilparenthesis{\bI-\gain\bH}^{k-l}} \le \norm{  \bI-\gain\bH }^{k-l}   =  \ilparenthesis{1-\gain\convexPara}^{k-l} $. 
  		
  		Using (\ref{eq:ConstantGainAlgoStationary2}),   we have the following approximation:	\begin{eqnarray}
  		&\quad & \ilbracket{\hbg_k \ilparenthesis{\hbtheta_k }}^\transpose \hbg_{k-1} \ilparenthesis{\hbtheta_{k-1} }\nonumber\\
  		& \approx & \set{   \noise_k   - \gain \bH \sum_{i=0}^{k-1}  \bracket{\ilparenthesis{\bI-\gain\bH}^i\noise _{k-1-i} } } ^\transpose  \set{  \noise_{k-1 } - \gain\bH \sum_{i=0}^{k-2}  \bracket{\ilparenthesis{\bI-\gain\bH}^i\noise_{k-2-i} } }\nonumber\\
  		&=& \set{  \noise_k  -  \gain \bH \noise_{k-1}  -   \gain\bH   \sum_{i=1}^{k-1}  \bracket{\ilparenthesis{\bI-\gain\bH}^i\noise_{k-1-i} } } ^\transpose \nonumber\\
  		&\quad &\quad  \cdot  \set{   \noise_{k-1 } - \gain\bH \sum_{i=0}^{k-2}  \bracket{\ilparenthesis{\bI-\gain\bH}^i\noise_{k-2-i} } } \nonumber\\ 
  		&=& \set{  \noise_k  -  \gain \bH \noise_{k-1}  -   \gain\bH  \ilparenthesis{\bI-\gain\bH } \sum_{i=0}^{k-2}  \bracket{\ilparenthesis{\bI-\gain\bH}^i\noise_{k-2-i} } } ^\transpose  \nonumber\\
  		&\quad & \quad \cdot  \set{   \noise_{k-1 } - \gain\bH \sum_{i=0}^{k-2}  \bracket{\ilparenthesis{\bI-\gain\bH}^i\noise_{k-2-i} } }, \,\,\,\text{ for large $k$ and   $\gain$ satisfying (\ref{eq:gainStationary}), }\nonumber\\
  		\end{eqnarray}
  		where  the first approximation is claimed after discarding the deterministic part in (\ref{eq:ConstantGainAlgoStationary2}) for \emph{large} $k$.  Combining  the  above observations,  we have the following:
  				 \begin{align}
  				 &   \E \set{  \ilbracket{\hbg_k \ilparenthesis{\hbtheta_k }}^\transpose \hbg_{k-1} \ilparenthesis{\hbtheta_{k-1} }   }\nonumber\\
  				 &\,\,=  \E \parenthesis{-\gain \noise_{k-1}^\transpose\bH \noise_{k-1}  }\nonumber\\
  				 &\,\, \,\,\,\,\,+ \E \parenthesis{  \set{ -\gain   \sum_{i=0}^{k-2}  \bracket{   \ilparenthesis{\bI-\gain\bH}^i\noise_{k-2-i}   }   } ^\transpose \ilparenthesis{\bI-\gain\bH} \bH ^2 \set{ -\gain    \sum_{i=0}^{k-2}  \bracket{   \ilparenthesis{\bI-\gain\bH}^i\noise_{k-2-i}   }   }}   \nonumber\\ 
  				 &\,\,\approx   -\gain\tr\ilparenthesis{\bH\VarianceError} + \tr\ilparenthesis{\ilparenthesis{\bI-\gain\bH} \bH^2   \VarianceMA  }\nonumber\\
  				 &\,\,=  -\gain\tr\ilparenthesis{\bH\VarianceError} + \tr\ilparenthesis{\bH^2\VarianceMA }  - \gain\tr\ilparenthesis{\bH^3\VarianceMA }  \nonumber \\
  				 & \,\,=  -\gain\tr\ilparenthesis{\bH\VarianceError} + O(\gain^2),   \,\,\,\text{ for large $k$ and for $\gain$ satisfying (\ref{eq:gainStationary}), }
  				 \end{align} where the   approximation uses    (\ref{eq:VarianceMA1}), (\ref{eq:ConstantGainAlgoStationary2}) and C.\ref{assume:Stationary}, and 
  				  the last equation   is due to the coefficient $\gain ^2$ on the r.h.s. of  (\ref{eq:VarianceMA2}). 
Furthermore, we also have: 
  	\begin{equation}
  	\Cov\parenthesis{ \ilbracket{\hbg_k \ilparenthesis{\hbtheta_k }}^\transpose \hbg_{k-1} \ilparenthesis{\hbtheta_{k-1} },\ilbracket{\hbg_l \ilparenthesis{\hbtheta_l }}^\transpose \hbg_{l-1} \ilparenthesis{\hbtheta_{l-1} }    }  = O (\gain  \ilparenthesis{1-\gain\convexPara}^{k-l} ), 
  	\end{equation}
  	which follows from (\ref{eq:VarianceG3}). Then (\ref{eq:InnerProductVariance}) immediately follows.

  	 \textbf{	Let us consider  the \emph{transient} phase for \emph{small} $k$.}	  We have the following observation:
  				 	\begin{align}\label{eq:PhaseTransient1}
  	& \E \set{\ilbracket{\hbg_k \ilparenthesis{\hbtheta_k }}^\transpose \hbg_{k-1} \ilparenthesis{\hbtheta_{k-1} }}\nonumber\\
  				 &\,\,  = \hbtheta_0^\transpose\ilparenthesis{\bI-\gain \bH} ^{k} \bH ^2\ilparenthesis{\bI-\gain\bH}^{k-1} \hbtheta_0   -  \E \parenthesis{\gain \noise_{k-1}^\transpose\bH \noise_{k-1}  }\nonumber\\
  				 &\quad  + \E \parenthesis{  \set{ -\gain   \sum_{i=0}^{k-2}  \bracket{   \ilparenthesis{\bI-\gain\bH}^i\noise_{k-2-i}   }   } ^\transpose \ilparenthesis{\bI-\gain\bH} \bH ^2 \set{ -\gain    \sum_{i=0}^{k-2}  \bracket{   \ilparenthesis{\bI-\gain\bH}^i\noise_{k-2-i}   }   }}   \nonumber\\ 
  				 &\,\,  = \hbtheta_0^\transpose\ilparenthesis{\bI-\gain \bH} ^{k} \bH ^2\ilparenthesis{\bI-\gain\bH}^{k-1} \hbtheta_0   -  \gain\tr\ilparenthesis{    \bH   \VarianceError  }     + \gain^2 \sum_{i=0}^ {k-2} \tr\ilparenthesis{  \ilparenthesis{\bI-\gain\bH}^{2i+1}\bH^2 \VarianceError   },\nonumber\\
  				 &\quad\quad\quad\quad\quad\quad\quad\quad\quad\quad\quad\quad\quad\quad\quad\quad\quad\quad\quad\quad\quad\quad\quad\quad\quad\quad\quad   \text{ for $k\ge 2 $, }   
  				 \end{align}  
  				 where the  binomial series $
  				 \ilparenthesis{\bI-\gain\bH}^k  =   \sum_{i=0}^k \binom{k}{i} \ilparenthesis{\gain\bH}^i $ for $k\ge 0$. 
  	For $k=1$, 	we have  	  
  	\begin{align}
  &	\E \set{\ilbracket{\hbg_k \ilparenthesis{\hbtheta_k }}^\transpose \hbg_{k-1} \ilparenthesis{\hbtheta_{k-1} }}\nonumber\\  &\,\, =  \hbtheta_0^\transpose\ilparenthesis{\bI-\gain\bH} \bH^2 \hbtheta_0 - \gain \tr\ilparenthesis{\bH\bV} \nonumber\\
  	&\,\, =  \hbtheta_0^\transpose\bH^2\hbtheta_0 - \gain \hbtheta_0^\transpose\bH^3\hbtheta_0 - \gain \tr\ilparenthesis{\bH\bV} \remove{\nonumber\\
  	&\,\,  \ge \convexPara^2\norm{\hbtheta_0}^2 - \gain \LipsPara^3 \norm{\hbtheta_0}^2 - \gain \LipsPara\tr\ilparenthesis{\bV}\nonumber\\
  	&\,\, \ge( \convexPara^2-\LipsPara^2)\norm{\hbtheta_0}
^2    - \tr\ilparenthesis{\bV},} \quad \text{ for $\gain$ satisfying (\ref{eq:gainStationary}).}
  	\end{align} 
  				 For $k=2$, we have
  	\begin{align}
  & 	\E \set{\ilbracket{\hbg_k \ilparenthesis{\hbtheta_k }}^\transpose \hbg_{k-1} \ilparenthesis{\hbtheta_{k-1} }}\nonumber\\ 
  &\,\, =  \hbtheta_0^\transpose\ilparenthesis{\bI-\gain\bH}^2\bH^2\ilparenthesis{\bI-\gain\bH}\hbtheta_0   - \gain \tr\ilparenthesis{\bH\VarianceError} + O\ilparenthesis{\gain^2}\nonumber\\
  &\,\,  = \hbtheta_0^\transpose\bH^2 \hbtheta_0  - 3\gain \hbtheta_0^\transpose\bH^3\hbtheta_0 - \gain\tr\ilparenthesis{\bH\bV}  + O(\gain^2),   \quad \text{ for $\gain$ satisfying (\ref{eq:gainStationary}).}
    	\end{align}			 
  	In general, for small $k\ge 1 $, the magnitude  deterministic part should dominate the magnitude of the mean-zero stochastic part, and (\ref{eq:PhaseTransient}) holds.	  
 \end{proof}

 Based on Theorem~\ref{thm:InnerProduct}, we propose the following strategy to adapt the gain sequence. 
 Let us recursively define a sequence of   ``critical'' times  $ \ilset{\TimeCritical _{\stopTime}} $ such that: 
 \begin{align}\label{eq:stop1}
 & \TimeCritical_{\stopTime+1}  =\inf \Bigg\{ {k>\TimeCritical _{\stopTime} }\bigg |       \frac{1}{k-\TimeCritical _{\stopTime}  }  \sum_{i=\TimeCritical _{\stopTime}+1}^k    \ilbracket{\hbg_{i}\ilparenthesis{\hbtheta_{i}}}^\transpose\ilbracket{\hbg_{i-1} \ilparenthesis{\hbtheta_{i-1}}} \text{ is either }  \le -  \gain \tr\parenthesis{\widehat{\bH} \widehat{    \VarianceError }     } \nonumber\\
 	&\quad\quad\quad\quad\quad\quad\quad\quad   \text{ or }   \ge \hbtheta_{ k  } ^\transpose \widehat{   \bH   } ^2    \hbtheta_{  k  }    - \gain \bracket{   \hbtheta_{ k  } ^\transpose \widehat{   \bH   } ^3    \hbtheta_{k }    +  \tr\ilparenthesis{ \widehat{   \bH   }   \widehat{    \VarianceError }    }   }            \Bigg\}, \nonumber\\
 &  \text{ with }\TimeCritical _0  =0, 
 \end{align} where $\widehat{   \bH }$ and  $\widehat{  \VarianceError }$are   the estimates   for $\bH$ and $\VarianceError$, respectively. How to construct $\widehat{\bH}$   and $\widehat{    \VarianceError } $ will be discussed momentarily.
 Correspondingly, the gain sequence  is  defined by:  
 \begin{align}\label{eq:dataGain1}
& \gainStopNew =\begin{cases}
 \shrink  \gain_{\stopInd},  \text{ if }      {    \frac{1}{k-\TimeCritical_\stopTime }  \sum_{i=\TimeCritical_{\stopTime}+1}^k    \ilbracket{\hbg_{i}\ilparenthesis{\hbtheta_{i}}}^\transpose\ilbracket{\hbg_{i-1} \ilparenthesis{\hbtheta_{i-1}}}   \le  -  \gain \tr\parenthesis{\bH \VarianceError     }      },\\
 \increase \gainStop, \\
 \,\,  \text{ if }      {    \frac{1}{k-\TimeCritical_\stopTime }  \sum_{i=\TimeCritical_{\stopTime}+1}^k    \ilbracket{\hbg_{i}\ilparenthesis{\hbtheta_{i}}}^\transpose\ilbracket{\hbg_{i-1} \ilparenthesis{\hbtheta_{i-1}}}   \ge   \hbtheta_{ k  } ^\transpose \widehat{   \bH   } ^2    \hbtheta_{  k  }    - \gain \bracket{   \hbtheta_{ k  } ^\transpose \widehat{   \bH   }  ^3    \hbtheta_{ k }    +  \tr\ilparenthesis{ \widehat{   \bH   }  \widehat{    \VarianceError }   }   }    }, 
 \end{cases}   
 \end{align}
 where $\gain  = \gain_{\stopInd}$ for $\TimeCritical_{\stopTime} \le k < \TimeCritical_{\stopTime+1}$.  
 Unfortunately, we do not have any quantification regarding the Type-I and Type-II errors for the phase detection in (\ref{eq:stop1}) at the moment.

 \subsubsection*{Estimation of Hessian Information and Error Covariance }

 We need both $\bH$ and $\VarianceError$ to perform gain adaptation (\ref{eq:dataGain1}), yet they are unknown.  
 Let us briefly obtain        $\widehat{\bH}$ and $\widehat{\VarianceError}$  through the  observable information $\field_{k+1}$.  
 We borrow the SP idea in \cite{spall2000adaptive} to construct $\widehat{\bH}$. Here we will slightly alter the recursion (\ref{eq:ConstantGainSGD}) into the following:
 \begin{equation}\label{eq:sgd2meas}
 \hbtheta_{k+1} = \hbtheta_k - \gain \frac{  \hbg_k\ilparenthesis{  \hbtheta_k + c_k\bDelta_k   } + \hbg_k \ilparenthesis{\hbtheta_k  - c_k\bDelta_k  }  }{2}, 
 \end{equation}
 where the setup for  $c_k$ and $\bDelta_k$   are the same as that in (\ref{eq:g1SP}). At the cost of two  measurements  at each $k$, we can estimate $\bH$ recursively as follows: 
 \begin{align}\label{eq:Hestimate}
 \widehat{\bH}_k &= \frac{k}{k+1} \widehat{\bH}_{k-1}+  \frac{1}{4c_k \ilparenthesis{k+1}}      \ilbracket{  \hbg_k\ilparenthesis{\hbtheta_k+c_k\bDelta_k  } - \hbg_k \ilparenthesis{\hbtheta_k  - c_k\bDelta_k   }    }\bDelta_k ^{-\transpose}  \nonumber\\
 &\,\,+  \frac{1}{4c_k \ilparenthesis{k+1}}   \bDelta_k ^{-1}  \ilbracket{  \hbg_k\ilparenthesis{\hbtheta_k+c_k\bDelta_k  } - \hbg_k \ilparenthesis{\hbtheta_k  - c_k\bDelta_k   }    }^\transpose   ,\,\, k= 1,2,\cdots
 \end{align}  where   $ \bDelta^{-\transpose} = \ilparenthesis{\bDelta^{-1}}^\transpose $. 
  For more details, see \cite{spall2000adaptive} or (\ref{eq:H_hat}) in  Appendix~\ref{chap:2nd}. Note that the initialization for (\ref{eq:Hestimate}) may be a scale matrix ($\mathrm{scale} \cdot \bI_p$ for $\mathrm{scale}>0$), or some other positive-definite matrix reflecting available information (e.g., if one knows that $\btheta$ elements will have very different magnitudes, then the initialization may be chosen to approximately scale for the differences).  
 Similarly, we 
 estimate $\VarianceError$ recursively as follows:
 \begin{align}\label{eq:Eestimate}
 \hat{\VarianceError}_k &= \frac{k}{k+1}\hat{\VarianceError}_{k-1} \nonumber\\
 &\,\,+ \frac{ 1}{ \ilparenthesis{k+1}}    \bracket{\hbg_k\ilparenthesis{ \hbtheta_k+c_k\bDelta_k } - \hbg_k\ilparenthesis{\hbtheta_k-c_k\bDelta_k }    }\bracket{\hbg_k\ilparenthesis{ \hbtheta_k+c_k\bDelta_k } - \hbg_k\ilparenthesis{\hbtheta_k-c_k\bDelta_k }    }^\transpose . 
 \end{align}

 \subsubsection{Summary of Adapted Gain-Tuning Algorithm}
 
 Let us summarize the  aforementioned procedure, including gain adaptation and the estimation of $\bH$ and $\VarianceError$ in   Algorithm  \ref{algo:adaptiveGain} below. 
 
 	\begin{algorithm}[!htbp]
 	\caption{Adaptive Gain Selection for Change Detection Using \emph{Two}-Measurements $ \hbg_k\ilparenthesis{\hbtheta_k\pm c_k\bDelta_k} $ at a Time } 
 	\begin{algorithmic}[1]  
 		\renewcommand{\algorithmicrequire}{\textbf{Input:}}
 		\renewcommand{\algorithmicensure}{\textbf{Output:}}
\Require  initial gain magnitude $\gain $, $ \hbtheta_0 $, increase ratio $\increase$, decrease ratio $\shrink$. 
 		\State \textbf{set} $\stopTime=0$ and $ \TimeCritical_{0} =0$. 
 		\For{$k\ge 1$ or $k\in\ilset{1,\cdots,K}$} \Comment{$K$ is the horizon over which we need to perform tracking.}
 		\State \textbf{collect}  $\hbg_k\ilparenthesis{\hbtheta_k+c_k\bDelta_k} $ and $\hbg_k\ilparenthesis{\hbtheta_k-c_k\bDelta_k}$. \Comment{We may let $c_k$ be the desired minimal change in components of $\hbtheta_k$, and generate $\bDelta_k$ from symmetric Bernoulli  $\pm1$ distribution. }
 		\State \textbf{update} $ \widehat{\bH}_k $ and $\widehat{\bV}_k$ using (\ref{eq:Hestimate}) and (\ref{eq:Eestimate}) respectively. 
 		\If{ $ \frac{1}{k-\TimeCritical_{\stopTime}+1} \sum_{i=\TimeCritical_{\stopTime}+1}^{k}   \ilset{\ilbracket{\hbg_i\ilparenthesis{\hbtheta_i}}^\transpose\ilbracket{\hbg_{i-1}\ilparenthesis{\hbtheta_{i-1}}}}<-\gain  \tr\ilparenthesis{  \widehat{\bH}_k  \widehat{\VarianceError}_k  }    $  } 
 		\State  \textbf{decrease} gain $\gain$ by a factor of $\shrink$.
 		\State \textbf{set} $\stopTime  \gets  \stopTime+1$. 
 		\ElsIf{  $ \frac{1}{k-\TimeCritical_{\stopTime}+1} \sum_{i=\TimeCritical_{\stopTime}+1}^{k}   \ilset{\ilbracket{\hbg_i\ilparenthesis{\hbtheta_i}}^\transpose\ilbracket{\hbg_{i-1}\ilparenthesis{\hbtheta_{i-1}}}}\ge     \hbtheta_{ k  } ^\transpose \widehat{   \bH   }_k ^2    \hbtheta_{  k  }    - \gain \bracket{   \hbtheta_{ k  } ^\transpose \widehat{   \bH   }_k  ^3    \hbtheta_{ k }    +  \tr\ilparenthesis{ \widehat{   \bH   }_k   \widehat{    \VarianceError } _k   }   }     $  }
 		\State    \textbf{increase} gain $\gain $ by a factor of $\increase$.
 		\State \textbf{set} $\stopTime  \gets  \stopTime+1$.  
 		\EndIf 
 		\State \textbf{update} $\hbtheta_k  $ using (\ref{eq:sgd2meas}). 
 		\Ensure $\hbtheta_k$.
 		\EndFor
 	\end{algorithmic}
 	\label{algo:adaptiveGain}
 \end{algorithm}

 \subsubsection*{An Example for Adaptive Gain}
   Here, we again consider $p=2$. The loss function is $\loss \ilparenthesis{\btheta} = \ilparenthesis{\btheta-\bvartheta}^\transpose\bH \ilparenthesis{\btheta-\bvartheta}/2$,   and the gradient function is $ \bg \ilparenthesis{\btheta} = \bH \ilparenthesis{\btheta-\bvartheta} $. 
 Again, $\bH$ is      constructed as $\bH=\bP \bD \bP^\transpose$ in (\ref{eq:SimHessian}), where $\bP$ is   (randomly generated) orthogonal, and $\bD$ is diagonal with diagonal entries $30$ and $5$. For simplicity, we select  $\bvartheta=\zero$. We pick   an increase ratio of $\increase=1.1$ and a decrease ratio $\shrink=0.9$. 
 The observational noise again follows i.i.d. $\mathrm{Normal}\ilparenthesis{\zero, \upsigma^2_1\bI_p}$ with $\upsigma_1=10$. We use an initialization $  \ilparenthesis{100\,\,\, 100}^\transpose $, which is far away from $\bvartheta=\zero$. 
 We can make the following observations from Figures \ref{fig:Compare1} to \ref{fig:Compare3}. 
 \begin{itemize}
 	\item For an appropriately tuned gain, the  estimates generated from  the constant-gain recursion (\ref{eq:ConstantGainSGD})   are capable of getting close to the target and perform  as well  as our adaptive gain algorithm~\ref{algo:adaptiveGain} (see   Figure \ref{fig:Compare1-1}).
 	\item For a gain that is too small, it takes an extremely long time for the estimates generated from  constant gain recursion (\ref{eq:ConstantGainSGD}) to get close to the desired optimum compared with  our adaptive gain algorithm~\ref{algo:adaptiveGain} (see  Figure \ref{fig:Compare2-1}). 
 	\item For a gain that is too large, the constant-gain recursion (\ref{eq:ConstantGainSGD})  will migrate  further and further away  from the target (see   Figure \ref{fig:Compare3-1}). 
 \end{itemize}
 
 In reality, $\LipsPara$ may  \emph{not} be  available to the agent(s), and,  the gain  used in constant-gain recursion (\ref{eq:ConstantGainSGD}) is often \emph{misspecified}. This further manifests the value of the data-dependent gain-tuning strategy summarized in Algorithm \ref{algo:adaptiveGain} and  \cite{zhu2020stochastic}.

 \begin{figure}[!htbp]
 	\centering
 	\begin{subfigure}{.65\textwidth}
 		\centering
 		\includegraphics[width=\linewidth]{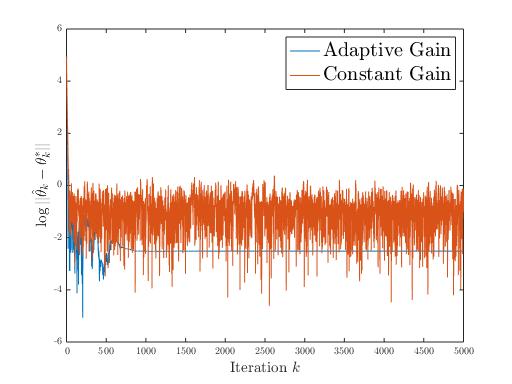}
 		\caption{Log-Euclidean-Distance Between Estimate and True Parameter}  \label{fig:Compare1-1}
 	\end{subfigure}\\
 	\begin{subfigure}{.65\textwidth}
 		\centering
 		\includegraphics[width=\linewidth]{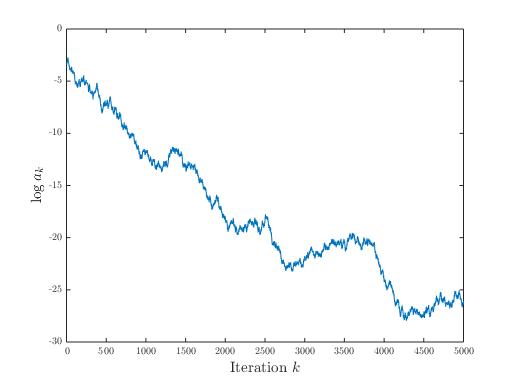}
 \caption{Log-Magnitude of Data-Dependent Gain Sequence} 
  \label{fig:Compare1-2}
 	\end{subfigure}
 \caption{A comparison of adaptive gain used in Algorithm \ref{algo:adaptiveGain} versus constant gain (\ref{eq:ConstantGainSGD}), both of which have   gain initialized at  $1/\LipsPara=0.0333$.   }
 \label{fig:Compare1}
 \end{figure}

 \begin{figure}[!htbp]
 	\centering
 	\begin{subfigure}{.65\textwidth}
 		\centering
 		\includegraphics[width=\linewidth]{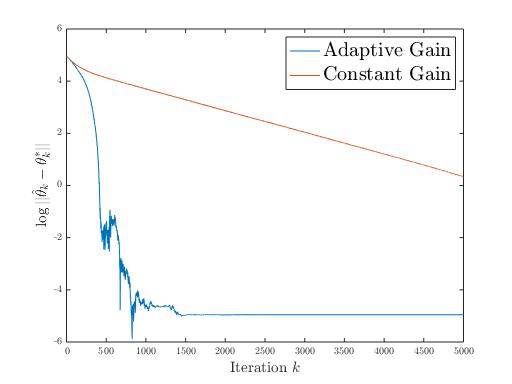}
 	\caption{Log-Euclidean-Distance Between Estimate and True Parameter} 
 	\label{fig:Compare2-1}
 	\end{subfigure}\\
 	\begin{subfigure}{.65\textwidth}
 		\centering
 		\includegraphics[width=\linewidth]{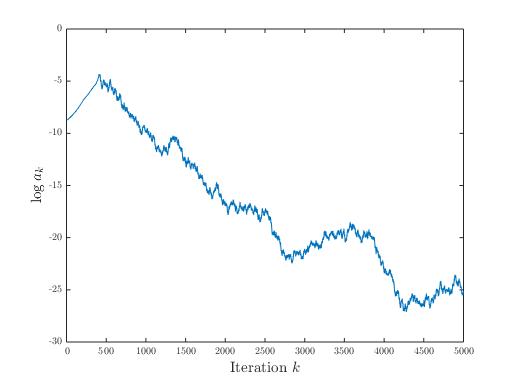}
 	 \caption{Log-Magnitude of Data-Dependent Gain Sequence}  
 	 \label{fig:Compare2-2}
 	\end{subfigure}
 \caption{A comparison of  the adaptive gain used in Algorithm \ref{algo:adaptiveGain} versus the constant gain (\ref{eq:ConstantGainSGD}), both of which have   gain initialized at  $0.005/\LipsPara=1.67\times 10^{-4}$.   }
\label{fig:Compare2}
 \end{figure}

 \begin{figure}[!htbp]
 	\centering
 	\begin{subfigure}{.65\textwidth}
 		\centering
 		\includegraphics[width=\linewidth]{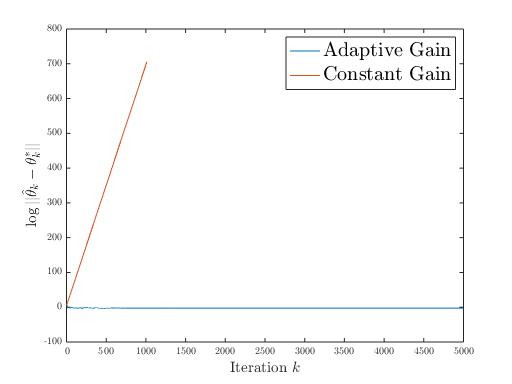}
 	\caption{Log-Euclidean-Distance Between Estimate and True Parameter} 
 	\label{fig:Compare3-1}
 	\end{subfigure}\\
 	\begin{subfigure}{.65\textwidth}
 		\centering
 		\includegraphics[width=\linewidth]{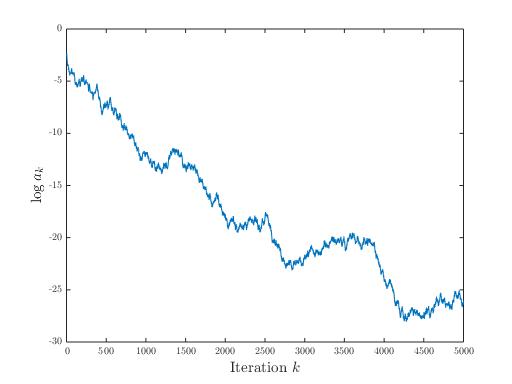}
 		 \caption{Log-Magnitude of Data-Dependent Gain Sequence} 
 		 \label{fig:Compare3-2}
 	\end{subfigure}
 	\caption{Comparison of the adaptive gain used in Algorithm \ref{algo:adaptiveGain} versus the constant gain (\ref{eq:ConstantGainSGD}), both of which have   gain initialized at  $3/\LipsPara=0.1$.   }
 \label{fig:Compare3}
 \end{figure}

 	\subsection{Building Block: Regime Change Detection With Constant Hessian}\label{subsect:BuildingBlock}   We 
 	may apply Algorithm \ref{algo:adaptiveGain} to the scenario where jump structure is allowed.  
 This is a relatively short section as it directly applies Algorithm \ref{algo:adaptiveGain} in 
 Section  \ref{sect:GainOneRegime} to a more general setting C.\ref{assume:Jump} based on the following observations.
 
 \begin{enumerate}
 	\item When regime switches from $\bvartheta_{\stateInd}$ to $ \bvartheta_{\ilparenthesis{\stateregime+1}} $, there will  be a phase of $\hbtheta_k$ steadily approaching the new estimate $ \bvartheta_{\ilparenthesis{\stateregime+1}}$. If an abrupt change is detected, we need to increase the gain  by a factor of $ \increase $, to achieve prompt tracking.

 	\item  When some   oscillating behavior  is detected from the path, we need to decrease the gain   by a factor of $\shrink$, to further approach our desired target.

 	\item For other scenarios (no strong evidence to support a steady-state phase or the transient phase), we simply keep the gain at the most recent level. That is, the gain  $\gain$ is  kept fixed  until we gather strong evidence  in favor of decreasing or increasing the gain.

 \end{enumerate}

 \subsubsection*{An  Example of Adaptive Gain }

 Again consider the same numerical setup as in  Subsubsection \ref{subsect:ExamplePriorBound}, except that the evolution of  $\ilset{\bvartheta_k}$ now changes to (\ref{eq:model1}). In our simulation, the   jump times for the $\ilset{\bvartheta_k}$ are $ 1567, 2949, 3607, 3729$,  and $4498$.

 \begin{figure}[!htbp]
 	\centering
 	\begin{subfigure}{.65\textwidth}
 		\centering
 		\includegraphics[width=\linewidth]{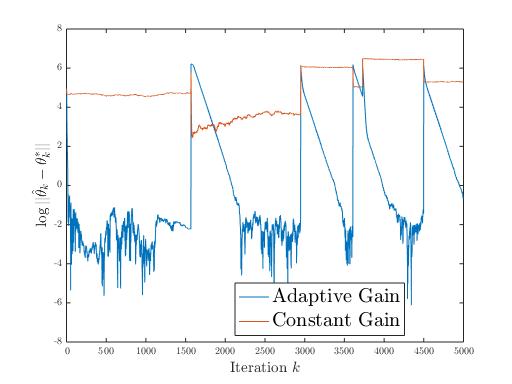}
 			\caption{Log-Euclidean-Distance Between Estimate and True Parameter}  
 			\label{fig:Compare4-1}
 	\end{subfigure}\\
 	\begin{subfigure}{.65\textwidth}
 		\centering
 		\includegraphics[width=\linewidth]{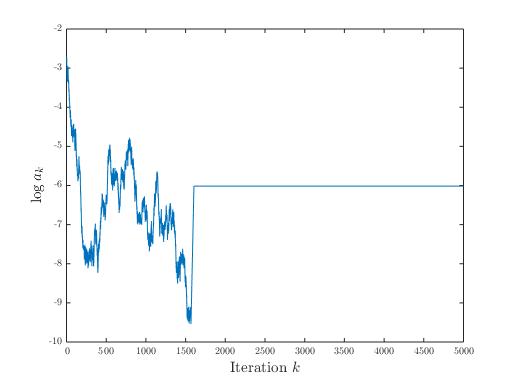} 
 		\caption{Log-Magnitude of Data-Dependent Gain Sequence}  
 		\label{fig:Compare4-2}
 	\end{subfigure}
 	\caption{A comparison of the  adaptive gain used in Algorithm \ref{algo:adaptiveGain} versus constant gain (\ref{eq:ConstantGainSGD}), both of which have   gain initialized at  $2/\LipsPara=0.0667$.  The evolution of $\ilset{\bvartheta_k}$ follows  (\ref{eq:model1}). }
 	\label{fig:Compare4}
 \end{figure}

\remove{
 \begin{figure}[!htbp]
	\centering
	\begin{subfigure}{.65\textwidth}
		\centering
		\includegraphics[width=\linewidth]{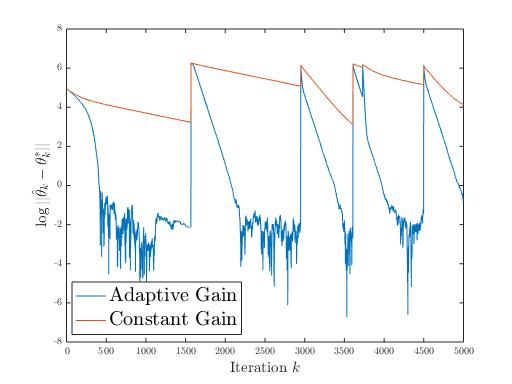}
		\caption{Log-Euclidean-Distance Between Estimate and True Parameter} 
	\end{subfigure}\\
	\begin{subfigure}{.65\textwidth}
		\centering
		\includegraphics[width=\linewidth]{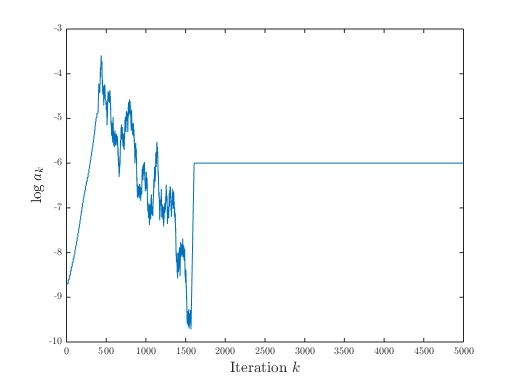} 
		\caption{Log-Magnitude of Data-Dependent Gain Sequence} 
	\end{subfigure}
	\caption{A comparison of the  adaptive gain used in Algorithm \ref{algo:adaptiveGain} versus constant gain (\ref{eq:ConstantGainSGD}), both of which have   gain initialized at  $0.005/\LipsPara=1.67\times 10^{-4}$.  The evolution of $\ilset{\bvartheta_k}$ follows  (\ref{eq:model1}). }
	\label{fig:Compare5}
\end{figure}
}

 \begin{figure}[!htbp]
	\centering
	\begin{subfigure}{.65\textwidth}
		\centering
		\includegraphics[width=\linewidth]{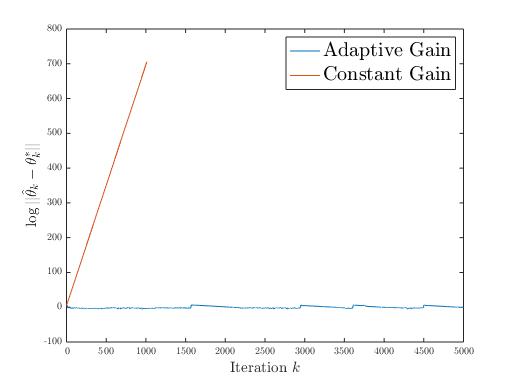}
		\caption{Log-Euclidean-Distance Between Estimate and True Parameter} 
		\label{fig:Compare6-1}
	\end{subfigure}\\
	\begin{subfigure}{.65\textwidth}
		\centering
		\includegraphics[width=\linewidth]{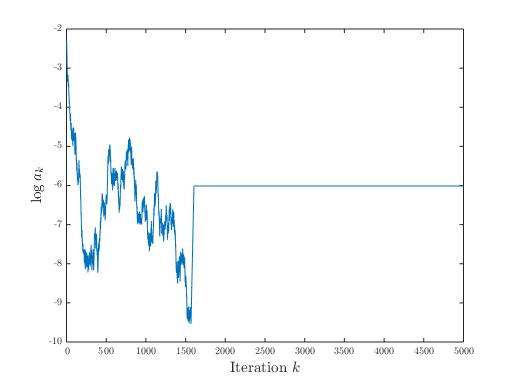} 
		\caption{Log-Magnitude of Data-Dependent Gain Sequence} 
		\label{fig:Compare6-2}
	\end{subfigure}
	\caption{A comparison of the  adaptive gain used in Algorithm \ref{algo:adaptiveGain} versus constant gain (\ref{eq:ConstantGainSGD}), both of which have   gain initialized at  $3/\LipsPara= 0.1$.  The evolution of $\ilset{\bvartheta_k}$ follows  (\ref{eq:model1}). }
	\label{fig:Compare6}
\end{figure}

We can make the following observations from Figures \ref{fig:Compare4} to \ref{fig:Compare6}. 
\begin{itemize}
	\item  Again, we tune the gain for the constant-gain recursion (\ref{eq:ConstantGainSGD}) very carefully, but this ``lazy'' strategy is not  so robust concerning  the jumps in the true $\ilset{\bvartheta_k}$ sequence, compared to the adaptive gain algorithm listed in Algorithm \ref{algo:adaptiveGain}. See  Figure \ref{fig:Compare4-1}. Also, from  Figure \ref{fig:Compare4-2}, we see that the adaptive gain tuning Algorithm \ref{algo:adaptiveGain} does increase the stepsize every time a jump arises  and  decreases  the stepsize every time the estimate is close to the target. 
	\remove{\item For a gain that is too small, it takes an extremely long time for the estimates generated from  constant gain recursion (\ref{eq:CondBound1}) to get close to the desired optimum. See the upper-plot in Figure \ref{fig:Compare2}. }
	\item For a gain that is too large, the constant-gain recursion will migrate  further and further away  from the target. See   Figure \ref{fig:Compare6-1}. Nonetheless, the data-dependent gain-tuning enables the estimates $\hbtheta_k$ to stay close with the moving target.
\end{itemize}

Let us reiterate that $\LipsPara$ may  \emph{not} be  available to the agent(s), and this further  indicates the value  of the data-dependent gain-tuning strategy summarized in Algorithm \ref{algo:adaptiveGain}.

 \section{Concluding Remarks}

 Recall that  
 Chapter~\ref{chap:FiniteErrorBound} provides a computable error bound for non-diminishing gain SA algorithms applied in online learning and dynamic control systems, and naturally gives rise to a gain  selection guidance in Algorithm~\ref{algo:basicSA}, that depends on the   strong convexity parameter $\convexPara_k$, the Lipschitz continuity parameter $\LipsPara_k$, the noise level $\noiseBound_k$, and the drift level $\driftBound_k$. Nonetheless, Chapter~\ref{chap:FiniteErrorBound} only captures the average  performance over possible sample paths.  
 The practical needs to perform well in every sample path drive us to consider data-dependent gain selection strategy, which requires   detecting the jump component in the hybrid system in Section \ref{sect:Detection}, and  estimating  the Hessian information and the  noise level      to adapt the non-diminishing gain sequence intelligently in Section \ref{sect:GainOneRegime}.   The additional restriction C.\ref{assume:Hybrid}   is imposed on the drift, mainly because the error bound discussed in Subsection \ref{subsect:conditional}   requires the availability of  $\convexPara_k$,  $\LipsPara_k $, and  $\noiseBound_k$, to which we  may not have  access   in real-world applications.

 We establish a framework for practical use:  specifically, we can adapt our gain sequence based on our estimate of the  Hessian information and the noise level. This is the key point on which  our work differs from  all the prior work that    require unavailable information, although we   impose stringent assumptions  C.\ref{assume:Hybrid} and so on. 
 The gain adaptation algorithm is developed mainly from the observation that constant-gain SA provides a ``fast transient'' to the vicinity of the solution $\bvartheta$.    Theorem~\ref{thm:InnerProduct}  and Algorithm~\ref{algo:adaptiveGain} are developed to  determine  the critical values to draw a confident conclusion of  either a  transient or steady-state phase, as little work has been done in this direction previously.

\remove{ 
 \begin{exmp}
 	Consider $\loss\ilparenthesis{\btheta}\equiv \frac{1}{2}\ilparenthesis{\btheta-\bvartheta}^\transpose\bH  \ilparenthesis{\btheta-\bvartheta}$ where $\bH\in\real^{p\times p}$ is positive definite, with the smallest eigenvalue being $\convexPara$ and the largest eigenvalue being $\LipsPara$. 
 	
 	Consider the SA iteration  in Algorithm~\ref{algo:adaptiveGain}  during  $ k\in \bracket{\start_{\stateregime}, \final_{\stateregime}}  $ with a  constant step $\gain$ and starting at point $\btheta_0\in\real^p$. Let the eigen-decomposition for $\bM$ to be $\bU^\transpose\bLambda\bU$, where $\bU$ is unitary and $\bLambda$ is diagonal. 
 	\begin{equation}
 	\begin{split}
 	  \hbtheta_{k+1} = \hbtheta_k-\gain \bM\ilparenthesis{\hbtheta_k-\bvartheta} - \gain \noise_{k+1}\ilparenthesis{\hbtheta_k} 
 	\end{split}
 	\end{equation}
 	Subtracting $\bvartheta$ from both sides, conditioning on $\field_k$, taking expectation, and then taking expectation again and recursing, we get for $\gain<\LipsPara^{-1}$, 
 	\begin{equation}
 	\begin{split}
 	\E \bracket{\parenthesis{ \hbtheta_k-\bvartheta }^2} &= \norm{\btheta_0-\bvartheta}^2 \parenthesis{\bI-\diag\ilparenthesis{\bLambda} }^{2(k+1)}  + \upsigma^2\gain^2\bU^\transpose \parenthesis{ \sum_{i=0}^k \parenthesis{\bI-\diag\parenthesis{\bLambda} }^{2(k-i)} } \bU \\
 	&\le \parenthesis{1-\gain \convexPara}^k\norm{\btheta_0-\bvartheta}^2+\frac{\upsigma^2}{\convexPara^2} \frac{\gain\convexPara}{2-\gain\convexPara}. 
 	\end{split}
 	\end{equation} Note that $ \E\bracket{\given{\noise_{k+1}}{\field_k}} = \zero $, and $ \E\bracket{\given{\norm{\noise_{k+1}}^2}{\field_k}}=\upsigma^2 $ almost surely for all $\btheta\in\real^p$ and all $k\ge 1$. 
 	The mean squared error of the $k$th iterate decomposes into a ``transient'' term that vanishes exponentially with $k$, and a 	``fluctuation'' term that does not vanish with $k$. Interestingly, the factor $\upsigma^2/\convexPara^2$ corresponds to the constant accompanying the Cramer-Rao lower bound for SA contexts \cite{fabian1968asymptotic}. Thus, following the idea that no more progress towards $\bvartheta$ can be made than the magnitude of the fluctuation term, we suggest 
 	\begin{equation}
 	\tilde{k}\approx \parenthesis{1-\gain\convexPara}^{-1} \log\frac{\upsigma^2\gain \convexPara}{\convexPara^2\parenthesis{2-\gain\convexPara}}
 	\end{equation} for the number of iterations of SA that should be executed with gain $\gain$. 

If we estiamte the variance $\upsigma^2$ and the curvature of the function $\loss$, a more nuanced estimate of the number of iterations based on the r.h.s. could be constructed. 

 \end{exmp}
}
 

\chapter{A Zero-Communication Multi-Agent  Problem   }\label{chap:multiagent}
This chapter      is an illustration of   the tracking capability    of    SA algorithms with non-decaying gains  as applied to the multi-agent multi-target surveillance mission. 
  This  problem of interest is to configure an ensemble of  agents with mobile sensors over a particular region to best\footnote{The quantification of good or bad is according to a set of mission-related metrics, such as the  fraction of
  	targets found, the accuracy of target position estimates and so on. } maintain awareness   of a group of targets within  a specific surveillance
  region.  This     tracking problem is  \emph{dynamic} due to the     motion of both the targets and the agents, and  is \emph{stochastic} due to that only  
  inexact  sensor measurements can be gathered. 
  Given the  two features, this surveillance problem  fits   the  time-varying SA setup (\ref{eq:Minimization}) perfectly, and the loss function $\loss_k\ilparenthesis{\cdot}$ in this chapter will be constructed  in a way such that  the  assumptions A.\ref{assume:ErrorWithBoundedSecondMoment}\textendash A.\ref{assume:BoundedVariation} are met. Again,  there is   no 
  optimal steady-state solution due to  the time-varying characteristic of $\ilset{\bvartheta_k}$.   In fact, this  tracking problem  is what motivates us to solve  (\ref{eq:Minimization}) using general SA algorithms (\ref{eq:basicSA}) with non-decaying gain while making only  modest assumptions on   the error  term  in $\hbg_k\ilparenthesis{\cdot}$  as in A.\ref{assume:ErrorWithBoundedSecondMoment}, the underlying loss function A.\ref{assume:StronglyConvex} and A.\ref{assume:Lsmooth}, and the moving target as in A.\ref{assume:BoundedVariation}, consistent with the main focus of  the entire thesis.

   To ease the  upcoming illustration with graphs, the discussion here will be  on a  two-dimensional $\east$-$\north$ plane with ``E'' and ``N'' representing the east and north directions respectively, i.e., only the latitude and  the longitude are considered. The east and the north directions  can be   relative to the origin of the existing geographic coordinate system, which is currently located in the Gulf of Guinea, or can be relative to any hypothetical origin of the two-dimensional plane.  Nonetheless, they can be readily extended to the three-dimensional space to include the elevation (such as the altitude of the UAV or the depth of the UUV) and other  higher-dimensional problems.

\section{Base Case: One Agent and One Target}\label{sect:basecase}

This section presents the simplest scenario where there are  only one agent and one target. The notation for  this base case can be readily extended to the upcoming general case with multiple targets and multiple agents.

\subsection{Basic Tracking Setup}\label{subsect:base}

We first    define  the     necessary notions for the   tracking problem. Denote the  state vector of the target  at time
$ \uptau_k $     as $ \bx_{k} = \ilparenthesis{  x_{k}^{\east},\,\,\,   x_{k}^{\north },     \,\,\,  \dot{x}_{k}^{\east}, \,\,\,  \dot{x}_{k}^{\north}    }^\transpose \in\real^4 $, where  $ 
\ilparenthesis{  x_{k}^{\east},\,\,\,   x_{k}^{\north}      }^\transpose\in\real^2$ is the coordinate of the target's position  at index $k $, and  $\dot{x}_{k}^{\east}$ and $\dot{x}_{k}^{\north}  $   are the magnitudes  of the target's velocity  in the directions of the east and the north.  
Similarly, the state of the agent  at time $\uptau_k $ will be denoted as $ \by_{k} =   \ilparenthesis{  y_{k}^{\text{E}},\,\,\,   y_{k}^{\text{N}}  ,   \,\,\,  \dot{y}_{k}^{\text{E}} ,\,\,\,  \dot{y}_{k}^{\text{N}}   }^\transpose \in\real^4  $.

Besides, let ${\velocity}_{\bx}^{\max}$ and $ {\velocity}_{\by}^{\max} $ denote the speed limits of    the target and the agent respectively. They set   constraints  on the Euclidean norm of   $ \ilparenthesis{ \dot{x}_k,\,\,\, \dot{x}_k} ^\transpose$ and $ \ilparenthesis{ \dot{y}_k,\,\,\, \dot{y}_k} ^\transpose$ respectively.    Take UUVs as an example: the maximum speed  is typically around  $15$ meters per second. We will correspondingly use one second as the unit for the sampling time $\uptau_k$. For simplicity, we will omit the unit ``meters per second'' for the speed limit, the unit ``meter'' for the distance,  and the unit ``seconds'' for time throughout this chapter.

The     available  information  that can be collected through the agent's sensor at time $\uptau_k $ typically include the \emph{noisy} measurement of the azimuth angle from the agent to the target defined as
\begin{equation}\label{eq:jiaodu} \left. 
\upvarphi\ilparenthesis{\bx,\by} \right| _{\ilparenthesis{\bx,\by}  = \ilparenthesis{\bx_k, \by_k } }= \begin{dcases}
\arctan \parenthesis{  \frac{  x_k^{\text{N}} - y_k ^{\text{N}}   }{   x_k^{\text{E}} - y_k ^{\text{E}}   }  }, &\text{ when }x_k^{\north}>y_k^{\north} \text{ and }x_k^{\east} > y_k^{\east},\\
 \arctan \parenthesis{  \frac{  x_k^{\text{N}} - y_k ^{\text{N}}   }{   x_k^{\text{E}} - y_k ^{\text{E}}   }  } + \uppi\,   , &\text{ when } x_k^{\east} < y_k^{\east}, \\
 \arctan \parenthesis{  \frac{  x_k^{\text{N}} - y_k ^{\text{N}}   }{   x_k^{\text{E}} - y_k ^{\text{E}}   }  }+ 2\uppi   , &\text{ when }x_k^{\north}<y_k^{\north} \text{ and }x_k^{\east} > y_k^{\east},
\end{dcases}
\end{equation}
and the \emph{noisy} measurement of the range between the agent and  the target denoted as 
\begin{equation}  \label{eq:changdu}
  \left. \uprho \ilparenthesis{\bx,\by} \right|      _{\ilparenthesis{\bx,\by}  = \ilparenthesis{\bx_k, \by_k } }= \sqrt{  \parenthesis{x_k^{\text{E}} - y_k ^{\text{E}}}^2 + (x_k^{\text{N}} - y_k ^{\text{N}})^2 },
\end{equation}  
The adjustment $\uppi \, \indicator_{\ilset{ y_k^{\text{E}} - x_k^{\text{E}}>0 }}$ and $ 2\uppi \indicator _{ \set{ x_k^{\north} < y_k^{\north} ,\text{ and }   y_k^{\east}  > y_k^{\east}  } }  $ in (\ref{eq:jiaodu})   serves to enable  $\upvarphi\ilparenthesis{\bx_k,\by_k }$ to be   the direction that the agent needs to move along in order to get closer  to the target. To avoid the issues arising from differentiating   $\arctan(\cdot)$ function (due to its  periodicity) and differentiating  $\sqrt{\,\cdot\, }$ function (due to its  non-differentiability at the origin) in what follows,
the agent's  observable information is rearranged as: 
\begin{equation}\label{eq:noisyMeasurement}
\left. \bz\ilparenthesis{\bx,\by} \right|  _{\ilparenthesis{\bx,\by}  = \ilparenthesis{\bx_k, \by_k } }= \begin{pmatrix}
  \uprho \ilparenthesis{\bx_k,\by_k }\cos\parenthesis{\upvarphi\ilparenthesis{\bx_k,\by_k }}  \\  \uprho\ilparenthesis{\bx_k,\by_k }\sin \parenthesis{\upvarphi\ilparenthesis{\bx_k,\by_k }}
\end{pmatrix}    + \bv_k \in\real^2,
\end{equation}
where $\bv_k\stackrel{\mathrm{i.i.d.}}{\sim }\mathrm{Normal}\ilparenthesis{\zero,\bR_k }$ with  a   covariance matrix of 
\begin{equation}\label{eq:Rmatrix}
\bR_k = \diag(10,10). 
\end{equation}
The covariance matrix for the measurement noise $\bv_k$   in  (\ref{eq:Rmatrix}) is proposed   based on the fact that   the typical   GPS devices  nowadays is   accurate anywhere within  3   to 10 meters.

After obtaining the noisy measurement (\ref{eq:noisyMeasurement}), the agent needs to pick an action  $\btheta_{k} = ( \dot{y}_k^{\east},\, \,\,\dot{y}_k^{\north} )^\transpose\in\real^2$   to determine the magnitude and the direction of its speed at time $\uptau_k$.  
With the aforementioned notation,  let us briefly describe the real-time tracking by iterative updating procedure. 
\begin{enumerate}[i)]
	\item  \label{item:KF1}   At   time  $\uptau_k $, the target is at      state $\bx_k$  according to its desired  motion model (which  is not revealed to the agent), and the agent is at   state $\by_k$.
	
	  The agent is allowed to collect noisy measurements $ \bz(\bx_k,\by_k)  $.
	  Then the agent predicts the next possible position of the target, denoted as  $\hat{\bx}_{\given{k+1}{k}}$ 
	  by making use  of  $\bz(\bx_k,\by_k)$, and the details will be discussed momentarily. With an a priori prediction $\hat{\bx}_{\given{k+1}{k}}$ for the upcoming  state $\bx_{k+1}$ of the target, 
	  the  agent then   picks a direction $\hbtheta_k $ such that the resulting    position of the agent at time $\uptau_{k+1}$ becomes 
\begin{equation}\label{eq:nextstate}
  \begin{pmatrix}
  y_{k+1}^{\east }\\  y_{k+1}^{\north}
  \end{pmatrix} =   \begin{pmatrix}
  y_{k}^{\east }\\  y_k^{\north}
  \end{pmatrix} + \ilparenthesis{\uptau_{k+1}-\uptau_k}\hbtheta_k, \text{ for }\hbtheta_k \in   \bTheta \equiv  \set{ \btheta:  \norm{\btheta
  	}\le{\velocity}_{\by}^{\max} }\subsetneq\real^2,
\end{equation} which should be  as close to $\hat{\bx}_{\given{k+1}{k}}$ as possible. The set  $\bTheta\subsetneq\real^2$ is natural due to the physical  constraints of the  speed limit.

\item \label{item:KF2}  Then at time $\uptau_{k+1}$, the target arrives at state $\bx_{k+1}$ and the agent arrives at $\by_{k+1}$ with the first two components specified as  in 
 (\ref{eq:nextstate}).  We can then repeat  the same procedure  in    \ref{item:KF1}   by setting $k\gets k+1$. 
\end{enumerate}  
 \begin{rem}\label{rem:rotation}
 	For (\ref{eq:nextstate}) to be valid, we need to assume that the agent updates its state according to  the speed   $\hbtheta_k$ at time $\uptau_k $. The  effect of the rotational dynamics are assumed  negligible such that 
 	the UUV can \emph{instantaneously} change direction for all $k$. 	 
 \end{rem}

\subsection{Loss Function  }\label{subsect:loss1}

Now let us discuss the details  of step \ref{item:KF1}   by constructing a  time-varying loss function.
At time $\uptau_k$, an intuitive strategy for the agent is to   pick an action $\hbtheta_k\in\bTheta$ such that the resulting position of the agent  $ \ilparenthesis{ y_{k+1}^{\text{E}} , \,\, y_{k+1}^{\text{N}}} ^\transpose$ computed as (\ref{eq:nextstate}) can be as close to the target's position $ \ilparenthesis{x_{k+1}^{\text{E}}  ,\,\, x_{k+1}^{\text{N}} } ^\transpose$ as possible.  Namely, at time index $k$, we want to find a value of $\btheta\in\real^2$ such that
\begin{align}\label{eq:loss1Case1}
\loss_k\ilparenthesis{\btheta}& \equiv   \frac{1}{2}  \bracket{  \ilparenthesis{   x_{k+1}^{\text{E}} - y_{k+1} ^{\text{E}}  }^2 + \ilparenthesis{   x_{k+1}^{\text{N}} - y_{k+1} ^{\text{N}}  }^2  } \nonumber\\
&= \frac{1}{2} \norm{  \parenthesis{	y_k^{\east} ,\,\,\, y_k^{\north}}^\transpose + (\uptau_{k+1}-\uptau_k) \btheta  -  \parenthesis{x_{k+1}^{\east} ,\,\,\, x_{k+1}^{\north}}^\transpose   }^2
\end{align} is minimized, where the second equality is obtained  by plugging in  (\ref{eq:nextstate}). 
  Note that both the sampling interval $ (\uptau_{k+1}-\uptau_k) $ and the current position of the agent   $ \ilparenthesis{ y_k^{\east},  \,\,\, y_k^{\north}   }^\transpose $ are \emph{known}.  Unfortunately, it is not feasible for the agent to evaluate  the  loss function (\ref{eq:loss1Case1}) at time $\uptau_k$, as  
  the agent does  not know the  \emph{next} position  $ \ilparenthesis{ x_{k+1}^{\east},\,\,\, x_{k+1}^{\north} }^\transpose $ of the target   at time $\uptau_k $. Even at time $ \uptau_{k+1}$,  the agent can only gather noisy information about  $ \ilparenthesis{ x_{k+1}^{\east},\,\,\, x_{k+1}^{\north} }^\transpose $    through  the noisy measurement (\ref{eq:noisyMeasurement}). Nonetheless, the agent can instead use the   approximation in (\ref{eq:loss2Case1}) as a proxy for the true loss function  (\ref{eq:loss1Case1}): 
 \begin{equation}\label{eq:loss2Case1}
\hat{f}_k\ilparenthesis{\btheta}  = \frac{1}{2} \norm{  \parenthesis{	y_k^{\east} ,\,\,\, y_k^{\north}}^\transpose+ (\uptau_{k+1}-\uptau_k) \btheta  -  \parenthesis{	\hat{x}_{\given{k+1}{k}}^{\east} ,\,\,\, \hat{x}_{\given{k+1}{k}}^{\north}}^\transpose   }^2, 
\end{equation}
where $	\hat{x}_{\given{k+1}{k}}^{\east} $ and $\hat{x}_{\given{k+1}{k}}^{\north}$ are the a priori prediction for the first two components of $\bx_{\given{k+1}{k}}$ mentioned  in step \ref{item:KF1}.

The KF scheme is a natural tool to  find the prediction   $ \ilparenthesis{\hat{x}_{\given{k+1}{k}}^{\text{E}}  ,\,\, \hat{x}_{\given{k+1}{k}}^{\text{N}} } ^\transpose$  for  $ \ilparenthesis{x_{k+1}^{\text{E}}  ,\,\, x_{k+1}^{\text{N}} } ^\transpose$. To implement the KF, we need to  impose further assumptions.  
  If the rotational dynamics are  also negligible for the target as it is the case for the agent  discussed in Remark~\ref{rem:rotation}, then the target state evolution  should take the form: 
	\begin{equation}\label{eq:transitiontrue}
	\bx_{k+1}  = \begin{pmatrix}
	1 & 0 &  (\uptau_{k+1} - \uptau_k ) & 0 \\
	0 & 1 & 0 &   (\uptau_{k+1} - \uptau_k ) \\
	0 & 0 & \iota_{k_1} & \iota_{k_2} \\
	0 & 0 & \iota_{k_3} & \iota_{k_4} \\
	\end{pmatrix} \bx_k , \text{ such that }\norm{\bx_{k+1}-\bx_k} \le {\velocity}_{\bx}^{\max}\ilparenthesis{\uptau_{k+1}-\uptau_k},
	\end{equation}
		for some parameters $\iota_{k_1}$, $\iota_{k_2}$, $\iota_{k_3}$, $\iota_{k_4}$ that manifest the change of the speed from $ \ilparenthesis{  \dot{x}_k^{\text{E}} ,\, \,\, \dot{x}_k^{\text{N}}  }^\transpose\in\real^2 $ to $ \ilparenthesis{  \dot{x}_{k+1}^{\text{E}},\, \,\, \dot{x}_{k+1}^{\text{N}}  }^\transpose $. Still, realistically, the agent cannot  access the exact evolution form (\ref{eq:transitiontrue}) of the target. Hence, the following  discrete-time representation  of  linear   dynamics for  the target  is \emph{assumed} by the agent:   
		\begin{equation}\label{eq:timevaryingmodel}
		\bx_{k+1} = \bPhi_k\bx_k + \bw_k,
		\end{equation}where  the state transition matrix is 
	\begin{equation}\label{eq:transition1}
	\bPhi_k = \begin{pmatrix}
	1 & 0 &  (\uptau_{k+1} - \uptau_k ) & 0 \\
	0 & 1 & 0 &   (\uptau_{k+1} - \uptau_k ) \\
	0 & 0 & 1& 0 \\
	0 & 0 & 0& 1   \\
	\end{pmatrix} \in\real^{4\times 4},
	\end{equation}   
	and $\bw_k \stackrel{\mathrm{i.i.d.}}{\sim}\mathrm{Normal}\ilparenthesis{\zero,\bQ_k }  $ with the following  covariance matrix   per  \cite{peterson2014simulation} 
	\begin{equation}\label{eq:Qmatrix}
	\bQ_k = \begin{pmatrix}
	\frac{\ilparenthesis{ \uptau_{k+1} - \uptau_k  }^3}{3} & 0 & \frac{\ilparenthesis{ \uptau_{k+1} - \uptau_k  }^2}{2} & 0 \\
	0 & \frac{\ilparenthesis{ \uptau_{k+1} - \uptau_k  }^3}{3} & 0 & \frac{\ilparenthesis{ \uptau_{k+1} - \uptau_k  }^2}{2}\\
	\frac{\ilparenthesis{ \uptau_{k+1} - \uptau_k  }^2}{2} & 0 & \ilparenthesis{ \uptau_{k+1} - \uptau_k  }& 0 \\
	0 & \frac{\ilparenthesis{ \uptau_{k+1} - \uptau_k  }^2}{2} & 0 & \ilparenthesis{ \uptau_{k+1} - \uptau_k  }
	\end{pmatrix}. 
	\end{equation}
\begin{rem}\label{rem:anticipatedtransition}
	The anticipated form  of (\ref{eq:transition1}) is due to the physical law of inertia, i.e.,   every  vehicle tends to keep its current speed (including both the  direction and  the magnitude). Fortunately,   $\hat{\bx}_{\given{k+1}{k}}$  generated from KF     provides  a reasonable a priori estimation, even if (\ref{eq:timevaryingmodel}) misspecifies (\ref{eq:transitiontrue}). 
\end{rem}

With     assumed form    (\ref{eq:timevaryingmodel}) of the target's motion, we may implement the   KF-based estimation summarized in Algorithm~\ref{algo:EKF}. 
	\begin{algorithm}[!htbp]
	\caption{Using  KF to Predict $ \bx_{k+1} $ at Time $\uptau_k$} 
	\begin{algorithmic}[1]  
		\renewcommand{\algorithmicrequire}{\textbf{Input:}}
		\renewcommand{\algorithmicensure}{\textbf{Output:}}
		\Require  $ \hat{\bx}_{k}\in\real^4 $,  $\uptau_{k+1} - \uptau_k\in\real$,  $\bP_k \in\real^{4\times 4}$, $\bPhi_k\in\real^{4\times 4}$   as   in (\ref{eq:transition1}),   $\bQ_k\in\real^{4\times 4}$ as in  (\ref{eq:Qmatrix}),    $\bR_{k+1}\in\real^{2\times 2} $  as in  (\ref{eq:Rmatrix}), $\KFmeasurementmatrix_{k+1} =  \begin{pmatrix}
		1 & 0 & 0 & 0\\ 0  & 1 & 0 & 0
		\end{pmatrix}\in\real^{2\times 4}$. 
		\State   At time    $\uptau_k $, predict (a priori) state estimate  as  $ \hat{\bx}_{\given{k+1}{k}} = \bPhi_k \hat{\bx}_k $.   \label{line:apriori1}
		\State  At time    $\uptau_k $, also predict (a priori) covariance estimate as   $ \bP_{\given{k+1}{k} }  = \bPhi_ k \bP _k  \bPhi_k^\transpose + \bQ_k $.  \label{line:apriori2}
		\remove{	\State At time $\uptau_{k+1}$, compute the residual/innovation $\tilde{\be}_{k+1} = \bz_{k+1} - \bH_{k+1} \hat{\bx}_{\given{k+1}{k}}$.
		\State At time $\uptau_{k+1}$, compute the   covariance for the residual/innovation as $ \bS _{k+1} = \bH_{k+1} \bP_{\given{k+1}{k}} \bH _{k+1}^\transpose + \bR_{k+1} $.   } 
		\State  At time $\uptau_{k+1}$, compute the   Kalman gain as $ \bK_{k+1} = \bP_{\given{k+1}{k}} \KFmeasurementmatrix_{k+1}^\transpose   \parenthesis{  \KFmeasurementmatrix_{k+1}  \bP_{\given{k+1}{k} }   \KFmeasurementmatrix_{k+1}^\transpose + \bR_{k+1}}        ^{-1} $. \label{line:apost1} 
		\State At time $\uptau_{k+1}$, update  (a posteriori) state estimate $ \hat{\bx}_{k+1} = \hat{\bx}_{\given{k+1}{k}} + \bK _{k+1}  \ilparenthesis{\bz \parenthesis{  \bx_{k+1},  \by_{k+1}  }  - \KFmeasurementmatrix_{k+1} \hat{\bx}_{\given{k+1}{k}}} $, where the binary   function   $\bz\ilparenthesis{\cdot,\cdot}$ is defined in (\ref{eq:noisyMeasurement}). 
		\State At time $\uptau_{k+1}$, update (a posteriori) covariance estimate $ \bP _{k+1} = \ilparenthesis{\bI_ 4  - \bK_{k+1}  \KFmeasurementmatrix _{k+1}} \bP _{\given{k+1}{k}} $.  \label{line:apostend} 
	\end{algorithmic}
	\label{algo:EKF}
\end{algorithm} 
With the a priori estimation  $ \hat{\bx}_{\given{k+1}{k}} $ generated from Algorithm~\ref{algo:EKF},  we    have a way to evaluate the proxy loss function (\ref{eq:loss2Case1}).  We now  discuss using  the SA scheme (\ref{eq:basicSA})  to generate iterative estimate for the minimizer $\bvartheta_k$ of   the  time-varying  loss function  (\ref{eq:loss1Case1}). An unbiased  estimator for 
\begin{equation}\label{eq:trackingG1}
\bg_k\ilparenthesis{\btheta} \equiv \frac{\partial \loss_k\ilparenthesis{\btheta}}{\partial\btheta} =   \ilparenthesis{\uptau_{k+1}-\uptau_k}^2 \btheta+  \ilparenthesis{\uptau_{k+1}-\uptau_k} \begin{pmatrix}
	y_k^{\east} - x_{k+1}^{\east} \\
	y_k^{\north} - x_{k+1}^{\north}
	\end{pmatrix}    
	\end{equation} is
	\begin{equation}\label{eq:trackingG2}
	\hbg_k \ilparenthesis{\btheta} = \ilparenthesis{\uptau_{k+1}-\uptau_k}^2 \btheta+  \ilparenthesis{\uptau_{k+1}-\uptau_k} \begin{pmatrix}
	y_k^{\east} - \hat{x}_{\given{k+1}{k}}^{\east} \\
	y_k^{\north} - \hat{x}_{\given{k+1}{k}}^{\north}
	\end{pmatrix}  .
	\end{equation}
	\begin{rem}
		If the target is moving according to a \emph{prescribed} trajectory, then there is no randomness in $\ilparenthesis{x_{k+1}^{\east}, \,\, x_{k+1}^{\north}}^\transpose$.  If otherwise, $\bg_k\ilparenthesis{\cdot}$ here involves the randomness in   $\ilparenthesis{x_{k+1}^{\east}, \,\, x_{k+1}^{\north}}^\transpose$. 
	\end{rem}
	  With (\ref{eq:trackingG2}), we can update the action of the agent using the scheme (\ref{eq:truncatedSA1})  and the corresponding constraint set  
	  $\bTheta$   is defined in (\ref{eq:nextstate}). 
Furthermore, $ {\partial   {\bg}_k\ilparenthesis{\uptheta}}/{\partial\btheta} = (\uptau_{k+1}-\uptau_k)^2 \bI_2 $  for all $\btheta$ when the sampling interval $\parenthesis{\uptau_{k+1}-\uptau_k}$ is positive, and discrete sampling applies to most  modern sensors.

  We   finish formulating  the loss function for the case where there are  only one agent and one target.  Let us reiterate that the underlying  loss function (\ref{eq:loss1Case1}) is time-varying, as it evolves as the agent and target move with time. The proxy of the underlying loss function (\ref{eq:loss2Case1})  to which   the agent can access is    stochastic as there is random noise in the   measurement  (\ref{eq:noisyMeasurement}), and is information-based given the underlying KF-based prediction  $\hat{\bx} _{\given{k+1}{k}}$ generated from  Algorithm~\ref{algo:EKF}. We summarize the details in implementing step \ref{item:KF1}   in Algorithm~\ref{algo:base}.   
	\begin{algorithm}[!htbp]
	\caption{The Procedure to Generate the $\hbtheta_k$ Sequence For Single-Agent   Single-Target Setting} 
	\begin{algorithmic}[1]  
		\renewcommand{\algorithmicrequire}{\textbf{Input:}}
		\renewcommand{\algorithmicensure}{\textbf{Output:}}
		\Require $ { {\velocity}_{\by}^{\max}} \in\real$, $\hbtheta_0\in\bTheta $,  $ \hat{\bx}_{0}\in\real^4 $,   $\bP_0\in\real^{4\times 4}$,  $\ilparenthesis{\uptau_{k+1}-\uptau_k}\in\real$, $ \bPhi_k \in\real^{4\times 4}   $ as in (\ref{eq:transition1}),  $ \bQ_k  \in\real^{4\times 4}$ as in (\ref{eq:Qmatrix}) for all $k\ge 0$, and $\KFmeasurementmatrix_{k} = \begin{pmatrix}
		1&0&0&0\\
		0&1&0&0
		\end{pmatrix} \in\real^{2\times 4}$,  $\bR_k\in\real^{2\times 2}$ as in (\ref{eq:Rmatrix}) for all $k\ge 1$. 
		\For{$0\le k\le K$}  
		\State \textbf{a priori estimation} $\hat{\bx}_{\given{k+1}{k}} = \bPhi_k \hat{\bx}_k$ and $ \bP_{\given{k+1}{k}}  = \bPhi_k \bP_k \bPhi_k^\transpose + \bQ_k $. 
		\State \textbf{update} $\hat{\btheta}_{k+1} = \Proj_{\bTheta} \ilbracket{\hat{\btheta}_k - \gain_k \hat{\bg}_k\ilparenthesis{\hbtheta_k}}$, where $\hat{\bg}_k\ilparenthesis{\cdot}$ is  given in (\ref{eq:trackingG2}) and $\bTheta\subset\real^2$ given in (\ref{eq:nextstate}).
		\Ensure $\hat{\btheta}_{k+1}$ 
			\State \textbf{update} agent's position $\ilparenthesis{ y_{k+1}^{\east} ,\,\,\,  y_{k+1}^{\north} }^\transpose= \ilparenthesis{ y_{k}^{\east} ,\,\,\,  y_{k}^{\north} }^\transpose+ (\uptau_{k+1} -\uptau_k) \hbtheta_k$ as in (\ref{eq:nextstate}). 
		\State  \textbf{compute}  the Kalman gain   $ \bK_{k+1} = \bP_{\given{k+1}{k}} \KFmeasurementmatrix _{k+1}^\transpose  \ilparenthesis{    \KFmeasurementmatrix_{k+1} \bP_{\given{k+1}{k}} \KFmeasurementmatrix _{k+1}^\transpose + \bR_{k+1}    } ^{-1} $.
		\State   \textbf{a posterior estimation}  $ \hat{\bx}_{k+1} = \hat{\bx}_{\given{k+1}{k}} + \bK _{k+1}  \parenthesis{\bz \ilparenthesis{\bx_{k+1}, \by_{k+1}}   - \KFmeasurementmatrix_{k+1} \hat{\bx}_{\given{k+1}{k}}} $ for the binary function  $\bz\ilparenthesis{\cdot,\cdot}$ as    in  (\ref{eq:noisyMeasurement}), and  $ \bP _{k+1} = \ilparenthesis{\bI_4- \bK_{k+1} \KFmeasurementmatrix_{k+1}} \bP _{\given{k+1}{k}} $. 
		\EndFor
	\end{algorithmic}
	\label{algo:base}
\end{algorithm}

\subsection{Relation With Error Bound Result in Chapter~\ref{chap:FiniteErrorBound}}\label{eq:relatingbackchap3}
Even though the tracking capability results  in Chapter~\ref{chap:FiniteErrorBound} are  derived for iterates $\hbtheta_k$ generated from the unconstrained SA algorithm (\ref{eq:basicSA}), they  can be readily extended to the constrained SA  algorithm (\ref{eq:truncatedSA1})    using the non-expansivity of the projection   $\Proj_{\bTheta}\ilparenthesis{\cdot}$ onto   the feasible region $\bTheta$,     as long as the optimizer $\bvartheta_k\in\bTheta$  for all $k$. 

In the  single-agent single-target setup,   the root of true gradient  function (\ref{eq:trackingG1})   gives  the minimizer of the true loss function (\ref{eq:loss1Case1}) $ \bvartheta_k = \ilparenthesis{  x_{k+1}^{\east}  - y _k ^{\east} , \,\,\,  x_{k+1}^{\north} - y_k^{\north}  }^\transpose     \in\real^2$ when $\bvartheta_k$ falls within the constraint region $\bTheta$. With the speed limit    ${\velocity}_{\bx}^{\max}$  imposed on   the target  in (\ref{eq:transitiontrue}) and the speed  limit ${\velocity}_{\by}^{\max}$ imposed on the  agent in (\ref{eq:nextstate}), we know that
\begin{align}\label{eq:driftBoundKF}
\norm{\bvartheta_{k+1} - \bvartheta_k}  &= \norm{  \parenthesis{	x_{k+2}^{\east} - y_{k+1}^{\east},\,\,\,
		x_{k+2}^{\north} - y_{k+1} ^{\north}}^\transpose  -  \parenthesis{	x_{k+1}^{\east}- y_k^{\east},  \,\,\,\,
		x_{k+1}^{\north}  - y_k^{\north}}^\transpose   } \nonumber\\ &\le \norm{  \parenthesis{x_{k+2}^{\east} - x_{k+1}^{\east}, \,\,\,\,
		x_{k+2}^{\north} - x_{k+1} ^{\north}}^\transpose   } +\norm{    \parenthesis{	y_{k+1}^{\east} - y_k^{\east} ,\,\,\,
		y_{k+1}^{\north}  - y_k^{\north}}^\transpose  } \nonumber\\
&\le{\velocity}_{\bx}^{\max}(\uptau_{k+2}-\uptau_{k+1}) + {\velocity}_{\by}^{\max} (\uptau_{k+1} - \uptau_k)  .
\end{align} Given above,        the assumption  A.\ref{assume:BoundedVariation}   is met with $\driftBound_k =  {\velocity}_{\bx}^{\max}(\uptau_{k+2}-\uptau_{k+1}) + {\velocity}_{\by}^{\max} (\uptau_{k+1} - \uptau_k)  $. 
Also, the assumptions A.\ref{assume:StronglyConvex} and A.\ref{assume:Lsmooth}   are satisfied with   $\convexPara_k=\LipsPara_k=\ilparenthesis{\uptau_{k+1}-\uptau_k}^2$, given the gradient function as in     (\ref{eq:trackingG1}) and the discussion in Subsection~\ref{subsect:ratio}.

Last, we need to consider whether the assumption A.\ref{assume:ErrorWithBoundedSecondMoment} is met. In this single-target single-agent case, the error term defined in (\ref{eq:gGeneral}) becomes  
\begin{align}\label{eq:noiseKF}
\be_k\ilparenthesis{\btheta} &= \hbg_k\ilparenthesis{\btheta} - \bg_k\ilparenthesis{\btheta} =\ilparenthesis{\uptau_{k+1}-\uptau_k} \begin{pmatrix}
x_{k+1}^{\east} - \hat{x}_{\given{k+1}{k}}^{\east}\\
x_{k+1}^{\north} - \hat{x}_{\given{k+1}{k}}^{\north}
\end{pmatrix}, 
\end{align}
where  $\bg_k\ilparenthesis{\cdot}$ is as  (\ref{eq:trackingG1})  and $\hbg_k\ilparenthesis{\cdot}$ is as  (\ref{eq:trackingG2}). \remove{Unfortunately,  the linear dynamical system  (\ref{eq:transitiontrue}) and linear observation (\ref{eq:noisyMeasurement}) only ensure  that the a posterior estimate $ \ilparenthesis{  \hat{x}_{k+1} ^{\east} ,\,\,\, \hat{x}_{k+1} ^{\north} } ^\transpose$ is an unbiased estimator for $\ilparenthesis{x_{k+1}^{\east},\,\,\, x_{k+1}^{\north} }^\transpose$.  Nothing definite can be concluded for the a priori prediction  $ \ilparenthesis{ \hat{x}_{\given{k+1}{k}} ^{\east},\,\,\, \hat{x}_{\given{k+1}{k}}^{\north}  } ^\transpose$ as an estimate for    $\ilparenthesis{x_{k+1}^{\east},\,\,\, x_{k+1}^{\north} }^\transpose$, 
not to mention that the model (\ref{eq:timevaryingmodel}) may not be the accurate representation for (\ref{eq:transitiontrue}), i.e., the transition matrix $\bPhi_k$ may be misspecified. Anyhow, }We will use the upper-left $2$-by-$2$ submatrix of  $ \bP_{\given{k+1}{k}} $, which gives the covariance between $ \hat{x}_{\given{k+1}{k}}^{\east} $ and $ \hat{x}_{\given{k+1}{k}}^{\north} $, as a proxy of the covariance matrix of $\be_k$ in (\ref{eq:noiseKF}). That is, we \emph{assume} that the assumption A.\ref{assume:ErrorWithBoundedSecondMoment} is met with $ \noiseBound_k  $ \emph{approximately} equaling the square root of the sum of the first two diagonal entries of $ \bP_{\given{k+1}{k}} $.

\subsection{Monte Carlo Simulation}
\label{subsect:MCbase}

We consider a time-frame   $0\le k\le 999$. 
Let the sampling frequency   $( \uptau_{k+1} - \uptau_k) = 0.3$ seconds for   $0\le k \le 998$, which is the typical sample interval of the existing sensor. 
 Assume that the target has a speed limit of $\velocity_{\bx}^{\max} =15$ meters per second,  which is  the average speed of the  middle-class submarines.   Assume that the agent has a speed limit of   
 $   \velocity_{\by}^{\max}= 30 $ meters per second,
 which is the average speed of the top-tier submarines.   Assume that the target is moving according to the following  transition law:
   \begin{align}\label{eq:transition2}
  &\quad\,\, \,\,   \bx_{k+1} = \begin{pmatrix}
  1 & 0 &  0.3 & 0 \\
  0 & 1 & 0 &   0.3 \\
  0&0&1&0\\
  0&0&0&1
  \end{pmatrix} \bx_k + \bw_k, \,\,\text{ for } 0\le k \le 499 \text{ and } 501\le k \le 999,\nonumber\\
  &\text{ and } \bx_{k+1}  = \begin{pmatrix}
  1 & 0 &  0.3 & 0 \\
  0 & 1 & 0 &   0.3 \\
  0&0&-1&0\\
  0&0&0&1
  \end{pmatrix}\bx_k+ \bw_k,  \,\, \text{ for }k = 500.
  \end{align}
 The above transition law is certainly    \emph{unknown} to the agent, and the agent will again use the anticipated transition matrix $\bPhi_k$ for the reason explained in Remark~\ref{rem:anticipatedtransition}. The matrix $\bR_k$ will be as in (\ref{eq:Rmatrix})   and the  matrix $\bQ_k$ will be as (\ref{eq:Qmatrix}) after plugging in the value of  the sampling interval, which is  $0.3$ seconds. 
 
 One remaining input for implementing Algorithm~\ref{algo:base} is $\bP_{0}$. We assume that at $k=0$,  the target's location  
 $\bx_0$ and the agent's location $\by_0$ are uniformly-random distributed  within $\bracket{-5,5}\times \bracket{-5,5}$, and assume that the target has an initial speed of  $\ilparenthesis{  \dot{x}_{0}^{\east},\,\,\, \dot{x}_0^{\north} }^\transpose$, whose  Euclidean norm equals $\velocity_{\bx}^{\max} = 15$, and a random direction uniformly sampled from $\mathrm{Uniform}\ilparenthesis{0,2\uppi}$. With such an initialization, the agent picks
 \begin{equation}\label{eq:initP}
 \bP_0 = \begin{pmatrix}
 \frac{100}{12} & 0 & 0 & 0 \\
 0 & \frac{100}{12} & 0 & 0\\
 0 & 0 &   \frac{15^2}{2}     & 0 \\
 0 & 0 & 0 &  \frac{15^2}{2}
 \end{pmatrix} = \diag (\frac{25}{3},  \,\,  \frac{25}{3}, \,\, \frac{225}{2},\,\, \frac{225}{2}  )\in\real^{4\times 4}
 \end{equation}
 as  an initial estimate for the covariance matrix  $ \E\bracket{    \ilparenthesis{  \hat{\bx}_0 - \bx_0 }\ilparenthesis{\hat{\bx}_0-\bx_0}^\transpose  } $. The first two diagonal entries in (\ref{eq:initP}) are    the variance of $\mathrm{Uniform}\bracket{-5,5}$, and the  lower-right $2$-by-$2$ submatrix is given by
 the product of the squared of $\velocity_{\bx}^{\max} $ and the $2$-by-$2$ variance matrix of the cosine and  the sine of a uniform random variable within $\bracket{0,2\uppi}$. 
 
 \subsubsection*{Relation To Results on  Error Bound }
 Subsection~\ref{eq:relatingbackchap3} mentions that the value of 
 $\LipsPara_k$ equals the value of $\convexPara_k$ for all $k$,   so we may pick $\Tobe_k=2.5$ for all $k$ as per line \ref{line:r1-1}  in Algorithm~\ref{algo:basicSA}. We then pick a gain $\gain_k$ of $1.15/\LipsPara_k$   to implement (\ref{eq:truncatedSA1}) as per  line \ref{line:r1-2} in Algorithm~\ref{algo:basicSA}. 
So    $\firstConst_k$  can be computed as in   (\ref{eq:firstConst}), and    $\secondConst_k$ can be computed as  in (\ref{eq:secondConst}). 
  
  We reiterate that the error bound results  (\ref{eq:PropagationLemma3})  and  (\ref{eq:PropagationLemma5}) are obtained after averaging the performance on all the sample  paths. For real-time tracking   in this chapter, the agent will \emph{not} have a chance to repeatedly rehearse the tracking  mission.  As a result,  the tracking error bounds is \emph{not}
 informative for \emph{one}   run, even though all the assumptions A.\ref{assume:ErrorWithBoundedSecondMoment}\textendash A.\ref{assume:BoundedVariation}   are satisfied (as discussed in Subsection~\ref{eq:relatingbackchap3}).

  Here, we use  (\ref{eq:PropagationLemma2})  ``loosely'' as the follows to compute a proxy of the error bound iteratively: 
 \begin{equation}\label{eq:loosebound}
  \norm{  \hbtheta_{k+1} - \bvartheta_k } \le \sqrt{\firstConst_k} \norm{\hbtheta_k -\bvartheta_{k}}  +  \noiseBound_k\sqrt{\secondConst_k} + \driftBound_k,
 \end{equation} 
 where $\driftBound_k$  can be computed    as   in (\ref{eq:driftBoundKF}), and 
 $\noiseBound_k$ can be  approximately    computed as  the square root of the sum of the first two diagonal entries of $ \bP_{\given{k+1}{k}} $ through implementing the recursive procedure described in Algorithm~\ref{algo:base}.

 \subsubsection*{Simulation Results Using Algorithm~\ref{algo:base} With Given Input}
 
 The positions of the target and the agent are plotted on the Cartesian coordinate   in Figure~\ref{fig:KFtrackingvisual}.  The starting/ending position of the target is denoted in the red upward/downward pointing triangle,  and the initial/ending position of the agent is denoted in the blue left/right pointing triangle.  The difference between the position of the target and the agent is plotted in  Figure~\ref{fig:KFtrackingerror}. 
\begin{figure}[!htbp]
	\centering
	\begin{subfigure}{.65\textwidth}
		\centering
		\includegraphics[width=\linewidth]{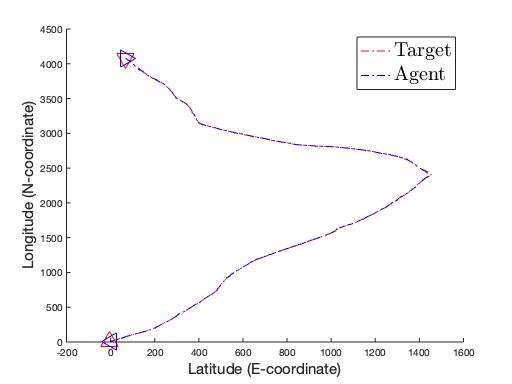}
		\caption{Trajectories of The Target and The Agent in \emph{One} Simulation Run} 	\label{fig:KFtrackingvisual}
	\end{subfigure}\\
	\begin{subfigure}{.65\textwidth}
		\centering
		\includegraphics[width=\linewidth]{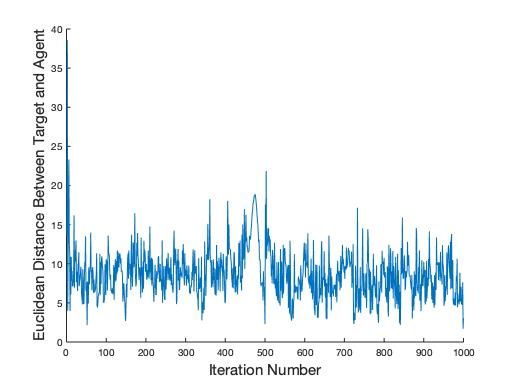}
		\caption{Euclidean Distance Between The Position  of The Target $ \ilparenthesis{x_{k}^{\east},\,\,\, x_k^{\north}}^\transpose $ and The Position of The Agent  $ \ilparenthesis{y_{k}^{\east},\,\,\, y_k^{\north}}^\transpose $.}  	\label{fig:KFtrackingerror}
	\end{subfigure}
	\caption{A Demonstration of Implementing  Algorithm~\ref{algo:base} Using the Inputs Described in This Subsection}
	\label{fig:KF}
\end{figure} 
We also include   Figure~\ref{fig:error}, but it is not very informative as the results in Chapter~\ref{chap:FiniteErrorBound} is valid after averaging the performance across all sample paths.
\begin{figure}[!htbp]
	\centering 
		\centering
		\includegraphics[width=.65\linewidth]{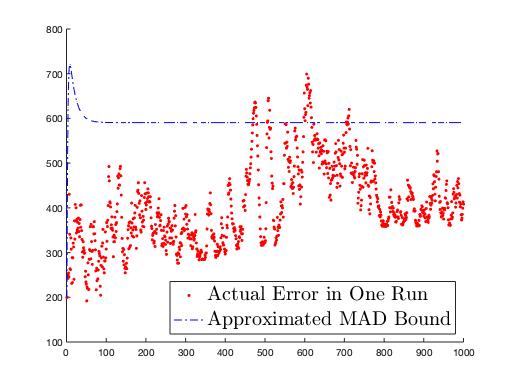}
		\caption{ Actual Error $ \norm{\hbtheta_k-\bvartheta_k} $ for One Simulation Run and   the ``Loose'' Bound Computed Per (\ref{eq:loosebound})   }    
	\label{fig:error}
\end{figure}

\section{Generality: Multi-Agent  Multi-Target Surveillance With Zero-Communication}\label{sect:general}

We  now   consider    the surveillance problem with $I$ targets and $J$ agents,  for $I, J \in \integer$ with $1<I\le J$. The $i$th target's state   and     the $j$th agent's state   at time $\uptau_k$ are  denoted as $\bx_k ^{(i)}$ and $\by_k^{(j)}$ respectively.  Furthermore, we assume \emph{no} communication between agents is  allowed. Each agent should rely on local  awareness and plays 
individually.

 In the   multi-agent multi-target setting with zero-communication, the objective of each agent is two-fold: one is to \emph{track} the  nearby target \emph{if needed}, the other is to \emph{spread} out     to  enlarge  the collective coverage of the area of interest \emph{if otherwise}.  By ``needed'' we mean that the agent   believes that it is closer to a certain target than any other agents.\remove{By ``coverage'' we mean the probability of detecting other targets successfully.  }
As before, the $j$th  target is allowed to obtain noisy observations of all the targets $ \bz \ilparenthesis{\bx_i, \by_j} $ for $i\in I$ and noisy observations of all other agents $ \bz\ilparenthesis{\by_{j'}, \by_j} $ for $j\in J\setminus\set{j}$, where the function $\bz\ilparenthesis{\cdot,\cdot}$ is defined in (\ref{eq:noisyMeasurement}). 
 \subsection{Loss Function  }\label{subsect:loss2}
  Let us explain  the loss function  addressing  the   objective of ``actively-tracking if needed and actively-spreading if otherwise'' that applies to every agent.  If all the targets are \emph{equally} 
  important, the ensemble of the agents is likely to distribute 
  \begin{equation}
  \label{eq:iStar} j^* \le  \left \lfloor{J/I}\right \rfloor  
  \end{equation} 
  agent(s) to track each target. Usually, $j^*=1$.  Algorithm~\ref{algo:trackingAssign} describes the procedure to assign ``actively-tracking'' and ``actively-spreading'' agents.
  
  At time $\uptau_k$, we   assign what we will call  the ``actively-tracking'' agents and the ``actively-spreading'' agents from this point on  using  Algorithm~\ref{algo:trackingAssign}. Specifically, the output       $T_k^{(i)}$ of   Algorithm~\ref{algo:trackingAssign} contains the indices of the agents that are expected to track target $i$ for $1\le i \le I$, and the output $S_k$ (if nonempty) contains the indices of the agents  that are expected to spread out as much as possible. 
 	\begin{algorithm}[!htbp]
 	\caption{Assigning Actively-Tracking and Actively-Spreading Agents At Time $\uptau_k$} 
 	\begin{algorithmic}[1]  
 		\renewcommand{\algorithmicrequire}{\textbf{Input:}}
 		\renewcommand{\algorithmicensure}{\textbf{Output:}}
 		\Require  the  distance matrix $\bD _{k+1}$ with the $(i,j)$-entry being  $(x_{k+1}^{(i), \east}    - y_{k+1} ^{(j),\east}  )^2 +(x_{k+1}^{(i), \north}    - y_{k+1} ^{(j),\north}  )^2 $ for $1\le i \le I$ and $1\le j\le J$, and the desired number  $j^*$ of agents to keep track of a single target satisfying (\ref{eq:iStar}). 
 		\State \textbf{initialize} the   search set  $S_k = \set{1,\cdots,J}$. 
 		\State \textbf{find} a set $T_k^{(1)} \subseteq S_k$ that contains the   indices of the columns   that have the $j^*$ smallest entries within  the first row of $\bD_{k+1}$. \Comment{If there exist equal rankings, just pick any set  such   that cardinality of  $T_k^{(1)}$ is    $j^*$.}
 		\For( $2\le i \le I $)
 		\State \textbf{update} $S_k \gets S_k \setminus T_k^{(i-1)}$.
 			\State \textbf{find} a set $T_k^{(i)} \subseteq S_k$ that contains the indices of the column that have the $j^*$ smallest entries in the $i$th row of $\bD_{k+1}$.
 		\EndFor
 		\State  \textbf{update} the search set $S_k\gets S_k \setminus 
 		T_k^{(I)}$. 
 		\Ensure the  sets $T_k^{(1)},\cdots,T_k^{(I)}$ and the set $S_k$. 
 	\end{algorithmic}
 	\label{algo:trackingAssign}
 \end{algorithm} 

If $S_k$ is nonempty, then the $j$th agent    for $j\in S_k$ is  not expected to actively track any of the targets and should  spread out as much as possible  to maximize the coverage area.  There are many ways to quantify ``spreading'' and 
 we adopt the strategy proposed by  \cite{lee2015multirobot}. The notion of Voronoi cell (a.k.a. Thiessen polygon) is used. Let $\bzeta$ be any canonical point in $\real^2$, then the Voronoi cell $V_k^{(j)}\subsetneq\real^2$ within which the $j$th agent locates is  constructed in a way such that   for any canonical point $\bzeta\in V_k^{(j)}$, the distance between $\bzeta$ and the $j$th agent's position is strictly smaller than the distance between $\bzeta$ and the position   of any other agent     at time $\uptau_k$.  The required input (which is a proper subspace of the two-dimensional Euclidean space)  to  compute the Voronoi cells $ V_k^{(j)} $ for $1\le j \le J$ is
 the convex hull of all the agent's positions $\ilparenthesis{ y_{k+1}^{(j),\east} ,\,\,\,  y_{k+1}^{(j),\north}}^\transpose$ for all $1\le j \le J$.   
 Given that  directly minimizing (\ref{eq:Voronoi1})  is difficult, 
 an intuitive   alternative is to let the ``actively-spreading'' agents   reach the center of the mass of $V_{k}^{(j)}$ for $j\in S_k$, which can be computed as: 
 \begin{equation}\label{eq:Voronoi2}
 \bc_k^{(j)} = \frac{  \int _{\bzeta \in  V_k^{(j)}   }  {   \bzeta   } \diff \bzeta   }{  \text{Area of }V_k^{(j)} }. 
 \end{equation}
 With the notion of Voronoi cell, \cite{lee2015multirobot} minimizes the following loss function
\begin{align}\label{eq:Voronoi1}
& \sum_{j\in S_k} \int _{V_k^{(j)}} \norm{  \bzeta - \ilparenthesis{ y_{k+1}^{(j), \east} ,\,\,\,  y_{k+1}^{(j),\north}}^\transpose  }^2 \diff \bzeta  \nonumber\\
&\quad = \sum_{j\in S_k} \int _{V_k^{(j)}} \norm{   \ilparenthesis{ y_{k}^{(j),\east} ,\,\,\,  y_{k}^{(j),\north}}^\transpose + (\uptau_{k+1}-\uptau_k)\btheta^{(j)} - \bzeta  }^2 \diff \bzeta 
\end{align}
 w.r.t.    the actions   $  \btheta^{(j)} \in\bTheta \subsetneq\real^2  $ for $j\in S_k$ and   for $\bTheta$ defined as in (\ref{eq:nextstate}).
  In (\ref{eq:Voronoi1}), $\bzeta $  is any canonical point in $\real^2$, and $V_k^{(j)}\subsetneq\real^2$ denotes the  Voronoi cell   \cite[Sect. 8.11]{burrough2015principles} within which the $j$th agent's locates. 
   
  To achieve the goal of ``actively-tracking if needed and actively-spreading if otherwise,'' 
   every agent   strives to minimize the  following   loss function 
   \begin{align}\label{eq:lossMultiAgent}
   \loss _k \ilparenthesis{\btheta}  &=\frac{1}{2}\sum_{i=1}^I \sum_{j \in T_k^{(i)}} \parenthesis{(x_{k+1}^{(i), \east}    - y_{k+1} ^{(j),\east}  )^2 +(x_{k+1}^{(i), \north}    - y_{k+1} ^{(j),\north})^2 } \nonumber\\
   &\quad + \frac{1}{2} \sum_{j\in S_k}  \norm{  
   \parenthesis{	y_{k+1}^{(j),\east}, \,\,\,  y_{k+1} ^{(j),\north}}^\transpose
   - \bc_k^{(j)} }^2\nonumber\\
&= \frac{1}{2}\sum_{i=1}^I \sum_{j \in T_k^{(i)}}
\norm{  \ilparenthesis{   y_k^{(j), \east }   - x_{k+1} ^{(i),\east}  ,\,\,\,  y_k^{(j), \north  }   - x_{k+1} ^{(i),\north}     } ^\transpose  + (\uptau_{k+1}-\uptau_k) \btheta^{(j)}     }^2  \nonumber\\
&\quad + \frac{1}{2} \sum_{j\in S_k}  \norm{  
	\ilparenthesis{	y_{k}^{(j),\east}, \,\,\,  y_{k} ^{(j),\north}}^\transpose + \ilparenthesis{\uptau_{k+1}-\uptau_k}\btheta^{(j)}
	- \bc_k^{(j)} }^2
   \end{align} 
   w.r.t.  $\btheta\in\bTheta^J\subsetneq \real^{2J}$, where  $\btheta$  is the concatenation of  $\btheta ^{(j)}\in\bTheta\subsetneq \real^2$ for all $1\le j\le J$.

Nonetheless, under the zero-communication setting, there exists no commander in chief who can      dispatch the corresponding actions $\btheta_k \in\real^{2J}$ to all the agents using the information from 
  $\loss_k\ilparenthesis{\cdot}$. Consequently, the $j$th agent   only gets to  update its   action $\btheta_k^{(j)}\in\bTheta$ by minimizing the following loss function
   \begin{align}\label{eq:loss1Case2}
  &  \loss_k^{(j)} \ilparenthesis{\btheta^{(j)} }\nonumber\\
   &\quad =\frac{1}{2} \sum_{i=1}^{I} \set{
   	\indicator_{\ilset{j\in T_k^{(i)}}}  \times    \ilbracket{(x_{k+1}^{(i), \east}    - y_{k+1} ^{(j),\east}  )^2 +(x_{k+1}^{(i), \north}    - y_{k+1} ^{(j),\north})^2 }} \nonumber\\
   &\quad\quad +  \frac{1}{2} \,  \indicator_{\set{j\in S_k}}   \times  \norm{  
   	\ilparenthesis{	y_{k+1}^{(j),\east}, \,\,\,  y_{k+1} ^{(j),\north}}^\transpose
   	- \bc_k^{(j)} }^2\nonumber\\
   &\quad =\begin{dcases}
   \frac{1}{2} 
   \norm{  \ilparenthesis{   y_k^{(j), \east }   - x_{k+1} ^{(i),\east}  ,\,\,\,  y_k^{(j), \north  }   - x_{k+1} ^{(i),\north}     } ^\transpose  + (\uptau_{k+1}-\uptau_k) \btheta^{(j)}     }^2 , \\
   \quad\quad   \quad\quad      \text{ if } j \in T_k^{(i)} \text{ for \emph{some} }1\le i\le I,\\
   \frac{1}{2}    \norm{  
   	\ilparenthesis{	y_{k}^{(j),\east}, \,\,\,  y_{k} ^{(j),\north}}^\transpose - \bc_k^{(j)} + \ilparenthesis{\uptau_{k+1}-\uptau_k}\btheta^{(j)}
   	 }^2 ,  \text{ if }j\in S_k, 
   \end{dcases}      
   \end{align}
   w.r.t. $\btheta^{(j)}\in\real^2$.  According to Algorithm\ref{algo:trackingAssign}, $T_k^{(1)}, \cdots, T_k^{(I)}, S_k$ are mutually exclusive.

   Of course, at time $\uptau_k$, the agent $j$ does \emph{not} have $\bx_{k+1}^{(i)}$ for $1\le i\le I$ and $\by 
    _{k+1}^{(j')}$ for $j'\neq j$ to determine $T_k^{(i)}$ and $S_k$  for $1\le i \le  I$ using Algorithm~\ref{algo:trackingAssign} and to compute the Voronoi cells $V_k^{(j)}$ and the centers $ \bc_k^{(j)}  $ for $1\le j \le  J$.   Similar to the rationale  behind substituting (\ref{eq:loss1Case1}) for  (\ref{eq:loss2Case1}), 
   the agent $j$ can   use (\ref{eq:loss2Case2})
   as a proxy of (\ref{eq:loss1Case2}):
   \begin{equation}
   \label{eq:loss2Case2}
   \hat{\loss}_k^{(j)}  (\btheta^{(j)})  = \begin{dcases}
   \frac{1}{2} 
   \norm{  \ilparenthesis{   y_k^{(j), \east }   - \hat{x}_{\given{k+1}{k}} ^{(i),\east,(j)}  ,\,\,\,  y_k^{(j), \north  }   - \hat{x}_{\given{k+1}{k}} ^{(i),\north,(j)}     } ^\transpose  + (\uptau_{k+1}-\uptau_k) \btheta^{(j)}     }^2 ,   \text{ if } j \in \hat{T}_k^{(i,j)},\\
   \frac{1}{2}    \norm{  
   	\ilparenthesis{	y_{k}^{(j),\east}, \,\,\,  y_{k} ^{(j),\north}}^\transpose + \ilparenthesis{\uptau_{k+1}-\uptau_k}\btheta^{(j)}
   	- \hat{\bc}_k^{(j,j)} }^2 ,  \text{ if }j\in \hat{S}_k^{(j)}.
   \end{dcases}      
   \end{equation}
   where all the relevant computations      arising in (\ref{eq:loss1Case2}) are executed using   the current state  of the $j$th agent   $\by_k$, and     the a priori approximation $\hat{\bx}_{\given{k+1}{k}}^{(i,j)}$ for $1\le i\le I$ and $\hat{\by}_{\given{k+1}{k}}^{(j',j)} $ for $j'\neq j$ based on the information available to agent $j$, including (1)  finding   $\hat{T}_k^{(i,j)}$ and $\hat{S}_k^{(j)}$ through implementing Algorithm~\ref{algo:trackingAssign} and (2)  generating Voronoi cells $\hat{V}_{k}^{(j,j)}$ using the built-in \textsc{MatLab} function \texttt{voronoi($\cdot$)} and  computing 
   the centers $\hat{\bc}_k^{(j,j)}$ using \texttt{polygem($\cdot$)}.  	$\hat{T}_k^{(i,j)}$,  	 $\hat{S}_k^{(j)}$,   $\hat{V}_{k}^{(j,j)}$,   and $\hat{\bc}_k^{(j,j)}$ 
   in (\ref{eq:loss2Case2})  represent the estimation of       	 ${T}_k^{(i)}$, $S_k$, $V_k^{(j)}$,  and   $\bc_k ^{(j)}$       appearing in  (\ref{eq:loss1Case2})
   based on the estimation obtained by the $j$th agent.

    A ready estimator, which  may be \emph{biased} due to the potential  inconsistency between $T_k^{(i)}$ ($\hat{T}_k^{(i,j)}$) and $S_k$ ($\hat{S}_k^{(j)}$),  for 
   \begin{align}\label{eq:trackingG3}
&   \bg_k^{(j)}\ilparenthesis{\btheta^{(j)}} \nonumber\\
&\quad   \equiv \frac{\partial \loss_k^{(j)}\ilparenthesis{\btheta^{(j)}}}{\partial\btheta^{(j)}} \nonumber\\
   &\quad =\begin{dcases}
  \ilparenthesis{\uptau_{k+1}-\uptau_k}^2 \btheta^{(j)}+  \ilparenthesis{\uptau_{k+1}-\uptau_k} \begin{pmatrix}
  y_k^{(j), \east} - x_{k+1}^{(i),\east} \\
  y_k^{(j),\north} - x_{k+1}^{(i),\north}
  \end{pmatrix}  ,    \text{ if } j \in T_k^{(i)} \text{ for some }1\le i \le I,\\
 \ilparenthesis{\uptau_{k+1}-\uptau_k}^2 \btheta^{(j)}+  \ilparenthesis{\uptau_{k+1}-\uptau_k} \begin{pmatrix}
 	y_k^{(j), \east}  \\
 	y_k^{(j), \north}  
 	\end{pmatrix} - \ilparenthesis{\uptau_{k+1}-\uptau_k}\bc_k^{(j)} ,   \text{ if }j\in S_k.
   \end{dcases}   
   \end{align} is
   \begin{align}\label{eq:trackingG4}
&   \hbg_k^{(j)}\ilparenthesis{\btheta^{(j)}} \nonumber\\
&\quad \equiv \frac{\partial \hat{\loss}_k^{(j)}\ilparenthesis{\btheta^{(j)}}}{\partial\btheta^{(j)}} \nonumber\\
 &\quad=\begin{dcases}
 \ilparenthesis{\uptau_{k+1}-\uptau_k}^2 \btheta^{(j)}+  \ilparenthesis{\uptau_{k+1}-\uptau_k} \begin{pmatrix}
 y_k^{(j), \east} - \hat{x}_{\given{k+1}{k}}^{(i),\east,(j)} \\
 y_k^{(j),\north} - \hat{x}_{\given{k+1}{k}}^{(i),\north,(j)}
 \end{pmatrix}  ,   \\
 \quad\quad \text{ if } j \in \hat{T}_k^{(i,j)} \text{ for some }1\le i \le  I,\\
 \ilparenthesis{\uptau_{k+1}-\uptau_k}^2 \btheta^{(j)}+  \ilparenthesis{\uptau_{k+1}-\uptau_k} \begin{pmatrix}
 y_k^{(j), \east}  \\
 y_k^{(j), \north}  
 \end{pmatrix} - \ilparenthesis{\uptau_{k+1}-\uptau_k}\hat{\bc}_k^{(j,j)} ,   \text{ if }j\in \hat{S}_k^{(j)}.
 \end{dcases}   
   \end{align} 
   A natural strategy   to decide    the action $\btheta^{(j)}$ of the agent $j$ is the truncated SA algorithm  (\ref{eq:truncatedSA1})  with   
   $\bTheta$   as    in (\ref{eq:nextstate}). 
   Furthermore, $ {\partial   {\bg}_k^{(j)}\ilparenthesis{\uptheta}}/{\partial\btheta} = (\uptau_{k+1}-\uptau_k)^2 \bI_2 $  for all $\btheta$ when the sampling interval $\uptau_{k+1}-\uptau_k$ is positive. Hence, all the discussion  in Subsection~\ref{eq:relatingbackchap3}   regarding  the loss function $\loss_k\ilparenthesis{\cdot}$ in (\ref{eq:loss1Case1}) is applicable for the loss function $\loss_k^{(j)}\ilparenthesis{\cdot}$ in (\ref{eq:loss1Case2}) for all $1\le j\le J$.

   We   finish stating the loss function for the general case where there are multiple agents and multiple targets. Again the underlying loss function   (\ref{eq:loss1Case2}) for agent $j$ is time-varying, and it only gets access to the noisy evaluation     $\bz\ilparenthesis{\bx_i,\by_j}$ for $1\le i \le I$ and $\bz\ilparenthesis{\by_{j'},\by_j}$ for $j'\neq j$.

  	\begin{algorithm}[!htbp]
  	\caption{The Procedure to Generate $\hbtheta_k^{(j)}$ Sequence For $j$th Agent  In Multi-Agent     Multi-Target Setting} 
  	\begin{algorithmic}[1]  
  		\renewcommand{\algorithmicrequire}{\textbf{Input:}}
  		\renewcommand{\algorithmicensure}{\textbf{Output:}}
  		\Require $ { {\velocity}_{\by}^{\max}} \in\real$, $\hbtheta_0^{(j)}\in\bTheta $,  $ \ilparenthesis{\uptau_{k+1}-\uptau_k}\in\real $ for all $k\ge 0 $, $ \hat{\bx}_{0}^{(i,j)}\in\real^4 $ and  $\bP_0^{(i,j)}\in\real^{4\times 4}$ for all $1\le i \le I$,  $ \hat{\by}_0^{(j',j)} \in\real^4 $ and $ \tilde{\bP}_0^{(j',j)} $ for all $j'\neq j$, $ \bPhi_k\in\real^{4\times 4}$ as in as in (\ref{eq:transition1}),   $ \bQ_k  \in\real^{4\times 4}$ as in (\ref{eq:Qmatrix}) for all $k\ge 0$, and $\KFmeasurementmatrix_k=\begin{pmatrix}
  		1&0&0&0\\ 0 & 1 & 0 & 0
  		\end{pmatrix} \in\real^{2\times 4}$,  $\bR_k\in\real^{2\times 2}$ as in  (\ref{eq:Rmatrix}) for all $k\ge 1$. 
  		\For{$0\le k\le K$}  
  		\For{$1\le i \le I$} \label{line:priorStart}
  		\State \textbf{a priori estimation} $\hat{\bx}_{\given{k+1}{k}}^{(i,j)} = \bPhi_k \hat{\bx}_k^{(i,j)}$ and $ \bP_{\given{k+1}{k}} ^{(i,j)}  = \bPhi_k \bP_k^{(i,j)} \bPhi_k^\transpose + \bQ_k $.
  		\EndFor
  		\For{$1\le j'\le J$ and $j'\neq j'$}
  		\State \textbf{a priori estimation} $\hat{\by}_{\given{k+1}{k}}^{(j',j)} = \bPhi_k \hat{\by}_k^{(j',j)}$ and $ \tilde{\bP}_{\given{k+1}{k}} ^{(j',j)}  = \bPhi_k \tilde{\bP}_k^{(j',j)} \bPhi_k^\transpose + \bQ_k $.
  		\EndFor \label{line:priorEnd}
  		\State   \textbf{generate} $\hat{T}_{k}^{(i,j)}$ and $\hat{S}_{k}^{(j)}$ (via Algorithm~\ref{algo:trackingAssign}) and \textbf{compute} $\hat{V}_k^{(j,j)}$ and $\bc_k^{(j,j)}$ using $\hat{\bx}_{\given{k+1}{k}}^{(i,j)}$ for $1\le i\le I$ and $\hat{\by}_{\given{k+1}{k}}^{(j',j)} $ for $j'\neq j$ as input.  \label{line:pickStart}
  		\State \textbf{update} $\hat{\btheta}_{k+1} ^{(j)}= \Proj_{\bTheta} \ilbracket{\hat{\btheta}_k^{(j)} - \gain_k^{(j)} \hat{\bg}_k^{(j)}\ilparenthesis{\hbtheta_k^{(j)}}}$ where $\hat{\bg}_k^{(j)}\ilparenthesis{\cdot}$ is  given in (\ref{eq:trackingG4}). \label{line:pickEnd}
  		\Ensure $\hat{\btheta}_{k+1}^{(j)}$ 
  		\State \textbf{update} the $j$th agent's position $\ilparenthesis{ y_{k+1}^{(j),\east} ,\,\,\,  y_{k+1}^{(j),\north} }^\transpose= \ilparenthesis{ y_{k}^{(j),\east} ,\,\,\,  y_{k}^{(j),\north} }^\transpose+ (\uptau_{k+1} -\uptau_k) \hbtheta_k^{(j)}$.  \label{line:positionUpdate}
  	\For{$1\le i \le I$} \label{line:postStart}
  	\State \textbf{compute} the Kalman gain $ \bK_{k+1}^{(i,j)} = \bP_{\given{k+1}{k}}^{(i,j)} \KFmeasurementmatrix _{k+1}^\transpose\ilparenthesis{ \KFmeasurementmatrix_{k+1}  \bP_{\given{k+1}{k}} ^{(i,j)}  \KFmeasurementmatrix _{k+1}^\transpose + \bR_{k+1} }^{-1} $.  
  	\State   \textbf{a posterior estimation}  $ \hat{\bx}_{k+1}^{(i,j)} = \hat{\bx}_{\given{k+1}{k}} ^{(i,j)}+ \bK _{k+1}^{(i,j)}  \ilparenthesis{  \bz \ilparenthesis{  \bx_{k+1}^{(i)}, \by_{k+1}^{(j)}  }  - \KFmeasurementmatrix_{k+1} \hat{\bx}_{\given{k+1}{k}} ^{(i,j)} } $ for the binary function $\bz\ilparenthesis{\cdot,\cdot}$ as in (\ref{eq:noisyMeasurement}),  and  $ \bP _{k+1}^{(i,j)} = \ilparenthesis{\bI_4 - \bK_{k+1}^{(i,j)} \bH _{k+1}} \bP _{\given{k+1}{k}} ^{(i,j)}$. 
  	\EndFor
  	\For{$1\le j'\le J$ and $j'\neq j$}
  		\State \textbf{compute} the Kalman gain $ \tilde{\bK}_{k+1}^{(j',j)} = \tilde{\bP}_{\given{k+1}{k}}^{(j',j)} \KFmeasurementmatrix _{k+1}^\transpose\ilparenthesis{ \KFmeasurementmatrix_{k+1}  \tilde{\bP}_{\given{k+1}{k}} ^{(j',j)}  \KFmeasurementmatrix _{k+1}^\transpose + \bR_{k+1} }^{-1} $.   
  	\State   \textbf{a posterior estimation}  $ \hat{\by}_{k+1}^{(j',j)} = \hat{\by}_{\given{k+1}{k}} ^{(j',j)}+ \tilde{\bK} _{k+1}^{(j',j)}  \ilparenthesis{     \bz\ilparenthesis{  \by_{k+1}^{(j')} , \by_{k+1} ^{(j)} } - \KFmeasurementmatrix_{k+1} \hat{ {\by}}_{\given{k+1}{k}} ^{(j',j)} } $ for the binary function $\bz\ilparenthesis{\cdot,\cdot}$ as in (\ref{eq:noisyMeasurement}),   and  $ \tilde{\bP} _{k+1}^{(j',j)} = \ilparenthesis{\bI _4- \tilde{\bK}_{k+1}^{(j',j)} \bH _{k+1}} \tilde{\bP} _{\given{k+1}{k}} ^{(j',j)}$. 
  	\EndFor \label{line:postEnd}
  		\EndFor
  	\end{algorithmic}
  	\label{algo:general}
  \end{algorithm}  Let us summarize the estimation procedure in Algorithm~\ref{algo:general}. Lines \ref{line:priorStart}\textemdash \ref{line:priorEnd}   compute the a priori  estimate for the states of all the  targets and all the other agents. 
Lines \ref{line:pickStart}     decides whether the $j$th agent  is ``actively-tracking'' (i.e., $j\in \hat{T} _k^{(i,j)}$ for some $1\le i \le  I$) or is ``actively-spreading'' (i.e., $j \in \hat{ S} _k^{(j)}$). Line  \ref{line:pickEnd} is to  pick a decision $\hbtheta_k^{(j)}$ using  the truncated SA scheme (\ref{eq:truncatedSA1}).  Then line \ref{line:positionUpdate} is to update the  $j$th agent's position according to $\hbtheta_{k}^{(j)}$ and (\ref{eq:nextstate}).  Lines \ref{line:postStart}\textemdash \ref{line:postEnd}   update the a posterior estimate for the states of  all the   targets and all the other agents.

 \subsection{Monte Carlo Simulation}\label{subsect:MCgeneral}

This subsection will use the same initialization as   Subsection~\ref{subsect:MCbase}, except that  $\velocity_{\by} ^{\max} $  becomes the same as $\velocity_{\bx}^{\max}=15$ meters per seconds. This change is made 
 in  the hope that the requirement on the agent's UUV speed in the multi-agent setting  
 with the joint effort with an ensemble of agents will not be as stringent as the requirement  in the single-agent setting. 

For graphical illustration, we use $J=4$ agents to track $I=2$ targets, and we pick $j^* $ to be $1$ per (\ref{eq:iStar}). 
 The positions of two  targets  and four agents  from $0\le k \le 999$ are plotted on the two-dimensional plane   in Figure~\ref{fig:KFtrackingmulti}.  The staring/ending positions of the  first target are denoted in the red upward/downward pointing triangles, and those of the second target are denoted in black.  The staring/ending positions of four agents  is denoted in the   left/right pointing triangles, and they are in   blue, magenta, yellow, and cyan respectively.   We can see that two agents are ``actively-tracking'' as they follow the two targets closely, and two agents are ``actively-spreading''  as they are randomly moving to somewhere in the middle of the simulation runs and end up in the positions that are  not   close to any   of the targets. This is what an ensemble of agents would look like as they are   all trying to achieve ``actively-tracking if needed and actively-spreading if otherwise.''
 \begin{figure}[!htbp]
 	\centering 
 	\centering
 	\includegraphics[width=.93\linewidth]{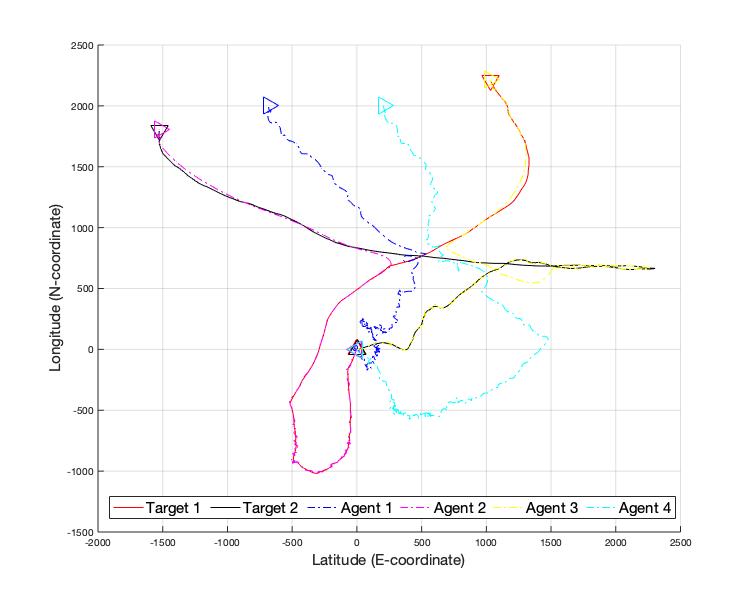}
 	\caption{ Trajectories of Two Targets and Four Agents in \emph{One} Simulation Run.     }    
 	\label{fig:KFtrackingmulti}
 \end{figure} 
\remove{The difference between the position of the target and the agent is plotted on Figure~\ref{fig:KFtrackingerror}. }

\section{Further Discussion} 
This numerical chapter presents an  investigation on the performance of the general SA algorithm  (\ref{eq:truncatedSA1}) in this multi-agent multi-target setting by simulating their dynamics.  Here, several agents in UUV need to perform a surveillance task within a certain area of interest such that any ``intruders'' in the coverage area can be tracked. Subsections \ref{subsect:loss1} and \ref{subsect:loss2}   formulate the surveillance problem as a stochastic optimization problem under time-varying setting.  Algorithms~\ref{algo:base} and \ref{algo:general} demonstrate   how the SA algorithms  with  non-decaying  gain  is  applied  to conduct the time-varying SO task. Besides, the numerical results in Subsections  \ref{subsect:MCbase} and  \ref{subsect:MCgeneral}     partly manifest the error bound  results in  Chapter~\ref{chap:FiniteErrorBound}. The data-dependent gain-tuning strategy proposed  in Chapter~\ref{chap:AdaptiveGain} can also be applied in the  multi-agent application.         Nonetheless, there are many other subtitles we avoid on purpose to present a clean story.  The real-world application may be different from the procedure described in this chapter due to various factors, e.g., the agent can collect a four-dimensional (as opposed to the two-dimensional reading (\ref{eq:noisyMeasurement})) reading including the speed of the agent using Doppler radar, or the number of agents $J$ is smaller than the number of targets $I$, and so on. 
We mention a few of them that need to be dealt with  in real-world tracking problems here.

\subsection{Detection Model}\label{subsect:detection}

In both Section \ref{sect:basecase} and Section \ref{sect:general}, we assume that each agent has an infinite detection range, i.e., each  agent can  collect noisy measurements (\ref{eq:noisyMeasurement}) between itself and any other object (either a target or an agent), so as   to present the loss functions (\ref{eq:loss1Case1}) and (\ref{eq:loss1Case2}) concisely. In reality, each  agent is only allowed to gather readings   (\ref{eq:noisyMeasurement}) from its nearby surroundings and use these readings   to estimate the position of the detectable objects. That is, the agent's sensor can only detect its surroundings up to a certain distance denoted as $\upmu$.  If the object  falls within the $\upmu$-neighborhood of the agent, it can be detected by the agent; otherwise, it is practically  ``invisible'' to the agent.  The typical value of the detection range $\upmu$ can be as small as  $30$ meters or as large as   $1600$ meters.

In addition to the detection range, the detection accuracy is also an important factor that affects  the loss function formulation. Intuitively, the further the object is away from the agent, the less informative is the noisy measurement between the object and the agent itself, even \emph{if} the target stays  within  the detection range of the agent. For example, we can borrow the idea in \cite{kim2005tracking}  to model the detection accuracy, which is measured by the magnitude of the covariance matrix  $\bR_k$  of the measurement noise $\bv_k$ arising  in (\ref{eq:noisyMeasurement}). By taking the detection accuracy into consideration, the diagonal entries of $\bR_k$ can be a non-decreasing function of the actual distance between the agent and the object (either a target or another agent). The domain of this non-decreasing function would be $ \bracket{0,\upmu} $ for the detection range of $\upmu$, and the function form  can be linear, exponential, and so on.

\remove{Our question is: will the agent be able to keep the target within the detection range for all $k$? If yes, what are  the necessary conditions? 
	Suppose that the target is within the agent's detection range $\upmu$  at  the beginning $k=0$. 
}

\subsection{Communication Within Range }
Another factor comes into play when the detection range in Subsection~\ref{subsect:detection} is taken into account. If two agents are close to each other, i.e., one falls within the detection range of the other, then  the  communication between these two agents  is generally allowed in reality. With the allowable communication within range at time $\uptau_k$, two neighboring  agents can exchange information, including their  estimations for the states  of all the targets and all other agents. The neighboring agents  can    rely on their mutual  awareness and   plan   in a  local team (as opposed to individually in Section~\ref{sect:general})   as a fully connected ensemble.

 Let  $j_1$ and $j_2$ be the indices of two neighboring agents at time $\uptau_k$. They can share the following with each other:
\begin{equation}\label{eq:share}
\hat{\bx}_{k}^{(i,j_1)}, \hat{\bx}_{k}^{(i,j_2)} \text{ for all }1\le i\le I, \text{ and } \hat{ {\by}}_{k}^{(j',j_1)}, \hat{{\by}}_{k}^{(j',j_2)} \text{ for all }j' \in \set{1,\cdots,J}\setminus\set{j_1,j_2}. 
\end{equation} 
It is natural to take advantage of the shared information (\ref{eq:share}) to jointly improve  their estimations. For example,  to obtain a better estimate of  $\hat{ \bx} _k^{(i)}$, the two neighboring agents can take a \emph{weighted} average between $\hat{\bx}_{k}^{(i,j_1)}$ and $ \hat{\bx}_{k}^{(i,j_2)}$. The weighted average is  proportional to $\ilparenthesis{\bP_{k}^{(i,j_1)}}^{-1}   \hat{\bx}_{k}^{(i,j_1)} +    \ilparenthesis{\bP_{k}^{(i,j_2)}}^{-1} \hat{\bx}_{k}^{(i,j_2)} $ 
where the $ \ilparenthesis{\bP_{k}^{(i,j_1)}}^{-1} $ measures the degree of confidence in  using     $\hat{\bx}_{k}^{(i,j_1)} $ as an estimate for $\hat{\bx}_k^{(i)}$, and $ \ilparenthesis{\bP_{k}^{(i,j_2)}}^{-1} $ measures the degree of confidence of using     $\hat{\bx}_{k}^{(i,j_2)} $ as an estimate for $\hat{\bx}_k^{(i)}$. Similarly, agent $j_1$ and $j_2$ can jointly improve their estimate for $\hat{ {\by}}_k^{(j')}$ for 
$j' \in \set{1,\cdots,J}\setminus\set{j_1,j_2}$. 

 Of course, the weighted average of multiple agents' information is also possible when they are within the detection range of each other. We haven't implemented this weighted average idea in our simulation study, as the notion of detection range in Subsection~\ref{subsect:detection} is not considered in this numerical study. It is also ``dangerous'' in some situations where agents need to stay hidden--minimal (or no) transmissions are preferred.

		\remove{
		This motivates
		the need for applying decentralized resource optimization
		methods to the decentralized
		motion planning problem. In the chosen formulation, each
		agent controls its motion by deciding on its action  at each
		time step via minimization of an appropriate loss function.
	}

\remove{ 
\section{Other Concerns}

\subsection{Some Radar Also Gives Speed}

\subsection{Tagging Issues}  
Samer Saab, ``Shuffled Linear Regression with Erroneous Observations'' 2019
}


\chapter{Summary and Possible Future Work} 
The thesis considers the general stochastic approximation setup, under a practical situation where the scalar-valued objective function $\loss_k\ilparenthesis{\cdot}$ or the vector-valued function $\bg_k\ilparenthesis{\cdot}$  may be perpetually time-varying.    The method we investigated is the general SA recursive schemes (\ref{eq:basicSA}) with non-decaying gains. The time-varying problem setting and SA framework   have  presented several
new issues, both theoretical and practical.

Chapter \ref{chap:FiniteErrorBound} develops     bounds for both MAD and RMS: the unconditional version is obtained by averaging all possible sample paths, and the conditional version is gathered by observing actual noisy gradient information. Both error bounds are computable as long as we have access to the noise level and the Hessian matrix of the underlying loss function.  Note that our quantification of  tracking capability within finite-iterations of the non-diminishing gain SA algorithm is  in terms of a probabilistically \emph{computable} error bound, which  may  also apply to the general \emph{nonlinear} SA literature. These two characteristics make our work different from \cite{eweda1985tracking} which focuses on linear models, and \cite{wilson2019adaptive} which provides big-$O$ bounds. Moreover, to the best of our knowledge, there are no existing approaches in estimation theory that solve  a sequence of the  time-varying problem, under only Assumption      A.\ref{assume:BoundedVariation}  (the expected distance between two consecutive optima  are bounded from above)  without any further  stringent state evolution assumption.\remove{
	The case of interest requires the strong convexity of the time-varying loss function (sequence), but  our  tracking error bound  is favorably informative under reasonable assumptions on the evolution of the true parameter being estimated.  }  
  A.\ref{assume:BoundedVariation} is a  fairly modest  assumption on the evolution of the underlying time-varying parameter to be identified:   the average distance between two consecutive minimizers $ \norm{\bvartheta_{k+1}-\bvartheta_k} $ is bounded uniformly across $k$.  
Finally, as a consequence of the MAD bound, we can characterize the stability of the SA algorithm in response to the drift $\ilset{\bvartheta_k}$  in terms of determining the allowable region for the non-diminishing gain $\gain  $, which embraces many more general SA algorithms including the special case of SGD discussed in \cite{zhu2016tracking}.    In short, the tracking performance 
of  non-diminishing gain SA algorithms is guaranteed by  a computable  bound on MAD, which is useful in     \emph{finite-sample} performance.

To supplement the tracking capability discussed in Chapter~\ref{chap:FiniteErrorBound}, Chapter~\ref{chap:Limiting} focuses on the concentration behavior in terms of the  probabilistic bound of the recursive estimates generated from the  constant-gain SGD algorithm over a finite time frame.    
 The weak convergence limit of a suitably interpolated sequence  of the iterates is shown to follow the trajectory of a non-autonomous ordinary differential equation, and the discussion there applies to constrained optimization in Section~\ref{sect:ConstantGain}. The weak convergence limit is taken w.r.t.  the constant gain $\gain$. It should be interpreted that for some nonzero constant gain $\gain$, which needs not to  go to zero, the continuation of $\hbtheta_k$ will stay close in the sense of weak limit to a  non-autonomous ODE  when the underlying data change with time on a scale that is commensurate with what is determined by the gain.  To make the bound of the probability for the event that $\hbtheta_k$ deviates  from $\bvartheta_k $ computable, Section~\ref{sect:ProbBound} imposes further assumptions and utilizes the formula for variation of parameters.
The probabilistic bound there provides a general sense of the likelihood of $\hbtheta_k$ staying close to $\bvartheta_k$ for a constant gain $\gain$ under certain conditions. 
    Note that  the upper bound for the probability of the iterates deviating from the target is  valid for all time, which is useful for finite-sample analysis.

Even though Chapter \ref{chap:FiniteErrorBound} develops a gain tuning strategy based upon the MAD bound, the strategy is derived after averaging out all possible sample paths of the random sequence $\ilset{\bvartheta_k}$. Even though Chapter~\ref{chap:Limiting} discusses the weak convergence limit and a bound of the event that $\hbtheta_k$ deviates from $\bvartheta_k$, it only characterizes the small probability of the rare event of $\bvartheta_k$ deviates from $\bvartheta_k$ beyond a certain threshold.  Both of these are probabilistic arguments      and may not provide much help in tuning the non-decaying gain in practical implementations.   In reality, we hope to detect the changes in $\ilset{\bvartheta_k}$ as promptly and accurately  as possible. Moreover, we have to deal with the situation where the Hessian and error information that governs the MAD bound are \emph{unavailable}. These two reasons motivate us to direct our attention to a data-dependent gain tuning strategy. Taking advantage of observable data  helps improve the tracking performance on each  specific sample-path.  Thus, Section \ref{sect:Detection} develops a change detection strategy, using the test statistic in the  multivariate Behrens\textendash Fisher problem, although the detection relies on the approximately normal distribution of the estimates $\ilset{\hbtheta_k}$ when it reaches steady-state phase  and oscillates around $\bvartheta_k$.  We, unfortunately,  cannot provide exact type-I and type-II errors  for such a test. Nonetheless, the detection scheme does help to detect regime change robustly and avoid the burden  of estimating Hessian and noise level adaptively.  
Based on  the change detection testing  in Sections \ref{sect:Detection} and    \ref{sect:GainOneRegime}, we  continue to develop a gain adaptation strategy   to adaptively adjust the gain sequence  by detecting whether a jump has occurred or not.   To perform better with each sample-path, we have to adjust the gain sequence adaptively based on the given data stream. Here, we handle the issues  of the Hessian and noise levels  being unknown by using  simultaneous perturbation methods, which is efficient and inexpensive.

In a nutshell, this work partly answers the questions ``what is the estimate for the dynamical system $\bvartheta_k$'' and ``how much   we can trust  $\hbtheta_k$ as an estimate for $\bvartheta_k$.'' 
To the best of our knowledge, there are no existing approaches in estimation theory that solve  a sequence of time-varying problems, under only Assumption      A.\ref{assume:BoundedVariation}    in Chapter~\ref{chap:FiniteErrorBound} or B.\ref{assume:gSequenceRegularity}  (the average distance between two consecutive optima is proportional to the sampling time elapsed)   in Chapter~\ref{chap:Limiting}  without any further  stringent state evolution assumption. Moreover, the probabilistic arguments  in  Chapter~\ref{chap:FiniteErrorBound}   and  Chapter~\ref{chap:Limiting}  are non-asymptotic.    Additionally, a data-dependent gain-tuning strategy is proposed in Chapter~\ref{chap:AdaptiveGain}.

Some possible future work includes:

\begin{itemize}
	
	\remove{\item  Admittedly, the mean-squared distance in A.\ref{assume:BoundedVariation} can be applicable in general. Nonetheless, it is still beneficial to extend to general ``$L_p$-stability'' distance notion in \cite[Def. 2.1]{guo1994stability}. }
	
		\item It appears unlikely that the bounds in Chapter~\ref{chap:FiniteErrorBound} that use Lipschitz constants and strong convexity parameters can be improved much. But how to efficiently estimate  these  needed parameters in an online  fashion remains unresolved.

	\item  Most existing works focus on the case where   $\bvartheta_k$  is a singleton for each $k$ for unconstrained optimization.
	 The extension to constrained optimization and multiple minimizers scenarios will help the practical implementation.  
	 
	  In the constrained or nonsmooth context, the optimum point $\bvartheta_ k$ may not lie within the interior of the feasible region, implying that the gradient at $\bvartheta_k $ may not be zero. Namely, $\bg_k\ilparenthesis{\bvartheta_k}=\zero$ is no longer  a necessary and sufficient condition for  determining $\bvartheta_k$, and other optimality condition should be discussed.  
		\item 
	It would be of interest to  extend the discussion of Theorem~\ref{thm:ODE} to 
	more   involved scenarios such as correlated noise, multi-scale, state-dependent noise processes, decentralized/asynchronous algorithms,  and discontinuities in the algorithms.
	\item
	Future work on computable probabilistic bound as in Theorem~\ref{thm:ProbBound} may consider the extension of  the bound to the case where  FDSA or  SPSA (instead of SGD) is used in time-varying problems (e.g., \cite{spall1998model}). 
	The main benefit is that only noisy measurements of the loss function $\loss_k\ilparenthesis{\cdot}$ are needed, but the main theoretical  complication introduced by FDSA or SPSA is that the gradient estimate is biased.
	
	\item Even though Chapter~\ref{chap:AdaptiveGain} discusses a data-dependent gain-tuning, more  theoretical   and  practical work is still needed  to  effectively tuning the constant gain to regulate the tracking capability and stability needs.  Some unresolved questions relative to gain tuning are listed below. 
	
	\begin{itemize}
			\item The critical value for the change detection in Section \ref{sect:Detection} is data-dependent, which forces us to estimate   unknown  covariance matrix $\VarianceLimiting$ in (\ref{eq:LimitingVariance}) on the fly. If some distribution-free test statistic with high power can be adapted to meet the change detection purpose, that may help  streamline the  change detection procedure. 
			\item Assumption C.\ref{assume:Hybrid} (the optimum remains constant within each regime), in some real-world applications, may still be restrictive. The extension to the scenario C.\ref{assume:HybridRelaxed} (the optimum remains stationary within each regime) will  be very much desirable, yet it requires more in-depth understanding of the limiting distribution of $\hbtheta_k$,  which is currently unavailable. 
			
		\remove{		\item Is it possible to extend the discussion beyond C.\ref{assume:Jump}?
		\item How to make the discussion in  Chapter~\ref{chap:AdaptiveGain} extend-able from SGD to general SA algorithms?
		}

			\item Both  Sect.  \ref{sect:Detection} and Sect.  \ref{sect:GainOneRegime}    require that the sequence of  loss functions take  the quadratic form $ \ilparenthesis{\btheta-\bvartheta_k}\bH_k\ilparenthesis{\btheta-\bvartheta_k}/2 $.  Can we extend the form of loss functions   $\ilset{\loss_k\ilparenthesis{\cdot
			}}$   to more general nonlinear form? 
				\item The explicit form of (\ref{eq:InnerProductVariance}), which  pertains to the variance of the moving average of the inner product of two consecutive noisy gradient estimates, is difficult to derive. Nonetheless,  if that is available, it does help to improve    Algorithm~\ref{algo:adaptiveGain} that adaptively changes the gain based on observed data $\bvartheta_k$. 
			 
		\item How can we select an optimal gain while estimating the drift term and the noise level in an online fashion?

		\item Are there  any values of $\increase$ and $\shrink$ (the parameters that govern  the increase and the decrease of the gain sequence) that are optimal in a certain statistical sense, i.e., the resulting estimate $\hbtheta_k$  achieves  the information-theoretic Cramer-Rao lower bound for SA contexts \cite{fabian1968asymptotic}? 
		
			\item  When $\bvartheta_k=\bvartheta$ for all $k$, can the idea of determining whether $\hbtheta_k$ reaches  proximity to stationarity be formalized in a way such that the resulting iterates in Algorithm~\ref{algo:adaptiveGain}  converge to $\bvartheta$  a.s.?
		\item Can we extend the scalar gain to a matrix gain, without incurring much extra computational cost? (Appendix \ref{chap:2nd} or  \cite{zhu2019efficient} demonstrate a reduction of $O(p)$ for the standard SA setup without time variation.)
	\end{itemize}

	\item Throughout our discussion, we promote \emph{few} measurements of the loss function or the gradient  at each sampling  time $\uptau_k$: only one or two parallel measurements  are allowed. An increased number of design points at each $k$ can likely  produce a tighter bound for the tracking error $ \norm{\hbtheta_k-\bvartheta_k} $, even though this goes against   the general philosophy of SA. Is there a way to  measure the efficiency trade-off for increased sampling?

\end{itemize}

 There are many unresolved questions, especially for the field of data-dependent gain tuning. This work is   a   step towards fully understanding  how $\hbtheta_k$ generated from general SA schemes with non-decaying gains,  tracks the time variation in $\bvartheta_k$ and how much we can trust $\hbtheta_k$ as an estimate of $\bvartheta_k$.


\begin{appendices}

\chapter{Second-Order SA in High-Dim Problems}\label{chap:2nd}

\section{Introduction}
 SA algorithms have been widely applied in minimization problems where the loss functions and/or the gradient are only accessible through noisy evaluations. Among all the SA algorithms, the second-order simultaneous perturbation stochastic approximation (2SPSA)\label{acronym:2SPSA}  and the second-order stochastic gradient (2SG)\label{acronym:2SG} are particularly efficient in high-dimensional problems covering both gradient-free and gradient-based scenarios. However, due to the necessary matrix operations, the per-iteration   FLOPs  of the original 2SPSA/2SG are  $ O(p^3) $ with $ p $ being the dimension of the underlying parameter. Note that the $O(p^3)$ FLOPs are  distinct from the classical SPSA-based per-iteration $O(1)$ cost in terms of the number of noisy function evaluations. In \cite{zhu2019efficient},  we propose a technique to efficiently implement the 2SPSA/2SG algorithms via the symmetric indefinite matrix factorization such  that the per-iteration floating-point operations (FLOPs)\label{acronym:FLOPs}    are reduced from $ O(p^3) $ to $ O(p^2) $. The almost sure convergence and rate of convergence for the newly-proposed scheme are naturally inherited from the original 2SPSA/2SG. The numerical improvement manifests its superiority in numerical studies in terms of computational complexity and numerical stability. 
\subsection{Problem Context}

SA has been widely applied in minimization and/or root-finding problems, when only  {noisy} loss function and/or gradient evaluations are accessible. Consider minimizing a differentiable loss function $ \loss (\btheta): \real^p \to \real $, where only noisy evaluations of $\loss \parenthesis{\cdot}$ and/or its gradient $\bg\parenthesis{\cdot} $ are accessible. The key distinction between SA and classical deterministic optimization is the presence of noise, which is largely inevitable when the function measurements are collected from either physical experiments or computer simulation. Furthermore, the noise term comes into play when the loss function is only evaluated on a small subset of an entire (inaccessible) dataset as in online training methods popular with neural network and machine learning. In the era of big-data, we deal with applications where   solutions are   data-dependent such that the cost is minimized over a  given set of sampled data rather than the entire distribution. Overall, SA algorithms have numerous applications in adaptive control, natural language processing, facial recognition, and collaborative filtering, just to name but a few.

In modern machine learning, there is a growing need for algorithms to handle high-dimensional problems. Particularly for deep learning, the need arises as the number of parameters (including both weights and bias) explodes quickly as the network depth and width increase. First-order methods based on back-propagation are widely applied, yet they suffer from slow convergence rate in later iterations after a sharp decline during the early iterations. Second-order methods are occasionally utilized to speed up convergence in terms of the  number of iterations, but, still, at a computational burden of $O(p^3)$ per-iteration FLOPs.

To achieve a faster convergence rate at a reasonable computational cost, we present a second-order SP method that incurs only   $O(p^2)$ per-iteration FLOPs  in contrast to the standard  $O(p^3)$. The idea of SP is an elegant generalization of a   finite difference (FD)\label{acronym:FD} scheme and can be applied in both first-order and second-order SA algorithms. Our proposed method rests on the  factorization of symmetric indefinite matrices.

\subsection{Relevant Prior Works}

The adaptive second-order methods here differ in fundamental ways from stochastic quasi-Newton and other similar methods in the machine learning literature. First, most of the machine learning-based methods are designed for loss functions of the  ERF form; namely,  for functions represented as summations, where each summand represents the contribution of one data vector. Such a structure, together with an assumption of strong convexity, has been exploited in \cite{johnson2013accelerating,martens2015optimizing}, and others for stronger convergence results. Second, first- or second-order derivative information is often assumed to be directly available on the summands in the loss function (e.g., \cite{byrd2016stochastic,sohl2014fast,schraudolph2007stochastic}). Ref.  \cite{saab2019multidimensional}  also assumes direct information on the Hessian is available in a second-order stochastic method, but allows for loss functions more general than the ERF.   Ref. \cite{byrd2016stochastic} applies  the   BFGS method to SO, but under a nonstandard setup where noisy Hessian information can be gathered. In our work, we assume that only  noisy loss function evaluations or noisy gradient information are available.  Third, notions of convergence and rates of convergence are in line with those in deterministic optimization when the loss function (the ERF) is composed of a finite (although possibly large) number of summands. For example, rates of convergence are linear or quadratic as a measure of iteration-to-iteration improvement in the ERF. In contrast, we follow the traditional notion of stochastic approximation, including applicability to general noisy loss functions, no availability of direct derivative information, and stochastic notions of convergence and rates of convergence based on sample-points (in almost surely sense) and convergence in distribution.

Among various SA schemes, SP algorithms are particularly efficient compared with   FD methods. Under certain regularity conditions, \cite{spall1992multivariate} shows that the SPSA algorithm uses only $ 1/p $ of the required number of loss function observations needed in the FD form to achieve the same level of MSE for the SA iterates. To further explore the potential of SP algorithms, \cite{spall2000adaptive} presents the second-order SP-based methods, including the  2SPSA for applications in the gradient-free case and the 2SG  for applications in the gradient-based case. Those methods estimate the  Hessian matrix to achieve near-optimal or optimal convergence rates and can be viewed as the stochastic analogs of the deterministic Newton-Raphson algorithm. Ref. \cite{spall2009feedback} incorporates both a feedback process and an optimal weighting mechanism in the averaging of the per-iteration Hessian estimates to improve the accuracy of the cumulative Hessian estimate in enhanced second-order simultaneous perturbation stochastic approximation (E2SPSA)\label{acronym:E2SPSA}   and enhanced second-order stochastic gradient (E2SG)\label{acronym:E2SG}. The guidelines for practical implementation details and the choice of gain coefficients are available in \cite{spall1998implementation}. More details on the related methods are discussed in \cite[Chaps. 7\textendash 8]{bhatnagar2012stochastic}.

\subsection{Our Contribution}
Refs. \cite{spall2000adaptive,spall2009feedback} show that the 2SPSA/2SG methods can achieve near-optimal or optimal convergence rates with a much smaller number (independent of dimension $p$) of loss or gradient function evaluations relative to other second-order stochastic methods in \cite{fabian1971stochastic, ruppert1985newton}. However, after obtaining   function evaluations, the per-iteration FLOPs  to update the estimate are  $ O(p^3) $, as discussed below. The computational burden becomes more severe as $p$ gets larger. This is usually the case in many modern machine learning applications. Here we propose a scheme to implement 2SPSA/2SG efficiently via the symmetric indefinite factorization, which reduces the per-iteration FLOPs   from $ O(p^3) $ to $ O(p^2) $. We also show that the proposed scheme inherits the almost sure convergence and the rate of convergence from the original 2SPSA/2SG in \cite{spall2000adaptive}.

The remainder of the chapter is as follows. Section~\ref{sec:2SPSA} reviews the original 2SPSA/2SG in \cite{spall2000adaptive} along with the computational complexity analysis. Section~\ref{sec:efficient_implementation} discusses the proposed efficient implementation, while  Section~\ref{sec:theory} covers the almost sure convergence and asymptotic normality.  Numerical studies   are  in Section \ref{sec:numerical}. Section \ref{sec:discussion} concludes with a discussion of  some practical issues.

\section{Review of 2SPSA/2SG}\label{sec:2SPSA}

Before proceeding, let us review the original 2SPSA/2SG algorithms and explain their $ O(p^3) $ per-iteration FLOPs.

\subsection{2SPSA/2SG Algorithm}
Following the routine SA framework, we find the root(s) of $\bg\parenthesis{\btheta}\equiv \partial \loss\parenthesis{\btheta}/\partial\btheta$   to solve the problem of finding $ \arg\min \loss\parenthesis{\btheta} $. \remove{ 
	Consider the root-finding problem, the proxy problem for minimizing $ L(\btheta) $ is to find $ \btheta $ such that
	\begin{equation*}
	\bg(\btheta) \equiv \frac{\partial \loss (\btheta)}{\partial\btheta} = \bm{0}\,. 
	\end{equation*} }

Our central task is to streamline the computing procedure, so we do not dwell on differentiating the global minimizer(s) from the local ones. Such root-finding formulation is widely used in the neural network training and other machine learning literature. We consider optimization under two different settings:
\begin{enumerate}
	\item Only noisy measurements of the loss function, denoted by $ y(\btheta)$ as in Section~\ref{sect:SAview}, are available.
	\item Only noisy measurements of the gradient function, denoted by $ \bY(\btheta) $ as in Section~\ref{sect:SAview}, are available. 
\end{enumerate}
 The conditions for noise can be found in \cite[Assumptions C.0 and C.2]{spall2000adaptive}, which include various types of noise such as Gaussian, multiplicative and impulsive noise as special cases.
The main updating recursion for 2SPSA/2SG in \cite{spall2000adaptive} is 
\begin{equation} \label{eq:theta_update}
\hbtheta_{k+1} = \hbtheta_k - a_k \oobH_k^{-1} \bG_k(\hbtheta_k), k = 0, 1, \cdots,
\end{equation}
where $ \{a_k\}_{k\geq0} $ is a positive decaying scalar gain sequence, $ \bG_k(\hbtheta_k) $ is the direct noisy observation or the approximation of the gradient information, and $ \oobH_k $ is the approximation of the Hessian information. The true gradient $ \bg(\hbtheta_k) $ is estimated by: 
\begin{numcases}
{\bG_k(\hbtheta_k)=}
\frac{y(\hbtheta_k+c_k\bDelta_k)-y(\hbtheta_k-c_k\bDelta_k)}{2c_k\bDelta_k}\,, &\hspace{-.25in}\text{for 2SPSA,}\label{eq:gradient_estimate_2SPSA}\\
\bY_k(\hbtheta_k)\,, &\hspace{-.25in}\text{for 2SG,}\label{eq:gradient_estimate_2SG}
\end{numcases}  
	where $ \bDelta_k = [\Delta_{k1}, \dots, \Delta_{kp}]^\transpose $ is a mean-zero $p$-dimensional stochastic perturbation vector with bounded inverse moments
\cite[Assumption B.6$^{\prime\prime}$ on pp. 183]{spall2005introduction}, $ 1 / \bDelta_k = \bDelta_k^{-1} \equiv (\Delta_{k1}^{-1}, \cdots, \Delta_{kp}^{-1})^\transpose $ is a vector of reciprocals of each nonzero components of $ \bDelta_k $ ($\bDelta_k^{-\transpose}$ is the transpose of $\bDelta_k^{-1}$), and $ \{c_k\}_{k\geq0} $ is a positive decaying scalar gain sequence satisfying conditions in \cite[Sect. 7.3]{spall2005introduction}. A valid choice for $ c_k $ is  $ c_k = 1 / (k+1)^{1/6}$. For the Hessian estimate $ \oobH_k $, \cite{spall2000adaptive} proposes: 
\begin{numcases}{}
\oobH_k = \bm{f}_k(\obH_k)\,, & \label{eq:H_ooverline}\\
\obH_k = (1-w_k) \obH_{k-1} + w_k \hbH_k \,,& \label{eq:H_overline}\\
\hbH_k=\frac{1}{2}\left[\frac{\updelta\bG_k}{2c_k}\bDelta_k^{-\transpose}+\left(\frac{\updelta\bG_k}{2c_k}\bDelta_k^{-\transpose}\right)^\transpose \right] \,, \label{eq:H_hat}&\\
\updelta\bG_k=\bG_k^{(1)}(\hbtheta_k+c_k\bDelta_k)-\bG_k^{(1)}(\hbtheta_k-c_k\bDelta_k) \,, \nonumber&
\end{numcases}
where $ \bm{m}_k\hspace{-0.04in}: \real^{p\times p} \to \{$positive definite $p\times p$ matrices$\}$ is a preconditioning step to guarantee the positive-definiteness of $ \oobH_k $,  $ \{w_k\}_{k\geq0} $ is a positive decaying scalar weight sequence, and $ \bG_k^{(1)}(\hbtheta_k\pm c_k\bDelta_k) $ are one-sided gradient estimates calculated by:
\begin{align*}
&\bG_k^{(1)}(\hbtheta_k\pm c_k\bDelta_k)=\begin{dcases}
\frac{y(\hbtheta_k\pm c_k\bDelta_k+\tilde{c}_k\tbDelta_k)-y(\hbtheta_k\pm c_k\bDelta_k)}{\tilde{c}_k\tbDelta_k}, &\hspace{-.1in}\text{in 2SPSA,}\\
\bY_k(\hbtheta_k\pm c_k\bDelta_k), &\hspace{-.1in}\text{in 2SG,}
\end{dcases}
\end{align*}
where $ \{\tilde{c}_k\}_{k\geq0} $ is another positive decaying gain sequence, and $ \tbDelta_k = (\tilde{\Delta}_{k1}, \cdots, \tilde{\Delta}_{kp})^\transpose $ is generated independently from $ \bDelta_k $, but in the same statistical manner as $ \bDelta_k $.  Some valid choices for $w_k$ include $w_k=1/(k+1)$ and the asymptotically optimal choices in  \cite[Eq. (4.2) or Eq. (4.3)]{spall2009feedback}.  Ref. \cite{spall2000adaptive} considers the special case where $ w_k=1/\parenthesis{k+1} $, i.e., $\obH_k$ is a sample average of the $ \hbH_j $ for $ j = 1, \cdots, k $.  Later \cite{spall2009feedback} proposes the E2SPSA and E2SG to obtain more accurate Hessian estimates by taking the optimal selection of weights and feedback-based terms in (\ref{eq:H_overline}) into account. While the focus of this paper is the original 2SPSA/2SG in \cite{spall2000adaptive}, we also  discuss the applicability of the ideas to the E2SPSA/E2SG algorithms in \cite{spall2009feedback}. Note that, independent of $p$, one iteration of 2SPSA/E2SPSA uses four noisy measurements $ y(\cdot) $,  and one iteration of 2SG/E2SG uses three noisy measurements $ \bY(\cdot) $.

\subsection{Per-Iteration Computational Cost of $ O(p^3) $} \label{subsect:p3cost}

The per-iteration computational cost of $ O(p^3) $ arises from two steps: one is from the preconditioning step in (\ref{eq:H_ooverline}), i.e., obtaining $ \oobH_k $; the other is from the descent direction step in (\ref{eq:theta_update}), i.e., obtaining $ \oobH_k^{-1}\bG_k(\hbtheta_k) $. We now discuss the per-iteration computational cost of these two steps in more detail.

\textbf{Preconditioning} The preconditioning step in (\ref{eq:H_ooverline}) is to guarantee the positive-definiteness of the Hessian estimate $ \oobH_k $. This step is necessary  because the updating of $ \obH_k $ in (\ref{eq:H_overline}) does not necessarily yield a positive-definite matrix (but $ \obH_k $ is guaranteed to be symmetric). One straightforward way is to perform the following transformation:
\begin{equation} \label{eq:f_k_sqrtm}
\bm{m}_k(\obH_k) = (\obH_k \obH_k + \delta_k \bI)^{1/2} \,,
\end{equation}
where $ \delta_k > 0 $ is a small \emph{decaying} scalar coefficient \cite{spall2000adaptive} and superscript ``1/2" denotes  the symmetric matrix square root. Let $ \lambda_i(\cdot) $ denote the $i$th eigenvalue of the argument. In that  $ \lambda_i(\bA+c\bI) = \lambda_i(\bA)+c $ for any matrix $\bA$ and constant $c$ \cite[Obs. 1.1.7]{horn1990matrix}, we see that (\ref{eq:f_k_sqrtm}) directly modifies the eigenvalues of $ \obH_k\obH_k $ such that $ \lambda_i(\obH_k\obH_k + \delta_k\bI) = \lambda_i(\obH_k\obH_k) + \delta_k $ for $ i = 1, ..., p $. When $ \delta_k > 0 $, all the eigenvalues of $ \obH_k\obH_k + \delta_k\bI $ are strictly positive and,  therefore,  the resulting $ \oobH_k $ is positive definite. However, (\ref{eq:f_k_sqrtm}) has  a computational cost of $ O(p^3) $ due to both the matrix multiplication in $ \obH_k \obH_k $ and the matrix square root computing \cite{higham1987computing}. Another intuitive transformation is 
\begin{equation} \label{eq:f_k_add}
\bm{m}_k(\obH_k) = \obH_k + \delta_k \bI
\end{equation} for a positive and sufficiently large $ \delta_k $. Again, applying eigen-decomposition on $ \obH_k $, we see that $ \lambda_i(\oobH_k) = \lambda_i(\obH_k) + \delta_k $ for $ i = 1, \cdots, p $. Take $ \uplambda_{\min}\parenthesis{\cdot}= \min_{1\le i\le p } \uplambda_i\parenthesis{\cdot} $ for any argument matrix in $\real^{p\times p}$. Any $ \delta_k > |\lambda_{\min}(\obH_k)| $ will result in $ \lambda_{\min}(\oobH_k) > \remove{\lambda_{\min}(\obH_k) + |\lambda_{\min}(\obH_k)| \geq} 0 $, and, therefore, the output $ \oobH_k $ is positive definite. Unfortunately, (\ref{eq:f_k_add}) cannot avoid the $O(p^3)$ cost in estimating $ \lambda_{\min}(\obH_k) $. 

In addition to  the $O(p^3)$ cost in (\ref{eq:f_k_sqrtm}) and (\ref{eq:f_k_add}), the Hessian estimate $ \oobH_k $ may be ill-conditioned, leading to slow convergence. Ref. \cite{zhu2002modified} proposes to replace all negative eigenvalues of $ \obH_k $ with values proportional to its smallest positive eigenvalue. Such modification is shown to improve the convergence rate for problems with ill-conditioned Hessian and achieve smaller mean square errors for problems with better-conditioned Hessian compared with original 2SPSA \cite{zhu2002modified}. However, those benefits are gained at the  price of computing the eigenvalues of $ \obH_k $, which still costs $ O(p^3) $.

\textbf{Descent direction} Another per-iteration computational cost of $ O(p^3) $ originates from the descent direction computing in (\ref{eq:theta_update}), which is typically computed by solving the linear system for $ \bd_k: \oobH_k\bd_k = \bG_k(\hbtheta_k) $. The estimate is updated recursively  as following: 
\begin{equation}\label{eq:theta_update_s}
\hbtheta_{k+1} = \hbtheta_k - a_k\bd_k \,.
\end{equation} 
With the matrix left-division, it is possible to efficiently solve for $ \bd_k $. However, the computation costs of typical methods, such as $LU$ decomposition or singular value decomposition, are still dominated by $ O(p^3) $.

\begin{table*}[!htbp]
	\renewcommand{\arraystretch}{2}
	\caption{Expressions for terms in (\ref{eq:two_rank_one_update})--(\ref{eq:v_k_tilde}). See \cite[Sect. 7.8.2]{spall2005introduction} for detailed suggestions.}
	\label{table:u_k_v_k}
	\centering
	\begin{tabular}{|l|c|c|c|c|}
		\hline
		Algorithm & $ t_k $ & $ b_k $ & $ \bu_k $ & $ \bv_k $\\
		\hline\hline	
		2SPSA \cite{spall2000adaptive} & $ 1 - w_k $ & $ w_k \delta y_k / (4c_k\tilde{c}_k) $ & $ \tbDelta_k^{-1} $ & \multirow{4}{*}{$ \bDelta_k^{-1}$} \\
		\cline{1-4}
	\multirow{2}{*}{E2SPSA \cite{spall2009feedback} } & 	\multirow{2}{*}{$1$} & $w_k[\updelta y_k / (2c_k\tilde{c}_k) ]/2 $  &  \multirow{2}{*}{$ \tbDelta_k^{-1} $} & \\ 
	& & $- w_k[  \bDelta_k^\transpose\obH_{k-1}\tbDelta_k]/2 $   & &  \\
		\cline{1-4}
		2SG \cite{spall2000adaptive} & $ 1 - w_k $ & $ w_k / (4c_k) $ & $ \delta\bG_k $ & \\
		\cline{1-4}
		E2SG \cite{spall2009feedback} & $ 1 $ & $ w_k / 2 $ & $ \delta\bG_k/(2c_k) - \obH_{k-1} \bDelta_k $ & \\
		\hline
	\end{tabular}
\end{table*}

To speed up the original 2SPSA/2SG, \cite{rastogi2016efficient} proposes to rearrange (\ref{eq:H_overline}) and (\ref{eq:H_hat}) into the following two sequential rank-one modifications:
\begin{numcases}{}
\obH_k = t_k\obH_{k-1} + b_k\tbu_k\tbu_k^{\transpose} - b_k\tbv_k\tbv_k^{\transpose}\,, & \label{eq:two_rank_one_update}\\
\tbu_k = \sqrt{\frac{\norm{\bv_k}}{2\norm{\bu_k}}} \parenthesis{\bu_k + \frac{\norm{\bu_k}}{\norm{\bv_k}}\bv_k} \,, & \label{eq:u_k_tilde}\\
\tbv_k = \sqrt{\frac{\norm{\bv_k}}{2\norm{\bu_k}}} \parenthesis{\bu_k - \frac{\norm{\bu_k}}{\norm{\bv_k}}\bv_k} \,, &\label{eq:v_k_tilde}
\end{numcases}
where the scalar terms $ t_k $ and $ b_k $ (\ref{eq:two_rank_one_update}), and vectors $ \bu_k $ and $ \bv_k $ in (\ref{eq:u_k_tilde}) and (\ref{eq:v_k_tilde}) are listed in Table \ref{table:u_k_v_k}. 
Applying the matrix inversion lemma \cite[pp. 513]{spall2005introduction}, \cite{rastogi2016efficient} shows that $ \obH_k^{-1} $ can be computed from $ \obH_{k-1}^{-1} $ with a cost of $ O(p^2) $. However, the positive-definiteness of $ \obH_k^{-1} $ is not guaranteed, and an additional eigenvalue modification step similar to either (\ref{eq:f_k_sqrtm}) or (\ref{eq:f_k_add}) is required. As discussed before, for any direct eigenvalue modifications, the computational cost of $ O(p^3) $ is inevitable due to the lacking knowledge about the eigenvalues of $ \obH_{k-1}^{-1} $.

In short, no prior works can fully streamline the entire second-order SP procedure with an $O\parenthesis{p^2}$ per-iteration FLOPs, which motivates the elegant procedure below.

\section{Efficient Implementation of 2SPSA/2SG}\label{sec:efficient_implementation}

\subsection{Introduction}\label{subsect:Introduction}
With the motivation for proposing
an efficient implementation scheme for 2SPSA/2SG laid out in Subsection~\ref{subsect:p3cost}, we now explain our methodology in more detail. Note that none of the prior attempts on 2SPSA/2SG methods can bypass the end-to-end computational cost of $O(p^3)$ per iteration in high-dimensional SO problems. Therefore, we propose   replacing $ \obH_k $ by its symmetric indefinite factorization, which enables us to implement the 2SPSA/2SG at a per-iteration computational cost of $O(p^2)$. Our work helps alleviate  the notorious curse of dimensionality by achieving the fastest possible second-order methods based on Hessian estimation, to the best of our knowledge. Moreover, note that the techniques in \cite{rastogi2016efficient} are no longer applicable because our scheme keeps track of the matrix factorization instead of the matrix itself, so we propose new algorithms to establish our claims.

\tikzstyle{block} = [rectangle, draw, fill=blue!20, text centered, rounded corners, minimum height=2em]
\tikzstyle{line} = [draw, -latex']
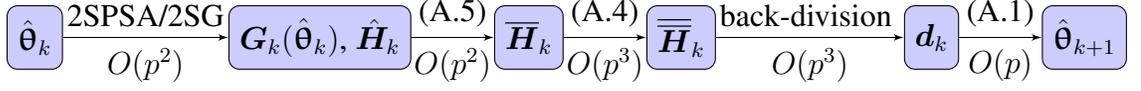
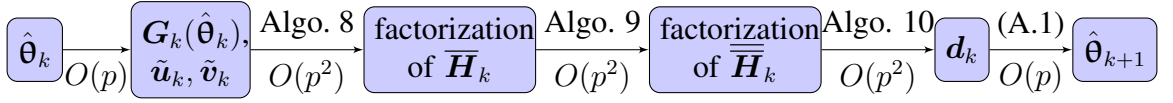
\begin{figure}[!bpht]
		\centering 
\begin{subfigure}{\linewidth}
\begin{tikzpicture}[auto]
\node [block] (hbtheta) {$ \hbtheta_k $};
\node [block, right = 2.2cm of hbtheta] (bG) {$ \bG_k(\hbtheta_k)$, $\hbH_k $};
\node [block, right = 1.1cm of bG] (obH) {$ \obH_k $};
\node [block, right = 1.1cm of obH] (oobH) {$ \oobH_k $};
\node [block, right = 2.5cm of oobH] (direction) {$ \bd_k $};
\node [block, right = 1.1cm of direction] (hbthetanew) {$ \hbtheta_{k+1} $};
\path [line] (hbtheta) -- node {2SPSA/2SG} node[below] {$ O(p^2) $} (bG);
\path [line] (bG) -- node {(\ref{eq:H_overline})} node[below] {$ O(p^2) $} (obH);
\path [line] (obH) -- node[align=center] {   (\ref{eq:H_ooverline})} node[below] {$ O(p^3) $} (oobH);
\path [line] (oobH) -- node[align=center]{  back-division  } node[below] {$ O(p^3) $} (direction);
\path [line] (direction) -- node[align=center] { (\ref{eq:theta_update})} node[below] {$ O(p) $} (hbthetanew);
\end{tikzpicture} 
\caption{Flow chart for the original 2SPSA/2SG}
\label{fig:Original}
\end{subfigure}
\hfil
\begin{subfigure}{\linewidth} 
\begin{tikzpicture}[auto]
\node [block] (hbtheta) {$ \hbtheta_k $};
\node [block, right = .9cm of hbtheta, text width=1.3cm] (bG) {$ \bG_k(\hbtheta_k)$, \\ $\tbu_k, \tbv_k $};
\node [block, right = 1.5cm of bG, text width=2cm] (obH) {factorization of $ \obH_k $};
\node [block, right = 1.5cm of obH, text width=2cm] (oobH) {factorization of $ \oobH_k $};
\node [block, right = 1.5cm of oobH] (direction) {$ \bd_k $};
\node [block, right = 1.1cm of direction] (hbthetanew) {$ \hbtheta_{k+1} $};
\path [line] (hbtheta) -- node {\remove{2SPSA/2SG}} node[below] {$ O(p) $} (bG);
\path [line] (bG) -- node[align=center] {Algo.~\ref{algo:two_rank_one_update}} node[below] {$ O(p^2) $} (obH);
\path [line] (obH) -- node[align=center] { Algo. \ref{algo:preconditioning}} node[below] {$ O(p^2) $} (oobH);
\path [line] (oobH) -- node[align=center] {   Algo. \ref{algo:descent_direction}} node[below] {$ O(p^2) $} (direction);
\path [line] (direction) -- node {(\ref{eq:theta_update})} node[below] {$ O(p) $} (hbthetanew);
\end{tikzpicture}
\caption{Flow chart for the proposed efficient implementation of 2SPSA/2SG (see Section~\ref{subsec:complexity} for detailed description)}
\label{fig:Proposed}
\end{subfigure} 
	\captionsetup{justification=centering}
\caption{Flow charts showing FLOPs cost at each stage of   the original 2SPSA/2SG and the proposed 2SPSA/2SG. Algorithms \ref{algo:two_rank_one_update}--\ref{algo:descent_direction} in the lower path are described in Section \ref{subsec:algo}.}
\label{fig:Flow_chart}
\end{figure}

To better illustrate our scheme and to be consistent with the original 2SPSA/2SG, we decompose our approach into the following three main steps and discuss the efficient implementation step by step.
\begin{enumerate}
	\item[i)] \textbf{Two rank-one modifications}: Update the symmetric indefinite factorization of $ \obH_k $ by the two sequential rank-one modifications in (\ref{eq:two_rank_one_update}) 
	\item[ii)] \textbf{Preconditioning}: Obtain the symmetric indefinite factorization of a positive definite $ \oobH_k $ from the symmetric indefinite factorization of $ \obH_k $
	\item[iii)] \textbf{Descent direction}: Update $ \hbtheta_{k+1} $ by the recursion (\ref{eq:theta_update_s})
\end{enumerate}

Note that $ \obH_k $ is guaranteed to be symmetric by (\ref{eq:two_rank_one_update}) as long as $ \obH_0 $ is chosen symmetric. For the sake of comparison, we list the flow-charts of the original 2SPSA and that of our proposed scheme in Figure~\ref{fig:Flow_chart} 
along with the per-iteration and per-step computational cost. The comparison of the flow-charts helps to put the extra move of indefinite factorization into perspective.

The remainder of this section is as follows. We introduce the symmetric indefinite factorization in Subsection~\ref{subsec:IMF} and derive the efficient algorithm in Subsection~\ref{subsec:algo}. The per-iteration computational complexity analysis is included in Subsection~\ref{subsec:complexity}.

\subsection{Symmetric Indefinite Factorization}\label{subsec:IMF}
This subsection briefly reviews the symmetric indefinite factorization, also called $ \bL\bB\bL^\transpose $ factorization, introduced in \cite{bunch1971direct}, which applies to any symmetric matrix $ \obH $ regardless of the positive-definiteness:
\begin{equation}
\label{eq:LBL}
\bP \obH \bP^\transpose = \bL\bB\bL^\transpose, 
\end{equation}
where $ \bP $ is a permutation matrix, $ \bB $ is a block diagonal matrix with diagonal blocks being symmetric with size $ 1\times1 $ or $ 2\times2 $, and $ \bL $ is a lower-triangular matrix. Furthermore, the matrices $ \bL $ and $ \bB $ satisfy the following properties \cite[Sect. 4]{bunch1971direct}, which are fundamental for carrying out subsequent steps i) -- iii) at a computational cost of $ O\parenthesis{p^2} $:
\begin{itemize}
	\item The magnitudes of the entries of $ \bL $ are bounded by a fixed positive constant. Moreover, the diagonal entries of $ \bL$ are all equal to $1$.
	\item $ \bB $ has the same number of positive, negative, and zero eigenvalues as $ \obH $.
	\item The number of negative eigenvalues of $ \obH $ is the sum of the number of blocks of size $ 2\times2 $ on the diagonal and the number of blocks of size $ 1\times1 $ on the diagonal with negative entires of $ \bB $. (Note: There are no guarantees for the signs of the entries in the $ 2\times2 $ blocks.)
\end{itemize}

The bound on the magnitudes of the entries of $ \bL $ is approximately $ 2.7808 $ per \cite{bunch1977some} and it is \textit{independent} of the size of $ \obH $. As shown in Theorem \ref{thm:H_barbar_property}--\ref{thm:H_barbar_uniform_bound}, such a constant bound is useful in practice to perform   a  \emph{quick} sanity check  regarding the appropriateness of the symmetric indefinite factorization and to provide useful bounds for the eigenvalues of $ \oobH_k $. From (\ref{eq:LBL}), $ \obH $ can be expressed as $ \obH = (\bP^\transpose\bL)\bB(\bP^\transpose\bL)^\transpose $. Then the second bullet point above can be easily  shown   by \textit{Sylvester's law of inertia}, which states that two congruent matrices have the same number of positive, negative,  and zero eigenvalues ($ \bA $ and $\bB $ are congruent if $ \bA = \bP\bB\bP^\transpose $ for some nonsingular matrix $ \bP $) \cite{sylvester1852xix}. From the third bullet point, if $ \obH $ is positive semidefinite, the corresponding $\bB$ is a diagonal matrix with nonnegative diagonal entries.

\subsection{Algorithm Description}\label{subsec:algo}

We now illustrate how the $ \bL\bB\bL^\transpose $ factorization can be of use in 2SPSA/2SG and discuss steps i) -- iii) in Section \ref{subsect:Introduction} in detail.

\textbf{Two rank-one modifications} Although the direct calculation of $ \obH_k $ in (\ref{eq:two_rank_one_update}) only costs $ O(p^2) $, the subsequent preconditioning step incurs a computational cost of $ O(p^3) $ when not using any factorization of $ \obH_k $. Therefore, in anticipation of the subsequent necessary preconditioning, we propose  monitoring the $ \bL\bB\bL^\transpose $ factorization of $ \obH_k $ instead of the matrix itself. That is, the two direct rank-one modifications in (\ref{eq:two_rank_one_update}) are transformed into two non-trivial modifications on the $ \bL\bB\bL^\transpose $ factorization, which also incurs a computational cost of $ O(p^2) $. It is not necessary that $ \obH_k $ is  explicitly computed in the algorithm, thereby avoiding the $ O(p^3) $ cost arising from matrix-associated necessary multiplications in the preconditioning.

Lemma \ref{lem:rank_one_update}   states that the $ \bL\bB\bL^\transpose $ factorization can be updated for rank-one modification at a computational cost of $O(p^2)$. The detailed algorithm is established in \cite{sorensen1977updating}. We adopt that algorithm to our two rank-one modifications in (\ref{eq:two_rank_one_update}) and present the result in Theorem \ref{thm:two_rank_one_update}.

\begin{lem}\label{lem:rank_one_update}
\textup{	\cite[Thm. 2.1]{sorensen1977updating}}\textbf{.}
	Let $ \bA \in \real^{p \times p} $ be symmetric (possibly indefinite) and \emph{non-singular} with $ \bP\bA\bP^\transpose = \bL\bB\bL^\transpose $. Suppose that $ \bz \in \real^p, \sigma \in \real $ are such that: 
	\begin{equation}\label{eq:IMF_rank_one_update}
	\tbA = \bA + \sigma\bz\bz^\transpose  
	\end{equation}
	is also \emph{nonsingular}. Then the factorization $ \tilde{\bP}\tbA\tilde{\bP}^\transpose = \tilde{\bL}\tilde{\bB}\tilde{\bL}^\transpose $ can be obtained from the factorization $ \bP\bA\bP^\transpose = \bL\bB\bL^\transpose $ with a computational cost of $ O(p^2) $.
\end{lem}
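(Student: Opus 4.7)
Conjugating by $\bP$ reduces the problem to the unpermuted setting: since $\bP\tbA\bP^\transpose = \bL\bB\bL^\transpose + \sigma\bw\bw^\transpose$ with $\bw=\bP\bz$, it suffices to construct an $\bL\bB\bL^\transpose$-factorization (up to a further permutation) of $\bL\bB\bL^\transpose + \sigma\bw\bw^\transpose$. Since $\bL$ is unit lower-triangular, it is invertible, so I would first solve $\bL\bp=\bw$ by forward substitution in $O(p^2)$ operations, reducing the task to factoring the symmetric matrix
\begin{equation*}
\bC \;\equiv\; \bB + \sigma\bp\bp^\transpose
\end{equation*}
as $\bC = \bM \tilde{\bB}\bM^\transpose$, where $\bM$ is to be kept close to unit lower-triangular so that $\tilde{\bL}\equiv\bL\bM$ (possibly after an additional permutation to be absorbed into $\tilde{\bP}$) is again unit lower-triangular. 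The nonsingularity hypothesis on $\tbA$ guarantees that $\bC$ is nonsingular, which is what permits the local eliminations below to proceed without catastrophic breakdown.

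The workhorse is a left-to-right sweep over the $1\times 1$ and $2\times 2$ diagonal blocks of $\bB$. At the $j$-th step, let $\bB_j$ denote the current leading block (of size $s_j\in\{1,2\}$) and let $\bp_j$ denote the first $s_j$ components of the current residual vector. I would choose a local transformation $\bE_j$ (a Gauss-type elementary matrix of size at most $2$, possibly combined with a small permutation in the Bunch--Kaufman spirit) that simultaneously (i) re-expresses $\bB_j+\sigma_j\bp_j\bp_j^\transpose$ in the new block $\tilde{\bB}_j$, (ii) zeros out the components of $\bp$ corresponding to that block, and (iii) propagates the correction into the remaining blocks by updating the scalar $\sigma_j\to\sigma_{j+1}$ and the tail of $\bp$. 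Because $\bE_j$ acts nontrivially only on $s_j$ coordinates and the trailing tail, both the update to $\bL$ (right-multiplication that only modifies the columns associated with the current block) and the update to the tail of $\bp$ take $O(p)$ work; summing over the $O(p)$ sweeps yields the advertised $O(p^2)$ cost.

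The main obstacle, and the reason the result is nontrivial, is the pivoting logic in step (i)--(iii): naive elimination with a $1\times 1$ pivot can fail when the corresponding diagonal of $\bB_j+\sigma_j\bp_j\bp_j^\transpose$ vanishes or is tiny relative to the off-diagonal, and naive growth can destroy the uniform bound on the entries of $\tilde{\bL}$ advertised in Subsection~\ref{subsec:IMF}. I would therefore import the Bunch--Kaufman-style pivot selection used in \cite{bunch1971direct}: at each block, compare the candidate $1\times 1$ pivot against the dominant off-diagonal and switch adaptively to a $2\times 2$ block (absorbing the switch into the permutation that ultimately forms $\tilde{\bP}$). Standard arguments from that paper then bound the growth of $\tilde{\bL}$ independently of $p$. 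Finally, I would verify that the whole sweep is well-defined whenever $\tbA$ is nonsingular, by observing that a breakdown in the $j$-th block would force a zero Schur complement and, tracing back through the congruences, a zero eigenvalue of $\tbA$, a contradiction. Collecting the three stages gives the factorization of $\tbA$ at total cost $O(p^2)$.
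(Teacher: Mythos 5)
The paper does not actually prove this lemma: it is imported verbatim as \cite[Thm.~2.1]{sorensen1977updating}, and the text surrounding it (and the proof of Theorem~\ref{thm:two_rank_one_update}) simply invokes the cited updating procedure as a black box. So there is no in-paper argument to compare against; what you have written is a reconstruction of Sorensen's algorithm, and at the level of a sketch it is the right reconstruction. The reduction $\bP\tbA\bP^\transpose=\bL\bB\bL^\transpose+\sigma\bw\bw^\transpose$ with $\bw=\bP\bz$, the forward solve $\bL\bm{p}=\bw$ in $O(p^2)$, the left-to-right sweep over the $1\times1$ and $2\times2$ blocks of $\bB+\sigma\bm{p}\bm{p}^\transpose$ with local eliminations that each touch one or two columns of $\bL$ and the tail of $\bm{p}$, and the resulting $O(p)\times O(p)=O(p^2)$ count are all faithful to the cited method, as is your identification of the pivot-selection logic as the place where the real work lives.

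Two points in your sketch deserve more care if it were to stand as a proof rather than a pointer. First, the phrase ``possibly after an additional permutation to be absorbed into $\tilde{\bP}$'' hides the main structural difficulty: if the local transformation at step $j$ is $\bM_j=\bm{\Pi}_j\bE_j$ with a nontrivial interchange $\bm{\Pi}_j$, then $\tilde{\bL}=\bL\bm{\Pi}_1\bE_1\cdots$ is not lower triangular, and you cannot commute $\bm{\Pi}_j$ out past $\bL$ without destroying triangularity. Sorensen's algorithm resolves this by restricting interchanges to adjacent indices within or between neighboring blocks and locally re-triangularizing the two affected columns of $\bL$ after each swap (still $O(p)$ per step); without saying this, the claim that the permutations ``form $\tilde{\bP}$'' is not justified. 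Second, your breakdown argument (``a zero Schur complement forces a zero eigenvalue of $\tbA$'') is only correct for the \emph{combined} $1\times1$/$2\times2$ pivot strategy: a vanishing $1\times1$ pivot alone does not contradict nonsingularity of $\tbA$, which is precisely why the $2\times2$ escape is mandatory rather than optional; the correct statement is that simultaneous failure of every admissible pivot at step $j$ forces a singular leading principal submatrix of the (suitably permuted) updated matrix, and it is that which contradicts nonsingularity. With those two repairs your outline matches the cited theorem; given that the paper treats the lemma as an external citation, no more is required of it here.
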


\begin{thm}\label{thm:two_rank_one_update}
	Suppose $ \obH_k $ is given in (\ref{eq:two_rank_one_update}). Further, assume that both $ \obH_{k-1} $ and $ \obH_k $ are \emph{nonsingular} and the factorization $ \bP_{k-1}\obH_{k-1}\bP_{k-1}^\transpose = \bL_{k-1}\bB_{k-1}\bL_{k-1}^\transpose $ is available. Then the factorization, 
	\begin{equation}\label{eq:IMF_rwo_rank_one_update}
	\bP_k\obH_k\bP_k^\transpose = \bL_k\bB_k\bL_k^\transpose\,,
	\end{equation}
	can be obtained at a computational cost of $ O(p^2) $.
\end{thm}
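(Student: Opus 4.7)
The plan is to decompose the update rule (\ref{eq:two_rank_one_update}) into three elementary operations on the factorization and carry each out at cost $O(p^2)$ or cheaper. Starting from $\bP_{k-1}\obH_{k-1}\bP_{k-1}^\transpose = \bL_{k-1}\bB_{k-1}\bL_{k-1}^\transpose$, the scaling by $t_k$ is absorbed for free: since
\begin{equation*}
\bP_{k-1}\bigl(t_k\obH_{k-1}\bigr)\bP_{k-1}^\transpose = \bL_{k-1}\bigl(t_k\bB_{k-1}\bigr)\bL_{k-1}^\transpose,
\end{equation*}
we retain the same permutation and lower-triangular factor while rescaling the block-diagonal middle matrix. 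Because $\bB_{k-1}$ has $1\times 1$ or $2\times 2$ diagonal blocks only, this rescaling costs $O(p)$ and preserves the required structural properties (signs of eigenvalues, bound on $\bL$ entries) up to the sign of $t_k$.

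Next, I would apply Lemma~\ref{lem:rank_one_update} twice in succession. The first application uses the rank-one increment $(\sigma,\bz)=(+b_k,\tbu_k)$ to transform the factorization of $t_k\obH_{k-1}$ into a factorization $\bP'(t_k\obH_{k-1}+b_k\tbu_k\tbu_k^\transpose)(\bP')^\transpose = \bL'\bB'(\bL')^\transpose$ in $O(p^2)$ FLOPs using the updating procedure of \cite{sorensen1977updating}. The second application uses $(\sigma,\bz)=(-b_k,\tbv_k)$ acting on this intermediate factorization to absorb the final rank-one subtraction, producing $\bP_k\obH_k\bP_k^\transpose = \bL_k\bB_k\bL_k^\transpose$ in another $O(p^2)$ FLOPs. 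Summing over all three stages gives the claimed $O(p^2)$ per-iteration cost, which should be packaged as an explicit algorithm (what will become Algorithm~\ref{algo:two_rank_one_update} in the flow chart of Figure~\ref{fig:Flow_chart}).

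The main obstacle is that Lemma~\ref{lem:rank_one_update} presumes the matrix being updated is nonsingular \emph{before and after} the rank-one modification, and the hypothesis of the theorem only supplies nonsingularity of the endpoints $\obH_{k-1}$ and $\obH_k$. The intermediate matrix $t_k\obH_{k-1}+b_k\tbu_k\tbu_k^\transpose$, which is the output of the first rank-one step and the input to the second, could in principle be singular even when both endpoints are not. I would address this by either (a) appealing to the matrix-determinant lemma to argue that singularity of the intermediate is a codimension-one event that can be avoided by generic choices of the perturbation vectors $\bDelta_k,\tbDelta_k$ (which are drawn from continuous or Bernoulli distributions satisfying the standard SPSA moment conditions), or (b) swapping the order of the two rank-one updates whenever the first ordering hits a singular intermediate, noting that $\obH_k$ is invariant under such a reordering. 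A brief remark afterwards would verify that the output $\bL_k,\bB_k$ still satisfy the structural properties listed in Subsection~\ref{subsec:IMF}, which are what the subsequent preconditioning and descent-direction steps rely on.
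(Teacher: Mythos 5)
Your proposal matches the paper's own proof: the scalar $t_k$ is absorbed into the block-diagonal factor at $O(p)$ cost, and Lemma~\ref{lem:rank_one_update} is then applied twice with $(\sigma,\bz)=(b_k,\tbu_k)$ and $(\sigma,\bz)=(-b_k,\tbv_k)$, each at $O(p^2)$ FLOPs. The intermediate-nonsingularity concern you raise is genuine and is silently passed over in the paper's two-line proof, which applies the lemma to the intermediate matrix $t_k\obH_{k-1}+b_k\tbu_k\tbu_k^\transpose$ without verifying that it is nonsingular; your proposed genericity or reordering fix is therefore a strengthening of the published argument rather than a departure from it.
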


\begin{proof} 	 
	With Lemma \ref{lem:rank_one_update}, we see that (\ref{eq:IMF_rwo_rank_one_update}) can be obtained by applying (\ref{eq:IMF_rank_one_update}) twice with $ \sigma = b_k, \bz = \tbu_k $ and $ \sigma = -b_k, \bz = \tbv_k $, respectively. In as  much as  each update requires a computational cost of $ O(p^2) $, the total computational cost remains $ O(p^2) $.
\end{proof}

\begin{rem}
	The nonsingularity (\emph{not} necessarily positive-definiteness) of $ \obH_k $ is a modest assumption for the following three reasons: i) $ \obH_0 $ is often initialized to be a positive definite matrix satisfying the nonsingularity assumption. For example, $ \obH_0 = c\bI $ for some constant $ c > 0 $. ii) Whenever $ \obH_k $ violates the nonsingularity assumption due to the two rank-one modifications in (\ref{eq:two_rank_one_update}), a new pair of $ \bDelta_k $ and $ \tbDelta_k $ along with the noisy measurements can be generated to redo the modifications in (\ref{eq:two_rank_one_update}). In practice, the singularity of $ \obH_k $ can be detected via the entry-wise bounds of $ \bL_k $ per \cite{bunch1977some}. Namely, if $ \bL_k $ has an entry exceeding $ 2.7808 $, the nonsingularity assumption of $ \obH_k $ is violated. It is indeed possible to compute the probability of getting a singular $ \obH_k $; however, we deem it as a minor practical issue and do not pursue further analysis in this work. iii) In that  the second-order method is often recommended to be implemented only after $ \hbtheta_k $ reaches the vicinity of $ \btheta^* $, and the true Hessian matrix of $ \btheta^* $ is assumed to be positive definite \cite{spall2000adaptive}, the estimate $ \obH_k $ is ``pushed" towards nonsingularity. The bottom line is that we can  run second-order methods at any iteration $ k $, but are more interested when $\hbtheta_k$ is near $\btheta^*$.
\end{rem}

We summarize the two rank-one modifications of $ \obH_k $ in  Algorithm~\ref{algo:two_rank_one_update} that follows. The outputs of Algorithm~\ref{algo:two_rank_one_update} are used to obtain a computational cost of $ O(p^2) $ in the preconditioning step as the eigenvalue modifications on $ \bB_k $, a diagonal block matrix, is more efficient than the direct eigenvalue modifications in (\ref{eq:f_k_sqrtm}) and (\ref{eq:f_k_add}). Algorithm~\ref{algo:two_rank_one_update} is the key that renders   steps ii) and iii)  in Subsection~\ref{subsect:Introduction}  achievable at a computational cost of $ O(p^2) $. 

\begin{algorithm}[!htbp]
	\caption{Two rank-one updates of $ \obH_k $}
	\label{algo:two_rank_one_update}
	\begin{algorithmic}[1]
			\renewcommand{\algorithmicrequire}{\textbf{Input:}}
		\renewcommand{\algorithmicensure}{\textbf{Output:}}
	\Require   matrices $ \bP_{k-1}, \bL_{k-1}, \bB_{k-1} $ in the symmetric indefinite factorization of $\obH_{k-1}$, scalars $ t_k, b_k $, and vectors $ \bu_k, \bv_k $ computed per Table~\ref{table:u_k_v_k}.
\Ensure   matrices $ \bP_k, \bL_k, \bB_k $ in the symmetric indefinite factorization of $ \obH_k $ per (\ref{eq:LBL}).
		\State \textbf{set} $ \bP_k \gets \bP_{k-1}, \bL_k \gets \bL_{k-1}, \bB_k \gets t_k\bB_{k-1} $. 
		\State  \textbf{update} $ \bP_k, \bL_k, \bB_k $ with the rank-one modifications $ b_k\tbu_k\tbu_k^\transpose $ with $ \tbu_k $ computed in (\ref{eq:u_k_tilde}) and $ -b_k\tbv_k\tbv_k^\transpose $ with $ \tbv_k $ computed in (\ref{eq:v_k_tilde}), using the updating procedure outlined in \cite{sorensen1977updating}. Code is available at   \url{https://github.com/jingyi-zhu/Rank1FactorizationUpdate}. 
		\State \Return  matrices $ \bP_k, \bL_k, \bB_k $. 
	\end{algorithmic}
\end{algorithm}

\begin{rem}
	Though $ \obH_k $ is not explicitly computed during each iteration, whenever needed, it can be computed easily from its $ \bL\bB\bL^\transpose $ factorization, though with a computational cost of $ O(p^3) $; i.e, $ \obH_k = \bP_k^\transpose\bL_k\bB_k\bL_k^\transpose\bP_k $. This calculation yields the same $ \obH_k $ as (\ref{eq:H_overline}) or (\ref{eq:two_rank_one_update}). The $ \bL\bB\bL^\transpose $ factorization of $ \obH_0 $ requires a computational cost of, at most\remove{Straightforward if choose $ \obH_0=\bI $.}, $ O(p^3) $ \cite[Table 2]{bunch1971direct}. However, as a one-time sunk-in cost, it does not compromise the overall computational cost. Of course, we can avoid this bothersome issue by initializing $ \obH_0 $ to a diagonal matrix, which immediately gives $ \bP_0 = \bL_0 = \bB_0 = \bI $. Generally, the cost for initialization is trivial if $\obH_0$ is a diagonal matrix.
\end{rem}

\textbf{Preconditioning}
Given the factorization of the estimated Hessian information $ \obH_k $, which is symmetric yet potentially \emph{indefinite} (especially during early iterations), we aim to output a factorization of the Hessian approximation $ \oobH_k $ such that $ \oobH_k $ is symmetric and sufficiently positive definite, i.e., $ \lambda_{\min}(\oobH_k) \geq \tau $ for some constant $ \tau > 0 $. With the above $ \bL\bB\bL^\transpose $ factorization associated with $\obH_k$ obtained from the previous two rank-one modification steps, we can modify the eigenvalues of $ \bB_k $. Note that $\bB_k$ is a block diagonal matrix, so any eigenvalue modification can be  carried out inexpensively. This is in contrast to directly modifying the eigenvalues of $\obH_k$ to obtain $ \oobH_k $, which is computationally-costly as laid out in Subsection \ref{subsect:p3cost}. Denote $ \obB_k $ as the modified matrix from $ \bB_k $. Note that $ \oobH_k $ and $ \obB_k $ are congruent as $ \oobH_k = (\bP_k^\transpose\bL_k)\obB_k(\bP_k^\transpose\bL_k)^\transpose $. By   Sylvester's law of inertia, the positive definiteness of $ \oobH_k $ is guaranteed as long as $ \obB_k $ is positive definite. 

To modify the eigenvalues of $ \bB_k $, we borrow the ideas from the modified Newton's method \cite[pp. 50]{nocedal2006numerical} to set $$ \lambda_j(\obB_k) = \max \set{\tau_k, |\uplambda_j(\bB_k)|} $$ for $ j = 1, ..., p $,  where $ \tau_k $ is a user-specified stability threshold, which is possibly   data-dependent. A possible choice of the uniformly bounded $ \set{\tau_k} $ sequence   in the Section \ref{sec:numerical} is to set $ \tau_k = \max\{10^{-4}, 10^{-4}p\max_{1\leq j \leq p} |\uplambda_j(\bB_k)| \} $.   The intuition behind the eigenvalue modification in Algorithm~\ref{algo:preconditioning} is to make $ \obB_k $ well-conditioned while behaving similarly to $ \bB_k $. The pseudo-code of the preconditioning step is listed in Algorithm~\ref{algo:preconditioning}.

\begin{algorithm}[htbp]
	\caption{Preconditioning}
	\label{algo:preconditioning}
	\begin{algorithmic}[1]
			\renewcommand{\algorithmicrequire}{\textbf{Input:}}
		\renewcommand{\algorithmicensure}{\textbf{Output:}}
\Require  user-specified stability-threshold $ \tau_k > 0 $ and matrix $ \bB_k $ in the symmetric indefinite factorization of $ \obH_k $.
	\Ensure  matrix $ \bQ_k $ in the eigen-decomposition of $ \bB_k $ and the modified matrix $ \obLambda_k $.
		\State \textbf{apply} eigen-decomposition of $ \bB_k = \bQ_k \bLambda_k \bQ_k^\transpose $, where $ \bLambda_k = \text{diag}(\lambda_{k1}, ..., \lambda_{kp}) $ and $ \lambda_{kj} \equiv \lambda_j(\bB_k) $ for $ j = 1, ..., p $.
		\State \textbf{update} $ \obLambda_k = \text{diag}(\bar{\lambda}_{k1}, ..., \bar{\lambda}_{kp}) $ with $ \bar{\lambda}_{kj} = \max \set{\tau_k, |\uplambda_{kj}|} $ for $j = 1, ..., p$.
		\State \Return  eigen-decomposition of $ \obB_k = \bQ_k \obLambda_k \bQ_k^\transpose $.
	\end{algorithmic}
\end{algorithm}

\begin{rem}
		Although the eigen-decomposition, in general, incurs an $ O(p^3) $ cost, the block diagonal structure of $ \bB_k $ allows such operation to be implemented relatively inexpensively. In the worst-case scenario, $ \bB_k $ consists of $ p/2 $ diagonal blocks of size $ 2 \times 2 $, where eigen-decompositions are applied on each block separately leading to a total computational cost of $ O(p) $. For the sake of efficiency, the matrix $ \oobH_k $ is not explicitly computed. Whenever needed, however, it can be computed by $ \oobH_k = \bP_k^\transpose\bL_k\bQ_k\obLambda_k\bQ_k^\transpose\bL_k^\transpose\bP_k $ at a cost of $O(p^3)$. 
\end{rem}

Algorithm~\ref{algo:preconditioning} makes our approach different from \cite{spall2000adaptive}. We only modify the eigenvalues of $ \bLambda_k $ (or equivalently of $ \bB_k $), which indirectly affects the eigenvalues of $ \oobH_k $ in a non-trivial way. However, if one constructs $ \obH_k $ and $ \oobH_k $ from their factorization (formally unnecessary as mentioned above), Algorithm~\ref{algo:preconditioning} can be viewed as a function that maps $ \obH_k $ to a positive-definite $ \oobH_k $. In this sense, Algorithm~\ref{algo:preconditioning} is just a special choice of $ f_k(\cdot) $ in (\ref{eq:H_ooverline}) even though such a $ f _k(\cdot) $ is non-trivial and difficult to find.

\textbf{Descent direction} After the preconditioning step, the descent direction $ \bd_k: \oobH_k \bd_k = \bG_k(\hbtheta_k) $ can be computed readily via one forward substitution  w.r.t. the lower-triangular matrix $\bL_k$ and one backward substitution w.r.t. the upper-triangular matrix $\bL_k^\transpose$, as the decomposition $\oobH_k=\bP_k^\transpose\bL_k\bQ_k\obLambda_k\bQ_k^\transpose\bL_k^\transpose\bP_k $ is available. The estimate $ \hbtheta_k $ can then be updated as in (\ref{eq:theta_update_s}). Note that $ \oobH_k $ is not directly computed in any iteration, and the forward and backward substitutions are implemented through the terms in the $ \bL\bB\bL^\transpose $ factorization. Algorithm~\ref{algo:descent_direction} below summarizes the details.

\begin{algorithm}[H]
	\caption{Descent Direction Step}
	\label{algo:descent_direction}
	\begin{algorithmic}[1]
			\renewcommand{\algorithmicrequire}{\textbf{Input:}}
		\renewcommand{\algorithmicensure}{\textbf{Output:}}
		\Require gradient estimate $ \bG_k(\hbtheta_k) $, and matrices $ \bP_k, \bL_k, \bQ_k, \obLambda_k $ in the $ \bL\bB\bL^\transpose $ factorization of $ \oobH_k $.
		\Ensure descent direction $ \bd_k $.
		\State \textbf{Solve} $ \bz$ by forward substitution such that $\bL_k\bz = \bP_k\bG_k(\hbtheta_k) $.
		\State \textbf{Compute} $ \bw $ such that $ \bw = \bQ_k\obLambda_k^{-1}\bQ_k^\transpose \bz $.
		\State \textbf{Solve} $ \by $ by backward substitution such that $ \bL_k^\transpose\by = \bw $.
\State 	\Return  $ \bd_k = \bP_k^\transpose\by $.
	\end{algorithmic}
\end{algorithm}
Given the triangular structure of $\bL_k$ and that both $\bP_k$ and $\bQ_k$ are permutation matrices, the computational cost of Algorithm \ref{algo:descent_direction} is dominated by $O(p^2)$.

\subsection{Overall Algorithm (Second-Order SP) and Computational Complexity }
\label{subsec:complexity}

With   the aforementioned steps, we present the \emph{complete} algorithm for implementing second-order SP in Algorithm~\ref{algo:2SP} below, which applies to 2SPSA/2SG/E2SPSA/E2SG. A complete computational complexity analysis for 2SPSA is also stated, and the suggestions for the user-specified inputs are listed in \cite[Sect. 7.8.2]{spall2005introduction}. Results for 2SG/E2SPSA/E2SG can be obtained similarly.

\begin{algorithm}[!htbp]
	\caption{Efficient Second-order SP (applies to 2SPSA, 2SG, E2SPSA, and E2SG)}
	\label{algo:2SP}
	\begin{algorithmic}[1]
				\renewcommand{\algorithmicrequire}{\textbf{Input:}}
		\renewcommand{\algorithmicensure}{\textbf{Output:}}
		\Require  initialization $ \hbtheta_0 $ and $ \bP_0, \bQ_0, \bB_0 $ in the symmetric indefinite factorization of $ \obH_0 $; user-specified stability-threshold $ \tau_k > 0 $; coefficients $ a_k, c_k, w_k $ and, for 2SPSA/E2SPSA, $ \tilde{c}_k $.
		\Ensure  terminal estimate $ \hbtheta_k $.
		\State \textbf{set} iteration index $ k = 0 $.
		\While{terminating condition for $ \hbtheta_k $ has not been satisfied}
		\State \textbf{estimate} gradient $ \bG_k(\hbtheta_k) $ by (\ref{eq:gradient_estimate_2SPSA}) or (\ref{eq:gradient_estimate_2SG}). 
		\State \textbf{compute} $ t_k, b_k, \tbu_k $ and $ \tbv_k $ by (\ref{eq:u_k_tilde}), (\ref{eq:v_k_tilde}) and Table~\ref{table:u_k_v_k}.
		\State \textbf{update} the symmetric indefinite factorization of $ \obH_k $ by Algorithm~\ref{algo:two_rank_one_update}.
		\State \textbf{update} the symmetric indefinite factorization of $ \oobH_k $ by Algorithm~\ref{algo:preconditioning}.
		\State \textbf{compute} the descent direction $ \bd_k $ by Algorithm~\ref{algo:descent_direction}.
		\State \textbf{update} $ \hbtheta_{k+1} = \hbtheta_k - a_k\bd_k $.
		\State $ k \leftarrow k + 1 $
		\EndWhile 
		\State \Return  $ \hbtheta_k $.
	\end{algorithmic}
\end{algorithm}

For the terminating condition, the algorithm is set to stop when a pre-specified total number of function  evaluations (applicable for 2SPSA and E2SPSA)  or gradient  measurements (applicable for 2SG and E2SG) is reached or  the norm  of the differences between several consecutive estimates is less than a pre-specified threshold.  Note that, for each iteration, four noisy loss function measurements are required in the gradient-free case, whereas  three noisy gradient measurements are required in the gradient-based case.

The corresponding computational complexity analysis for Algorithm~\ref{algo:2SP} under the gradient-free case is summarized in Table~\ref{table:computational_complexity}. Analogously, the analysis can be carried out for the gradient-based case  and the feedback-based case (E2SPSA or E2SG).

\begin{table}[!t]
	\caption{Computational complexity analysis in gradient-free case (2SPSA in Algorithm \ref{algo:2SP}) Complexity cost shown in FLOPs.}
	\centering
	\begin{tabular}{|c|c|c|}
		\hline
		Leading Cost & Original 2SPSA & Proposed Implementation\\
		\hline\hline
		Update $ \obH_k $ & $ 7p^2 $ & $ 3.67p^2 + O(p) $ \\
		\hline
		Precondition $ \oobH_k $ & $ 17.67p^3 + O(p^2) $ & $ 8p $ \\
		\hline
		Descent direction $ \bd_k $ & $ 0.33p^3 + O(p^2) $ & $ 4p^2 + O(p) $ \\
		\hline\hline
		Total Cost & $ 18p^3 + O(p^2) $ & $7.67p^2 + O(p) $ \\
		\hline
	\end{tabular}\label{table:computational_complexity}
\end{table}

Let us now show how we obtain the terms in Table~\ref{table:computational_complexity}. A floating-point operation (FLOP)  is assumed to be either a summation or a multiplication, while transposition requires no FLOPs. For the updating $ \obH_k $ step in the  original 2SPSA, $ 3p^2 $ FLOPs are required per (\ref{eq:H_overline}) and $ 4p^2 $ FLOPs are required per (\ref{eq:H_hat}). In the proposed implementation, $ 10p $ FLOPs are required to get $ \tbu_k $ and $ \tbv_k $ per (\ref{eq:u_k_tilde}) and (\ref{eq:v_k_tilde}), respectively, and $ 22p^2/6 + O(p) $ FLOPs are required to update the symmetric indefinite factorization of $ \obH_k $ \cite[Thm. 2.1 ]{sorensen1977updating}. For the preconditioning step in the  original 2SPSA, if using (\ref{eq:f_k_sqrtm}), $ p^3 + p $ FLOPs are required to get $ \obH_k\obH_k + \delta_k\bI $ and an additional $ 50p^3/3 + O(p^2) $ FLOPs are required for the matrix square root operation \cite{higham1987computing}. In the proposed implementation, at most $ 7p $ FLOPs are required to get an eigenvalue decomposition on $ \bB_k $ ($ 14 $ FLOPs for at most $ p/2 $ blocks of size $ 2 \times 2 $),  and $ p $ FLOPs are required to update the eigenvalues of $ \bB_k $. For computing the descent direction $ \bd_k $ in the original 2SPSA, $ p^3/3 $ FLOPs are required to apply Cholesky decomposition for $ \oobH_k $, and $ 2p^2 $ FLOPs are required for the backward substitutions. In the proposed implementation, $ 4p^2 + 2p $ FLOPs are required to backward substitutions.

Table~\ref{table:computational_complexity} may not provide the lowest possible computational complexities  because a great deal of existing work on parallel computing\textemdash such as \cite{george1986parallel} on parallelization of Cholesky decomposition, \cite{deadman2012blocked} for computing principal matrix square root, and \cite{dongarra1987fully} for the symmetric eigenvalue problem\textemdash have tremendously accelerated the matrix-operation computing speed in modern data analysis packages. Nonetheless, even with such enhancements, the FLOPS counts remain $ O(p^3) $ in the standard methods. The bottom line is that our proposed implementation reduces the overall computational cost from $ O(p^3) $ to $ O(p^2) $.

\section{Theoretical Results and Practical Benefits}\label{sec:theory}

This section presents the  theoretical foundation  related to  the almost sure convergence and the asymptotic normality of $ \hbtheta_k $. We also offer comments on  the practical benefits  of  the proposed scheme. Lemma~\ref{lem:Ostrowski} provides the theoretical guarantee to connect the eigenvalues of $ \oobH_k $ and $ \obLambda_k $, which are important for proving Theorem~\ref{thm:H_barbar_property}--\ref{thm:H_barbar_uniform_bound} related to the matrix properties of $\obH_k$ and $\oobH_k$.

\begin{lem}  \label{lem:Ostrowski} \textup{\cite[Thm. 4.5.9]{horn1990matrix}}\textbf{.}
	  Let $ \bA, \bS \in \real^{p \times p} $, with $ \bA $ being symmetric and $ \bS $ being \emph{nonsingular}. Let the eigenvalues of $ \bA $ and $ \bS\bA\bS^\transpose $ be arranged in \emph{nondecreasing} order. Let $ \sigma_1 \geq \cdots \geq \sigma_p > 0 $ be the singular values of $ \bS $. For each $ j = 1, \cdots, p $, there exists a positive number $ \zeta_j \in [\sigma_p^2, \sigma_1^2] $ such that $ \lambda_j(\bS\bA\bS^\transpose) = \zeta_j\lambda_j(\bA)$.
\end{lem}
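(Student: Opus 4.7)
The plan is to follow the classical proof of Ostrowski's theorem, decomposed into three stages: a reduction to the diagonal case via SVD, an inertia argument to pin down the sign of $\zeta_j$, and a Courant-Fischer bound to pin down its magnitude.

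First I would reduce to the diagonal case. Writing the SVD $\bS = \bU\bSigma\bV^\transpose$ with $\bU, \bV$ orthogonal and $\bSigma = \diag(\sigma_1,\dots,\sigma_p)$, we have $\bS\bA\bS^\transpose = \bU\bSigma(\bV^\transpose\bA\bV)\bSigma\bU^\transpose$, which is orthogonally similar to $\bSigma\tilde{\bA}\bSigma$, where $\tilde{\bA} \equiv \bV^\transpose\bA\bV$ has the same eigenvalues as $\bA$. Hence it suffices to prove the statement with $\bS$ replaced by the diagonal positive matrix $\bSigma$ and $\bA$ replaced by $\tilde{\bA}$.

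Next I would handle the sign of $\zeta_j$. Since $\bSigma$ is nonsingular, Sylvester's law of inertia (already invoked in Subsection~\ref{subsec:IMF}) ensures that $\bSigma\tilde{\bA}\bSigma$ and $\tilde{\bA}$ have the same number of positive, negative, and zero eigenvalues. Because both sequences are arranged in nondecreasing order, the signs of $\lambda_j(\bSigma\tilde{\bA}\bSigma)$ and $\lambda_j(\tilde{\bA})$ agree index-by-index. When $\lambda_j(\tilde{\bA}) = 0$, both sides vanish and any $\zeta_j \in [\sigma_p^2, \sigma_1^2]$ works trivially; the remaining case has $\lambda_j(\tilde{\bA}) \neq 0$ of a known sign.

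For the magnitude bound, I would use the Courant-Fischer min-max characterization. The Rayleigh quotient of $\bSigma\tilde{\bA}\bSigma$ at $\bx\ne \zero$ factors as $(\bx^\transpose\bSigma\tilde{\bA}\bSigma\bx)/(\bx^\transpose\bx) = [(\bSigma\bx)^\transpose\tilde{\bA}(\bSigma\bx)/\norm{\bSigma\bx}^2] \cdot [\norm{\bSigma\bx}^2/\norm{\bx}^2]$, where the first factor is a Rayleigh quotient of $\tilde{\bA}$ and the second factor always lies in $[\sigma_p^2, \sigma_1^2]$. Since $\bx \mapsto \bSigma\bx$ is a linear bijection on $\real^p$, substituting $\by = \bSigma\bx$ puts $j$-dimensional subspaces in one-to-one correspondence, so the min-max over subspaces transfers cleanly. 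Combining these factorizations with the Courant-Fischer expression for $\lambda_j(\tilde\bA)$ yields $\sigma_p^2\lambda_j(\tilde{\bA}) \le \lambda_j(\bSigma\tilde{\bA}\bSigma) \le \sigma_1^2 \lambda_j(\tilde{\bA})$ when $\lambda_j(\tilde{\bA}) > 0$, with the inequalities reversed when $\lambda_j(\tilde{\bA}) < 0$. In either case, the ratio $\zeta_j \equiv \lambda_j(\bSigma\tilde{\bA}\bSigma)/\lambda_j(\tilde{\bA})$ is a positive number in $[\sigma_p^2, \sigma_1^2]$, which is the required assertion.

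The main obstacle will be the sign-dependent direction of the Courant-Fischer bound. Because the Rayleigh quotient of $\tilde{\bA}$ need not be single-signed across an arbitrary $j$-dimensional subspace, the scaling factor in $[\sigma_p^2, \sigma_1^2]$ enters the extremal quotient in opposite directions depending on whether the extremizing value is positive or negative; one cannot simply pull the bound out of the min-max. I would circumvent this by splitting $\tilde{\bA}$ along its positive and negative eigenspaces (or equivalently by working with $\tilde{\bA}$ and $-\tilde{\bA}$ on appropriate invariant subspaces) and restricting the min-max to test subspaces on which the Rayleigh quotient of $\tilde\bA$ has a controlled sign, leveraging the inertia matching from the previous step to guarantee that such subspaces realize the correct $j$-th eigenvalue.
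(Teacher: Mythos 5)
This lemma is not proved in the paper at all: it is quoted verbatim as Ostrowski's theorem with a citation to Horn and Johnson, so there is no in-paper argument to compare against. What you have written is essentially the standard textbook proof of that cited result, and it is sound in outline. The SVD reduction to a diagonal positive $\bSigma$ is correct (orthogonal similarity preserves the spectrum on both sides of the congruence), the inertia argument correctly forces $\lambda_j(\bSigma\tilde{\bA}\bSigma)$ and $\lambda_j(\tilde{\bA})$ to share a sign index-by-index, and the Rayleigh-quotient factorization with the substitution $\by=\bSigma\bx$ is the right engine for the magnitude bound. You have also correctly flagged the one genuinely delicate point: the factor in $[\sigma_p^2,\sigma_1^2]$ cannot be pulled out of the min--max uniformly because its effect on the extremum reverses with the sign of the Rayleigh quotient. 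Your proposed fix works, though it can be stated more simply than "splitting along eigenspaces": for the upper bound when $\lambda_j(\tilde{\bA})>0$, take $W=\bSigma^{-1}\mathrm{span}$ of the first $j$ eigenvectors of $\tilde{\bA}$; on that subspace every Rayleigh quotient of $\tilde{\bA}$ is at most $\lambda_j(\tilde{\bA})$, so the product is at most $\sigma_1^2\lambda_j(\tilde{\bA})$ whether the quotient is positive or negative, and the matching lower bound follows from the dual max--min characterization. With that last step written out, the argument is complete; as it stands, the proposal is a correct sketch of the classical proof rather than a new route.
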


Before presenting the main theorems, we first discuss the singular values of $ \bL_k $. Denote $ \{\sigma_i(\bL_k)\}_{i=1}^p $ as the singular values of $ \bL_k $ and let $ \sigma_{\min}(\cdot) = \min_{1\leq i \leq p} \sigma_i(\cdot)$, $ \sigma_{\max}(\cdot) = \max_{1\leq i \leq p} \sigma_i(\cdot) $. Since $ \bL_k $ is a unit lower triangular matrix, we have $ \lambda_j(\bL_k) = 1 $ for $ j = 1, .., p $ and $ \det(\bL_k) = 1 $. From the entry-wise bounds of $ \bL_k $ in Subsection~\ref{subsec:IMF}, we see that $ p \leq \norm{\bL_k}_F \leq 3p^2/2 - p/2 $ for all $ k $, where $\norm{\cdot}_F$ is the Frobenius norm of the argument matrix in $ \real^{p\times p} $. With the lower bound of $ \sigma_{\min}(\bL_k) $ \cite{yi1997note}, there exists a constant $ \underline{\sigma} > 0 $ such that $ \sigma_{\min}(\bL_k) \geq \underline{\sigma} $ for all $ k $. On the other hand, by the equivalence of the matrix norms, i.e, $ \sigma_{\max}(\bL_k) = \norm{\bL_k}_2 \leq \norm{\bL_k}_F $ for $ \norm{\cdot}_2 $ being the spectral norm, there exists a constant $ \overline{\sigma} > 0 $ such that $ \sigma_{\max}(\bL_k) \leq \overline{\sigma} $ for all $ k$. Both $ \underline{\sigma} $ and $ \overline{\sigma} $ are independent of the sample path for $ \bL_k $. By the Rayleigh-Ritz theorem \cite[Thm. 4.2.2]{horn1990matrix}, $ \be_1^\transpose(\bL_k\bL_k^\transpose)\be_1 = 1 $ implies that $ \sigma_{\min}(\bL_k) \leq 1 $ and $ \sigma_{\max}(\bL_k) \geq 1 $. Combined, all the singular values of $ \bL_k $ are bounded uniformly across $k$; i.e., $ \underline{\sigma} < \sigma_{\min}(\bL_k) \leq 1 \leq \sigma_{\max}(\bL_k) \leq \overline{\sigma} $. Let $ \kappa(\bL_k) $ be the condition number of $ \bL_k $, then $ 1 \leq \kappa(\bL_k) \leq \overline{\sigma} / \underline{\sigma} $.

As  the focus of Algorithm~\ref{algo:preconditioning} is to generate a positive definite $ \obB_k $ (or equivalently its eigen-decomposition), we replace $ \tau_k $  in Theorem~\ref{thm:H_barbar_property}--\ref{thm:H_barbar_uniform_bound}   with some constant $ \underline{\tau}\in\left(0,\uptau_k\right] $ independent of the sample path for $ \bB_k $ for all $k$. Note that the substitution is solely for succinctness and does not affect the theoretical result that $\obB_k$ is positive definite. Theorem~\ref{thm:H_barbar_property} presents the key theoretical properties of $ \oobH_k $ satisfying the regularity conditions in \cite[C.6]{spall2000adaptive}. Based on Theorem~\ref{thm:H_barbar_property}, the strong convergence, $ \hbtheta_k \to \btheta^* $ and $ \obH_k \to \bH(\btheta^*) $, can be established conveniently. See Remark~\ref{rmk:strong_convergence}.

\begin{thm}\label{thm:H_barbar_property} Assume there exists a symmetric indefinite factorization $ \obH_k = \bP_k^\transpose\bL_k\bB_k\bL_k^\transpose\bP_k$.
	Given any constant $\underline{\uptau} \in\left(0,\uptau_k\right] $ for all $ k $, the matrix $ \oobH_k = \bP_k^\transpose\bL_k\bQ_k\obLambda_k\bQ_k^\transpose\bL_k^\transpose\bP_k $ with $ \bQ_k$ and $\obLambda_k $ returned from Algorithm~\ref{algo:preconditioning} satisfies the following properties:
	\begin{itemize}
		\item[(a)] $ \lambda_{\min}(\oobH_k) \geq \underline{\sigma}^2\underline{\tau} > 0 $.
		\item[(b)] $\oobH_k^{-1}$ exists a.s., $ c_k^2 \oobH_k ^{-1}\to \bm{0}$ a.s., and for some constants $\updelta, \uprho>0$, $ \mathbb{E} [ \| \oobH_k^{-1} \|^{2+\updelta} ]\le \uprho $.
	\end{itemize}
\end{thm}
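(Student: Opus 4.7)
My plan is to reduce everything to eigenvalue comparisons between $\oobH_k$ and the block-diagonal matrix $\obLambda_k$ output by Algorithm~\ref{algo:preconditioning}, using the previously reviewed Ostrowski-type bound in Lemma~\ref{lem:Ostrowski} together with the uniform singular-value control on $\bL_k$ coming from the entry-wise bound on the symmetric indefinite factorization. Once part~(a) is in hand, part~(b) should follow almost immediately because $\|\oobH_k^{-1}\|=1/\uplambda_{\min}(\oobH_k)$ will be deterministically bounded above by a finite constant that is independent of $k$ and of the sample path.

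For part~(a), I would first observe that $\bP_k$ is a permutation matrix and $\bQ_k$ is orthogonal (it comes from a symmetric eigen-decomposition of the block-diagonal $\bB_k$ in Algorithm~\ref{algo:preconditioning}), so they are isometries and do not alter eigenvalues or singular values under the corresponding similarity/congruence moves. Define $\bS_k=\bL_k\bQ_k$; then $\bS_k$ has the same singular values as $\bL_k$, and
\[
\oobH_k \;=\; \bP_k^{\transpose}\bS_k\obLambda_k\bS_k^{\transpose}\bP_k,
\]
so $\oobH_k$ is congruent to $\bS_k\obLambda_k\bS_k^{\transpose}$ via an orthogonal transformation. Applying Lemma~\ref{lem:Ostrowski} with $\bA=\obLambda_k$ and $\bS=\bS_k$, for each $j$ there exists $\zeta_j\in[\sigma_{\min}(\bL_k)^{2},\,\sigma_{\max}(\bL_k)^{2}]$ with $\uplambda_j(\oobH_k)=\zeta_j\,\uplambda_j(\obLambda_k)$. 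Algorithm~\ref{algo:preconditioning} guarantees $\uplambda_j(\obLambda_k)\ge\underline{\uptau}$ for all $j$, and the entry-wise bound recalled in Subsection~\ref{subsec:IMF} gives the uniform lower bound $\sigma_{\min}(\bL_k)\ge\underline{\sigma}$ that is valid along every sample path. Combining these yields the claim $\uplambda_{\min}(\oobH_k)\ge\underline{\sigma}^{2}\underline{\uptau}>0$.

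Part~(b) then reduces to bookkeeping on operator norms. Since $\oobH_k$ is symmetric and its smallest eigenvalue is strictly positive by (a), $\oobH_k$ is positive definite and therefore invertible with probability one; in fact the invertibility here is sample-path deterministic. Because $\|\oobH_k^{-1}\|$ equals the reciprocal of $\uplambda_{\min}(\oobH_k)$ for a symmetric positive-definite matrix, part~(a) gives the uniform deterministic bound $\|\oobH_k^{-1}\|\le(\underline{\sigma}^{2}\underline{\uptau})^{-1}$. From this, $c_k^{2}\|\oobH_k^{-1}\|\le c_k^{2}(\underline{\sigma}^{2}\underline{\uptau})^{-1}\to 0$ using the assumed decay of $c_k$, which delivers $c_k^{2}\oobH_k^{-1}\to\zero$ a.s. Likewise $\E\|\oobH_k^{-1}\|^{2+\updelta}\le(\underline{\sigma}^{2}\underline{\uptau})^{-(2+\updelta)}$ for \emph{any} $\updelta>0$, so we may take $\uprho$ to be this constant.

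The only place where I expect to need care is in justifying that the constants $\underline{\sigma}$ and $\underline{\uptau}$ can be taken independent of $k$ and of the sample path. For $\underline{\sigma}$ this is handed to us by the entry-wise bound of $2.7808$ on $\bL_k$ together with $\det(\bL_k)=1$, as noted just before the theorem; for $\underline{\uptau}$ it is already in the hypothesis (the stability threshold $\uptau_k$ dominates a strictly positive constant $\underline{\uptau}$). Once these two uniform bounds are in place, the Ostrowski step is the only non-trivial move, and the rest is a short sequence of elementary inequalities.
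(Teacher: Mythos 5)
Your proposal is correct and follows essentially the same route as the paper: both rest on the Ostrowski-type comparison of Lemma~\ref{lem:Ostrowski} applied to the congruence $\oobH_k = \bP_k^\transpose\bL_k\bQ_k\obLambda_k\bQ_k^\transpose\bL_k^\transpose\bP_k$, combined with the uniform bounds $\sigma_{\min}(\bL_k)\ge\underline{\sigma}$ and $\uplambda_j(\obLambda_k)\ge\underline{\uptau}$, to obtain $\uplambda_{\min}(\oobH_k)\ge\underline{\sigma}^{2}\underline{\uptau}$, after which part~(b) is the same deterministic bound on $\|\oobH_k^{-1}\|$. Your explicit treatment of $\bS_k=\bL_k\bQ_k$ and the expansion of the one-line deduction of (b) are just slightly more detailed versions of what the paper states.
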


\begin{proof}
	For all $k$, it is easy to see that $ \lambda_{\min}(\obLambda_k) \geq \underline{\tau} > 0 $ implying $ \obLambda_k $ is positive definite. Since both $ \bQ_k $ and $ \bL_k $ are nonsingular, by Sylvester's law of inertia \cite{sylvester1852xix}, $ \oobH_k $ is also positive definite as $ \obLambda_k $ is positive definite. Moreover, by Lemma~\ref{lem:Ostrowski},
		\begin{equation}\label{eq:lambda_min_H_barbar}
	\lambda_{\min}(\oobH_k) \geq \sigma_{\min}^2(\bL_k)\lambda_{\min}(\obLambda_k) \geq \underline{\sigma}^2\underline{\tau} > 0 \,.
	\end{equation}
	Because $ \oobH_k $ has a constant lower bound for all its eigenvalues across $k$, property (b) follows.
\end{proof}

\begin{rem}
	\label{rmk:strong_convergence}
	Theorem~\ref{thm:H_barbar_property} guarantees that $ \oobH_k $ is positive definite, and,  therefore, the estimates of $\btheta$ in the second-order method move in a descent-direction on average. Meeting property (b) is also necessary for  showing the convergence results. Suppose the routine regularity conditions in \cite[Sect. III and IV]{spall2000adaptive} hold. To depict  the strong convergence, $ \hbtheta_k \to \btheta^* $ and $ \obH_k \to \bH(\btheta^*) $, we  need only   verify that $ \oobH_k $ satisfies the regularity conditions in \cite[C.6]{spall2000adaptive} because the key difference between the original 2SPSA/2SG and our proposed method is effectively the preconditioning step. Theorem~\ref{thm:H_barbar_property} verifies the Assumption C.6 in \cite{spall2000adaptive} directly, and therefore we have $ \hbtheta_k \to \btheta^* $ a.s. and $ \obH_k \to \bH(\btheta^*) $ a.s. under both the 2SPSA and 2SG settings by \cite[Thms. 1 and 2]{spall2000adaptive}. 
\end{rem}

Theorem~\ref{thm:H_bar_property} discusses the connection between $ \obH_k $ and $ \oobH_k $ when $ k $ is sufficiently large. It also verifies a key condition when proving the asymptotic normality of $ \hbtheta_k $. See Remark~\ref{rmk:asymptotic_normality}.

\begin{thm}\label{thm:H_bar_property}
	Assume $ \bH(\btheta^*) $ is positive definite. When choosing $ 0 < \underline{\tau} \leq \lambda_{\min}(\bH(\btheta^*)) / (2\overline{\sigma}^2) $, there exists a constant $ K_1 $ such that for all $ k > K_1 $, we have $ \oobH_k = \obH_k $.
\end{thm}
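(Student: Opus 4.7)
}

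The plan is to show that, for all sufficiently large $k$, every eigenvalue $\lambda_{kj}$ of $\bB_k$ satisfies $\lambda_{kj}\ge \underline{\tau}$. Once this holds, Algorithm~\ref{algo:preconditioning} returns $\bar{\lambda}_{kj}=\max\{\underline{\tau},|\lambda_{kj}|\}=\lambda_{kj}$ for every $j$, whence $\obLambda_k=\bLambda_k$, hence $\obB_k=\bB_k$, and finally $\oobH_k=\bP_k^\transpose\bL_k\bQ_k\obLambda_k\bQ_k^\transpose\bL_k^\transpose\bP_k=\bP_k^\transpose\bL_k\bB_k\bL_k^\transpose\bP_k=\obH_k$.

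First I would use Remark~\ref{rmk:strong_convergence} to invoke $\obH_k\to \bH(\btheta^*)$ almost surely. Since the eigenvalues of a symmetric matrix are continuous functions of its entries, this gives $\lambda_{\min}(\obH_k)\to \lambda_{\min}(\bH(\btheta^*))>0$ almost surely. Consequently, there exists an almost surely finite $K_1$ such that for every $k>K_1$,
\begin{equation*}
\lambda_{\min}(\obH_k)\;\ge\; \tfrac{1}{2}\,\lambda_{\min}(\bH(\btheta^*)).
\end{equation*}

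Next I would relate $\lambda_{\min}(\bB_k)$ to $\lambda_{\min}(\obH_k)$ using Lemma~\ref{lem:Ostrowski}. Writing $\obH_k=(\bP_k^\transpose\bL_k)\bB_k(\bP_k^\transpose\bL_k)^\transpose$ and noting that $\bP_k$ is a permutation matrix, the singular values of $\bP_k^\transpose\bL_k$ coincide with those of $\bL_k$, so they lie in $[\underline{\sigma},\overline{\sigma}]$ by the uniform bounds on $\sigma_i(\bL_k)$ discussed just before Theorem~\ref{thm:H_barbar_property}. Lemma~\ref{lem:Ostrowski} then yields $\lambda_j(\obH_k)=\zeta_j\lambda_j(\bB_k)$ with $\zeta_j\in[\underline{\sigma}^2,\overline{\sigma}^2]$ for every $j$. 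In particular, selecting the smallest eigenvalue of $\bB_k$ and applying the corresponding bound gives
\begin{equation*}
\lambda_{\min}(\bB_k)\;\ge\;\frac{\lambda_{\min}(\obH_k)}{\overline{\sigma}^{2}}\;\ge\;\frac{\lambda_{\min}(\bH(\btheta^*))}{2\,\overline{\sigma}^{2}}\;\ge\;\underline{\tau},
\end{equation*}
where the last inequality is exactly the hypothesis on $\underline{\tau}$ in the statement. Because $\lambda_{\min}(\bB_k)\ge \underline{\tau}>0$, all eigenvalues of $\bB_k$ are strictly positive and at least $\underline{\tau}$, so the equality chain described in the first paragraph goes through.

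The main obstacle I anticipate is a subtle one rather than a computational one: Lemma~\ref{lem:Ostrowski} as stated gives a matched pairing of ordered eigenvalues, but the ``$\zeta_j$'' are matrix-dependent and only guaranteed to lie in the singular-value-squared interval, so one must argue carefully that the estimate transfers to $\lambda_{\min}(\bB_k)$ rather than to some mismatched index. This is handled by applying the bound to $j=1$ in a nondecreasing ordering, which is why the uniform two-sided bounds on $\sigma_i(\bL_k)$ established before Theorem~\ref{thm:H_barbar_property} are essential. A secondary minor technicality is that the convergence $\obH_k\to\bH(\btheta^*)$ is almost sure, so $K_1$ is a sample-path dependent finite random index; the statement of the theorem should be read in that almost-sure sense, consistent with the convergence result inherited from \cite{spall2000adaptive}.
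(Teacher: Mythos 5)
Your proposal is correct and follows essentially the same route as the paper's proof: invoke $\obH_k \to \bH(\btheta^*)$ a.s.\ from Remark~\ref{rmk:strong_convergence} to bound $\lambda_{\min}(\obH_k)$ below by $\lambda_{\min}(\bH(\btheta^*))/2$ for large $k$, then apply Lemma~\ref{lem:Ostrowski} with the uniform singular-value bound $\sigma_{\max}(\bL_k)\le\overline{\sigma}$ to conclude $\lambda_{\min}(\bB_k)\ge\underline{\tau}$, so the preconditioning step leaves $\bLambda_k$ unchanged. Your added remarks on the index-matching in Lemma~\ref{lem:Ostrowski} and the sample-path dependence of $K_1$ are sensible clarifications but do not change the argument.
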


\begin{proof}
	By Remark~\ref{rmk:strong_convergence}, since $ \obH_k \to \bH(\btheta^*) $ a.s., there exists an integer $K$ such that for all $ k > K_1 $, $ \lambda_{\min}(\obH_k) \geq \lambda_{\min}(\bH(\btheta^*)) / 2 > 0 $. By Lemma~\ref{lem:Ostrowski}, we can achieve a lower bound for the eigenvalues of $ \bLambda_k $ as
	\begin{equation*}
	\lambda_{\min}(\bLambda_k) \geq \frac{\lambda_{\min}(\obH_k)}{\sigma_{\max}^2(\bL_k)} \geq \frac{\lambda_{\min}(\obH_k)}{\overline{\sigma}^2} \geq \underline{\tau}\,.
	\end{equation*}
	Therefore, for all $ k > K_1$, $ \obLambda_k = \bLambda_k $ and,  consequently, $ \oobH_k = \obH_k $.
\end{proof}

\begin{rem}
	\label{rmk:asymptotic_normality}
	Theorem~\ref{thm:H_bar_property} shows that when $ k $ is large (the estimated Hessian $ \obH_k $ is sufficiently positive definite), the proposed preconditioning step will automatically make $ \oobH_k = \obH_k $, which satisfies one of the key required conditions for the asymptotic normality of $ \hat{\btheta}_k $ in \cite{spall2000adaptive}. Apart from the additional regularity conditions in \cite[C.10--12]{spall2000adaptive}, we are required to verify that
	$
	\oobH_k - \obH_k \to \bm{0}~\text{a.s.},
	$
	which can be inferred by Theorem~\ref{thm:H_bar_property}. Following \cite[Thm. 3]{spall2000adaptive}, when the gain sequences have the standard form $ a_k = a/(A+k+1)^\alpha $ and $ c_k = c/(k+1)^\gamma $, the asymptotic normality of $ \hbtheta_k $ gives:
	\begin{equation*}
	\begin{split}
	k^{(\alpha-2\gamma)/2}(\hbtheta_k - \btheta^*) \stackrel{\text{dist}}{\longrightarrow} N(\bm{\upmu}, \bm{\Omega}) &\quad \text{for 2SPSA,}\\
	k^{\alpha/2}(\hbtheta_k - \btheta^*) \stackrel{\text{dist}}{\longrightarrow} N(\bm{0}, \bm{\Omega'}) &\quad\text{for 2SG,}
	\end{split}
	\end{equation*}
	where the specifications of $ \alpha, \gamma, \bm{\upmu}, \bm{\Omega}$ and $ \bm{\Omega'} $ are available in \cite{spall2000adaptive}. Under E2SPSA/E2SG settings, the convergence and asymptotic results can be derived analogously from \cite[Thms. 1--4]{spall2009feedback}.
\end{rem}

As  an ill-conditioned matrix may cause an excessive step-size in recursion (\ref{eq:theta_update_s}) leading to slow a convergence rate \cite{li2018preconditioned}, we need to make sure that the resulting $\oobH_k $ (or its equivalent factorization) is not only positive definite but also numerically favorable. Theorem~\ref{thm:H_barbar_uniform_bound} below shows that changing the eigenvalues of $ \bLambda_k $ does not lead to the eigenvalues of $ \oobH_k $ becoming either too large or too small.

\begin{thm}\label{thm:H_barbar_uniform_bound} Assume the eigenvalues of $\bH\parenthesis{\btheta^*}$ are bounded uniformly such that $0< \underline{\uplambda}^*<\abs{\uplambda_j\parenthesis{\bH\parenthesis{\btheta^*}}}<\overline{\uplambda}^*<\infty $ for $j=1,...,p$ for all $k$. Then there exists some $K_2$ such that for $ k>K_2 $, the eigenvalues and condition number of $ \oobH_k $ are also bounded uniformly.
\end{thm}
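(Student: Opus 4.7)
The plan is to combine Theorem~\ref{thm:H_bar_property} (which tells us that preconditioning becomes inactive for large $k$) with the almost sure convergence $\obH_k\to\bH(\btheta^*)$ (recorded in Remark~\ref{rmk:strong_convergence}) to transport the uniform spectral bounds on $\bH(\btheta^*)$ to the iterate $\oobH_k$. Recall that $\oobH_k$ is by construction symmetric and positive definite, so its eigenvalues coincide with the absolute values of its eigenvalues, and its condition number is simply $\lambda_{\max}(\oobH_k)/\lambda_{\min}(\oobH_k)$.

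The first step is to invoke Theorem~\ref{thm:H_bar_property}, which guarantees the existence of $K_1$ such that $\oobH_k=\obH_k$ for every $k>K_1$, provided $\underline{\tau}$ is chosen no larger than $\lambda_{\min}(\bH(\btheta^*))/(2\overline{\upsigma}^2)$. This reduces the problem to bounding the spectrum of $\obH_k$ uniformly in $k$ once $k$ is large enough. Next, by Remark~\ref{rmk:strong_convergence}, $\obH_k\to\bH(\btheta^*)$ almost surely, and since the eigenvalues of a symmetric matrix depend continuously on its entries (for instance via Weyl's inequality, $|\lambda_j(\obH_k)-\lambda_j(\bH(\btheta^*))|\le\|\obH_k-\bH(\btheta^*)\|$), we obtain $\lambda_j(\obH_k)\to\lambda_j(\bH(\btheta^*))$ almost surely for each $j=1,\ldots,p$.

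Fixing any $\upvarepsilon\in(0,\underline{\uplambda}^*/2)$, the almost sure convergence of the eigenvalues and the hypothesized uniform lower bound $\underline{\uplambda}^*$ and upper bound $\overline{\uplambda}^*$ on $|\lambda_j(\bH(\btheta^*))|$ together yield an integer $K_2\ge K_1$ such that, for all $k>K_2$ and all $j$,
\begin{equation*}
\underline{\uplambda}^*-\upvarepsilon \;\le\; \lambda_j(\obH_k) \;\le\; \overline{\uplambda}^*+\upvarepsilon.
\end{equation*}
Because $\oobH_k=\obH_k$ in this regime, the same double inequality holds for $\lambda_j(\oobH_k)$, which gives the claimed uniform spectral bounds. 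The condition number is then immediately controlled by
\begin{equation*}
\mathrm{cond}(\oobH_k)\;=\;\frac{\lambda_{\max}(\oobH_k)}{\lambda_{\min}(\oobH_k)}\;\le\;\frac{\overline{\uplambda}^*+\upvarepsilon}{\underline{\uplambda}^*-\upvarepsilon},
\end{equation*}
which is a finite constant independent of $k$.

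The only delicate point is that the preceding argument rests on the almost sure convergence $\obH_k\to\bH(\btheta^*)$, so strictly speaking $K_2$ is sample-path dependent; since both Theorems \ref{thm:H_barbar_property}--\ref{thm:H_bar_property} and the companion convergence statements in \cite{spall2000adaptive} already operate in the almost sure mode, this is consistent with the paper's conventions and poses no real obstacle. One could instead state a probability-one version by letting $K_2$ be a finite random variable. No other step is subtle: the continuity of eigenvalues is standard, and the passage from eigenvalue bounds to a condition-number bound is immediate once $\underline{\uplambda}^*>0$.
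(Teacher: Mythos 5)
Your argument is clean but it proves the theorem only under strictly stronger hypotheses than the statement actually carries, and the gap is precisely the reliance on Theorem~\ref{thm:H_bar_property}. That theorem requires two things that Theorem~\ref{thm:H_barbar_uniform_bound} does \emph{not} assume: first, that $\bH\parenthesis{\btheta^*}$ is positive definite, whereas the present statement only bounds $\abs{\uplambda_j\parenthesis{\bH\parenthesis{\btheta^*}}}$ away from zero and infinity, so $\bH\parenthesis{\btheta^*}$ (and hence $\obH_k$ for large $k$) is permitted to be indefinite; and second, that the user-specified stability threshold satisfies $\underline{\uptau}\le \uplambda_{\min}\parenthesis{\bH\parenthesis{\btheta^*}}/(2\overline{\upsigma}^2)$, a condition on a tuning parameter that depends on the unknown Hessian at the unknown optimum. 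If either condition fails — the Hessian limit has a negative eigenvalue, or $\underline{\uptau}$ is chosen too large — then the identity $\oobH_k=\obH_k$ on which your whole reduction rests never kicks in: the preconditioning in Algorithm~\ref{algo:preconditioning} keeps modifying eigenvalues of $\bB_k$ for all $k$, and your transfer of the spectral bounds from $\bH\parenthesis{\btheta^*}$ to $\oobH_k$ via Weyl's inequality has nothing to attach to.

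The paper's proof avoids this by never leaving the factorization. It uses $\obH_k\to\bH\parenthesis{\btheta^*}$ a.s.\ only to get uniform two-sided bounds $\underline{\uplambda}<\abs{\uplambda_j\parenthesis{\obH_k}}<\overline{\uplambda}$ for large $k$, then applies Lemma~\ref{lem:Ostrowski} twice — once to pass from $\obH_k$ to $\bB_k$ and once to pass from the modified $\obB_k$ back to $\oobH_k$ — with the congruence factors controlled by the dimension-independent singular-value bounds $\underline{\upsigma}\le\upsigma_{\min}\parenthesis{\bL_k}\le\upsigma_{\max}\parenthesis{\bL_k}\le\overline{\upsigma}$. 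The eigenvalue modification $\bar{\uplambda}_{kj}=\max\ilset{\underline{\uptau},\abs{\uplambda_{kj}}}$ is then carried explicitly through the bounds, yielding $\uplambda_{\min}\parenthesis{\oobH_k}\ge\underline{\upsigma}^2\max\ilset{\underline{\uptau},\underline{\uplambda}/\overline{\upsigma}^2}$ and $\uplambda_{\max}\parenthesis{\oobH_k}\le\overline{\upsigma}^2\max\ilset{\underline{\uptau},\overline{\uplambda}/\underline{\upsigma}^2}$ whether or not the preconditioning is still active. To repair your proof you would either need to add the hypotheses of Theorem~\ref{thm:H_bar_property} to the statement (weakening the result), or handle the regime in which $\oobH_k\ne\obH_k$ directly, which essentially forces you back to the Ostrowski-based route.
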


\begin{proof}
	Again by Remark \ref{rmk:strong_convergence}, in that  $\obH_k\to \bH\parenthesis{\btheta^*}$ a.s.; therefore,  for all $k>K_2$, the eigenvalues of $ \obH_k $ are bounded uniformly in the sense that $ \underline{\lambda} < |\lambda_j(\obH_k)| < \overline{\lambda} $ for $ j = 1, ..., p $, where $ \underline{\lambda} = \underline{\uplambda}^*/2 $ and $ \overline{\lambda} =2 \overline{\uplambda}^*$ are constants independent of the sample path for $ \obH_k $. 
	Given $ \obH_k = \bP_k\bL_k\bB_k\bL_k^T\bP_k $, by Lemma~\ref{lem:Ostrowski},
		\begin{equation*}
	\frac{\lambda_{\min}(\obH_k)}{\sigma_{\max}^2(\bL_k)} \leq \lambda_{\min}(\bB_k) \leq \frac{\lambda_{\min}(\obH_k)}{\sigma_{\min}^2(\bL_k)}\,,
	\end{equation*}
	and \begin{equation*}
	\frac{\lambda_{\max}(\obH_k)}{\sigma_{\max}^2(\bL_k)} \leq \lambda_{\max}(\bB_k) \leq \frac{\lambda_{\max}(\obH_k)}{\sigma_{\min}^2(\bL_k)}\,.
	\end{equation*}
	Similarly, since $ \oobH_k = \bP_k\bL_k\obB_k\bL_k^T\bP_k $,
	\begin{equation*}
	\begin{split}
	\lambda_{\min}(\oobH_k) &
	\geq \sigma_{\min}^2(\bL_k) \lambda_{\min}(\obB_k)\\
	&\geq \sigma_{\min}^2(\bL_k)\max\left\{\underline{\tau},\frac{\lambda_{\min}(\obH_k)}{\sigma_{\max}^2(\bL_k)}\right\}\\
	&
	\geq \underline{\sigma}^2\max\left\{\underline{\tau},\frac{\underline{\lambda}}{\overline{\sigma}^2}\right\}\,,
	\end{split}
	\end{equation*}
	\begin{equation*}
	\begin{split}
	\lambda_{\max}(\oobH_k)&
	\leq \sigma_{\max}^2(\bL_k) \lambda_{\max}(\obB_k)\\
	&\leq \sigma_{\max}^2(\bL_k)\max\left\{\underline{\tau},\frac{\lambda_{\max}(\obH_k)}{\sigma_{\min}^2(\bL_k)}\right\}\\
	&\leq \overline{\sigma}^2\max\left\{\underline{\tau},\frac{\overline{\lambda}}{\underline{\sigma}^2}\right\}\,,
	\end{split}
	\end{equation*}
		where $ \kappa(\cdot) $ is the condition number of the matrix argument. In as much as  $ \underline{\sigma}^2, \overline{\sigma}^2, \underline{\lambda} $, and $ \overline{\lambda} $ are all constants specified before running the algorithm, the eigenvalues of $ \oobH_k $ are bounded uniformly across $ k > K_2 $.
	
	Moreover, for the condition number of $ \oobH_k $, we have: 
		\begin{equation*}
	\kappa(\oobH_k)
	\leq \frac{\sigma_{\max}^2(\bL_k)}{\sigma_{\min}^2(\bL_k)}\frac{\max\left\{\underline{\tau},\lambda_{\max}(\obH_k)/\sigma_{\min}^2(\bL_k)\right\}}{\max\left\{\underline{\tau},\lambda_{\min}(\obH_k)/\sigma_{\max}^2(\bL_k)\right\}}.
	\end{equation*}
	Hence, the condition number of $ \oobH_k $ is also bounded uniformly across $k>K_2$.
\end{proof}

\begin{rem}
	Theorem~\ref{thm:H_barbar_uniform_bound} is highly desired for the preconditioning step as  it ensures the numerical stability. Recall that the preconditioning step listed in Algorithm~\ref{algo:preconditioning} modifies the eigenvalues of $ \obH_k $ by modifying the eigenvalues of $ \bB_k $. This modification is desirable because  the eigenvalues of $ \oobH_k $ are controllable; i.e., a bound for $ \lambda_j(\oobH_k) $ uniformly for sufficiently large $k$ under a given size $ p $ can be obtained. The controlled condition number in Theorem~\ref{thm:H_barbar_uniform_bound} demarcates the original preconditioning procedure as in Eq. (\ref{eq:f_k_add}), which does not control the condition number of $\overline{\overline{\bH}}_k$. 
\end{rem}

\section{Numerical Studies}\label{sec:numerical}

In this section, we demonstrate the strength of the proposed algorithms by minimizing the skewed-quartic function \cite{spall2000adaptive} using   efficient 2SPSA/E2SPSA and training a neural network using    efficient 2SG.
\subsection{Skewed-Quartic Function}

We consider the following skewed-quartic function used in \cite{spall2000adaptive} to show the performance of the efficient 2SPSA/E2SPSA:
\begin{equation*}
\loss (\btheta) = \btheta^\transpose\bB^\transpose\bB\btheta+0.1
\sum_{i=1}^{p} (\bB\btheta)_i^3 +0.01 \sum_{i=1}^{p}
(\bB\btheta)_i^4 \,,
\end{equation*}
where $ (\cdot)_i $ is the $i$th component of the argument vector, and $ \bB $ is such that $ p\bB $ is an upper triangular matrix of all $1$'s. The additive noise in $ y(\cdot) $ is independent $\mathcal{N}\parenthesis{0,0.05^2}$; i.e., $ y(\btheta) = \loss (\btheta) + \varepsilon$, where $ \varepsilon \sim \mathcal{N}\parenthesis{0,0.05^2} $. \remove{The noise level $\upsigma=0.05$ is relatively large when compared to the values of the elements in $ L(\hbtheta_0) = $. }It is easy to check that that $ L(\btheta) $ is strictly convex with a unique minimizer $ \btheta^* = \bm{0} $ such that $L(\btheta^*) = 0$.

For the preconditioning step in the original 2SPSA/E2SPSA, we choose $ \oobH_k = \bm{m}_k(\obH_k) = (\obH_k\obH_k + 10^{-4}e^{-k} \bI)^{1/2} $, which satisfies the definition of $ \bm{m}_k(\cdot) $ in (\ref{eq:f_k_sqrtm}) as  $ \delta_k = 10^{-4}e^{-k} \to 0 $. In the efficient 2SPSA/E2SPSA, we choose $ \obLambda_k = \text{diag}(\bar{\lambda}_{k1}, ..., \bar{\lambda}_{kp}) $ with $ \bar{\lambda}_{kj} = \max \{10^{-4}$, $10^{-4}p\max_{1\leq i \leq p} |\lambda_{ki}|, |\lambda_{kj}|\} $ for all $ j $, which is consistent with the suggestion in \cite[pp. 118]{sorensen1977updating} and satisfies Theorem~\ref{thm:H_barbar_property}. To guard against unstable steps during the iteration process, a blocking step is added to reset $ \hbtheta_{k+1} $ to $ \hbtheta_k $ if $ \norm{\hbtheta_{k+1} - \hbtheta_k} \geq 1 $. We choose an initial value $ \hbtheta_0 = [1, 1, \dots, 1]^\transpose $.

We show three plots below. Figures \ref{fig:loss_2SPSA} and \ref{fig:loss_E2SPSA} illustrate how the efficient method here provides essentially the same solution in terms of the loss function values as the $O(p^3)$ methods in \cite{spall2000adaptive} and \cite{spall2009feedback} (2SPSA and feedback and weighting-based E2SPSA). Figure 4 illustrates how the $O(p^3)$ vs. $O(p^2)$ FLOPS-based cost in Table \ref{table:computational_complexity} above is manifested in overall runtimes.

Figure~\ref{fig:loss_2SPSA} plots the normalized loss function values $ [\loss(\hbtheta_k) - \loss (\btheta^*)] / [\loss(\hbtheta_0) - \loss (\btheta^*)] $ of the original 2SPSA and the efficient 2SPSA averaged over 20 independent replicates for $ p = 100 $ and  the number of iterations $ N = 50,000 $. Similar to the numerical studies in \cite{spall2009feedback}, the gain sequences of the two algorithms are chosen to be $ a_k = a/(A+k+1)^{0.602} $, $ c_k = \tilde{c}_k = c/(k+1)^{0.101} $, and $ w_k = w/(k+1)^{0.501} $, where $ a = 0.04, A = 1000, c = 0.05 $, and $ w = 0.01 $ following the standard guidelines in \cite{spall1998implementation}.

\begin{figure}[!htbp]
	\centering
	\includegraphics[width=\linewidth]{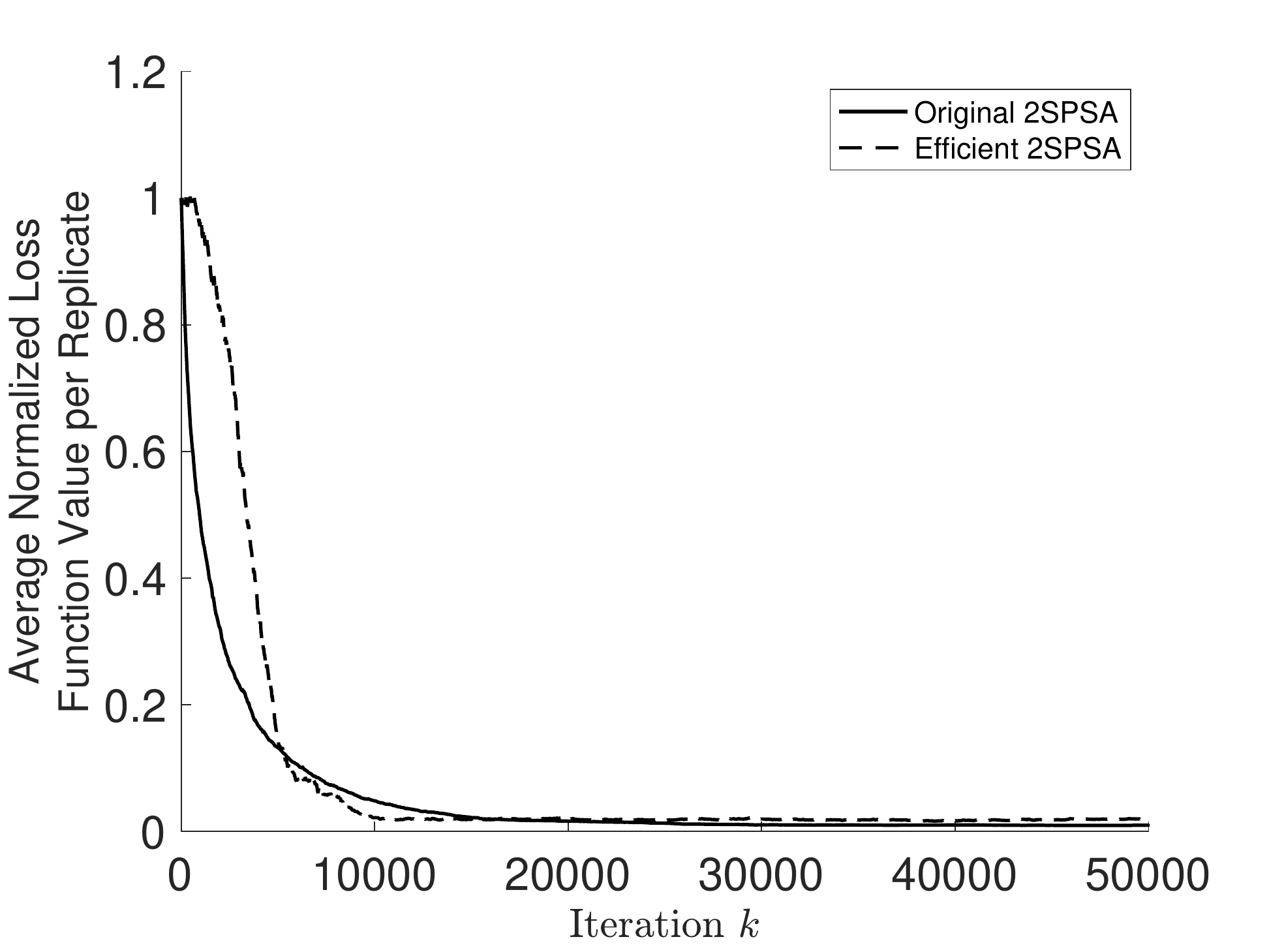}
	\caption{Similar performance of algorithms with respect to loss values (\emph{different} run times). Normalized terminal loss $ [\loss (\hbtheta_k) - \loss (\btheta^*)] / [\loss (\hbtheta_0) - \loss (\btheta^*)] $ of the original 2SPSA and the efficient 2SPSA averaged over 20 replicates for $ p = 100 $.}
	\label{fig:loss_2SPSA}
\end{figure}

Figure~\ref{fig:loss_E2SPSA} compares the normalized loss function values $ [\loss (\hbtheta_k) - \loss (\btheta^*)] / [\loss(\hbtheta_0) - \loss(\btheta^*)] $ of the standard E2SPSA and the efficient E2SPSA averaged over 10 independent replicates for $ p = 10 $ and number of iterations $ N = 10,000 $. The gain sequences of the two algorithms are chosen to have the form $ a_k = a/(A+k+1)^{0.602} $, $ c_k = \tilde{c}_k = c/(k+1)^{0.101} $, and $ w_k = w/(k+1)^{0.501} $, where $ a = 0.3, A = 50 $, and $ c = 0.05 $. The weight sequence $ w_k = \tilde{c}_k^2c_k^2/[\sum_{i=0}^{k}(\tilde{c}_i^2c_i^2 )]$ is set according to the optimal weight in \cite[Eq. (4.2)]{spall2009feedback}. 

\begin{figure}[!htbp]
	\centering
	\includegraphics[width=\linewidth]{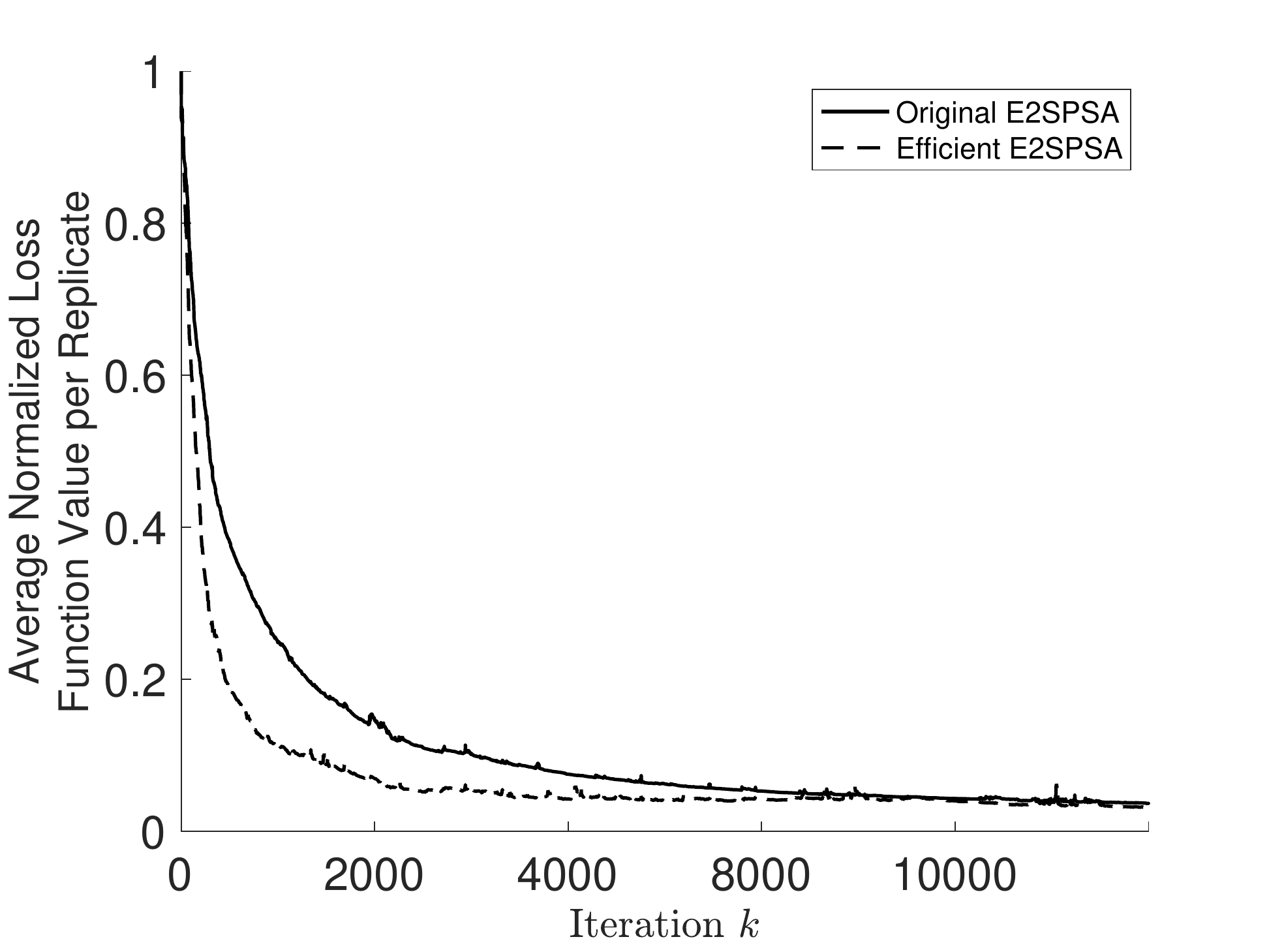}
	\caption{Similar performance of algorithms with respect to loss values (\emph{different} run times).  Normalized terminal loss $ [\loss(\hbtheta_k) - \loss(\btheta^*)] / [\loss(\hbtheta_0) - \loss(\btheta^*)] $ of the original E2SPSA and the efficient E2SPSA averaged over 10 replicates for $ p = 10 $. }
	\label{fig:loss_E2SPSA}
\end{figure}

In the above comparisons, the loss function decreases significantly for all the dimensions with only noisy loss function measurements available. We see that the two implementations of E2SPSA provide close to the same accuracy for $1000$ or more iterations, although at a computing cost difference of $O(p^2)$ versus $O(p^3)$. Note that the differences (across $k$) between the original 2SPSA and the efficient 2SPSA/E2SPSA in Figure~\ref{fig:loss_E2SPSA} can be made arbitrarily small by picking an appropriate $ \bm{m}_k(\cdot) $ (or equivalently $ \oobH_k $) in the original 2SPSA, although such a choice might be non-trivial.

\begin{figure}[!htbp]
	\centering
	\includegraphics[width=\linewidth]{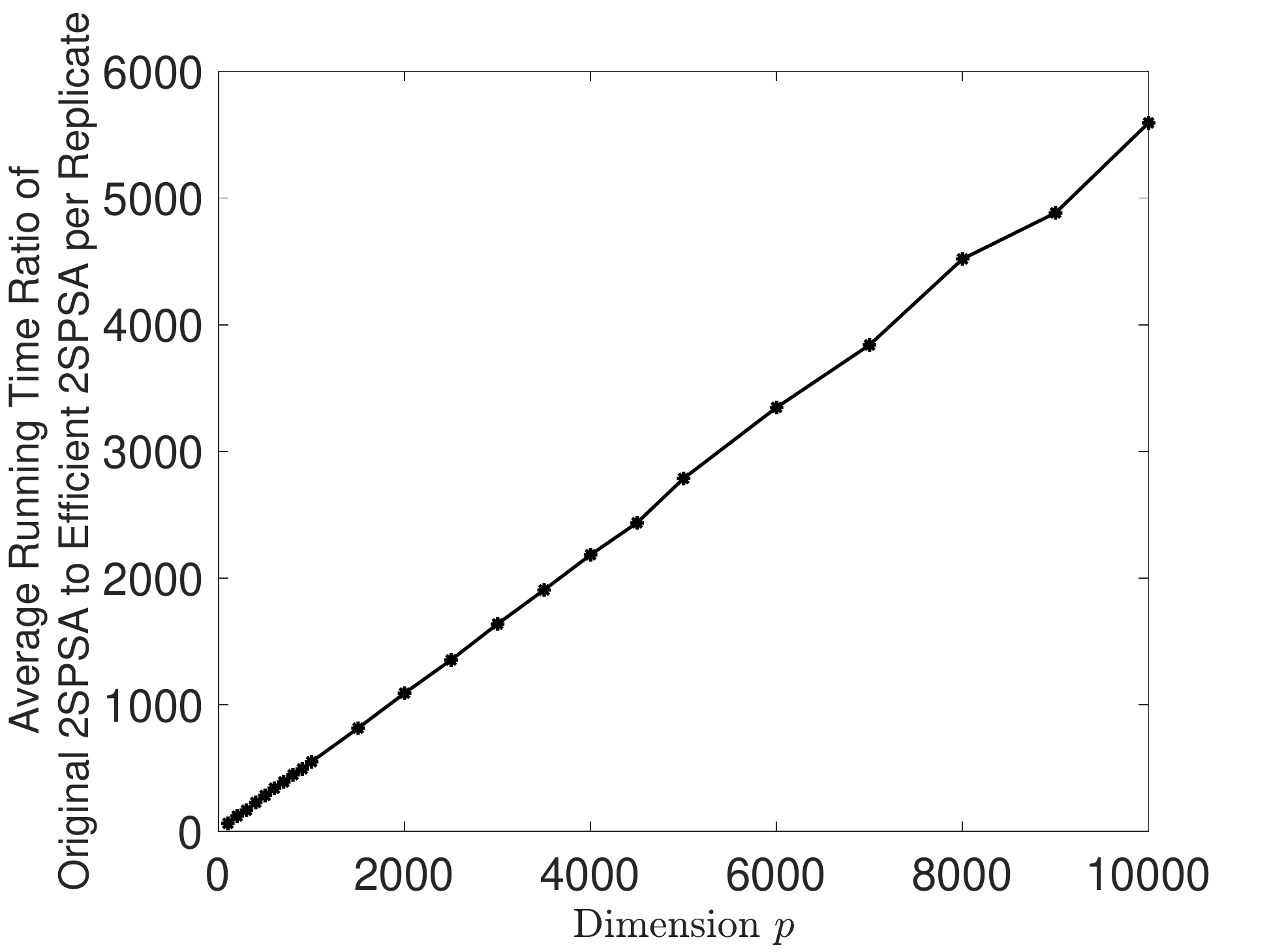}
	\caption{Running time ratio of the original 2SPSA to the efficient 2SPSA averaged over 10 replicates, where the same skewed-quartic loss function is used, and the total number of iterations is fixed at 10 for each run. The trend is close to the theoretical linear relationship as a function of dimension $p$.}
	\label{fig:time}
\end{figure}

To measure the computational time, Figure~\ref{fig:time} plots the running time (measured by the built-in \textsc{C++} function \texttt{clock()} with no input) ratio of the original 2SPSA to the efficient 2SPSA averaged over 10 independent replicates with dimension up to 10000. It visualizes the practicality of the efficient 2SPSA over the original 2SPSA. In terms of the general trend, the linear relationship between the running time ratio and the dimension number is consistent with the $ O(p^3) $ cost for the original 2SPSA and the $ O(p^2) $ cost for the efficient 2SPSA.  From Figure~\ref{fig:time}, it is clear that the computational benefit of the efficient 2SPSA is more apparent as the dimension $ p $ increases. The slope in Figure~\ref{fig:time} is roughly 0.56, which is consistent with  the theoretical FLOPs ratio of 2.35 in Table~\ref{table:computational_complexity}, when accounting for differences due to the storage costs and code efficiency. With a more dedicated programming language, it is expected that the running time ratio will be closer to  the  theoretical FLOPs ratio in Table~\ref{table:computational_complexity}.

\subsection{Real-Data Study: Airfoil Self-Noise Data Set}

In this subsection, we compare the efficient 2SG with the SGD and ADAM \cite{kingma2014adam}   in training a one-hidden-layer feed-forward neural network to predict sound levels over an airfoil.  Although there are many gradient-based   methods to train a neural network, we select SGD and  ADAM because they are popular and representative of algorithms within the  machine learning community. Comparison of  efficient 2SG and the two aforementioned  algorithms is appropriate as all of  them use the noisy gradient evaluations \emph{only}, despite their different forms. Aside from the application here, neural networks have been widely used as function approximators in the field of aerodynamics and aeroacoustics. Recent applications include airfoil design \cite{rai2000aerodynamic},\remove{ pressure prediction \cite{irani2011application},} and aerodynamic prediction \cite{perez2000prediction}.

The dataset used in this example is the NASA data of the  NACA 0012 airfoil self-noise data set \cite{brooks1981trailing, brooks1989airfoil}, which is also available on the UC Irvine Machine Learning Repository at  \url{https://archive.ics.uci.edu/ml/datasets/Airfoil+Self-Noise}. This NASA dataset is obtained from a series of aerodynamic and acoustic tests of two and three-dimensional airfoil blade sections conducted in an anechoic wind tunnel. The inputs contain five variables: frequency (in Hertz); angle of attack (in degrees, not in radians); chord length (in meters); free-stream velocity (in meters per second); and suction side displacement thickness (in meters). The output contains the scaled sound pressure level (in decibels). Readers may refer to \cite{brooks1989airfoil} and \cite[Sect. 3]{errasquin2009airfoil} for further details.

Now that  the number of samples is  $ n = 1503 $, we   fit the dataset using a one-hidden-layer neural network with 150 hidden neurons and with sigmoid activating functions. Other choices in  neural network structures, that use a different number of layers or different activation functions, have been implemented in \cite{errasquin2009airfoil}. Here, we use a neural network with a greater number of neurons than in \cite{errasquin2009airfoil}   to demonstrate the strength of the efficient 2SG in high-dimensional problems. The value of $ p $ is 1051, calculated as $ 5 \times 150 $ weights and $ 150 $ bias parameters for the hidden neurons along with 150 weights and 1 bias parameters for the output neuron.

Following the principles in \cite{wilson2003general}, we train the neural network in an online manner, where only one training sample is evaluated during each iteration. Denote the dataset as $ \{(y_i, \bx_i)\}_{i=1}^n $ and the parameters in the neural network as $ \btheta $. The loss function is chosen to be the ERF; i.e., $ \loss(\btheta) = (1/n) \sum_{i=1}^n (y_i - \hat{y}_i)^2 $, where $ \hat{y}_i $ is the neural network output based on input $ \bx_i $ and parameter $ \btheta $. Consistent with the online training of an ERF in machine learning, the loss function based on that one training sample can be deemed as a noisy measurement of the loss function based on the entire dataset.

We implement SGD and ADAM with 10 epochs, each corresponding  to 1503 iterations (one iteration per data point), resulting in a total   of 15030 iterations.   The gain sequence is  chosen to be $ a_k = a / (k + 1 + A)^\alpha $ and  $ A = 1503$ being 10\% of the total number of iterations with  $ \alpha = 1 $ following \cite[pp. 113\textendash 114]{spall2005introduction}. After tuning for optimal performance, we choose $ a = 1 $ for SGD and ADAM \cite{kingma2014adam} . Other hyper-parameters for ADAM are determined from the default settings in \cite{kingma2014adam}. There is no ``re-setting'' of $ a_k $ imposed at the beginning of each epoch   so that the gain sequence goes down consecutively across iterations and epochs. The initial value $ \hbtheta_0 = \bm{0} $. Recall that efficient 2SG requires  three back-propagations per iteration, where SGD and ADAM only require  one   per iteration. Therefore, for fair comparison, we implement the efficient 2SG under two different scenarios: (1) serial computing, and (2) concurrent computing.

Within each iteration of efficient 2SG, the three gradient measurements, $ \bY_k(\hbtheta_k),\bY_k(\hbtheta_k +c_k\bDelta_k) $ and $ \bY_k(\hbtheta_k-c_k\bDelta_k) $ can be computed simultaneously in as much as  they do not rely on each other. Using this concurrent implementation, the time spent in back-propagation can be reduced to one-third of the original time. All the remaining steps are unchanged.  Although the efficient 2SG takes time in performing Algorithm~\ref{algo:preconditioning}, numerical studies indicate that the majority of the time is spent on   back-propagation. Therefore, under the concurrent implementation, the efficient 2SG has roughly the same running time per iteration as SGD and ADAM. Figure~\ref{fig:real_data_log_MSE_per_iteration} shows the value of   ERF under the concurrent implementation. In the efficient 2SG, the gain sequences are chosen to be $ a_k = a/(A+k+1)^\alpha $, $ w_k = 1/(k+1) $, and $ c_k = c/(k+1)^\gamma $ with $A=1503, \alpha=1 $ and $ \gamma = 1/6 $ following \cite{spall1998implementation}. Other parameters of $ a = 0.1 $ and $ c = 0.05 $ are tuned for optimal performance. The matrix $ \obLambda_k $ is computed the same as in the skewed-quartic function above. For better practical performance, training data is normalized to the range $ [0,1] $. As  all the inputs and outputs are positive,   normalization is simply performed  by dividing the data by their corresponding maximums. Figure~\ref{fig:real_data_log_MSE_per_iteration} shows that the efficient 2SG converges much quicker and obtains a better terminal value. One explanation for this phenomenon is that the Hessian information helps the speed of convergence, similar to the benefits of Newton-Raphson relative to the gradient-descent method.

\begin{figure}[!htbp]
	\centering
	\includegraphics[width=\linewidth]{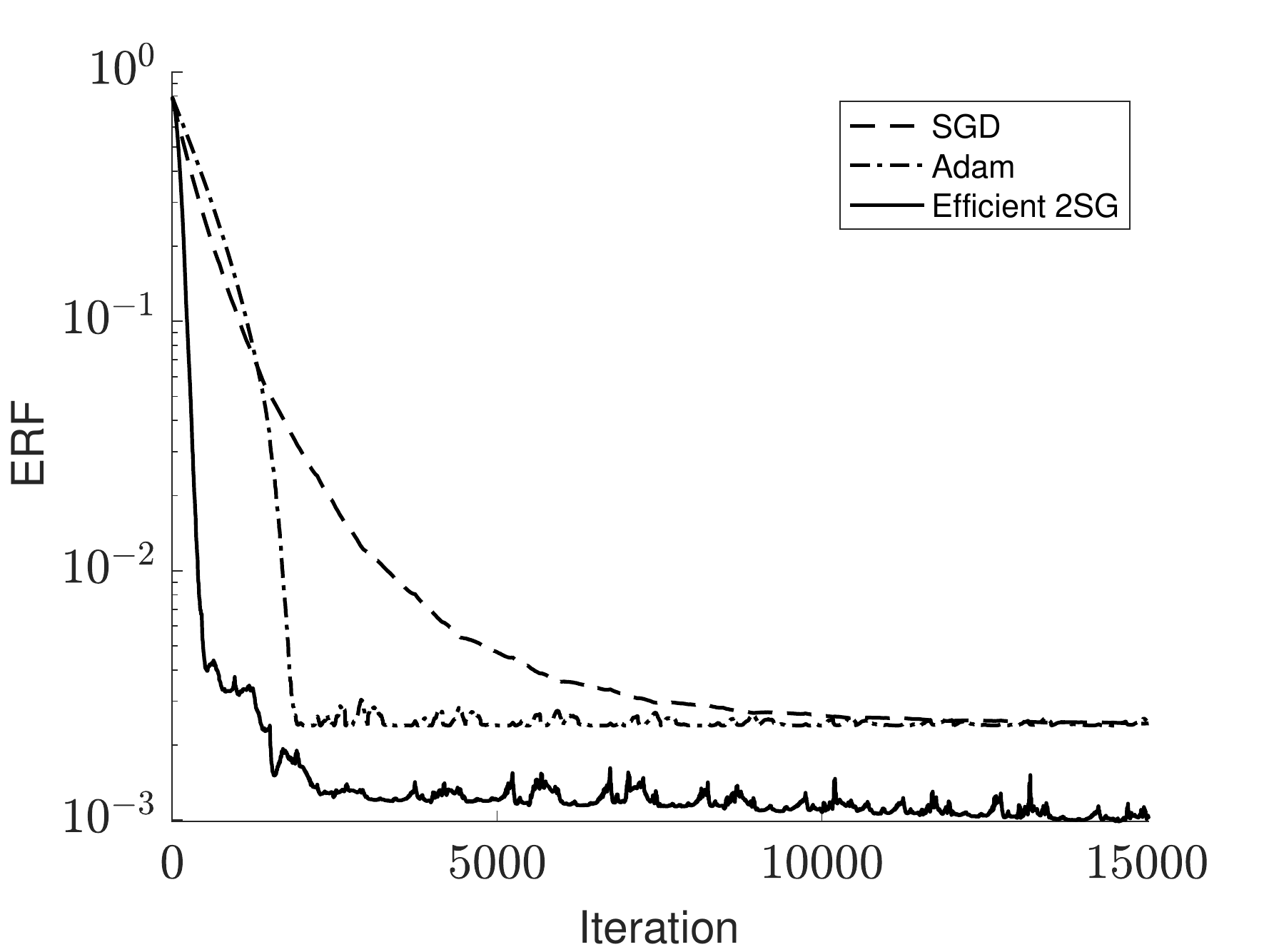}
	\caption{ERF of training samples in SGD, ADAM, and the efficient 2SG under concurrent implementation.}
	\label{fig:real_data_log_MSE_per_iteration}
\end{figure}

Figure~\ref{fig:real_data_log_MSE_per_iteration_adjusted} compares the ERF of the two algorithms in terms of the number of gradient evaluations. Note that each iteration of SGD and ADAM takes one gradient evaluation, while the efficient 2SG necessitates  three. This comparison is suitable for the non-concurrent implementation because one iteration of the efficient 2SG has roughly the cost of three iterations of the SGD. It is shown in Figure~\ref{fig:real_data_log_MSE_per_iteration_adjusted} that the efficient 2SG still outperforms SGD and ADAM even without any concurrent implementation. There is less than a 7\% difference in running time among SGD, ADAM, and  the efficient 2SG under the concurrent implementation. \remove{For running time, SGD takes 2068 seconds, ADAM takes 2105 second and the efficient 2SG takes 2211 seconds under the concurrent implementation.}

\begin{figure}[!htbp]
	\centering
	\includegraphics[width=\linewidth]{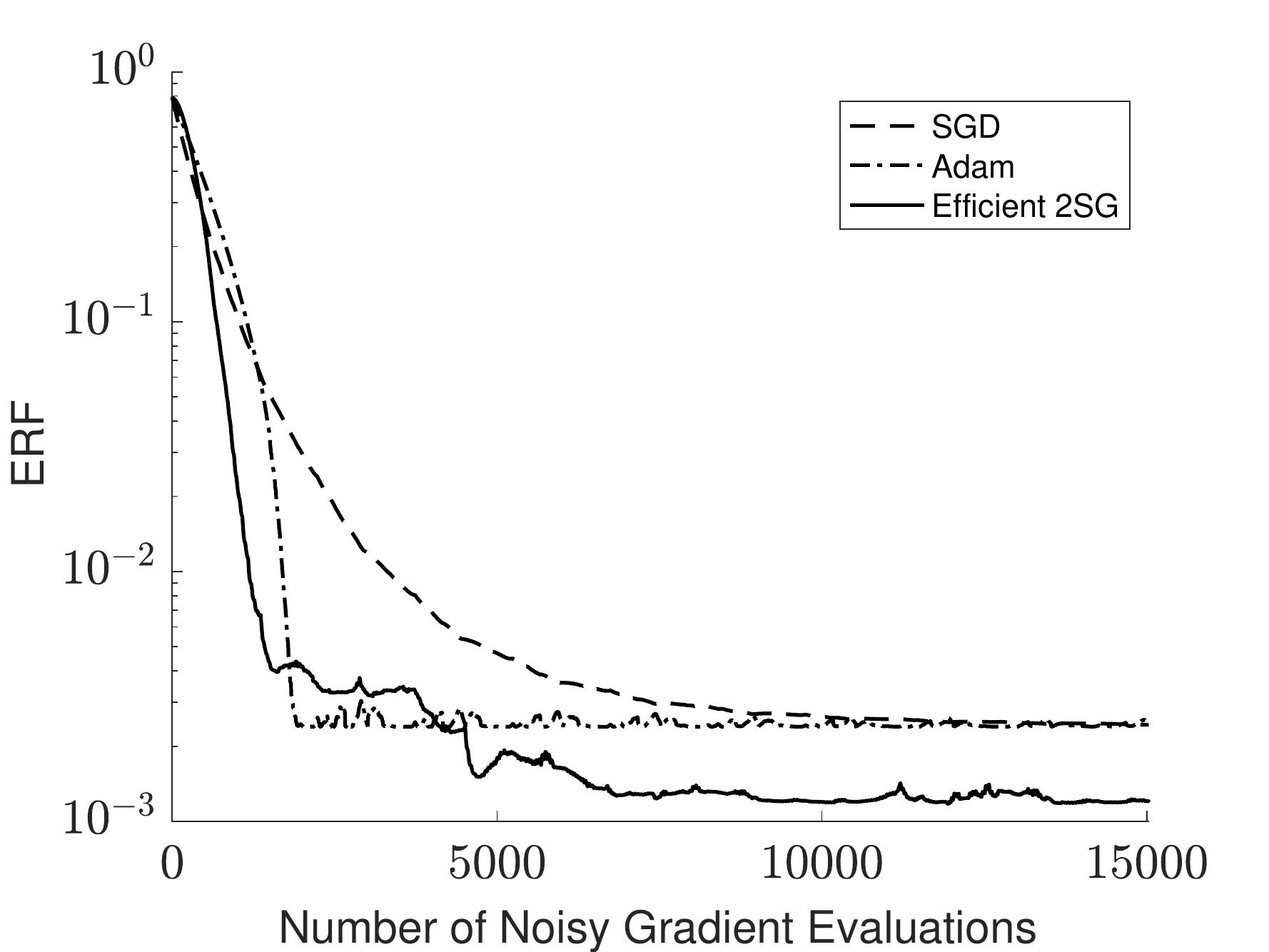}
	\caption{ERF of training samples in SGD, ADAM, and the efficient 2SG per gradient evaluation under \emph{serial} (non-concurrent) computing. SGD and ADAM have  three times the number of iterations of 2SG.}
	\label{fig:real_data_log_MSE_per_iteration_adjusted}
\end{figure}

\section{Practical Issues and Concluding Remarks}\label{sec:discussion}

Let us discuss two practical questions regarding   Algorithm~\ref{algo:2SP}. i) What is the difference between the standard adaptive SPSA-based method and the proposed algorithm if $ \obB_k $ (or $ \obH_k $) is sufficiently positive definite? ii) How to recover $ \obH_k $ at any $ k $?

In the ideal case, if $ \bB_k $ (or $ \obH_k $) is assumed to always be positive definite, the preconditioning step becomes unnecessary and we can directly set the symmetric indefinite factorization of $ \obH_k $ as the symmetric indefinite factorization of $ \oobH_k $; i.e., $ \bLambda_k = \obLambda_k $. In this scenario, the proposed method is identical to the original 2SPSA. However, because of the symmetric indefinite factorization, the overall computational cost remains at $ O(p^2) $ as in Table~\ref{table:computational_complexity}, and it is still favorable relative to the original 2SPSA, which incurs a computational cost of $ O(p^3) $ due to the Gaussian elimination of $ \oobH_k $ in computing the descent direction $ \bd_k $. As mentioned in Section \ref{subsect:p3cost}, however, \cite{rastogi2016efficient} uses the matrix inversion lemma to show that the computational cost can be reduced to $ O(p^2) $ as well. Compared  with \cite{rastogi2016efficient}, which directly updates the matrix $ \obH_k^{-1} $ using the matrix inverse lemma, our proposed method has more control over the eigenvalues of $ \obH_k $ and performs well even when $ \obH_k $ is ill-conditioned.

The second aspect is that both $ \obH_k $ and $ \oobH_k $ are never explicitly computed during each iteration. By maintaining the corresponding factorization, we avoid   expensive matrix multiplications and gain a much faster way to achieve second-order convergence. However, whenever needed, either $ \obH_k $ or $ \oobH_k $ can be directly computed from the factorizations at a cost of $ O(p^3) $. See Subsection~\ref{subsec:algo}.

To the best of our knowledge, 2SPSA, 2SG, E2SPSA, and E2SG are the fastest possible second-order stochastic Newton-type algorithms based on the estimations of the Hessian matrix from either noisy loss measurements or noisy gradient measurements.
This paper shows how symmetric indefinite matrix factorization may be used to reduce the per-iteration FLOPs of the algorithms from $ O(p^3) $ to $ O(p^2) $. The approach guarantees both a positive definite estimation of the Hessian matrix (``preconditioned") as well as a valid stochastic Newton-type update of the parameter vector, both in $ O(p^2) $. This implementation scheme serves to improve practical performance in high-dimensional problems, such as deep learning. In our proposed scheme,   formal convergence and convergence rates for $\hbtheta_k$ and $\obH_k$ are maintained,   following the prior work \cite{spall2000adaptive,spall2009feedback}. 

Apart from   the theoretical guarantee, numerical studies show that the efficient implementation of second-order SP methods provides a promising convergence rate at a tolerable computing cost  compared with the  stochastic gradient descent method. Note that second-order methods do not provide global convergence in general, and, therefore, the second-order method is recommended to be implemented after reaching the  vicinity of the optimizer.

Overall, our proposed scheme of second-order SA methods has values in high-dimensional optimization and learning problems. In that  a key step of this work is the symmetric indefinite factorization, the proposed algorithm might be useful for other algorithms whenever updating an estimated Hessian matrix is involved, such as second-order random directions stochastic approximation \cite{prashanth2017adaptive}, natural gradient descent \cite{amari2000adaptive}, and stochastic variants of the BFGS quasi-Newton methods \cite{schraudolph2007stochastic}. In all these  methods, instead of directly updating the matrix of interest (usually the Hessian matrix), one might consider updating its corresponding symmetric indefinite factorization in the manner of this paper in order to speed up any matrix inverse operation or matrix eigenvalue modification. Overall, the proposed approach provides a practical second-order method that can be used following first-order or other methods that can place  the iterate in at least the vicinity of the solution.

\end{appendices}

\addcontentsline{toc}{chapter}{Bibliography}
\bibliographystyle{apalike}
\bibliography{thesis}  

\begin{thebibliography}{}

\bibitem[Alekseev, 1961]{alekseev1961estimate}
Alekseev, V.~M. (1961).
\newblock An estimate for the perturbations of the solutions of ordinary
  differential equations.
\newblock {\em Westnik Moskov Unn. Ser}, 1:28--36.

\bibitem[Amari et~al., 2000]{amari2000adaptive}
Amari, S.-I., Park, H., and Fukumizu, K. (2000).
\newblock Adaptive method of realizing natural gradient learning for multilayer
  perceptrons.
\newblock {\em Neural Computation}, 12(6):1399--1409.

\bibitem[Bamieh and Giarre, 2002]{bamieh2002identification}
Bamieh, B. and Giarre, L. (2002).
\newblock Identification of linear parameter varying models.
\newblock {\em International Journal of Robust and Nonlinear Control},
  12(9):841--853.

\bibitem[Basseville and Nikiforov, 1993]{basseville1993detection}
Basseville, M. and Nikiforov, I.~V. (1993).
\newblock {\em Detection of Abrupt Changes: Theory and Application}, volume
  104.
\newblock Prentice Hall Englewood Cliffs.

\bibitem[Bellman, 1953]{bellman1953stability}
Bellman, R.~E. (1953).
\newblock {\em Stability Theory of Differential Equations}.
\newblock McGraw-Hill.

\bibitem[Benveniste et~al., 2012]{benveniste2012adaptive}
Benveniste, A., M{\'e}tivier, M., and Priouret, P. (2012).
\newblock {\em Adaptive Algorithms and Stochastic Approximations}, volume~22.
\newblock Springer Science \& Business Media.

\bibitem[Benveniste and Ruget, 1982]{benveniste1982measure}
Benveniste, A. and Ruget, G. (1982).
\newblock A measure of the tracking capability of recursive stochastic
  algorithms with constant gains.
\newblock {\em IEEE Transactions on Automatic Control}, 27(3):639--649.

\bibitem[Bertsekas, 2009]{bertsekas2009convex}
Bertsekas, D.~P. (2009).
\newblock {\em Convex Optimization Theory}.
\newblock Athena Scientific Belmont.

\bibitem[Besbes et~al., 2015]{besbes2015non}
Besbes, O., Gur, Y., and Zeevi, A. (2015).
\newblock Non-stationary stochastic optimization.
\newblock {\em Operations Research}, 63(5):1227--1244.

\bibitem[Bharath and Borkar, 1999]{bharath1999stochastic}
Bharath, B. and Borkar, V.~S. (1999).
\newblock Stochastic approximation algorithms: Overview and recent trends.
\newblock {\em Sadhana}, 24(4--5):425--452.

\bibitem[Bhatnagar et~al., 2013]{bhatnagar2012stochastic}
Bhatnagar, S., Prasad, H.~L., and Prashanth, L.~A. (2013).
\newblock {\em Stochastic Recursive Algorithms for Optimization: Simultaneous
  Perturbation Methods}, volume 434.
\newblock Springer.

\bibitem[Bickel and Doksum, 2007]{bickel2007mathematical}
Bickel, P.~J. and Doksum, K.~A. (2007).
\newblock {\em Mathematical Statistics: Basic Ideas And Selected Topics},
  volume~1.
\newblock CRC Press.

\bibitem[Billingsley, 1968]{billingsley1968convergence}
Billingsley, P. (1968).
\newblock {\em Convergence of Probability Measures}.
\newblock Wiley.

\bibitem[Blakney and Zhu, 2019]{blakney2019compare}
Blakney, A. and Zhu, J. (2019).
\newblock A comparison of the finite difference and simultaneous perturbation
  gradient estimation methods with noisy function evaluations.
\newblock In {\em Proceedings of the 53rd Conference on Information Science and
  Systems}, Baltimore, MD. IEEE.

\bibitem[Blum, 1954]{blum1954approximation}
Blum, J.~R. (1954).
\newblock Approximation methods which converge with probability one.
\newblock {\em The Annals of Mathematical Statistics}, 25(2):382--386.

\bibitem[Brauer, 1966]{brauer1966perturbations}
Brauer, F. (1966).
\newblock Perturbations of nonlinear systems of differential equations.
\newblock {\em Journal of Mathematical Analysis and Applications},
  14(2):198--206.

\bibitem[Brooks and Hodgson, 1981]{brooks1981trailing}
Brooks, T.~F. and Hodgson, T.~H. (1981).
\newblock Trailing edge noise prediction from measured surface pressures.
\newblock {\em Journal of Sound and Vibration}, 78(1):69--117.

\bibitem[Brooks et~al., 1989]{brooks1989airfoil}
Brooks, T.~F., Pope, D.~S., and Marcolini, M.~A. (1989).
\newblock Airfoil self-noise and prediction.
\newblock Technical Report NASA-RP-1218, L-16528, NAS 1.61:1218, NASA Langley
  Research Center, Hampton, VA.

\bibitem[Bunch and Kaufman, 1977]{bunch1977some}
Bunch, J.~R. and Kaufman, L. (1977).
\newblock Some stable methods for calculating inertia and solving symmetric
  linear systems.
\newblock {\em Mathematics of Computation}, 31(137):163--179.

\bibitem[Bunch and Parlett, 1971]{bunch1971direct}
Bunch, J.~R. and Parlett, B.~N. (1971).
\newblock Direct methods for solving symmetric indefinite systems of linear
  equations.
\newblock {\em SIAM Journal on Numerical Analysis}, 8(4):639--655.

\bibitem[Burrough et~al., 2015]{burrough2015principles}
Burrough, P.~A., McDonnell, R., McDonnell, R.~A., and Lloyd, C.~D. (2015).
\newblock {\em Principles of Geographical Information Systems}.
\newblock Oxford university press.

\bibitem[Butcher, 2016]{butcher2016numerical}
Butcher, J.~C. (2016).
\newblock {\em Numerical Methods for Ordinary Differential Equations}.
\newblock John Wiley \& Sons.

\bibitem[Byrd et~al., 2016]{byrd2016stochastic}
Byrd, R.~H., Hansen, S.~L., Nocedal, J., and Singer, Y. (2016).
\newblock A stochastic quasi-{N}ewton method for large-scale optimization.
\newblock {\em SIAM Journal on Optimization}, 26(2):1008--1031.

\bibitem[Deadman et~al., 2012]{deadman2012blocked}
Deadman, E., Higham, N.~J., and Ralha, R. (2012).
\newblock Blocked {S}chur algorithms for computing the matrix square root.
\newblock In {\em Proceedings of the International Workshop on Applied Parallel
  Computing}, pages 171--182, Helsinki, Finland.

\bibitem[D{\'e}fossez and Bach, 2015]{defossez2015averaged}
D{\'e}fossez, A. and Bach, F.~R. (2015).
\newblock Averaged least-mean-squares: Bias-variance trade-offs and optimal
  sampling distributions.
\newblock In {\em 18th International Conference on Artificial Intelligence and
  Statistics}, volume~38. Journal of Machine Learning Research.

\bibitem[Delyon and Juditsky, 1995]{delyon1995asymptotical}
Delyon, B. and Juditsky, A. (1995).
\newblock Asymptotical study of parameter tracking algorithms.
\newblock {\em SIAM Journal on Control and Optimization}, 33(1):323--345.

\bibitem[Derevitskii and Fradkov, 1974]{derevitskii19742}
Derevitskii, D. and Fradkov, A. (1974).
\newblock Two models for analysing the dynamics of adaptation algorithms.
\newblock {\em Automation and Remote Control}, 35(1):59--67.

\bibitem[Dieuleveut et~al., 2017]{dieuleveut2017bridging}
Dieuleveut, A., Durmus, A., and Bach, F. (2017).
\newblock Bridging the gap between constant step size stochastic gradient
  descent and markov chains.
\newblock {\em \emph{Available at \url{https://arxiv.org/abs/1707.06386}}}.

\bibitem[Diniz, 2008]{ps2008adaptive}
Diniz, P.~S. (2008).
\newblock {\em Adaptive Filtering: Algorithms and Practical Implementations}.
\newblock Springer.

\bibitem[Dongarra and Sorensen, 1987]{dongarra1987fully}
Dongarra, J.~J. and Sorensen, D.~C. (1987).
\newblock A fully parallel algorithm for the symmetric eigenvalue problem.
\newblock {\em SIAM Journal on Scientific and Statistical Computing},
  8(2):139--154.

\bibitem[Ermoliev, 1969]{ermoliev1969on}
Ermoliev, Y. (1969).
\newblock On the method of generalized stochastic gradients and quasi-fejer
  sequences.
\newblock {\em Cybernetics}, 5:208--220.

\bibitem[Errasquin, 2009]{errasquin2009airfoil}
Errasquin, L. (2009).
\newblock {\em Airfoil self-noise prediction using neural networks for wind
  turbines}.
\newblock PhD thesis, Virginia Polytechnic Institute and State University,
  Blacksburg, Virginia.

\bibitem[Ethier and Kurtz, 2005]{ethier2009markov}
Ethier, S.~N. and Kurtz, T.~G. (2005).
\newblock {\em Markov Processes: Characterization and Convergence}, volume 282.
\newblock John Wiley \& Sons.

\bibitem[Eweda and Macchi, 1985]{eweda1985tracking}
Eweda, E. and Macchi, O. (1985).
\newblock Tracking error bounds of adaptive nonstationary filtering.
\newblock {\em Automatica}, 21(3):293--302.

\bibitem[Fabian, 1968]{fabian1968asymptotic}
Fabian, V. (1968).
\newblock On asymptotic normality in stochastic approximation.
\newblock {\em The Annals of Mathematical Statistics}, 39(4):1327--1332.

\bibitem[Fabian, 1971]{fabian1971stochastic}
Fabian, V. (1971).
\newblock Stochastic approximation.
\newblock In {\em Optimizing Methods in Statistics}, pages 439--470. Elsevier.

\bibitem[Farden, 1981]{farden1981tracking}
Farden, D. (1981).
\newblock Tracking properties of adaptive signal processing algorithms.
\newblock {\em IEEE Transactions on Acoustics, Speech, and Signal Processing},
  29(3):439--446.

\bibitem[Friedman and Rafsky, 1979]{friedman1979multivariate}
Friedman, J.~H. and Rafsky, L.~C. (1979).
\newblock Multivariate generalizations of the {W}ald-{W}olfowitz and {S}mirnov
  two-sample tests.
\newblock {\em The Annals of Statistics}, pages 697--717.

\bibitem[George et~al., 1986]{george1986parallel}
George, A., Heath, M.~T., and Liu, J. (1986).
\newblock Parallel {C}holesky factorization on a shared-memory multiprocessor.
\newblock {\em Linear Algebra and Its Applications}, 77:165--187.

\bibitem[Grenander and Miller, 1994]{grenander1994rep}
Grenander, U. and Miller, M.~I. (1994).
\newblock Representations of knowledge in complex systems.
\newblock {\em Journal of the Royal Statistical Society: Series B
  (Methodological)}, 56(4):549--581.

\bibitem[Guckenheimer and Holmes, 1983]{gucken1983nonlinear}
Guckenheimer, J. and Holmes, P. (1983).
\newblock {\em Nonlinear Oscillations, Dynamical Systems, and Bifurcations of
  Vector Fields}.
\newblock Springer-Verlag, Berlin and New York.

\bibitem[Gunnarsson and Ljung, 1989]{gunnarsson1989frequency}
Gunnarsson, S. and Ljung, L. (1989).
\newblock Frequency domain tracking characteristics of adaptive algorithms.
\newblock {\em IEEE Transactions on Acoustics, Speech, and Signal Processing},
  37(7):1072--1089.

\bibitem[Guo, 1990]{guo1990estimating}
Guo, L. (1990).
\newblock Estimating time-varying parameters by the {K}alman filter based
  algorithm: stability and convergence.
\newblock {\em IEEE Transactions on Automatic Control}, 35(2):141--147.

\bibitem[Guo and Ljung, 1995]{guo1995performance}
Guo, L. and Ljung, L. (1995).
\newblock Performance analysis of general tracking algorithms.
\newblock {\em IEEE Transactions on Automatic Control}, 40(8):1388--1402.

\bibitem[Gustafsson, 2000]{gustafsson2000adaptive}
Gustafsson, F. (2000).
\newblock {\em Adaptive Filtering and Change Detection}, volume~1.
\newblock Wiley.

\bibitem[Hall and Ross, 1981]{hall1981incoherent}
Hall, P.~L. and Ross, D.~K. (1981).
\newblock Incoherent neutron scattering functions for random jump diffusion in
  bounded and infinite media.
\newblock {\em Molecular Physics}, 42(3):673--682.

\bibitem[Hazan et~al., 2008]{hazan2008adaptive}
Hazan, E., Rakhlin, A., and Bartlett, P.~L. (2008).
\newblock Adaptive online gradient descent.
\newblock In {\em Advances in Neural Information Processing Systems}, pages
  65--72.

\bibitem[Higham, 1987]{higham1987computing}
Higham, N.~J. (1987).
\newblock Computing real square roots of a real matrix.
\newblock {\em Linear Algebra and Its Applications}, 88:405--430.

\bibitem[Horn and Johnson, 1990]{horn1990matrix}
Horn, R.~A. and Johnson, C.~R. (1990).
\newblock {\em Matrix Analysis}.
\newblock Cambridge University Press.

\bibitem[Johansen, 1980]{johansen1980welch}
Johansen, S. (1980).
\newblock The {W}elch-{J}ames approximation to the distribution of the residual
  sum of squares in a weighted linear regression.
\newblock {\em Biometrika}, 67(1):85--92.

\bibitem[Johnson and Zhang, 2013]{johnson2013accelerating}
Johnson, R. and Zhang, T. (2013).
\newblock Accelerating stochastic gradient descent using predictive variance
  reduction.
\newblock In {\em Advances in Neural Information Processing Systems}, pages
  315--323.

\bibitem[Joslin and Heunis, 2000]{joslin2000law}
Joslin, J.~A. and Heunis, A.~J. (2000).
\newblock Law of the iterated logarithm for a constant-gain linear stochastic
  gradient algorithm.
\newblock {\em SIAM Journal on Control and Optimization}, 39(2):533--570.

\bibitem[Kesten, 1958]{kesten1958accelerated}
Kesten, H. (1958).
\newblock Accelerated stochastic approximation.
\newblock {\em The Annals of Mathematical Statistics}, 29(1):41--59.

\bibitem[Khalil, 2002]{khalil2002noninear}
Khalil, H.~K. (2002).
\newblock {\em Noninear Systems}.
\newblock Prentice-Hall, New Jersey, 3rd edition.

\bibitem[Kiefer and Wolfowitz, 1952]{kiefer1952stochastic}
Kiefer, J. and Wolfowitz, J. (1952).
\newblock Stochastic estimation of the maximum of a regression function.
\newblock {\em The Annals of Mathematical Statistics}, 23(3):462--466.

\bibitem[Kifer et~al., 2004]{kifer2004detecting}
Kifer, D., Ben-David, S., and Gehrke, J. (2004).
\newblock Detecting change in data streams.
\newblock In {\em Proceedings of the Thirtieth international conference on Very
  large data bases-Volume 30}, pages 180--191. VLDB Endowment.

\bibitem[Kim et~al., 2005]{kim2005tracking}
Kim, W., Mechitov, K., Choi, J.~Y., and Ham, S. (2005).
\newblock On target tracking with binary proximity sensors.
\newblock In {\em Proceedings of the International Symposium on Information
  Processing in Sensor Netowrks}, pages 301--308.

\bibitem[Kingma and Ba, 2015]{kingma2014adam}
Kingma, D.~P. and Ba, J. (2015).
\newblock Adam: A method for stochastic optimization.
\newblock In {\em The International Conference on Learning Representations}.

\bibitem[Krishnamoorthy and Yu, 2004]{krishnamoorthy2004modified}
Krishnamoorthy, K. and Yu, J. (2004).
\newblock Modified {N}el and {V}an der {M}erwe test for the multivariate
  {B}ehrens--{F}isher problem.
\newblock {\em Statistics \& Probability Letters}, 66(2):161--169.

\bibitem[Kurtz, 1981]{kurtz1981approximation}
Kurtz, T.~G. (1981).
\newblock {\em Approximation of Population Processes}, volume~36.
\newblock SIAM.

\bibitem[Kushner, 1984]{kushner1984approximation}
Kushner, H.~J. (1984).
\newblock {\em Approximation and Weak Convergence Methods for Random Processes
  with Applications to Stochastic Systems Theory}.
\newblock MIT press.

\bibitem[Kushner and Clark, 1978]{kushner1978stochastic}
Kushner, H.~J. and Clark, D.~S. (1978).
\newblock {\em Stochastic Approximation Methods for Constrained and
  Unconstrained Systems}, volume~26.
\newblock Springer Science \& Business Media.

\bibitem[Kushner and Huang, 1981]{kushner1981asymptotic}
Kushner, H.~J. and Huang, H. (1981).
\newblock Asymptotic properties of stochastic approximations with constant
  coefficients.
\newblock {\em SIAM Journal on Control and Optimization}, 19(1):87--105.

\bibitem[Kushner and Yang, 1995]{kushner1995analysis}
Kushner, H.~J. and Yang, J. (1995).
\newblock Analysis of adaptive step-size {SA} algorithms for parameter
  tracking.
\newblock {\em IEEE Transactions on Automatic Control}, 40(8):1403--1410.

\bibitem[Kushner and Yin, 2003]{kushner2003stochastic}
Kushner, H.~J. and Yin, G.~G. (2003).
\newblock {\em Stochastic Approximation and Recursive Algorithms and
  Applications}.
\newblock Springer Science \& Business Media.

\bibitem[Lee et~al., 2015]{lee2015multirobot}
Lee, S.~G., Diaz-Mercado, Y., and Egerstedt, M. (2015).
\newblock Multirobot control using time-varying density functions.
\newblock {\em IEEE Transactions on Robotics}, 31(2):489--493.

\bibitem[Lepage, 1971]{lepage1971combination}
Lepage, Y. (1971).
\newblock A combination of {W}ilcoxon's and {A}nsari-{B}radley's statistics.
\newblock {\em Biometrika}, 58(1):213--217.

\bibitem[Li, 2018]{li2018preconditioned}
Li, X.-L. (2018).
\newblock Preconditioned stochastic gradient descent.
\newblock {\em IEEE Transactions on Neural Networks and Learning Systems},
  29(5):1454--1466.

\bibitem[Ljung, 1977a]{ljung1977analysis}
Ljung, L. (1977a).
\newblock Analysis of recursive stochastic algorithms.
\newblock {\em IEEE Transactions on Automatic Control}, 22(4):551--575.

\bibitem[Ljung, 1977b]{ljung1977positive}
Ljung, L. (1977b).
\newblock On positive real transfer functions and the convergence of some
  recursive schemes.
\newblock {\em IEEE Transactions on Automatic Control}, 22(4):539--551.

\bibitem[Ljung and Gunnarsson, 1990]{ljung1990adaptation}
Ljung, L. and Gunnarsson, S. (1990).
\newblock Adaptation and tracking in system identification—a survey.
\newblock {\em Automatica}, 26(1):7--21.

\bibitem[Ljung and Priouret, 1991]{ljung1991result}
Ljung, L. and Priouret, P. (1991).
\newblock A result on the mean square error obtained using general tracking
  algorithms.
\newblock {\em International Journal of Adaptive Control and Signal
  Processing}, 5(4):231--248.

\bibitem[Ljung and S{\"o}derstr{\"o}m, 1983]{ljung1983theory}
Ljung, L. and S{\"o}derstr{\"o}m, T. (1983).
\newblock {\em Theory and Practice of Recursive Identification}.
\newblock MIT press.

\bibitem[Macchi, 1986]{macchi1986optimization}
Macchi, O. (1986).
\newblock Optimization of adaptive identification for time-varying filters.
\newblock {\em IEEE Transactions on Automatic Control}, 31(3):283--287.

\bibitem[Martens and Grosse, 2015]{martens2015optimizing}
Martens, J. and Grosse, R. (2015).
\newblock Optimizing neural networks with {K}ronecker-factored approximate
  curvature.
\newblock In {\em International conference on machine learning}, pages
  2408--2417.

\bibitem[Maryak et~al., 1995]{maryak1995uncertainties}
Maryak, J.~L., Spall, J.~C., and Silberman, G.~L. (1995).
\newblock Uncertainties for recursive estimators in nonlinear state-space
  models, with applications to epidemiology.
\newblock {\em Automatica}, 31(12):1889--1892.

\bibitem[Merton, 1976]{merton1976option}
Merton, R.~C. (1976).
\newblock Option pricing when underlying stock returns are discontinuous.
\newblock {\em Journal of Financial Economics}, 3(1--2):125--144.

\bibitem[Nel and Van~der Merwe, 1986]{nel1986solution}
Nel, D. and Van~der Merwe, C. (1986).
\newblock A solution to the multivariate {B}ehrens-{F}isher problem.
\newblock {\em Communications in Statistics\textemdash Theory and Methods},
  15(12):3719--3735.

\bibitem[Nemirovski et~al., 2009]{nemirovski2009robust}
Nemirovski, A., Juditsky, A., Lan, G., and Shapiro, A. (2009).
\newblock Robust stochastic approximation approach to stochastic programming.
\newblock {\em SIAM Journal on Optimization}, 19(4):1574--1609.

\bibitem[Nocedal and Wright, 2006]{nocedal2006numerical}
Nocedal, J. and Wright, S.~J. (2006).
\newblock {\em Numerical Optimization}.
\newblock Springer Science \& Business Media.

\bibitem[P{\'e}rez et~al., 2000]{perez2000prediction}
P{\'e}rez, P., Trier, A., and Reyes, J. (2000).
\newblock Prediction of {PM2.5} concentrations several hours in advance using
  neural networks in {S}antiago, {C}hile.
\newblock {\em Atmospheric Environment}, 34(8):1189--1196.

\bibitem[Peterson et~al., 2014]{peterson2014simulation}
Peterson, C.~K., Newman, A.~J., and Spall, J.~C. (2014).
\newblock Simulation-based examination of the limits of performance for
  decentralized multi-agent surveillance and tracking of undersea targets.
\newblock In {\em Signal Processing, Sensor/Information Fusion, and Target
  Recognition XXIII}, volume 9091, page 90910F. International Society for
  Optics and Photonics.

\bibitem[Pflug, 1986]{pflug1986stochastic}
Pflug, G.~C. (1986).
\newblock Stochastic minimization with constant step-size: asymptotic laws.
\newblock {\em SIAM Journal on Control and Optimization}, 24(4):655--666.

\bibitem[Pflug, 1988]{pflug1988stepsize}
Pflug, G.~C. (1988).
\newblock Stepsize rules, stopping times and their implementation in stochastic
  quasi-gradient algorithms.
\newblock {\em Numerical Techniques for Stochastic Optimization}, pages
  353--372.

\bibitem[Polyak, 1987]{polyak1987introduction}
Polyak, B.~T. (1987).
\newblock {\em Introduction to Optimization}.
\newblock Optimization Software, Publications Division, New York.

\bibitem[Popkov, 2005]{popkov2005gradient}
Popkov, A.~Y. (2005).
\newblock Gradient methods for nonstationary unconstrained optimization
  problems.
\newblock {\em Automation and Remote Control}, 66(6):883--891.

\bibitem[Prashanth et~al., 2017]{prashanth2017adaptive}
Prashanth, L.~A., Bhatnagar, S., Fu, M., and Marcus, S. (2017).
\newblock Adaptive system optimization using random directions stochastic
  approximation.
\newblock {\em IEEE Transactions on Automatic Control}, 62(5):2223--2238.

\bibitem[Prokhorov, 1956]{prokhorov1956convergence}
Prokhorov, Y.~V. (1956).
\newblock Convergence of random processes and limit theorems in probability
  theory.
\newblock {\em Theory of Probability \& Its Applications}, 1(2):157--214.

\bibitem[Rai and Madavan, 2000]{rai2000aerodynamic}
Rai, M.~M. and Madavan, N.~K. (2000).
\newblock Aerodynamic design using neural networks.
\newblock {\em American Institute of Aeronautics and Astronautics Journal},
  38(1):173--182.

\bibitem[Rastogi et~al., 2016]{rastogi2016efficient}
Rastogi, P., Zhu, J., and Spall, J.~C. (2016).
\newblock Efficient implementation of enhanced adaptive simultaneous
  perturbation algorithms.
\newblock In {\em Proceedings of the 50th Conference on Information Science and
  Systems}, pages 298--303, Princeton, NJ. IEEE.

\bibitem[Robbins and Monro, 1951]{robbins1951stochastic}
Robbins, H. and Monro, S. (1951).
\newblock A stochastic approximation method.
\newblock {\em The Annals of Mathematical Statistics}, 22(3):400--407.

\bibitem[Ross et~al., 2011]{ross2011nonparametric}
Ross, G.~J., Tasoulis, D.~K., and Adams, N.~M. (2011).
\newblock Nonparametric monitoring of data streams for changes in location and
  scale.
\newblock {\em Technometrics}, 53(4):379--389.

\bibitem[Rudin, 1976]{rudin1976principles}
Rudin, W. (1976).
\newblock {\em Principles of Mathematical Analysis}.
\newblock McGraw-Hill Publishing Co.

\bibitem[Ruppert, 1985]{ruppert1985newton}
Ruppert, D. (1985).
\newblock A {N}ewton-{R}aphson version of the multivariate {R}obbins-{M}onro
  procedure.
\newblock {\em The Annals of Statistics}, 13(1):236--245.

\bibitem[Saab and Shen, 2019]{saab2019multidimensional}
Saab, S.~S. and Shen, D. (2019).
\newblock Multidimensional gains for stochastic approximation.
\newblock {\em IEEE Transactions on Neural Networks and Learning Systems,
  \emph{in press at \url{http://dx.doi.org/10.1109/TNNLS.2019.2920930}}}.

\bibitem[Schraudolph et~al., 2007]{schraudolph2007stochastic}
Schraudolph, N.~N., Yu, J., and G{\"u}nter, S. (2007).
\newblock A stochastic quasi-{N}ewton method for online convex optimization.
\newblock In {\em Artificial Intelligence and Statistics}, pages 436--443.

\bibitem[Simonetto, 2017]{simonetto2017time}
Simonetto, A. (2017).
\newblock Time-varying convex optimization via time-varying averaged operators.
\newblock {\em \emph{Available at \url{https://arxiv.org/abs/1704.07338}}}.

\bibitem[Sohl-Dickstein et~al., 2014]{sohl2014fast}
Sohl-Dickstein, J., Poole, B., and Ganguli, S. (2014).
\newblock Fast large-scale optimization by unifying stochastic gradient and
  quasi-{N}ewton methods.
\newblock In {\em International Conference on Machine Learning}, pages
  604--612.

\bibitem[Solo and Kong, 1994]{solo1994adaptive}
Solo, V. and Kong, X. (1994).
\newblock {\em Adaptive Signal Processing Algorithms: Stability and
  Performance}.
\newblock Prentice-Hall, Inc.

\bibitem[Sorensen, 1977]{sorensen1977updating}
Sorensen, D.~C. (1977).
\newblock Updating the symmetric indefinite factorization with applications in
  a modified {N}ewton's method.
\newblock Technical Report ANL-77-49, Argonne National Laboratory, Argonne, IL.

\bibitem[Spall, 1992]{spall1992multivariate}
Spall, J.~C. (1992).
\newblock Multivariate stochastic approximation using a simultaneous
  perturbation gradient approximation.
\newblock {\em IEEE Transactions on Automatic Control}, 37(3):332--341.

\bibitem[Spall, 1994]{spall1994developments}
Spall, J.~C. (1994).
\newblock Developments in stochastic optimization algorithms with gradient
  approximations based on function measurements.
\newblock In {\em Proceedings of Winter Simulation Conference}, pages 207--214,
  Orlando, FL. IEEE.

\bibitem[Spall, 1997]{spall1997one}
Spall, J.~C. (1997).
\newblock A one-measurement form of simultaneous perturbation stochastic
  approximation.
\newblock {\em Automatica}, 33(1):109--112.

\bibitem[Spall, 1998]{spall1998implementation}
Spall, J.~C. (1998).
\newblock Implementation of the simultaneous perturbation algorithm for
  stochastic optimization.
\newblock {\em IEEE Transactions on Aerospace and Electronic Systems},
  34(3):817--823.

\bibitem[Spall, 2000]{spall2000adaptive}
Spall, J.~C. (2000).
\newblock Adaptive stochastic approximation by the simultaneous perturbation
  method.
\newblock {\em IEEE Transactions on Automatic Control}, 45(10):1839--1853.

\bibitem[Spall, 2003]{spall2005introduction}
Spall, J.~C. (2003).
\newblock {\em Introduction to Stochastic Search and Optimization: Estimation,
  Simulation, and Control}.
\newblock John Wiley \& Sons.

\bibitem[Spall, 2009]{spall2009feedback}
Spall, J.~C. (2009).
\newblock Feedback and weighting mechanisms for improving {J}acobian estimates
  in the adaptive simultaneous perturbation algorithm.
\newblock {\em IEEE Transactions on Automatic Control}, 54(6):1216--1229.

\bibitem[Spall and Cristion, 1998]{spall1998model}
Spall, J.~C. and Cristion, J.~A. (1998).
\newblock Model-free control of nonlinear stochastic systems with discrete-time
  measurements.
\newblock {\em IEEE Transactions on Automatic Control}, 43(9):1198--1210.

\bibitem[Sylvester, 1852]{sylvester1852xix}
Sylvester, J.~J. (1852).
\newblock A demonstration of the theorem that every homogeneous quadratic
  polynomial is reducible by real orthogonal substitutions to the form of a sum
  of positive and negative squares.
\newblock {\em The London, Edinburgh, and Dublin Philosophical Magazine and
  Journal of Science}, 4(23):138--142.

\bibitem[Teschl, 2012]{teschl2012ordinary}
Teschl, G. (2012).
\newblock {\em Ordinary Differential Equations and Dynamical Systems}, volume
  140.
\newblock American Mathematical Society Providence.

\bibitem[Tropp, 2015]{tropp2015introduction}
Tropp, J.~A. (2015).
\newblock An introduction to matrix concentration inequalities.
\newblock {\em Foundations and Trends in Machine Learning}, 8(1--2):1--230.

\bibitem[Walk, 1977]{walk1977invariance}
Walk, H. (1977).
\newblock An invariance principle for the {R}obbins-{M}onro process in a
  hilbert space.
\newblock {\em Probability Theory and Related Fields}, 39(2):135--150.

\bibitem[Wang et~al., 2018]{wang2018mixed}
Wang, L., Zhu, J., and Spall, J.~C. (2018).
\newblock Mixed simultaneous perturbation stochastic approximation for
  gradient-free optimization with noisy measurements.
\newblock In {\em Proceedings of the 59th American Control Conference}, pages
  3774--3779, Milwaukee, WI. IEEE.

\bibitem[Wang and Ye, 2014]{wang2014rate}
Wang, Q. and Ye, M. (2014).
\newblock Rate of convergence analysis of simultaneous perturbation stochastic
  approximation algorithm for time-varying loss function.
\newblock In {\em American Control Conference}, pages 5192--5197. IEEE.

\bibitem[Widrow et~al., 1977]{widrow1977stationary}
Widrow, B., McCool, J., Larimore, M.~G., and Johnson, C.~R. (1977).
\newblock Stationary and nonstationary learning characteristics of the {LMS}
  adaptive filter.
\newblock In {\em Aspects of Signal Processing}, pages 355--393. Springer.

\bibitem[Wiggins, 2003]{wiggins2003introduction}
Wiggins, S. (2003).
\newblock {\em Introduction to Applied Nonlinear Dynamical Systems and Chaos},
  volume~2.
\newblock Springer Science \& Business Media.

\bibitem[Wilson et~al., 2018]{wilson2019adaptive}
Wilson, C., Veeravalli, V.~V., and Nedi{\'c}, A. (2018).
\newblock Adaptive sequential stochastic optimization.
\newblock {\em IEEE Transactions on Automatic Control}, 64(2):496--509.

\bibitem[Wilson and Martinez, 2003]{wilson2003general}
Wilson, D.~R. and Martinez, T.~R. (2003).
\newblock The general inefficiency of batch training for gradient descent
  learning.
\newblock {\em Neural Networks}, 16(10):1429--1451.

\bibitem[Yao, 1965]{yao1965approximate}
Yao, Y. (1965).
\newblock An approximate degrees of freedom solution to the multivariate
  {B}ehrens {F}isher problem.
\newblock {\em Biometrika}, 52(1/2):139--147.

\bibitem[Yousefian et~al., 2012]{yousefian2012stochastic}
Yousefian, F., Nedi{\'c}, A., and Shanbhag, U.~V. (2012).
\newblock On stochastic gradient and subgradient methods with adaptive
  steplength sequences.
\newblock {\em Automatica}, 48(1):56--67.

\bibitem[Yu and Gu, 1997]{yi1997note}
Yu, Y. and Gu, D. (1997).
\newblock A note on a lower bound for the smallest singular value.
\newblock {\em Linear Algebra and Its Applications}, 253(1--3):25--38.

\bibitem[Zhu and Spall, 2015]{zhu2015error}
Zhu, J. and Spall, J.~C. (2015).
\newblock Error bound analysis of the least-mean-squares algorithm in linear
  models.
\newblock In {\em Proceedings of the 49th Conference on Information Science and
  Systems}, Baltimore, MD. IEEE.

\bibitem[Zhu and Spall, 2016]{zhu2016tracking}
Zhu, J. and Spall, J.~C. (2016).
\newblock Tracking capability of stochastic gradient algorithm with constant
  gain.
\newblock In {\em Proceedings of the 55th Conference on Decision and Control},
  pages 4522--4527, Las Vegas, NV. IEEE.

\bibitem[Zhu and Spall, 2018]{zhu2018prob}
Zhu, J. and Spall, J.~C. (2018).
\newblock Probabilistic bounds in tracking a discrete-time varying process.
\newblock In {\em Proceedings of the 57th Conference on Decision and Control},
  pages 4849--4854, Miami Beach, FL. IEEE.

\bibitem[Zhu and Spall, 2020]{zhu2020stochastic}
Zhu, J. and Spall, J.~C. (2020).
\newblock Stochastic approximation with non-decaying gain: Error bounds and
  data-driven gain-tuning.
\newblock {\em International Journal of Robust and Nonlinear Control,
  \emph{under revision}}.

\bibitem[Zhu et~al., 2019]{zhu2019efficient}
Zhu, J., Wang, L., and Spall, J.~C. (2019).
\newblock Efficient implementation of second-order stochastic approximation
  algorithms in high-dimensional problems.
\newblock {\em IEEE Transactions on Neural Networks and Learning Systems,
  \emph{in press at \url{http://dx.doi.org/10.1109/TNNLS.2019.2935455}}}.

\bibitem[Zhu and Spall, 2002]{zhu2002modified}
Zhu, X. and Spall, J.~C. (2002).
\newblock A modified second-order {SPSA} optimization algorithm for finite
  samples.
\newblock {\em International Journal of Adaptive Control and Signal
  Processing}, 16(5):397--409.

\end{thebibliography}

\begin{vita}  
	  Jingyi Zhu was born in November 1991 in Heyuan City, Guangdong Province, People's Republic of China. 
	 Jingyi  is currently a    Ph.D. candidate in  the Department of Applied Mathematics and Statistics (AMS) at the Johns Hopkins University (JHU). Her doctoral study  was supported in part by  the  Paul V. Renoff Fellowship from  JHU,     the Charles and Catherine Counselman Fellowship from  the  Department of AMS, the Acheson J. Duncan Fund for the Advancement of Research in Statistics from  the  Department of AMS,   and   Navy contract N00024-13-D6400 via the  Office of Naval Research. Her efforts in teaching were recognized through the Professor Joel Dean Award for Excellence in Teaching in 2017.  In summer 2018, she conducted research on mixed-variable constrained optimization at the JHU  Applied Physics Laboratory.  In winter 2017, she was an Academic Cooperation Program intern in the Lawrence Livermore National Lab, funded by the  National Science Foundation  Mathematical Sciences Graduate Internship Program.   She received an M.S.E. degree in Computer Science and an M.S.E. degree in Financial Mathematics from   JHU, in May 2019 and December 2014, respectively.   She   received an   B.S. degree in Mathematics and  Applied Mathematics from Tongji University, Shanghai, China in May  2013.    
	 Broadly, her research lies
	 in the intersection of stochastic optimization and control\textemdash particularly the theoretical foundation for
	 stochastic optimization algorithms to be applied in nonstationarity tracking.     She also works on  general stochastic approximation \cite{blakney2019compare}, second-order methods \cite{zhu2019efficient}, and  mixed-variable optimization \cite{wang2018mixed}. In her leisure time, she   plays badminton,   participates in aerial dancing and yoga, and cooks Cantonese and Hakka cuisine. 
\end{vita} 
\end{document}